\newtheorem{thm}{Theorem}[section]
\newtheorem{thm*}{Theorem}
\newtheorem{proposition}[thm]{Proposition}
\newtheorem{lemma}[thm]{Lemma}
\newtheorem{corollary}[thm]{Corollary}
\newtheorem{hypothesis}[thm]{Hypothesis}
\newtheorem{definition}[thm]{Definition}
\numberwithin{equation}{section}
\renewcommand{\-}{\longrightarrow}
\renewcommand{\=}{\Longrightarrow}
\newcommand{\lap}{\Delta}
\newcommand{\K}{\bar{K}}
\newcommand{\eps}{\varepsilon}
\newcommand{\gdw}{\Longleftrightarrow}
\newcommand{\N}{\mathbb{N}}
\newcommand{\R}{\mathbb{R}}
\newcommand{\e}{\;\exists \;}
\newcommand{\fa}{\;\forall\;}
\renewcommand{\{}{\lbrace}
\renewcommand{\}}{\rbrace}
\newcommand{\var}{\varphi}
\def\Item{\item\abovedisplayskip=0pt\abovedisplayshortskip=0pt~\vspace*{-\baselineskip}}
\date{}
\begin{document}
\title{A scalar curvature flow in low dimensions}
\author{Martin Mayer}

\maketitle
\vfill
\begin{abstract}
\noindent
Let $(M^{n},g_{0})$ be a $n=3,4,5$ dimensional, closed Riemannian manifold
of positive  Yamabe invariant.
For a smooth function $K>0$ on $M$ we consider 
a scalar curvature flow, that tends to prescribe $K$ as the scalar curvature of a metric $g$ conformal to $g_{0}$.
We show global existence and in case $M$ is not conformally equivalent to the standard sphere smooth flow convergence and solubility of the prescribed scalar curvature problem under suitable conditions on $K$.  
\end{abstract}
\vfill
\vfill
\begin{center}
Inauguraldissertation zur Erlangung des Doktorgrades \\
der Naturwissenschaftlichen Fachbereiche der\\
Justus-Liebig-Universit\"at Gie\ss en
\end{center}

\begin{center}
\small{Supervisor}\\
Prof. Dr. Ould Ahmedou
\end{center}
\newpage
\tableofcontents

\newpage
\section{Introduction}
\label{sec:Introduction}
\subsection{Overview and related works}
\label{subsec:OverviewAndRelatedWorks}
We study the problem of prescribing the scalar curvature of a closed Riemannian manifold  within its conformal class, called the prescribed scalar curvature problem. Many work  has been devoted to this topic in the last decades and we refer to \cite{Aubin_book}, \cite{LeeAndParker} and the references therein for an overview.
More precisely we consider the problem of conformally prescribing a smooth function $K>0$ as the scalar curvature in case the underlying manifold already admits a conformal metric of positive scalar curvature.

\smallskip
The problem has variational structure and solutions of the prescribed scalar curvature problem then correspond to critical points of a non negative energy functional $J$, which does not satisfy a compactness criterion known as the Palais-Smale condition. So direct variational methods can not be applied. 
Indeed considering a minimizing or more general a Palais-Smale sequence the possible obstacle of finding a minimizer or a critical point of the associated energy functional 
is, what we call a critical point at infinity - a blow up phenomenon, whose profile however is well understood \cite{StruweConcentrationCompactness}.

\begin{figure}[h]
\begin{center}
\psscalebox{1}
{\begin{pspicture}(-4,-0.5)(8,5.5)\psset{xunit=25pt,yunit=25pt,runit=20pt}
\def\coordinates
  {
  \psaxes[linewidth=0.02,linecolor=gray,labels=none,ticks=none]{->}(0,0)(-3,-1)(9.1,6)
  }

\def\Curve
  {
  \psBspline[linewidth=0.02,fillstyle=none,fillcolor=lightgray,hatchwidth=0.01,showpoints=false]
  (-3,-1)(2,6.4)(3,3.6)(6,2.2)(7,2.1)(8,2)(9,1.96)
  }

\def\HorizontalForCpAtInfinity
 {
 \psline[linewidth=0.01,linecolor=gray,linestyle=dashed,showpoints=false](0,1.95)(9,1.95)
 }
\def\HorizontalForCp
 {
 \psline[linewidth=0.01,linecolor=gray,linestyle=dashed,showpoints=false](0,5)(2,5)
 }
\def\TopChange
 {
 \psline[linewidth=0.01,linecolor=gray,linestyle=dashed,showpoints=false](-3.2,-1)(-3.2,6)
 \psline[linewidth=0.02,linecolor=black,showpoints=false](-3.4,1.95)(-3,1.95)
 \psline[linewidth=0.02,linecolor=black,showpoints=false](-3.4,5)(-3,5)
 }
 
\def\Box 
 {
 \psline[linewidth=0.01,linecolor=gray,linestyle=dashed,showpoints=false]
  (9,1.75)(6.9,1.75)(6.9,2.2)(9,2.2)
 }

\Box
\Curve
\coordinates
\HorizontalForCpAtInfinity
\HorizontalForCp
\TopChange

\put(10.2,3)
{\psscalebox{0.6 0.4}{\psplot[algebraic,linewidth=0.01]{-1.7}{1.4}{4/(1+16*x^2)}}
}
\put(9.7,3)
{\psscalebox{0.5 0.6}{\psplot[algebraic,linewidth=0.01]{-1.7}{1.4}{4/(1+16*x^2)}}
}
\put(10.2,2.9)
{\psscalebox{0.4 0.1.6}{\psplot[algebraic,linewidth=0.01]{-3}{2.1}{1/2*sin(14*x)*cos(10*x)}}
}

\put(-1,6.2){\small $c_{p_{r}}$}
\put(-0.85,2.4){\small $c_{p_{\infty}}$}
\put(-4.7,6.7){\small $0$}
\put(-4.7,4.2){\small $2$}
\put(-4.7,1){\small $1$}
\put(9,1.7){\tiny{near infinity}}
\end{pspicture}
}
\end{center}
\caption{Blow up at infinity and topological contribution}\label{Figure1}
\end{figure}
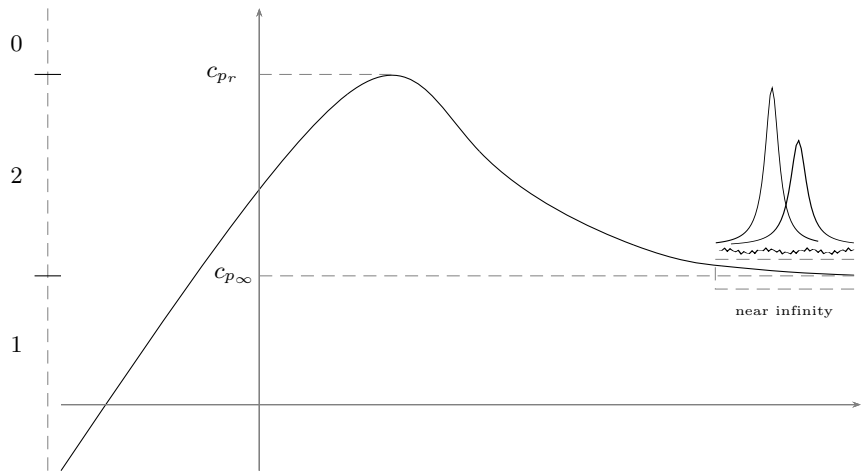

The problem of prescribing a constant scalar curvature is known as the Yamabe problem.
In this case the critical energy levels, at which a blow up may occur, are quantized.
Thus to prove existence of a minimizer, it is sufficient to find a test function, whose 
energy is below the least critical energy level \cite{Aubin_n>=6}, \cite{Schoen_deformation}.
Even, if this is not possible, one can show existence of critical points by analysing the critical points at infinity and their topological contribution to the underlying space as indicated in the above figure, cf. \cite{Bahri_without}, \cite{Bahri_Brezis}, \cite{BahriCoronCriticalExponent} and \cite{BenAyed_Chtioui_Hammami} for some genuine algebraic topological argument.

In addition to these two approaches one may recover solutions by perturbation arguments \cite{Ambrosetti_Malchiodi}, \cite{Chang_Gursky_Yang}.

\smallskip
Besides pure existence results it is a natural idea to find critical points as the limit of the gradient flow or more general of a pseudo gradient flow related to the energy functional. 
In this context one has to show long time existence and flow convergence with the crucial task being to ensure, that a flow line does not escape from the variational space towards a critical point at infinity. 
In the Yamabe case the question of flow convergence reduces to proving, that along a flow line, which becomes highly concentrated, the associated will eventually be below the critical energy levels, at which blow up may occur,  and thus can not blow up at all 
\cite{BrendleArbitraryEnergies}, \cite{Chow}, \cite{StruweLargeEnergies}, \cite{Ye}.

\smallskip
When prescribing the scalar curvature however the critical energy levels are not necessarily quantized. Nonetheless to show existence of a minimizer one may construct a test function with energy strictly below the least critical energy like for the Yamabe problem \cite{AubinHebey}, \cite{Escobar_Schoen} and one may use as well topological arguments to show existence of solutions as critical points \cite{AubinBahri}, \cite{BenAyed_n=4}, \cite{BenAyed&OuldAhmedou}, \cite{YanYanLi_Part1}, \cite{YanYanLi_Part2}.

The strategy of finding solutions by starting a flow is more complicated. The first task is to show long time existence. Secondly one has to prove, that the flow or at least one flow line does not converge to a critical point at infinity instead of a critical point - the ingredient of quantized energy levels being not available. To overcome this deficit one may impose assumptions on the function to be prescribed and therefore on the energy functional to be considered, which ensure a quantization of the critical energy levels \cite{ChenXu}.

\smallskip
One may object, that, when using deformations in the context of topological arguments, some pseudo gradient flow is always used, so there is nothing new. But the freedom of possibly choosing another more suitable pseudo gradient flow, in case some lines of a given flow do blow up, as sketched in figure \ref{Figure2}, is lost, once we limit ourselves to considering one fixed pseudo gradient flow. And a priori there is no equivalence in using different flows.

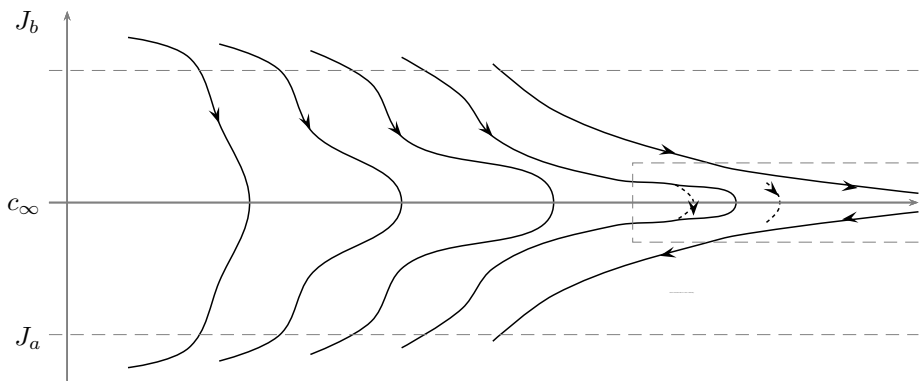
\begin{figure}[h]
\begin{pspicture}(-1,-2.5)(9,2.5)\psset{xunit=23pt,yunit=25pt}
\def\coordinates
  {
  \psaxes[linewidth=0.02,linecolor=gray,labels=none,ticks=none]{->}(0,0)(-0.3,-2.7)(14,2.9)
  }

\def\CurveA
  {
  \pscurve[linewidth=0.02,showpoints=false]
  (1,2.5)(2,2.2)(2.5,1.2)
  (3,0)
  (2.5,-1.2)(2,-2.2)(1,-2.5)
  }

\def\CurveB
  {
  \pscurve[linewidth=0.02,showpoints=false]
  (2.5,2.4)(3.5,2)(4,1.1)
  (5.5,0)
  (4,-1.1)(3.5,-2)(2.5,-2.4)
  }

\def\CurveC
  {
  \pscurve[linewidth=0.02,,showpoints=false]
  (4,2.3)(5,1.8)(5.5,1)
  (8,0)
  (5.5,-1)(5,-1.8)(4,-2.3)
  }

\def\CurveD
  {
  \pscurve[linewidth=0.02,showpoints=false]
  (5.5,2.2)(6.5,1.6)(7,1)(9,0.35)(10,0.27)
  (11,0)
  (10,-0.27)(9,-0.35)(7,-1)(6.5,-1.6)(5.5,-2.2)
  }
  
\def\CurveDdefomed
 {
 \pscurve[linewidth=0.02,linestyle=dashed,dash=0.5mm,showpoints=false]
  (10,0.27)
  (10.3,0)
  (10,-0.27)
 }

\def\CurveE
  {
  \pscurve[linewidth=0.02,linestyle=dashed,dash=0.5mm,showpoints=false]
  (11.5,0.3)(11.7,0.1)
  (11.7,-0.1)(11.5,-0.3)
  }

\def\CurveFPlus
  {
  \pscurve[linewidth=0.02,showpoints=false]
  (7,2.1)(8,1.4)(10.75,0.55)(11,0.5)
  (14,0.14)
  }

\def\CurveFMinus
  {
  \pscurve[linewidth=0.02,showpoints=false]
  (14,-0.14)
  (11,-0.5)(10.75,-0.55)(8,-1.4)(7,-2.1)
  }

\def\HorizontalForJb
 {
 \psline[linewidth=0.01,linecolor=gray,linestyle=dashed,showpoints=false](-0.3,2)(14,2)
 }

\def\HorizontalForJa
 {
 \psline[linewidth=0.01,linecolor=gray,linestyle=dashed,showpoints=false](-0.3,-2)(14,-2)
 }

\def\ArrowA
 {
 \psline[linewidth=0.01,linestyle=none,linecolor=black,showpoints=false,arrows=->,arrowsize=0.15]
 (1,5)(2.5,1.2)
 }

\def\ArrowB
 {
 \psline[linewidth=0.01,linestyle=none,linecolor=black,showpoints=false,arrows=->,arrowsize=0.15]
 (2,5)(4,1.1)
 }

\def\ArrowC
 {
 \psline[linewidth=0.01,linestyle=none,linecolor=black,showpoints=false,arrows=->,arrowsize=0.15]
 (2,5)(5.5,1)
 }

\def\ArrowD
 {
 \psline[linewidth=0.01,linestyle=none,linecolor=black,showpoints=false,arrows=->,arrowsize=0.15]
 (3,5)(7,1)
 }
 
\def\ArrowE
 {
 \psline[linewidth=0.01,linestyle=none,linecolor=black,showpoints=false,arrows=->,arrowsize=0.15]
 (11.2,0.55)(11.7,0.1)
 }
 
\def\ArrowDdeformed
 {
 \psline[linewidth=0.01,linestyle=none,linecolor=black,showpoints=false,arrows=->,arrowsize=0.15]
 (10.3,5)(10.3,-0.2)
 }
 
\def\ArrowFPlus
 {
 \psline[linewidth=0.01,linestyle=none,linecolor=black,showpoints=false,arrows=->,arrowsize=0.15]
 (8,1.2)(10,0.75)
 \psline[linewidth=0.01,linestyle=none,linecolor=black,showpoints=false,arrows=->,arrowsize=0.15]
 (11,0.5)(13,0.22)
 }
 
\def\ArrowFMinus
 {
 \psline[linewidth=0.01,linestyle=none,linecolor=black,showpoints=false,arrows=-<,arrowsize=0.15]
 (8,-1.2)(10,-0.75)
 \psline[linewidth=0.01,linestyle=none,linecolor=black,showpoints=false,arrows=-<,arrowsize=0.15]
 (11,-0.5)(13,-0.22)
 }

\def\Box
 {
 \psline[linewidth=0.01,linestyle=dashed,linecolor=gray,showpoints=false]
 (14,-0.6)(9.3,-0.6)(9.3,0.6)(14,0.6)
 }
 
\CurveA
\CurveB
\CurveC
\CurveD
\CurveDdefomed
\CurveE
\CurveFPlus
\CurveFMinus
\coordinates
\HorizontalForJb
\HorizontalForJa
\ArrowA
\ArrowB
\ArrowC
\ArrowD
\ArrowDdeformed
\ArrowE
\ArrowFPlus	
\ArrowFMinus
\Box
\put(-0.7,2.3){$J_b$}
\put(-0.7,-1.9){$J_a$}
\put(-0.8,-0.1){$c_{\infty}$}
\put(8,-1.2){\psscalebox{0.1}{ \small{\tiny{Flow deformation near infinity}}}}
\end{pspicture}

\caption{Suitable deformation to avoid infinity}\label{Figure2}
\end{figure}

  However, if we do not limit ourselves to use pseudo gradient flows  with just the purpose of finding solutions of the prescribed scalar curvature problem, it is of its own interest to describe the asymptotic behaviour of flow lines qualitatively - those converging to critical points and those diverging to  critical points at infinity.  And this is the aim of this work within its restrictive setting.

\smallskip
We would like to point out, that blowing up flow lines are not an unusual feature of the prescribed scalar curvature problem. On the contrary only under very restrictive assumptions blowing up flow lines can be excluded. 
\subsection{Exposition}
\label{subsec:Exposition}
We wish to give a quick overview on our main arguments.

\smallskip
In subsection \ref{subsec:Preliminaries} we provide the setting of this work, introduce the pseudo gradient flow to be considered, its basic properties and state two theorems, that provide full flow convergence and solubility of the prescribed scalar curvature problem under sufficient conditions on the function $K$ to be prescribed.

\smallskip
Section \ref{sec:LongtimeExistenceAndStrongConvergence} is devoted to prove long time existence and weak convergence of the first variation $\partial J$ along a flow line $u$ in a sense to be made precise. The arguments, we use, are straight forward adaptations from the Yamabe setting \cite{BrendleArbitraryEnergies}, \cite{StruweLargeEnergies};  cf. \cite{ChenXu} for a similar reasoning.

\smallskip
Section \ref{sec:TheFlowNearInfinity} describes the flow near infinity. Since a flow line $u$ restricted to any time sequence tending to infinity is a Palais-Smale sequence, 
well known blow up and concentration compactness arguments \cite{StruweConcentrationCompactness} provide a suitable parametrization. Namely $u$ can up to a small error term $v$  be written as a linear combination of a solution $\omega$ and finitely many bubbles
\begin{align*}
u = \alpha \omega+\alpha^{i}\delta_{a_{i}, \lambda_{i}}+v, \, i=1, \ldots,p,
\end{align*}
where locally around $a_{i}$ the bubble $\delta_{a_{i}, \lambda_{i}}$ has the form
\begin{align*}
\delta_{a_{i}, \lambda_{i}}(x)=(\frac{\lambda_{i}}{1+\lambda_{i}^{2}d(a_{i},x)^{2}})^{\frac{n-2}{2}}.
\end{align*}
Thus a blow up corresponds to $\lambda_{i}\- \infty$.

We then refine the representation by choosing more suitable bubbles $\varphi_{a_{i}, \lambda_{i}}$ instead of $\delta_{a_{i}, \lambda_{i}}$ and take care of a possible degeneracy of the representation in the spirit of \cite{BrendleArbitraryEnergies}. Degeneracy in this context refers to the degeneracy of $\partial^{2}J(\omega)$.
Subsequently the representation is made unique by means of a Lyapunow-Schmidt reduction, that implies some orthogonality properties of the error term $v$ with respect to the solution $\omega$ and the bubbles $\varphi_{a_{i}, \lambda_{i}}$. In particular we obtain smallness of linear interactions of $v$ with $\omega$ and $\varphi_{a_{i}, \lambda_{i}}$ - a crucial aspect, that will enable us to identify the principal forces, that move $\lambda_{i}$ for instance or $a_{i}$.

Finally we show by Lojasiewicz inequality type arguments \cite{SlowConvergence}, \cite{Lojasiewicz}, that, if a flow line is precompact, it is fully compact, thus convergent and this generically with exponential speed.

\smallskip
In section \ref{sec:w=0} we then consider the case, that a flow line $u$ near infinity can up to a small error term $v$ be thought of as a linear combination of bubbles
\begin{align*}
u=\alpha^{i}\var_{a_{i}, \lambda_{i}}+v,
\end{align*}
so no solution $\omega$ is there. By suitable testing of the pseudo gradient flow equation in the spirit of  
\cite{BahriCriticalPointsAtInfinity} we analyse the movement of the bubbles by establishing explicit evolution
equations of those three parameters, that constitute the bubbles, namely the scaling parameter $\alpha_{i}$, height $\lambda_{i}$ and position $a_{i}$.
At this point the special choice of the Lyapunow-Schmidt reduction implies, that the evolution equations of the aforementioned parameters are independent of the time derivative of the error term $v$, which is difficult to control. 

Using the fact, that the second variation $\partial^{2}J(u)$ is positive  definite in this case, when applied to the error term $v$, we are able to give a suitable a priori estimate on $v$ - indeed $\partial J(u)$ is square integrable in time, since we are dealing with a pseudo gradient flow and $\partial J(\alpha^{i}\var_{a_{i}})$ is small. 

In conclusion  we obtain a precise description of the behaviour of the flow line in terms of $\lambda_{i}$  as the only non compact variable and $a_{i}$.

\smallskip
Section \ref{sec:omega>0} deals analogously to section \ref{sec:w=0} with the case, that a flow line $u$ near infinity can be written as a linear combination of a non trivial solution $\omega>0$ and finitely many bubbles - up to a small error term.
We then follow the same scheme as in the previous section. The main difference is, 
that there are more parameters to be considered beyond the scaling factor, height and position of the bubbles. Namely we have to deal with a scaling factor $\alpha$ for the solution $\omega$ plus finitely many parameters $\beta_{i}$ to describe the degenerate space of the solution $\omega$ and the implicit function theorem yields a suitable parametrization $u_{\alpha, \beta}=\alpha u_{1, \beta}$ for this purpose. So
\begin{align*}
u= u_{\alpha, \beta}+\alpha^{i}\var_{a_{i}, \lambda_{i}}+v.
\end{align*}
We would like to point out, that generically a solution $\omega$  is non degenerate, in which case $u_{\alpha, \beta}$ reduces to $\alpha \omega$.
Moreover the second variation  $\partial^{2}J(u)$ is not necessarily positive definite. But, since we have taken care of the degenerate space, the second variation is sort of non degenerate, when applied to the space, that the error term $v$ lives on. Thence we still get a sufficient estimate on $v$.

\smallskip
In section \ref{sec:FlowOnVwpe}, subsection \ref{subsec:PrincipalBehaviour} we proceed considering the flow near infinity and, under a suitable assumption on the energy functional, that the flow behaves as one would expect, 
e.g. that a flow line does not only converge to a solution, once this is true for a time sequence as seen at the end of section \ref{sec:TheFlowNearInfinity},
but that the same holds true for a critical point at infinity. This means, that, if for some time sequence the flow line blows up, this is true for the full flow line as well.
Moreover we show, that the critical set $[\nabla K=0]$ attracts the concentration points $a_{i}$ of a flow line near infinity.

\smallskip
The following subsection \ref{subsec:LeavingVwpe} contains the very essence of the proof of the theorem.
Under suitable conditions on $K$, which already imply, that the flow behaves in the sense of the foregoing subsection, we explicitly construct some functions adapted to the dimension and the case, whether $\omega$ is trivial or not,
with the basic property of becoming arbitrarily negative in case the flow line blows up, while on the other hand their time derivative is basically non negative. So they can be thought of as a way to check the compactness of a flow line near infinity.
This idea originates from \cite{BahriCriticalPointsAtInfinity}, where it was used in case $M=\mathbb{S}^{3}$ to exclude a multi bubble blow up, and our constructions are somewhat technical, but natural generalisations to the non spherical situation in dimensions $n=3,4,5$. 

For the construction the explicit evolution equations of the parameters $\lambda_{i}$ and  $a_{i}$ of the bubbles $\varphi_{a_{i}, \lambda_{i}}$ obtained in sections \ref{sec:w=0} and \ref{sec:omega>0} are used. Besides the necessity of controlling the error term $v$ there are two basic features to be considered.

The first one concerns self-interaction phenomen, whereby we mean quantities, which are attributed solely to a one bubble situation. In this case, the question of what moves a bubble is simply answered by saying, a bubble is moved, by what prevents a bubble from being a solution. E.g. on the standard sphere a bubble is a solution of the Yamabe problem, but not of the prescribed scalar curvature problem for $K$ non constant.  Thus we expect a bubble to be moved by the non vanishing derivatives of $K$, for instance the gradient of $K$ moves $a_{i}$ as $\lambda_{i}$ is moved by the laplacian 

If in addition we are dealing with an arbitrary manifold we expect other geometric quantities to move the bubbles as well - thereby the positive mass theorem comes into play.

The second feature is due to interaction quantities arising from the presence of several bubbles or from bubbles and a solution $\omega$. On the standard sphere for example, while each bubble is a solution of  the Yamabe problem, their linear combination is not. Thus the movement of the bubbles is caused solely by the interaction phenomena and in the context of proving flow convergence, one has to ensure, that the interaction terms rather decrease the possibly non compact variables $\lambda_{i}$ instead of increasing them.

\smallskip
In subsection \ref{subsec:ProvingTheTheorem} we put all the previous informations together and show flow convergence by contradiction based on the functions constructed in foregoing subsection \ref{subsec:LeavingVwpe}.
Thus proving  theorem \ref{thm_1}. In order to prove theorem \ref{thm_2} we basically prove the existence of a converging flow line - using the same arguments as for proving theorem \ref{thm_1}.

\smallskip
The final subsection \ref{subsec:DivergingSzenario} exposes a non trivial scenario of a blowing up  flow line.
In this example the function $K$ to be prescribed as the scalar curvature satisfies at one of its maximum points a flatness condition, that due to \cite{Escobar_Schoen} guarantees the existence of a minimizer of $J$ in case $M$ is not conformally equivalent to the standard sphere. On the other hand the flow line constructed blows up at the same maximum point.

\subsection{Preliminaries and statement of the theorems}
\label{subsec:Preliminaries}
We consider a smooth, closed Riemannian manifold 
\begin{equation*}\begin{split}
M=(M^{n},g_{0}), \; n=3,4,5
\end{split}\end{equation*}
with volume measure $\mu_{g_{0}}$ and scalar curvature $R_{g_{0}}$.
The Yamabe invariant 
\begin{equation*}\begin{split}
Y(M,g_{0})
= &
\inf_{\mathcal{A}}
\frac
{\int c_{n}\vert \nabla u \vert_{g_{0}}^{2}+R_{g_{0}}u^{2}d\mu_{g_{0}}}
{(\int u^{\frac{2n}{n-2}}d\mu_{g_{0}})^{\frac{n-2}{n}}},
\end{split}\end{equation*}
where $c_{n}=4\frac{n-1}{n-2}$ and 
\begin{equation*}\begin{split}
\mathcal{A}=
\{
u\in W^{1,2}_{g_{0}}(M)\mid u\geq 0,u\not \equiv 0
\},
\end{split}\end{equation*}
is assumed to be positive,  $Y(M,g_{0})>0$. The conformal laplacian
\begin{equation*}\begin{split} 
L_{g_{0}}=-c_{n}\lap_{g_{0}}+R_{g_{0}}
\end{split}\end{equation*}
then forms a positive,  self-adjoint operator with Green's function
\begin{equation*}
\begin{split}
G_{g_{0}}:M\times M\-\R_{+}
\end{split}
\end{equation*} 
and we may assume for the background metric
\begin{equation*}\begin{split} 
R_{g_{0}}>0\;\text{ and }\; 
\int Kd\mu_{g_{0}}=1.
\end{split}\end{equation*}
Considering a conformal metric $g=g_{u}=u^{\frac{4}{n-2}}g_{0}$ there holds 
\begin{equation*}\begin{split} 
d\mu=d\mu_{g_{u}}=u^{\frac{2n}{n-2}}d\mu_{g_{0}}
\end{split}\end{equation*}
for the volume element and for the scalar curvature 
\begin{equation*}\begin{split}
R=R_{g_{u}}=u^{-\frac{n+2}{n-2}}(-c_{n} \lap_{g_{0}} u+R_{g_{0}}u)
=
u^{-\frac{n+2}{n-2}}L_{g_{0}}u.
\end{split}\end{equation*} 
Let $0<K\in C^{\infty}(M)$ and 
\begin{equation*}\begin{split}
r=r_{u}=\int Rd\mu, \,k=k_{u}=\int Kd\mu, \,
\K=\K_{u}
=
\frac{K}{k}.
\end{split}\end{equation*}
Note, that 
\begin{align*}
c\Vert u \Vert_{W^{1,2}}\leq r_{u}=\int L_{g_{0}}uud\mu_{g_{0}}=\int c_{n}\vert \nabla u \vert^{2}_{g_{0}}+R_{g_{0}}u^{2}d\mu_{g_{0}}\leq C\Vert u \Vert_{W^{1,2}}
\end{align*}
and
\begin{align*}
c\Vert u \Vert_{L^{\frac{2n}{n-2}}}^{\frac{2n}{n-2}}
\leq k_{u}=\int Ku^{\frac{2n}{n-2}}d\mu_{g_{0}}\leq C\Vert u \Vert_{L^{\frac{2n}{n-2}}}^{\frac{2n}{n-2}}.
\end{align*}
In particular we may define 
$$\Vert u \Vert=\int L_{g_{0}}uud\mu_{g_{0}}$$ 
and use $\Vert \cdot \Vert$ as an equivalent norm on $W^{1,2}$.
The aim of this paper is a study of
\begin{equation*}\begin{split} 
\partial_{t}u
=
-\frac{1}{K}(R-r\bar{K})u, \; u(\cdot,0)=u_{0}>0
\end{split}\end{equation*}
as an evolution equation for the conformal factor. Obviously
\begin{equation*}\begin{split}
\partial_{t}k=\partial_{t}\int K u^{\frac{2n}{n-2}}d\mu_{g_{0}}=0.
\end{split}\end{equation*}
Thus, if we choose as an initial value 
\begin{align*}
u(\cdot,0)=u_{0}>0\;\text{ satisfying }\;k_{u_{0}}=\int Ku_{0}^{\frac{2n}{n-2}}=1, 
\end{align*}
then the unit volume $k\equiv 1$ is preserved and in case  
\begin{equation*}\begin{split}
u\- u_{\infty}>0\; \text{ in }\; W^{1,2}_{g_{0}}(M),
\end{split}\end{equation*} 
where $u_{\infty}$ is a stationary point, there necessarily holds
\begin{equation*}\begin{split}
\int Ku_{\infty}^{\frac{2n}{n-2}}d\mu_{g_{0}}=1
\; \text{ and  }\; 
R_{u_{\infty}}=r_{u_{\infty}}K. 
\end{split}\end{equation*}
In what follows we will simply call any maximal solution 
\begin{align*}
u:M\times [0,T)\-\R, \;T\in (0, \infty]
\end{align*}
of
\begin{align*}
\partial_{t}u=-\frac{1}{K}(R-r\K), \; u(\cdot,0)=u_{0}>0\; \text{ with }\; \int Ku_{0}^{\frac{2n}{n-2}}=1 
\end{align*}
a flow line with initial value $u_{0}$. Let us consider the energy
\begin{equation*}
\begin{split}
J(u)
= &
\frac{\int c_{n}\vert \nabla u \vert_{g_{0}}^{2}+R_{g_{0}}u^{2}d\mu_{g_{0}}}{(\int Ku^{\frac{2n}{n-2}}d\mu_{g_{0}})^{\frac{n-2}{n}}}
\; \text{ for }\;  u\in \mathcal{A}.
\end{split}
\end{equation*} 
\begin{proposition}[Derivatives of $J$]\label{prop_derivatives_of_J}$_{}$\\
We have
\begin{enumerate}[label=(\roman*)]
 \item 
\begin{equation*}
\begin{split}
J(u)
=
\frac{r_{u}}{k_{u}^{\frac{n-2}{n}}} 
\end{split}
\end{equation*} 
 \item 
\begin{equation*}\begin{split}\label{first_derivative_of_J}
\frac{1}{2}\partial  J(u)v
= &
\frac{1}{k_{u}^{\frac{n-2}{n}}}
[
\int L_{g_{0}}uv
-
\frac
{
r_{u}
}
{
k_{u}
}
\int Ku^{\frac{n+2}{n-2}}v
]
\\
= & 
\frac{1}{k_{u}^{\frac{n-2}{n}}}\int (R_{u}-\frac{r_{u}}{k_{u}}K)u^{\frac{n+2}{n-2}}v
\end{split}\end{equation*}
\item 
\begin{equation*}\begin{split}\label{second_derivative_of_J}
\frac{1}{2}\partial^{2}  J(u)vw 
=&
\frac{1}{k_{u}^{\frac{n-2}{n}}}
[
\int L_{g_{0}}vw
-
\frac{n+2}{n-2}
\frac
{
r_{u}
}
{
k_{u}
} 
\int Ku^{\frac{4}{n-2}}vw
]
\\ &-
\frac{2}{k_{u}^{\frac{n-2}{n}+1}}
[
\int L_{g_{0}}uv\int Ku^{\frac{n+2}{n-2}}w
+
\int L_{g_{0}}uw\int Ku^{\frac{n+2}{n-2}}v
]\\
& +
4\frac{n-1}{n-2}\frac{r_{u}}{k_{u}^{\frac{n-2}{n}+2}}
\int Ku^{\frac{n+2}{n-2}}v\int Ku^{\frac{n+2}{n-2}}w.
\end{split}\end{equation*}
\end{enumerate}
Moreover $J$ is $C^{2, \alpha}_{loc}$ and uniformly H\"older continuous on each
\begin{align*}
U_{\epsilon}=\{u\in \mathcal{A}\mid  \epsilon<\Vert u \Vert,\,J(u)\leq \epsilon^{-1}\}\subset \mathcal{A}.
\end{align*}
\end{proposition}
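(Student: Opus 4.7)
The plan is to compute (i)–(iii) by direct differentiation, viewing $J(u) = r_u\, k_u^{-(n-2)/n}$ with $r_u = \int L_{g_0} u\cdot u\, d\mu_{g_0}$ and $k_u = \int K u^{2n/(n-2)}\, d\mu_{g_0}$, and then to deduce the regularity assertion from the explicit formulas together with the Sobolev embedding $W^{1,2}(M)\hookrightarrow L^{2n/(n-2)}(M)$, which is available in dimensions $n\in\{3,4,5\}$.

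For (i), integration by parts and self-adjointness of $L_{g_0}$ give $r_u = \int c_n |\nabla u|_{g_0}^2 + R_{g_0} u^2\, d\mu_{g_0}$, so $J(u)=r_u/k_u^{(n-2)/n}$ matches the definition of the Yamabe quotient with weight $K$. For (ii), I would compute $\partial r_u\cdot v = 2\int L_{g_0} u\, v$ and $\partial k_u\cdot v = \tfrac{2n}{n-2}\int K u^{(n+2)/(n-2)} v$; the quotient rule then yields the first identity, and the second identity follows from the pointwise relation $R_u u^{(n+2)/(n-2)} = L_{g_0} u$. For (iii), I would differentiate the first expression of (ii) once more, treating the four ingredients $k_u^{-(n-2)/n}$, $\int L_{g_0} u v$, $r_u$, and $\int K u^{(n+2)/(n-2)} v$ separately. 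The two ``mixed'' cross terms in the statement come from differentiating $k_u^{-(n-2)/n}$ in the first part and differentiating $\int L_{g_0} u v$ (together with the factor $r_u/k_u$) in the second part; the combination $\bigl(\tfrac{n-2}{n}+1\bigr)\cdot \tfrac{2n}{n-2} = \tfrac{4(n-1)}{n-2}$ is what produces the prefactor in the last line.

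The regularity assertion reduces to showing that the multilinear building blocks $u\mapsto \int K u^{2n/(n-2)}$, $u\mapsto \int K u^{(n+2)/(n-2)} v$, and $u\mapsto \int K u^{4/(n-2)} v w$ are continuous, with Hölder-continuous second derivatives, as functions on $W^{1,2}$. The only nonobvious exponent is $4/(n-2)$, which takes the values $4,2,4/3$ for $n=3,4,5$; in each case Hölder's inequality together with the embedding $W^{1,2}\hookrightarrow L^{2n/(n-2)}$ bounds the trilinear form, and Hölder continuity of $t\mapsto t^{4/(n-2)}$ on compact intervals upgrades continuity to the required $C^{2,\alpha}_{\text{loc}}$ regularity.

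The only point to handle carefully is the uniform Hölder estimate on $U_\epsilon$. For this I would combine the lower bound $\epsilon<\|u\|=r_u$ with $J(u)\leq \epsilon^{-1}$ to conclude $k_u^{(n-2)/n}\geq r_u/J(u)\geq \epsilon^2$, so that the negative powers of $k_u$ appearing in the second derivative are uniformly bounded on $U_\epsilon$; combined with the Sobolev embedding the Hölder modulus depends only on $\epsilon$. I do not expect any substantive obstacle beyond this bookkeeping and a careful verification of the Sobolev exponents in the low-dimensional regime.
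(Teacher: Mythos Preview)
Your proposal is correct and mirrors the paper's own argument: the paper also dispatches (i)--(iii) as a ``straight forward calculation'' and derives the uniform H\"older continuity on $U_\epsilon$ from the two-sided bounds on $r_u$ and $k_u^{(n-2)/n}$ together with the elementary pointwise inequalities $\bigl||a|^p-|b|^p\bigr|\le C_p|a-b|^p$ for $0<p<1$ and $\bigl||a|^p-|b|^p\bigr|\le C_p\max\{|a|^{p-1},|b|^{p-1}\}|a-b|$ for $p\ge 1$, which is exactly your ``H\"older continuity of $t\mapsto t^{4/(n-2)}$'' combined with Sobolev embedding.
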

\noindent
The derivatives stated above are obtained by straight forward calculation. Moreover note, that 
$u\in U_{\epsilon}$ implies 
\begin{align*}
\epsilon^{2}\leq r_{u}\leq \epsilon^{-2}
\;\text{ and }\;
c\epsilon^{3}\leq k_{u}^{\frac{n-2}{n}}=J(u)^{-1}r_{u}\leq C\epsilon^{-3}.
\end{align*}
Thus uniform H\"older continuity on $U_{\epsilon}$ follows from the pointwise estimates
\begin{align*}
\vert \vert a\vert^{p}-\vert b \vert^{p}\vert
\leq 
C_{p}\vert a-b\vert^{p}\; \text{ in case $0<p<1$ }
\end{align*}
and
\begin{align*}
\vert \vert a \vert^{p}-\vert b \vert^{p}\vert
\leq 
C_{p}\max\{\vert a\vert^{p-1},\vert b \vert^{p-1}\}\vert a-b\vert\; \text{ in case }\; p\geq 1.
\end{align*}

So the problem of prescribing the scalar curvature has a variational structure, since a critical point $\omega>0$ of $J$ satisfies
\begin{align*}
R_{\omega}=\frac{r_{\omega}}{k_{\omega}}K, \;
\text{ where }\;r_{\omega}=\int L_{g_{0}}\omega \omega,k_{\omega}=\int K\omega^{\frac{2n}{2n}},
\end{align*}
whence the scalar curvature $R_{\omega}$ of $g_{\omega}=\omega^{\frac{4}{n-2}}g_{0}$
equals $K$ up to a coefficient.
\noindent
Note, that the standard norm of $\partial J(u)$
\begin{equation*}
\begin{split}
\Vert \partial J(u)\Vert=\Vert \partial J(u)\Vert_{W^{-1,2}_{g_{0}}(M)} 
\end{split}
\end{equation*} 
may be estimated by
\begin{equation*}\begin{split}
\frac{1}{2}\Vert \partial J(u)\Vert
\leq \frac{1}{k^{\frac{n-2}{n}}}\Vert R-r\K \Vert_{L^{\frac{2n}{n+2}}_{\mu}}
\leq  \frac{1}{k^{\frac{n-2}{n}}}\Vert R-r\K \Vert_{L^{2}_{\mu}}.
\end{split}\end{equation*}
We therefore define by a slight abuse of notation
\begin{equation*}
\begin{split}
\vert \delta J(u)\vert =\frac{2}{k^{\frac{n-2}{n}}}\Vert R-r\K\Vert_{L^{2}_{\mu}} 
\end{split}
\end{equation*} 
as a natural majorant of $\Vert \partial J(u)\Vert$. Since $k\equiv 1$ along a flow line, we get
\begin{equation*}
\begin{split}
\partial_{t} J(u)
= &
\partial J(u)\partial_{t}u
=
-
2\int \frac{1}{K}\vert R-r\K \vert^{2}u^{\frac{2n}{n-2}}
\leq 
-\frac{1}{2\max_{M}K}\vert \delta J(u)\vert^{2}
.
\end{split}
\end{equation*}
This justifies the notion of $\partial_{t}u=-\frac{1}{K}(R-r\K)u$ as a pseudo gradient flow related to $J$ and, since $J$ is bounded from below, we have a priori integrability 
\begin{equation*}
\begin{split}
\int^{T}_{0}\vert \delta J(u)\vert^{2} dt<C(K)J(u_{0}).
\end{split}
\end{equation*} 
On the other hand the positivity of the Yamabe invariant implies
\begin{align*}
J(u)>\frac{Y(M,g_{0})}{\max_{M}K^{\frac{n-2}{n}}}>c.
\end{align*}
Thus we may assume, that along a flow line $c<J(u)=r_{u}<C$ due to $k\equiv 1$. 
Recalling proposition \ref{prop_derivatives_of_J} this shows $u\in U_{\epsilon}$ for some $\epsilon>0$ small and fix, whence
$J$ is uniformly H\"older continuous along and close by every flow line.

Consider the following conditions in cases $n=3,4,5$,
which are obviously satisfied, if $M$ is not conformally equivalent to the standard $\mathbb{S}^{n}$ and $K \equiv 1$.
They are scaling invariant with respect to $K$ as one should expect due to the scaling invariance of $J$.

\begin{hypothesis}[Dimensional conditions]\label{def_dimensional_conditions}$_{}$
\begin{enumerate}
 \item [Cond$_{3}$] \;:\; $M$ is not conformally equivalent to the standard sphere $\mathbb{S}^{3}$
 \item [Cond$_{4}$] \;:\; $M$ is not conformally equivalent to the standard sphere $\mathbb{S}^{4}$ and
$$
[\nabla K=0]\subseteq [\frac{\lap K}{K} > -c]
\;\text{ for some }\; c=c(M)>0
$$
\item [Cond$_{5}$] \;:\; $M$ is not conformally equivalent to the standard sphere $\mathbb{S}^{5}$ and
$$
\langle \nabla \lap K, \nabla K\rangle 
>\frac{1}{3} \vert \lap K \vert^{2}
$$
\quad holds on $[\lap K<0]\cap U$ for an open neighbourhood $U$ of 
$[\nabla K=0].$ 
\end{enumerate}
Moreover let $Cond_{n}'$ denote $Cond_{n}$ with $[\nabla K=0]$ replaced by $[K=\max K]$.
\end{hypothesis}
\noindent

Theorem \ref{thm_1} below generalizes the convergence of the Yamabe flow in these dimensions
proven in \cite{BrendleArbitraryEnergies}, however by a different strategy. 

\begin{thm*}\label{thm_1}$_{}$\\
Let $M=(M^{n},g_{0}), \;n=3,4,5$ be a smooth, closed Riemannian manifold of positive  Yamabe-invariant. 
Then for $0<K\in C^{\infty}(M)$ every flow line
$$\partial_{t}u
=
-\frac{1}{K}(R-r\bar{K})u, \; u(\cdot,0)=u_{0}>0\; \text{ with }\; \int Ku_{0}^{\frac{2n}{n-2}}=1
$$
exists for all times and remains positive. 

Moreover we have convergence in the sense, that
\begin{equation*}\begin{split}
u\-u_{\infty}>0\; \text{ in }\; C^{\infty}
\text{ solving }\;R_{u_{\infty}}=r_{u_{\infty}}K,
\end{split}\end{equation*}

provided the dimensional condition $Cond_{n}$ is satisfied.
\end{thm*}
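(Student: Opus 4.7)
The plan is to follow the roadmap sketched in Subsection \ref{subsec:Exposition}. \emph{Long-time existence and positivity} of $u$ will follow from the a priori bound $c < J(u) = r_u < C$ established in the preliminaries, together with the integrability $\int_0^T |\delta J(u)|^2\,dt < C(K) J(u_0)$: these give H\"older bounds on $r$ and hence on the coefficient of the linear equation $\partial_t u = -K^{-1}(R - r\K) u$, after which parabolic Schauder theory, a maximum-principle argument for positivity, and a standard continuation argument yield $T = \infty$ and $u > 0$ throughout. This is the content of Section \ref{sec:LongtimeExistenceAndStrongConvergence}, adapted from \cite{BrendleArbitraryEnergies} and \cite{StruweLargeEnergies}.

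For any sequence $t_k \to \infty$, the $L^2$-integrability of $|\delta J|^2$ forces $\partial J(u(t_k)) \to 0$ along a subsequence, so $\{u(t_k)\}$ is Palais-Smale. The concentration-compactness theorem \cite{StruweConcentrationCompactness} then yields, after passing to a subsequence,
\begin{equation*}
u = \alpha \omega + \alpha^{i} \delta_{a_i, \lambda_i} + v, \qquad \Vert v \Vert \to 0, \quad \lambda_i \to \infty.
\end{equation*}
Following \cite{BrendleArbitraryEnergies}, one replaces the standard bubbles by geometry-adapted $\var_{a_i, \lambda_i}$ and handles a potentially degenerate $\partial^2 J(\omega)$ via an implicit-function parametrization $u_{\alpha, \beta} = \alpha u_{1, \beta}$ of the solution set through $\omega$. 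A Lyapunov-Schmidt reduction pins down the decomposition uniquely and enforces orthogonality of $v$ to the bubble tangent spaces and to the $\beta$-directions at $\omega$ (Section \ref{sec:TheFlowNearInfinity}).

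Next, the pseudo-gradient flow equation is tested against $u_{1, \beta}, \partial_\beta u_{1, \beta}$ and against $\var_{a_i, \lambda_i}, \partial_{\lambda_i} \var_{a_i, \lambda_i}, \partial_{a_i} \var_{a_i, \lambda_i}$ in the spirit of \cite{BahriCriticalPointsAtInfinity}. The orthogonality from the Lyapunov-Schmidt reduction decouples the otherwise troublesome $\dot v$ contribution and produces closed ODEs for $(\alpha, \beta, \alpha^i, \lambda_i, a_i)$ up to controlled errors; the error $v$ itself is controlled by positivity of $\partial^2 J(u)$ on the admissible subspace --- trivially when $\omega = 0$ (Section \ref{sec:w=0}), more delicately when $\omega > 0$ (Section \ref{sec:omega>0}), where quotienting out the $\beta$-directions is essential. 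From these ODEs one reads off, as in Subsection \ref{subsec:PrincipalBehaviour}, that the $a_i$ are attracted to $[\nabla K = 0]$ and that sequential blow-up implies full blow-up.

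The core of the proof is Subsection \ref{subsec:LeavingVwpe}: construct auxiliary functionals $\Phi = \Phi(\alpha, \beta, \alpha^i, \lambda_i, a_i)$, tailored to $n \in \{3, 4, 5\}$ and to whether $\omega$ is trivial, satisfying $\Phi \to -\infty$ on any blow-up scenario while $\partial_t \Phi \geq -C |\delta J(u)|^\theta$ for some $\theta \in (0, 2]$ along the flow. Since $\int_0^\infty |\delta J(u)|^2\,dt < \infty$, integrating in time bounds $\Phi$ from below, contradicting the first property and excluding blow-up. Every flow line is then precompact, and the Lojasiewicz-type argument at the end of Section \ref{sec:TheFlowNearInfinity} upgrades sequential to full convergence, with elliptic regularity promoting this to $C^\infty$. \textbf{The main obstacle} is the construction of $\Phi$ itself: one must match the sign of each bubble's self-interaction --- governed by $\mathrm{Cond}_n$, namely the Laplacian hypothesis in $n = 4$, the $\langle \nabla \lap K, \nabla K \rangle$ hypothesis in $n = 5$, and, in $n = 3$, a positive-mass contribution extracted from the non-conformal-sphere hypothesis --- against the bubble-bubble and bubble-$\omega$ cross-interactions, against the geometric background terms, and against error terms in $\Vert v \Vert$ and $|\delta J|$, uniformly across all sub-cases. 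This is the genuine new work beyond \cite{BrendleArbitraryEnergies} and \cite{BahriCriticalPointsAtInfinity}.
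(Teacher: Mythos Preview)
Your proposal is correct and follows essentially the same route as the paper: long-time existence via the $L^p$ estimates of Section~\ref{sec:LongtimeExistenceAndStrongConvergence}, concentration-compactness and the refined bubble decomposition of Section~\ref{sec:TheFlowNearInfinity}, the shadow-flow ODEs and $v$-estimates of Sections~\ref{sec:w=0}--\ref{sec:omega>0}, and finally the auxiliary functions $\psi$ of Subsection~\ref{subsec:LeavingVwpe} satisfying $\psi'\geq O(\vert\delta J(u)\vert^{2})$ (so $\theta=2$ precisely, which is what makes the $L^2$-in-time integrability suffice). The only inaccuracy is cosmetic: long-time existence does not come from ``H\"older bounds on $r$'' but from $L^p_\mu$-bounds on $R-r\bar K$ and the resulting space-time H\"older regularity of $u$ itself (Proposition~\ref{prop_time_dependend_holder_regularity}).
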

So $Cond_{n}$ implies compactness of the flow, whereas $Cond_{n}'$ is at least sufficient to solve the prescribed scalar curvature problem. 
\begin{thm*}\label{thm_2}$_{}$\\
Let $M=(M^{n},g_{0}), \;n=3,4,5$ be a smooth, closed Riemannian manifold of positive  Yamabe-invariant. 
Then for $0<K\in C^{\infty}(M)$ there exists 
\begin{equation*}\begin{split}
u_{\infty}>0\; \text{ in }\; C^{\infty}
\text{ solving }\;R_{u_{\infty}}=r_{u_{\infty}}K,
\end{split}\end{equation*}
provided the dimensional condition $Cond_{n}'$ is satisfied.
\end{thm*}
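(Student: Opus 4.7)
The plan is to adapt the argument used for Theorem \ref{thm_1}, but with the weaker hypothesis $Cond_n'$ producing only one compact flow line rather than compactness of every flow line. All the preparatory material — long time existence from Section \ref{sec:LongtimeExistenceAndStrongConvergence}, the parametrization near infinity from Section \ref{sec:TheFlowNearInfinity}, the evolution equations for $\alpha^{i}, \lambda_{i}, a_{i}$ in Sections \ref{sec:w=0} and \ref{sec:omega>0}, and the fact from Subsection \ref{subsec:PrincipalBehaviour} that concentration points are attracted to $[\nabla K = 0]$ — is already established without needing any dimensional condition. Hence the only remaining obstruction to convergence of a given flow line is that its concentration points could accumulate at critical points of $K$ which are not maxima; but $Cond_{n}'$ controls exactly and only the maxima.

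The key move is therefore the choice of initial datum. I would start the flow from $u_{0}$ essentially equal to a single standard bubble $\delta_{a_{0}, \lambda_{0}}$ based at some $a_{0}\in [K = \max K]$ with $\lambda_{0}$ large, rescaled so that $\int K u_{0}^{\frac{2n}{n-2}} d\mu_{g_{0}} = 1$. For this $u_{0}$ the energy $J(u_{0})$ lies strictly below the second critical level at infinity (corresponding to two-bubble configurations, or to a single bubble based at a smaller value of $K$) and, generically, below the lowest solution energy as well. Since $J$ is non increasing along the flow, this bound persists for all time, so the parametrization of Section \ref{sec:TheFlowNearInfinity} applied to the orbit forces the $w\equiv 0$ case of Section \ref{sec:w=0} with exactly one bubble $\varphi_{a_{1}, \lambda_{1}}$, and the energy constraint further pins its concentration point inside $[K=\max K]$ and not merely in $[\nabla K=0]$.

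Once the concentration is confined to $[K=\max K]$, the functions constructed in Subsection \ref{subsec:LeavingVwpe} — whose negativity and positive time derivative depend only on the local structure of $K$ at the concentration points and on the positive mass contribution of $M$ — can be applied under the weaker hypothesis $Cond_{n}'$ exactly as they were applied under $Cond_{n}$ in the proof of Theorem \ref{thm_1}. Along a hypothetical blow-up of the chosen orbit these functions would tend to $-\infty$ while remaining essentially non decreasing in time, a contradiction. Hence the orbit is precompact, and the Lojasiewicz-type argument sketched at the end of Section \ref{sec:TheFlowNearInfinity} upgrades precompactness to smooth convergence $u\to u_{\infty}>0$ with $R_{u_{\infty}} = r_{u_{\infty}}K$.

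The hard part of the plan is the second step: verifying that for the prescribed class of initial data any emergent concentration point must lie in $[K=\max K]$, and that no auxiliary solution $\omega$ appears in the decomposition. This requires a sharp comparison between $J(u_{0})$ and the asymptotic energies of the various critical points at infinity (which in the low dimensional, possibly degenerate setting are not quantized a priori), together with the selection rules of the Lyapunov-Schmidt reduction of Section \ref{sec:TheFlowNearInfinity}. Once this localisation of the concentration set is secured, the remaining reasoning is a direct quotation of the Theorem \ref{thm_1} argument applied to the single orbit starting at $u_{0}$.
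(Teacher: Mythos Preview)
Your overall strategy coincides with the paper's: start the flow at a single bubble $u_{0}=\alpha_{0}\varphi_{a_{0},\lambda_{0}}$ with $a_{0}\in[K=\max K]$, use the energy expansion
\[
J(u)=c_{0}\Bigl(\sum_{i}K_{i}^{\frac{2-n}{2}}\Bigr)^{\frac{2}{n}}+o_{\varepsilon}(1)
\]
to force $p=1$ and $a$ close to $[K=\max K]$, and then invoke the $\psi$-functions of Subsection~\ref{subsec:LeavingVwpe} under $Cond_{n}'$ to derive a contradiction with blow-up.

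The point where you diverge from the paper, and where there is a genuine gap, is your treatment of the case $\omega>0$. You propose to exclude it by arranging $J(u_{0})$ to be ``generically below the lowest solution energy''; but this is neither true (a minimizer of $J$, if one exists, has energy at most the least critical level at infinity, hence at most $J(u_{0})+o_{\varepsilon}(1)$) nor needed. In the concentration-compactness description of Proposition~\ref{prop_concentration_compactness} the weak limit $u_{\infty}=\omega$ already solves $L_{g_{0}}\omega=r_{\infty}K\omega^{\frac{n+2}{n-2}}$, so the moment $\omega>0$ appears you have produced the desired solution and the theorem is proved. The paper exploits exactly this: it argues by contradiction, assuming no positive solution exists, which forces $\omega\equiv0$ in every decomposition for free. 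What you flagged as ``the hard part'' then evaporates, and no sharp comparison between $J(u_{0})$ and solution energies is required.

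A minor correction: the attraction of concentration points to $[\nabla K=0]$ (Lemma~\ref{lem_critical_points_of_K_as_attractors}) is not unconditional; it requires $\partial J$ to be principally lower bounded. The paper secures this along the specific flow line by first pinning $a$ near $[K=\max K]$ via the energy argument and then observing that $Cond_{n}'$ yields $Cond_{n}$ locally there, so Proposition~\ref{prop_princ_lower_bounded_under_Cond_n} applies along the orbit. Your outline reverses this order.
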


\section{Long time existence and weak convergence}
\label{sec:LongtimeExistenceAndStrongConvergence}
In this section
adapted from \cite{BrendleArbitraryEnergies} and \cite{StruweLargeEnergies}
we derive global existence and weak convergence  in the sense, 
that $\Vert R-r\K \Vert_{L^{p}_{\mu}} \- 0$ as  $t\-\infty$.

\subsection{Long time existence}
\label{subsec:LontTimeExistence}
\begin{lemma}[Lower bounding the scalar curvature] \label{lem_bounding_R_from_below}$_{}$\\
Along a flow line the scalar curvature $R$ is uniformly lower bounded. 
\end{lemma}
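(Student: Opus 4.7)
The plan is to apply the parabolic maximum principle to the quantity $R/K$, whose evolution along the flow is cleaner than that of $R$ itself because $K$ is time-independent.

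First I derive the evolution of $R$ along the flow. Differentiating $R\,u^{(n+2)/(n-2)}=L_{g_{0}}u$ in time with $\partial_{t}u=vu$, $v=-(R-r\K)/K$, and using the conformal-Laplacian identity $\Delta_{g}\phi = u^{-4/(n-2)}\Delta_{g_{0}}\phi + 2u^{-(n+2)/(n-2)}\nabla u\cdot\nabla\phi$, a short calculation gives
\begin{equation*}
\partial_{t} R \;=\; -c_{n}\,\Delta_{g} v \,-\, \tfrac{4}{n-2}\,Rv \;=\; c_{n}\,\Delta_{g}(R/K) \,+\, \tfrac{4}{n-2}\,\tfrac{R(R-r\K)}{K},
\end{equation*}
where I used that $\K/K=1/k$ is spatially constant. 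Since $K$ does not depend on $t$, dividing through yields
\begin{equation*}
\partial_{t}(R/K) \;=\; \tfrac{c_{n}}{K}\,\Delta_{g}(R/K) \,+\, \tfrac{4}{n-2}\,\tfrac{R(R-r\K)}{K^{2}}.
\end{equation*}

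Now I apply the parabolic maximum principle to $\rho(t):=\min_{M}(R/K)(\cdot,t)$. At a spatial minimum $x_{1}(t)$ one has $\Delta_{g}(R/K)(x_{1})\geq 0$; writing $R(x_{1})=\rho\,K(x_{1})$ and using $\K(x_{1})/K(x_{1})=1/k$, the reaction term at $x_{1}$ simplifies to $\tfrac{4}{n-2}\,\rho(\rho - r/k)$. Hence, in the Dini sense,
\begin{equation*}
\partial_{t}\rho \;\geq\; \tfrac{4}{n-2}\,\rho\,(\rho - r/k).
\end{equation*}
Whenever $\rho<0$, both factors $\rho$ and $\rho-r/k$ are strictly negative (as $r,k>0$), so the right-hand side is positive and $\rho$ is non-decreasing at times where it is negative. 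A standard ODE comparison then gives $\rho(t)\geq\min(\rho(0),0)$ for all $t\in[0,T)$, and consequently $R(x,t)\geq K(x)\,\rho(t)\geq -(\max_{M}K)\cdot|\min(\rho(0),0)|$ pointwise and uniformly in time.

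The pleasant point of this argument, and the reason for working with $R/K$ rather than with $R$, is that a direct application of the maximum principle to $R_{\min}$ would produce an additional term $c_{n}R_{\min}\,\Delta_{g}(1/K)(x_{0})$. That would be the main obstacle, since $\Delta_{g}(1/K)$ involves the evolving metric through $u^{-4/(n-2)}\Delta_{g_{0}}(1/K)+2u^{-(n+2)/(n-2)}\nabla u\cdot\nabla(1/K)$ and would require uniform pointwise control of $u$ and $\nabla u$ at the minimum, which the a priori bounds $k\equiv 1$ and $r=J(u)\leq C$ only yield in $W^{1,2}$. Passing to $R/K$ removes this term entirely and reduces the lemma to the scalar ODE inequality above.
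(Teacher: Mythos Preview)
Your proof is correct and follows essentially the same route as the paper: derive the evolution equation $\partial_{t}R=c_{n}\Delta_{g}(R/K)+\tfrac{4}{n-2}\tfrac{R(R-r\K)}{K}$ and apply the parabolic maximum principle to $R/K$. The only cosmetic difference is that the paper first multiplies $R$ by the integrating factor $e^{\frac{4}{n-2}\int_{0}^{t}\frac{r}{k}}$, which turns the reaction term into the manifestly non-negative $\tfrac{4}{n-2}R\tilde R/K$ (since $R$ and $\tilde R$ share sign) and reduces the argument to the pure supersolution inequality $\partial_{t}\tilde R\geq c_{n}\Delta_{g}(\tilde R/K)$; you instead keep the full reaction term and analyse the resulting scalar ODE $\dot\rho\geq\tfrac{4}{n-2}\rho(\rho-r/k)$ directly. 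Both variants yield the same conclusion, and your discussion of why $R/K$ is the right quantity is on point.
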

\begin{proof}[\textbf{Proof of lemma \ref{lem_bounding_R_from_below}}]\label{p_bounding_R_from_below}$_{}$\\
Letting 
\begin{equation}
\tilde R=e^{\frac{4}{n-2}\int^{t}_{0}\frac{r}{k}(\tau) d\tau}R
\end{equation} 
we have in view of lemma \ref{App1}
\begin{equation}
\begin{split}
\partial_{t} \tilde R
= &
e^{\frac{4}{n-2}\int^{t}_{0}\frac{r}{k}(\tau) d\tau}
[
c_{n}\lap_{g}\frac{R}{K}
+
\frac{4}{n-2}(R-r\K)\frac{R}{K}
]
+
\frac{4}{n-2}\frac{r}{k}\tilde R \\
= &
c_{n}\lap_{g}\frac{\tilde R}{K}
+
\frac{4}{n-2}R\frac{\tilde R}{K}
\geq
c_{n}\lap_{g}\frac{\tilde R}{K}.
\end{split}
\end{equation} 
The parabolic maximum principle then shows
\begin{equation}
\min_{\{ t\}\times M}\frac{\tilde R}{K}\geq  \min_{\{0\}\times M} \frac{\tilde R}{K},
\end{equation} 
whence
\begin{equation}\label{R-_decay}
\min_{\{t\}\times M}R
\geq 
C(K)
e^{-\frac{4}{n-2}\int^{t}_{0}\frac{r}{k}(\tau)d\tau}\min_{\{0\}\times M}R. 
\end{equation} 
Since $\frac{r}{k}=r\geq r_{\infty}>0$ along a flow line, the assertion follows.
\end{proof}

Due to Gronwall's lemma this lower bound implies an upper bound on $u$.
\begin{lemma}[Upper bound] \label{lem_bounding_u_from_above}$_{}$\\
Along a flow line $u$ there exists
$
C>0
$
such, that for $0\leq t < T$ we have
\begin{equation*}\begin{split} 
\sup_{M}u(t, \cdot)
\leq
e^{Ct}
.
\end{split}\end{equation*}
\end{lemma}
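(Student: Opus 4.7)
The plan is to combine the uniform lower bound on $R$ from lemma \ref{lem_bounding_R_from_below} with the fact that $k\equiv 1$ and $r$ is bounded along a flow line, then apply Gronwall pointwise.

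First I would upgrade the conclusion of lemma \ref{lem_bounding_R_from_below} to a uniform in time lower bound. From \eqref{R-_decay} we have
\begin{equation*}
\min_{\{t\}\times M}R \geq C(K)\,e^{-\frac{4}{n-2}\int_{0}^{t}\frac{r}{k}(\tau)\,d\tau}\min_{\{0\}\times M}R.
\end{equation*}
Because $\tfrac{r}{k}=r\geq r_{\infty}>0$, the exponential factor lies in $(0,1]$; whether $\min R_{0}\ge 0$ or $\min R_{0}<0$, the right hand side is bounded below by a finite constant depending only on $K$ and $R_{g_{0}}$. Hence there exists $C_{1}=C_{1}(M,g_{0},K,u_{0})$ with $R(t,\cdot)\geq -C_{1}$ on $[0,T)\times M$.

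Next I would rewrite the flow equation in a form directly amenable to Gronwall. Since $k\equiv 1$ along the flow, $\bar K=K$ and $\bar K/K=1$, so
\begin{equation*}
\partial_{t}u = -\frac{1}{K}(R-r\bar K)u = \Bigl(-\frac{R}{K}+\frac{r}{k}\Bigr)u.
\end{equation*}
The first term is estimated by $-R/K\leq C_{1}/\min_{M}K$ using the previous step. For the second term, the remark following proposition \ref{prop_derivatives_of_J} together with the monotonicity $\partial_{t}J(u)\leq 0$ gives $r=J(u)\leq J(u_{0})$, so $r/k=r\leq C_{2}$. Combining,
\begin{equation*}
\partial_{t}u \leq \Bigl(\frac{C_{1}}{\min_{M}K}+C_{2}\Bigr)u =: Cu
\end{equation*}
pointwise on $[0,T)\times M$.

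Finally, integrating this differential inequality in time (i.e.\ applying Gronwall pointwise in $x$) yields $u(t,x)\leq u_{0}(x)\,e^{Ct}$, and taking the supremum over $M$ gives $\sup_{M}u(t,\cdot)\leq (\sup_{M}u_{0})\,e^{Ct}$, which is the claimed bound (after absorbing the constant $\sup_{M}u_{0}$ into $C$ or the statement). There is no real obstacle here: the essential work was done in the preceding lemma, and the remaining argument is just recognising that the pseudo gradient flow equation takes the form $\partial_{t}u=(\text{bounded})\cdot u$ once $R$ is bounded below and $r$ is bounded above.
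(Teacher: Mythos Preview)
Your proof is correct and follows exactly the same approach as the paper: use the lower bound on $R$ from lemma \ref{lem_bounding_R_from_below} together with the boundedness of $r$ to obtain $\partial_{t}u\leq cu$, then apply Gronwall. The paper's proof is a two-line compression of precisely the argument you wrote out in full.
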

\begin{proof}[\textbf{Proof of lemma \ref{lem_bounding_u_from_above}}]\label{p_bounding_u_from_above}$_{}$\\
From lemma \ref{lem_bounding_R_from_below} we infer
\begin{equation}\begin{split}
\partial_{t}u=-\frac{1}{K}(R-r\K)u
\leq &
cu.
\end{split}\end{equation}
The claim follows from Gronwall's inequality.
\end{proof}
The Harnack inequality now implies a lower bound on $u$.
\begin{lemma}[Lower bound] \label{lem_bounding_u_from_below}$_{}$\\
Along a flow line $u$ there exists for
$
\Theta>0
$
some
$
C=C(\Theta)>0
$
such, that 
\begin{equation*}\begin{split}
\sup_{M\times [0,T)}u \leq \Theta \= \inf_{M\times [0,T)}u \geq C.
\end{split}\end{equation*}
\end{lemma}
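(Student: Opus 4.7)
The plan is to realize $u$ as a positive solution of a linear parabolic equation whose coefficients are controlled in terms of $\Theta$, and then apply the parabolic Harnack inequality to convert the given upper bound into a pointwise lower bound.

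First I would rewrite the flow equation. Using $Ru=u^{-4/(n-2)}L_{g_{0}}u$, the evolution equation is equivalent to
\begin{equation*}
K u^{4/(n-2)}\partial_{t}u-c_{n}\lap_{g_{0}}u+R_{g_{0}}u=r\K u^{(n+2)/(n-2)}.
\end{equation*}
Along the flow $r=r_{u}$ is uniformly bounded (since $J$ is non-increasing, $k_{u}\equiv 1$, and $J>Y(M,g_{0})(\max K)^{-(n-2)/n}$), so under $u\leq \Theta$ all the coefficients and the right-hand side are bounded in terms of $\Theta$ and the fixed data $K$, $R_{g_{0}}$, $J(u_{0})$.

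Next I would apply the parabolic Harnack inequality (Moser, Krylov--Safonov, or the weak Harnack for non-negative supersolutions) to the positive $u$ on parabolic cylinders of size $\tau=\tau(\Theta)>0$. Iteration over a finite ball-cover of the compact $M$ and in the time variable yields
\begin{equation*}
\sup_{M}u(\cdot,s)\leq C(\Theta)\inf_{M}u(\cdot,s+\tau),\quad s\in[0,T-\tau).
\end{equation*}
The conservation $k_{u}\equiv 1$ together with $u\leq \Theta$ forces $\int u^{2n/(n-2)}d\mu_{g_{0}}\geq (\max K)^{-1}$, hence $\sup_{M}u(\cdot,s)\geq c_{0}>0$ uniformly in $s$, with $c_{0}$ depending only on $K$ and $M$. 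Plugging this into the Harnack estimate produces the required uniform lower bound $\inf u\geq c_{0}/C(\Theta)$ on $M\times[\tau,T)$, and the short initial slab $[0,\tau]$ is covered by continuity of $u$ starting from the positive smooth $u_{0}$.

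The main obstacle is that in the form $\partial_{t}u=a\lap_{g_{0}}u+bu$ the principal coefficient $a=c_{n}/(K u^{4/(n-2)})$ is bounded below but \emph{not} above, since an upper bound on $a$ is equivalent to the lower bound on $u$ we are trying to prove; direct use of the standard two-sided Moser or Krylov--Safonov Harnack is therefore formally circular. The resolution is either to invoke a Harnack inequality robust under one-sided bounds on the leading coefficient (weak Harnack for supersolutions, or an Aronson--Serrin quasilinear variant), or to bootstrap the lower bound locally, starting from regions where $u$ is already not small and propagating via the PDE, exactly as done in the Yamabe flow arguments of \cite{StruweLargeEnergies} and \cite{BrendleArbitraryEnergies}.
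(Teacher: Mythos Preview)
Your parabolic approach runs into exactly the circularity you diagnose, and the two resolutions you offer are too vague to count as a proof: ``a Harnack inequality robust under one-sided bounds'' would still need the leading coefficient $Ku^{4/(n-2)}$ bounded away from zero to control the time scale of parabolic cylinders, and ``bootstrap locally'' is not spelled out.

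The paper sidesteps the degeneracy entirely by working \emph{elliptically at each fixed time}, using a piece of information you did not invoke: the uniform lower bound on the scalar curvature $R$ already established in Lemma~\ref{lem_bounding_R_from_below}. Choosing $c>0$ with $R+c>0$ and setting $P=R_{g_{0}}+cu^{4/(n-2)}$, one has
\[
-c_{n}\lap_{g_{0}}u+Pu=(R+c)u^{(n+2)/(n-2)}\geq 0,
\]
so at each time $u$ is a positive supersolution of a uniformly elliptic operator whose potential satisfies $\Vert P\Vert_{L^{\infty}}\leq C(\Theta)$ thanks to the hypothesis $u\leq\Theta$. The elliptic weak Harnack inequality then gives $\int u\leq C(\Theta)\inf_{M}u$, and the volume normalisation $k\equiv 1$ yields $\int u\geq (\sup_{M}Ku^{(n+2)/(n-2)})^{-1}\geq c(\Theta)>0$, hence the lower bound.

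The point is that freezing time removes the degenerate parabolic coefficient altogether; the only dependence on $\Theta$ enters through the zeroth-order potential, which is harmless for elliptic Harnack. Your parabolic route could in principle be pushed through, but it is substantially harder and unnecessary once the lower bound on $R$ is available.
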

\begin{proof}[\textbf{Proof of lemma \ref{lem_bounding_u_from_below}}]\label{p_bounding_u_from_below}$_{}$\\
Let us choose $c>0$, such that $R+c>0$ according to lemma \ref{lem_bounding_R_from_below}. Then for
\begin{equation}
P= R_{g_{0}}+cu^{\frac{4}{n-2}}
\end{equation} 
we have
\begin{equation}
-c_{n}\lap_{g_{0}}u+Pu=L_{g_{0}}u-R_{g_{0}}u+Pu=Ru^{\frac{n+2}{n-2}}+cu^{\frac{n+2}{n-2}}. 
\end{equation} 
Thus the weak Harnack inequality gives
\begin{equation}\begin{split}
k= &
\int K u^{\frac{2n}{n-2}} 
\leq 
\sup_{M}(Ku^{\frac{n+2}{n-2}}) \int u
\leq 
C\sup_{M}(Ku^{\frac{n+2}{n-2}}) \inf_{M} u,
\end{split}\end{equation}
where $C=C(\Vert P \Vert_{L^{\infty}})$. The claim follows.
\end{proof}
As a consequence of the positivity of the Yamabe invariant we obtain a logarithmic type estimate on the first variation of $J$.
\begin{lemma}[Logarithmic-type estimate on the first variation] \label{lem_logarithmic-type_estimate_on_dJ(u)}$_{}$\\
For $p>\frac{n}{2}$ there exist constants
$$
c=c(p)>0\; \text{ and }\; C=C(p)>0
$$
such, that along a flow line we have
\begin{equation*}\begin{split}
\partial_{t}\int \vert R-r\K \vert^{p}d\mu
& +
c(\int \vert R-r\K \vert^{\frac{pn}{n-2}}d\mu )^{\frac{n-2}{n}} \\
\leq &
C
(\int \vert R-r\K \vert^{p}d\mu)^{\frac{2p+2-n}{2p-n}}
+
C
\int \vert R-r\K \vert^{p}d\mu.
\end{split}\end{equation*}
\end{lemma}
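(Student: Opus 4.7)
\smallskip
\noindent
\textbf{Proof proposal.} The plan is to differentiate $\int |R-r\bar K|^{p}d\mu$ in time, integrate by parts to produce a coercive gradient term, and then invoke the Yamabe/Sobolev inequality in the evolving metric to produce the $L^{pn/(n-2)}$ quantity on the left-hand side, with the nonlinear competing terms absorbed through a Young inequality whose conjugate exponents yield exactly $\frac{2p+2-n}{2p-n}$.

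First, write $f=R-r\bar K$ and recall from the computation in the proof of Lemma \ref{lem_bounding_R_from_below} that $\partial_{t}R=c_{n}\lap_{g}(f/K)+\frac{4}{n-2}\frac{fR}{K}$, together with $\partial_{t}d\mu=-\frac{2n}{n-2}\frac{f}{K}d\mu$ coming from $\partial_t u=-\frac{fu}{K}$. Since $k\equiv 1$ along the flow and $K$ is time-independent one has $\bar K\equiv K$, and a direct computation (using $\int f\,d\mu=0$) gives
\begin{equation*}
\partial_{t}r=-2\int\tfrac{f^{2}}{K}d\mu,
\qquad
\partial_{t}f
=c_{n}\lap_{g}(f/K)+\tfrac{4}{n-2}\tfrac{fR}{K}+2K\!\int\tfrac{f^{2}}{K}d\mu.
\end{equation*}
Differentiating $\int|f|^{p}d\mu$ I would then obtain three types of contributions: a diffusive piece $pc_{n}\int|f|^{p-2}f\,\lap_{g}(f/K)d\mu$, a reaction piece $\frac{4p}{n-2}\int|f|^{p}\frac{R}{K}d\mu$, the nonlocal piece coming from $\partial_{t}r$, and the measure piece $-\frac{2n}{n-2}\int|f|^{p}\frac{f}{K}d\mu$.

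Next, integration by parts on the diffusive piece produces, up to $L^{p}$-controlled error from derivatives of $1/K$, the negative definite term $-\frac{4c_{n}(p-1)}{p}\int\frac{|\nabla|f|^{p/2}|_{g}^{2}}{K}d\mu$. Since $K$ is bounded above and $Y(M,g_{0})>0$, the Yamabe inequality applied in the conformal metric $g=u^{4/(n-2)}g_{0}$ to $w=|f|^{p/2}$ yields
\begin{equation*}
\Bigl(\int|f|^{\frac{pn}{n-2}}d\mu\Bigr)^{\frac{n-2}{n}}
\le C\!\int\!\bigl(c_{n}|\nabla|f|^{p/2}|_{g}^{2}+R_{g}|f|^{p}\bigr)d\mu,
\end{equation*}
so the gradient term controls, up to the $L^{p}$ error and a $\int R|f|^{p}d\mu$ term, a definite multiple of $\bigl(\int|f|^{pn/(n-2)}d\mu\bigr)^{(n-2)/n}$, which will be moved to the left-hand side of the desired inequality.

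Finally, the remaining terms must be dominated by $C(\int|f|^{p}d\mu)^{(2p+2-n)/(2p-n)}+C\int|f|^{p}d\mu$. Using $R=f+rK$ with $r$ bounded, every problematic term reduces to bounding $\int|f|^{p+1}d\mu$ (and the nonlocal contribution $\int|f|^{p-1}d\mu\cdot\int f^{2}d\mu$, which is even smaller since $\mu(M)=k=1$). By H\"older interpolation between $L^{p}$ and $L^{pn/(n-2)}$ one finds
\begin{equation*}
\int|f|^{p+1}d\mu
\le A^{(2p-n+2)/(2p)}\,X^{n/(2p)},
\qquad
A:=\int|f|^{p}d\mu,\ X:=\Bigl(\int|f|^{\tfrac{pn}{n-2}}d\mu\Bigr)^{\tfrac{n-2}{n}},
\end{equation*}
where the interpolation parameter $(n-2)/(2p)$ is admissible precisely because $p>n/2$. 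A Young inequality with conjugate exponents $\bigl(\tfrac{2p}{n},\tfrac{2p}{2p-n}\bigr)$ then gives $\int|f|^{p+1}d\mu\le\eps X+C_{\eps}A^{(2p+2-n)/(2p-n)}$; choosing $\eps$ small enough absorbs $\eps X$ into the coercive Sobolev term on the left, which completes the estimate. The main obstacle is bookkeeping: keeping all the lower-order pieces from the integration by parts against $\nabla K/K^{2}$ and from the nonlocal $\partial_{t}r$ contribution on the correct side of the inequality, and verifying that $p>n/2$ is exactly what makes both the Young exponent positive and the Sobolev exponent $pn/(n-2)$ accessible from $w=|f|^{p/2}\in W^{1,2}$.
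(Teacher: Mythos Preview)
Your proposal is correct and follows essentially the same route as the paper: differentiate $\int|R-r\bar K|^{p}d\mu$, integrate the diffusion term by parts to extract $-c(p)\int\frac{1}{K}|\nabla f|_{g}^{2}|f|^{p-2}d\mu$, rewrite this as a gradient of $|f|^{p/2}$, apply the conformal Yamabe inequality to get the $L^{pn/(n-2)}$ term, and then interpolate $\int|f|^{p+1}d\mu$ between $L^{p}$ and $L^{pn/(n-2)}$ with H\"older and Young, the conjugate exponents $\bigl(\tfrac{2p}{n},\tfrac{2p}{2p-n}\bigr)$ producing exactly the power $\tfrac{2p+2-n}{2p-n}$. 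The only visible difference is that you keep explicit track of the nonlocal $\partial_{t}r$ contribution, which the paper silently absorbs into the $C(p)\bigl(\int|f|^{p+1}d\mu+\int|f|^{p}d\mu\bigr)$ remainder; your estimate of it via H\"older against $\mu(M)\le C$ is the right way to see why this is harmless.
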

\begin{proof}[\textbf{Proof of lemma \ref{lem_logarithmic-type_estimate_on_dJ(u)}}]\label{p__logarithmic-type_estimate_on_dJ(u)}$_{}$\\
In view of lemma \ref{App1}  we have
\begin{equation}\begin{split}
\partial_{t}\int \vert R& -r\K \vert^{p}d\mu \\
= & 
p\int \partial_{t}(R-r\K)(R-r\K)\vert R-r\K \vert^{p-2}d\mu  
+
\int \vert R-r\K \vert^{p} \partial_{t}d\mu \\
= &
p c_{n}\int\lap_{g} \frac{R-r\K}{K}(R-r\K)\vert R-r\K \vert^{p-2}d\mu \\
& +
\frac{4p}{n-2} \int \frac{R}{K}\vert R-r\K \vert^{p}d\mu
-
\frac{2n}{n-2}\int \vert R-r\K \vert^{p}\frac{R-r\K}{K}d\mu.
\end{split}\end{equation}
Integrating by parts we obtain
\begin{equation}\begin{split}
\partial_{t}\int  \vert R  -  r\K  \vert^{p}d\mu 
\leq &
-c(p)\int \frac{1}{K}\vert \nabla(R- r\K) \vert_{g}^{2}
\vert R-r\K \vert^{p-2} d\mu\\
& +
C(p)
(
\int \vert R-r\K \vert^{p+1}d\mu
+
\int \vert R-r\K \vert^{p}d\mu
).
\end{split}\end{equation}
Using 
$\vert \nabla(R-r\K) \vert_{g}
\overset{a.e.}{=}
\vert \nabla \vert R-r\K \vert\vert_{g}$
this gives
\begin{equation}\begin{split}
\partial_{t}\int  \vert R  -r\K \vert^{p}d\mu 
\leq &
-c(p)
\int 
c_{n}\vert \nabla\vert R-r\K\vert^{\frac{p}{2}}\vert_{g}^{2}
d\mu \\
& + 
C(p)
(
\int \vert R-r\K \vert^{p+1}d\mu
+
\int \vert R-r\K \vert^{p}d\mu
)
\end{split}\end{equation}
Then
$Y(M,g_{0})>0$
implies
\begin{equation}\begin{split}
\partial_{t}\int \vert R-r\K \vert^{p}d\mu
\leq &
-c(p)
(\int \vert R-r\K \vert^{\frac{pn}{n-2}}d\mu)^{\frac{n-2}{n}} \\
& +
C(p)
(
\int \vert R-r\K \vert^{p+1}d\mu
+
\int \vert R-r\K \vert^{p}d\mu
).
\end{split}\end{equation}
Since $p>\frac{n}{2}$, we may apply H\"older's inequality to $f=\vert R-r\K \vert^{p}$ via
\begin{equation}\begin{split}
\Vert f^{\frac{p+1}{p}} \Vert_{L_{g_{0}}^{1}}
= &
\Vert f \Vert^{\frac{p+1}{p}}_{L_{g_{0}}^{\frac{p+1}{p}}}
\leq
\Vert f \Vert^{\lambda\frac{p+1}{p}}_{L_{g_{0}}^{\Lambda}}
\Vert f \Vert^{(1-\lambda)\frac{p+1}{p}}_{L_{g_{0}}^{\Theta}}
\leq
\Vert f \Vert^{\frac{n}{2p}}_{L_{g_{0}}^{\frac{n}{n-2}}}
\Vert f \Vert^{\frac{2p+2-n}{2p}}_{L_{g_{0}}^{1}} \\
\leq &
\eps \Vert f \Vert_{L_{g_{0}}^{\frac{n}{n-2}}}
+
c(p, \eps)\Vert f \Vert^{\frac{2p+2-n}{2p-n}}_{L_{g_{0}}^{1}},
\end{split}\end{equation}
where
$\Lambda = \frac{n}{n-2}, \; \Theta=1, \;\lambda=\frac{n}{2(p+1)}$
to conclude by absorption
\begin{equation}\begin{split}
\partial_{t}\int \vert R & -r\K \vert^{p}d\mu \\
\leq &
-c(p)(\int \vert R-r\K \vert^{\frac{pn}{n-2}}d\mu)^{\frac{n-2}{n}} \\
& +
C(p)
[
(\int \vert R-r\K \vert^{p}d\mu)^{\frac{2p+2-n}{2p-n}}  
+
\int \vert R-r\K \vert^{p}d\mu
]
.
\end{split}\end{equation}
This is the desired result.
\end{proof}
The next proposition is a typical parabolic type estimate.
\begin{proposition}[Main observation for long time existence] \label{prop_main_observation_for_long_time_existence}$_{}$\\
Along a flow line there holds for $1\leq p \leq \frac{n}{2}$
\begin{equation*}
\begin{split}
\partial_{t}\int \frac{R_{+}^{p}}{K^{p-1}}d\mu 
\leq &
-4\frac{p-1}{p}c_{n}\int \vert \nabla (\frac{R_{+}}{K})^{\frac{p}{2}}\vert^{2}_{g}d\mu \\
& -
\frac{2n-4p}{n-2}\int \frac{1}{K^{p}}\vert R_{+}-r\K\vert^{p+1}d\mu.
\end{split}
\end{equation*} 
\end{proposition}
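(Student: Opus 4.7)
The plan is to differentiate $\int R_+^p/K^{p-1}\,d\mu$ in time, substitute the evolution equations for $R$ and $d\mu$ already used in the proofs of lemmas \ref{lem_bounding_R_from_below} and \ref{lem_logarithmic-type_estimate_on_dJ(u)}, integrate by parts to extract the gradient term, and then prove a pointwise algebraic inequality that, after integration, delivers the $\vert R_+ - r\K\vert^{p+1}$ term on the right hand side.

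Using $\partial_t R_+ = \chi_{\{R > 0\}}\partial_t R$ almost everywhere together with $\partial_t R = c_n \lap_g\frac{R - r\K}{K} + \frac{4}{n-2}\frac{R(R - r\K)}{K}$ and $\partial_t d\mu = -\frac{2n}{n-2}\frac{R - r\K}{K}d\mu$, and the identity $R_+^p(R - r\K) = R_+^p(R_+ - r\K)$ (because $R_+ R_- = 0$), collecting terms produces
\begin{align*}
\partial_t \int \frac{R_+^p}{K^{p-1}}d\mu
= pc_n \int \kl\frac{R_+}{K}\kr^{p-1}\chi_{\{R > 0\}}\lap_g\frac{R - r\K}{K} d\mu
- \frac{2n - 4p}{n-2}\int \frac{R_+^p(R_+ - r\K)}{K^p}d\mu.
\end{align*}
The first integral I would handle by integration by parts, noting $\nabla R_+ = \chi_{\{R > 0\}}\nabla R$ a.e.\ and that $(R - r\K)/K$ and $R/K$ differ by the spatial constant $r$ (recall $\K = K$ since $k \equiv 1$). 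A short chain rule computation then gives the exact value $-\frac{4(p-1)}{p}c_n \int \vert\nabla (R_+/K)^{p/2}\vert^2_g\, d\mu$ whenever $p > 1$; at $p = 1$ the prefactor vanishes and the remaining $c_n \int \chi_{\{R > 0\}}\lap_g(R/K)\, d\mu$ is non-positive by the Kato inequality $\chi_{\{R > 0\}}\lap_g(R/K) \leq \lap_g(R_+/K)$ together with $\int \lap_g(R_+/K)\, d\mu = 0$ on the closed manifold.

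The substantive step is the second integral. I would establish the pointwise inequality
\begin{align*}
x^p(x - y) - \vert x - y\vert^{p+1} \geq y^p(x - y)\quad \text{for all }\; x, y \geq 0,\; p \geq 1,
\end{align*}
which after dividing by $\vert x - y\vert$ is equivalent to the super-additivity $(a + b)^p \geq a^p + b^p$, applied with $(a, b) = (y, x - y)$ if $x \geq y$ and with $(a, b) = (x, y - x)$ if $x < y$. Inserting $x = R_+/K$, $y = r\K/K = r$ and integrating against the probability measure $K\,d\mu$ (note $\int K\,d\mu = k = 1$) turns this into
\begin{align*}
\int \frac{R_+^p(R_+ - r\K)}{K^p}d\mu - \int \frac{\vert R_+ - r\K\vert^{p+1}}{K^p}d\mu
\geq r^p\kl\int R_+\,d\mu - r\kr \geq 0,
\end{align*}
the last estimate using $\int R_+\,d\mu \geq \int R\,d\mu = r > 0$. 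Multiplying through by the non-negative coefficient $(2n - 4p)/(n-2)$ — non-negative precisely because $p \leq n/2$ — and combining with the gradient term yields the claim.

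The hardest part is this integrated algebraic inequality: the integrand $R_+^p(R_+ - r\K) - \vert R_+ - r\K\vert^{p+1}$ has \emph{no definite sign} on $\{R_+ < r\K\}$, so a naive pointwise comparison fails; super-additivity supplies exactly the linear correction $y^p(x - y)$ that makes the inequality hold pointwise, and this correction is rescued globally by the flow-preserved identity $\int R\,d\mu = r$ together with $R_+ \geq R$. The remaining technicalities, namely the almost everywhere chain rule for $R_+$ at $\{R = 0\}$ and the integration by parts across this set, are standard and handled by smoothly truncating the positive part and passing to the limit.
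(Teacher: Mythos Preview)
Your proposal is correct and follows essentially the same route as the paper: both compute the time derivative, integrate the Laplacian term by parts to produce the gradient, and reduce the remaining term via the pointwise inequality $(a^{p}-b^{p})(a-b)\geq \vert a-b\vert^{p+1}$ (your $x^{p}(x-y)-\vert x-y\vert^{p+1}\geq y^{p}(x-y)$ is exactly this, rearranged) together with $\int (R-r\K)\,d\mu=0$. Your treatment adds extra care at $p=1$ via Kato's inequality, which the paper leaves implicit, but otherwise the arguments coincide.
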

Here $R_{+}=\min\{R,0\}$.
\begin{proof}[\textbf{Proof of proposition \ref{prop_main_observation_for_long_time_existence}}]\label{p_main_observation_for_long_time_existence}$_{}$\\
In view of lemma \ref{App1} we have using 
\begin{equation}
\begin{split}
\partial_{t}\int & \frac{R^{p}_{+}}{K^{p-1}}d\mu 
=
p\int \partial_{t}RR_{+}^{p-1}d\mu+\int R_{+}^{p}\partial_{t}d\mu
\\
= &
pc_{n}\int \lap_{g}\frac{R}{K}(\frac{R_{+}}{K})^{p-1} d\mu 
+
\frac{4p-2n}{n-2}\int (R-r\K)(\frac{R_{+}}{K})^{p}d\mu \\
= &
-4\frac{p-1}{p}c_{n}\int \vert \nabla (\frac{R_{+}}{K})^{\frac{p}{2}}\vert_{g}^{2}d\mu\\
& +
\frac{4p-2n}{n-2}\int (R_{+}-r\K)[(\frac{R_{+}}{K})^{p}-(\frac{r}{k})^{p}]d\mu  \\
& +
\frac{4p-2n}{n-2}(\frac{r}{k})^{p}\int (R_{+}-r\K)d\mu.
\end{split}
\end{equation} 
Due to $(a^{p}-b^{p})(a-b)\geq \vert a-b\vert^{p+1}$ and $\int (R-r\K)d\mu=0$ one obtains
\begin{equation}
\begin{split}
\partial_{t}\int \frac{R_{+}^{p}}{K^{p-1}}d\mu 
\leq &
-4\frac{p-1}{p}c_{n}\int \vert \nabla (\frac{R_{+}}{K})^{\frac{p}{2}}\vert_{g}^{2}d\mu\\
& +
\frac{4p-2n}{n-2}\int \frac{1}{K^{p}}\vert R_{+}-r\K\vert^{p+1}d\mu.
\end{split}
\end{equation} 
This is the desired result.
\end{proof}
The following is by now an easy consequence.
\begin{corollary}\label{cor_easy_consequence}$_{}$\\
Along a flow line there holds
\begin{equation*}\begin{split}
\sup_{0 \leq t < T}\int \frac{R_{+}^{p}}{K^{p-1}}d\mu
& +
4\frac{p-1}{p}c_{n}\int^{T}_{0}\int \vert \nabla(\frac{R_{+}}{K})^{\frac{p}{2}} \vert_{g}^{2}d\mu dt \\
& +
\frac{2n-4p}{n-2}\int^{T}_{0}\int \frac{1}{K^{p}}\vert R_{+}-r\K \vert^{p+1}d\mu dt \\
\leq &
\int \frac{R_{+}^{p}}{K^{p-1}}d\mu\lfloor_{t=0}
\end{split}\end{equation*}
\end{corollary}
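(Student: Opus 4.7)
The plan is simply to integrate the pointwise-in-time differential inequality of Proposition \ref{prop_main_observation_for_long_time_existence} over $[0,\tau]$ for an arbitrary $\tau \in [0,T)$. The hypothesis $1 \leq p \leq \tfrac{n}{2}$ is precisely what gives both terms on the right-hand side of that proposition the correct sign: $4\tfrac{p-1}{p}c_{n} \geq 0$ because $p \geq 1$, and $\tfrac{2n-4p}{n-2} \geq 0$ because $p \leq \tfrac{n}{2}$.

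Concretely, introducing the abbreviations
\begin{equation*}
\Phi(t) = \int \frac{R_{+}^{p}}{K^{p-1}}\,d\mu, \quad A(t) = 4\tfrac{p-1}{p}c_{n}\int \bigl|\nabla(R_{+}/K)^{p/2}\bigr|_{g}^{2}\,d\mu, \quad B(t) = \tfrac{2n-4p}{n-2}\int \tfrac{1}{K^{p}}\bigl|R_{+}-r\K\bigr|^{p+1}\,d\mu,
\end{equation*}
one has $A(t),B(t) \geq 0$ and the proposition reads $\partial_{t}\Phi(t) + A(t) + B(t) \leq 0$. Integrating from $0$ to $\tau$ yields
\begin{equation*}
\Phi(\tau) + \int_{0}^{\tau}A(t)\,dt + \int_{0}^{\tau}B(t)\,dt \;\leq\; \Phi(0).
\end{equation*}

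Each of the three summands on the left is non-negative, and the two time-integrals are monotonically non-decreasing in $\tau$. The pointwise bound $\Phi(\tau) \leq \Phi(0)$ therefore holds uniformly in $\tau \in [0,T)$, so passing to the supremum in the first summand and letting $\tau \nearrow T$ in the other two produces exactly the asserted inequality.

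There is no genuine obstacle in this argument; the only step that merits attention is the sign verification on the two coefficients, which is precisely what forces the restriction $1 \leq p \leq \tfrac{n}{2}$. Outside of this range the associated monotonicity of $\Phi$ would fail, and the accumulated $L^{2}$-gradient and $L^{p+1}$-controls of $R_{+}-r\K$ encoded in the corollary could no longer be extracted by a bare time integration.
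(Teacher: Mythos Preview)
Your argument is correct and is exactly the intended one: the paper does not even spell out a proof, writing only ``The following is by now an easy consequence,'' and what you have written is precisely that consequence---integrate the differential inequality of Proposition~\ref{prop_main_observation_for_long_time_existence} in time and use the sign conditions forced by $1\le p\le \tfrac{n}{2}$. The only caveat is the last sentence: taking the supremum in $\Phi(\tau)$ and simultaneously letting $\tau\nearrow T$ in the integrals does not literally bound the \emph{sum} by $\Phi(0)$ (indeed $\sup_\tau\Phi(\tau)=\Phi(0)$), but this is a harmless imprecision shared by the corollary's own statement, and your inequality $\Phi(\tau)+\int_0^\tau A+\int_0^\tau B\le\Phi(0)$ for each $\tau$ delivers exactly what is used downstream.
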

This implies via Sobolev embedding higher integrability, which applied to 
lemma \ref{lem_logarithmic-type_estimate_on_dJ(u)} proves the following time dependent bound.

\begin{corollary}[$L^{p}$-bound on the first variation]\label{cor_L_{g_{0}}p_bound_on_the_first_variation}$_{}$\\
For $1\leq p\leq \frac{n^{2}}{2(n-2)}$ and $T>0$ there exists 
$
C=C(p,T)
$
such, that 
\begin{equation*}\begin{split}
\sup_{0\leq t < T}\int \vert R-r\K \vert^{p}d\mu
\leq &
C
\; \text{ along a flow line.}
\end{split}\end{equation*}
\end{corollary}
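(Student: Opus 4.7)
The plan is to combine Corollary \ref{cor_easy_consequence} at $p=\tfrac{n}{2}$, the Sobolev embedding on $(M,g)$, and Lemma \ref{lem_logarithmic-type_estimate_on_dJ(u)} at the critical exponent $p_{*}=\tfrac{n^{2}}{2(n-2)}$, then conclude for smaller $p$ by H\"older. Set $y_{q}(t):=\int|R-r\bar K|^{q}d\mu$.

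Step 1 (spacetime integrability at $p_{*}$). Taking $p=\tfrac{n}{2}$ in Corollary \ref{cor_easy_consequence} gives
\[
\sup_{0\leq t<T}\int R_{+}^{n/2}d\mu\leq C,\qquad \int_{0}^{T}\!\!\int |\nabla(R_{+}/K)^{n/4}|_{g}^{2}\,d\mu\,dt\leq C.
\]
By Lemmas \ref{lem_bounding_u_from_above} and \ref{lem_bounding_u_from_below}, the conformal factor satisfies $c(T)\leq u\leq C(T)$ pointwise on $M\times[0,T]$, so the metrics $g=u^{4/(n-2)}g_{0}$ and $g_{0}$ are uniformly equivalent there, and the Sobolev embedding $W^{1,2}(M,g)\hookrightarrow L^{2n/(n-2)}(M,g)$ holds with a constant depending only on $T$. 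Applying it to $\phi=(R_{+}/K)^{n/4}$ and integrating in time yields
\[
\int_{0}^{T}\Bigl(\int R_{+}^{n^{2}/(2(n-2))}\,d\mu\Bigr)^{(n-2)/n}dt\leq C(T).
\]
Since $R\geq -c(T)$ on $[0,T]$ by Lemma \ref{lem_bounding_R_from_below}, we have $|R-r\bar K|\leq R_{+}+C(T)$ pointwise, whence
\[
\int_{0}^{T}y_{p_{*}}(t)^{(n-2)/n}\,dt\leq C(T).
\]

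Step 2 (Gronwall at the critical exponent). A direct computation gives
\[
\alpha:=\frac{2p_{*}+2-n}{2p_{*}-n}=\frac{2(n-1)}{n},\qquad \alpha-1=\frac{n-2}{n},
\]
precisely the exponent controlled in Step~1. Dropping the nonnegative term in Lemma \ref{lem_logarithmic-type_estimate_on_dJ(u)} and factoring $y_{p_{*}}$ out of the right-hand side leaves
\[
y_{p_{*}}'(t)\leq C\bigl(y_{p_{*}}(t)^{(n-2)/n}+1\bigr)y_{p_{*}}(t).
\]
Since $h(t):=C(y_{p_{*}}(t)^{(n-2)/n}+1)$ lies in $L^{1}(0,T)$ by Step~1 and $y_{p_{*}}(0)<\infty$ (as $u_{0}\in C^{\infty}$), Gronwall's inequality yields $\sup_{0\leq t<T}y_{p_{*}}(t)\leq y_{p_{*}}(0)\exp(\int_{0}^{T}h)\leq C(T,u_{0})$. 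For $1\leq p\leq p_{*}$, H\"older gives $y_{p}(t)\leq \mu(M)^{1-p/p_{*}}y_{p_{*}}(t)^{p/p_{*}}\leq C(T,u_{0})$, completing the proof.

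The heart of the argument is the sharp exponent matching $\alpha-1=\tfrac{n-2}{n}$ at $p_{*}=\tfrac{n^{2}}{2(n-2)}$, which is exactly the integrability delivered by Sobolev applied to Corollary \ref{cor_easy_consequence}; the argument would not directly close for larger $p$. The main technical care is ensuring a uniform Sobolev constant on the evolving conformal metric, which is why the $L^{\infty}$ bounds on $u$ from the preceding lemmas are invoked.
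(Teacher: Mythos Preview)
Your proof is correct and follows essentially the same route as the paper: obtain spacetime integrability of $y_{p_*}^{(n-2)/n}$ from Corollary~\ref{cor_easy_consequence} at $p=\tfrac{n}{2}$ plus Sobolev, then feed this into Lemma~\ref{lem_logarithmic-type_estimate_on_dJ(u)} at $p_*$ and close by Gronwall (the paper phrases the last step as $\partial_t\ln y_{p_*}\leq C\,y_{p_*}^{(n-2)/n}+C$, which is the same inequality). Your explicit justification of a $T$-uniform Sobolev constant via the $L^\infty$ bounds on $u$ from Lemmas~\ref{lem_bounding_u_from_above}--\ref{lem_bounding_u_from_below} is a welcome clarification the paper leaves implicit.
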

\begin{proof}[\textbf{Proof of corollary \ref{cor_L_{g_{0}}p_bound_on_the_first_variation}}]
\label{p_L_{g_{0}}p_bound_on_the_first_variation}$_{}$\\
From corollary \ref{cor_easy_consequence} for $p=\frac{n}{2}$ we infer
\begin{equation}\begin{split}
\sup_{0 \leq t < T}\int R_{+}^{\frac{n}{2}}d\mu
+
\int^{T}_{0}\int \vert \nabla(\frac{R_{+}}{K})^{\frac{n}{4}}\vert_{g}^{2}d\mu dt \leq C.
\end{split}\end{equation}
Sobolev's embedding then implies
\begin{equation}\begin{split}
\int^{T}_{0}(\int (\frac{R_{+}}{K})^{\frac{n^{2}}{2(n-2)}}d\mu)^{\frac{n-2}{n}}dt
\leq
C.
\end{split}\end{equation}
Since $R$ is uniformly bounded from below according to lemma \ref{lem_bounding_R_from_below} we get \begin{equation}\begin{split}
\int^{T}_{0}(\int \vert R\vert^{\frac{n^{2}}{2(n-2)}}d\mu)^{\frac{n-2}{n}}dt
\leq
C,
\end{split}\end{equation}
whence
\begin{equation}\begin{split}
\int^{T}_{0}(\int \vert R-r\K\vert^{\frac{n^{2}}{2(n-2)}}d\mu)^{\frac{n-2}{n}}dt
\leq
C.
\end{split}\end{equation}
But from lemma \ref{lem_logarithmic-type_estimate_on_dJ(u)} with
$p=\frac{n^{2}}{2(n-2)}> \frac{n}{2}$ we infer
\begin{equation}\begin{split}
\partial_{t} \ln \int \vert R-r\K \vert^{\frac{n^{2}}{2(n-2)}}d\mu
\leq &
C(\int \vert R-r\K \vert^{\frac{n^{2}}{2(n-2)}}d\mu)^{\frac{n-2}{n}}+C.
\end{split}\end{equation}
This proves the claim.
\end{proof}
With the above bounds at hand one uses Morrey's inequality to prove H\"older regularity.
\begin{proposition}[Time-dependent H\"older regularity]\label{prop_time_dependend_holder_regularity}$_{}$\\
Along a flow line  there exists for $0<\alpha<\min \lbrace \frac{4}{n},1 \rbrace$ and $ T>0$ a constant
$$
C=C(\alpha,T)
$$
 such, that  we have
\begin{equation*}\begin{split}
\vert u(x_{1},t_{1}) -u(x_{2},t_{2}) \vert
\leq
C
(
\vert t_{1}-t_{2}\vert^{\frac{\alpha}{2}}
+
d(x_{1},x_{2})^{\alpha})
\end{split}\end{equation*}
for all $x_{1},x_{2} \in M$ and $0 \leq t_{1},t_{2}<T$ with
$\vert t_{1}-t_{2} \vert \leq 1$
\end{proposition}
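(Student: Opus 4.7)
The plan is to establish the parabolic Hölder regularity by first deriving a space Hölder bound on each time slice from elliptic $L^{p}$ theory, and then upgrading to joint space-time regularity via a parabolic mean-value argument.

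\smallskip
On $[0,T)$, Lemmas \ref{lem_bounding_u_from_above} and \ref{lem_bounding_u_from_below} yield $0<c(T)\leq u\leq C(T)$, so that $d\mu$ and $d\mu_{g_{0}}$ are uniformly equivalent on $[0,T)$. Set
\begin{equation*}
p_{*}=\frac{n^{2}}{2(n-2)}.
\end{equation*}
Corollary \ref{cor_L_{g_{0}}p_bound_on_the_first_variation}, together with the uniform bound on $r\K$, gives $\Vert R\Vert_{L^{p_{*}}_{g_{0}}}\leq C(T)$, and since $u^{(n+2)/(n-2)}$ is bounded the identity $L_{g_{0}}u=Ru^{(n+2)/(n-2)}$ yields $\Vert L_{g_{0}}u(\cdot,t)\Vert_{L^{p_{*}}_{g_{0}}}\leq C(T)$. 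Standard elliptic $L^{p}$ regularity for the positive self-adjoint operator $L_{g_{0}}$ on the closed manifold $M$ then produces $\Vert u(\cdot,t)\Vert_{W^{2,p_{*}}}\leq C(T)$ uniformly in $t\in[0,T)$. A direct computation gives $2-n/p_{*}=4/n$, so the Sobolev–Morrey embedding (chained through $W^{2,p_{*}}\hookrightarrow W^{1,q}$ with $1/q=1/p_{*}-1/n$ when $2p_{*}\leq n$) provides $W^{2,p_{*}}(M)\hookrightarrow C^{0,\alpha}(M)$ for every $0<\alpha<\min\{4/n,1\}$. Thus
\begin{equation*}
|u(x_{1},t)-u(x_{2},t)|\leq C(\alpha,T)\,d(x_{1},x_{2})^{\alpha}\quad\text{for every }t\in[0,T).
\end{equation*}

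\smallskip
For the time Hölder bound, fix $x\in M$ and $0\leq t_{1}\leq t_{2}<T$ with $|t_{1}-t_{2}|\leq 1$, and choose the radius $r=|t_{1}-t_{2}|^{1/2}$. Writing $B=B_{r}(x)$ and $u_{B}(t)=|B|^{-1}\int_{B}u(\cdot,t)\,d\mu_{g_{0}}$, the space Hölder estimate gives
\begin{equation*}
|u(x,t_{i})-u_{B}(t_{i})|\leq C(\alpha,T)\,r^{\alpha}=C(\alpha,T)\,|t_{1}-t_{2}|^{\alpha/2}\quad\text{for }i=1,2.
\end{equation*}
The flow equation $\partial_{t}u=-(R-r\K)u/K$ combined with the $L^{p_{*}}$ bound on $R-r\K$ yields $\Vert \partial_{t}u(\cdot,\tau)\Vert_{L^{p_{*}}_{g_{0}}}\leq C(T)$, so Hölder's inequality on $B$ together with $|B|\sim r^{n}$ gives
\begin{equation*}
|u_{B}(t_{1})-u_{B}(t_{2})|\leq\int_{t_{1}}^{t_{2}}\frac{1}{|B|}\int_{B}|\partial_{t}u|\,d\mu_{g_{0}}\,d\tau\leq C(T)\,|t_{1}-t_{2}|\,r^{-n/p_{*}}=C(T)\,|t_{1}-t_{2}|^{2/n}.
\end{equation*}
Since $\alpha/2<2/n$ and $|t_{1}-t_{2}|\leq 1$, this right-hand side is controlled by $C(\alpha,T)|t_{1}-t_{2}|^{\alpha/2}$. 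The triangle inequality
\begin{equation*}
|u(x_{1},t_{1})-u(x_{2},t_{2})|\leq|u(x_{1},t_{1})-u(x_{2},t_{1})|+|u(x_{2},t_{1})-u(x_{2},t_{2})|
\end{equation*}
then combines the spatial and temporal estimates into the claimed parabolic Hölder bound.

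\smallskip
The main technical task is the bookkeeping of the $T$-dependence of constants through Lemmas \ref{lem_bounding_u_from_above}, \ref{lem_bounding_u_from_below} and Corollary \ref{cor_L_{g_{0}}p_bound_on_the_first_variation}, together with verifying the chain of Sobolev embeddings producing the sharp exponent $4/n$ in each dimension $n=3,4,5$; no new integrability or compactness input is required beyond what has already been established.
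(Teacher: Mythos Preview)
Your proof is correct and follows essentially the same route as the paper: both obtain a uniform $L^{p}$ bound on $R$ (hence on $L_{g_{0}}u$) from the preceding corollaries, invoke elliptic regularity and Morrey for the spatial H\"older estimate, and then average over a ball of radius $\sqrt{|t_{1}-t_{2}|}$ together with the $L^{p}$ bound on $\partial_{t}u$ to recover the time H\"older exponent. The only cosmetic difference is that the paper works with an exponent $p$ in the open range $\frac{n}{2}<p<\min\{\frac{n^{2}}{2(n-2)},n\}$ so that $\alpha=2-\frac{n}{p}$ is matched exactly, whereas you use the endpoint $p_{*}=\frac{n^{2}}{2(n-2)}$ and then dominate $|t_{1}-t_{2}|^{2/n}$ by $|t_{1}-t_{2}|^{\alpha/2}$ at the end.
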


\begin{proof}[\textbf{Proof of proposition \ref{prop_time_dependend_holder_regularity}}]
\label{p_time_dependend_holder_regularity} $_{}$\\
Let
$\alpha=2-\frac{n}{p}$
and
$\frac{n}{2}<p<\min \lbrace \frac{n^{2}}{2(n-2)}, \,n \rbrace$.
Lemma \ref{lem_bounding_R_from_below} and  \ref{cor_easy_consequence} show
\begin{equation}\begin{split}
\int \vert R \vert^{p}d\mu \leq C
\end{split}\end{equation}
with $C=C(T)$, whence by conformal invariance and lemmata \ref{lem_bounding_u_from_above}, \ref{lem_bounding_u_from_below}
\begin{equation}\begin{split}
\int  \vert \lap_{g_{0}} u \vert^{p}\leq C.
\end{split}\end{equation}
On the other hand corollary \ref{cor_L_{g_{0}}p_bound_on_the_first_variation} shows
\begin{equation}\begin{split}
\int \vert \frac{\partial_{t}u}{u} \vert^{p}d\mu \leq C, \; \text{ in particular }\;\int \vert \partial_{t}u \vert^{p} \leq C
\end{split}\end{equation}
From this it follows via Morrey
\begin{equation}\begin{split}
\vert u(x,t)-u(y,t) \vert \leq Cd(x,y)^{\alpha} \; \text{ for all } \; x,y\in M,
\end{split}\end{equation}
where $0<\alpha<\min \lbrace \frac{4}{n},1 \rbrace$,
and
\begin{equation}\begin{split}
\vert u(x,t_{1}) & -u(x,t_{2})\vert \\
= &
\vert t_{1}-t_{2}\vert^{-\frac{n}{2}}
\underset{B_{\sqrt{\vert t_{1}-t_{2}\vert}}(x)}{\int}\vert u(x,t_{1})-u(x,t_{2})\vert d\mu_{g_{0}}(y) \\
\leq &
\vert t_{1}-t_{2}\vert^{-\frac{n}{2}}
\underset{B_{\sqrt{\vert t_{1}-t_{2}\vert}}(x)}{\int}\vert u(y,t_{1})-u(y,t_{2})\vert d\mu_{g_{0}}(y) 
+
C\vert t_{1}-t_{2}\vert^{\frac{\alpha}{2}}\\
\leq &
\vert t_{1}-t_{2}\vert^{-\frac{n}{2}+1} \sup_{0 \leq t <T}
\underset{B_{\sqrt{\vert t_{1}-t_{2}\vert}}(x)}{\int}\vert \partial_{t}u(t,y)\vert d\mu_{g_{0}}(y) 
+
C\vert t_{1}-t_{2}\vert^{\frac{\alpha}{2}}\\
\leq &
\vert t_{1}-t_{2}\vert^{-\frac{n-2}{2}}
\vert t_{1}-t_{2}\vert^{\frac{n}{2}\frac{p-1}{p}}
\sup_{0 \leq t<T}(\int \vert \partial_{t}u \vert^{p}d\mu)^{\frac{1}{p}} +
C\vert t_{1}-t_{2}\vert^{\frac{\alpha}{2}}
\end{split}\end{equation}
for all $\vert t_{1}-t_{2} \vert \leq 1$. The claim follows from
$
-
\frac{n-2}{2}
+
\frac{n}{2}\frac{p-1}{p}
=\frac{\alpha}{2}.
$
\end{proof}
With H\"older regularity at hand standard regularity arguments show
\begin{corollary}[Long-time existence]\label{cor_long_time_existence}$_{}$\\
Each flow line
exists for all times.
\end{corollary}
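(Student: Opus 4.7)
The plan is to argue by contradiction: suppose the maximal existence time $T$ is finite and derive a priori bounds on $u$ up to $t=T$ strong enough to restart the flow past $T$, contradicting maximality.

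First, on the finite interval $[0,T)$ the preceding lemmata yield two-sided positive bounds on $u$. Lemma \ref{lem_bounding_u_from_above} gives $\sup_{M\times[0,T)} u \leq e^{CT} < \infty$, and feeding this into Lemma \ref{lem_bounding_u_from_below} furnishes a uniform positive lower bound $u \geq c > 0$. This is the essential input, because using $R = u^{-\frac{n+2}{n-2}}L_{g_0}u$ the flow equation reads
$$\partial_{t} u = \frac{c_{n}}{K}u^{-\frac{4}{n-2}}\lap_{g_{0}}u - \frac{R_{g_{0}}}{K}u^{-\frac{4}{n-2}+1} + \frac{r\K}{K}u,$$
a quasilinear parabolic equation with principal coefficient $\frac{c_{n}}{K}u^{-\frac{4}{n-2}}$; the two-sided bounds on $u$ keep this coefficient bounded above and below, so the equation is uniformly parabolic on $[0,T)$.

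Second, Proposition \ref{prop_time_dependend_holder_regularity} makes $u$ space-time H\"older continuous on $[0,T)$, so the coefficient $\frac{c_{n}}{K}u^{-\frac{4}{n-2}}$ and the zero-order coefficients are H\"older continuous in $(x,t)$. Regarding the flow as a linear parabolic equation for $u$ with these $C^{\alpha,\alpha/2}$ coefficients and a $C^{\alpha,\alpha/2}$ solution, the Schauder estimates for linear parabolic equations upgrade $u$ to $C^{2+\alpha,1+\alpha/2}$ uniformly on $[0,T)$. Standard bootstrapping then kicks in: differentiating the equation in space and time, each derivative of $u$ satisfies an analogous linear parabolic equation whose coefficients inherit the improved regularity, and iterating Schauder estimates yields uniform $C^{k,k/2}$ bounds for every $k$ on $[0,T)$.

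In particular $u(\cdot,t)$ converges in $C^{\infty}(M)$ to a smooth positive limit $u(\cdot,T)$ as $t\uparrow T$, which serves as a smooth positive initial datum for the local existence theorem. Restarting the flow from this datum extends the solution to an interval $[0,T+\delta)$, contradicting the maximality of $T$; hence $T=\infty$. The only potential obstacle is the quasilinear degeneracy of the principal symbol at $u=0$ or $u=\infty$, but the preceding two-sided bounds have been established precisely to rule this out, so what remains is a routine parabolic Schauder bootstrap.
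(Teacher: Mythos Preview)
Your proof is correct and follows the same approach as the paper, which simply states that the result follows from short time existence and Proposition~\ref{prop_time_dependend_holder_regularity}. You have spelled out in detail the standard parabolic continuation argument that the paper leaves implicit: two-sided bounds on $u$ make the equation uniformly parabolic, the H\"older estimate feeds into Schauder bootstrap, and short time existence restarts the flow past any putative finite $T$.
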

\begin{proof}[\textbf{Proof of corollary \ref{cor_long_time_existence}}]$_{}$\\
This follows from short time existence and proposition \ref{prop_time_dependend_holder_regularity}.
\end{proof}

\subsection{Integrability and weak convergence }
\label{subsec:IntegrabilityAndStrongConvergence}
Now, that long time existence has been established, we examine in which sense the first variation of $J$ vanishes as $t\- \infty$.
\begin{lemma}[Integrability and weak convergence]\label{lem_integrability_and_weak_convergence}$_{}$\\
For $1\leq p<\frac{n}{2}$ we have along a flow line
\begin{equation*}\begin{split}
\int^{\infty}_{0}\int \vert R-r\K \vert^{p+1}d\mu dt \leq C
\; \text{ and } \;
\liminf_{t\nearrow \infty}\int \vert R-r\K \vert^{p+1}d\mu =0.
\end{split}\end{equation*}
\end{lemma}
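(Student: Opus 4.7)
I would start by applying Proposition \ref{prop_main_observation_for_long_time_existence} with an exponent $p$ in the strict range $1 \leq p < n/2$, so that the coefficient $\frac{2n-4p}{n-2}$ of the $|R_{+} - r\bar{K}|^{p+1}$-term on the right-hand side is strictly positive. Integrating the differential inequality in time over $[0,T]$ and discarding the non-negative gradient term and the non-negative boundary value at $t=T$ yields
\begin{equation*}
\int_{0}^{T}\int \frac{1}{K^{p}}|R_{+} - r\bar{K}|^{p+1}\,d\mu\,dt \;\leq\; C \int \frac{R_{+}^{p}}{K^{p-1}}d\mu\Big|_{t=0},
\end{equation*}
uniformly in $T$. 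Since $K$ is a fixed smooth positive function on the closed manifold, $K$ is bounded away from $0$ and $\infty$, and letting $T\to\infty$ produces the space-time integrability of $|R_{+} - r\bar{K}|^{p+1}$.

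The remaining task is to replace $R_{+}$ by $R$. Decomposing $R = R_{+} - R_{-}$ with $R_{+}, R_{-} \geq 0$ and using the elementary inequality
\begin{equation*}
|R - r\bar{K}|^{p+1} \;\leq\; 2^{p}\bigl(|R_{+} - r\bar{K}|^{p+1} + R_{-}^{p+1}\bigr),
\end{equation*}
it suffices to bound $\int_{0}^{\infty}\!\int R_{-}^{p+1}\,d\mu\,dt$. Here I would revisit the proof of Lemma \ref{lem_bounding_R_from_below}, which supplies the pointwise estimate
\begin{equation*}
\min_{\{t\}\times M} R \;\geq\; C(K)\, e^{-\frac{4}{n-2}\int_{0}^{t}\frac{r}{k}\,d\tau}\min_{\{0\}\times M} R.
\end{equation*}
Since $r/k = r \geq r_{\infty} > 0$ along the flow, this forces $R_{-}(t,\cdot)\leq C e^{-ct}$ uniformly on $M$ for some $c > 0$ (in the trivial case $\min_{t=0} R \geq 0$ one has $R_{-}\equiv 0$). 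Using $k\equiv 1$ I then obtain $\int_{0}^{\infty}\!\int R_{-}^{p+1}\,d\mu\,dt \leq C\int_{0}^{\infty} e^{-c(p+1)t}\,dt < \infty$, which combined with the previous step gives $\int_{0}^{\infty}\!\int|R-r\bar{K}|^{p+1}\,d\mu\,dt \leq C$ as claimed.

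The $\liminf$ statement then follows at once: the function $f(t) := \int|R - r\bar{K}|^{p+1}d\mu$ is non-negative and continuous in $t$ along the smooth flow, and if $\liminf_{t\to\infty}f(t) = L > 0$ then $f(t)\geq L/2$ on some tail $[T_{0},\infty)$, contradicting the established integrability.

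I expect the only mildly subtle point to be the bookkeeping between $R_{+}$ and $R$, where one has to invest that the negative part of $R$ decays exponentially. All other ingredients are direct consequences of results already in hand, in particular Proposition \ref{prop_main_observation_for_long_time_existence} and Lemma \ref{lem_bounding_R_from_below}.
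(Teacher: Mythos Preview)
Your proposal is correct and follows essentially the same route as the paper: integrate the differential inequality of Proposition \ref{prop_main_observation_for_long_time_existence} (this is exactly the content of Corollary \ref{cor_easy_consequence}) to control the $R_{+}$-part, then use the exponential decay $R_{-}\leq Ce^{-ct}$ extracted from the proof of Lemma \ref{lem_bounding_R_from_below} to handle the negative part, and finally observe that $L^{1}$-in-time integrability of a nonnegative function forces the $\liminf$ to vanish. The paper's proof is terser but identical in substance.
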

\begin{proof}[\textbf{Proof of lemma \ref{lem_integrability_and_weak_convergence}}]
\label{p_integrability_and_weak_convergence}$_{}$\\
Clearly the first inequality above implies the second one. Note, that
\begin{equation}
\int^{\infty}_{0}\int \vert R_{+}-r\K \vert^{p+1}d\mu dt \leq C 
\end{equation} 
with time independent $C$ according to corollary \ref{cor_easy_consequence}. Moreover we have
\begin{equation}
\min_{\{t\}\times M}R
\geq 
C(K)
e^{-\frac{4}{n-2}\int^{t}_{0}\frac{r}{k}(\tau)d\tau}\min_{\{0\}\times M}R,
\end{equation} 
cf. \eqref{R-_decay}. Since along a flow line $k=1$ and $r\searrow r_{\infty}>0$ this gives
\begin{equation}
R_{-}\leq Ce^{-ct},R_{-}=-\min\{R,0\}
\end{equation} 
for suitable constants $c,C>0$. From this the assertion follows.
\end{proof}
Interpolating via lemma \ref{lem_logarithmic-type_estimate_on_dJ(u)} we obtain weak convergence.
\begin{proposition}[Weak convergence  of the first variation]
\label{prop_strong_convergence_of_the_first_variation}$_{}$\\
Along a flow line we have for any $1 \leq p < \infty$
\begin{equation*}\begin{split}
\lim_{t \nearrow \infty}\int \vert R-r\K \vert^{p}d\mu =0.
\end{split}\end{equation*}
In particular we have $\vert \delta J(u)\vert\- 0$ as $t\- \infty$.
\end{proposition}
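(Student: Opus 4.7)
My plan is to combine the $L^{1}$-in-time integrability supplied by lemma \ref{lem_integrability_and_weak_convergence} with the super-linear differential inequality of lemma \ref{lem_logarithmic-type_estimate_on_dJ(u)}, so as to upgrade the $\liminf$-vanishing to a genuine limit first at a well chosen anchor exponent, and then at every $p\in[1,\infty)$.

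For the anchor I pick $q\in(n/2,n/2+1)\cap[2,\infty)$, a non-empty interval in each of the dimensions $n=3,4,5$. Setting $\phi_{q}(t):=\int|R-r\K|^{q}d\mu$, lemma \ref{lem_integrability_and_weak_convergence} applied with $p=q-1$ gives $\phi_{q}\in L^{1}(0,\infty)$, while lemma \ref{lem_logarithmic-type_estimate_on_dJ(u)} with $p=q$, after dropping the non-negative Sobolev-type dissipation on the left, yields
$$\dot\phi_{q}(t)\leq C\phi_{q}(t)^{\alpha}+C\phi_{q}(t),\qquad \alpha=\tfrac{2q+2-n}{2q-n}>1.$$
To promote the $\liminf$ to a genuine limit, I argue by contradiction: if $\phi_{q}(t_{n})\geq\varepsilon$ along some sequence $t_{n}\to\infty$, let $\tau_{n}<t_{n}$ be the last time at which $\phi_{q}$ crosses $\varepsilon/2$. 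Comparison with the scalar ODE $\dot\psi=C(\psi+\psi^{\alpha})$ starting at $\varepsilon/2$ then forces
$$t_{n}-\tau_{n}\geq\delta(\varepsilon):=\int_{\varepsilon/2}^{\varepsilon}\frac{d\psi}{C\psi+C\psi^{\alpha}}>0,$$
and on each $[\tau_{n},t_{n}]$ one has $\phi_{q}\geq\varepsilon/2$; thinning to a disjoint subfamily yields $\int_{0}^{\infty}\phi_{q}\,dt=\infty$, contradicting step one. Hence $\phi_{q}(t)\to0$.

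For $1\leq p\leq q$, Hölder's inequality immediately gives $\phi_{p}\leq\mu(M)^{1-p/q}\phi_{q}^{p/q}\to0$; in particular, taking $p=2$ (possible since $q\geq2$) and recalling $k\equiv1$ along the flow, $|\delta J(u)|=2\sqrt{\phi_{2}}\to0$. For $p>q$ I iterate: integrating the full lemma \ref{lem_logarithmic-type_estimate_on_dJ(u)} from $T$ to $\infty$ and using what has just been proved,
$$\int_{T}^{\infty}\phi_{q'}(t)^{\frac{n-2}{n}}\,dt\leq\frac{1}{c}\phi_{q}(T)+\frac{C}{c}\int_{T}^{\infty}(\phi_{q}^{\alpha}+\phi_{q})\,dt\xrightarrow{T\to\infty}0,$$
with $q':=qn/(n-2)>q$. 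Rerunning the contradiction argument at exponent $q'$---the vanishing tail of $\int\phi_{q'}^{(n-2)/n}dt$ now plays the role previously played by $\phi_{q}\in L^{1}$, since $\phi_{q'}\geq\varepsilon/2$ on $[\tau_{n},t_{n}]$ forces $\phi_{q'}^{(n-2)/n}\geq(\varepsilon/2)^{(n-2)/n}$ there---yields $\phi_{q'}(t)\to0$. Iterating the map $q\mapsto qn/(n-2)$ finitely many times reaches every prescribed $p<\infty$.

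The main obstacle is the anchor upgrade $\liminf=0\Rightarrow\lim=0$: the super-linearity of the derivative bound precludes a direct Gronwall argument and forces the quantitative ODE-comparison outlined above, together with careful use of the continuity of $\phi_{q}$ to produce the times $\tau_{n}$. Once this anchor is in place, the extension to all $p$ is essentially mechanical, powered by the Sobolev dissipation already built into lemma \ref{lem_logarithmic-type_estimate_on_dJ(u)}.
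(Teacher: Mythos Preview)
Your proof is correct and follows essentially the same route as the paper: both fix an anchor exponent in $(\max\{2,n/2\},\,n/2+1)$ so that lemma~\ref{lem_integrability_and_weak_convergence} gives $L^{1}$-in-time control, use the differential inequality of lemma~\ref{lem_logarithmic-type_estimate_on_dJ(u)} to upgrade $\liminf=0$ to $\lim=0$, and then iterate via the Sobolev dissipation term $p\mapsto pn/(n-2)$. The only cosmetic difference is in the upgrade step: you argue by contradiction via ODE comparison (pulling back to the last $\varepsilon/2$-crossing), whereas the paper runs a direct barrier argument (choosing $\tau_{k}$ with both $\phi_{p_{0}}(\tau_{k})$ and the tail $\int_{\tau_{k}}^{\infty}\phi_{p_{0}}$ small, then showing the first exit time from $\{\phi_{p_{0}}<2/k\}$ is $+\infty$); these are dual formulations of the same mechanism.
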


\begin{proof}[\textbf{Proof of proposition \ref{prop_strong_convergence_of_the_first_variation}}
(cf. \cite{StruweLargeEnergies}, Lemma 3.3 and equation (43))]
\label{p_interpolated_integrability_and_strong_convergence}$_{}$\\
Due to lemma \ref{lem_integrability_and_weak_convergence} for any 
$\max\{2, \frac{n}{2}\} < p_{0} < \frac{n+2}{2}$ there holds
\begin{equation}\begin{split}
\int^{\infty}_{0}\int \vert R-r\K \vert^{p_{0}}d\mu dt \leq C
\; \text{ and } \;
\liminf_{t\nearrow \infty}\int \vert R-r\K \vert^{p_{0}}d\mu =0.
\end{split}\end{equation}
Thus we may choose a sequence $\tau^{0}_{k} \nearrow \infty$ satisfying
\begin{equation}\begin{split}
\int \vert R-r\K \vert^{p_{0}}d\mu\lfloor_{\tau^{0}_{k}}\leq \frac{1}{2k}
\; \text{ and } \;
\int^{\infty}_{\tau^{0}_{k}}\int \vert R-r\K \vert^{p_{0}}d\mu dt < \frac{1}{4Ck},
\end{split}\end{equation}
where $C=C(p)$ is the constant appearing in lemma \ref{lem_logarithmic-type_estimate_on_dJ(u)}.
Define
\begin{equation}\begin{split}
\theta^{0}_{k}
= &
\sup \lbrace
\tau > \tau^{0}_{k} \mid
\forall \tau^{0}_{k}<t<\tau\;:\;
\int \vert R-r\K \vert^{p_{0}}d\mu < \frac{2}{k} \rbrace
>
\tau^{0}_{k}.
\end{split}\end{equation}
Then we infer from lemma \ref{lem_logarithmic-type_estimate_on_dJ(u)} for  $\tau^{0}_{k}<t<\theta^{0}_{k}$
\begin{equation}\begin{split}\label{n/(n-2)_argument}
\int \vert R-r\K \vert^{p_{0}}d\mu \lfloor_{t}
& +
c\int^{t}_{\tau^{0}_{k}}(\int \vert R-r\K \vert^{p_{0}\frac{n}{n-2}}d\mu)^{\frac{n-2}{n}}dt \\
\leq &
\int \vert R-r\K \vert^{p_{0}}d\mu \lfloor_{\tau^{0}_{k}} \\
& +
C\int^{t}_{\tau^{0}_{k}}(\int \vert R-r\K \vert^{p_{0}}d\mu)^{1+\frac{2}{2p-n}}dt \\
& +
C\int^{t}_{\tau^{0}_{k}}\int \vert R-r\K \vert^{p_{0}}d\mu dt \\
\leq &
\frac{1}{2k}
+
2C\int^{\infty}_{\tau^{0}_{k}}\int \vert R-r\K \vert^{p_{0}}d\mu dt
\leq \frac{1}{k}.
\end{split}\end{equation}
If $\theta^{0}_{k}< \infty$, then
$
\frac{2}{k}=\int \vert R-r\K \vert^{p_{0}}d\mu \lfloor_{\theta^{0}_{k}} \leq \frac{1}{k},
$
whence $\theta^{0}_{k}=\infty$ and
\begin{equation}\begin{split}
\int \vert R-r\K \vert^{p_{0}}d\mu \leq \frac{2}{k} \; \text{ on }\;[\tau^{0}_{k}, \infty).
\end{split}\end{equation}
We conclude
$
\lim_{t\nearrow \infty}\int \vert R-r\K \vert^{p_{0}}d\mu =0
$
and in particular, cf. \eqref{n/(n-2)_argument},
\begin{equation}\begin{split}
\int^{\infty}_{0}(\int \vert R-r\K \vert^{p_{1}}d\mu)^{\frac{n-2}{n}} dt <\infty
\; \text{ and } \;
\liminf_{t\nearrow \infty}\int \vert R-r\K \vert^{p_{1}}d\mu =0
\end{split}\end{equation}
letting 
\begin{equation}
\begin{split}
p_{1}=\frac{n}{n-2}p_{0}.
\end{split} 
\end{equation} 
As before we may choose a sequence $\tau^{1}_{k} \nearrow \infty$ satisfying
\begin{equation}\begin{split}
(\int \vert R-r\K \vert^{p_{1}}d\mu)^{\frac{n-2}{n}}\lfloor_{\tau^{1}_{k}}\leq \frac{1}{2k}
\end{split}\end{equation}
and
\begin{equation}\begin{split}
\int^{\infty}_{\tau^{1}_{k}}(\int \vert R-r\K \vert^{p_{1}}d\mu)^{\frac{n-2}{n}} dt < \frac{n}{4Ck(n-2)},
\end{split}\end{equation}
where $C=C(p)$ is the constant appearing in lemma \ref{lem_logarithmic-type_estimate_on_dJ(u)}.
Define
\begin{equation}\begin{split}
\theta^{1}_{k}
= &
\sup \lbrace
\tau > \tau^{1}_{k} \mid
\forall \tau^{1}_{k}<t<\tau\;:\;
(\int \vert R-r\K \vert^{p_{1}}d\mu)^{\frac{n-2}{n}} < \frac{2}{k} \rbrace
>
\tau^{1}_{k}.
\end{split}\end{equation}
Then we infer from lemma \ref{lem_logarithmic-type_estimate_on_dJ(u)} for $\tau^{0}_{k}<t<\theta^{0}_{k}$
\begin{equation}\begin{split}\label{n/(n-2)_argument_k>=1}
(\int \vert R-r\K \vert^{p_{1}}d\mu)^{\frac{n-2}{n}} \lfloor_{t}
& +
c\frac{n-2}{n}\int^{t}_{\tau^{1}_{k}}
\frac
{(\int \vert R-r\K \vert^{p_{1}\frac{n}{n-2}}d\mu)^{\frac{n-2}{n}}}
{(\int \vert R-r\K\vert^{p_{1}})^{\frac{2}{n}}}
dt \\
\leq &
(\int \vert R-r\K \vert^{p_{0}}d\mu)^{\frac{n-2}{n}} \lfloor_{\tau^{1}_{k}} \\
& +
C\frac{n-2}{n}\int^{t}_{\tau^{1}_{k}}(\int \vert R-r\K \vert^{p_{1}}d\mu)^{\frac{n-2}{n}+\frac{2}{2p-n}}dt \\
& +
C\frac{n-2}{n}\int^{t}_{\tau^{1}_{k}}(\int \vert R-r\K \vert^{p_{1}}d\mu)^{\frac{n-2}{n}} dt \\
\leq &
\frac{1}{2k}
+
2C\frac{n-2}{n}\int^{\infty}_{\tau^{1}_{k}}\int \vert R-r\K \vert^{p_{0}}d\mu dt
\leq \frac{1}{k}.
\end{split}\end{equation}
If $\theta^{1}_{k}< \infty$, then
$
\frac{2}{k}=\int \vert R-r\K \vert^{p_{1}}d\mu \lfloor_{\theta^{1}_{k}} \leq \frac{1}{k},
$
whence $\theta^{1}_{k}=\infty$ and
\begin{equation}\begin{split}
\int \vert R-r\K \vert^{p_{1}}d\mu \leq \frac{2}{k} \; \text{ on }\;[\tau^{1}_{k}, \infty).
\end{split}\end{equation}
We conclude
$
\lim_{t\nearrow \infty}\int \vert R-r\K \vert^{p_{1}}d\mu =0
$
and in particular, cf. \eqref{n/(n-2)_argument_k>=1},
\begin{equation}\begin{split}
\int^{\infty}_{0}(\int \vert R-r\K \vert^{p_{2}}d\mu)^{\frac{n-2}{n}} dt <\infty
\; \text{ and } \;
\liminf_{t\nearrow \infty}\int \vert R-r\K \vert^{p_{2}}d\mu =0
\end{split}\end{equation}
letting $p_{2}=p_{1}(\frac{n}{n-2})$. Note, that from this we may start an induction yielding
\begin{equation}
\lim_{t\nearrow \infty}\int \vert R-r\K \vert^{p_{k}}d\mu =0
\end{equation} and 
\begin{equation}\begin{split}
\int^{\infty}_{0}(\int \vert R-r\K \vert^{p_{k+1}}d\mu)^{\frac{n-2}{n}} dt <\infty
\; \text{ and } \;
\liminf_{t\nearrow \infty}\int \vert R-r\K \vert^{p_{k+1}}d\mu =0
\end{split}\end{equation}
letting $p_{k+1}=\frac{n}{n-2}p_{k}$ for $k\geq 1$. Thereby the claim is evidently proven.
\end{proof}

\section{The flow near infinity}
\label{sec:TheFlowNearInfinity}
\subsection{Blow-up analysis}
\label{subsec:BlowUpAnalysis}
For a Palais-Smale sequence of decreasing energy, say $u_{k}=u(t_{k})$ for a flow line $u$ and $t_{k}\- \infty$,
the lack of compactness is described as follows.
\begin{proposition}[Concentration-Compactness]\label{prop_concentration_compactness}$_{}$\\
Let
\;$
(u_{m})\subset W^{1,2}_{g_{0}}(M, \R_{>0})$ satisfy  $k_{u_{m}}=\int Ku_{m}^{\frac{2n}{n-2}}d\mu_{g_{0}}=1
$
and 
\begin{equation*}\begin{split}
\sup_{m \in \mathbb{N}}J(u_{m})<\infty
\; \text{ and } \;
\Vert\partial J(u_{m})\Vert\-0.
\end{split}\end{equation*}
Passing to a subsequence we then have
\begin{equation*}\begin{split}
J(u_{m})=r_{u_{m}}\-J_{\infty}=r_{\infty}.
\end{split}\end{equation*}
and there exist\;
$0 \leq  u_{\infty} \in W^{1,2}_{g_{0}}(M)$ with either $u_{\infty} \equiv 0$ or $u_{\infty} >0$
solving
\begin{equation*}\begin{split}
L_{g_{0}}u_{\infty}
=
r_{\infty }Ku_{\infty}^{\frac{n+2}{n-2}}
\end{split}\end{equation*}
and for some $p \in \mathbb{N}_{0}$ sequences $(a_{i_{m}})\subset M, \;(\lambda_{i_{m}})\subset \R_{>0}, \;i=1, \ldots,p$ with
\begin{equation*}\begin{split}
a_{i_{m}}\-a_{i_{\infty}}
\; \text{ and }\;
\lambda_{i_{m}}\-\infty
\; \text{ as }\; m\- \infty
\end{split}\end{equation*}
such, that
\begin{equation*}\begin{split}
\Vert
u_{m}
-
u_{\infty}
-
\sum^{p}_{i=1}\hat \delta_{a_{i_{m}}, \lambda_{i_{m}}}
\Vert\-0,         
\end{split}\end{equation*}
where 
\begin{equation*}\begin{split}
\hat \delta_{a_{i_{m}}, \lambda_{i_{m}}}
=
(\frac{4n(n-1)}{r_{\infty} K(a_{i})})^{\frac{n-2}{4}}\eta_{a_{i_{m}}}
(\frac{\lambda_{i_{m}}}{1+\lambda_{i_{m}}^{2}\vert\exp^{-1}_{a_{i_{m}}}(\cdot)\vert_{g_{0}}^{2}})^{\frac{n-2}{2}}
\end{split}\end{equation*}
with a cut-off function \;
$\eta_{a_{i_{m}}}=\eta(\vert \exp_{a_{i_{m}}}^{-1}(\cdot)\vert_{g_{0}}^{2})$, where 
\begin{equation*}\begin{split}
\eta\in C^{\infty}(B_{2}(0), \R_{\geq 0}), \; \eta\equiv 1\; \text{ on }\;B_{1}(0).
\end{split}\end{equation*}
More precisely there holds for each $i \neq j = 1, \ldots,p$
\begin{equation*}\begin{split}
\frac{\lambda_{i_{m}}}{\lambda_{j_{m}}}
+
\frac{\lambda_{j_{m}}}{\lambda_{i_{m}}}
+
\lambda_{i_{m}}\lambda_{j_{m}}d^{2}_{g_{0}}(a_{i_{m}},a_{j_{m}})
\- \infty
\; \text{ as }\;m\-\infty.
\end{split}\end{equation*}

\end{proposition}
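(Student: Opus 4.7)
The plan is to carry out the classical Struwe-type concentration-compactness decomposition, adapted from the Yamabe setting to the present prescribed-curvature functional. Since $k_{u_{m}}=1$ and $\sup_{m}J(u_{m})<\infty$, the norm equivalence recorded in subsection \ref{subsec:Preliminaries} gives a uniform $W^{1,2}_{g_{0}}$-bound on $(u_{m})$, so after passing to a subsequence $u_{m}\schwach u_{\infty}\geq 0$ weakly in $W^{1,2}_{g_{0}}$, strongly in $L^{p}_{g_{0}}$ for $p<\tfrac{2n}{n-2}$, and pointwise a.e. Simultaneously $r_{u_{m}}=J(u_{m})\- r_{\infty}$ along a subsequence. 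Testing $\partial J(u_{m})\- 0$ in $W^{-1,2}_{g_{0}}$ against an arbitrary $\var\in C^{\infty}(M)$ and using the compact subcritical embedding plus a Vitali argument to pass the critical nonlinearity $Ku_{m}^{\frac{n+2}{n-2}}\var$ to the limit yields $L_{g_{0}}u_{\infty}=r_{\infty}Ku_{\infty}^{\frac{n+2}{n-2}}$ weakly. Standard elliptic bootstrap plus the strong maximum principle then gives either $u_{\infty}\equiv 0$ or $u_{\infty}>0$.

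Next I set $v_{m}^{(1)}=u_{m}-u_{\infty}\schwach 0$. Brezis--Lieb splits both $\int L_{g_{0}}u_{m}u_{m}$ and $\int K u_{m}^{\frac{2n}{n-2}}$ into a piece carried by $u_{\infty}$ plus a piece carried by $v_{m}^{(1)}$; combined with the equation for $u_{\infty}$ this makes $(v_{m}^{(1)})$ a Palais--Smale sequence for the purely critical limit problem $L_{g_{0}}v=r_{\infty}Kv^{\frac{n+2}{n-2}}$. If $v_{m}^{(1)}\-0$ strongly, we stop with $p=0$. Otherwise a standard Sobolev lower bound forces $\int K(v_{m}^{(1)})^{\frac{2n}{n-2}}\geq c_{0}>0$, and the concentration function
\begin{equation*}
Q_{m}(\lambda)=\sup_{a\in M}\int_{B_{1/\lambda}(a)}(v_{m}^{(1)})^{\frac{2n}{n-2}}d\mu_{g_{0}}
\end{equation*}
can be made equal to a prescribed small $\eps_{0}$ at some $(a_{m},\lambda_{m})$ with $\lambda_{m}\-\infty$. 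After extracting $a_{m}\- a_{\infty}$ and rescaling $w_{m}(y)=\lambda_{m}^{-\frac{n-2}{2}}v_{m}^{(1)}(\exp_{a_{m}}(y/\lambda_{m}))$, the pulled-back metric converges on compacts to the Euclidean one, the lower order term $R_{g_{0}}u$ scales away, and $w_{m}$ converges weakly in $D^{1,2}(\R^{n})$ and strongly on compacts to a nonnegative finite-energy solution $w$ of $-c_{n}\lap w=r_{\infty}K(a_{\infty})w^{\frac{n+2}{n-2}}$. The Caffarelli--Gidas--Spruck classification then forces $w$ to be an Aubin--Talenti bubble, and the normalizing prefactor $(\tfrac{4n(n-1)}{r_{\infty}K(a_{\infty})})^{\frac{n-2}{4}}$ matches exactly the profile of $\hat\delta_{a_{m},\lambda_{m}}$.

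Finally I iterate: $v_{m}^{(2)}=v_{m}^{(1)}-\hat\delta_{a_{1_{m}},\lambda_{1_{m}}}$ is again Palais--Smale for the critical limit equation (the cut-off $\eta_{a_{m}}$ and the curvature of $g_{0}$ contribute only $o(1)$ errors because $\lambda_{m}\-\infty$), and Brezis--Lieb shows its energy drops by the fixed Euclidean Sobolev quantum $r_{\infty}^{-\frac{n-2}{2}}\cdot(\text{Yamabe constant of }\mathbb{S}^{n})^{\frac{n}{2}}$. Hence the procedure terminates after finitely many steps $p\in\N_{0}$ with $\Vert u_{m}-u_{\infty}-\sum_{i=1}^{p}\hat\delta_{a_{i_{m}},\lambda_{i_{m}}}\Vert\- 0$. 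The asymptotic disjointness $\frac{\lambda_{i_{m}}}{\lambda_{j_{m}}}+\frac{\lambda_{j_{m}}}{\lambda_{i_{m}}}+\lambda_{i_{m}}\lambda_{j_{m}}d_{g_{0}}^{2}(a_{i_{m}},a_{j_{m}})\-\infty$ follows because otherwise two bubbles would, in the rescaling of the $i$-th, converge to two distinct Aubin--Talenti profiles on $\R^{n}$, and their difference could not be absorbed into the $W^{1,2}$ remainder. The main obstacle in the argument is the bubble extraction step: one must carefully certify that the rescaled sequence $(w_{m})$ really inherits the Palais--Smale condition for the Euclidean critical problem, keeping track of how the Riemannian volume form, the metric, and the coefficient function $K$ behave under the $\lambda_{m}$-dilation on balls of radius $\lambda_{m}^{-1}$ centered at $a_{m}$; once this is in place, the rest is bookkeeping and iteration.
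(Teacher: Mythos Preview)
Your proposal is correct and follows precisely the classical Struwe decomposition that the paper invokes: the paper does not give its own proof here but simply states that the result is classical, refers to \cite{StruweConcentrationCompactness} for the main decomposition and to \cite{Brezis_Coron_H_Systems} for the asymptotic disjointness of the bubble parameters, noting that the proof is a straightforward adaptation. Your sketch is exactly that adaptation.
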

This characterization is classical and we refer to \cite{StruweConcentrationCompactness}.
The proposition is proven by straight forward adaptation.
For the last statement  cf. \cite{Brezis_Coron_H_Systems}.

\subsection{Bubbles and interaction estimates}
\label{subsec:BubblesAndInteractionEstimates}
We refine the definition of blow up functions $\hat \delta_{a, \lambda}$ given in proposition \ref{prop_concentration_compactness}, 
referred to as bubbles, since they
form a spherical geometry around $a$. 

\begin{definition}[Bubbles] \label{def_bubbles}$_{}$\\
For $a\in M$ let $u_{a}$ introduce normal conformal coordinates around $a\in M$ via
\begin{equation*}\begin{split}
g_{a}=u_{a}^{\frac{4}{n-2}}g_{0}.
\end{split}\end{equation*}
Let  $G_{g_{ a }}$ be the Green's function of the conformal laplacian
\begin{equation*}\begin{split}
L_{g_{ a }}=-c_{n}\lap_{g_{ a }}+R_{g_{ a }}, \;
c_{n}=4\frac{n-1}{n-2}.
\end{split}\end{equation*}
For $\lambda>0$ let
\begin{equation*}\begin{split}
\varphi_{a, \lambda }
= &
u_{ a }(\frac{\lambda}{1+\lambda^{2} \gamma_{n}G^{\frac{2}{2-n}}_{ a }})^{\frac{n-2}{2}},
\;G_{ a }=G_{g_{ a }}( a, \cdot), \;
\gamma_{n}=(4n(n-1)\omega _{n})^{\frac{2}{n-2}}.
\end{split}\end{equation*}
One may expand 
\begin{equation*}\begin{split}
G_{ a }=\frac{1}{4n(n-1)\omega _{n}}(r^{2-n}_{a}+H_{ a }), \; r_{a}=d_{g_{a}}(a, \cdot)
, \; 
H_{ a }=H_{r,a }+H_{s, a }.
\end{split}\end{equation*}
There holds $H_{r,a }\in C^{2, \alpha}_{loc}$ and in conformal normal coordinates
\begin{equation*}
\begin{split}
H_{s,a}
=
O
\begin{pmatrix}
0 & \text{ for }\, n=3\\ r_{a}^{2}\ln r_{a} & \text{ for }\, n=4 \\ r_{a}& \text{ for }\, n=5 
\end{pmatrix}
\end{split}
\end{equation*} 
In addition 
it follows from  the positive  mass theorem, that
\begin{equation*}\begin{split}
H_{ a }( a )=0 \;\text{ for }\; M\simeq \mathbb{S}^{n} \; \text{ and }\; H_{a}(a)>0\;\text{ for }\;
 M\not \simeq \mathbb{S}^{n},
\end{split}\end{equation*}
so $H_{a}(a)$ is always non negative  with strict positivity unless $M$ is conformally equivalent to the standard sphere
$\mathbb{S}^{n}$
.
\end{definition}
For the expansion of the Green's function stated cf.  \cite{LeeAndParker}, Theorem 6.5.
Ibidem conformal normal coordinates are introduced in section 5, see also the improvement due to \cite{Guenter}.
Note, that we may and will replace $\hat \delta_{a, \lambda}$ by $\varphi_{a, \lambda}$ in proposition \ref{prop_concentration_compactness}, since
\begin{equation*}\begin{split}
\Vert \varphi_{a, \lambda}-\hat \delta_{a, \lambda}\Vert \- 0\; \text{ as }\; \lambda\- \infty.
\end{split}\end{equation*}
The reason for the above redefinition of bubbles is the simple way
to calculate their conformal laplacian in terms of its Green's function, see the lemma below,
whose proof we delay to the appendix.
\begin{lemma}[Emergence of the regular part]\label{lem_emergence_of_the_regular_part}$_{}$\\
One has 
$
L_{g_{0}}\varphi_{a, \lambda}=
O
(
\varphi_{a, \lambda}^{\frac{n+2}{n-2}}
)
$
and on a geodesic ball $B_{\alpha}( a )$ for $\alpha>0$ small 
\begin{equation*}\begin{split}
L_{g_{0}}\varphi_{a, \lambda}
= &
4n(n-1)\varphi_{a, \lambda}^{\frac{n+2}{n-2}}
-
2nc_{n}
r_{a}^{n-2}((n-1)H_{a}+r_{a}\partial_{r_{a}}H_{a}) \varphi_{a, \lambda}^{\frac{n+2}{n-2}} \\
& 
+
\frac{u_{a}^{\frac{2}{n-2}}R_{g_{a}}}{\lambda}\varphi_{a, \lambda}^{\frac{n}{n-2}}
+
o(r_{a}^{n-2})\varphi_{a, \lambda}^{\frac{n+2}{n-2}},
\end{split}\end{equation*}
where  $r_{a}=d_{g_{a}}(a, \cdot)$. Note, that $R_{g_{a}}=O(r_{a}^{2})$ in geodesic normal coordinates.
\end{lemma}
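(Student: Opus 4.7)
The plan is to use conformal covariance to reduce the computation to $L_{g_a}\psi$ for a simpler function $\psi$, then exploit the Green's function equation $L_{g_a}G_a = 0$ off the diagonal to eliminate $\lap_{g_a}G_a$ in favour of $R_{g_a}G_a/c_n$, and finally Taylor-expand using the stated asymptotics of $G_a$ near $a$.

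Write $\varphi_{a,\lambda} = u_a\psi$ with $\psi := \bigl(\lambda/(1 + \lambda^2\gamma_n G_a^{2/(2-n)})\bigr)^{(n-2)/2}$. Conformal covariance applied to $g_a = u_a^{4/(n-2)}g_0$ gives $L_{g_0}\varphi_{a,\lambda} = u_a^{(n+2)/(n-2)}L_{g_a}\psi$. Setting $f := \gamma_n G_a^{2/(2-n)}$, a direct differentiation yields
\begin{equation*}
L_{g_a}\psi = 2(n-1)\lambda^{\frac{n+2}{2}}(1+\lambda^2 f)^{-\frac{n}{2}}\lap_{g_a}f - n(n-1)\lambda^{\frac{n+6}{2}}(1+\lambda^2 f)^{-\frac{n+2}{2}}|\nabla f|^2_{g_a} + R_{g_a}\psi.
\end{equation*}
Expressing $\lap_{g_a}f$ and $|\nabla f|^2_{g_a}$ through $G_a$ and $\nabla G_a$ and substituting $\lap_{g_a}G_a = R_{g_a}G_a/c_n$ brings every term into powers of $G_a$, $|\nabla G_a|^2_{g_a}$, $R_{g_a}$ and $(1+\lambda^2 f)$. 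Two cancellations, each driven by the identity $(1+\lambda^2 f)-\lambda^2 f = 1$, collapse the expression to
\begin{equation*}
L_{g_a}\psi = \frac{4n(n-1)\gamma_n}{(n-2)^2}\,|\nabla G_a|^2_{g_a}\,G_a^{-(2n-2)/(n-2)}\,\psi^{(n+2)/(n-2)} + \frac{R_{g_a}}{\lambda}\psi^{n/(n-2)}.
\end{equation*}
Multiplying by $u_a^{(n+2)/(n-2)}$ and using $u_a^{(n+2)/(n-2)}\psi^{n/(n-2)} = u_a^{2/(n-2)}\varphi_{a,\lambda}^{n/(n-2)}$ produces precisely the scalar-curvature term of the lemma.

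To finish, expand the gradient factor near $a$ by substituting $G_a = c\,r_a^{2-n}(1+r_a^{n-2}H_a)$ with $c = (4n(n-1)\omega_n)^{-1}$. The zeroth-order Euclidean term is pinned down by the classical identity $-c_n\lap\delta_{0,\mu} = 4n(n-1)\delta_{0,\mu}^{(n+2)/(n-2)}$ for the standard Aubin bubble on $\R^n$, yielding the leading $4n(n-1)\varphi_{a,\lambda}^{(n+2)/(n-2)}$. The linear correction in $H_a$ arises from differentiating $(1+r_a^{n-2}H_a)$ once: the factors $r_a^{n-2}H_a$ (from $G_a^{-(2n-2)/(n-2)}$) and $r_a\partial_{r_a}(r_a^{n-2}H_a) = (n-2)r_a^{n-2}H_a + r_a^{n-1}\partial_{r_a}H_a$ (from $|\nabla G_a|^2_{g_a}$) combine, after tracking all prefactors, into $-2nc_n r_a^{n-2}((n-1)H_a + r_a\partial_{r_a}H_a)\varphi_{a,\lambda}^{(n+2)/(n-2)}$. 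Decomposing $H_a = H_{r,a}+H_{s,a}$, the higher-order Taylor remainder from $H_{r,a}\in C^{2,\alpha}_{loc}$ is $O(r_a^{2(n-2)}) = o(r_a^{n-2})$, while the singular part contributes $r_a^{n-2}((n-1)H_{s,a}+r_a\partial_{r_a}H_{s,a})=o(r_a^{n-2})$ in each dimension $n=3,4,5$ thanks to the stated sizes of $H_{s,a}$ (identically zero for $n=3$, $O(r_a^2\log r_a)$ for $n=4$, $O(r_a)$ for $n=5$). The global bound $L_{g_0}\varphi_{a,\lambda} = O(\varphi_{a,\lambda}^{(n+2)/(n-2)})$ is then immediate from the exact two-term formula, since $1+\lambda^2\gamma_n G_a^{2/(2-n)}\geq 1$ and all geometric factors are uniformly bounded on compact subsets of $M\setminus\{a\}$, while $R_{g_a}=O(r_a^2)$ at $a$ in conformal normal coordinates absorbs the $\lambda^{-1}$ factor.

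The main technical obstacle is the collapse of the direct expansion of $L_{g_a}\psi$ to the clean two-term form above: a priori the differentiation produces several summands with different powers of $(1+\lambda^2 f)$, and their exact cancellation depends crucially both on the specific exponent $2/(2-n)$ built into the definition of $\varphi_{a,\lambda}$ and on $L_{g_a}G_a = 0$ off the diagonal. A secondary delicate point arises in dimension $n=4$, where $H_{s,a}=O(r_a^2\log r_a)$ is only logarithmically smaller than $r_a^{n-2}=r_a^2$; one must check carefully that the logarithmic singular piece is absorbed into the $o(r_a^{n-2})$ remainder and does not contaminate the main linear correction governed by $H_{r,a}$.
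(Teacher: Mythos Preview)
Your proposal is correct and follows essentially the same route as the paper's proof: conformal covariance to reduce to $L_{g_a}\psi$, direct differentiation of the profile, substitution of $\lap_{g_a}G_a=R_{g_a}G_a/c_n$ off the diagonal to collapse to the clean two-term identity, and finally Taylor expansion of the gradient factor via $G_a=c\,r_a^{2-n}(1+r_a^{n-2}H_a)$. The only cosmetic difference is that the paper packages the gradient factor as $\gamma_n|\nabla G_a^{1/(2-n)}|^2_{g_a}$ and expands that directly, whereas you keep $|\nabla G_a|^2_{g_a}\,G_a^{-(2n-2)/(n-2)}$ split; the paper also leaves the full $H_a$ in the main correction term rather than shunting $H_{s,a}$ into the $o(r_a^{n-2})$ remainder as you do, but since you correctly verify that $r_a^{n-2}((n-1)H_{s,a}+r_a\partial_{r_a}H_{s,a})=o(r_a^{n-2})$ in each dimension, the two formulations are equivalent.
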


We would like to point out, that the term $\frac{R_{g_{a}}}{\lambda}\varphi_{a, \lambda}^{\frac{n}{n-2}}$ is negligible for our discussion, whereas it plays a crucial role in higher dimensions.

To abbreviate the  notation we make the following definitions.
\begin{definition}[Relevant quantities]\label{def_relevant_quantities}$_{}$\\
For $k,l=1,2,3$ and $ \lambda_{i} >0, \, a _{i}\in M, \,i= 1, \ldots,p$ define
\begin{enumerate}[label=(\roman*)]
 \item \quad 
$\var_{i}=\var_{a_{i}, \lambda_{i}}$ and $(d_{1,i},d_{2,i},d_{3,i})=(1,-\lambda_{i}\partial_{\lambda_{i}}, \frac{1}{\lambda_{i}}\nabla_{a_{i}})$
 \item \quad
$\phi_{1,i}=\varphi_{i}, \;\phi_{2,i}=-\lambda_{i} \partial_{\lambda_{i}}\varphi_{i}, \;\phi_{3,i}= \frac{1}{\lambda_{i}} \nabla_{ a _{i}}\varphi_{i}$, so
$
\phi_{k,i}=d_{k,i}\varphi_{i}
$
\end{enumerate}
\end{definition}

We collect some useful estimates, which are well known, so we delay their proof to the appendix.
They are essential for the rest of our discussion and will be heavily used. 
\begin{lemma}[Interactions]\label{lem_interactions}$_{}$\\
Let $k,l=1,2,3$ and $i,j = 1, \ldots,p$. We have
\begin{enumerate}[label=(\roman*)]
 \item \quad
$ \vert \phi_{k,i}\vert, \vert \lambda_{i}\partial_{\lambda_{i}}\phi_{k,i}\vert, \vert \frac{1}{\lambda_{i}}\nabla_{a_{i}} \phi_{k,i}\vert\leq C \varphi_{i}$
 \item \quad
$ 
\int \varphi_{i}^{\frac{4}{n-2}} \phi_{k,i}\phi_{k,i}
=
c_{k}\cdot id
+
O(\frac{1}{\lambda_{i}^{n-2}}+\frac{1}{\lambda_{i}^{2}}), \;c_{k}>0$
\item \quad
$
\int \varphi_{i}^{\frac{n+2}{n-2}}\phi_{k,j}
= 
b_{k}d_{k,i}\eps_{i,j}+o_{\varepsilon}(\eps_{i,j})
=
\frac{n+2}{n-2}\int \phi_{k,i}\varphi_{i}^{\frac{4}{n-2}}\varphi_{j}, \; b_{k}>0,
\,i\neq j$
 \item \quad 
$
\int \varphi_{i}^{\frac{4}{n-2}} \phi_{k,i}\phi_{l,i}
= 
O( \frac{1}{\lambda_{i}^{n-2}} +\frac{1}{\lambda_{i}^{2}})$
for $k\neq l$, 
$\int \varphi_{i}^{\frac{2n}{n-2}}=c_{1}+O(\frac{1}{\lambda_{i}^{n-2}})$ and
$$
\int \varphi_{i}^{\frac{n+2}{n-2}} \phi_{k,i}
= 
O( \frac{1}{\lambda_{i}^{n-2}})\; \text{ for }\; k=2,3
$$
 \item \quad
$
\int \varphi_{i}^{\alpha}\varphi_{j}^{\beta} 
=
O(\eps_{i,j}^{\beta})
$
for $i\neq j$ and $\alpha +\beta=\frac{2n}{n-2}, \;\frac{n}{n-2}> \alpha>\beta\geq 1 $
\item \quad
$
\int \varphi_{i}^{\frac{n}{n-2}}\varphi_{j}^{\frac{n}{n-2}} 
=
O(\eps^{\frac{n}{n-2}}_{i,j}\ln \eps_{i,j}), \,i\neq j
$
 \item \quad 
$
(1, \lambda_{i}\partial_{\lambda_{i}}, \frac{1}{\lambda_{i}}\nabla_{a_{i}})\eps_{i,j}=O(\eps_{i,j})
, \,i\neq j$,
\end{enumerate}
where $\epsilon=\min\{\frac{1}{\lambda_{i}}, \frac{1}{\lambda_{j}}, \eps_{i,j}\}$ and
\begin{equation*}\begin{split}
\eps_{i,j}
=
(
\frac{\lambda_{j}}{\lambda_{i}}
+
\frac{\lambda_{i}}{\lambda_{j}}
+
\lambda_{i}\lambda_{j}\gamma_{n}G_{g_{0}}^{\frac{2}{2-n}}(a _{i},a _{j})
)^{\frac{2-n}{2}}.
\end{split}\end{equation*}
\end{lemma}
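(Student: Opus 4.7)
\noindent\emph{Proof plan.} The plan is to reduce every integral to a computation on the Euclidean model and then track the error from the conformal factor and from the Green's function regular part. I would fix $i$ and work in conformal normal coordinates around $a_{i}$, in which $g_{a_{i}}=u_{a_{i}}^{4/(n-2)}g_{0}$ satisfies $\det g_{a_{i}}=1+O(r_{a_{i}}^{N})$ for every $N$ and in which $\varphi_{a_{i},\lambda_{i}}$ is, up to the factor $u_{a_{i}}$ and the correction coming from $H_{a_{i}}$, a multiple of the standard Euclidean bubble $U_{\lambda_{i}}(x)=(\lambda_{i}/(1+\lambda_{i}^{2}|x|^{2}))^{(n-2)/2}$. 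The substitution $x=y/\lambda_{i}$ then turns each self-interaction integral into an explicit scale-invariant Euclidean integral plus a remainder, and the two error scales $\lambda_{i}^{-(n-2)}$ and $\lambda_{i}^{-2}$ visible in (ii)--(iv) come respectively from the regular part $H_{a_{i}}$ of the Green's function (of size $O(r_{a_{i}}^{\min\{n-2,2\}}\ln r_{a_{i}})$ for $n=3,4,5$ by definition \ref{def_bubbles}) and from the conformal correction $u_{a_{i}}-1=O(r_{a_{i}}^{2})$.

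Item (i) reduces to explicit differentiation: on the Euclidean model $-\lambda\partial_{\lambda}U_{\lambda}$ and $\tfrac{1}{\lambda}\nabla_{a}U_{\lambda}$ admit uniform-in-$(x,\lambda)$ bounds by $CU_{\lambda}$, and one further differentiation preserves this structure; the transition from $g_{a_{i}}$ to $g_{0}$ only produces bounded multiplicative factors. Items (v) and (vi) are size estimates obtained by splitting the integration domain into the region closer to $a_{i}$ and the region closer to $a_{j}$, replacing the far bubble by its leading value at the concentration point of the near one, and computing the remaining one-bubble integral; (vii) is immediate from the definition of $\eps_{i,j}$. The leading diagonal in (ii) and the orthogonality (iv) record the classical fact that $U_{\lambda}$, $-\lambda\partial_{\lambda}U_{\lambda}$ and $\tfrac{1}{\lambda}\nabla_{a}U_{\lambda}$ span the kernel of the linearized Yamabe equation on $\R^{n}$ and are mutually $L^{2}$-orthogonal with respect to the weight $U_{\lambda}^{4/(n-2)}$.

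The main obstacle is (iii). For the leading order $b_{k}d_{k,i}\eps_{i,j}+o_{\eps}(\eps_{i,j})$ of $\int \varphi_{i}^{(n+2)/(n-2)}\phi_{k,j}$, I would localize the integral to a small ball around $a_{i}$, on which $\phi_{k,j}$ is smooth and, by the definition of $\varphi_{a_{j},\lambda_{j}}$ combined with the expansion of $G_{g_{0}}$, admits a leading Taylor expansion obtained by applying $d_{k,j}$ to $u_{a_{i}}(a_{j})\lambda_{j}^{-(n-2)/2}\gamma_{n}^{(n-2)/2}G_{g_{0}}(a_{j},a_{i})^{-(n-2)/2}$. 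After the rescaling $x=y/\lambda_{i}$, the integrand factorises into this Taylor coefficient, which a direct calculation identifies with $d_{k,i}\eps_{i,j}$ modulo $o_{\eps}(\eps_{i,j})$, and a scale-invariant Euclidean integral whose value defines the universal constant $b_{k}>0$. The second equality in (iii) is then obtained by applying the same localization/rescaling strategy to $\int \phi_{k,i}\varphi_{i}^{4/(n-2)}\varphi_{j}$ and verifying that both sides reduce to the same quantity $b_{k}d_{k,i}\eps_{i,j}$ modulo $o_{\eps}(\eps_{i,j})$; structurally this reflects the approximate symmetry $\int \varphi_{i}L_{g_{0}}\varphi_{j}=\int \varphi_{j}L_{g_{0}}\varphi_{i}$ combined with lemma \ref{lem_emergence_of_the_regular_part}, which turns $\tfrac{n+2}{n-2}\int \phi_{k,i}\varphi_{i}^{4/(n-2)}\varphi_{j}=d_{k,i}\!\int \varphi_{i}^{(n+2)/(n-2)}\varphi_{j}$ into $\int \varphi_{i}^{(n+2)/(n-2)}\phi_{k,j}$ up to errors stemming from $H_{a_{i}}$, $H_{a_{j}}$ and the lower order term in $L_{g_{0}}\varphi_{i}$.
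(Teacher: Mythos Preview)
Your overall strategy---pass to conformal normal coordinates around $a_{i}$, rescale $x\mapsto y/\lambda_{i}$, and track the two error scales coming from $u_{a_{i}}-1=O(r_{a_{i}}^{2})$ and from the regular part $H_{a_{i}}$---is exactly what the paper does, and items (i), (ii), (iv), (vii) go through along the lines you sketch.

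There is, however, a real gap in your plan for (iii). You propose to localize $\int\varphi_{i}^{(n+2)/(n-2)}\phi_{k,j}$ to a small ball around $a_{i}$ ``on which $\phi_{k,j}$ is smooth'' and Taylor-expand $\phi_{k,j}$ via the tail asymptotics of $\varphi_{j}$. That Taylor expansion is only valid when $\lambda_{j}^{2}\gamma_{n}G_{a_{j}}^{2/(2-n)}(a_{i})\gg 1$, i.e.\ when the distance term dominates $\eps_{i,j}^{2/(2-n)}$. In the towering regime $\eps_{i,j}^{2/(2-n)}\sim\lambda_{j}/\lambda_{i}$ the point $a_{j}$ may lie inside your ball with $\varphi_{j}$ concentrated on the much smaller scale $1/\lambda_{j}$, so $\phi_{k,j}$ is \emph{not} slowly varying there and your expansion breaks down. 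The paper handles this by invoking the self-adjointness $\int L_{g_{0}}\varphi_{i}\,\varphi_{j}=\int\varphi_{i}\,L_{g_{0}}\varphi_{j}$ together with lemma~\ref{lem_emergence_of_the_regular_part} \emph{before} any localization, so as to reduce to the ordering $\lambda_{i}\geq\lambda_{j}$; after the rescaling by $\lambda_{i}$ the factor built from $\varphi_{j}$ is then the slowly varying one. Even with that reduction the paper still needs a further domain decomposition (a region $\mathcal{A}$ on which the expansion of the $\varphi_{j}$-factor is legitimate, and its complement, treated by direct size bounds). You reserve the self-adjointness trick for the second equality in (iii), which leaves the first equality unproven in the towering case.

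The same issue reappears in (v) and (vi). Splitting into ``closer to $a_{i}$'' versus ``closer to $a_{j}$'' and replacing the far bubble by its value at the other centre only controls the regime where $\lambda_{i}\lambda_{j}\gamma_{n}G_{g_{0}}^{2/(2-n)}(a_{i},a_{j})$ dominates. The paper instead distinguishes which of the three summands in $\eps_{i,j}^{2/(2-n)}$ is dominant and, in the towering cases, obtains the bound by rescaling around the centre with the larger $\lambda$ and integrating directly, not by a pointwise substitution of the far bubble.
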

Here we used and will use later on $a=o_{\epsilon}(b)$  as short hand for
\begin{align*}
\vert a \vert \leq \omega(\varepsilon)\vert b \vert\; \text{ with } \;\omega(\epsilon)\- 0 \text{ as }\; \epsilon\- 0.
\end{align*}

\subsection{Degeneracy and pseudo critical points}
\label{subsec:DegeneracyAndPseudoCriticalPoints}
In order to obtain a precise description of the dynamical behaviour of a flow line
we have to take care of a possible degeneracy of $J$ at a critical point. 
\begin{lemma}[Spectral theorem and degeneracy] \label{lem_spectral_theorem_and_degeneracy}$_{}$\\
Let $\omega > 0$ solve $L_{g_{0}}\omega =K\omega  ^{\frac{n+2}{n-2}}$. 

Then there exists a set of solutions 
\begin{equation*}\begin{split}
L_{g_{0}}w_{i}=\mu_{w_{i}}K\omega  ^{\frac{4}{n-2}}w_{i}, \, \mu_{w_i}\- \infty
\end{split}\end{equation*}

such, that
\begin{equation*}\begin{split}
\langle w_{i},w_{j}\rangle_{L_{g_{0}}}=\delta_{ij},
\langle w_{i}\mid i \in \N \rangle = W^{1,2}_{g_{0}}(M)
\end{split}\end{equation*}

and for any eigenspace $E_{\mu}(\omega)=\langle w_{i}\mid \mu_{w_{i}}=\mu\rangle$ we have $\dim E_{\mu}<\infty$. 
\\
Moreover we have $\partial J(\omega)=0$ and isomorphy
\begin{equation*}\begin{split}
\partial^{2}J(\omega)\lfloor_{H_{0}(\omega)^{\perp_{L_{g_{0}}}}}:
H_{0}(\omega)^{\perp_{L_{g_{0}}}}
\overset{\simeq}{\longrightarrow}
(H_{0}(\omega)^{\perp_{L_{g_{0}}}})^{*}
,
\end{split}\end{equation*}
where 
\begin{equation*}\begin{split}
H_{0}(\omega )=\langle \omega\rangle \oplus \langle \mathrm{e}_{i}\mid i=1, \ldots,m\rangle
\end{split}\end{equation*}
with
\begin{equation*}\begin{split}
\langle \mathrm{e}_{i}\mid i=1, \ldots,m\rangle=E_{\frac{n+2}{n-2}}(\omega), \, \langle \mathrm{e}_{i}, \mathrm{e}_{j}\rangle_{L_{g_{0}}}=\delta_{ij}
\end{split}\end{equation*}
denotes the kernel of $\partial^{2}J$ at $\omega$
and
$H_{0}(\omega)^{\perp_{L_{g_{0}}}}$
is the orthogonal of $H_{0}(\omega)$ with respect $\langle \cdot, \cdot\rangle_{L_{g_{0}}}$.
The case $E_{\frac{n+2}{n-2}}(\omega)=\emptyset$ is generic.
\end{lemma}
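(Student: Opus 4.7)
The plan is to combine the spectral theorem for compact self-adjoint operators with a direct computation of the Hessian formula from Proposition \ref{prop_derivatives_of_J}(iii).

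For the spectral part, I would introduce the operator $T : W^{1,2}_{g_0}(M) \to W^{1,2}_{g_0}(M)$ implicitly by $\langle Tv, w\rangle_{L_{g_0}} = \int K\omega^{4/(n-2)} vw\, d\mu_{g_0}$. Since $L_{g_0}$ is coercive (from $Y(M,g_0)>0$ together with $R_{g_0}>0$), and elliptic regularity yields $\omega \in C^{2,\alpha}$, the operator $T$ is self-adjoint with respect to $\langle \cdot, \cdot\rangle_{L_{g_0}}$ and compact via the compact embedding $W^{1,2} \hookrightarrow L^{2n/(n-2)}$. The spectral theorem then delivers an $L_{g_0}$-orthonormal basis $(w_i)$ of eigenfunctions with eigenvalues $1/\mu_{w_i}\searrow 0$, whence $\mu_{w_i}\to \infty$ and each eigenspace $E_\mu(\omega)$ is finite-dimensional. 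That $\partial J(\omega)=0$ is immediate: the equation $L_{g_0}\omega = K\omega^{(n+2)/(n-2)}$ gives $R_\omega = \omega^{-(n+2)/(n-2)}L_{g_0}\omega = K$, so $r_\omega = \int R_\omega\, d\mu_\omega = \int K\, d\mu_\omega = k_\omega$, and the integrand in Proposition \ref{prop_derivatives_of_J}(ii) vanishes identically.

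To pin down $H_0(\omega)$, I would substitute $L_{g_0}\omega = K\omega^{(n+2)/(n-2)}$ and $r_\omega=k_\omega$ into Proposition \ref{prop_derivatives_of_J}(iii). Abbreviating $A_v = \int L_{g_0}\omega v = \int K\omega^{(n+2)/(n-2)} v$, the cross terms combine to
\begin{equation*}
\tfrac{1}{2}\partial^{2}J(\omega)vw = \tfrac{1}{k_\omega^{(n-2)/n}}\bigl[\textstyle\int L_{g_0}vw - \tfrac{n+2}{n-2}\int K\omega^{4/(n-2)}vw\bigr] + \tfrac{4/(n-2)}{k_\omega^{(n-2)/n+1}}A_v A_w .
\end{equation*}
Testing with $v=\omega$ one checks directly that $\omega$ lies in the kernel, which reflects the scaling invariance $J(c\omega)=J(\omega)$. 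Since $\omega \in E_1(\omega)$ and distinct eigenspaces are $L_{g_0}$-orthogonal, $E_{(n+2)/(n-2)}(\omega)\perp_{L_{g_0}} \omega$; for any such $v$ we have $A_v=0$ and the kernel condition collapses to $L_{g_0}v=\tfrac{n+2}{n-2}K\omega^{4/(n-2)}v$. Conversely, decomposing a general kernel element into its $\omega$-component and its $L_{g_0}$-orthogonal complement shows the orthogonal part must satisfy the same eigenvalue equation. Hence $H_0(\omega)=\langle \omega\rangle \oplus E_{(n+2)/(n-2)}(\omega)$.

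Finally, the isomorphism on $H_0(\omega)^{\perp_{L_{g_0}}}$ follows by diagonalizing in the eigenbasis. On that subspace $A_v=0$, and expanding $v=\sum \alpha_i w_i$ over those indices with $\mu_{w_i}\neq (n+2)/(n-2)$, the quadratic form reads $\tfrac{2}{k_\omega^{(n-2)/n}}\sum_i(1-\tfrac{n+2}{(n-2)\mu_{w_i}})\alpha_i^{2}$ (plus the symmetric bilinear realization), so Lax--Milgram applied to the corresponding map into the dual yields the isomorphism once the coefficients are uniformly bounded away from zero; since the only accumulation point of $\{\mu_{w_i}\}$ is $+\infty$ (where the factor tends to $1$), this separation is automatic. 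The main obstacle in this step is bookkeeping of signs: some coefficients are negative (for those $\mu_{w_i}<(n+2)/(n-2)$, yielding the negative directions of $\partial^2 J$), but an isomorphism only requires separation from zero, not definiteness. Generic non-degeneracy, i.e.\ $E_{(n+2)/(n-2)}(\omega)=\emptyset$ for a residual set of $K$, is a standard Sard--Smale transversality argument applied to the nonlinear operator $(u,K)\mapsto L_{g_0}u - Ku^{(n+2)/(n-2)}$.
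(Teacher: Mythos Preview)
Your proposal is correct and follows essentially the same route as the paper: spectral theorem for the compact self-adjoint operator $T=L_{g_0}^{-1}(K\omega^{4/(n-2)}\,\cdot\,)$, direct Hessian computation from Proposition~\ref{prop_derivatives_of_J}, diagonalisation in the eigenbasis to obtain the isomorphism (the paper writes out the explicit map $w_i\mapsto 2k_\omega^{(2-n)/n}(1-\mu_{w_i}^{-1}\tfrac{n+2}{n-2})\langle w_i,\cdot\rangle_{L_{g_0}}$, which is exactly your ``coefficients bounded away from zero'' observation), and a Smale--Sard argument for genericity.

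Two minor corrections. First, the embedding $W^{1,2}\hookrightarrow L^{2n/(n-2)}$ is continuous but \emph{not} compact; compactness of $T$ follows instead from Rellich--Kondrachov ($W^{1,2}\hookrightarrow L^2$ compactly) together with the fact that $\omega$ is smooth, so the weight $K\omega^{4/(n-2)}$ is bounded. Second, invoking ``Lax--Milgram'' is a misnomer since the bilinear form is indefinite; but your actual argument---diagonal coefficients $1-\tfrac{n+2}{(n-2)\mu_{w_i}}$ uniformly separated from zero because the only accumulation point of the $\mu_{w_i}$ is $+\infty$---is correct and is precisely what the paper uses.
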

Please note, that due to scaling invariance of the functional the kernel always contains $\omega$ itself. 
We may thus call $\omega$ (essentially) non degenerate,  if simply $H_{0}(\omega)=\langle \omega \rangle$, or equivalently,
if $E_{\frac{n+2}{n-2}}(\omega)=\emptyset$. The foregoing lemma asserts, that non degeneracy  is generic.

\begin{proof}[\textbf{Proof of lemma \ref{lem_spectral_theorem_and_degeneracy}}]$_{}$\\
The statement on the basis $\{w_{i}\mid i\in \N\}$ of eigenfunctions is a direct application of the spectral theorem for compact operators. 
Moreover 
\begin{equation}
\begin{split}
r_{\omega}=\int L_{g_{0}}\omega \omega=\int K\omega^{\frac{2n}{n-2}}=k_{\omega}
\end{split}
\end{equation} 
for a solution $L_{g_{0}}\omega =K\omega  ^{\frac{n+2}{n-2}}$. Thus proposition \ref{prop_derivatives_of_J} 
shows
\begin{equation}\label{def_w_ej}
\begin{split}
\partial J(\omega), \partial^{2} J(\omega)\omega, \partial^{2}J(\omega)\mathrm{e}_{j}=0,
\end{split}
\end{equation} 
which is easy to check. Likewise for
$v\perp_{L_{g_{0}}} \omega$ one obtains
\begin{equation}
\begin{split}
\frac{1}{2}\partial^{2}J(\omega)vf
=
k_{\omega}^{\frac{2-n}{n}}
\int (L_{g_{0}}v-\frac{n+2}{n-2}\omega^{\frac{4}{n-2}}v)f.
\end{split}
\end{equation} 
This proves the claim with isomorphy of
\begin{equation}\label{iso1}
\begin{split}
\partial^{2}J(\omega)\lfloor_{H_{0}(\omega)^{\perp_{L_{g_{0}}}}}:
H_{0}(\omega)^{\perp_{L_{g_{0}}}}
\overset{\simeq}{\longrightarrow}
(H_{0}(\omega)^{\perp_{L_{g_{0}}}})^{*}
\end{split}
\end{equation} 
given by
\begin{equation}\label{iso2}
\begin{split}
w_{i}
\- 
2k_{\omega}^{\frac{2-n}{n}}(1-\mu_{w_i}^{-1}\frac{n+2}{n-2})\langle w_{i}, \cdot\rangle_{L_{g_{0}}}.
\end{split}
\end{equation} 

We are left with proving genericity of $E_{\frac{n+2}{n-2}}(\omega)=\emptyset$.
\\
To that end consider the scalar curvature mapping
\begin{equation}
\begin{split}
R:C^{2, \alpha}(M,A_{\epsilon})\- C^{0, \alpha}(M):\omega\- R_{\omega}=\omega^{-\frac{n+2}{n-2}}L\omega,
\end{split}
\end{equation} 
where $A_{\epsilon}=(\epsilon, \epsilon^{-1})$ for some $\epsilon>0$, with derivative
\begin{equation}
\begin{split}
\partial R_{\omega}\cdot v
= &
\omega^{-\frac{n+2}{n-2}}(L_{g_{0}}v-\frac{n+2}{n-2}R_{\omega}\omega^{\frac{4}{n-2}}v).
\end{split}
\end{equation} 
Note, that for $\omega \in C^{2, \alpha}(M,A_{\epsilon})$ fixed we have isomorphy of
\begin{equation}
\begin{split}
C^{2, \alpha}(M)\- C^{0, \alpha}(M):v\- \omega^{-\frac{n+2}{n-2}}L_{g_{0}}v
\end{split}
\end{equation} 
and compactness of 
\begin{equation}
\begin{split}
C^{2, \alpha}(M)\-C^{0, \alpha}(M):v\- R_{\omega}\omega^{\frac{4}{n-2}}v.
\end{split}
\end{equation} 
Thus $\partial R$ is a Fredholm operator and the Smale-Sard lemma gives 
\begin{equation}
\begin{split}
R[\partial R\neq 0]= \cap^{\infty}_{k=1}O_{k}
\end{split}
\end{equation} 
with countably many open and dense subsets $O_{k}\subset Im(R)$. Covering
\begin{equation}
\begin{split}
\R_{>0}=\cup_{k=1}^{\infty}A_{\frac{1}{k}} 
\end{split}
\end{equation} 
we obtain the same result for $R:C^{2, \alpha}(M, \R_{>0})\-C^{0, \alpha}(M)$.

Thus, if $K\in C^{0, \alpha}(M)$ is the scalar curvature of a conformal metric
\begin{equation}
\begin{split}
K=R_{\omega}=\omega^{-\frac{n+2}{n-2}}L\omega, \, \omega\in C^{2, \alpha}(M, \R_{>0}),
\end{split}
\end{equation} 

then obviously $K\in Im(R)$ and generically $K\in R[\partial R\neq 0]$, so
\begin{equation}
\begin{split}
L_{g_{0}}v-\frac{n+2}{n-2}K\omega^{\frac{4}{n-2}}v\neq 0\; \text{ for all }\; 0\neq v\in C^{2, \alpha}(M),
\end{split}
\end{equation} 

whenever $K=R_{\omega}$. Consequently for a solution $L\omega=K\omega^{\frac{n+2}{n-2}}$
\begin{equation}
\begin{split}
\partial^{2}J(\omega)= \frac{2}{k^{\frac{n-2}{n}}}(L_{g_{0}}u-\frac{n+2}{n-2}\omega^{\frac{n+2}{n-2}})
\end{split}
\end{equation} 

is for a generic $K$ invertible, which is equivalent to $E_{\frac{n+2}{n-2}}(\omega)=\emptyset$.
\\
Please note, that we may replace $C^{2, \alpha},C^{0, \alpha}$ by any 
$C^{k+2, \alpha},C^{k, \alpha}$. So $\gamma=1$ is true for 
any $K$ in a dense subset of $C^{k, \alpha}$
\end{proof}
In light of the foregoing lemma the following parametrization is a natural application of the implicit function theorem.
\begin{lemma}[Degeneracy and pseudo critical points]\label{lem_degeneracy_and_pseudo_critical_points}$_{}$\\
For $\omega > 0$ solving $L_{g_{0}}\omega =K\omega  ^{\frac{n+2}{n-2}}$ let  
\begin{equation*}\begin{split}
\Pi=\Pi_{H_{0}(\omega )^{\perp_{L_{g_{0}}}}}
\end{split}\end{equation*}
be the projection on $H_{0}(\omega)^{\perp_{L_{g_{0}}}}$.

Then there exist $\epsilon>0$, an open neighbourhood $U$ of $\omega$
\begin{equation*}\begin{split}
\omega \in U\subset W^{1,2}_{g_{0}}(M)
\end{split}\end{equation*}

and a smooth function 
$
h:B_{\epsilon}^{\R^{m+1}}(0)\- H_{0}(\omega )^{\perp_{L_{g_{0}}}}
$
such, that
\begin{equation*}
\begin{split}
\{
w\in U 
& 
\mid \Pi \nabla J(w)=0
\}
\\
= &
\{
u_{\alpha, \beta}
=
(1+\alpha) \omega +\beta^{i}\mathrm{e}_{i}
+
h(\alpha, \beta)\mid 
(\alpha, \beta)\in B_{\epsilon}^{m+1}(0)\}
\end{split}
\end{equation*} 

with  
$$\Vert h(\alpha, \beta)\Vert =O(\vert \alpha\vert^{2}+\Vert \beta \Vert^{2}),$$

where $\nabla J$ is gradient of $\partial J$ with respect 
to the scalar product $\langle \cdot, \cdot \rangle_{L_{g_{0}}}$.\\\smallskip
We call $w \in U$ a pseudo critical point related to $\omega$, if 
$\Pi_{H_{0}(\omega)^{\perp_{L_{g_{0}}}}} \nabla J(w)=0$. 
\end{lemma}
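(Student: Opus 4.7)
The plan is to apply the implicit function theorem on the Hilbert space $H_{0}(\omega)^{\perp_{L_{g_{0}}}}$, exploiting the invertibility of $\partial^{2} J(\omega)$ there already established in Lemma \ref{lem_spectral_theorem_and_degeneracy}.

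First I would introduce
\begin{equation*}
F(\alpha,\beta,v)=\Pi\,\nabla J\bigl((1+\alpha)\omega+\beta^{i}\mathrm{e}_{i}+v\bigr),
\end{equation*}
defined on a small ball around the origin in $\R^{m+1}\times H_{0}(\omega)^{\perp_{L_{g_{0}}}}$ and valued in $H_{0}(\omega)^{\perp_{L_{g_{0}}}}$. For sufficiently small arguments, $(1+\alpha)\omega+\beta^{i}\mathrm{e}_{i}+v$ stays strictly positive and close to $\omega$ in $W^{1,2}_{g_{0}}$, so $F$ is well defined and at least $C^{1}$ by the $C^{2,\alpha}_{\mathrm{loc}}$ regularity of $J$ on the positive cone recorded in Proposition \ref{prop_derivatives_of_J}. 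Moreover $F(0,0,0)=\Pi\,\nabla J(\omega)=0$ since $\partial J(\omega)=0$ by Lemma \ref{lem_spectral_theorem_and_degeneracy}.

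Second, the differential of $F$ in $v$ at the origin is
\begin{equation*}
\partial_{v}F(0,0,0)\cdot \tilde v=\Pi\bigl(\partial^{2}J(\omega)\tilde v\bigr)\qquad\text{for } \tilde v\in H_{0}(\omega)^{\perp_{L_{g_{0}}}},
\end{equation*}
which, modulo the musical identification, is precisely the isomorphism $\partial^{2}J(\omega)\lfloor_{H_{0}(\omega)^{\perp_{L_{g_{0}}}}}$ exhibited in \eqref{iso1}--\eqref{iso2}. The implicit function theorem therefore yields $\epsilon>0$ and a unique $C^{1}$ map
\begin{equation*}
h:B_{\epsilon}^{\R^{m+1}}(0)\longrightarrow H_{0}(\omega)^{\perp_{L_{g_{0}}}},\qquad h(0,0)=0,\qquad F(\alpha,\beta,h(\alpha,\beta))\equiv 0,
\end{equation*}
together with a neighbourhood $U\ni\omega$ in which its local uniqueness shows that every $w$ with $\Pi\nabla J(w)=0$ is of the form $u_{\alpha,\beta}$ for exactly one $(\alpha,\beta)\in B_{\epsilon}^{\R^{m+1}}(0)$.

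Third, for the quadratic estimate I differentiate the identity $F(\alpha,\beta,h(\alpha,\beta))\equiv 0$ at the origin and invert $\partial_{v}F(0,0,0)$, obtaining
\begin{equation*}
\partial_{\alpha}h(0,0)=-\partial_{v}F(0,0,0)^{-1}\Pi\bigl(\partial^{2}J(\omega)\omega\bigr),\qquad \partial_{\beta^{i}}h(0,0)=-\partial_{v}F(0,0,0)^{-1}\Pi\bigl(\partial^{2}J(\omega)\mathrm{e}_{i}\bigr).
\end{equation*}
Both right hand sides vanish: $\partial^{2}J(\omega)\omega=0$ by scale invariance of $J$ and $\partial^{2}J(\omega)\mathrm{e}_{i}=0$ since $\mathrm{e}_{i}\in E_{\frac{n+2}{n-2}}(\omega)$ belongs to the kernel, both facts already recorded in \eqref{def_w_ej}. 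Hence $\nabla h(0,0)=0$ and Taylor's theorem delivers $\Vert h(\alpha,\beta)\Vert=O(\vert\alpha\vert^{2}+\Vert\beta\Vert^{2})$.

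The only mildly technical point is to check that $\nabla J$ is genuinely a $C^{1}$ map between the correct function spaces so that the implicit function theorem applies in the Banach setting; this is routine given the explicit formulas in Proposition \ref{prop_derivatives_of_J} together with the Sobolev embedding $W^{1,2}_{g_{0}}\hookrightarrow L^{\frac{2n}{n-2}}_{g_{0}}$ available in dimensions $n=3,4,5$, and no further analytic obstacle is expected.
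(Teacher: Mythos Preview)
Your proof is correct and follows essentially the same approach as the paper: apply the implicit function theorem to $\Pi\nabla J$ using the isomorphism $\partial^{2}J(\omega)\lfloor_{H_{0}(\omega)^{\perp_{L_{g_{0}}}}}$ from \eqref{iso1}--\eqref{iso2}, then obtain the quadratic estimate on $h$ from the vanishing of $\partial^{2}J(\omega)\omega$ and $\partial^{2}J(\omega)\mathrm{e}_{i}$ recorded in \eqref{def_w_ej}. Your argument is in fact slightly more explicit than the paper's, which simply states that ``the estimate on $h$ follows from \eqref{def_w_ej}''.
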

\noindent
Thus the construction above parametrizes in a neighbourhood of $\omega$ the set 
of pseudo critical points related to $\omega$; and clearly every critical point of 
$J$  is 
a pseudo critical point related to $\omega$ as well.

For the sake of clarity consider $u_{\alpha, \beta}>0$ close to $\omega$ solving 
$$\Pi_{H_{0}(\omega)^{\perp_{L_{g_{0}}}}}\nabla J(u_{\alpha, \beta})=0.$$ 

Then 
\begin{equation*}\begin{split}
\partial J(u_{\alpha, \beta})f
=
2k_{u_{\alpha, \beta}}^{\frac{2-n}{n}}\int (L_{g_{0}}u_{\alpha, \beta}-\frac{r_{u_{\alpha, \beta}}}{k_{u_{\alpha, \beta}}}Ku_{\alpha, \beta}^{\frac{n+2}{n-2}})f,
\end{split}\end{equation*}

so $\nabla J(u_{\alpha, \beta})=\overline u_{\alpha, \beta}$ solves 
$$L_{g_{0}}\overline u_{\alpha, \beta}=2k_{u_{\alpha, \beta}}^{\frac{2-n}{n}}
(L_{g_{0}}u_{\alpha, \beta}-\frac{r_{u_{\alpha, \beta}}}{k_{u_{\alpha, \beta}}}u_{\alpha, \beta}^{\frac{n+2}{n-2}}).$$ 

Thus
$\Pi \overline u_{\alpha, \beta}=0$ implies
\begin{equation*}\begin{split}
L_{g_{0}}u_{\alpha, \beta}& -\frac{r_{u_{\alpha, \beta}}}{k_{u_{\alpha, \beta}}}Ku_{\alpha, \beta}^{\frac{n+2}{n-2}}
=
\frac{k_{u_{\alpha, \beta}}^{\frac{n-2}{n}}}{2}L_{g_{0}}\overline u_{\alpha, \beta} \\
= &
\frac{k_{u_{\alpha, \beta}}^{\frac{n-2}{n}}}{2}\langle \overline u_{\alpha, \beta}, \frac{\omega}{\Vert \omega \Vert} \rangle_{L_{g_{0}}} L_{g_{0}}\frac{\omega}{\Vert \omega \Vert}
+
\frac{k_{u_{\alpha, \beta}}^{\frac{n-2}{n}}}{2}\sum_{j=1}^{m}\langle \overline u_{\alpha, \beta}, \mathrm{e}_{j}\rangle_{L_{g_{0}}} L_{g_{0}}\mathrm{e}_{j} \\
= &
[\int (L_{g_{0}}u_{\alpha, \beta}-\frac{r_{u_{\alpha, \beta}}}{k_{u_{\alpha, \beta}}}Ku_{\alpha, \beta}^{\frac{n+2}{n-2}})
\frac{w}{\Vert w \Vert}
]
L_{g_{0}}\frac{w}{\Vert w \Vert} \\
& +
\sum_{j=1}^{m}[\int (L_{g_{0}}u_{\alpha, \beta}-\frac{r_{u_{\alpha, \beta}}}{k_{u_{\alpha, \beta}}}Ku_{\alpha, \beta}^{\frac{n+2}{n-2}})\mathrm{e}_{j}]
L_{g_{0}}\mathrm{e}_{j}
\end{split}\end{equation*}

\begin{proof}[\textbf{Proof of lemma \ref{lem_degeneracy_and_pseudo_critical_points}}]$_{}$\\
The statement is a mere application of the implicit function theorem to
\begin{equation}
\begin{split}
W^{1,2}(M)=H_{0}(\omega )\oplus_{L_{g_{0}}} H_{0}(\omega )^{\perp_{L_{g_{0}}}}\- H_{0}(\omega )^{\perp_{L_{g_{0}}}}
:
u\- \Pi\nabla  J(u).
\end{split}
\end{equation}
Indeed
$
\Pi\nabla  J(\omega )=0, 
$
since $\nabla J(\omega)=0$. Moreover
\begin{equation}
\begin{split}
\nabla (\Pi \nabla J)(\omega)
=
\Pi \nabla^{2} J(\omega).
\end{split}
\end{equation}
and from \eqref{iso1} and \eqref{iso2}  we have isomorphy
\begin{equation}
\begin{split}
\nabla^{2}J(\omega)\lfloor_{H_{0}(\omega )^{\perp_{L_{g_{0}}}}}
:
H_{0}(\omega )^{\perp_{L_{g_{0}}}}
\overset{\simeq}{\-}
H_{0}(\omega )^{\perp_{L_{g_{0}}}}.
\end{split}
\end{equation}
As $\Pi$ is the identity operator on $H_{0}(\omega )^{\perp_{L_{g_{0}}}}$, we obtain
\begin{equation}
\begin{split}
\nabla_{H_{0}(\omega )^{\perp_{L_{g_{0}}}}} (\Pi \nabla J)(\omega)
= 
\nabla^{2} J(\omega)\lfloor_{H_{0}(\omega )^{\perp_{L_{g_{0}}}}}
\end{split}
\end{equation}
and therefore isomorphy of $\nabla_{H_{0}(\omega )^{\perp_{L_{g_{0}}}}} (\Pi \nabla J)(\omega)$ as well.

Finally the estimate on $h$ follows from \eqref{def_w_ej}.
\end{proof}
Using Moser iteration one may improve this result to a smooth setting.
\begin{proposition}[Smoothness of $u_{\alpha, \beta}$]\label{prop_smoothness_of_u_a_b}$_{}$\\
For any $k\in \N$ we have $w, \mathrm{e}_{i},u_{\alpha, \beta},h_{\alpha, \beta}\in C^{k}$ and 
\begin{equation*}\begin{split}
\Vert h(\alpha, \beta)\Vert_{C^{k}}\- 0\; \text{ as }\;  \vert \alpha\vert+\Vert \beta \Vert\- 0.
\end{split}\end{equation*}
\end{proposition}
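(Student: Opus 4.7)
The plan is to bootstrap elliptic regularity from the $W^{1,2}$ information provided by the implicit function theorem, using the explicit elliptic equation satisfied by $u_{\alpha,\beta}$.

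First I would argue that the base elements $\omega$ and $\mathrm{e}_{i}$ lie in $C^{\infty}$. Indeed $\omega>0$ solves the critical semilinear elliptic equation $L_{g_{0}}\omega=K\omega^{\frac{n+2}{n-2}}$; since $\omega\in W^{1,2}\subset L^{\frac{2n}{n-2}}$, Trudinger/Brezis–Kato-type iteration (applicable because $\omega$ is a fixed classical solution so the nonlinearity $K\omega^{\frac{4}{n-2}}$ acts as a bounded multiplier after the first Moser step) yields $\omega\in L^{\infty}$, and then Schauder bootstrap together with smoothness of $K$ upgrades to $\omega\in C^{\infty}$. Each eigenfunction $\mathrm{e}_{i}$ satisfies the linear equation $L_{g_{0}}\mathrm{e}_{i}=\frac{n+2}{n-2}K\omega^{\frac{4}{n-2}}\mathrm{e}_{i}$ with smooth coefficients, so standard linear elliptic regularity gives $\mathrm{e}_{i}\in C^{\infty}$ as well.

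Next I would use the pseudo critical point equation displayed right after lemma \ref{lem_degeneracy_and_pseudo_critical_points}, which identifies
\begin{equation*}
L_{g_{0}}u_{\alpha,\beta}-\frac{r_{u_{\alpha,\beta}}}{k_{u_{\alpha,\beta}}}Ku_{\alpha,\beta}^{\frac{n+2}{n-2}}
=A(u_{\alpha,\beta})L_{g_{0}}\omega+\sum_{j=1}^{m}B_{j}(u_{\alpha,\beta})L_{g_{0}}\mathrm{e}_{j},
\end{equation*}
where $A$ and the $B_{j}$ are scalar integrals depending continuously on $u_{\alpha,\beta}$ in $W^{1,2}$. Since $L_{g_{0}}\omega=K\omega^{\frac{n+2}{n-2}}$ and $L_{g_{0}}\mathrm{e}_{j}=\frac{n+2}{n-2}K\omega^{\frac{4}{n-2}}\mathrm{e}_{j}$ are already $C^{\infty}$ by the previous step, the right-hand side is of the form $\Phi(x,u_{\alpha,\beta})$ with $\Phi$ smooth in $x$ and smooth in $u$ on the open set where $u>0$. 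Writing $u_{\alpha,\beta}=\omega+r$ with $\|r\|_{W^{1,2}}=o(1)$ small, the nonlinearity $u_{\alpha,\beta}^{\frac{n+2}{n-2}}$ linearizes to a bounded perturbation of $\omega^{\frac{4}{n-2}}r$, so the Brezis–Kato trick applies uniformly: the first Moser iteration step yields $u_{\alpha,\beta}\in L^{p}$ for some $p>\frac{2n}{n-2}$, a second iteration gives $L^{\infty}$, and then Schauder estimates applied inductively to $L_{g_{0}}u_{\alpha,\beta}=\Phi(\cdot,u_{\alpha,\beta})$ deliver $u_{\alpha,\beta}\in C^{k,\alpha}$ for every $k$, hence $h_{\alpha,\beta}=u_{\alpha,\beta}-(1+\alpha)\omega-\beta^{i}\mathrm{e}_{i}\in C^{k}$.

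For the smallness $\|h(\alpha,\beta)\|_{C^{k}}\to 0$ I would apply the same bootstrap to the equation satisfied by the difference $h(\alpha,\beta)$ obtained by subtracting the equations for $u_{\alpha,\beta}$ and for $(1+\alpha)\omega+\beta^{i}\mathrm{e}_{i}$. The known $W^{1,2}$-smallness $\|h\|=O(|\alpha|^{2}+\|\beta\|^{2})$ from lemma \ref{lem_degeneracy_and_pseudo_critical_points}, combined with continuity of the coefficients $A,B_{j}$ in $W^{1,2}$, shows that the source driving $h$ tends to $0$ in $L^{p}$ for every $p$; running Schauder estimates quantitatively then propagates the smallness through each iteration step, yielding $\|h(\alpha,\beta)\|_{C^{k}}\to 0$.

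The main obstacle is the critical Sobolev exponent $\frac{n+2}{n-2}$ in the nonlinearity, which obstructs a naive Moser iteration. This is circumvented precisely because we work in a $W^{1,2}$-neighbourhood of the smooth reference solution $\omega$: on that neighbourhood $u^{\frac{4}{n-2}}$ is uniformly close in $L^{n/2}$ to the bounded function $\omega^{\frac{4}{n-2}}$, which is exactly the hypothesis needed to run Brezis–Kato and pass beyond the critical threshold. Once this first jump is achieved, the rest is standard linear bootstrap.
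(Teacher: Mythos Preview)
Your proposal is correct and follows essentially the same route as the paper: write the pseudo critical point equation as $L_{g_{0}}u_{\alpha,\beta}=Pu_{\alpha,\beta}+v_{\alpha,\beta}$ with $\Vert P\Vert_{L^{n/2}(B_{r})}\to 0$ as $r\to 0$ (your closeness to $\omega^{4/(n-2)}$ in $L^{n/2}$ is exactly this), run Moser/Brezis--Kato to pass the critical exponent, and then bootstrap by Schauder. The only noteworthy difference is the final step for $\Vert h(\alpha,\beta)\Vert_{C^{k}}\to 0$: the paper does not subtract equations but instead first secures \emph{uniform} $C^{k}$ bounds on $u_{\alpha,\beta}$ and then argues by compact embedding (uniform boundedness in $C^{k+1}$ plus $W^{1,2}$-convergence of $h$ to $0$ forces $C^{k}$-convergence). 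This is slightly cleaner than your quantitative difference estimate, since $(1+\alpha)\omega+\beta^{i}\mathrm{e}_{i}$ does not itself satisfy the pseudo critical point equation, so the ``subtracted equation'' for $h$ still carries the full nonlinearity and must be handled by a linearization that in effect reproduces the uniform-bound-plus-compactness argument anyway.
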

\begin{proof}[\textbf{Proof of proposition \ref{prop_smoothness_of_u_a_b}}]$_{}$\\
In view of lemma \ref{lem_spectral_theorem_and_degeneracy} let us write
\begin{equation}
\begin{split}
u_{\alpha, \beta}=(1+\alpha) \omega +\beta^{i}\mathrm{e}_{i}+h(\alpha, \beta).
\end{split}
\end{equation} 
The equation solved by $u_{\alpha, \beta}$ is $\Pi \nabla J=0$, which is equivalent to
\begin{equation}\label{eq_solved}
\begin{split}
L_{g_{0}}u_{\alpha, \beta}-(r\K)_{u_{\alpha, \beta}}u_{\alpha, \beta}^{\frac{n+2}{n-2}}
= &
[
\int 
(
L_{g_{0}}u_{\alpha, \beta}-(r\K)_{u_{\alpha, \beta}}u_{\alpha, \beta}^{\frac{n+2}{n-2}}
)
\frac{\omega}{\Vert \omega \Vert} 
]
L_{g_{0}}\frac{\omega}{\Vert \omega \Vert}  \\
& +
\sum_{i=1}^{m}
[ \int 
(L_{g_{0}}u_{\alpha, \beta}-(r\K)_{u_{\alpha, \beta}}u_{\alpha, \beta}^{\frac{n+2}{n-2}}) \mathrm{e}_{i}]
L_{g_{0}}\mathrm{e}_{i}
\end{split}
\end{equation} 
In particular
$L_{g_{0}}u_{\alpha, \beta}
=
P u_{\alpha, \beta}+v_{\alpha, \beta}
$
with $\Vert v_{\alpha, \beta} \Vert_{W^{-1,2}_{g_{0}}(M)}=O(\vert \alpha \vert + \Vert \beta \Vert)$  and
\begin{equation}\label{P}
\begin{split}
\Vert P \Vert_{L^{\frac{n}{2}}(B_{r}(x_{0}))}\overset{r\to 0}{\-} 0\; \text{ for all }\; x_{0}\in M. 
\end{split}
\end{equation} 
Let $p\geq 1$ and consider  a suitable cut-off function $\eta\in C^{1}_{0}(B_{2r}(x_{0}))$. For 
\begin{equation}
\begin{split}
w_{\alpha, \beta} = u^{2p-1}_{\alpha, \beta}\eta^{2} \; \text{ and }\; \text{w}_{\alpha, \beta} =u^{p}_{\alpha, \beta}\eta
\end{split}
\end{equation} 
one obtains using Young's inequality and absorption
\begin{equation}
\begin{split}
\vert  \nabla  \text{w}_{\alpha, \beta} \vert^{2}_{g_{0}}
\leq 
c_{p}(\langle \nabla  u_{\alpha, \beta}, \nabla  w_{\alpha, \beta}\rangle_{g_{0}}+ u^{2p}_{\alpha, \beta}\vert \nabla \eta \vert_{g_{0}}^{2})
\end{split}
\end{equation} 
and thus 
\begin{equation}
\begin{split}
\int L_{g_{0}} \text{w}_{\alpha, \beta}  \text{w}_{\alpha, \beta}
= &
\int c_{n}\vert \nabla  \text{w}_{\alpha, \beta} \vert^{2}_{g_{0}}+R_{g_{0}} \text{w}^{2}_{\alpha, \beta} \\
\leq &
c_{n,p}\int L_{g_{0}} u_{\alpha, \beta}  w_{\alpha, \beta}
+
 u ^{2p}_{\alpha, \beta}\vert \nabla \eta \vert_{g_{0}}^{2} \\
= &
c_{n,p}\int P u_{\alpha, \beta}  w_{\alpha, \beta}+v_{\alpha, \beta}  w_{\alpha, \beta} 
+ 
u^{2p}_{\alpha, \beta}\vert \nabla \eta \vert^{2}_{g_{0}}.
\end{split}
\end{equation} 
As $ \text{w}^{2}_{\alpha, \beta}= u_{\alpha, \beta} w_{\alpha, \beta}$ 
and $ w_{\alpha, \beta}= \text{w}_{\alpha, \beta} u^{p-1}_{\alpha, \beta}\eta$
one may absorb via 
\eqref{P} 
to get
\begin{equation}
\begin{split}
\int L_{g_{0}} \text{w}_{\alpha, \beta}  \text{w}_{\alpha, \beta}
\leq  &
C_{n,p}
(
\Vert v_{\alpha, \beta}  u ^{p-1}_{\alpha, \beta}\Vert_{L^{\frac{2n}{n+2}}}^{2}
+
\Vert  u^{2p}_{\alpha, \beta}\Vert_{L_{g_{0}}^{1}}
).
\end{split}
\end{equation} 
Suppose $ u_{\alpha, \beta} \in L^{r}, \,r\geq \frac{2n}{n-2}$. We then get for $p=\frac{r}{2}$ using H\"older's inequality
\begin{equation}
\begin{split}
\int L_{g_{0}} \text{w}_{\alpha, \beta}  \text{w}_{\alpha, \beta}
\leq &
C_{n,p}(\Vert v_{\alpha, \beta} \Vert^{2}_{L_{g_{0}}^{\frac{nr}{n+r}}}\Vert  u_{\alpha, \beta} \Vert_{L_{g_{0}}^{r}}^{r-2}+
\Vert  u_{\alpha, \beta} \Vert^{r}_{L_{g_{0}}^{r}}) \\
\leq &
C_{n,p}
(
\Vert v_{\alpha, \beta} \Vert^{r}_{L_{g_{0}}^{\frac{nr}{n+r}}}
+
\Vert  u_{\alpha, \beta} \Vert^{r}_{L_{g_{0}}^{r}}).
\end{split}
\end{equation} 
whence using a suitable covering $M=\sum^{m}_{i=1}B_{r_{i}}(x_{i})$ we get 
\begin{equation}\label{r_interpolation}
\begin{split}
\Vert  u_{\alpha, \beta} \Vert^{2}_{L_{g_{0}}^{\frac{n}{n-2}r}}
\leq 
C_{n,p}
(
\Vert v_{\alpha, \beta} \Vert^{r}_{L_{g_{0}}^{\frac{nr}{n+r}}}
+
\Vert  u_{\alpha, \beta} \Vert^{r}_{L_{g_{0}}^{r}}).
\end{split}
\end{equation} 

Note, that in case $\vert \alpha \vert +\Vert \beta \Vert=0$ we have $u_{\alpha, \beta}=\omega$ 
and $v_{\alpha, \beta}=0$, whence

by iteration of \eqref{r_interpolation} one obtains $w\in L^{p}_{g_{0}}$ for all $1\leq p< \infty$. Due to
\begin{equation*}\begin{split}
L_{g_{0}}\omega=K\omega^{\frac{n+2}{n-2}}
\;\text{ and }\;
L_{g_{0}}\mathrm{e}_{j}=\frac{n+2}{n-2}K\omega^{\frac{4}{n-2}}\mathrm{e}_{j}
\end{split}\end{equation*}

this gives $\omega, \mathrm{e}_{j}\in C^{\infty}$ by standard regularity arguments. \\
Recalling \eqref{eq_solved} this implies $v_{\alpha, \beta}\in C^{k}$ and 
\begin{equation}
\begin{split}
\Vert v_{\alpha, \beta}\Vert_{C^{k}}=O(\vert \alpha \vert +\Vert \beta \Vert) 
\end{split}
\end{equation} 
Thus we obtain by iteration of \eqref{r_interpolation} 
\begin{equation}
\begin{split}
\fa 1\leq q <\infty\;:\; \sup_{\vert \alpha \vert +\Vert \beta \Vert<\epsilon}\Vert  u_{\alpha, \beta}\Vert_{L_{g_{0}}^{q}}<\infty.
\end{split}
\end{equation} 
and therefore $\sup_{\vert \alpha \vert +\Vert \beta \Vert<\epsilon}\Vert u_{\alpha, \beta}\Vert_{C^{k}}<\infty$. Since by the very definition of $u_{\alpha, \beta}$
\begin{equation}
\begin{split}
\Vert h(\alpha, \beta)\Vert \-0 \;\text{ for }\; \vert \alpha \vert + \Vert \beta \Vert\- 0,
\end{split}
\end{equation} 
this convergence generalizes to all $C^{k}$ by compact embedding.
\end{proof}

Note, that due to  scaling invariance
\begin{equation*}\begin{split}
\Pi \nabla J(\omega)=0 \gdw \fa \alpha >0\;:\; \Pi \nabla J(\alpha \omega)=0.
\end{split}\end{equation*}

Thus we may reparametrise the pseudo critical points related to $\omega$ as 
\begin{equation*}\begin{split}
u_{\alpha, \beta}=\alpha(\omega +\beta^{i}\mathrm{e}_{i}+h(\beta)), h(\beta)\perp_{L_{g_{0}}} H_{0}(\omega),
\end{split}\end{equation*}

where
$
\Vert h(\beta)\Vert=O(\Vert \beta \Vert^{2})
$
and $\Vert h(\beta)\Vert_{C^{k}}\- 0$ as $\Vert \beta \Vert \- 0$.

\subsection{Critical points at infinity}
\label{subsec:CriticalPointsAtInfinity}

\begin{definition}[A neighbourhood of critical points at infinity]\label{def_V(omega,p,e)}$_{}$

Let  
$\omega\geq 0$ solve $L_{g_{0}}\omega =K\omega  ^{\frac{n+2}{n-2}}, \,p\in \N$ and $\eps>0$ sufficiently small. 
\\
For $u\in W^{1,2}_{g_{0}}(M)$ we define 
\begin{equation*}
\begin{split}
A_{u}(\omega,p, \eps)
=
\{ &
(\alpha, \beta_{k}, \alpha_{i}, \lambda_{i},a_{i})\in (\R_{+}, \R^{m}, \R^{p}_{+}, \R^{p}_{+},M^{p}) \mid
\\
& \;
\underset{i\neq j}{\fa}\;
 \lambda_{i}^{-1}, \lambda_{j}^{-1}, \eps_{i,j}, \vert 1-\frac{r\alpha_{i}^{\frac{4}{n-2}}K(a_{i})}{4n(n-1)k}\vert,
\\
& \quad\;\;\;
\vert 1-\frac{r\alpha^{\frac{4}{n-2}}}{k}\vert, \Vert \beta \Vert,
\Vert u-u_{\alpha, \beta}-\alpha^{i}\varphi_{a_{i}, \lambda_{i}}\Vert
<\eps\
\},
\end{split}
\end{equation*} 
where recalling lemma \ref{lem_interactions} we have
\begin{equation*}\begin{split}
\eps_{i,j}
=
(
\frac{\lambda_{j}}{\lambda_{i}}
+
\frac{\lambda_{i}}{\lambda_{j}}
+
\lambda_{i}\lambda_{j}\gamma_{n}G_{g_{0}}^{\frac{2}{2-n}}(a _{i},a _{j})
)^{\frac{2-n}{2}}.
\end{split}\end{equation*}
In case $p>0$ we call 
\begin{equation*}\begin{split}
V(\omega, p, \eps)
= 
\{
u\in W^{1,2}_{g_{0}}(M)  
\mid
A_{u}(\omega,p, \eps)\neq \emptyset
\}
\end{split}\end{equation*}
 a neighbourhood of a critical point at infinity.
\end{definition}
Keep in mind, that $k\equiv 1$ and $r\searrow r_{\infty}$ along a flow line. We would like to make a remark on two special cases.
\begin{enumerate}[label=(\roman*)]
 \item 
If $\omega=0$, then
$u_{\alpha, \beta}=0$. So the conditions on $\alpha$ and $\beta_{k}$ are trivial. Thus the sets $A_{u}(0,p, \eps)$ and $V(0,p, \eps)$ naturally reduce to
\begin{equation*}
\begin{split}
A_{u}(p, \eps)
=
\{ &
(\alpha_{i}, \lambda_{i},a_{i})\in (\R^{p}_{+}, \R^{p}_{+},M^{p}) \mid
\\
&\,
\underset{i\neq j}{\fa}\;
 \lambda_{i}^{-1}, \lambda_{j}^{-1}, \eps_{i,j}, \vert 1-\frac{r\alpha_{i}^{\frac{4}{n-2}}K(a_{i})}{4n(n-1)k}\vert,
\Vert u-\alpha^{i}\varphi_{a_{i}, \lambda_{i}}\Vert
<\eps\
\}
\end{split}
\end{equation*} 
and 
$
V(p, \eps)=\{u\in W^{1,2}_{g_{0}}(M)\mid A_{u}(p, \eps)\neq \emptyset\}.
$

 \item
$V(\omega,0, \eps)$ corresponds to a neighbourhood the critical point line 
\begin{equation*}
\begin{split}
 \{\alpha \omega\mid \alpha>0\}.
\end{split}
\end{equation*} 
\end{enumerate}
So proposition \ref{prop_concentration_compactness} states, that every sequence
$u(t_{k})$ is precompact 
with respect to $V(\omega, p, \eps)$ in the sense, that up to a subsequence for any $\eps>0$ 
we find an index $k_{0}$, for which $u_{t_{k}} \in V(w,p, \eps)$ for some $p\geq 0$ and all $k\geq k_{0}$.

The subsequent reduction by minimization, whose prove we postpone to the appendix, 
makes the representation in $V(\omega, p, \eps)$ unique.

\begin{proposition}[Optimal choice]\label{prop_optimal_choice}$_{}$\\
For every $\eps_{0}>0$  there exists $\eps_{1}>0$ such, that  for $u\in V(\omega, p, \eps)$ with $\eps<\eps_{1}$
\begin{equation*}\begin{split}
\inf
_
{
(\tilde \alpha, \tilde\beta_{k}, \tilde\alpha_{i}, \tilde a_{i}, \tilde\lambda_{i})\in A_{u}(\omega,p,2\eps_{0}) 
}
\int 
Ku^{\frac{4}{n-2}}
\vert 
u
-
u_{\tilde \alpha, \tilde \beta}
-
\tilde\alpha^{i}\varphi_{\tilde a_{i}, \tilde \lambda_{i}}
\vert^{2}
\end{split}\end{equation*}
admits an unique minimizer $(\alpha, \beta_{k}, \alpha_{i},a_{i}, \lambda_{i})\in A_{u}(\omega,p, \eps_{0})$ and we define
\begin{equation*}\begin{split}
\varphi_{i}=\varphi_{a_{i}, \lambda_{i}},v=u-u_{\alpha, \beta}-\alpha^{i}\varphi_{i},
\;\, \eps_{i,j}
=
(
\frac{\lambda_{j}}{\lambda_{i}}
+
\frac{\lambda_{i}}{\lambda_{j}}
+
\lambda_{i}\lambda_{j}\gamma_{n}G_{g_{0}}^{\frac{2}{2-n}}(  a _{i},  a _{j})
)^{\frac{2-n}{2}}.
\end{split}\end{equation*}
Moreover $(\alpha, \beta_{k}, \alpha_{i},a_{i}, \lambda_{i})$ depends smoothly on $u$.
\end{proposition}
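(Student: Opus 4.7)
My plan is to treat the statement as a Lyapunov--Schmidt reduction: rephrase the minimization as a smooth nonlinear system in the parameters and apply the implicit function theorem at a reference configuration supplied by proposition \ref{prop_concentration_compactness}. Setting
\[
F_{u}(\tilde\alpha,\tilde\beta_{k},\tilde\alpha_{i},\tilde a_{i},\tilde\lambda_{i})
=
\int K u^{\frac{4}{n-2}} \vert u-u_{\tilde\alpha,\tilde\beta}-\tilde\alpha^{i}\varphi_{\tilde a_{i},\tilde\lambda_{i}}\vert^{2} d\mu_{g_{0}}
\]
on the finite-dimensional open manifold $A_{u}(\omega,p,2\eps_{0})$, interior critical points of $F_{u}$ are characterised by orthogonality of the residual $v=u-u_{\tilde\alpha,\tilde\beta}-\tilde\alpha^{i}\varphi_{\tilde a_{i},\tilde\lambda_{i}}$ to the tangent directions $\partial_{\alpha}u_{\alpha,\beta}$, $\partial_{\beta_{k}}u_{\alpha,\beta}$, $\varphi_{i}$, $\phi_{2,i}=-\lambda_{i}\partial_{\lambda_{i}}\varphi_{i}$, $\phi_{3,i}=\lambda_{i}^{-1}\nabla_{a_{i}}\varphi_{i}$, with respect to the weighted inner product $\langle f,g\rangle_{u}=\int Ku^{4/(n-2)}fg\, d\mu_{g_{0}}$.

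Next I would compute the Jacobian of this orthogonality system. Because $\Vert v \Vert = O(\eps_{0})$ and $u$ is close to $u_{\alpha,\beta}+\alpha^{i}\varphi_{i}$, the Jacobian equals, up to $O(\eps_{0})$-corrections, the Gram matrix of the above tangent vectors in $\langle\cdot,\cdot\rangle_{u}$. Replacing $Ku^{4/(n-2)}$ in leading order by a constant multiple of $\omega^{4/(n-2)}$ away from the concentration points and by the sharper bubble profile near each $a_{i}$, the interaction estimates of lemma \ref{lem_interactions}(ii)--(iv) render this matrix approximately block-diagonal: the bubble-to-bubble diagonal blocks are $c_{k}\cdot\mathrm{id}+O(\lambda_{i}^{-2}+\lambda_{i}^{2-n})$ with $c_{k}>0$, the $(\omega,\mathrm{e}_{k})$-block is non-degenerate by lemma \ref{lem_spectral_theorem_and_degeneracy}, and all cross terms between bubbles and between bubbles and the reference solution are of order $O(\eps_{i,j})+O(\lambda_{i}^{-(n-2)/2})$. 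For $\eps_{1}$ sufficiently small this Jacobian is therefore uniformly invertible, and the implicit function theorem produces a unique smooth branch of critical points $(\alpha,\beta_{k},\alpha_{i},a_{i},\lambda_{i})(u)$ near any reference configuration.

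The remaining step, and the main obstacle, is to promote this local critical point into the global minimizer of $F_{u}$ on $A_{u}(\omega,p,2\eps_{0})$ and confine it to $A_{u}(\omega,p,\eps_{0})$. Choosing a reference tuple in $A_{u}(\omega,p,\eps_{1})$ yields $F_{u}=O(\eps_{1}^{2})$, so $\inf F_{u}=O(\eps_{1}^{2})$. The constraints on $\alpha,\alpha_{i},\beta_{k}$ keep these parameters in compact intervals bounded away from $0$, and $a_{i}\in M$ lies in a compact manifold; the only possible escape of a minimizing sequence is through $\lambda_{i}\to\infty$ or $\eps_{i,j}\to 0$, namely a bubble vanishing or two bubbles merging. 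Excluding this quantitatively is the delicate point: a lost or merged bubble would leave a residual $v$ forced to absorb a fixed amount of $L^{2n/(n-2)}$-mass, yielding $\int Ku^{4/(n-2)}v^{2}\geq c_{0}>0$ uniformly by the same concentration-compactness arguments underlying proposition \ref{prop_concentration_compactness}; this contradicts $F_{u}=O(\eps_{1}^{2})$ once $\eps_{1}$ is small enough. Compactness of minimizing sequences then follows, the local uniqueness from the implicit function theorem identifies the minimizer, and the $O(\eps_{1}^{2})$ smallness forces it into the strictly smaller open set $A_{u}(\omega,p,\eps_{0})$ rather than onto its boundary. Smooth dependence of the minimizer on $u$ is immediate from the implicit function theorem with $u$ as ambient parameter.
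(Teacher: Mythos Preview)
Your proposal is correct and follows the same overall Lyapunov--Schmidt scheme as the paper, but the organization differs in two places worth noting. First, for compactness and confinement of the minimizer to $A_{u}(\omega,p,\eps_{0})$, the paper does not invoke concentration--compactness abstractly; instead it derives directly from the interaction lemma the quantitative coercivity estimate
\[
\vert\hat\alpha-\tilde\alpha\vert^{2}+\Vert\hat\beta-\tilde\beta\Vert^{2}
+\sum_{i}\bigl(\vert\hat\alpha_{i}-\tilde\alpha_{i}\vert^{2}
+\hat\lambda_{i}^{2}\vert\hat a_{i}-\tilde a_{i}\vert^{2}
+\vert\tilde\lambda_{i}/\hat\lambda_{i}-1\vert^{2}\bigr)=w(\eps)
\]
for any near-minimizer $(\tilde\alpha,\tilde\beta_{k},\tilde\alpha_{i},\tilde a_{i},\tilde\lambda_{i})$, which simultaneously rules out $\tilde\lambda_{i}\to\infty$ and forces the minimizer into the smaller set. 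Your mass-absorption argument reaches the same conclusion but is less explicit. Second, for uniqueness the paper does not rely on the local uniqueness from the implicit function theorem; it takes two hypothetical minimizers, uses the first-order optimality conditions at one of them together with the orthogonality at the other, and shows that the total discrepancy $A+\sum_{k}B_{k}+\sum_{i}(A_{i}+D_{i}+L_{i})$ satisfies a self-improving bound $o(\,\cdot\,)$ of itself, hence vanishes. This is of course equivalent to the invertibility of the Jacobian you compute, so the two arguments are two sides of the same coin; the paper reserves the implicit function theorem only for the smooth dependence on $u$ at the very end.
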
 
\noindent
Thus for a sequence $u_{l}\in V(\omega,p, \eps_{l}), \, \eps_{l}\-0$ we may assume, that for each $u_{l}$ there exists an unique
representation in $A_{u_{l}}(\omega,p, \eps_{0})$, say
\begin{equation*}\begin{split}
u_{l}=u_{\alpha_{l}, \beta_{l}}+\alpha^{i,l}\var_{a_{i,l}, \lambda_{i,l}}+v_{l}
, \;(\alpha_{l}, \beta_{k,l}, \alpha_{i,l},a_{i,l}, \lambda_{i,l})
\in A_{u_{l}}(\omega,p, \eps_{0})
\end{split}\end{equation*}
and we have 
$(\alpha_{l}, \beta_{k,l}, \alpha_{i,l},a_{i,l}, \lambda_{i,l})
\in A_{u_{l}}(\omega,p, \epsilon_{l})$
for suitable $\epsilon_{l}\-0$.

The error term  
$v=u-u_{\alpha, \beta}-\alpha^{i}\var_{i}$ is with respect to the scalar product
\begin{equation*}\begin{split}
\langle \cdot, \cdot\rangle_{Ku^{\frac{4}{n-2}}}
=
\int \cdot \,Ku^{\frac{4}{n-2}}\, \cdot  
\end{split}\end{equation*}

orthogonal to 
\begin{equation*}\begin{split}
\langle 
u_{\alpha, \beta}, \partial_{\beta_{i}}u_{\alpha, \beta}, \varphi_{i},-\lambda_{i}\partial_{\lambda_{i}}\varphi_{i}, \frac{1}{\lambda_{i}}\nabla_{a_{i}}\varphi_{i}
\rangle 
\end{split}\end{equation*}

and due to $\vert \delta J(u)\vert\- 0$ almost orthogonal with respect to 
\begin{equation*}\begin{split}
\langle \cdot, \cdot\rangle_{L_{g_{0}}}=\int \cdot L_{g_{0}} \cdot
\end{split}\end{equation*}

\begin{definition}[The orthogonal bundle $H(\omega, p, \eps)$]\label{def_H(omega,p,e)}$_{}$\\
For $u\in V(\omega, p, \eps)$ let
\begin{equation*}\begin{split}
H_{u}(\omega, p, \eps)
=
\langle 
u_{\alpha, \beta}, \partial_{\beta_{i}}u_{\alpha, \beta}, \varphi_{i},-\lambda_{i}\partial_{\lambda_{i}}\varphi_{i}, \frac{1}{\lambda_{i}}\nabla_{a_{i}}\varphi_{i}
\rangle
^{\perp_{Ku^{\frac{4}{n-2}}}}
\end{split}\end{equation*}
in case $\omega>0$ and in case $\omega=0$
\begin{equation*}\begin{split}
H_{u}(p, \eps)
=
\langle 
\varphi_{i},-\lambda_{i}\partial_{\lambda_{i}}\varphi_{i}, \frac{1}{\lambda_{i}}\nabla_{a_{i}}\varphi_{i}
\rangle
^{\perp_{Ku^{\frac{4}{n-2}}}}
\end{split}\end{equation*}
\end{definition}

Orthogonality of the error term  $v$ implies smallness of linear interactions.
Subsequently we will even show, that essentially 
$v$ is negligible.
\begin{lemma}[Linear $v$-type interactions]\label{lem_v_type_interactions}$_{}$\\
On $V(\omega, p, \eps)$ for $\eps>0$ small we have
\begin{enumerate}[label=(\roman*)]
 \item \quad
$
\int  L_{g_{0}}\phi_{k,i}v 
= 
o( \frac{1}{\lambda_{i}^{\frac{n-2}{2}}} + \sum_{i\neq j=1}^{p}\eps_{i,j})
+
O
(\Vert v \Vert^{2})
$
 \item \quad
$
\int L_{g_{0}}u_{\alpha, \beta}v
=
o(\sum_{r}\frac{1}{\lambda_{r}^{\frac{n-2}{2}}})
+
O(\Vert v \Vert^{2}+\vert \delta J(u)\vert^{2})
$ 
\item \quad
$
\int Ku^{\frac{n+2}{n-2}}\phi_{k,i}
= 
\int K (u_{\alpha, \beta}+\alpha^{j}\varphi_{j})^{\frac{n+2}{n-2}}\phi_{k,i}
+
O(\Vert v \Vert^{2})
$
\item \quad
$
\int Ku^{\frac{n+2}{n-2}}u_{\alpha, \beta}
= 
\int K (u_{\alpha, \beta}+\alpha^{j}\varphi_{j})^{\frac{n+2}{n-2}}u_{\alpha, \beta}
+
O(\Vert v \Vert^{2})
$
\end{enumerate}
and more precisely for $u\in V(p, \eps)$
\begin{equation*}\begin{split}
\int  L_{g_{0}}\phi_{k,i}v 
= 
o
(
\frac{1}{\lambda_{i}^{n-2}} + \sum_{i\neq j=1}^{p}\eps_{i,j})
+
O
(
\frac{\vert \nabla K_{i}\vert^{2}}{\lambda_{i}^{2}}
+
\Vert v \Vert^{2}
).
\end{split}\end{equation*}

\end{lemma}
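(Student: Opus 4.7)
The unifying strategy is to exploit that $v\in H_{u}(\omega,p,\eps)$, so $\int Ku^{\frac{4}{n-2}}\phi_{k,i}v=0$ and $\int Ku^{\frac{4}{n-2}}u_{\alpha,\beta}v=0$, together with the pointwise expansion $L_{g_{0}}\varphi_{i}=4n(n-1)\varphi_{i}^{\frac{n+2}{n-2}}+E_{i}$ from lemma \ref{lem_emergence_of_the_regular_part}, where $E_{i}$ collects the regular part of the Green's function and the lower-order curvature correction. For each of (i)--(iv) one rewrites the left-hand side as the bubble contribution $C\int \varphi_{i}^{\frac{4}{n-2}}\phi_{k,i}v$ plus small remainders, then replaces $\varphi_{i}^{\frac{4}{n-2}}$ by the right multiple of $Ku^{\frac{4}{n-2}}$ to activate the orthogonality.

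\emph{Part (i).} Since $d_{k,i}$ commutes with $L_{g_{0}}$, the chain rule gives $d_{k,i}\varphi_{i}^{\frac{n+2}{n-2}}=\varphi_{i}^{\frac{n+2}{n-2}}$ for $k=1$ and $\frac{n+2}{n-2}\varphi_{i}^{\frac{4}{n-2}}\phi_{k,i}$ for $k=2,3$, so $L_{g_{0}}\phi_{k,i}=c_{k,n}\varphi_{i}^{\frac{4}{n-2}}\phi_{k,i}+d_{k,i}E_{i}$. Using the normalization $|1-\tfrac{r\alpha_{i}^{4/(n-2)}K(a_{i})}{4n(n-1)k}|<\eps$ and the fact that $u^{\frac{4}{n-2}}\approx(\alpha^{i})^{\frac{4}{n-2}}\varphi_{i}^{\frac{4}{n-2}}$ near $a_{i}$, I select a constant $\mu_{k,i}$ so that $c_{k,n}\varphi_{i}^{\frac{4}{n-2}}\phi_{k,i}$ and $\mu_{k,i}Ku^{\frac{4}{n-2}}\phi_{k,i}$ match to leading order. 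Then
\[ \int L_{g_{0}}\phi_{k,i}v=\mu_{k,i}\int Ku^{\frac{4}{n-2}}\phi_{k,i}v+\int\bigl(L_{g_{0}}\phi_{k,i}-\mu_{k,i}Ku^{\frac{4}{n-2}}\phi_{k,i}\bigr)v, \]
and the first integral vanishes. The residue splits into: (a) the regular-part term $d_{k,i}E_{i}$, whose $W^{-1,2}$-norm is $O(\lambda_{i}^{-(n-2)/2})$ and which, paired with $\Vert v\Vert=o(1)$, gives $o(\lambda_{i}^{-(n-2)/2})$; (b) the interactions $\varphi_{j}^{\frac{4}{n-2}}\phi_{k,i}$, $j\neq i$, controlled by lemma \ref{lem_interactions} to produce $o(\sum_{j\neq i}\eps_{i,j})$; and (c) the Taylor remainder from replacing $u^{\frac{4}{n-2}}$ by $u_{0}^{\frac{4}{n-2}}$ with $u_{0}=u_{\alpha,\beta}+\alpha^{j}\varphi_{j}$, which by the mean-value bound $|u^{\frac{4}{n-2}}-u_{0}^{\frac{4}{n-2}}|\lesssim|v|\max\{u,u_{0}\}^{\frac{4}{n-2}-1}$ together with Sobolev yields $O(\Vert v\Vert^{2})$. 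In the $V(p,\eps)$-case with $\omega=0$, the far-field coupling disappears, the regular-part estimate sharpens to $O(\lambda_{i}^{-(n-2)})$, and the Taylor expansion $K(x)=K(a_{i})+\nabla K(a_{i})\cdot(x-a_{i})+O(r_{a_{i}}^{2})$ introduces the additional $|\nabla K_{i}|^{2}/\lambda_{i}^{2}$ term.

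\emph{Parts (iii), (iv), (ii).} For (iii) and (iv) I expand $Ku^{\frac{n+2}{n-2}}=Ku_{0}^{\frac{n+2}{n-2}}+\frac{n+2}{n-2}Ku_{0}^{\frac{4}{n-2}}v+O(|v|^{2}\max\{u,u_{0}\}^{\frac{4}{n-2}-1})$ and test against $\phi_{k,i}$ respectively $u_{\alpha,\beta}$. The linear-in-$v$ piece is converted via $\int Ku_{0}^{\frac{4}{n-2}}\phi_{k,i}v=\int Ku^{\frac{4}{n-2}}\phi_{k,i}v-\int(Ku^{\frac{4}{n-2}}-Ku_{0}^{\frac{4}{n-2}})\phi_{k,i}v$ into $0+O(\Vert v\Vert^{2})$ by orthogonality and the same mean-value estimate; the quadratic tail is directly $O(\Vert v\Vert^{2})$. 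For (ii) I invoke the identity recalled after lemma \ref{lem_degeneracy_and_pseudo_critical_points}, namely $L_{g_{0}}u_{\alpha,\beta}-\frac{r_{u_{\alpha,\beta}}}{k_{u_{\alpha,\beta}}}Ku_{\alpha,\beta}^{\frac{n+2}{n-2}}\in\langle L_{g_{0}}\omega,L_{g_{0}}\mathrm{e}_{j}\rangle$. The $Ku_{\alpha,\beta}^{(n+2)/(n-2)}v$ piece reduces to $O(\Vert v\Vert^{2})$ as in (i). The $\omega$- and $\mathrm{e}_{j}$-residues paired with $v$ carry the discrepancy between $\langle\cdot,\cdot\rangle_{Ku^{4/(n-2)}}$ and $\langle\cdot,\cdot\rangle_{L_{g_{0}}}$, which is of order $|\delta J(u)|$ since the deviation of the weight is governed by $R-r\bar K$; this produces the $O(|\delta J(u)|^{2})$ term, while the bubble self-action against $u_{\alpha,\beta}$ yields the $o(\sum_{r}\lambda_{r}^{-(n-2)/2})$ contribution.

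The delicate step is (ii): one has to convert $\langle\cdot,\cdot\rangle_{Ku^{4/(n-2)}}$-orthogonality into $\langle\cdot,\cdot\rangle_{L_{g_{0}}}$-orthogonality and quantify the defect by $|\delta J(u)|$, while ensuring that no linear-in-$v$ term escapes the various cancellations, since any such survivor would degrade the bound from $O(\Vert v\Vert^{2})$ to $O(\Vert v\Vert)$ and render the estimate useless in the subsequent control of the error term.
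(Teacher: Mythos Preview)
Your outline for (i), (iii), (iv) matches the paper's approach: orthogonality plus the expansion of $L_{g_{0}}\varphi_{i}$ from lemma~\ref{lem_emergence_of_the_regular_part}, with region-splitting to peel off the single bubble $\varphi_{i}^{4/(n-2)}$ from $u^{4/(n-2)}$. One minor correction in (i): the regular-part error $d_{k,i}E_{i}$ always has $W^{-1,2}$-size $O(\lambda_{i}^{-(n-2)})$, not $O(\lambda_{i}^{-(n-2)/2})$. The weakening to $o(\lambda_{i}^{-(n-2)/2})$ in the $\omega>0$ case comes from a term you did not list: when you extract $\varphi_{i}^{4/(n-2)}$ from $(u_{\alpha,\beta}+\alpha^{j}\varphi_{j})^{4/(n-2)}$, the cross terms $\varphi_{i}^{4/(n-2)}u_{\alpha,\beta}$ and $u_{\alpha,\beta}^{4/(n-2)}\varphi_{i}$ paired with $|v|$ give $O(\lambda_{i}^{-(n-2)/2}\Vert v\Vert)=o(\lambda_{i}^{-(n-2)/2})$. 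This is why that term disappears in the sharpened $V(p,\eps)$ estimate.

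For (ii) there is a genuine gap in your mechanism. The terms $\int L_{g_{0}}\omega\, v$ and $\int L_{g_{0}}\mathrm{e}_{j}v$ are \emph{not} themselves small via any scalar-product comparison; they are simply $O(\Vert v\Vert)$. The $|\delta J(u)|$ enters through the \emph{coefficients} of the pseudo-critical decomposition. Writing
\[
\int L_{g_{0}}u_{\alpha,\beta}v-\int(r\bar K)_{u_{\alpha,\beta}}u_{\alpha,\beta}^{\frac{n+2}{n-2}}v
=\Bigl[\int(L_{g_{0}}u_{\alpha,\beta}-(r\bar K)_{u_{\alpha,\beta}}u_{\alpha,\beta}^{\frac{n+2}{n-2}})\tfrac{\omega}{\Vert\omega\Vert}\Bigr]\int L_{g_{0}}\tfrac{\omega}{\Vert\omega\Vert}v+\cdots,
\]
the bracketed coefficient is then traded for the same expression with $u$ in place of $u_{\alpha,\beta}$, at the cost of $O(\sum_{r}\lambda_{r}^{-(n-2)/2}+\Vert v\Vert)$ and a residue $O(|(\tfrac{r}{k})_{u_{\alpha,\beta}}-(\tfrac{r}{k})_{u}|)$. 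The expression with $u$ is $O(|\delta J(u)|)$ by definition. The product of these coefficients with the $O(\Vert v\Vert)$ factors then yields $O(|\delta J(u)|^{2}+\Vert v\Vert^{2})$ by Young, and the $(r/k)$-discrepancy is bounded separately by $O(\sum_{r}\lambda_{r}^{-(n-2)/2}+\Vert v\Vert+|\delta J(u)|)$ (the paper records this as \eqref{r/kuab-r/k_rough_estimate}). Your ``discrepancy of scalar products governed by $R-r\bar K$'' narrative would require $\int L_{g_{0}}\omega\,v$ itself to be $O(|\delta J(u)|)$, which does not follow from the available orthogonalities.
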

\noindent
We use $K_{i}$ as a short hand notation for $K(a_{i})$, $\nabla K_{i}$ for $\nabla K(a_{i})$ etc. 
\begin{proof}[\textbf{Proof of lemma \ref{lem_v_type_interactions}}]\label{p_v_type_interactions}$_{}$\\
We first calculate the bubble type interactions. Recall
\begin{equation}\begin{split}
\phi_{k,i}=d_{k,i}\varphi_{i},
\text{ where }\;
(d_{k,i})_{k=1,2,3}=(1,-\lambda_{i}\partial_{\lambda_{i}}, \frac{1}{\lambda_{i}}\nabla_{a_{i}}).
\end{split}\end{equation}
By lemma \ref{lem_emergence_of_the_regular_part}  one obtains
\begin{equation}\begin{split}
\int L_{g_{0}}\phi_{k,i}v
= &
\int d_{k,i}L_{g_{0}}\varphi_{i}v \\
= & 
4n(n-1)\int_{B_{\alpha}( a _{i})} d_{k,i}\varphi_{i}^{\frac{n+2}{n-2}}v 
+
o
(
\frac{1}{\lambda_{i}^{n-2}})+O(\Vert v \Vert^{2}), 
\end{split}\end{equation}
whence with $c_{k}>0$
\begin{equation}\begin{split}
\int L_{g_{0}}\phi_{k,i}v
= & 
c_{k}\int \varphi_{i}^{\frac{4}{n-2}} \phi_{k,i}v
+
o( \frac{1}{\lambda_{i}^{n-2}} )+O(\Vert v \Vert^{2}).
\end{split}\end{equation}
Moreover we have  
\begin{equation}\begin{split}
\int (K-K_{i}) \varphi_{i}^{\frac{4}{n-2}} \phi_{k,i} v 
= &
o(\frac{1}{\lambda_{i}^{n-2}})
+
O(\frac{\vert \nabla K_{i}\vert^{2} }{\lambda_{i}^{2}} +\Vert v \Vert^{2})
\end{split}\end{equation}
and thus 
\begin{equation}\begin{split}\label{L_{g_{0}}Phiki_Phikideltai^(...)}
\int L_{g_{0}}\phi_{k,i}v
= 
c_{k}\int\frac{K}{K_{i}}\varphi_{i}^{\frac{4}{n-2}}\phi_{k,i}v
& +
o( \frac{1}{\lambda_{i}^{n-2}} ) +
O
(\frac{\vert \nabla K_{i}\vert^{2}}{\lambda_{i}^{2}}+\Vert v \Vert^{2}).
\end{split}\end{equation}
Expanding $u^{\frac{4}{n-2}}=(\alpha^{j}\varphi_{j}+v)^{\frac{4}{n-2}}$ in case $u\in V(p, \eps)$ we have
\begin{equation}\begin{split}
0
= &
\int Ku^{\frac{4}{n-2}}\phi_{k,i}v 
= 
\int_{[\alpha^{j}\varphi_{j}\geq v]} K(\alpha^{j}\varphi_{j})^{\frac{4}{n-2}} \phi_{k,i}v
+
O(\Vert v \Vert^{2}),
\end{split}\end{equation}
whence 
\begin{equation}
\begin{split}
\int K(\alpha^{j}\varphi_{j})^{\frac{4}{n-2}}\phi_{k,i}v
=
O(\Vert v \Vert^{2}).
\end{split}
\end{equation} 
Thus we obtain, since $\vert \phi_{k,i}\vert \leq C\varphi_{i}$,
\begin{equation}\begin{split}
O(\Vert v \Vert^{2})
= &
\int K(\alpha^{j}\varphi_{j})^{\frac{4}{n-2}}\phi_{k,i}v
 \\
= &
\underset{[\alpha_{i}\varphi_{i}\geq \sum_{i\neq j=1}^{p}\alpha_{j}\varphi_{j}]}{\int}
K(\alpha_{i}\varphi_{i}+ \sum_{i\neq j=1}^{p}\alpha_{j}\varphi_{j})^{\frac{4}{n-2}}\phi_{k,i} v\\
& +
\underset{[\alpha_{i}\varphi_{i}< \sum_{i\neq j=1}^{p}\alpha_{j}\varphi_{j}]}{\int} 
K(\alpha_{i}\varphi_{i}+\sum_{i\neq j=1}^{p}\alpha_{j}\varphi_{j})^{\frac{4}{n-2}}\phi_{k,i}v
\\
= &
\underset{[\alpha_{i}\varphi_{i}\geq \sum_{i\neq j=1}^{p}\alpha_{j}\varphi_{j}]}{\int}
K(\alpha_{i}\varphi_{i})^{\frac{4}{n-2}}\phi_{k,i} v 
+
O(
\sum_{i\neq j=1}^{p}\int\varphi_{j}^{\frac{4}{n-2}}\varphi_{i}\vert v \vert 
) \\
= &
\int 
K(\alpha_{i}\varphi_{i})^{\frac{4}{n-2}}\phi_{k,i} v 
+
O(
\sum_{i\neq j=1}^{p}\int\varphi_{j}^{\frac{4}{n-2}}\varphi_{i}\vert v \vert 
).
\end{split}\end{equation}
Using lemma \ref{lem_interactions} we have 
$
\Vert \varphi_{j}^{\frac{4}{n-2}}\varphi_{i}\Vert_{L^{\frac{2n}{n+2}}}=O(\varepsilon_{i,j})
$
for $i\neq j$. This gives
\begin{equation}
\begin{split}
\int 
K(\alpha_{i}\varphi_{i})^{\frac{4}{n-2}} \phi_{k,i} v 
= &
o(\eps_{i,j})+O(\Vert v \Vert^{2})
\end{split} 
\end{equation} 
Plugging this into \eqref{L_{g_{0}}Phiki_Phikideltai^(...)} we conclude
\begin{equation}\begin{split}
\int  L_{g_{0}}\phi_{k,i}v 
= &
o( \frac{1}{\lambda_{i}^{n-2}} +\sum_{i\neq j=1}^{p}\eps_{i,j}) 
+
O
(\frac{\vert \nabla K_{i}\vert^{2}}{\lambda_{i}^{2}}+\Vert v \Vert^{2})
.
\end{split}\end{equation}
Expanding $u^{\frac{4}{n-2}}=(u_{\alpha, \beta} +\alpha^{i}\varphi_{i}+v)^{\frac{4}{n-2}}$ in case $u\in V(\omega, p, \eps)$ we have
\begin{equation}
\begin{split}
0
= &
\int Ku^{\frac{4}{n-2}}\phi_{k,i}v
=
\int K(u_{\alpha, \beta}+\alpha^{j}\varphi_{j}+v)^{\frac{4}{n-2}} \phi_{k,i} v\\
= &
\int_{[u_{\alpha, \beta} +\alpha^{j}\varphi_{j}\geq v]} 
K(u_{\alpha, \beta} +\alpha^{j}\varphi_{j})^{\frac{4}{n-2}} \phi_{k,i} v 
+
O(\Vert v \Vert^{2}) 
\end{split} 
\end{equation} 
and thus 
\begin{equation}
\begin{split}
O(\Vert v \Vert^{2})
= &
\int K(u_{\alpha, \beta}+\alpha^{j}\varphi_{j})^{\frac{4}{n-2}}\phi_{k,i}v \\
= &
\int_{[\varphi_{i}\geq u_{\alpha, \beta}+\sum_{i\neq j=1}^{p}\varphi_{j} ]} 
K(u_{\alpha, \beta}+\alpha^{j}\varphi_{j})^{\frac{4}{n-2}}\phi_{k,i} v \\
& +
\int_{[\varphi_{i}< u_{\alpha, \beta}+\sum_{i\neq j=1}^{p}\varphi_{j} ]} 
K(u_{\alpha, \beta} +\alpha^{j}\varphi_{j})^{\frac{4}{n-2}}\phi_{k,i} v \\
= &
\int_{[\varphi_{i}\geq u_{\alpha, \beta}+\sum_{i\neq j=1}^{p}\varphi_{j} ]}  
K(\alpha_{i}\varphi_{i})^{\frac{4}{n-2}}\phi_{k,i} v \\
& +
O
(\int_{[\varphi_{i}\geq u_{\alpha, \beta}+\sum_{i\neq j=1}^{p}\varphi_{j} ]}  
\varphi_{i}^{\frac{4}{n-2}}(u_{\alpha, \beta}+\sum_{i\neq j=1}^{p}\varphi_{j}) \vert v \vert  
\\ &
\quad\quad\quad +
\int_{[\varphi_{i}< u_{\alpha, \beta}+\sum_{i\neq j=1}^{p}\varphi_{j} ]}  
(u_{\alpha, \beta}+\sum_{i\neq j=1}^{p}\varphi_{j})^{\frac{4}{n-2}}\varphi_{i}\vert v \vert 
).
\end{split}
\end{equation} 
This gives
\begin{equation}
\begin{split}
\int  &
K  (\alpha_{i}  \varphi_{i})^{\frac{4}{n-2}}\phi_{k,i} v \\
= &
O
(
\int_{[\varphi_{i}\geq u_{\alpha, \beta} ]}  
\varphi_{i}^{\frac{4}{n-2}}u_{\alpha, \beta} \vert v \vert  
+
\int_{[\varphi_{i}\geq \sum_{i\neq j=1}^{p}\varphi_{j} ]}  
\varphi_{i}^{\frac{4}{n-2}}\sum_{i\neq j=1}^{p}\varphi_{j}\vert v \vert  
\\ &
\quad\; +
\int_{[\varphi_{i}< u_{\alpha, \beta}]}  
(u_{\alpha, \beta})^{\frac{4}{n-2}}\varphi_{i}\vert v \vert 
+
\int_{[\varphi_{i}< \sum_{i\neq j=1}^{p}\varphi_{j} ]}  
(\sum_{i\neq j=1}^{p}\varphi_{j})^{\frac{4}{n-2}}\varphi_{i}\vert v \vert 
),
\end{split}
\end{equation} 
whence by H\"older's inequality, direct integration and lemma \ref{lem_interactions}
\begin{equation}
\begin{split}
\int K\varphi_{i}^{\frac{4}{n-2}}\phi_{k,i} v 
= & 
o(\frac{1}{\lambda_{i}^{\frac{n-2}{2}}}+\sum_{i\neq j=1}^{p}\eps_{i,j})
+
O(\Vert v \Vert^{2}).
\end{split}
\end{equation} 
Plugging this into \eqref{L_{g_{0}}Phiki_Phikideltai^(...)} we conclude
\begin{equation}\begin{split}
\int  L_{g_{0}}\phi_{k,i}v 
= &
o( \frac{1}{\lambda_{i}^{\frac{n-2}{2}}} + \sum_{i\neq j=1}^{p}\eps_{i,j})
+
O(\Vert v \Vert^{2}).
\end{split}\end{equation}
Next we calculate for $u\in V(\omega, p, \eps)$ as before
\begin{equation}\begin{split}\label{intKu^ualphabetav}
0
= & 
\int 
Ku^{\frac{4}{n-2}}u_{\alpha, \beta}v
=
\int 
K(u_{\alpha, \beta} +\alpha^{i}\varphi_{i})^{\frac{4}{n-2}}u_{\alpha, \beta} v +O(\Vert v \Vert^{2})\\
= &
\int K u_{\alpha, \beta}^{\frac{n+2}{n-2}}v \\
& +
O
(
\int_{u_{\alpha, \beta}\geq \alpha^{i}\varphi_{i}}u_{\alpha, \beta}^{\frac{4}{n-2}}\varphi_{i}\vert v\vert
+
\int_{u_{\alpha, \beta}<\alpha^{i}\varphi_{i}}(\alpha^{i}\varphi_{i})^{\frac{4}{n-2}}u_{\alpha, \beta}\vert v \vert
+
\Vert v \Vert^{2})\\
= &
\int K u_{\alpha, \beta}^{\frac{n+2}{n-2}}v 
+
o(\sum_{r}\frac{1}{\lambda_{r}^{\frac{n-2}{2}}})
+
O(\Vert v \Vert^{2}),
\end{split}\end{equation}
whence due to \eqref{intKu^ualphabetav} and $\Pi \nabla J(u_{\alpha, \beta})=0$, cf. the remark on lemma
\ref{lem_degeneracy_and_pseudo_critical_points}
\begin{equation}\begin{split}
\int L_{g_{0}}u_{\alpha, \beta}v
= &
\int (L_{g_{0}}u_{\alpha, \beta}-(r\K)_{u_{\alpha, \beta}}u_{\alpha, \beta}^{\frac{n+2}{n-2}})v \\
& +
o(\sum_{r}\frac{1}{\lambda_{r}^{\frac{n-2}{2}}})
+
O(\Vert v \Vert^{2})
\\
= &
\int (L_{g_{0}}u_{\alpha, \beta}-(r\K)_{u_{\alpha, \beta}}u_{\alpha, \beta}^{\frac{n+2}{n-2}})
\frac{\omega}{\Vert \omega \Vert}
\int L_{g_{0}}\frac{\omega}{\Vert \omega \Vert} v  \\
& +
\sum_{i=1}^{m}\int (L_{g_{0}}u_{\alpha, \beta}-(r\K)_{u_{\alpha, \beta}}u_{\alpha, \beta}^{\frac{n+2}{n-2}})\mathrm{e}_{i}
\int L_{g_{0}}\mathrm{e}_{i}v \\
& 
+
o(\sum_{r}\frac{1}{\lambda_{r}^{\frac{n-2}{2}}})
+
O(\Vert v \Vert^{2}).
\end{split}\end{equation}
This gives
\begin{equation}\begin{split}
\int L_{g_{0}}u_{\alpha, \beta}v
= &
\int (L_{g_{0}}u-(r\K)_{u_{\alpha, \beta}}u^{\frac{n+2}{n-2}})\omega\int L_{g_{0}}\omega v  \\
& +
\sum_{i=1}^{m}\int (L_{g_{0}}u-(r\K)_{u_{\alpha, \beta}}u^{\frac{n+2}{n-2}})\mathrm{e}_{i}
\int L_{g_{0}}\mathrm{e}_{i}v \\
& 
+
o(\sum_{r}\frac{1}{\lambda_{r}^{\frac{n-2}{2}}})
+
O(\Vert v \Vert^{2}) \\
= &
O(\vert (\frac{r}{k})_{u_{\alpha, \beta}}-(\frac{r}{k})_{u}\vert^{2}) \\
& +
o(\sum_{r}\frac{1}{\lambda_{r}^{\frac{n-2}{2}}})
+
O(\Vert v \Vert^{2}+\vert \delta J(u)\vert^{2}).
\end{split}\end{equation}
Note, that
\begin{equation}
\begin{split}
\int (L_{g_{0}}u-(r\K)_{u} u^{\frac{n+2}{n-2}})u_{\alpha, \beta}
= &
\int L_{g_{0}}u_{\alpha, \beta}u_{\alpha, \beta}
-
(r\K)_{u}u_{\alpha, \beta}^{\frac{2n}{n-2}} \\
& +
O
(
\sum_{r}\frac{1 }{\lambda_{r}^{\frac{n-2}{2}}}
+
\Vert v \Vert 
),
\end{split}
\end{equation}
whence as a rough estimate
\begin{equation} \label{r/kuab-r/k_rough_estimate}
\begin{split}
(\frac{r}{k})_{u_{\alpha, \beta}}-(\frac{r}{k})_{u}
= &
O
(
\sum_{r}\frac{1 }{\lambda_{r}^{\frac{n-2}{2}}}
+
\Vert v \Vert 
+
\vert \delta J(u)\vert
).
\end{split}
\end{equation}
This proves 
\begin{equation}\begin{split}
\int L_{g_{0}}u_{\alpha, \beta}v
=
o(\sum_{r}\frac{1}{\lambda_{r}^{\frac{n-2}{2}}})
+
O(\Vert v \Vert^{2}+\vert \delta J(u)\vert^{2}).
\end{split}\end{equation}
Moreover for $u\in V(\omega, p, \eps)$ 
\begin{equation}\begin{split}
\int Ku^{\frac{n+2}{n-2}}\phi_{k,i}
= &
\int K (u_{\alpha, \beta}+\alpha^{j}\varphi_{j})^{\frac{n+2}{n-2}}\phi_{k,i} \\
& +
\frac{n+2}{n-2}\int K(u_{\alpha, \beta}+\alpha^{j}\varphi_{j})^{\frac{4}{n-2}}\phi_{k,i}v 
+
O(\Vert v \Vert^{2})
\end{split}\end{equation}
and we simply estimate
\begin{equation}\begin{split}
0= &
\int Ku^{\frac{4}{n-2}}\phi_{k,i}v
=
\int K(u_{\alpha, \beta}+\alpha^{j}\varphi_{j})^{\frac{4}{n-2}}\phi_{k,i}v
+
O(\Vert v \Vert^{2}).
\end{split}\end{equation}
\end{proof}

\subsection{Convergence versus critical points at infinity}
\label{subsec:ConvergenceVersusCritical}

Due to the Lojasiewicz inequality one has along  a flow line either convergence or a time sequence blowing up.
\begin{proposition}[Unicity of a limiting critical point]\label{prop_unicity_of_a_limiting_critical_point}$_{}$\\
If a sequence 
$
u(t_{k})
$
converges in $L^{\frac{2n}{n-2}}$ to a critical point $u_{\infty}$ of $J$, then 
\begin{equation*}
\begin{split}
u\- u_{\infty} \; \text{ in }\; C^{\infty} \; \text{ as }\; t\- \infty
\end{split}
\end{equation*}  
with at least polynomial, but generically exponential convergence rate in $C^{k, \alpha}$.
\end{proposition}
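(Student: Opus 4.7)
My plan is to combine a Lojasiewicz--Simon gradient inequality at $u_\infty$ with the pseudo gradient identity $\partial_t J(u)\leq -\tfrac{1}{2\max_{M}K}|\delta J(u)|^{2}$, following Simon's classical scheme \cite{SlowConvergence}. The first step is to establish the inequality: since $u_\infty>0$ the functional $J$ is real analytic in a neighborhood of $u_\infty$ (being rational in $u$ with analytic coefficients), and by lemma \ref{lem_spectral_theorem_and_degeneracy} the Hessian $\partial^{2}J(u_\infty)$ is Fredholm with finite dimensional kernel $H_{0}(u_\infty)$. A Lyapunov--Schmidt reduction onto $H_{0}(u_\infty)$ combined with the finite dimensional Lojasiewicz inequality thus yields an exponent $\theta\in(0,\tfrac12]$, a constant $C>0$ and a $C^{2,\alpha}$--neighborhood $U$ of $u_\infty$ on which
\begin{equation*}
|J(u)-J(u_\infty)|^{1-\theta}\leq C\,|\delta J(u)|.
\end{equation*}
Generically, by the non degeneracy part of lemma \ref{lem_spectral_theorem_and_degeneracy}, one has $\theta=\tfrac12$.

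Second, I would upgrade the hypothesis $u(t_{k})\to u_\infty$ in $L^{\frac{2n}{n-2}}$ to $C^{k,\alpha}$--closeness for sufficiently late times. By proposition \ref{prop_strong_convergence_of_the_first_variation} we have $|\delta J(u(t))|\to 0$, and together with the uniform bounds from section \ref{sec:LongtimeExistenceAndStrongConvergence} a standard parabolic bootstrap shows that $u(t_{k})\to u_\infty$ in $C^{k,\alpha}$ for every $k$. In particular $u(t_{k})\in U$ for $k$ large. Now I would run a continuity argument: set $H(t)=J(u(t))-J(u_\infty)\geq 0$ and let $T$ be the first time after $t_{k_{0}}$ at which $u$ exits $U$. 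On $[t_{k_0},T]$ we combine both estimates to get
\begin{equation*}
-\frac{d}{dt}H^{\theta}
=
\theta H^{\theta-1}(-\partial_t J(u))
\geq
c\,H^{\theta-1}|\delta J(u)|^{2}
\geq
c'\,|\delta J(u)|
\geq
c''\,\Vert\partial_t u\Vert_{L^{2}_{\mu_{0}}},
\end{equation*}
using lemmata \ref{lem_bounding_u_from_above} and \ref{lem_bounding_u_from_below} for the last step. Integration yields $\int_{t_{k_0}}^{T}\Vert\partial_t u\Vert_{L^{2}_{\mu_{0}}}\,dt\leq c'''H(t_{k_0})^{\theta}$, which for $k_{0}$ large is smaller than the distance from $u(t_{k_0})$ to $\partial U$, forcing $T=\infty$. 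Hence the flow stays in $U$, the length is finite, and $u(t)\to u_\infty$ in $L^{2}_{\mu_{0}}$; parabolic regularity upgrades this to $C^{\infty}$.

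For the rate I would insert the Lojasiewicz inequality into $-\dot H\geq c|\delta J(u)|^{2}\geq c'H^{2(1-\theta)}$ and integrate, obtaining $H(t)\leq Ct^{-1/(1-2\theta)}$ when $\theta<\tfrac12$ and $H(t)\leq C e^{-ct}$ when $\theta=\tfrac12$; the same rate transfers to $\Vert u(t)-u_\infty\Vert_{W^{1,2}}$ via the Lojasiewicz and length estimates, and to any $C^{k,\alpha}$ via bootstrap. The main obstacle is the Lojasiewicz--Simon inequality itself: analyticity of $J$ is straightforward, but executing the Lyapunov--Schmidt reduction onto the finite dimensional kernel $H_{0}(u_\infty)$ and controlling the reduced finite dimensional analytic function to invoke the classical Lojasiewicz inequality \cite{Lojasiewicz} is the real technical work.
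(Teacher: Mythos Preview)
Your overall strategy—Lojasiewicz inequality plus the dissipation estimate to get finite trajectory length—is exactly the paper's, but there is a norm mismatch in your trapping step that the paper handles differently and that you do not address.

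You place the Lojasiewicz inequality on a $C^{2,\alpha}$--neighbourhood $U$ but control the trajectory in $L^{2}_{\mu_{0}}$. Your conclusion ``$\int_{t_{k_{0}}}^{T}\Vert\partial_{t}u\Vert_{L^{2}_{\mu_{0}}}\,dt$ is smaller than the distance from $u(t_{k_{0}})$ to $\partial U$, forcing $T=\infty$'' does not follow: $\partial U$ is a $C^{2,\alpha}$--sphere, and a small $L^{2}$ displacement does not prevent exit in $C^{2,\alpha}$. Closing this requires an additional parabolic smoothing and interpolation argument inside $U$ that you do not supply. Relatedly, ``uniform bounds from section \ref{sec:LongtimeExistenceAndStrongConvergence}'' is misleading: lemmata \ref{lem_bounding_u_from_above} and \ref{lem_bounding_u_from_below} give bounds growing like $e^{Ct}$. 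Within $U$ you do get $0<c\leq u\leq C$ from the $C^{2,\alpha}$ control, so your constant $c''$ is fine on $[t_{k_{0}},T]$; but the step~2 bootstrap of $u(t_{k})\to u_{\infty}$ in $C^{k,\alpha}$ needs the Moser iteration of lemma \ref{App3}, not section \ref{sec:LongtimeExistenceAndStrongConvergence}.

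The paper avoids all of this by never leaving the weak topology. It quotes the Hilbert--space Lojasiewicz inequality of \cite{Lojasiewicz} directly on an $L^{\frac{2n}{n-2}}$--ball (no preliminary $C^{2,\alpha}$ upgrade of $u(t_{k})$), and controls the $L^{\frac{2n}{n-2}}$--distance via
\[
\Vert u(t_{1})-u(t_{2})\Vert_{L^{\frac{2n}{n-2}}}^{\frac{n}{n-2}}
\leq c\,\Vert u^{\frac{n}{n-2}}(t_{1})-u^{\frac{n}{n-2}}(t_{2})\Vert_{L^{2}_{g_{0}}}
\leq c\int_{t_{1}}^{t_{2}}\vert\delta J(u)\vert\,dt,
\]
which holds \emph{without} any $L^{\infty}$ bound on $u$ because $\partial_{t}u^{\frac{n}{n-2}}=-\tfrac{n}{n-2}K^{-1}(R-r\bar K)u^{\frac{n}{n-2}}$ and $d\mu=u^{\frac{2n}{n-2}}d\mu_{g_{0}}$. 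Only \emph{after} obtaining full--time $L^{\frac{2n}{n-2}}$ convergence does the paper bootstrap (Moser, then $W^{2,p}$, then lemma \ref{lem_bounding_u_from_below}) to $C^{\infty}$. Structurally the paper uses a dyadic decomposition of the oscillation intervals rather than your exit--time argument, but that is a cosmetic difference; the substantive point is the choice of norm. Your rate computation is correct and matches the paper's.
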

More precisely genericity arises from the fact, that generically the second variation is  non degenerate,  cf. lemma \ref{lem_spectral_theorem_and_degeneracy}, and exponential speed of convergence holds true, whenever the limiting critical point is  non degenerate.

In particular the proposition  implies, that in order to show flow convergence
we have to exclude the case of blow up, so we may assume the latter case arguing by contradiction.
\begin{proof}[\textbf{Proof of proposition \ref{prop_unicity_of_a_limiting_critical_point}}]
(\cite{BrendleArbitraryEnergies}, proposition 2.6)$_{}$\\
Suppose $\Vert u(\tau_{l})- \omega\Vert_{L^{\frac{2n}{n-2}}}\- 0$ as $\tau_{l}\nearrow \infty$, but 
$
\Vert u- \omega\Vert_{L^{\frac{2n}{n-2}}} \not \hspace{-4pt}\- 0
$
as $t\- \infty$.

For $\eps_{0}>0$ small we then find a decomposition
\begin{equation}
\begin{split}
a_{1}<b_{1}< a_{2}<b_{2}<\ldots<b_{m-1}< a_{m}<b_{m}< a_{m+1}<\ldots
\end{split}
\end{equation} 

such, that
\begin{equation}
\begin{split}
\sum_{m}(a_{m},b_{m})=\{t>0\mid \Vert u-\omega\Vert_{L^{\frac{2n}{n-2}}}<\eps_{0}\} 
\end{split}
\end{equation} 

and for a subsequence $\tau_{l}\in (a_{m_{l}},b_{m_{l}})$. 
\begin{equation}\label{aml-bml_estimate}
\begin{split}
\Vert u(b_{m_{l}}) & -u(\tau_{l})\Vert_{L^{\frac{2n}{n-2}}}^{\frac{n}{n-2}}
= 
(\int \vert u(b_{m_{l}})-u(\tau_{l})\vert^{\frac{2n}{n-2}})^{\frac{1}{2}} \\
\leq &
c (\int \vert u^{\frac{n}{n-2}}(b_{m_{l}})-u^{\frac{n}{n-2}}(\tau_{l})\vert^{2})^{\frac{1}{2}} 
=
c\Vert u^{\frac{n}{n-2}}(b_{m_{l}})-u^{\frac{n}{n-2}}(\tau_{l})\Vert_{L^{2}}
\\
\leq  &
c\int^{b_{m_{l}}}_{\tau_{l}}\Vert \partial_{t}u^{\frac{n}{n-2}}\Vert_{L^{2}}
\leq 
c\int^{b_{m_{l}}}_{a_{m_{l}}}\vert \delta J(u)\vert,
\end{split}
\end{equation}

whence according to proposition \ref{prop_strong_convergence_of_the_first_variation} we may assume
\begin{equation}
\begin{split}
b_{m_{l}}-a_{m_{l}}\-\infty. 
\end{split}
\end{equation} 

Passing to a subsequence we thus may inductively  decompose 
\begin{equation}
\begin{split}
[a_{m_{l_{1}}},b_{m_{l_{1}}})=\sum_{k=1}^{m_{1}}[s_{k},t_{k})
,
\; 2^{k}\leq t_{k}-s_{k}<c2^{k+1}, \, c\in [1,3)
\end{split}
\end{equation} 

and 
\begin{equation}
\begin{split}
[a_{m_{l_{2}}},b_{m_{l_{2}}})
=\sum_{k=m_{1}+1}^{m_{2}}[s_{k},t_{k})
,
\; 2^{k}\leq t_{k}-s_{k}<c2^{k+1}, \, c\in [1,3)
\end{split}
\end{equation} 

and so on.\\
By analyticity of $J$ we may use the Lojasiewicz inequality
\begin{equation}\label{Lojasiewicz}
\begin{split}
\e C>0, \gamma \in (0,1]\fa u\in B_{\eps_{0}}(\omega )\;:\;\vert J(u)-J(\omega )\vert \leq C\Vert \partial J(u)\Vert^{1+\gamma},
\end{split}
\end{equation} 
cf. \cite{Lojasiewicz}, Theorem 4.1. Clearly $J(\omega )=J_{\infty}=r_{\infty}$ and along a flow line we have
\begin{equation}
\begin{split}
\Vert \partial J(u)\Vert \leq C \vert \delta J(u)\vert.
\end{split}
\end{equation} 
Thus for $t\in (s_{k},t_{k})$
\begin{equation}
\begin{split}
\partial_{t}J(u)
\leq &
-c\vert \delta J(u)\vert^{2}
\leq 
-C (J(u)-J_{\infty})^{\frac{2}{\gamma+1}}.
\end{split}
\end{equation} 
Without loss of generality $\gamma<1$, whence 
$\partial_{t} (J(u)-J_{\infty})^{\frac{\gamma-1}{\gamma+1}} \geq c$
and
\begin{equation}
\begin{split}
(J(u(t_{k}))-J_{\infty})^{\frac{\gamma-1}{\gamma+1}}
\geq &
(J(u(s_{k}))-J_{\infty})^{\frac{\gamma-1}{\gamma+1}}
+
c (t_{k}-s_{k})
\end{split}
\end{equation} 
and in particular
$
J(u(t_{k}))-J_{\infty}
\leq 
c(t_{k}-s_{k})^{\frac{\gamma+1}{\gamma-1}}.
$
We conclude
\begin{equation}
\begin{split}
(\int^{t_{k}}_{s_{k}}&\vert \delta J(u)\vert)^{2} \\
\leq &
(t_{k}-s_{k})\int^{t_{k}}_{s_{k}}\vert \delta J(u)\vert^{2} 
\leq 
c(t_{k}-s_{k})(J(u(s_{k}))-J(u(t_{k})) \\
\leq &
c(t_{k}-s_{k})(J(u(s_{k}))-J_{\infty})
\leq 
c(t_{k}-s_{k})(J(u(t_{k-1}))-J_{\infty}) \\
\leq &
c(t_{k}-s_{k})(t_{k-1}-s_{k-1})^{\frac{\gamma+1}{\gamma-1}} 
\leq 
c_{2}2^{k+1}(2^{k-1})^{\frac{\gamma+1}{\gamma-1}}
\leq 
c
(2^{\frac{2\gamma}{\gamma-1}})^{k-1}
\end{split}
\end{equation} 
having used Jensen's inequality. Consequently 
\begin{equation}
\begin{split}
\sum_{m_{l}}\int_{a_{m_{l}}}^{b_{m_{l}}}\vert \delta J(u)\vert
=
\sum_{k}\int_{s_{k}}^{t_{k}}\vert \delta J(u)\vert<\infty,
\end{split}
\end{equation} 
whence $\lim_{l\to\infty}\int_{a_{m_{l}}}^{b_{m_{l}}}\vert \delta J(u)\vert =0$.
This contradicts \eqref{aml-bml_estimate} and we conclude 
\begin{equation}
\begin{split}
u\- \omega\;\text{ in }\; L^{\frac{2n}{n-2}}\; \text{ as }\; t \-\infty.
\end{split}
\end{equation}  

Now let $x_{0}\in M$. Then
$
\Vert R \Vert_{L^{\frac{n}{2}}_{\mu }(B_{r}(x_{0}))}=o(r)
$
by proposition \ref{prop_strong_convergence_of_the_first_variation}, whence
\begin{equation}
\begin{split}
L_{g_{0}}u =R u ^{\frac{n+2}{n-2}}=P u\; \text{ with }\;
\Vert P \Vert_{L_{g_{0}}^{\frac{n}{2}}(B_{r}(x_{0}))}
=
o(r).
\end{split}\end{equation}

Lemma \ref{App3} then shows
$
\sup_{t\geq 0}\Vert u\Vert_{L_{g_{0}}^{p}}<\infty 
$
for all $p\geq 1$ and due to 
\begin{equation}
\begin{split}
-c_{n}\lap_{g_{0}} u 
= &
(R -r \K )u ^{\frac{n+2}{n-2}}+r \K u^{\frac{n+2}{n-2}} -R_{g_{0}}u
\end{split}\end{equation}

and proposition \ref{prop_strong_convergence_of_the_first_variation} it follows, that
$(-\lap u )\subset L^{p}$ and applying Calderon-

Zygmund estimates, 
that $(u )\subset W^{2,p}\hookrightarrow L^{\infty}$ is uniformly bounded. 

Then lemma \ref{lem_bounding_u_from_below} shows
$
0<c<u<C<\infty. 
$
Due to proposition \ref{prop_strong_convergence_of_the_first_variation} we 

have $\int \vert R-r\K\vert^{p} d\mu\-0$ for all $p\geq 1$. 
With this at hand one may 

repeat the arguments proving proposition \ref{prop_time_dependend_holder_regularity} to show 
\begin{equation}\begin{split}
\vert u(x_{1},t_{1}) -u(x_{2},t_{2}) \vert
\leq
C(\alpha)
(
\vert t_{1}-t_{2}\vert^{\frac{\alpha}{2}}
+
d(x_{1},x_{2})^{\alpha}),
\end{split}\end{equation}

for all $x_{1},x_{2} \in M$ and $0 \leq t_{1},t_{2}<\infty, \, \vert t_{1}-t_{2} \vert \leq 1$, where \begin{equation}
\begin{split}
0<\alpha<\min\{\frac{4}{n},1\}.
\end{split}
\end{equation} 

By standard regularity arguments then 
$(u)\subset C^{k, \alpha}$
is uniformly bounded. \\
As for the speed of convergence note, that as before we have
\begin{equation}
\begin{split}
\partial_{t}(J(u)-J_{\infty})^{\frac{\gamma-1}{\gamma+1}}\geq c.
\end{split}
\end{equation} 
From this we obtain polynomial convergence of $J(u)$, namely 
\begin{equation}
\begin{split}
0<J(u)-J_{\infty}<\frac{C}{(1+t)^{\frac{1+\gamma}{1-\gamma}}}.
\end{split}
\end{equation} 
Moreover
\begin{equation}
\begin{split}
\partial_{t}\Vert u^{\frac{n}{n-2}}-\omega^{\frac{n}{n-2}}\Vert_{L^{2}}
\leq &
c\vert \delta J(u)\vert
\end{split}
\end{equation} 
and applying once more the Lojasiewicz inequality \eqref{Lojasiewicz}
\begin{equation}
\begin{split}
\partial_{t}(J(u) & -J_{\infty})^{\frac{\gamma}{1+\gamma}}
\leq 
-c(J(u)-J_{\infty})^{\frac{\gamma}{1+\gamma}-1}\vert \delta J(u)\vert^{2} \\
\leq &
-c (J(u)-J_{\infty})^{-\frac{1}{1+\gamma}}\Vert \partial J(u)\Vert \vert \delta J(u)\vert 
\leq 
-c \vert \delta J(u)\vert,
\end{split}
\end{equation} 
whence 
\begin{equation}
\begin{split}
\partial_{t} \Vert u^{\frac{n}{n-2}}-\omega^{\frac{n}{n-2}}\Vert_{L^{2}}
\leq 
-C\partial_{t}(J(u)-J(_{\infty}))^{\frac{\gamma}{1+\gamma}}.
\end{split}
\end{equation}
We conclude polynomial convergence $u\- \omega$ in $L^{\frac{2n}{n-2}}$ via
\begin{equation}
\begin{split}
\Vert u-\omega\Vert_{L^{\frac{2n}{n-2}}}^{\frac{n}{n-2}}
\leq &
C\Vert u^{\frac{n}{n-2}}-\omega^{\frac{n}{n-2}}\Vert_{L^{2}}
\leq 
C(J(u)-J(\infty))^{\frac{\gamma}{1+\gamma}}\\
\leq &
\frac{C}{(1+t)^{\frac{\gamma}{1-\gamma}}}.
\end{split}
\end{equation} 
With uniform boundedness at hand we may use Sobolev space interpolation
\begin{equation}
\begin{split}
\Vert v \Vert_{W^{k,p}}\leq C(k,p)\Vert v \Vert_{W^{k-1,p}}^{\frac{1}{2}}\Vert v \Vert_{W^{k+1,p}}^{\frac{1}{2}}
\end{split}
\end{equation} 
to conclude  polynomial convergence at least in each Sobolev or H\"older space. 

Note, that in case $\gamma=1$ we have
\begin{equation}
\begin{split}
\partial_{t}(J(u)-J_{\infty})\leq -c\vert \delta J(u)\vert^{2}\leq -C\vert J(u)-J_{\infty}\vert, 
\end{split}
\end{equation} 

whence $J(u)\searrow J_{\infty}$ with convergence at exponential rate. Moreover 
\begin{equation}
\begin{split}
\partial_{t}\Vert u^{\frac{n}{n-2}}-\omega^{\frac{n}{n-2}}\Vert_{L^{2}}\leq c\vert \delta J(u)\vert 
\end{split}
\end{equation} 

and 
\begin{equation}
\begin{split}
\partial_{t}(J(u)-J_{\infty}))^{\frac{1}{2}}
\leq 
-c(J(u)-J_{\infty})^{-\frac{1}{2}}\vert \delta J(u)\vert^{2}
\leq 
-C \vert \delta J(u)\vert.
\end{split}
\end{equation} 

By the same arguments as before we conclude $u\- \omega$ at exponential 

rate in every Sobolev or H\"older space in case $\gamma=1$. 
\\
In the generic case $E_{\frac{n+2}{n-2}}(\omega)=\emptyset$, cf. lemma \ref{lem_spectral_theorem_and_degeneracy}, however
the Lojasiewicz inequality \eqref{Lojasiewicz}
holds with optimal exponent 
$\gamma=1$. 

Indeed  $J(u)=J(\omega)$ for $u\in \langle \omega \rangle=H_{0}(\omega)$ by scaling invariance and 
\begin{equation}
\begin{split}
\vert J(u)-J(\omega)\vert \leq \vert u-\omega\vert^{2}\;\text{ and }\;\vert \delta J(u)\vert \geq c\vert u-\omega\vert
\end{split}
\end{equation} 

for $u\in \langle \omega \rangle^{\perp_{L_{g_{0}}}}=H_{0}(\omega)^{\perp_{L_{g_{0}}}}=kern(\partial^{2}J(\omega))$.
\end{proof}

\section{Case \textomega=0} 
\label{sec:w=0}
The starting point in this section is a flow line 
$ 
u\in V(p, \eps),
$
that we study by analysing the  evolution of 
the parameters $\alpha_{i}, \lambda_{i},a_{i}$ in the representation
\begin{align*}
u=\alpha^{i}\var_{i}+v=\alpha^{i}\var_{a_{i}, \lambda_{i}}+v
\end{align*}
given by proposition \ref{prop_optimal_choice}. To that end we test the flow equation
\begin{align*}
\partial_{t}u=-\frac{1}{K}(R-r\K)
\end{align*}
with $\var_{i}, \lambda_{i}\partial_{\lambda_{i}}\var_{i}$ and $\frac{1}{\lambda_{i}}\nabla_{a_{i}}\var_{i}$, cf. definition \ref{def_relevant_quantities}.
\begin{lemma}[The shadow flow]\label{lem_the_shadow_flow}$_{}$\\
For $u\in V(p, \eps)$ with $\eps>0$ and 
$$ \sigma_{k,i}=-\int (L_{g_{0}}u-r\K u^{\frac{n+2}{n-2}})\phi_{k,i}, \;i= 1, \ldots,p, \; k=1,2,3 $$
we  have
by testing $K\partial_{t}u=-(R-r\K)u$ with $u^{\frac{4}{n-2}}\phi_{k,i}$
\begin{enumerate}[label=(\roman*)]
 \Item $_{}$
\begin{equation*}\begin{split}
\frac{\dot \alpha_{i}}{\alpha_{i}}
= 
\frac{\alpha_{i}^{\frac{n+2}{2-n}}}{c_{1}K_{i}}\sigma_{1,i}
(1+o_{\frac{1}{\lambda_{i}}}(1))
+
R_{1,i}
\end{split}\end{equation*}
 \Item $_{}$
\begin{equation*}\begin{split}
-\frac{\dot \lambda_{i}}{\lambda_{i}}
= 
\frac{\alpha_{i}^{\frac{n+2}{2-n}}}{c_{2}K_{i}}\sigma_{2,i}
(1+o_{\frac{1}{\lambda_{i}}}(1))
+
R_{2,i}
\end{split}\end{equation*}
 \Item $_{}$
\begin{equation*}\begin{split}
\lambda_{i}\dot a_{i}
= 
\frac{\alpha_{i}^{\frac{n+2}{2-n}}}{c_{3}K_{i}}\sigma_{3,i}
(1+o_{\frac{1}{\lambda_{i}}}(1))
+
R_{3,i}
\end{split}\end{equation*}
\end{enumerate}
with constants $c_{k}>0$ given in lemma \ref{lem_interactions} and
\begin{equation*}\begin{split}
R_{k,i}
= 
O
(
\sum_{r\neq s}\eps_{r,s}^{2}
+
\Vert v \Vert^{2}
+
\vert \delta J(u)\vert^{2}
)_{k,i}
.
\end{split}\end{equation*}
\end{lemma}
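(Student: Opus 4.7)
The plan is to test the pointwise identity $K\partial_{t}u=-(R-r\K)u$ with $u^{\frac{4}{n-2}}\phi_{k,i}$ against $d\mu_{g_{0}}$. Using $R\,u^{\frac{n+2}{n-2}}=L_{g_{0}}u$, the right hand side collapses to
$$
-\int\bigl(L_{g_{0}}u-r\K\,u^{\frac{n+2}{n-2}}\bigr)\phi_{k,i}\,d\mu_{g_{0}}=\sigma_{k,i}.
$$
Differentiating the decomposition $u=\alpha^{j}\var_{j}+v$ of proposition \ref{prop_optimal_choice} gives $\partial_{t}u=\sum_{j,l}\alpha_{j}c_{l,j}\phi_{l,j}+\dot v$ with $(c_{1,j},c_{2,j},c_{3,j})=(\dot\alpha_{j}/\alpha_{j},\,-\dot\lambda_{j}/\lambda_{j},\,\lambda_{j}\dot a_{j})$, and the test identity becomes the linear system
$$
\sum_{j,l}\alpha_{j}c_{l,j}\int Ku^{\frac{4}{n-2}}\phi_{l,j}\phi_{k,i}\,d\mu_{g_{0}}+\int Ku^{\frac{4}{n-2}}\phi_{k,i}\dot v\,d\mu_{g_{0}}=\sigma_{k,i}
$$
for the unknowns $c_{l,j}$.

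Next I analyse the coefficient matrix. Expanding $u^{\frac{4}{n-2}}=(\alpha^{j}\var_{j}+v)^{\frac{4}{n-2}}$ and localising around $a_{i}$, where $\var_{i}$ dominates and $K=K_{i}+O(\lambda_{i}^{-1})$ on the concentration scale, the diagonal entry $(l,j)=(k,i)$ equals $c_{k}K_{i}\alpha_{i}^{\frac{4}{n-2}}(1+o_{1/\lambda_{i}}(1))$ by lemma \ref{lem_interactions}(ii). For $j=i$ with $l\neq k$ the entries are of order $O(\lambda_{i}^{-(n-2)}+\lambda_{i}^{-2})$ by lemma \ref{lem_interactions}(iv), and for $j\neq i$ they are bounded by $\eps_{i,j}$ and its powers via lemma \ref{lem_interactions}(v)--(vi). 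Hence the matrix is a diagonal dominant perturbation that is inverted by a Neumann series; all off-diagonal contributions end up absorbed into $R_{k,i}$ of the claimed form (using $\eps_{i,j}\lesssim\eps_{i,j}^{2}+$ cross terms of the same order through Cauchy--Schwarz).

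The main obstacle is controlling the $\dot v$ contribution, and this is exactly where the Lyapunow--Schmidt reduction pays off. The orthogonality $\int Ku^{\frac{4}{n-2}}v\phi_{k,i}\,d\mu_{g_{0}}=0$ is preserved along the flow, so differentiating in time yields
$$
\int Ku^{\frac{4}{n-2}}\dot v\phi_{k,i}\,d\mu_{g_{0}}=-\tfrac{4}{n-2}\int Ku^{\frac{6-n}{n-2}}\partial_{t}u\,v\phi_{k,i}\,d\mu_{g_{0}}-\int Ku^{\frac{4}{n-2}}v\,\partial_{t}\phi_{k,i}\,d\mu_{g_{0}}.
$$
Both integrals are bilinear in $v$: the first is estimated via $\partial_{t}u=-(R-r\K)u/K$ and Cauchy--Schwarz by $C\Vert v\Vert\,\vert\delta J(u)\vert$, which Young's inequality converts into $\Vert v\Vert^{2}+\vert\delta J(u)\vert^{2}$; in the second, $\partial_{t}\phi_{k,i}$ expands into a linear combination of $\phi$-type functions with coefficients $c_{l,i}$, so by lemma \ref{lem_v_type_interactions} this term produces contributions of the shape $\sum_{l}\vert c_{l,i}\vert\cdot o(\Vert v\Vert)$ that can be moved back to the left hand side as an additional small perturbation of the diagonal matrix, without affecting its invertibility.

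Inverting the perturbed near-diagonal system finally yields
$$
\alpha_{i}c_{k,i}\cdot c_{k}K_{i}\alpha_{i}^{\frac{4}{n-2}}=\sigma_{k,i}\bigl(1+o_{1/\lambda_{i}}(1)\bigr)+O\!\Bigl(\sum_{r\neq s}\eps_{r,s}^{2}+\Vert v\Vert^{2}+\vert\delta J(u)\vert^{2}\Bigr),
$$
and dividing by $c_{k}K_{i}\alpha_{i}^{\frac{n+2}{n-2}}=c_{k}K_{i}\alpha_{i}^{-\frac{n+2}{2-n}}$ together with the substitutions $c_{1,i}=\dot\alpha_{i}/\alpha_{i}$, $c_{2,i}=-\dot\lambda_{i}/\lambda_{i}$, $c_{3,i}=\lambda_{i}\dot a_{i}$ produces the three claimed evolution equations.
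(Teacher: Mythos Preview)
Your approach is essentially the paper's: test, differentiate the decomposition, eliminate $\dot v$ by differentiating the orthogonality $\int Ku^{4/(n-2)}\phi_{k,i}v=0$, then invert a near-diagonal matrix. Two small inaccuracies are worth fixing. First, for $\int Ku^{4/(n-2)}v\,\partial_{t}\phi_{k,i}$ you do not need lemma~\ref{lem_v_type_interactions}; the bound $|\lambda_{i}\partial_{\lambda_{i}}\phi_{k,i}|,|\tfrac{1}{\lambda_{i}}\nabla_{a_{i}}\phi_{k,i}|\le C\varphi_{i}$ from lemma~\ref{lem_interactions}(i) and H\"older give $O(\Vert v\Vert)\cdot c_{l,i}$, not $o(\Vert v\Vert)\cdot c_{l,i}$; this is still absorbable because $\Vert v\Vert=o(1)$. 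Second, the line ``$\eps_{i,j}\lesssim\eps_{i,j}^{2}+\ldots$'' is false as written; what actually happens is that the off-diagonal $O(\eps_{i,j}+\Vert v\Vert)$ pieces of $\Xi^{-1}$ multiply $\sigma_{k,i}=O(|\delta J(u)|)$, and Young's inequality on the product yields $O(\eps_{i,j}^{2}+\Vert v\Vert^{2}+|\delta J(u)|^{2})$.
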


\begin{proof}[\textbf{Proof of lemma \ref{lem_the_shadow_flow}}]\label{p_the_shadow_flow}$_{}$\\
For each $i,j= 1, \ldots,p, \; k=1,2,3$ let 
\begin{equation}\begin{split}
(\dot{\xi}_{1,j}, \dot{\xi}_{2,j}, \dot{\xi}_{3,j})=(\dot{\alpha}_{j},-\alpha_{j}\frac{\dot \lambda_{j}}{\lambda_{j}}, \alpha_{j}\lambda_{j}\dot a _{j})
\end{split}\end{equation}
and recall
\begin{equation}
\begin{split}
\phi_{k,i}=d_{k,i}\varphi_{i}=
(\varphi_{i},-\lambda_{i} \partial_{\lambda_{i}}\varphi_{i}, \frac{1}{\lambda_{i}} \nabla_{ a _{i}}\varphi_{i}).
\end{split}
\end{equation} 
Testing $K\partial_{t}u=-(R-r\K)u$ with $u^{\frac{4}{n-2}}\phi_{k,i}$ we obtain using $\int Ku^{\frac{4}{n-2}}\phi_{k,i}v=0$
\begin{equation}\begin{split}\label{testing_w=0}
\sigma_{k,i}
= &
\int \partial_{t}uK  u^{\frac{4}{n-2}}\phi_{k,i} 
= 
\int \partial_{t}(\alpha^{j}\varphi_{j}+v)Ku^{\frac{4}{n-2}}\phi_{k,i} \\
= &
\dot{\xi}^{l,j}\int Ku^{\frac{4}{n-2}}\phi_{l,j}\phi_{k,i}
-
\int Kv[\partial_{t}u^{\frac{4}{n-2}}\phi_{k,i}+u^{\frac{4}{n-2}}\partial_{t}\phi_{k,i}].
\end{split}\end{equation}
Note, that 
\begin{equation}\begin{split}\label{expansion_interaction_w=0}
\int Ku^{\frac{4}{n-2}} & \phi_{l,j}\phi_{k,i}
= 
\int K(\alpha^{m}\varphi_{m})^{\frac{4}{n-2}}\phi_{l,j}\phi_{k,i}
+
O(\Vert v\Vert)_{k,i,l,j}\\
= &
c_{k}\alpha_{i}^{\frac{4}{n-2}}K_{i}\delta_{kl}\delta_{ij} 
+
O(\frac{\vert \nabla K_{i}\vert}{\lambda_{i}}+\frac{1}{\lambda_{i}^{2}}
+
\frac{1}{\lambda_{i}^{n-2}} 
)_{k,l}\delta_{ij}
\\
& +
O(
\sum_{i\neq m=1}^{p}\eps_{i,m} +\Vert v\Vert)
_{k,i,l,j}.
\end{split}\end{equation}
Indeed
\begin{equation}
\begin{split}
\int  K & (\alpha^{m}  \varphi_{m})^{\frac{4}{n-2}}\phi_{l,j}\phi_{k,i} \\
= &
\underset{[\varphi_{i}\geq \sum_{i\neq m=1}^{p}\varphi_{m}]}{\int} 
K(\alpha_{i}\varphi_{i})^{\frac{4}{n-2}}\phi_{l,j}\phi_{k,i} \\
& +
\sum_{i\neq m=1}^{p}
O
(
\underset{[\varphi_{i}\geq \sum_{i\neq m=1}^{p}\varphi_{m}]}{\int} 
\varphi_{i}^{\frac{4}{n-2}}\varphi_{j}\varphi_{m} 
+
\underset{[\varphi_{i}<\sum_{i\neq m=1}^{p}\varphi_{m}]}{\int} 
\varphi_{m}^{\frac{4}{n-2}}\varphi_{j}\varphi_{i} 
),
\end{split}
\end{equation} 
whence by means of lemma \ref{lem_interactions} we have
\begin{equation}\label{alpha^m_delta_m_Phi_k,i_Phi_l,j}
\begin{split}
\int  K  (\alpha^{m}  &\varphi_{m})^{\frac{4}{n-2}}\phi_{l,j}\phi_{k,i} \\
= &
\underset{[\varphi_{i}\geq \sum_{i\neq m=1}^{p}\varphi_{m}]}{\int} 
K(\alpha_{i}\varphi_{i})^{\frac{4}{n-2}}\phi_{l,j}\phi_{k,i} 
+
O
(
\sum_{i\neq m=1}^{p}\eps_{i,m}
) \\
= &
\int  
K(\alpha_{i}\varphi_{i})^{\frac{4}{n-2}}\phi_{l,j}\phi_{k,i} \\
& +
O
(
\underset{[\varphi_{i}< \sum_{i\neq m=1}^{p}\varphi_{m}]}{\int} 
\varphi_{i}^{\frac{n+2}{n-2}}\varphi_{j}
+
\sum_{i\neq m=1}^{p}\eps_{i,m}
)\\
= &
\alpha_{i}^{\frac{4}{n-2}}
\int  
K\varphi_{i}^{\frac{4}{n-2}}\phi_{l,j}\phi_{k,i} 
+
O
(
\sum_{i\neq m=1}^{p}\eps_{i,m}
) \\
= &
\alpha_{i}^{\frac{4}{n-2}}\delta_{ij}
\int  
K\varphi_{i}^{\frac{4}{n-2}}\phi_{l,i}\phi_{k,i} 
+
O
(
\sum_{i\neq m=1}^{p}\eps_{i,m}
) \\
= &
\alpha_{i}^{\frac{4}{n-2}}\delta_{ij}\delta_{kl}
\int  
K\varphi_{i}^{\frac{4}{n-2}}\phi^{2}_{k,i} 
+
O
(
\frac{1}{\lambda_{i}^{2}}+\frac{1}{\lambda_{i}^{n-2}}
)\delta_{ij}
+
O
(
\sum_{i\neq m=1}^{p}\eps_{i,m}
).
\end{split}
\end{equation} 
From this \eqref{expansion_interaction_w=0} follows.
Moreover we may write 
\begin{equation}\label{partial_t_phi_ki_estimate_w=0}
\begin{split}
\int Ku^{\frac{4}{n-2}}\partial_{t}\phi_{k,i}v
=
O(\Vert v \Vert)_{i,k,l,j}\dot \xi^{l,j}
\end{split}
\end{equation} 
using 
$\vert \partial _{\alpha}\phi_{k,i}\vert, \vert \lambda_{i}\partial_{\lambda_{i}}\phi_{k,i}\vert, \vert \frac{1}{\lambda_{i}}\nabla_{a_{i}}\phi_{k,i}\vert \leq C\varphi_{i}
$
and estimate 
\begin{equation}\begin{split}
\vert 
\int Kv\partial_{t}& u^{\frac{4}{n-2}}\Phi_{k,i}
\vert
= 
\frac{4}{n-2}\vert 
\int v ( R-r\K)  u^{\frac{4}{n-2}} \phi_{k,i}
\vert \\
\leq & 
C\int  \vert R-r\K \vert u^{\frac{4}{n-2}}\varphi_{i}\vert v \vert 
= 
C\int  \vert R-r\K \vert u^{\frac{4}{n-2}}\vert u-v\vert\vert v \vert \\
\leq &
C\int \vert R-r\K \vert u^{\frac{n+2}{n-2}}\vert v \vert
+
C\int \vert R-r\K \vert u^{\frac{4}{n-2}}\vert v \vert^{2} \\
\leq &
C(\Vert R-r\K \Vert_{L^{\frac{2n}{n+2}}_{\mu}}\Vert v \Vert+\Vert R-r\K \Vert_{L^{\frac{n}{2}}_{\mu}}\Vert v \Vert^{2})
\end{split}\end{equation}
using $\vert \phi_{k,i}\vert \leq C\varphi_{i}$, whence according to proposition \ref{prop_strong_convergence_of_the_first_variation} we obtain
\begin{equation}\begin{split}\label{trick_with_strong_convergence_w=0}
\int Kv\partial_{t}& u^{\frac{4}{n-2}}\Phi_{k,i}
=
O(\vert \delta J(u)\vert^{2}+\Vert v \Vert^{2}).
\end{split}\end{equation}
Thus plugging \eqref{expansion_interaction_w=0}, \eqref{partial_t_phi_ki_estimate_w=0} and \eqref{trick_with_strong_convergence_w=0} into \eqref{testing_w=0} we obtain for
\begin{equation}
\begin{split}
\Xi_{k,i,l,j}
= &
c_{k}\alpha_{i}^{\frac{4}{n-2}}K_{i}\delta_{kl}\delta_{ij} \\
& +
O(
\frac{\vert \nabla K_{i}\vert}{\lambda_{i}}+\frac{1}{\lambda_{i}^{2}}
+
\frac{1}{\lambda_{i}^{n-2}} 
)_{k,l}\delta_{ij}
+
O
(
\sum_{i\neq m=1}^{p}\eps_{i,m} +\Vert v\Vert)
_{k,i,l,j}
\end{split}
\end{equation} 
the identity
\begin{equation}
\begin{split}
\Xi_{k,i,l,j}\dot\xi^{l,j} 
=
\sigma_{k,i}
+
O(\Vert v \Vert^{2} + \vert \delta J(u)\vert^{2})_{k,i}.
\end{split}
\end{equation} 
For the inverse $\Xi^{-1}$ of $\Xi$ we then have
\begin{equation}
\begin{split}
\Xi_{k,i,l,j}^{-1}
= &
\frac{\alpha^{\frac{4}{2-n}}}{c_{k}K_{i}}\delta_{kl}\delta_{ij} \\
& +
O(
\frac{\vert \nabla K_{i}\vert}{\lambda_{i}}+\frac{1}{\lambda_{i}^{2}}
+
\frac{1}{\lambda_{i}^{n-2}} 
)_{k,l}\delta_{ij}
+
O
(
\sum_{i\neq m=1}^{p}\eps_{i,m} +\Vert v\Vert)
_{k,i,l,j}
\end{split}
\end{equation}
and the claim follows, since by definition $\sigma_{k,i}=O(\vert \delta J(u)\vert)$.
\end{proof}
Consequently our task is two folded. We have to carefully evaluate 
$\sigma_{k,i}$ by expansion and 
find suitable estimates on the error term  $v$.
\begin{proposition}[Analysing $\sigma_{k,i}$]\label{prop_analysing_ski}$_{}$\\
On $V(p, \eps)$ for $\eps>0$ small we have
with constants $b_{1}, \ldots,e_{4}>0$
\begin{enumerate}[label=(\roman*)]
 \Item $_{}$
\begin{equation*}\begin{split}
\sigma_{1,i}
= &
4n(n-1)\alpha_{i}
[
\frac{r\alpha_{i}^{\frac{4}{n-2}}K_{i}}{4n(n-1)k}
-
1
]
\int \varphi_{i}^{\frac{2n}{n-2}}\\
& +
4n(n-1)\sum_{i\neq j =1}^{p}\alpha_{j}
[
\frac{r\alpha_{j}^{\frac{4}{n-2}}K_{j}}{4n(n-1)k}-1
] 
b_{1}\eps_{i,j}
 \\
& 
+
d_{1}\alpha_{i}\frac{ H_{i}}{\lambda_{i} ^{n-2}}
+
e_{1}\frac{r\alpha_{i}^{\frac{n+2}{n-2}}}{k}  \frac{\lap K_{i}}{\lambda_{i} ^{2}} 
+
b_{1}
\frac{r\alpha_{i}^{\frac{4}{n-2}}K_{i}}{k}
\sum_{i \neq j=1}^{p}\alpha_{j}
\eps_{i,j} 
+
R_{1,i}
\end{split}\end{equation*}
 \Item $_{}$
\begin{equation*}\begin{split}
\sigma_{2,i}
= &
-4n(n-1)\alpha_{i}
[
\frac{r\alpha_{i}^{\frac{4}{n-2}}K_{i}}{4n(n-1)k}
-
1
]
\int \varphi_{i}^{\frac{n+2}{n-2}}\lambda_{i}\partial_{\lambda_{i}}\varphi_{i}\\
& 
-4n(n-1)b_{2}\sum_{i\neq j =1}^{p}\alpha_{j}
[
\frac{r\alpha_{j}^{\frac{4}{n-2}}K_{j}}{4n(n-1)k}-1
] 
\lambda_{i} \partial_{\lambda_{i}}\eps_{i,j}
+
d_{2}\alpha_{i}\frac{ H_{i}}{\lambda_{i} ^{n-2}} \\
& +
e_{2}\frac{r\alpha_{i}^{\frac{n+2}{n-2}}}{k} \frac{\lap K_{i}}{\lambda_{i}^{2}}  - 
b_{2}
\frac{r\alpha_{i}^{\frac{4}{n-2}}K_{i}}{k}
\sum_{i \neq j=1}^{p}\alpha_{j}
\lambda_{i}\partial_{\lambda_{i}}\eps_{i,j} 
+
R_{2,i}
\end{split}\end{equation*}
 \Item $_{}$
\begin{equation*}\begin{split}
\sigma_{3,i}
= &
4n(n-1)\alpha_{i}
[
\frac{r\alpha_{i}^{\frac{4}{n-2}}K_{i}}{4n(n-1)k}
-
1
]
\int \varphi_{i}^{\frac{n+2}{n-2}}\frac{1}{\lambda_{i}}\nabla_{a_{i}}\varphi_{i}\\
& +
4n(n-1)b_{3}\sum_{i\neq j =1}^{p}\alpha_{j}
[
\frac{r\alpha_{j}^{\frac{4}{n-2}}K_{j}}{4n(n-1)k}-1
] 
\frac{1}{\lambda_{i}} \nabla_{ a _{i}}\eps_{i,j}
 \\
& 
+
\frac{r\alpha_{i}^{\frac{n+2}{n-2}}}{k}
[
e_{3}\frac{\nabla K_{i}}{\lambda_{i}}
+
e_{4}\frac{\nabla \lap K_{i}}{\lambda_{i}^{3}} 
] 
\\ & +
b_{3}\frac{r\alpha_{i}^{\frac{4}{n-2}}K_{i}}{k}
\sum_{i \neq j=1}^{p}
\frac{\alpha_{j}}{\lambda_{i}}\nabla_{a_{i}}\eps_{i,j} +
R_{3,i},
\end{split}\end{equation*}
\end{enumerate}
where 
$
R_{k,i}
= 
o_{\varepsilon}
(
 \frac{1}{\lambda_{i}^{n-2}} 
+
\sum_{i\neq j=1}^{p}\eps_{i,j}
)_{k,i}
+
O
(
\sum_{r\neq s}\eps_{r,s}^{2}
+
\Vert v \Vert^{2}
)_{k,i}
.                               
$
\end{proposition}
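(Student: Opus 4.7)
The plan is to start from the definition
\[
\sigma_{k,i} = -\int L_{g_{0}}u\,\phi_{k,i} + \int r\K u^{\frac{n+2}{n-2}}\phi_{k,i},
\]
substitute $u = \alpha^{j}\varphi_{j}+v$, and then sort the result by (a) self-interactions at the $i$-th bubble, (b) bubble-bubble cross terms, and (c) contributions from the Taylor expansion of $K$ at $a_{i}$. The $v$-dependence is essentially removed at the outset: the linear piece $\int L_{g_{0}}v\,\phi_{k,i}$ is absorbed into the remainder by Lemma \ref{lem_v_type_interactions}(i), while in the nonlinear integral $\int K u^{\frac{n+2}{n-2}}\phi_{k,i}$ I would write $u^{\frac{n+2}{n-2}}-(\alpha^{j}\varphi_{j})^{\frac{n+2}{n-2}}$ as $\frac{n+2}{n-2}(\alpha^{j}\varphi_{j})^{\frac{4}{n-2}}v$ plus a quadratic remainder, and use Lemma \ref{lem_v_type_interactions}(iii) (applied with $\omega=0$, so $u_{\alpha,\beta}=0$) to reduce it to a pure bubble calculation up to $O(\Vert v\Vert^{2})$.

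For the diagonal self-interaction ($j=i$) the key input is Lemma \ref{lem_emergence_of_the_regular_part}. Its leading term $4n(n-1)\varphi_{i}^{\frac{n+2}{n-2}}$ combines with $r\K\alpha_{i}^{\frac{n+2}{n-2}}\varphi_{i}^{\frac{n+2}{n-2}}$ to give precisely the prefactor $4n(n-1)\alpha_{i}\bigl[\tfrac{r\alpha_{i}^{4/(n-2)}K_{i}}{4n(n-1)k}-1\bigr]$ times $\int\varphi_{i}^{\frac{n+2}{n-2}}\phi_{k,i}$; for $k=1$ this integral equals $\int\varphi_{i}^{\frac{2n}{n-2}}$ and for $k=2,3$ it produces the derivative-of-$\varphi_{i}$ factors stated. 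The correction $-2nc_{n}r_{i}^{n-2}((n-1)H_{i}+r_{i}\partial_{r_{i}}H_{i})\varphi_{i}^{\frac{n+2}{n-2}}$, integrated against $\phi_{k,i}$, yields the mass term $d_{k}\alpha_{i}\tfrac{H_{i}}{\lambda_{i}^{n-2}}$ after substituting the Green expansion; the $R_{g_{a_{i}}}/\lambda_{i}$ piece and the $o(r_{i}^{n-2})$ remainder are of lower order than the admissible error. Taylor expanding $K=K_{i}+\nabla K_{i}\cdot y+\tfrac12 D^{2}K_{i}(y,y)+\tfrac16 D^{3}K_{i}(y,y,y)+O(|y|^{4})$ in conformal normal coordinates and integrating against the radially symmetric bubble, odd moments drop and even moments deliver the $\tfrac{\nabla K_{i}}{\lambda_{i}}$, $\tfrac{\lap K_{i}}{\lambda_{i}^{2}}$, $\tfrac{\nabla\lap K_{i}}{\lambda_{i}^{3}}$ coefficients $e_{1},\ldots,e_{4}$, with the gradient terms appearing only for $k=3$.

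For cross-bubble contributions ($j\neq i$) I would again use Lemma \ref{lem_emergence_of_the_regular_part} for $L_{g_{0}}\varphi_{j}$, so that the off-diagonal integrals reduce to $\int\varphi_{j}^{\frac{n+2}{n-2}}\phi_{k,i}=b_{k}d_{k,i}\eps_{i,j}+o_{\eps}(\eps_{i,j})$ by Lemma \ref{lem_interactions}(iii), producing both the Yamabe-interaction term $4n(n-1)b_{k}\sum_{j\neq i}\alpha_{j}[\cdots]d_{k,i}\eps_{i,j}$ and the raw $K$-interaction $b_{k}\tfrac{r\alpha_{i}^{4/(n-2)}K_{i}}{k}\sum_{j\neq i}\alpha_{j}d_{k,i}\eps_{i,j}$; to obtain the latter one splits the integration of $(\alpha^{j}\varphi_{j})^{\frac{n+2}{n-2}}\phi_{k,i}$ into the regions $\{\varphi_{i}\geq\sum_{j\neq i}\varphi_{j}\}$ and its complement and uses $|(a+b)^{p}-a^{p}-pa^{p-1}b|\leq C\min\{b^{p},a^{p-2}b^{2}\}$ together with Lemma \ref{lem_interactions}(v)--(vi), which bounds all genuinely quadratic cross remainders by $\sum_{r\neq s}\eps_{r,s}^{2}$.

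The main obstacle will be the bookkeeping rather than any single estimate: one has to marry three different expansion schemes (Green's function regular part, Taylor of $K$, and $(\alpha^{j}\varphi_{j})^{\frac{n+2}{n-2}}$ region-splitting) so that each admissible remainder $o_{\eps}(\lambda_{i}^{2-n}+\sum\eps_{i,j})$ is actually achieved. In particular the singular parts $H_{s,a}$ of $H_{a}$, which are dimension-dependent ($0$, $r^{2}\ln r$, $r$ for $n=3,4,5$), must be checked to contribute at the claimed order and not to contaminate the $H_{i}(a_{i})$ coefficient, and the interaction cross remainders have to be shown to lose an $o_{\eps}(1)$ factor relative to $\eps_{i,j}$, which is where the smallness of $\varepsilon$ in the definition of $V(p,\eps)$ and the sharp estimate in Lemma \ref{lem_v_type_interactions}(i) for the $\omega=0$ case are used.
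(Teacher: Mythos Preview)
Your plan is correct and matches the paper's own proof essentially step by step: start from the definition of $\sigma_{k,i}$, strip off the $v$-contribution via Lemma~\ref{lem_v_type_interactions}, expand $\alpha^{j}\int L_{g_{0}}\varphi_{j}\phi_{k,i}$ through Lemma~\ref{lem_emergence_of_the_regular_part} to extract the mass and interaction pieces, reduce $\int K(\alpha^{j}\varphi_{j})^{\frac{n+2}{n-2}}\phi_{k,i}$ by region splitting and Lemma~\ref{lem_interactions}, and finally Taylor expand $K$ at $a_{i}$ to obtain the $e_{k}$-coefficients. One small correction: for $k=3$ the Green-regular-part integral produces $d_{3}\alpha_{i}\tfrac{\nabla H_{i}}{\lambda_{i}^{n-1}}$ rather than $d_{3}\alpha_{i}\tfrac{H_{i}}{\lambda_{i}^{n-2}}$ (by radial symmetry the $H_{i}$-contribution vanishes), and this term is then absorbed into $R_{3,i}$ since $\lambda_{i}^{1-n}=o(\lambda_{i}^{2-n})$, which is why no mass term appears in the final expression for $\sigma_{3,i}$.
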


\begin{proof}[\textbf{Proof of proposition \ref{prop_analysing_ski}}]\label{p_analysing_ski}$_{}$\\
By definition and conformal invariance
\begin{equation}\begin{split}\label{sigmaki=Ki...+K-Ki}
\sigma_{k,i}
= &
-\int (L_{g_{0}}u-r\K u^{\frac{n+2}{n-2}})\phi_{k,i}
=
-\int (R-r\K )u^{\frac{n+2}{n-2}}\phi_{k,i}.
\end{split}\end{equation}
We start evaluating
\begin{equation}\begin{split}\label{intL_{g_{0}}uPhiki}
\int L_{g_{0}}u\phi_{k,i}
= &
\int L_{g_{0}}(\alpha^{j}\varphi_{j}+v)\phi_{k,i}
=
\alpha^{j}\int L_{g_{0}}\varphi_{j}\phi_{k,i}
+
\int L_{g_{0}}\phi_{k,i}v.
\end{split}\end{equation}
Using lemmata \ref{lem_emergence_of_the_regular_part} and \ref{lem_interactions}  we obtain for $\alpha >0$ small 
\begin{equation}\begin{split}
\alpha^{j}\int L_{g_{0}} & \varphi_{j}\phi_{k,i} 
= 
\alpha_{i}\int L_{g_{0}}\varphi_{i}\phi_{k,i}
+
\sum_{i\neq j =1}^{p}\alpha_{j}\int L_{g_{0}}\varphi_{j}\phi_{k,i} \\
= &
4n(n-1)\alpha_{i}\int_{B_{\alpha}( a _{i})}
\varphi_{i}^{\frac{n+2}{n-2}}
\phi_{k,i} \\
& -
2nc_{n}
\alpha_{i}\int_{B_{\alpha}( a _{i})}
( 
((n-1)H_{i}+r_{i}\partial_{r_{i}}H_{i}) r_{i}^{n-2}\varphi^{\frac{n+2}{n-2}}_{i} 
)
\phi_{k,i}
\\
& +
4n(n-1)\sum_{i\neq j =1}^{p}\alpha_{j}\int_{B_{\alpha}( a _{j})} 
\varphi_{j}^{\frac{n+2}{n-2}}
\phi_{k,i}
+
o_{\varepsilon}( \frac{1}{\lambda_{i}^{n-2}} +\sum_{i\neq j=1}^{p}\eps_{i,j})
 \\
= &
4n(n-1)\alpha_{i}\int_{B_{\alpha}( a _{i})}
\varphi_{i}^{\frac{n+2}{n-2}}
\phi_{k,i}
+
4n(n-1)b_{k}\sum_{i\neq j =1}^{p}\alpha_{j}d_{k,i}\eps_{i,j}
 \\
& -
2nc_{n}
\alpha_{i}\int_{B_{\alpha}( a _{i})}
( 
(n-1)H_{i}+r_{i}\partial_{r_{i}}H_{i}) r_{i}^{n-2}\varphi^{\frac{n+2}{n-2}} _{i}
)
\phi_{k,i}
\\
& +
o_{\varepsilon}
(
 \frac{1}{\lambda_{i}^{n-2}} 
+
\sum_{i\neq j=1}^{p}\eps_{i,j}
).
\end{split}\end{equation}
Indeed the curvature related term arising from lemma \ref{lem_emergence_of_the_regular_part} is of order
\begin{equation}
\begin{split}
\int_{B_{\alpha}(0)}\frac{r^{2}}{\lambda_{i}}(\frac{\lambda_{i}}{1+\lambda_{i}^{2}r^{2}})^{\frac{n-2}{2}(\frac{n}{n-2}+1)}
=
\lambda_{i}^{-4}O(\lambda_{i}, \ln \lambda_{i},1)=o(\frac{1}{\lambda_{i}^{n-2}}).
\end{split}
\end{equation} 
Thus
\begin{equation}\begin{split}
\alpha^{j}&\int L_{g_{0}}\varphi_{j}\phi_{k,i} \\
= &
4n(n-1)
[
\alpha_{i}
\int
\varphi_{i}^{\frac{n+2}{n-2}}
\phi_{k,i}
+
b_{k}\sum_{i\neq j =1}^{p}\alpha_{j}d_{k,i}\eps_{i,j}
] \\
& -
(n-1)(n-2)c_{n}\alpha_{i}H_{i} 
\int_{B_{\alpha}(0)}
 r^{n-2}
(1, -\lambda_{i} \partial_{\lambda_{i} }, \frac{1}{\lambda_{i}} \nabla)
(\frac{\lambda_{i} }{1+\lambda_{i}^2r^{2}})^{n}\\
& -
(n-2)c_{n}\alpha_{i}\nabla H_{i} 
\int_{B_{\alpha}(0)} 
\nabla rr^{n-1}
(1, -\lambda_{i} \partial_{\lambda_{i} }, \frac{1}{\lambda_{i}} \nabla)
(\frac{\lambda_{i} }{1+\lambda_{i}^2r^{2}})^{n}
\\
& +
o_{\varepsilon}
(
 \frac{1}{\lambda_{i}^{n-2}} 
+
\sum_{i\neq j=1}^{p}\eps_{i,j}
)
\end{split}\end{equation}
using
$
\gamma_{n}\nabla_{ a _{i}} G_{ a _{i}}^{\frac{2}{2-n}}
=
2x+O(r^{n-1}).
$
By radial symmetry we then get
\begin{equation}\begin{split}\label{IntL_{g_{0}}djPhiki_expanded}
\alpha^{j}\int L_{g_{0}}  \varphi_{j}\phi_{k,i}
= &
4n(n-1)
[
\alpha_{i}
\int
\varphi_{i}^{\frac{n+2}{n-2}}
\phi_{k,i}
+
b_{k}\sum_{i\neq j =1}^{p}\alpha_{j}d_{k,i}\eps_{i,j}
] \\
& 
-
\alpha_{i}(d_{1}\frac{ H_{i}}{\lambda_{i} ^{n-2}},d_{2}\frac{ H_{i}}{\lambda_{i} ^{n-2}},d_{3}\frac{\nabla H_{i}}{\lambda_{i} ^{n-1}}  ) \\
& +
o_{\varepsilon}
(
 \frac{1}{\lambda_{i}^{n-2}} 
+
\sum_{i\neq j=1}^{p}\eps_{i,j}
)
\end{split}\end{equation}
with  $d_{k}>0$. Inserting this into \eqref{intL_{g_{0}}uPhiki} and applying lemma  \ref{lem_v_type_interactions} gives
\begin{equation}\begin{split}\label{IntL_{g_{0}}uPhiki_expanded}
\int & L_{g_{0}}u\phi_{k,i}
= 
\int L_{g_{0}}(\alpha^{j}\varphi_{j}+v)\phi_{k,i} \\
= &
4n(n-1)
[
\alpha_{i}
\int
\varphi_{i}^{\frac{n+2}{n-2}}
\phi_{k,i}
+
b_{k}\sum_{i\neq j =1}^{p}\alpha_{j}d_{k,i}\eps_{i,j}
] \\
& 
-
\alpha_{i}(d_{1}\frac{ H_{i}}{\lambda_{i} ^{n-2}},d_{2}\frac{ H_{i}}{\lambda_{i} ^{n-2}},d_{3}\frac{\nabla H_{i}}{\lambda_{i} ^{n-1}}  ) 
+
o_{\varepsilon}( \frac{1}{\lambda_{i}^{n-2}} + \sum_{i\neq j=1}^{p}\eps_{i,j})
+
O
(\Vert v \Vert^{2}).
\end{split}\end{equation}
Next from lemma \ref{lem_v_type_interactions}  we infer
\begin{equation}\begin{split}\label{intKu^..Phiki}
\int Ku^{\frac{n+2}{n-2}}\phi_{k,i}
= &
\int K (\alpha^{j}\varphi_{j})^{\frac{n+2}{n-2}}\phi_{k,i}
+
O(\Vert v \Vert^{2}).
\end{split}\end{equation}
Clearly
\begin{equation}
\begin{split}
\int & K  (\alpha^{j}\varphi_{j})^{\frac{n+2}{n-2}}\phi_{k,i} \\
= &
\underset{[\alpha_{i}\varphi_{i}\geq \sum_{i\neq j=1}^{p}\alpha_{j}\varphi_{j}]}{\int}
K
(\alpha_{i}\varphi_{i})^{\frac{n+2}{n-2}} \phi_{k,i} 
+
\frac{n+2}{n-2}(\alpha_{i}\varphi_{i})^{\frac{4}{n-2}}\sum_{i\neq j=1}^{p}\alpha_{j}\varphi_{j}\phi_{k,i} \\
& +
\underset{[\alpha_{i}\varphi_{i}< \sum_{i\neq j=1}^{p}\alpha_{j}\varphi_{j}]}{\int}
K(\sum_{i\neq j=1}^{p}\alpha_{j}\varphi_{j})^{\frac{n+2}{n-2}}\phi_{k,i} \\
& +
O
(
\underset{[\varphi_{i}\geq \epsilon\sum_{i\neq j=1}^{p}\varphi_{j}]}{\int}
\varphi_{i}^{\frac{4}{n-2}}\sum_{i\neq j=1}^{p}\varphi_{j}^{2}
+
\underset{[\epsilon \varphi_{i}< \sum_{i\neq j=1}^{p}\varphi_{j}]}{\int}
\sum_{i\neq j=1}^{p}\varphi_{j}^{\frac{4}{n-2}}\varphi_{i}^{2}
),
\end{split}
\end{equation} 
whence
\begin{equation}
\begin{split}
\int & K  (\alpha^{j}\varphi_{j})^{\frac{n+2}{n-2}}\phi_{k,i} \\
= &
\int 
K
(\alpha_{i}\varphi_{i})^{\frac{n+2}{n-2}} \phi_{k,i} 
+
\frac{n+2}{n-2}(\alpha_{i}\varphi_{i})^{\frac{4}{n-2}}\sum_{i\neq j=1}^{p}\alpha_{j}\varphi_{j}\phi_{k,i} \\
& +
\int
K(\sum_{i\neq j=1}^{p}\alpha_{j}\varphi_{j})^{\frac{n+2}{n-2}}\phi_{k,i} \\
& +
O
(
\underset{[\varphi_{i}\geq \epsilon\sum_{i\neq j=1}^{p}\varphi_{j}]}{\int}
\varphi_{i}^{\frac{4}{n-2}}\sum_{i\neq j=1}^{p}\varphi_{j}^{2}
+
\underset{[\epsilon \varphi_{i}< \sum_{i\neq j=1}^{p}\varphi_{j}]}{\int}
\sum_{i\neq j=1}^{p}\varphi_{j}^{\frac{4}{n-2}}\varphi_{i}^{2}
).
\end{split}
\end{equation} 
Therefore we obtain applying lemma \ref{lem_interactions}
\begin{equation}
\begin{split}
\int  K  (\alpha^{j}&\varphi_{j})^{\frac{n+2}{n-2}}\phi_{k,i} \\
= &
\int 
K
(\alpha_{i}\varphi_{i})^{\frac{n+2}{n-2}} \phi_{k,i} 
+
\frac{n+2}{n-2}(\alpha_{i}\varphi_{i})^{\frac{4}{n-2}}\sum_{i\neq j=1}^{p}\alpha_{j}\varphi_{j}\phi_{k,i} \\
& +
\int
K(\sum_{i\neq j=1}^{p}\alpha_{j}\varphi_{j})^{\frac{n+2}{n-2}}\phi_{k,i} 
+
o_{\varepsilon}(\sum_{i\neq j=1}^{p}\eps_{i,j}).
\end{split}
\end{equation} 
Moreover note, that for $\epsilon >0$ sufficiently small
\begin{equation}
\begin{split}
M=\cup_{i=1}^{p}[\varphi_{i}>\epsilon\sum_{i\neq j=1}^{p}\varphi_{j}]=\cup_{i=1}^{p}A_{i},
\end{split}
\end{equation} 
whence for $B_{i}=A_{i}\setminus \cup_{i\neq j=1}^{p}A_{j}$ we  have
$M=\sum_{i=1}^{p}B_{i}$.
This gives
\begin{equation}
\begin{split}
\int 
K  & (\sum_{i\neq j=1}^{p}\alpha_{j}\varphi_{j})^{\frac{n+2}{n-2}}\phi_{k,i} 
= 
\sum_{i\neq j=1}^{p}\underset{B_{j}}{\int}K(\sum_{i\neq j=1}^{p}\alpha_{j}\varphi_{j})^{\frac{n+2}{n-2}}\phi_{k,i}
+
o(\sum_{i\neq j=1}^{p}\eps_{i,j}) \\
= &
\sum_{i\neq j=1}^{p}
\int K (\alpha_{j}\varphi_{j})^{\frac{n+2}{n-2}}\phi_{k,i} 
+
O(\sum_{\underset{r\neq s}{s\neq i,r\neq i}}\int \varphi_{r}^{\frac{4}{n-2}}\varphi_{s}\varphi_{i})
+
o(\sum_{i\neq j=1}^{p}\eps_{i,j})
\end{split}
\end{equation} 
and we obtain using H\"older's inequality and lemma \ref{lem_interactions}
\begin{equation}
\begin{split}
\int
K(\sum_{i\neq j=1}^{p}\alpha_{j}\varphi_{j})^{\frac{n+2}{n-2}}\phi_{k,i} 
= &
\sum_{i\neq j=1}^{p}
\int K (\alpha_{j}\varphi_{j})^{\frac{n+2}{n-2}}\phi_{k,i} \\
& +
o(\sum_{i\neq j=1}^{p}\eps_{i,j})
+
O(\sum_{r\neq s}\eps_{r,s}^{2}).
\end{split}
\end{equation} 
Therefore
\begin{equation}\begin{split}\label{intK(ajdj^(...)phiki}
\int K (\alpha^{j}\varphi_{j})^{\frac{n+2}{n-2}}\phi_{k,i}
= &
\alpha_{i}^{\frac{n+2}{n-2}}\int K\varphi_{i}^{\frac{n+2}{n-2}}\phi_{k,i}
+
\sum_{i\neq j =1}^{p}\alpha_{j}^{\frac{n+2}{n-2}}\int K \varphi_{j}^{\frac{n+2}{n-2}}\phi_{k,i} \\
& +
\frac{n+2}{n-2}\alpha_{i}^{\frac{4}{n-2}}
\sum_{i \neq j=1}^{p}\alpha_{j}
\int K\varphi_{i}^{\frac{4}{n-2}}\phi_{k,i}\varphi_{j} \\
& +
o_{\varepsilon}(\sum_{i\neq j=1}^{p}\eps_{i,j})
+
O(\sum_{r\neq s}\eps_{r,s}^{2}).
\end{split}\end{equation}
By a simple expansion we then get
\begin{equation}\begin{split}\label{IntK(ajdj)^{...}Phiki}
\int K (\alpha^{j}\varphi_{j})^{\frac{n+2}{n-2}}\phi_{k,i}
= &
\alpha_{i}^{\frac{n+2}{n-2}}K_{i}\int \varphi_{i}^{\frac{n+2}{n-2}}\phi_{k,i}
+
\sum_{i\neq j =1}^{p}\alpha_{j}^{\frac{n+2}{n-2}}K_{j}\int  \varphi_{j}^{\frac{n+2}{n-2}}\phi_{k,i} \\
& +
\frac{n+2}{n-2}\alpha_{i}^{\frac{4}{n-2}}K_{i}
\sum_{i \neq j=1}^{p}\alpha_{j}
\int \varphi_{i}^{\frac{4}{n-2}}\phi_{k,i}\varphi_{j} \\
& +
\alpha_{i}^{\frac{n+2}{n-2}}(e_{1}  \frac{\lap K_{i}}{\lambda_{i} ^{2}},e_{2} \frac{\lap K_{i}}{\lambda_{i}^{2}},
e_{3}\frac{\nabla K_{i}}{\lambda_{i}} +e_{4}\frac{\nabla \lap K_{i}}{\lambda_{i}^{3}} )\\
& +
o_{\varepsilon}( \frac{1}{\lambda_{i}^{n-2}} +\sum_{i\neq j=1}^{p}\eps_{i,j})
+
O
(
\sum_{r\neq s}\eps_{r,s}^{2}
)
.
\end{split}\end{equation}
Indeed using \eqref{Phi1i}, \eqref{Phi2i}, \eqref{Phi3i}
we have in case $k=1$,
\begin{equation}\begin{split}\label{bringoutK_k=1}
\int (K & -  K_{i})\varphi_{i}^{\frac{n+2}{n-2}}\phi_{k,i} \\
= &
\int_{B_{\lambda_{i}\alpha}(0)}
\frac{K( \frac{1}{\lambda_{i}} \, \cdot)-K(0)}{( 1+r^{2}(1+ \frac{1}{\lambda_{i}^{n-2}} r^{n-2}H_{a_{i}}(  \frac{\cdot}{\lambda_{i}}))^{\frac{2}{2-n}})^{n}}
+
O( \frac{1}{\lambda_{i}^{n}})
\\ 
= &
\int_{B_{\lambda_{i}\alpha }(0)}
\frac{K(  \frac{1}{\lambda_{i}} \, \cdot)-K(0)}{( 1+r^{2})^{n}}
+
O( \frac{1}{\lambda_{i}^{n-1}}) = 
e_{1}  \frac{\lap K_{i}}{\lambda_{i} ^{2}}
+
o( \frac{1}{\lambda_{i}^{n-2}} ),
\end{split}\end{equation}
where 
$e_{1}=\frac{1}{2n}\int_{\R^{n}} \frac{r^{2}}{(1+r^{2})^{n}}$.
In case $k=2$ we get
\begin{equation}\begin{split}\label{bringoutK_k=2}
\int (  K & -K_{i})\varphi_{i}^{\frac{n+2}{n-2}}\phi_{k,i} \\
= &
\frac{n-2}{2}  \frac{1}{\lambda_{i}}\int_{B_{\lambda_{i}\alpha }(0)}
\frac
{(K(  \frac{1}{\lambda_{i}} \, \cdot)-K_{i}) ( r^{2}-1)}
{( 1+r^{2})^{n+1}}  
+
O( \frac{1}{\lambda_{i}^{n-1}})\\
= &
e_{2}  \frac{\lap K_{i}}{\lambda_{i} ^{2}}
+
o( \frac{1}{\lambda_{i}^{n-2}} ),
\end{split}\end{equation}
where 
$e_{2}=\frac{(n-2)}{4n}\int_{\R^{n}} \frac{r^{2}(r^{2}-1)}{(1+r^{2})^{n+1}}$
and in case $k=3$ 
\begin{equation}\begin{split}\label{bringoutK_k=3}
\int (K & -K_{i})\varphi_{i}^{\frac{n+2}{n-2}}\phi_{k,i} 
= 
\frac{n-2}{2n}\int (K  -K_{i})\frac{1}{\lambda_{i}}\nabla_{a_{i}}\varphi_{i}^{\frac{2n}{n-2}} \\
= &
\frac{n-2}{2n}\frac{\nabla K_{i}}{\lambda_{i}}\int \varphi_{i}^{\frac{2n}{n-2}} 
+
\frac{n-2}{2n}\frac{\nabla_{a_{i}}}{\lambda_{i}}\int (K  -K_{i})\varphi_{i}^{\frac{2n}{n-2}} 
\\
=&
e_{3}\frac{\nabla K_{i}}{\lambda_{i}}  
+
e_{4}\frac{\nabla \lap K_{i}}{\lambda_{i}^{3}}
+
o( \frac{1}{\lambda_{i}^{n-2}} )
\end{split}\end{equation}
with 
$e_{3}=\frac{n-2}{2n}\int_{\R^{n}} \frac{1}{(1+r^{2})^{n}},
e_{4}=\frac{n-2}{4n^{2}}\int_{\R^{n}}\frac{r^{2}}{(1+r^{2})^{n}}$.

Plugging \eqref{IntK(ajdj)^{...}Phiki} into \eqref{intKu^..Phiki} gives
\begin{equation}\begin{split}\label{intKu^..Phiki_expanded}
\int K  &u^{\frac{n+2}{n-2}}\phi_{k,i} \\
= &
\alpha_{i}^{\frac{n+2}{n-2}}K_{i}\int \varphi_{i}^{\frac{n+2}{n-2}}\phi_{k,i}
+
\sum_{i\neq j =1}^{p}\alpha_{j}^{\frac{n+2}{n-2}}K_{j}\int  \varphi_{j}^{\frac{n+2}{n-2}}\phi_{k,i} \\
& +
\frac{n+2}{n-2}\alpha_{i}^{\frac{4}{n-2}}K_{i}
\sum_{i \neq j=1}^{p}\alpha_{j}
\int \varphi_{i}^{\frac{4}{n-2}}\phi_{k,i}\varphi_{j} \\
& +
\alpha_{i}^{\frac{n+2}{n-2}}(e_{1}  \frac{\lap K_{i}}{\lambda_{i} ^{2}},e_{2} \frac{\lap K_{i}}{\lambda_{i}^{2}},e_{3}\frac{\nabla K_{i}}{\lambda_{i}} +e_{4}\frac{\nabla \lap K_{i}}{\lambda_{i}^{3}} )\\
& +
o_{\varepsilon}( \frac{1}{\lambda_{i}^{n-2}} +\sum_{i\neq j=1}^{p}\eps_{i,j})
+
O
(
\sum_{r\neq s}\eps_{r,s}^{2}
+
\Vert v \Vert^{2}
)
\end{split}\end{equation}

and inserting finally \eqref{IntL_{g_{0}}uPhiki_expanded} and \eqref{intKu^..Phiki_expanded} into \eqref{sigmaki=Ki...+K-Ki}  we conclude 
\begin{equation}\begin{split}
\sigma_{k,i}
= &
-
4n(n-1)
[
\alpha_{i}
\int
\varphi_{i}^{\frac{n+2}{n-2}}
\phi_{k,i}
+
b_{k}\sum_{i\neq j =1}^{p}\alpha_{j}d_{k,i}\eps_{i,j}
] \\
& 
+
\alpha_{i}(d_{1}\frac{ H_{i}}{\lambda_{i} ^{n-2}},d_{2}\frac{ H_{i}}{\lambda_{i} ^{n-2}},d_{3}\frac{\nabla H_{i}}{\lambda_{i} ^{n-1}}  ) 
\\
& + 
\alpha_{i}^{\frac{n+2}{n-2}}\frac{r}{k}K_{i}\int \varphi_{i}^{\frac{n+2}{n-2}}\phi_{k,i}
+
\sum_{i\neq j =1}^{p}\alpha_{j}^{\frac{n+2}{n-2}}\frac{r}{k}K_{j}\int  \varphi_{j}^{\frac{n+2}{n-2}}\phi_{k,i} \\
& +
\frac{n+2}{n-2}\alpha_{i}^{\frac{4}{n-2}}\frac{r}{k}K_{i}
\sum_{i \neq j=1}^{p}\alpha_{j}
\int \varphi_{i}^{\frac{4}{n-2}}\phi_{k,i}\varphi_{j} \\
& +
\alpha_{i}^{\frac{n+2}{n-2}}\frac{r}{k}
(e_{1}  \frac{\lap K_{i}}{\lambda_{i} ^{2}},e_{2} \frac{\lap K_{i}}{\lambda_{i}^{2}},e_{3}\frac{\nabla K_{i}}{\lambda_{i}} 
+
e_{4}\frac{\nabla \lap K_{i}}{\lambda_{i}^{3}} )
\\
& +
o_{\varepsilon}( \frac{1}{\lambda_{i}^{n-2}} +\sum_{i\neq j=1}^{p}\eps_{i,j})
+
O
(
\sum_{r\neq s}\eps_{r,s}^{2}
+
\Vert v \Vert^{2}
).
\end{split}\end{equation}
The claim follows.
\end{proof}
As $\sigma_{1,i}=O(\vert \delta J(u)\vert)$
the equations for $\sigma_{2,i}, \sigma_{3,i}$ simplify significantly.
\begin{corollary}[Simplifying $\sigma_{k,i}$]\label{cor_simplifying_ski}$_{}$\\
On $V(p, \eps)$ for $\eps>0$ small we have with constants $b_{2}, \ldots,e_{4}>0$
\begin{enumerate}[label=(\roman*)]
 \Item $_{}$
\begin{equation*}\begin{split}
\sigma_{2,i}
= &
d_{2}\alpha_{i}\frac{ H_{i}}{\lambda_{i} ^{n-2}}
+
e_{2}\frac{r\alpha_{i}^{\frac{n+2}{n-2}}}{k}\frac{\lap K_{i}}{\lambda_{i}^{2}} 
- 
b_{2}\frac{r\alpha_{i}^{\frac{4}{n-2}}K_{i}}{k}
\sum_{i \neq j=1}^{p}\alpha_{j}
\lambda_{i}\partial_{\lambda_{i}}\eps_{i,j} 
+
R_{2,i}
\end{split}\end{equation*}
 \Item $_{}$
\begin{equation*}\begin{split}
\sigma_{3,i}
= &
\frac{r\alpha_{i}^{\frac{n+2}{n-2}}}{k}
[
 e_{3}\frac{\nabla K_{i}}{\lambda_{i}}
+
e_{4}\frac{\nabla \lap K_{i}}{\lambda_{i}^{3}}
] 
+
b_{3}\frac{r\alpha_{i}^{\frac{4}{n-2}}K_{i}}{k}
\sum_{i \neq j=1}^{p}
\frac{\alpha_{j}}{\lambda_{i}}\nabla_{a_{i}}\eps_{i,j} +
R_{3,i},
\end{split}\end{equation*}
\end{enumerate}
where  
\begin{equation*}\begin{split}
R_{k,i}
= 
o_{\varepsilon}
(
 \frac{1}{\lambda_{i}^{n-2}} 
+
\sum_{i\neq j=1}^{p}\eps_{i,j}
)_{k,i} 
+
O
(
\sum_{r\neq s}\eps_{r,s}^{2} 
+
\Vert v \Vert^{2}
+
\vert \delta J(u)\vert^{2}
)_{k,i}
.                               
\end{split}\end{equation*}
\end{corollary}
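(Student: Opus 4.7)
\textbf{Proof plan for Corollary \ref{cor_simplifying_ski}.}
The statement is a pure bookkeeping simplification of Proposition \ref{prop_analysing_ski}, exploiting two structural facts: the first lines of the expansions of $\sigma_{2,i}$ and $\sigma_{3,i}$ are multiplied by factors that are much smaller than the analogous factor in $\sigma_{1,i}$, and the a priori bound $\sigma_{1,i}=O(|\delta J(u)|)$ lets one upgrade control on the coefficients
$$
\tau_{j}:=\frac{r\alpha_{j}^{\frac{4}{n-2}}K_{j}}{4n(n-1)k}-1.
$$

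The first step is to extract a quantitative bound on $\tau_{i}$. From Proposition \ref{prop_analysing_ski}(i), using that $\int \varphi_{i}^{\frac{2n}{n-2}}=c_{1}+O(\lambda_{i}^{2-n})$ is bounded away from zero by Lemma \ref{lem_interactions}(iv), and that $\sigma_{1,i}=O(|\delta J(u)|)$ by the very definition of $\sigma_{1,i}$ together with proposition \ref{prop_strong_convergence_of_the_first_variation}, I would solve the (weakly coupled, since $\epsilon_{i,j}=o_{\epsilon}(1)$) linear system in the $\tau_{j}$'s to obtain
$$
\tau_{i}=O(|\delta J(u)|)+O\bigl(\tfrac{1}{\lambda_{i}^{n-2}}+\tfrac{|\Delta K_{i}|}{\lambda_{i}^{2}}+\sum_{j\neq i}\epsilon_{i,j}\bigr)+R_{1,i}'
$$
with $R_{1,i}'$ of the type appearing in $R_{k,i}$. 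In particular $\tau_{i}=o_{\epsilon}(1)$.

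The second step is to insert this into the expansions of $\sigma_{2,i}$ and $\sigma_{3,i}$ given by Proposition \ref{prop_analysing_ski}(ii)--(iii). The key small factors are provided by Lemma \ref{lem_interactions}(iv), which gives
$$
\int \varphi_{i}^{\frac{n+2}{n-2}}\phi_{k,i}=O\bigl(\tfrac{1}{\lambda_{i}^{n-2}}\bigr)\quad\text{for }k=2,3,
$$
and Lemma \ref{lem_interactions}(vii), which gives $\lambda_{i}\partial_{\lambda_{i}}\epsilon_{i,j},\frac{1}{\lambda_{i}}\nabla_{a_{i}}\epsilon_{i,j}=O(\epsilon_{i,j})$. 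Multiplying the bound on $\tau_{i}$ by $O(\lambda_{i}^{2-n})$, and the bound on $\tau_{j}$ ($j\neq i$) by $O(\epsilon_{i,j})$, the diagonal term $\tau_{i}\cdot O(\lambda_{i}^{2-n})$ produces $|\delta J(u)|\cdot O(\lambda_{i}^{2-n})$ which I would split using AM--GM,
$$
|\delta J(u)|\cdot \tfrac{1}{\lambda_{i}^{n-2}}\leq |\delta J(u)|^{2}+\tfrac{1}{\lambda_{i}^{2(n-2)}}=O(|\delta J(u)|^{2})+o_{\epsilon}(\tfrac{1}{\lambda_{i}^{n-2}}),
$$
while the remaining contributions fall into $o_{\epsilon}(\lambda_{i}^{2-n}+\sum_{j\neq i}\epsilon_{i,j})+O(\sum_{r\neq s}\epsilon_{r,s}^{2})$; analogously for the off-diagonal $\tau_{j}\cdot O(\epsilon_{i,j})$ terms. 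All of these absorb cleanly into the claimed $R_{k,i}$.

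There is no genuine difficulty; the only thing to track carefully is the passage from $o_{\epsilon}(\cdot)$ in $R_{1,i}$ to the same kind of error after multiplication by the small factors above, and the absorption of the $|\delta J(u)|$ cross term into $|\delta J(u)|^{2}$ via AM--GM, which explains the appearance of $O(|\delta J(u)|^{2})$ in the error of the corollary that was not present in Proposition \ref{prop_analysing_ski}.
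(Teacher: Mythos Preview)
Your approach is correct and coincides with the paper's own proof: use $\sigma_{1,i}=O(|\delta J(u)|)$ together with Proposition~\ref{prop_analysing_ski}(i) to bound $\tau_{i}$, then feed this into (ii) and (iii) using $\int\varphi_{i}^{\frac{n+2}{n-2}}\phi_{k,i}=O(\lambda_{i}^{2-n})$ for $k=2,3$ and $d_{k,i}\eps_{i,j}=O(\eps_{i,j})$. Two small remarks: the bound $\sigma_{1,i}=O(|\delta J(u)|)$ is an immediate H\"older estimate and does not require Proposition~\ref{prop_strong_convergence_of_the_first_variation}; and there is no need to genuinely ``solve'' a coupled system for the $\tau_{j}$, since the off-diagonal terms $\tau_{j}\cdot b_{1}\eps_{i,j}$ are already $o_{\eps}(\eps_{i,j})$ by the defining constraint $|\tau_{j}|<\eps$ of $V(p,\eps)$, which lets you read off the bound on $\tau_{i}$ directly (this is the content of \eqref{rai^{...}/k=...}).
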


\begin{proof}[\textbf{Proof of corollary \ref{cor_simplifying_ski}}]
\label{p_simplifying_ski}$_{}$\\
We have 
\begin{equation}\begin{split}
C\vert \delta J(u)\vert
\geq  \vert\int (R-r\K)u^{\frac{n+2}{n-2}}\varphi_{i}\vert
=
\vert \sigma_{1,i}\vert,
\end{split}\end{equation}
whence due to proposition \ref{prop_analysing_ski} for $k=1$
\begin{equation}\begin{split}\label{rai^{...}/k=...}
\frac{r\alpha_{i}^{\frac{4}{n-2}}K_{i}}{4n(n-1)k}
= &
1 
+
O
(
\frac{1}{\lambda_{i}^{n-2}}+\frac{\vert \lap K_{i}\vert}{\lambda_{i}^{2}}
+
\sum_{i\neq j=1}^{p}\eps_{i,j}
\\
& \quad\quad \quad
+
\sum_{r\neq s}\eps_{r,s}^{2}
+
\Vert v \Vert^{2}
+
\vert \delta J(u)\vert
).
\end{split}\end{equation}
Inserting \eqref{rai^{...}/k=...} into proposition \ref{prop_analysing_ski} for $k=2,3$ proves the claim, since
\begin{equation}
\begin{split}
\frac{\nabla_{a_{i}}}{\lambda_{i}}\int \var_{i}^{\frac{2n}{n-2}}
,
\lambda_{i}\partial_{\lambda_{i}}\int \var_{i}^{\frac{2n}{n-2}}
=
O(\frac{1}{\lambda_{i}^{n-2}}).
\end{split}
\end{equation} 
\end{proof}
We turn to estimate the error term  $v$. To do so we characterize the first two derivatives of 
$J$ at $\alpha^{i}\varphi_{i}=u-v$.
\begin{proposition}[Derivatives on $H(p, \eps)$]\label{prop_derivatives_on_H}$_{}$\\
For $\eps>0$ small let $u=\alpha^{i}\var_{i}+v\in V(p, \eps)$ and $h_{1},h_{2}\in H= H_{u}(p, \eps)$.

We then have
\begin{enumerate}[label=(\roman*)]
 \Item 
 \begin{equation*}\begin{split}
\Vert\partial J( \alpha^{i}\varphi_{i}  )\lfloor_{H}\Vert
= 
O(\sum_{r} \frac{\vert \nabla K_{r}\vert}{\lambda_{r}}  & + \frac{\vert \lap K_{r}\vert}{\lambda_{r}^{2}} 
+
\frac{1}{\lambda_{r}^{n-2}} 
\\ & 
+\sum_{r\neq s}\eps_{r,s}+\Vert v \Vert^{2}+\vert \delta J(u)\vert)
\end{split}\end{equation*}
\Item 
\begin{equation*}\begin{split}
\frac{1}{2}\partial^{2}J( \alpha^{i}\varphi_{i}   )h_{1}h_{2}
= &
k^{\frac{2-n}{n}}_{\alpha^{i}\varphi_{i}}
[\int L_{g_{0}}h_{1}h_{2}
-
c_{n}n(n+2)
\sum_{i}\int\varphi_{i}^{\frac{4}{n-2}}h_{1}h_{2}
] \\
& +
o_{\varepsilon}(\Vert h_{1} \Vert\, \Vert h_{2} \Vert)
\end{split}\end{equation*}
\end{enumerate}
\end{proposition}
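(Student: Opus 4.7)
My plan is to repeat, test-function by test-function, the bubble expansion carried out in the proof of proposition \ref{prop_analysing_ski}, but with the explicit $\phi_{k,i}$ replaced by an arbitrary $h\in H=H_{u}(p,\eps)$, and to convert the orthogonality $\int Ku^{\frac{4}{n-2}}\phi_{k,i}h=0$ built into $H$ into smallness of the corresponding linear interactions with the pseudo critical approximation $u^{*}=\alpha^{i}\var_{i}=u-v$.

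For (i), start from
\begin{equation*}
\tfrac{1}{2}\partial J(u^{*})h
=
k_{u^{*}}^{\frac{2-n}{n}}\int\bigl(L_{g_{0}}u^{*}-\tfrac{r_{u^{*}}}{k_{u^{*}}}K(u^{*})^{\frac{n+2}{n-2}}\bigr)h,
\end{equation*}
apply lemma \ref{lem_emergence_of_the_regular_part} to $L_{g_{0}}\var_{i}$, and expand $K(u^{*})^{\frac{n+2}{n-2}}$ exactly as in the proof of proposition \ref{prop_analysing_ski}. Every surviving integral of the form $\int\var_{i}^{\frac{n+2}{n-2}}h$ is then treated as follows: it equals $(K_{i}\alpha_{i}^{\frac{4}{n-2}})^{-1}\int K(\alpha_{i}\var_{i})^{\frac{4}{n-2}}\var_{i}h$ modulo bubble-cross integrals, and this in turn differs from $(K_{i}\alpha_{i}^{\frac{4}{n-2}})^{-1}\int Ku^{\frac{4}{n-2}}\var_{i}h=0$ by $O(\|v\|\,\|h\|)$, using the pointwise inequality $|(a+b)^{\frac{4}{n-2}}-a^{\frac{4}{n-2}}|\lesssim a^{\frac{4}{n-2}-1}|b|+|b|^{\frac{4}{n-2}}$ together with H\"older and Sobolev. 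The remaining residuals are precisely those already tabulated in proposition \ref{prop_analysing_ski}: the regular mass contribution of order $H_{i}/\lambda_{i}^{n-2}$ from lemma \ref{lem_emergence_of_the_regular_part}, the Taylor corrections $|\nabla K_{i}|/\lambda_{i}$ and $|\lap K_{i}|/\lambda_{i}^{2}$ produced by $\int(K-K_{i})\var_{i}^{\frac{n+2}{n-2}}h$, the bubble interactions of order $\eps_{i,j}$, and the prefactor mismatch $r\alpha_{i}^{\frac{4}{n-2}}K_{i}/(4n(n-1)k)-1$, which by $\sigma_{1,i}=O(|\delta J(u)|)$ is itself $O(1/\lambda_{r}^{n-2}+|\lap K_{r}|/\lambda_{r}^{2}+\sum\eps_{r,s}+\|v\|^{2}+|\delta J(u)|)$. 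Summing over $i$ and $h\in H$ yields the stated norm bound.

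For (ii), plug $u=u^{*}$, $v=h_{1}$, $w=h_{2}$ into proposition \ref{prop_derivatives_of_J}(iii). The two cross terms and the rank-one bilinear term factor through $\int L_{g_{0}}u^{*}h_{j}$ and $\int K(u^{*})^{\frac{n+2}{n-2}}h_{j}$; each of these equals, up to a bounded multiplicative constant, the quantity controlled by part (i), and is therefore $o_{\eps}(\|h_{j}\|)$, so these three pieces drop into the $o_{\eps}(\|h_{1}\|\,\|h_{2}\|)$ remainder. For the remaining diagonal term I split $M$ into disjoint bubble regions $B_{i}$ as in the proof of proposition \ref{prop_analysing_ski}, use $(u^{*})^{\frac{4}{n-2}}=\alpha_{i}^{\frac{4}{n-2}}\var_{i}^{\frac{4}{n-2}}+O(\var_{i}^{\frac{4}{n-2}-1}\sum_{j\neq i}\var_{j}+\sum_{j\neq i}\var_{j}^{\frac{4}{n-2}})$ on $B_{i}$, replace $K$ by $K_{i}$ modulo $o(1)$, and invoke the defining condition $r\alpha_{i}^{\frac{4}{n-2}}K_{i}/(4n(n-1)k)=1+o(1)$ of $V(p,\eps)$ together with the identity $\frac{n+2}{n-2}\cdot 4n(n-1)=c_{n}n(n+2)$; lemma \ref{lem_interactions} and Sobolev embedding absorb the leftover $\int\var_{i}^{a}\var_{j}^{b}h_{1}h_{2}$ interactions into the $o_{\eps}$ term.

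The main nuisance is the uniform bookkeeping of these cross integrals across $n=3,4,5$; in particular for $n=5$ one has $\frac{4}{n-2}=\frac{4}{3}<2$, so the expansion of $(u^{*}+v)^{\frac{4}{n-2}}$ must use the sublinear inequality above rather than a quadratic Taylor expansion. This is the one point where some care is required, but the structural estimates of lemma \ref{lem_interactions} plus Sobolev applied to $h\in W^{1,2}$ are enough to push every resulting remainder into either $O(\|v\|^{2})$, into the explicit bound of (i), or into $o_{\eps}(\|h_{1}\|\,\|h_{2}\|)$.
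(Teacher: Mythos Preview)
Your proposal is essentially the paper's approach, with one point of confusion in part (i). The paper does \emph{not} use the orthogonality $\int Ku^{4/(n-2)}\varphi_{i}h=0$ to bound $\int\varphi_{i}^{(n+2)/(n-2)}h$ there; that step of yours yields only $O(\Vert v\Vert)\Vert h\Vert$, which by itself would give $O(\Vert v\Vert)$ in the final bound rather than the stated $O(\Vert v\Vert^{2})$ --- and the quadratic power is what makes the absorption in corollary \ref{cor_a-priori_estimate_on_v} work. What the paper actually does (and what you also list, under ``prefactor mismatch'') is combine $\int L_{g_{0}}(\alpha^{i}\varphi_{i})h$ with $(r/k)_{u^{*}}\int K(\alpha^{i}\varphi_{i})^{(n+2)/(n-2)}h$ so that the leading $\int\varphi_{i}^{(n+2)/(n-2)}h$ terms appear multiplied by $\alpha_{i}\bigl(1-\tfrac{r\alpha_{i}^{4/(n-2)}K_{i}}{4n(n-1)k}\bigr)$, and then bounds this coefficient via \eqref{rai^{...}/k=...} while keeping only the trivial estimate $\int L_{g_{0}}\varphi_{i}h=O(\Vert h\Vert)$. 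One step you omit: the paper first replaces $(r/k)_{u^{*}}$ by $(r/k)_{u}$ using lemma \ref{lem_v_type_interactions}, at cost $O(\vert\nabla K_{r}\vert^{2}/\lambda_{r}^{2}+\Vert v\Vert^{2})$. Since you do invoke the prefactor bound, your argument still closes, but the orthogonality detour is redundant for (i) and obscures where the $\Vert v\Vert^{2}$ actually comes from.

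For (ii), your phrasing ``each of these equals \ldots\ the quantity controlled by part (i)'' is imprecise: part (i) controls the \emph{difference} $\int\bigl(L_{g_{0}}u^{*}-(r\bar K)_{u^{*}}(u^{*})^{(n+2)/(n-2)}\bigr)h_{j}$, not $\int L_{g_{0}}u^{*}h_{j}$ and $\int K(u^{*})^{(n+2)/(n-2)}h_{j}$ individually. The paper handles the cross terms directly by computing
\[
\int L_{g_{0}}u\,h_{j}
=\tfrac{r}{k}\int Ku^{\frac{n+2}{n-2}}h_{j}+O(\vert\delta J(u)\vert)\Vert h_{j}\Vert
=\tfrac{r}{k}\int Ku^{\frac{4}{n-2}}v\,h_{j}+O(\vert\delta J(u)\vert)\Vert h_{j}\Vert
=O(\Vert v\Vert+\vert\delta J(u)\vert)\Vert h_{j}\Vert,
\]
the middle identity being exactly the orthogonality you introduced in (i). So the orthogonality argument \emph{is} the right tool --- it just belongs here rather than in (i). Your bubble-region decomposition of the diagonal term is equivalent to the paper's direct expansion.
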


\begin{proof}[\textbf{Proof of proposition \ref{prop_derivatives_on_H}}]\label{p_derivatives_on_H}$_{}$\\
Let in addition $h\in H_{u}(p, \eps)$ with $\Vert h \Vert=1$. From proposition \ref{prop_derivatives_of_J} we then infer
\begin{equation}\begin{split}
\frac{1}{2}\partial  J(\alpha^{i}\varphi_{i})h
= &
k_{\alpha^{i}\varphi_{i}}^{\frac{2-n}{n}} 
[
\int L_{g_{0}}(\alpha^{i}\varphi_{i} )h
-
 \int (r\K)_{\alpha^{i}\varphi_{i}}(\alpha^{i}\varphi_{i})^{\frac{n+2}{n-2}}h
]
\end{split}\end{equation}
and 
\begin{equation}\begin{split}
\frac{1}{2} \partial^{2}  J( \alpha^{i}\varphi_{i}   )h_{1}h_{2} 
= &
k_{\alpha^{i}\varphi_{i}}^{\frac{2-n}{n}}
[
\int \hspace{-4pt}L_{g_{0}}h_{1}h_{2}
 -
\frac{n+2}{n-2}
\int (r\K)_{\alpha^{i}\varphi_{i}}(\alpha^{i}\varphi_{i})^{\frac{4}{n-2}}h_{1}h_{2}
] \\
& +
o_{\varepsilon}(\Vert h_{1} \Vert \Vert h_{2} \Vert),
\end{split}\end{equation}
since, when considering the formula for the second variation, we have
\begin{equation}
\begin{split}
\int L_{g_{0}}uh_{i} 
= &
\frac{r}{k}\int Ku^{\frac{n+2}{n-2}}h_{i}
+
O(\vert \delta J(u)\vert \Vert h_{i}\Vert)\\
= &
\frac{r}{k}\int Ku^{\frac{4}{n-2}}vh_{i}+O(\vert \delta J(u)\vert \Vert h_{i}\Vert) \\
= &
O(\Vert v \Vert+\vert \delta J(u)\vert)\Vert h_{i}\Vert.
\end{split}
\end{equation} 
Using $\frac{r\alpha_{i}^{\frac{4}{n-2}}K_{i}}{k}=4n(n-1)+o_{\varepsilon}(1)$ and $c_{n}=4\frac{n-1}{n-2}$ we obtain
\begin{equation}\begin{split}
\frac{1}{2}\partial^{2}J( \alpha^{i}\varphi_{i}   )h_{1}h_{2}
= &
k_{\alpha^{i}\varphi_{i}}^{\frac{2-n}{n}}
[
\int L_{g_{0}}h_{1}h_{2}
 -
c_{n}n(n+2)
\int  \sum_{i}\tilde\varphi_{i}^{\frac{4}{n-2}}h_{1}h_{2}
] \\
& +
o_{\varepsilon}(\Vert h_{1} \Vert \Vert h_{2} \Vert), 
\end{split}\end{equation}
This shows the statement on the second derivative. Moreover by lemma \ref{lem_v_type_interactions} 
\begin{equation}\begin{split}
\frac{r_{\alpha^{i}\varphi_{i}}}{k_{\alpha^{i}\varphi_{i}}}
= &
\frac{r}{k}  
+
o(\sum_{r} \frac{1}{\lambda_{r}^{n-2}} + \sum_{r\neq s}\eps_{r,s})
+
O(\sum_{r}\frac{\vert \nabla K_{r}\vert^{2}}{\lambda_{r}^{2}}+\Vert v \Vert^{2}).
\end{split}\end{equation}
We obtain with $r\K=\frac{r}{k}K=(\frac{r}{k})_{u}K$
\begin{equation}\begin{split}\label{1/2partialdJu}
\frac{1}{2}\partial  J(\alpha^{i}\varphi_{i}  )h
= &
k_{\alpha^{i}\varphi_{i}}^{\frac{2-n}{n}} 
[
\int L_{g_{0}}(\alpha^{i}\varphi_{i})h
-
\int r\K(\alpha^{i}\varphi_{i})^{\frac{n+2}{n-2}}h
]
\\ &
+
o(\sum_{r} \frac{1}{\lambda_{r}^{n-2}} + \sum_{r\neq s}\eps_{r,s})
+
O(\sum_{r}\frac{\vert \nabla K_{r}\vert^{2}}{\lambda_{r}^{2}}
+
\Vert v \Vert^{2}),
\end{split}\end{equation}
where due to lemmata \ref{lem_emergence_of_the_regular_part} and \ref{lem_interactions}
\begin{equation}\begin{split}
\int K(\alpha^{i}\varphi_{i} )^{\frac{n+2}{n-2}}h
= &
\sum_{i}\frac{\alpha_{i}^{\frac{n+2}{n-2}}K_{i}}{4n(n-1)}\int L_{g_{0}}\varphi_{i}h \\
& +
O(\sum_{r} \frac{\vert \nabla K_{r}\vert}{\lambda_{r}} 
+ 
\frac{\vert \lap K_{r}\vert}{\lambda_{r}^{2}} 
+
\frac{1}{\lambda_{r}^{n-2}} +\sum_{r\neq s}\eps_{r,s})
.
\end{split}\end{equation}
This gives
\begin{equation}\begin{split}
\frac{1}{2}\partial J(\alpha^{i}\varphi_{i})h
= &
k_{\alpha^{i}\varphi_{i}}^{\frac{2-n}{n}} 
\alpha^{i}(1-\frac{r\alpha^{\frac{4}{n-2}}_{i}K_{i}}{4n(n-1)k})\int L_{g_{0}}\varphi_{i}h
\\ &
+
O(\sum_{r} \frac{\vert \nabla K_{r}\vert}{\lambda_{r}}
+
\frac{\vert \lap K_{r}\vert}{\lambda_{r}^{2}} 
+
\frac{1}{\lambda_{r}^{n-2}} +\sum_{r\neq s}\eps_{r,s})
+
\Vert v \Vert^{2}.
\end{split}\end{equation}
From this the assertion on the first derivative follows from \eqref{rai^{...}/k=...}.
\end{proof}
The second variation at $\alpha^{i}\var_{i}$ turns out to be positive  definite.
\begin{proposition}[Positivity of the second variation]\label{prop_positivity_of_D2J}$_{}$\\
There exist $\gamma, \eps_{0}>0$ such,  that for any 
\begin{equation}
u=\alpha^{i}\varphi_{i}+v\in V(p,\varepsilon) 
\end{equation} 
with $0<\eps<\eps_{0}$ we have  
\begin{equation*}\begin{split}
\partial^{2}J(\alpha^{i}\varphi_{i})\lfloor_{H}>\gamma, \quad H=H_{u}(p, \eps).
\end{split}\end{equation*}
\end{proposition}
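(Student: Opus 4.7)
By Proposition \ref{prop_derivatives_on_H}(ii), the prefactor $k^{\frac{2-n}{n}}_{\alpha^{i}\varphi_{i}}$ is bounded above and below uniformly on $V(p,\eps)$, so after absorbing the $o_{\eps}(\Vert h_{1}\Vert\Vert h_{2}\Vert)$ error it suffices to exhibit $\gamma_{0}>0$, independent of $u$, such that the leading quadratic form
\[
Q(h,h):=\int L_{g_{0}}hh-c_{n}n(n+2)\sum_{i}\int \varphi_{i}^{\frac{4}{n-2}}hh
\]
satisfies $Q(h,h)\geq 2\gamma_{0}\Vert h \Vert^{2}$ for every $h\in H=H_{u}(p,\eps)$, once $\eps$ is small. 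The plan is to reduce this multi-bubble estimate to the classical single-bubble spectral gap via a localization argument.

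The first step is to recall the one-bubble picture. For a single bubble $\varphi=\varphi_{a,\lambda}$, rescaling to the Euclidean bubble $\delta$ and passing to the limit, the operator $-c_{n}\Delta-c_{n}n(n+2)\delta^{\frac{4}{n-2}}$ on $\R^{n}$ is well known (Rey, Bianchi-Egnell) to have one negative eigenvalue with eigenfunction $\delta$, an $(n+1)$-dimensional kernel spanned by $-\lambda\partial_{\lambda}\delta$ and $\frac{1}{\lambda}\nabla_{a}\delta$, and the remaining spectrum bounded below by some positive constant. Transferred back to $(M,g_{0})$ this yields $\gamma_{0}>0$ such that
\[
\int L_{g_{0}}hh-c_{n}n(n+2)\int \varphi^{\frac{4}{n-2}}hh \geq \gamma_{0}\Vert h\Vert^{2}
\]
for every $h$ orthogonal to $\varphi,-\lambda\partial_{\lambda}\varphi,\frac{1}{\lambda}\nabla_{a}\varphi$ in the weighted inner product $\langle\cdot,\cdot\rangle_{\varphi^{4/(n-2)}}$, up to a curvature error that is $o(\Vert h \Vert^{2})$ as $\lambda\to\infty$.

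The second step is to pass from one to $p$ bubbles. I would introduce cutoffs $\eta_{1},\ldots,\eta_{p}$ supported on the regions where $\varphi_{i}$ dominates the other $\varphi_{j}$, complemented by $\eta_{0}$ supported away from all concentration points, and decompose $h=\sum_{i}\eta_{i}h+\eta_{0}h$. Applied to each $\eta_{i}h$ the single-bubble estimate gives the required positive contribution; on the complementary region the weights $\varphi_{j}^{4/(n-2)}$ are negligible and $\int L_{g_{0}}(\eta_{0}h)(\eta_{0}h)$ is bounded below by $c\Vert \eta_{0}h\Vert^{2}$ using the positivity of the Yamabe invariant. The cross terms between different regions and the commutator terms $\int L_{g_{0}}[\eta_{i},\nabla]h\cdot h$ are $o_{\eps}(\Vert h \Vert^{2})$ by lemma \ref{lem_interactions}, since $\eps_{i,j}\to 0$.

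The main obstacle I expect is translating the orthogonality defining $H_{u}$, which uses the weight $Ku^{\frac{4}{n-2}}$, into the orthogonality with respect to the single-bubble weights $\varphi_{i}^{\frac{4}{n-2}}$ demanded by the spectral gap. For this I would use that on the region where $\varphi_{i}$ dominates one has $u^{\frac{4}{n-2}}\approx \alpha_{i}^{\frac{4}{n-2}}\varphi_{i}^{\frac{4}{n-2}}$ and $K\approx K_{i}$, with errors controlled by $\Vert v \Vert$, $\sum_{j\neq i}\eps_{i,j}$ and $\frac{1}{\lambda_{i}}$; lemmata \ref{lem_interactions} and \ref{lem_v_type_interactions} then bound the difference of the two weighted projections by $o_{\eps}(\Vert h \Vert)$, which can be absorbed. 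Once this reduction is in place and the local spectral gaps are glued together, the uniform lower bound $\gamma>0$ claimed in the proposition follows by choosing $\eps$ small enough.
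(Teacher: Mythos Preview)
Your approach is plausible but differs from the paper's and is sketchy precisely on the steps you flag as ``the main obstacle''. The paper argues by contradiction and blow-up rather than by direct localisation: assuming the bound fails along a sequence $(u_{k},w_{k})$ with $\eps_{k}\to 0$ and $\Vert w_{k}\Vert=1$, one gets $\int \sum_{i}\varphi_{i,k}^{4/(n-2)}w_{k}^{2}\geq c>0$. After ordering $\lambda_{1_{k}}\leq \cdots\leq \lambda_{p_{k}}$ and choosing an intermediate scale $\gamma_{k}\to\infty$, the paper defines excised regions $\Omega_{j,k}=B_{\gamma_{k}/\lambda_{j_{k}}}(a_{j_{k}})\setminus \cup_{i<j}B_{\gamma_{k}/\lambda_{i_{k}}}(a_{i_{k}})$; there must be some $j$ where both the weight $\int \varphi_{j,k}^{4/(n-2)}w_{k}^{2}$ and the localised failure of the quadratic form persist. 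Rescaling on $\Omega_{j,k}$ yields a weak limit $\tilde{w}\neq 0$ on $\R^{n}$ that inherits orthogonality to $\delta,\partial_{\lambda}\delta,\nabla_{a}\delta$ from $w_{k}\in H_{u_{k}}$ yet satisfies $\int |\nabla \tilde{w}|^{2}\leq n(n+2)\int (1+r^{2})^{-2}\tilde{w}^{2}$, contradicting the sharp spectral gap for the linearised Euclidean bubble (Rey).

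Your cutoff strategy could in principle be completed, but the two places you label routine are exactly where the blow-up argument earns its keep. First, ``transferred back to $(M,g_{0})$ this yields $\gamma_{0}>0$'' already hides the argument the paper gives: the single-bubble gap on $M$ with error $o_{\lambda^{-1}}$ is itself obtained by contradiction and rescaling to $\R^{n}$, so you are deferring rather than avoiding the blow-up. Second, the commutator control for your $\eta_{i}$ is delicate when bubbles are nested: the transition between an inner bubble $\varphi_{i}$ and an outer $\varphi_{j}$ sits at scale $(\lambda_{i}\lambda_{j})^{-1/2}$ around $a_{i}$, so $|\nabla \eta_{i}|^{2}\sim \lambda_{i}\lambda_{j}$ on a thin annulus, and showing $\int |\nabla\eta_{i}|^{2}h^{2}=o_{\eps}(\Vert h\Vert^{2})$ for arbitrary $h\in H$ is a genuine concentration estimate that lemma \ref{lem_interactions} (which only bounds bubble-bubble integrals) does not supply. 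The paper's excision of faster bubbles via $\Omega_{j,k}$ and passage to a weak limit handles both issues simultaneously, and the orthogonality in the $Ku^{4/(n-2)}$ weight passes to the standard Euclidean orthogonality automatically in the limit.
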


\begin{proof}[\textbf{Proof of proposition \ref{prop_positivity_of_D2J}}]\label{p_positivity_of_D2J}
(Cf.  \cite{BrendleArbitraryEnergies}, proposition 5.4)$_{}$\\
In view of proposition \ref{prop_derivatives_on_H} there would otherwise exist 
\begin{equation}\begin{split}
\epsilon_{k}\searrow 0 
\;\text{ and }\;
(w_{k})\subset H_{u_{k}}(p, \epsilon_{k})                                                                                          
\end{split}\end{equation}
such, that 
\begin{equation}\begin{split}
1
=
\int c_{n}\vert \nabla w_{k}\vert^{2}_{g_{0}}+R_{g_{0}}w_{k}^{2}
\leq 
c_{n}n(n+2)\lim_{k\nearrow \infty}\int\sum_{i}\varphi_{i,k}^{\frac{4}{n-2}}w_{k}^{2}.
\end{split}\end{equation}
We order 
$
\frac{1}{\lambda_{1_{k}}}\leq \ldots\leq \frac{1}{\lambda_{p_{k}}} 
$
and choose 
$\gamma_{k}\nearrow \infty$ tending to infinity slower than
\begin{equation}
\begin{split}
\frac{1}{\lambda_{i_{k}}}, \eps_{i_{k},j_{k}}\- 0
\end{split}
\end{equation} 
does tend to zero in the sense, that for all $i<j$
\begin{equation}\begin{split}
\frac{\lambda_{i_{k}}}{\gamma_{k}}, \;
\frac{\frac{\lambda_{i_{k}}}{\lambda_{j_{k}}} +\lambda_{i_{k}}G^{\frac{1}{2-n}}(a _{i_{k}},a _{j_{k}})}
{\gamma_{k}}\nearrow \infty
\end{split}\end{equation}
as $k\-\infty$. Define inductively
\begin{equation}\begin{split}
\Omega_{j,k}
=
B_{\frac{\gamma_{k}}{\lambda_{j_{k}}}}( a _{j_{k}})
\setminus
\cup_{i<j}B_{\frac{\gamma_{k}}{\lambda_{i_{k}}} }(a _{i_{k}}).
\end{split}\end{equation}
Then there exists  $j= 1, \ldots,p$ such,  that
\begin{equation}\begin{split}
\lim_{k\-\infty}\int  \varphi_{j,k}^{\frac{4}{n-2}}w_{k}^{2}>0
\end{split}\end{equation}
and
\begin{equation}\begin{split}
\lim_{k \- \infty} \int _{\Omega_{j,k}} c_{n}\vert \nabla w_{k}\vert^{2}_{g_{0}}+R_{g_{0}}w_{k}^{2}
\leq 
c_{n}n(n+2)\lim_{k\-\infty}\int\varphi_{j,k}^{\frac{4}{n-2}}w_{k}^{2}.
\end{split}\end{equation}
Blowing up 
on $\Omega_{j,k}$ one obtains $\tilde{w}_{k}\rightharpoondown:\tilde{w}$ locally with 
$\tilde{w}\in W^{1,2}(\R^{n})$ and
\begin{equation}\begin{split}
\int_{\R^{n}}\vert \nabla \tilde{w} \vert^{2}\leq n(n+2)\int_{\R^{n}}(\frac{1}{1+r^{2}})^{2}\tilde{w}^{2}, \;
\int_{\R^{n}} (\frac{1}{1+r^{2}})^{2}\tilde{w}^{2}>0.
\end{split}\end{equation}
In particular $\tilde{w} \neq 0$. But due to orthogonality  $w_{k}\in H_{u_{k}}(p, \eps)$ one finds
\begin{equation}\begin{split}
\int_{\R^{n}} (\frac{1}{1+r^{2}})^{\frac{n+2}{2}}\tilde{w},
\int_{\R^{n}} (\frac{1}{1+r^{2}})^{\frac{n+2}{2}}\frac{1-r^{2}}{1+r^{2}}\tilde{w}
=0
\end{split}\end{equation}
and 
\begin{equation}\begin{split}
\int_{\R^{n}} (\frac{1}{1+r^{2}})^{\frac{n+2}{2}}\frac{x}{1+r^{2}}\tilde{w} (x)
= 0.
\end{split}\end{equation}
This is a contradiction, cf. \cite{ReyRoleOfGreensFunction} Appendix D, pp.49-51.
\end{proof}
Smallness of the first and positivity of the second derivative give a suitable
estimate on the error term  $v$.
\begin{corollary}[A-priori estimate on $v$]\label{cor_a-priori_estimate_on_v}$_{}$\\
On $V(p, \eps)$ for $\eps>0$ small we have 
\begin{equation*}\begin{split}
\Vert v \Vert
=
O(\sum_{r} \frac{\vert \nabla K_{r}\vert}{\lambda_{r}} + \frac{\vert \lap K_{r}\vert}{\lambda_{r}^{2}}
+
\frac{1}{\lambda_{r}^{n-2}} +\sum_{r\neq s}\eps_{r,s}
+
\vert \delta J(u)\vert).
\end{split}\end{equation*}
\end{corollary}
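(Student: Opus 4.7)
The plan is to combine the coercivity of $\partial^{2}J$ at $\alpha^{i}\var_{i}$ on $H=H_{u}(p,\eps)$ from Proposition \ref{prop_positivity_of_D2J} with the smallness of $\partial J(\alpha^{i}\var_{i})\lfloor_{H}$ from Proposition \ref{prop_derivatives_on_H}(i), by comparing $\partial J$ at $u$ and at $\alpha^{i}\var_{i}$ through a second-order Taylor expansion tested against $v\in H$.

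First, since $J\in C^{2,\alpha}_{\mathrm{loc}}$ by Proposition \ref{prop_derivatives_of_J} and both $u$ and $\alpha^{i}\var_{i}$ lie in some uniform $U_{\eps}$ on which $\partial^{2}J$ is H\"older continuous, the fundamental theorem of calculus applied to the map $t\mapsto \partial J(\alpha^{i}\var_{i}+tv)v$ yields
\begin{equation*}
\partial J(u)v - \partial J(\alpha^{i}\var_{i})v = \partial^{2}J(\alpha^{i}\var_{i})(v,v) + o(\Vert v\Vert^{2}),
\end{equation*}
where the remainder is genuinely $o(\Vert v\Vert^{2})$ because $\Vert v\Vert<\eps$ can be made arbitrarily small. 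Since $v\in H$ the term $\partial J(\alpha^{i}\var_{i})v$ equals $\partial J(\alpha^{i}\var_{i})\lfloor_{H}v$, and the left-hand side is bounded by
\begin{equation*}
\bigl(\Vert \partial J(u)\Vert + \Vert \partial J(\alpha^{i}\var_{i})\lfloor_{H}\Vert\bigr)\Vert v\Vert \leq C\bigl(\vert\delta J(u)\vert + R' + \Vert v\Vert^{2}\bigr)\Vert v\Vert,
\end{equation*}
with
\begin{equation*}
R' = \sum_{r}\frac{\vert\nabla K_{r}\vert}{\lambda_{r}} + \frac{\vert\lap K_{r}\vert}{\lambda_{r}^{2}} + \frac{1}{\lambda_{r}^{n-2}} + \sum_{r\neq s}\eps_{r,s},
\end{equation*}
using $\Vert\partial J(u)\Vert\leq C\vert\delta J(u)\vert$ together with Proposition \ref{prop_derivatives_on_H}(i).

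On the other hand, Proposition \ref{prop_positivity_of_D2J} gives $\partial^{2}J(\alpha^{i}\var_{i})(v,v)\geq \gamma\Vert v\Vert^{2}$. Combining the two bounds produces
\begin{equation*}
\gamma\Vert v\Vert^{2} \leq C\bigl(\vert\delta J(u)\vert + R'\bigr)\Vert v\Vert + C\Vert v\Vert^{3} + o(\Vert v\Vert^{2}).
\end{equation*}
For $\eps$ small both the cubic term $C\Vert v\Vert^{3}$ and the $o(\Vert v\Vert^{2})$ Taylor remainder are absorbed into $\frac{\gamma}{2}\Vert v\Vert^{2}$ on the left, leaving $\Vert v\Vert \leq \frac{2C}{\gamma}\bigl(\vert\delta J(u)\vert + R'\bigr)$, which is exactly the claim. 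The only delicate point is ensuring that the Taylor remainder is truly $o(\Vert v\Vert^{2})$ and not merely $O(\Vert v\Vert^{2})$, for otherwise it could not be absorbed into the coercivity term; this is provided by the uniform H\"older continuity of $\partial^{2}J$ on $U_{\eps}$ recorded in Proposition \ref{prop_derivatives_of_J} together with $\Vert v\Vert<\eps\to 0$.
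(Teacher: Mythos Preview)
Your proof is correct and follows essentially the same approach as the paper: both expand $\partial J(u)v$ around $\alpha^{i}\varphi_{i}$ with a Taylor remainder $o(\Vert v\Vert^{2})$ justified by the uniform H\"older continuity of $\partial^{2}J$, invoke the coercivity of Proposition~\ref{prop_positivity_of_D2J} on $H$, bound $\partial J(\alpha^{i}\varphi_{i})\lfloor_{H}$ via Proposition~\ref{prop_derivatives_on_H}(i), and absorb the higher-order terms. Your write-up is somewhat more explicit about the absorption mechanics, but the argument is the same.
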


\begin{proof}[\textbf{Proof of corollary \ref{cor_a-priori_estimate_on_v}}]\label{p_a-priori_estimate_on_v}$_{}$\\
Note, that $\partial^{2}J$ is uniformly H\"older continuous on $V(p, \varepsilon)$ by proposition \ref{prop_derivatives_of_J} and the remarks following, whence in view of proposition  \ref{prop_positivity_of_D2J} we have
\begin{equation}\begin{split}
\partial J(u)v
= &
\partial J(\alpha^{i}\varphi_{i}+v)v
=
\partial J(\alpha^{i}\varphi_{i})v+
\partial^{2} J(\alpha^{i}\varphi_{i})v^{2}
+
o(\Vert v \Vert^{2})
\\
\geq &
\partial J(\alpha^{i}\varphi_{i})v
+
\gamma \Vert v\Vert^{2}
+
o(\Vert v \Vert^{2}).
\end{split}\end{equation}
Since $v\in H_{u}(p, \eps)$ the claim follows from proposition \ref{prop_derivatives_on_H} by absorption.
\end{proof}
Thus having analysed $\sigma_{k,i}$ and the error term  $v$ the shadow flow reads as
\begin{corollary}[Simplifying the shadow flow] \label{cor_simplifying_the_shadow_flow}$_{}$\\
For $u\in V(p, \eps)$ with $\eps>0$ small we have
\begin{enumerate}[label=(\roman*)] 
 \Item $_{}$
\begin{equation*}\begin{split}
-\frac{\dot \lambda_{i}}{\lambda_{i}}
= & 
\frac{r}{k}
[
\frac{d_{2}}{c_{2}}
\frac{ H_{i}}{\lambda_{i} ^{n-2}}
+
\frac{e_{2}}{c_{2}}\frac{\lap K_{i}}{K_{i}\lambda_{i} ^{2}} 
-
\frac{b_{2}}{c_{2}} \sum_{i \neq j=1}^{p}\frac{\alpha_{j}}{\alpha_{i}}
\lambda_{i}\partial_{\lambda_{i}}\eps_{i,j} 
]
(1+o_{\frac{1}{\lambda_{i}}}(1)) \\
& +
R_{2,i}
\end{split}\end{equation*}
 \Item $_{}$
\begin{equation*}\begin{split}
\lambda_{i}\dot a_{i}
= &
\frac{r}{k}
[
\frac{e_{3}}{c_{3}}  \frac{\nabla K_{i}}{K_{i}\lambda_{i}}
+
\frac{e_{4}}{c_{3}}  \frac{\nabla \lap K_{i}}{K_{i}\lambda_{i}^{3}}
+
\frac{b_{3}}{c_{3}} \sum_{i \neq j=1}^{p}\frac{\alpha_{j}}{\alpha_{i}}
 \frac{1}{\lambda_{i}}\nabla_{a_{i}}\eps_{i,j}
](1+o_{\frac{1}{\lambda_{i}}}(1)) \\
& +
R_{3,i},
\end{split}\end{equation*}
\end{enumerate}
where
\begin{equation*}\begin{split}
R_{2,i},R_{3,i}
= &
o_{\varepsilon}( \frac{1}{\lambda_{i}^{n-2}} +\sum_{i\neq j=1}^{p}\eps_{i,j}) \\
& +
O
(
\sum_{r} \frac{\vert \nabla K_{r}\vert^{2}}{\lambda_{r} ^{2}}
+
\frac{\vert \lap K_{r}\vert^{2}}{\lambda_{r}^{4}}
+
\frac{1}{\lambda_{r}^{2(n-2)}}
+
\sum_{r\neq s}\eps_{r,s}^{2}
+
\vert \delta J(u)\vert^{2})
.\end{split}\end{equation*}

\end{corollary}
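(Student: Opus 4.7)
The plan is to combine the three preceding results: the shadow flow equations of lemma \ref{lem_the_shadow_flow}, the refined expansions of $\sigma_{2,i},\sigma_{3,i}$ from corollary \ref{cor_simplifying_ski}, and the a priori estimate on the error term provided by corollary \ref{cor_a-priori_estimate_on_v}. Once the expansions are assembled, the statement follows by substitution, a renormalisation of the prefactors via \eqref{rai^{...}/k=...} and absorption of lower order quantities into the remainder.

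First I would plug the expressions for $\sigma_{2,i}$ and $\sigma_{3,i}$ given by corollary \ref{cor_simplifying_ski} into the formulas (ii) and (iii) of lemma \ref{lem_the_shadow_flow}. The prefactor $\alpha_{i}^{(n+2)/(2-n)}/(c_{k}K_{i})$ combines with each summand in $\sigma_{k,i}$ via the elementary exponent identities
\begin{equation*}
\frac{n+2}{2-n}+1=\frac{4}{2-n},\;\;
\frac{n+2}{2-n}+\frac{n+2}{n-2}=0,\;\;
\frac{n+2}{2-n}+\frac{4}{n-2}=-1,
\end{equation*}
which match the shape of the right hand side of the claim, up to the overall factor $r/k$ and a relabelling of the constants $b_{k},d_{k},e_{k}$ (all positive).

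The key renormalisation step is the identity \eqref{rai^{...}/k=...}, which reads $\tfrac{r\alpha_{i}^{4/(n-2)}K_{i}}{k}=4n(n-1)+o_{\epsilon}(1)$ along the flow. Using this I convert $\alpha_{i}^{4/(2-n)}/K_{i}$ appearing in the $H_{i}$ term into a multiple of $r/k$, while the $\lap K_{i}$ term already carries the factor $r/k$ explicitly and the interaction terms become $\sum_{j\neq i}(\alpha_{j}/\alpha_{i})\lambda_{i}\partial_{\lambda_{i}}\eps_{i,j}$ (resp.\ with $\tfrac{1}{\lambda_{i}}\nabla_{a_{i}}\eps_{i,j}$) multiplied by $r/k$. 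The $(1+o_{\epsilon}(1))$ correction from \eqref{rai^{...}/k=...} is absorbed into the $(1+o_{1/\lambda_{i}}(1))$ factor stated in the corollary.

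Finally, the remainders $R_{2,i},R_{3,i}$ inherited from lemma \ref{lem_the_shadow_flow} and corollary \ref{cor_simplifying_ski} are controlled by
\begin{equation*}
o_{\varepsilon}\bigl(\tfrac{1}{\lambda_{i}^{n-2}}+\sum_{i\neq j}\eps_{i,j}\bigr)
+O\bigl(\sum_{r\neq s}\eps_{r,s}^{2}+\Vert v\Vert^{2}+\vert\delta J(u)\vert^{2}\bigr),
\end{equation*}
and I substitute the square of the a priori bound
\begin{equation*}
\Vert v\Vert^{2}=O\bigl(\sum_{r}\tfrac{\vert\nabla K_{r}\vert^{2}}{\lambda_{r}^{2}}+\tfrac{\vert\lap K_{r}\vert^{2}}{\lambda_{r}^{4}}+\tfrac{1}{\lambda_{r}^{2(n-2)}}+\sum_{r\neq s}\eps_{r,s}^{2}+\vert\delta J(u)\vert^{2}\bigr)
\end{equation*}
from corollary \ref{cor_a-priori_estimate_on_v} to bring $R_{2,i},R_{3,i}$ into exactly the form announced in the corollary. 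There is no real obstacle here, only arithmetic care; the mildly delicate point is keeping the $o_{\varepsilon}(\tfrac{1}{\lambda_{i}^{n-2}}+\sum_{i\neq j}\eps_{i,j})$ term separate from the quadratic $O$-terms so that the final remainder does not swallow the leading $H_{i}/\lambda_{i}^{n-2}$ and $\eps_{i,j}$ contributions that drive the analysis in the subsequent sections.
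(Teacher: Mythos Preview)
Your proposal is correct and follows essentially the same route as the paper: combine lemma \ref{lem_the_shadow_flow} with corollary \ref{cor_simplifying_ski}, use \eqref{rai^{...}/k=...} to convert the $H_{i}$ prefactor into a multiple of $r/k$, and replace $\Vert v\Vert^{2}$ via corollary \ref{cor_a-priori_estimate_on_v}. One minor point: the $o_{\varepsilon}(1)$ correction coming from \eqref{rai^{...}/k=...} applied to the $H_{i}$ term does not go into the $(1+o_{1/\lambda_{i}}(1))$ factor (which is inherited verbatim from lemma \ref{lem_the_shadow_flow}) but rather contributes an $o_{\varepsilon}(1/\lambda_{i}^{n-2})$ term to the remainder $R_{2,i}$, which is already accounted for there.
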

Thus the movement of $a_{i}$ and $\lambda_{i}$ is primarily ruled by quantities arising from self-interaction of  $\varphi_{i}$ and direct interaction of $\varphi_{i}$ with other bubbles $\varphi_{j}$.

\begin{proof}[\textbf{Proof of corollary \ref{cor_simplifying_the_shadow_flow}}]\label{p_simplifying_the_shadow_flow}$_{}$\\
This follows immediately  from corollaries \ref{cor_simplifying_ski}, \ref{cor_a-priori_estimate_on_v} applied to lemma \ref{lem_the_shadow_flow} and using
\eqref{rai^{...}/k=...} for the $H_{i}$ term; we have replaced $\frac{d_{2}}{4n(n-1)}$ by $d_{2}$
\end{proof}

\section{Case \textomega \textgreater 0}
\label{sec:omega>0}

Analogously to the case $\omega =0$ we establish the shadow flow.
\begin{lemma}[The shadow flow]\label{lem_the_shadow_flow_w}$_{}$\\
For $u\in V(\omega, p, \eps)$ with $\eps>0$ small and 
\begin{equation*}\begin{split}
\sigma_{k,i}=-\int (L_{g_{0}}u-r\K u^{\frac{n+2}{n-2}})\phi_{k,i}, \, i=1, \ldots,p, \,k=1,2,3
\end{split}\end{equation*}
we have suitable testing of $K\partial_{t}u=-(R-r\K)u$
\begin{enumerate}[label=(\roman*)]
 \Item $_{}$
\begin{equation*}\begin{split}
\frac{\dot \alpha_{i}}{\alpha_{i}}
= &
\frac{\alpha_{i}^{\frac{n+2}{2-n}}}{c_{1}K_{i}}
\sigma_{1,i}
(1+o_{\frac{1}{\lambda_{i}}}(1))
+
R_{1,i}
.
\end{split}\end{equation*}
 \Item $_{}$ 
\begin{equation*}\begin{split}
-\frac{\dot \lambda_{i}}{\lambda_{i}}
= &
\frac{\alpha_{i}^{\frac{n+2}{2-n}}}{c_{2}K_{i}} \sigma_{2,i}(1+o_{\frac{1}{\lambda_{i}}}(1))
+
R_{2,i}
\end{split}\end{equation*}
 \Item $_{}$ 
\begin{equation*}\begin{split}
\lambda_{i}\dot a_{i}
= &
\frac{\alpha_{i}^{\frac{n+2}{2-n}}}{c_{3}K_{i}} \sigma_{3,i}(1+o_{\frac{1}{\lambda_{i}}}(1))
+
R_{3,i}
\end{split}\end{equation*}
\end{enumerate}
with constants $c_{k}>0$ given in lemma \ref{lem_interactions} and
\begin{equation*}
R_{k,i}
=
O
(
\sum_{r}
\frac{1}{\lambda_{r}^{n-2}}
+
\sum_{r\neq s}\eps_{r,s}^{2}
+
\Vert v \Vert^{2}
+
\vert \delta J(u)\vert^{2})_{k,i}.
\end{equation*} \end{lemma}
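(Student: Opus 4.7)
The strategy is a direct adaptation of the argument for lemma \ref{lem_the_shadow_flow} (case $\omega=0$), the new ingredient being the extra parameters $\alpha$ and $\beta^l$ coming from the pseudo-critical parametrization $u_{\alpha,\beta}$. Writing $u = u_{\alpha,\beta} + \alpha^{j}\varphi_{j} + v$ and using that proposition \ref{prop_optimal_choice} makes $(\alpha,\beta,\alpha_j,a_j,\lambda_j)$ smooth functions of $u$, one differentiates the orthogonality relation $\int Ku^{4/(n-2)}\phi_{k,i}v=0$ in time and tests the flow equation $K\partial_t u = -(R-r\bar K)u$ with $u^{4/(n-2)}\phi_{k,i}$. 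This produces, as in the identity \eqref{testing_w=0}, the relation
\begin{equation*}
\sigma_{k,i} = \dot\alpha\, A^{0}_{k,i} + \dot\beta^{l}\,A^{l}_{k,i} + \dot\xi^{l,j}\,\Xi_{k,i,l,j} + E_{k,i},
\end{equation*}
where $(\dot\xi^{1,j},\dot\xi^{2,j},\dot\xi^{3,j})=(\dot\alpha_j,-\alpha_j\dot\lambda_j/\lambda_j,\alpha_j\lambda_j\dot a_j)$. The error $E_{k,i}$ is bounded by $O(\Vert v\Vert^{2}+|\delta J(u)|^{2})$ exactly as in \eqref{partial_t_phi_ki_estimate_w=0}--\eqref{trick_with_strong_convergence_w=0}, using $|\phi_{k,i}|\le C\varphi_i$ and proposition \ref{prop_strong_convergence_of_the_first_variation}.

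Next I would evaluate the entries of the matrix. The bubble block $\Xi_{k,i,l,j}$ is computed as in \eqref{expansion_interaction_w=0}, giving $c_{k}\alpha_{i}^{4/(n-2)}K_{i}\delta_{kl}\delta_{ij}$ up to $O(\sum_{i\neq m}\eps_{i,m}+\Vert v\Vert)$. The new cross coefficients
\begin{equation*}
A^{0}_{k,i} = \int Ku^{4/(n-2)}\partial_\alpha u_{\alpha,\beta}\,\phi_{k,i},\qquad A^{l}_{k,i} = \int Ku^{4/(n-2)}\partial_{\beta_l}u_{\alpha,\beta}\,\phi_{k,i}
\end{equation*}
involve a smooth $\lambda$-independent factor (close to $\omega$ or $\mathrm{e}_l$) tested against the concentrated profile, so by a direct expansion in bubble coordinates and using lemma \ref{lem_interactions}
\begin{equation*}
A^{0}_{k,i},\;A^{l}_{k,i} = O(\lambda_{i}^{-(n-2)/2}).
\end{equation*}

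To close the system one needs bounds on $\dot\alpha$ and $\dot\beta^l$. These are obtained by testing $K\partial_t u$ against $u^{4/(n-2)}u_{\alpha,\beta}$ and $u^{4/(n-2)}\partial_{\beta_l}u_{\alpha,\beta}$, both orthogonal to $v$ by the very definition of $H_u(\omega,p,\eps)$. The resulting $(1+m)\times(1+m)$ sub-block is, to leading order, the Gram matrix of $(\omega,\mathrm{e}_1,\ldots,\mathrm{e}_m)$ with respect to $\langle\cdot,\cdot\rangle_{Ku^{4/(n-2)}}$, which is non-degenerate by lemma \ref{lem_spectral_theorem_and_degeneracy}; inverting, one gets $\dot\alpha,\,\dot\beta^l = O(|\delta J(u)|+\sum_r\lambda_r^{-(n-2)/2})$ modulo errors already of the type absorbed in $R_{k,i}$. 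Substituting this back and inverting the bubble block (whose inverse has the same diagonal $1/(c_{k}\alpha_{i}^{4/(n-2)}K_{i})$ as in the $\omega=0$ case), the residual $\dot\alpha A^{0}_{k,i}+\dot\beta^l A^{l}_{k,i}$ enters as $O(\lambda_i^{-(n-2)})+O(\lambda_i^{-(n-2)/2}|\delta J(u)|)$, which via Young's inequality is controlled by $O(\sum_r\lambda_r^{-(n-2)}+|\delta J(u)|^{2})$, yielding the three claimed equations with the stated remainder.

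The one delicate point is ensuring that the off-diagonal coupling between the bubble block and the $(\alpha,\beta)$-block is weak enough that the leading behaviour of $\dot\alpha_i,\dot\lambda_i,\dot a_i$ is genuinely governed by $\sigma_{k,i}$ alone. This amounts to the bookkeeping above, combining the $O(\lambda_i^{-(n-2)/2})$ size of the cross entries with the a priori bound on $\dot\alpha,\dot\beta^l$ coming from lemma \ref{lem_spectral_theorem_and_degeneracy}; once this is carried out, the rest of the proof is identical to the $\omega=0$ situation.
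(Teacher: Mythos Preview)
Your proposal is correct and follows essentially the same approach as the paper: test the flow equation with $u^{4/(n-2)}\phi_{k,i}$, $u^{4/(n-2)}u_{\alpha,\beta}$ and $u^{4/(n-2)}\partial_{\beta_l}u_{\alpha,\beta}$, assemble the resulting linear system in $(\dot\alpha,\dot\beta^m,\dot\xi^{l,j})$, and exploit that the cross-coupling between the $(\alpha,\beta)$-block and the bubble block is of order $O(\lambda_i^{-(n-2)/2}+\sum_{i\neq q}\eps_{i,q}+\Vert v\Vert)$. The only cosmetic difference is that the paper inverts the full block matrix at once via a Neumann series (observing that the bubble row of the inverse of the leading part is simply $\tilde\Xi^{-1}$), whereas you propose a two-step substitution; also note that your cross-term estimate should include the $\eps_{i,q}$ and $\Vert v\Vert$ contributions, though these are harmlessly absorbed into $R_{k,i}$ after multiplication by $\dot\alpha,\dot\beta=O(\vert\delta J(u)\vert)$.
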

One should not be surprised, that in contrast to lemma \ref{lem_the_shadow_flow} there appear $\frac{1}{\lambda_{r}^{n-2}}$ terms in $R_{k,i}$. Indeed, just like $\eps_{i,j}$ measures the interaction of the bubbles $\varphi_{i}$ and $\varphi_{j}$, the interaction of $u_{\alpha, \beta}$ and $\varphi_{i}$ is measured by $\frac{1}{\lambda_{i}^{\frac{n-2}{2}}}$.
\begin{proof}[\textbf{Proof of lemma \ref{lem_the_shadow_flow_w}}]\label{p_the_shadow_flow_w}$_{}$\\
Let 
\begin{equation}
(\dot{\xi}_{1,j}, \dot{\xi}_{2,j}, \dot{\xi}_{3,j})=(\dot{\alpha}_{j},-\alpha_{j}\frac{\dot \lambda_{j}}{\lambda_{j}}, \alpha_{j}\lambda_{j}\dot a _{j}).
\end{equation} 
Testing 
as indicated in the statement we get
\begin{equation}\begin{split}\label{sigma_ki_implizit}
\sigma_{k,i}
= &
\int  K  u^{\frac{4}{n-2}} \partial_{t}u\phi_{k,i} 
= 
\int Ku^{\frac{4}{n-2}}\partial_{t}(u_{\alpha, \beta} +\alpha^{j}\varphi_{j}+v)\phi_{k,i} \\
= &
\dot \alpha \int Ku^{\frac{4}{n-2}}\partial_{\alpha}u_{\alpha, \beta} \phi_{k,i}
+
\dot \beta^{m}\int Ku^{\frac{4}{n-2}}\partial_{\beta_{m}}u_{\alpha, \beta}\phi_{k,i}\\
& +
\dot{\xi}^{l,j}\int Ku^{\frac{4}{n-2}}\phi_{l,j}\phi_{k,i} 
-
\int Kv[\partial_{t}u^{\frac{4}{n-2}}\phi_{k,i}+u^{\frac{4}{n-2}}\partial_{t}\phi_{k,i}].
\end{split}\end{equation}
The first two integrals on the right hand side above may be estimated via
\begin{equation}
\begin{split}
\int u^{\frac{4}{n-2}}\var_{i}
= &
\int (u_{\alpha, \beta}+\alpha^{q}\var_{q})^{\frac{4}{n-2}}\var_{i}+O(\Vert v \Vert)\\
\leq &
C\int \var_{i}+\varphi_{i}^{\frac{n+2}{n-2}}+C\sum_{i\neq q=1}^{p}\int\var_{q}^{\frac{4}{n-2}}\var_{i}+O(\Vert v \Vert)\\
\leq &
C\sum_{i\neq q=1}^{p}\Vert \var_{q}^{\frac{4}{n-2}}\var_{i}\Vert_{L^{\frac{2n}{n+2}}}
+
O(\frac{1}{\lambda_{i}^{\frac{n-2}{2}}}+\Vert v \Vert) \\
= &
O(\frac{1}{\lambda_{i}^{\frac{n-2}{2}}}+\sum_{i\neq q=1}^{p}\eps_{i,q}+\Vert v \Vert)
\end{split}
\end{equation} 
where we made use of lemma \ref{lem_interactions}, yielding
\begin{equation}\label{dot_alpha,dot_beta_for_sigma_k_i}
\begin{split}
\dot \alpha \int Ku^{\frac{4}{n-2}}\partial_{\alpha}u_{\alpha, \beta} \phi_{k,i}
& +
\dot \beta^{m}\int Ku^{\frac{4}{n-2}}\partial_{\beta_{m}}u_{\alpha, \beta}\phi_{k,i} \\
= &
\begin{pmatrix}
O(\frac{1}{\lambda_{i}^{\frac{n-2}{2}}}+\sum_{i\neq q=1}^{p}\eps_{i,q}+\Vert v \Vert)_{k,i} \\
O(\frac{1}{\lambda_{i}^{\frac{n-2}{2}}}+\sum_{i\neq q=1}^{p}\eps_{i,q}+\Vert v \Vert)_{k,i,m}
\end{pmatrix}
\begin{pmatrix}
\frac{\dot \alpha}{\alpha} \\ \dot \beta^{m}
\end{pmatrix}
\end{split}
\end{equation} 
Turning to the third summand on the right hand side of \eqref{sigma_ki_implizit} note, that
\begin{equation}\label{phi_lj*phi_ki_estimate_w>0}
\begin{split}
\int Ku^{\frac{4}{n-2}}\phi_{l,j}\phi_{k,i}
=
\int K & (u_{\alpha, \beta}  +\alpha^{m}\varphi_{m})^{\frac{4}{n-2}}\phi_{l,j}\phi_{k,i}
+
O(\Vert v \Vert)
\end{split}
\end{equation} 
and
\begin{equation}
\begin{split}
\int K & (u_{\alpha, \beta}  +\alpha^{m}\varphi_{m})^{\frac{4}{n-2}}\phi_{l,j}\phi_{k,i} \\
= &
\int_{[\alpha^{m}\varphi_{m}\geq u_{\alpha, \beta}]}
K(\alpha^{m}\varphi_{m})^{\frac{4}{n-2}}\phi_{l,j}\phi_{k,i} \\
& +
O
(
\underset{[\alpha^{m}\varphi_{m}\geq u_{\alpha, \beta}]}{\int}
(\alpha^{m}\varphi_{m})^{\frac{6-n}{n-2}}u_{\alpha, \beta}\varphi_{j}\varphi_{i}
+
\underset{[\alpha^{m}\varphi_{m}< u_{\alpha, \beta}]}{\int}
u_{\alpha, \beta}^{\frac{4}{n-2}}\varphi_{j}\varphi_{i}
)
\\
= &
\int
K(\alpha^{m}\varphi_{m})^{\frac{4}{n-2}}\phi_{l,j}\phi_{k,i} \\
& +
O
(
\underset{[\alpha^{m}\varphi_{m}\geq u_{\alpha, \beta}]}{\int}
(\alpha^{m}\varphi_{m})^{\frac{6-n}{n-2}}u_{\alpha, \beta}\varphi_{j}\varphi_{i}
+
\underset{[\alpha^{m}\varphi_{m}< u_{\alpha, \beta}]}{\int}
u_{\alpha, \beta}^{\frac{4}{n-2}}\varphi_{j}\varphi_{i}
).
\end{split}
\end{equation} 
Using
\begin{equation}
\begin{split}
\int
\varphi_{j}\varphi_{i}
\leq 
C(\int \varphi_{i}+\int \varphi_{j}^{\frac{n+2}{n-2}}\varphi_{i})
=
O(\lambda_{i}^{\frac{n-2}{2}}+\eps_{i,j})
\end{split}
\end{equation} 
and 
\begin{equation}
\begin{split}
\underset{[\alpha^{m}\varphi_{m}\geq u_{\alpha, \beta}]}{\int} &
(\alpha^{m}\varphi_{m})^{\frac{6-n}{n-2}}u_{\alpha, \beta}\varphi_{j}\varphi_{i} \\
\leq  &
C\underset{[\alpha^{m}\varphi_{m}\geq u_{\alpha, \beta}]\cap [\varphi_{i}\geq \sum_{i\neq q=1}^{p}\varphi_{q}]}{\int} 
(\alpha^{m}\varphi_{m})^{\frac{4}{n-2}}u_{\alpha, \beta}\varphi_{i} \\
& +
C\underset{[\alpha^{m}\varphi_{m}\geq u_{\alpha, \beta}]\cap [\varphi_{i}<\sum_{i\neq q=1}^{p}\varphi_{q}]}{\int} 
(\alpha^{m}\varphi_{m})^{\frac{4}{n-2}}u_{\alpha, \beta}\varphi_{i} \\
\leq &
C
(
\int \varphi_{i}^{\frac{n+2}{n-2}}
+
\int (\sum_{i\neq q=1}^{p}\varphi_{q})^{\frac{n+2}{n-2}}\varphi_{i}
)
=
O(\lambda_{i}^{\frac{n-2}{2}}+\sum_{i\neq q=1}^{p}\eps_{i,q})
\end{split}
\end{equation}
we obtain
\begin{equation}
\begin{split}
\int K (u_{\alpha, \beta} & +\alpha^{m}\varphi_{m})^{\frac{4}{n-2}}\phi_{l,j}\phi_{k,i} \\
= &
\int K(\alpha^{m}\varphi_{m})^{\frac{4}{n-2}}\phi_{l,j}\phi_{k,i}
+
O(\lambda_{i}^{\frac{n-2}{2}}+\sum_{i\neq q=1}^{p}\eps_{i,j}) \\
= &
\alpha_{i}^{\frac{4}{n-2}}\int K\varphi_{i}^{\frac{4}{n-2}}\phi_{l,j}\phi_{k,i}
+
O(\lambda_{i}^{\frac{n-2}{2}}+\sum_{i\neq q=1}^{p}\eps_{i,j}),
\end{split}
\end{equation} 
where we made use of \eqref{alpha^m_delta_m_Phi_k,i_Phi_l,j}. Plugging this into \eqref{phi_lj*phi_ki_estimate_w>0} we obtain
\begin{equation}\label{phi_lj*phi_ki_estimate_w>0_better}
\begin{split}
\int & Ku^{\frac{4}{n-2}}\phi_{l,j}\phi_{k,i} \\
= &
\alpha_{i}^{\frac{4}{n-2}}\int K\varphi_{i}^{\frac{4}{n-2}}\phi_{l,j}\phi_{k,i}
+
O(\lambda_{i}^{\frac{n-2}{2}}+\sum_{i\neq q=1}^{p}\eps_{i,j}+\Vert v \Vert)\\
= &
c_{k}\alpha_{i}^{\frac{4}{n-2}}K_{i}\delta_{kl}\delta_{ij}
+
O(\frac{\vert \nabla K_{i}\vert}{\lambda_{i}})\delta_{ij}
+
O(\lambda_{i}^{\frac{n-2}{2}}+\sum_{i\neq q=1}^{p}\eps_{i,j}+\Vert v \Vert).
\end{split}
\end{equation} 
Moreover arguing as for \eqref{partial_t_phi_ki_estimate_w=0} and \eqref{trick_with_strong_convergence_w=0} we have
\begin{equation}\label{partial_t_phi_ki_estimate_w>0}
\int Ku^{\frac{4}{n-2}}\partial_{t}\phi_{k,i}v=O(\Vert v \Vert)_{i,k,l,j}\dot\xi^{l,j},
\end{equation} 
and 
\begin{equation}\label{trick_with_strong_convergence_w>0}
\int Kv\partial_{t}u\phi_{k,i}
=
O(\Vert v \Vert^{2}+\vert \delta J(u)\vert^{2}).
\end{equation} 
Thus plugging \eqref{dot_alpha,dot_beta_for_sigma_k_i}, \eqref{phi_lj*phi_ki_estimate_w>0_better}, \eqref{partial_t_phi_ki_estimate_w>0} and 
\eqref{trick_with_strong_convergence_w>0} into \eqref{sigma_ki_implizit} we conclude
\begin{equation}\begin{split}
\sigma_{k,i}
= &
\begin{pmatrix}
O(\frac{1}{\lambda_{i}^{\frac{n-2}{2}}}+\sum_{i\neq q=1}^{p}\eps_{i,q}+\Vert v\Vert)_{k,i}
\\
O(\frac{1}{\lambda_{i}^{\frac{n-2}{2}}}+\sum_{i\neq q=1}^{p}\eps_{i,q}+\Vert v\Vert)_{k,i,m}
\\
\Xi_{k,i,l,j}
\end{pmatrix}
^{T}
\begin{pmatrix}
\frac{\dot \alpha }{\alpha}
\\
\dot \beta^{m}
\\
\dot{\xi}^{l,j}
\end{pmatrix}
\\ &
+
O(\Vert v \Vert^{2} + \vert \delta J(u)\vert^{2})_{k,i}.
\end{split}\end{equation}
where
\begin{equation}\begin{split}
 \Xi_{k,i,l,j}
=
c_{k}\alpha_{i}^{\frac{4}{n-2}}K_{i}\delta_{kl}\delta_{ij}  
& +
O
(
\frac{\vert \nabla K_{i}\vert}{\lambda_{i}} 
)_{k,l}
\delta_{ij}\\
& +
O
(
\frac{1}{\lambda_{i}^{\frac{n-2}{2}}}
+
\sum_{i\neq q=1}^{p}\eps_{i,q}
+
\Vert v\Vert
)_{k,i,l,j}.
\end{split}\end{equation}
Next let
\begin{equation}
\begin{split}
\sigma=-\int (L_{g_{0}}u-r\K u^{\frac{n+2}{n-2}})u_{\alpha, \beta}.
\end{split}
\end{equation} 
We then have
\begin{equation}\begin{split}
\sigma
= &
\int K  u^{\frac{4}{n-2}}\partial_{t} u u_{\alpha, \beta} 
= 
\int Ku^{\frac{4}{n-2}} \partial_{t}(u_{\alpha, \beta} +\alpha^{i}\varphi_{i}+v)u_{\alpha, \beta} \\
= &
\frac{\dot \alpha}{\alpha} \int Ku^{\frac{4}{n-2}}u_{\alpha, \beta}^{2} 
+
\dot \beta^{m}\int Ku^{\frac{4}{n-2}}\partial_{\beta_{m}}u_{\alpha, \beta} u_{\alpha, \beta} \\ 
& +
\dot{\xi}^{l,j}\int Ku^{\frac{4}{n-2}}\phi_{l,j}u_{\alpha, \beta} 
-
\int Kv\partial_{t} u^{\frac{4}{n-2}}u_{\alpha, \beta}
\end{split}\end{equation}
and therefore recalling $\alpha \partial_{\alpha}u_{\alpha, \beta}=u_{\alpha, \beta}$
\begin{equation}\begin{split}
\sigma
= &
\begin{pmatrix}
\int Ku^{\frac{4}{n-2}}u_{\alpha, \beta}^{2}
\\
\int Ku^{\frac{4}{n-2}}\partial_{\beta_{m}}u_{\alpha, \beta} u_{\alpha, \beta} 
\\
O(\frac{1}{\lambda_{j}^{\frac{n-2}{2}}}+\sum_{j\neq q=1}^{p}\eps_{i,q}+\Vert v \Vert)_{l,j}
\end{pmatrix}^{T}
\begin{pmatrix}
\frac{\dot \alpha}{\alpha} 
\\
\dot \beta^{m}
\\
\dot{\xi}^{l,j} 
\end{pmatrix} \\
& +
O(\Vert v \Vert^{2}+\vert \delta J(u)\vert^{2}).
\end{split}\end{equation}
Likewise we obtain for
$
\sigma_{n}=-\int (L_{g_{0}}u-r\K u^{\frac{n+2}{n-2}})\partial_{\beta_{n}}u_{\alpha, \beta}
$
\begin{equation}\begin{split}
\sigma_{n}
= &
\int K  u^{\frac{4}{n-2}}\partial_{t} u \partial_{\beta_{n}}u_{\alpha, \beta}
= 
\int Ku^{\frac{4}{n-2}}\partial_{t}(u_{\alpha, \beta} +\alpha^{i}\varphi_{i}+v)\partial_{\beta_{n}}u_{\alpha, \beta}\\
= &
\dot \alpha \int Ku^{\frac{4}{n-2}}\partial_{\alpha}u_{\alpha, \beta} \partial_{\beta_{n}}u_{\alpha, \beta}
+
\dot \beta^{m}\int Ku^{\frac{4}{n-2}}\partial_{\beta_{m}}u_{\alpha, \beta}\partial_{\beta_{n}}u_{\alpha, \beta} \\
& +
\dot{\xi}^{l,j}\int Ku^{\frac{4}{n-2}}\phi_{l,j}\partial_{\beta_{n}}u_{\alpha, \beta} \\
& -
\int Kv
[
\partial_{t} u^{\frac{4}{n-2}}\partial_{\beta_{n}}u_{\alpha, \beta}
+
u^{\frac{4}{n-2}}\partial_{t}\partial_{\beta_{n}}u_{\alpha, \beta}
]\\
= &
\begin{pmatrix}
\int Ku^{\frac{4}{n-2}}u_{\alpha, \beta}\partial_{\beta_{n}}u_{\alpha, \beta}+O(\Vert v \Vert)
\\
\int Ku^{\frac{4}{n-2}}\partial_{\beta_{m}}u_{\alpha, \beta}\partial_{\beta_{n}}u_{\alpha, \beta}
+
O(\Vert v \Vert)
\\
O(\frac{1}{\lambda_{j}^{\frac{n-2}{2}}}+\sum_{j\neq l=q}^{p}\eps_{i,q}+\Vert v \Vert)_{n,l,j}
\end{pmatrix}^{T}
\begin{pmatrix}
\frac{\dot \alpha}{\alpha} 
\\
\dot \beta^{m}
\\
\dot{\xi}^{l,j} 
\end{pmatrix} \\
& +
O(\Vert v \Vert^{2}+\vert \delta J(u)\vert^{2})_{n}.
\end{split}\end{equation}
Summing up we conclude 
\begin{equation}
\begin{split}
\left(
A
+
R
\right)^{T}_{i,k,j,l,n,m}
\begin{pmatrix}
\frac{\dot \alpha}{\alpha} 
\\ 
\dot \beta^{m}
\\ 
\dot\xi^{l,j} 
\end{pmatrix} 
=
\begin{pmatrix}
\sigma \\ \sigma_{k,i} \\ \sigma_{n}
\end{pmatrix}
+
O(\Vert v \Vert^{2}+\vert \delta J(u)\vert^{2})_{k,i,n}
,
\end{split}
\end{equation}
where
\begin{equation}
\begin{split}
A_{i, \ldots,m}
=
\begin{pmatrix}
\langle u_{\alpha, \beta},u_{\alpha, \beta}\rangle &  \langle u_{\alpha, \beta}, \partial_{\beta_{m}}u_{\alpha, \beta} \rangle & 0\\
\langle u_{\alpha, \beta}, \partial_{\beta_{n}}u_{\alpha, \beta}\rangle  & \langle\partial_{\beta_{n}}u_{\alpha, \beta}, \partial_{\beta_{m}}u_{\alpha, \beta}\rangle & 0\\
0 & 0 & \tilde \Xi
\end{pmatrix} 
\end{split}
\end{equation} 
with
\begin{equation}
\tilde \Xi
=
a_{k}K_{i}\alpha_{i}^{\frac{4}{n-2}}\delta_{kl}\delta_{ij} 
+
O(\frac{\vert \nabla K_{i}\vert}{\lambda_{i}})_{k,l}\delta_{ij} 
\end{equation} 
and 
\begin{equation}
\begin{split}
R_{i, \ldots,m}=
O(\sum_{r} \frac{1}{\lambda_{r}^{\frac{n-2}{2}}} +\sum_{r\neq s}\eps_{r,s}+\Vert v\Vert)
_{i, \ldots,m}.
\end{split}
\end{equation} 
Using $\sigma, \sigma_{k,i}, \sigma_{n}=O(\vert \delta J(u)\vert)$ we obtain
\begin{equation}
\begin{split}
A
_{i,k,j,l,m,n}
\begin{pmatrix}
\dot \alpha \\ \dot\xi^{l,j} \\ \dot \beta^{m}
\end{pmatrix} 
=
\begin{pmatrix}
\sigma \\ \sigma_{k,i} \\ \sigma_{n}
\end{pmatrix}
+
R_{k,i,n}
\end{split}
\end{equation}
with 
\begin{equation*}\begin{split}
R_{k,i,n}
=
O
(
\sum_{r}
\frac{1}{\lambda_{r}^{n-2}}
+
\sum_{r\neq s}\eps_{r,s}^{2}
+
\Vert v \Vert^{2}
+
\vert \delta J(u)\vert^{2})_{k,i,n}.
\end{split}\end{equation*}
Note, that we may write $A=A_{i,k,j,l,n,m}$ as  
\begin{equation}
\begin{split}
A
=
\begin{pmatrix}
B & C & 0 \\
C & D & 0 \\
0 & 0 & E
\end{pmatrix}
=
\begin{pmatrix}
I & CD^{-1} & 0 \\
CB^{-1} & I & 0 \\
0 &  0 & I
\end{pmatrix}
\begin{pmatrix}
B & 0 & 0 \\
0 & D & 0 \\
0 & 0 & E
\end{pmatrix}
,
\end{split} 
\end{equation} 
whence we obtain via Neumann series 
\begin{equation}
\begin{split}
A^{-1}
=
\begin{pmatrix}
B^{-1} & 0 & 0 \\
0 & D^{-1} & 0 \\
0 & 0 & E^{-1}
\end{pmatrix}
{\sum
_{k=0}^{\infty}}
(-1)^{k}
\begin{pmatrix}
0 & CD^{-1} & 0 \\
CB^{-1} & 0 & 0 \\
0 & 0 & 0
\end{pmatrix}^{k}
\end{split}
.
\end{equation} 
Last note, that the third row of $A^{-1}$ is just $E^{-1}$, where
$E=\tilde \Xi$.
\end{proof}
As before our task is two folded, namely to analyse $\sigma_{k,i}$ and to provide a suitable estimate on $v$. 
\begin{proposition}[Analysing $\sigma_{k,i}$]\label{prop_analysing_ski_f}$_{}$\\
On $V(\omega, p, \eps)$ for $\eps>0$ small we have with constants $b_{1}, \ldots,d_{3}>0$
\begin{enumerate}[label=(\roman*)]
 \Item $_{}$ 
\begin{equation*}\begin{split}
\sigma_{1,i}
= &
4n(n-1)\alpha_{i}
[\frac{r\alpha_{i}^{\frac{4}{n-2}}K_{i}}{4n(n-1)k}-1]
\int
\varphi_{i}^{\frac{2n}{n-2}}
 \\
& +
4n(n-1)b_{1}\sum_{i\neq j =1}^{p}\alpha_{j}[\frac{r\alpha_{j}^{\frac{4}{n-2}}K_{j}}{4n(n-1)k}-1]
\eps_{i,j} \\
& -
\int (L_{g_{0}}u_{\alpha, \beta}-r\K u_{\alpha, \beta}^{\frac{n+2}{n-2}})\varphi_{i}
 \\
& +
b_{1}\frac{r\alpha_{i}^{\frac{4}{n-2}}K_{i}}{k}\sum_{i\neq j =1}^{p}\alpha_{j}
\eps_{i,j}
+
d_{1}\frac{r\alpha_{i}^{\frac{4}{n-2}}}{k}
\frac{\alpha K_{i}\omega _{i}}{\lambda_{i} ^{\frac{n-2}{2}}}
+
R_{1,i}
\end{split}\end{equation*}
 \Item $_{}$ 
\begin{equation*}\begin{split}
\sigma_{2,i}
= &
-4n(n-1)\alpha_{i}
[\frac{r\alpha_{i}^{\frac{4}{n-2}}K_{i}}{4n(n-1)k}-1]
\int
\varphi_{i}^{\frac{n+2}{n-2}}
\lambda_{i}\partial_{\lambda_{i}}\varphi_{i}
\\
& -
4n(n-1)b_{2}\sum_{i\neq j =1}^{p}\alpha_{j}[\frac{r\alpha_{j}^{\frac{4}{n-2}}K_{j}}{4n(n-1)k}-1]
\lambda_{i}\partial_{\lambda_{i}}\eps_{i,j} \\
& -
\int (L_{g_{0}}u_{\alpha, \beta}-r\K u_{\alpha, \beta}^{\frac{n+2}{n-2}})
\lambda_{i}\partial_{\lambda_{i}}\varphi_{i}
 \\
& -
b_{2}\frac{r\alpha_{i}^{\frac{4}{n-2}}K_{i}}{k}\sum_{i\neq j =1}^{p}\alpha_{j}
\lambda_{i}\partial_{\lambda_{i}}\eps_{i,j}
+
d_{2}\frac{r\alpha_{i}^{\frac{4}{n-2}}K_{i}}{k}
\frac{\alpha \omega_{i}}{\lambda_{i} ^{\frac{n-2}{2}}}
+
R_{2,i}
\end{split}\end{equation*}
 \Item $_{}$ 
\begin{equation*}\begin{split}
\sigma_{3,i}
= &
4n(n-1)\alpha_{i}
[\frac{r\alpha_{i}^{\frac{4}{n-2}}K_{i}}{4n(n-1)k}-1]
\int
\varphi_{i}^{\frac{n+2}{n-2}}
\frac{1}{\lambda_{i}}\nabla_{a_{i}}\varphi_{i}
\\
& +
4n(n-1)b_{3}\sum_{i\neq j =1}^{p}\alpha_{j}[\frac{r\alpha_{j}^{\frac{4}{n-2}}K_{j}}{4n(n-1)k}-1]
\frac{1}{\lambda_{i}}\nabla_{a_{i}}\eps_{i,j} \\
& -
\int (L_{g_{0}}u_{\alpha, \beta}-r\K u_{\alpha, \beta}^{\frac{n+2}{n-2}})
\frac{1}{\lambda_{i}}\nabla_{a_{i}}\varphi_{i}
 \\
& +
b_{3}\frac{r\alpha_{i}^{\frac{4}{n-2}}K_{i}}{k}\sum_{i\neq j =1}^{p}\alpha_{j}
\frac{1}{\lambda_{i}}\nabla_{a_{i}}\eps_{i,j}
+
d_{3}\frac{r\alpha_{i}^{\frac{n+2}{n-2}}}{k}
\frac{\nabla K_{i}}{\lambda_{i}}
+
R_{3,i},
\end{split}\end{equation*}
\end{enumerate}
where
$
R_{k,i}
= 
o_{\varepsilon}
(
\frac{1}{\lambda_{i}^{\frac{n-2}{2}}}+\sum_{i\neq j=1}^{p}\eps_{i,j}
)
+
O
(
\sum_{r\neq s}\eps_{r,s}^{2}
+
\Vert v \Vert^{2}
+ 
\vert \delta J(u)\vert^{2}
).
$
\end{proposition}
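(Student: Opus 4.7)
The plan is to run the same expansion scheme as in the proof of proposition \ref{prop_analysing_ski}, but to isolate the two genuinely new contributions coming from the pseudo critical piece $u_{\alpha,\beta}$. Starting from
\begin{equation*}
\sigma_{k,i}=-\int L_{g_{0}}u\,\phi_{k,i}+\int r\K\,u^{\frac{n+2}{n-2}}\phi_{k,i}
\end{equation*}
I would plug in the decomposition $u=u_{\alpha,\beta}+\alpha^{j}\varphi_{j}+v$. In the linear term this splits into $\int L_{g_{0}}u_{\alpha,\beta}\phi_{k,i}$, the bubble part $\alpha^{j}\int L_{g_{0}}\varphi_{j}\phi_{k,i}$, and $\int L_{g_{0}}\phi_{k,i}v$, the last of which is handled by lemma \ref{lem_v_type_interactions} (i) (note its weaker version $o(\lambda_{i}^{-(n-2)/2})$ for $u\in V(\omega,p,\eps)$). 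For the nonlinear term I would first use lemma \ref{lem_v_type_interactions} (iii) to reduce to $\int K(u_{\alpha,\beta}+\alpha^{j}\varphi_{j})^{\frac{n+2}{n-2}}\phi_{k,i}+O(\Vert v\Vert^{2})$ and then perform the binomial expansion
\begin{equation*}
(u_{\alpha,\beta}+\alpha^{j}\varphi_{j})^{\frac{n+2}{n-2}}
=u_{\alpha,\beta}^{\frac{n+2}{n-2}}+(\alpha^{j}\varphi_{j})^{\frac{n+2}{n-2}}+\tfrac{n+2}{n-2}\bigl[u_{\alpha,\beta}^{\frac{4}{n-2}}\alpha^{j}\varphi_{j}+u_{\alpha,\beta}(\alpha^{j}\varphi_{j})^{\frac{4}{n-2}}\bigr]+\ldots
\end{equation*}
modulo quadratic remainders absorbable through H\"older and lemma \ref{lem_interactions}.

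The pure bubble contributions coming from $\alpha^{j}\int L_{g_{0}}\varphi_{j}\phi_{k,i}$ together with $\int K(\alpha^{j}\varphi_{j})^{\frac{n+2}{n-2}}\phi_{k,i}$ I would expand verbatim as in the proof of proposition \ref{prop_analysing_ski}, using lemmata \ref{lem_emergence_of_the_regular_part} and \ref{lem_interactions}. This yields the self-interaction $4n(n-1)\alpha_{i}[\tfrac{r\alpha_{i}^{4/(n-2)}K_{i}}{4n(n-1)k}-1]\int\varphi_{i}^{\frac{n+2}{n-2}}\phi_{k,i}$, the bubble-bubble terms involving $\eps_{i,j}$ and $d_{k,i}\eps_{i,j}$, and $K$-variation corrections. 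In the present dimensional range $n=3,4,5$ the mass term $\alpha_{i}H_{i}/\lambda_{i}^{n-2}$ and the Taylor remainders $\Delta K_{i}/\lambda_{i}^{2}$ that are explicit for $\omega=0$ now satisfy $\lambda_{i}^{-(n-2)},\lambda_{i}^{-2}=o(\lambda_{i}^{-(n-2)/2})$ and therefore collapse into $R_{k,i}$; only the $\nabla K_{i}/\lambda_{i}$ and $\nabla\Delta K_{i}/\lambda_{i}^{3}$ pieces in the $k=3$ equation need to be carried along explicitly as in corollary \ref{cor_simplifying_ski}. The pure pseudo critical contribution $-\int L_{g_{0}}u_{\alpha,\beta}\phi_{k,i}$ combines with $(r/k)\int Ku_{\alpha,\beta}^{\frac{n+2}{n-2}}\phi_{k,i}$ to produce exactly the explicit term $-\int(L_{g_{0}}u_{\alpha,\beta}-r\K u_{\alpha,\beta}^{\frac{n+2}{n-2}})\phi_{k,i}$ displayed in the statement.

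The genuinely new computation is the cross term $\tfrac{n+2}{n-2}\int Ku_{\alpha,\beta}(\alpha^{j}\varphi_{j})^{\frac{4}{n-2}}\phi_{k,i}$. Its dominant contribution comes from $j=i$; after localizing around $a_{i}$ and Taylor expanding $u_{\alpha,\beta}(x)=u_{\alpha,\beta}(a_{i})+O(|x-a_{i}|)$, with $u_{\alpha,\beta}(a_{i})=\alpha\omega_{i}+O(\Vert\beta\Vert)$ by proposition \ref{prop_smoothness_of_u_a_b}, one obtains to leading order $\alpha\omega_{i}\alpha_{i}^{\frac{4}{n-2}}K_{i}\int\varphi_{i}^{\frac{4}{n-2}}\phi_{k,i}$. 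For $k=1,2$ the integral $\int\varphi_{i}^{\frac{n+2}{n-2}}$ (respectively its $-\lambda_{i}\partial_{\lambda_{i}}$-derivative) is of order $\lambda_{i}^{-(n-2)/2}$, producing the displayed $d_{k}\,r\alpha_{i}^{4/(n-2)}K_{i}\alpha\omega_{i}/(k\lambda_{i}^{(n-2)/2})$ terms; for $k=3$ the corresponding integral vanishes by odd symmetry of $\phi_{3,i}$ against $\varphi_{i}^{4/(n-2)}$ and only subleading $\nabla u_{\alpha,\beta}(a_{i})/\lambda_{i}\cdot\lambda_{i}^{-(n-2)/2}$ contributions remain, which fall into $R_{3,i}$. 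The mirror cross term $\tfrac{n+2}{n-2}\int Ku_{\alpha,\beta}^{4/(n-2)}\alpha^{j}\varphi_{j}\phi_{k,i}$ and the off-diagonal pieces $j\neq i$ are all controlled by $o(\lambda_{i}^{-(n-2)/2}+\sum_{i\neq j}\eps_{i,j})$ via direct integration and lemma \ref{lem_interactions}, using the splitting $M=\sum B_{i}$ as in the $\omega=0$ proof.

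The main obstacle is not conceptual but a somewhat tedious dimensional bookkeeping: one must verify that in $n=3,4,5$ every single remainder not written down explicitly—Taylor remainders of $K$ and of $u_{\alpha,\beta}$, off-diagonal bubble interactions with the pseudo critical piece, quadratic $v$-contributions of the nonlinearity, and curvature terms from lemma \ref{lem_emergence_of_the_regular_part}—is genuinely of size $o_{\eps}(\lambda_{i}^{-(n-2)/2}+\sum_{i\neq j}\eps_{i,j})+O(\sum_{r\neq s}\eps_{r,s}^{2}+\Vert v\Vert^{2}+|\delta J(u)|^{2})$, and in particular that the mass term $H_{i}/\lambda_{i}^{n-2}$ and curvature Taylor corrections $\Delta K_{i}/\lambda_{i}^{2}$, explicit in the $\omega=0$ case, are now strictly subdominant to the new $\alpha\omega_{i}/\lambda_{i}^{(n-2)/2}$ interaction.
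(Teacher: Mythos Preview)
Your proposal is correct and follows essentially the same route as the paper: decompose $\sigma_{k,i}$ via $u=u_{\alpha,\beta}+\alpha^{j}\varphi_{j}+v$, recycle the pure bubble expansion from the $\omega=0$ case (proposition \ref{prop_analysing_ski}), keep $-\int(L_{g_{0}}u_{\alpha,\beta}-r\K u_{\alpha,\beta}^{\frac{n+2}{n-2}})\phi_{k,i}$ as an explicit block, and extract the new $\alpha\omega_{i}/\lambda_{i}^{(n-2)/2}$ contribution from the cross term $\tfrac{n+2}{n-2}\int K(\alpha_{i}\varphi_{i})^{4/(n-2)}u_{\alpha,\beta}\phi_{k,i}$ by localising $u_{\alpha,\beta}(a_{i})=\alpha\omega_{i}+o_{\varepsilon}(1)$, with the $k=3$ case vanishing by radial symmetry. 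Two minor remarks: the paper does not write a formal binomial expansion but instead splits $M$ into the regions $[u_{\alpha,\beta}\gtrless\alpha^{j}\varphi_{j}]$ and expands around the dominant piece on each (this is how it justifies that your ``mirror'' cross term $u_{\alpha,\beta}^{4/(n-2)}\alpha^{j}\varphi_{j}$ is genuinely error, cf.\ (5.15)--(5.18)); and you need not carry $\nabla\Delta K_{i}/\lambda_{i}^{3}$ explicitly, since it is already $o(\lambda_{i}^{-(n-2)/2})$ in all dimensions considered and is absorbed into $R_{3,i}$.
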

\noindent
Here and in what follows $\omega_{i}$ is short hand for $\omega(a_{i})$
analogously to $K_{i}=K(a_{i})$.
\begin{proof}[\textbf{Proof of proposition \ref{prop_analysing_ski_f}}]\label{p_analysing_ski_f}$_{}$\\
We evaluate by means of lemma \ref{lem_v_type_interactions}
\begin{equation}\begin{split}\label{L_{g_{0}}u-rK_f}
\int (L_{g_{0}} & u  -r\K u^{\frac{n+2}{n-2}})\phi_{k,i} \\
= &
 \int L_{g_{0}}u_{\alpha, \beta}\phi_{k,i}
+
\alpha^{j}\int L_{g_{0}}\varphi_{j}\phi_{k,i}
-
\int r\K (u_{\alpha, \beta}+ \alpha^{j}\varphi_{j})^{\frac{n+2}{n-2}}\phi_{k,i}\\
& +
o( \frac{1}{\lambda_{i} ^{\frac{n-2}{2}}}+ \sum_{i\neq j=1}^{p}\eps_{i,j})
+
O
(\Vert v \Vert^{2}+\vert \delta J(u)\vert^{2}).
\end{split}\end{equation}
From \eqref{IntL_{g_{0}}djPhiki_expanded} we infer
\begin{equation}\begin{split}\label{IntL_{g_{0}}djPhiki_expanded_f}
\int L_{g_{0}}u_{\alpha, \beta}\phi_{k,i} 
& 
+
\alpha^{j}\int L_{g_{0}}  \varphi_{j}\phi_{k,i}
\\
= &
4n(n-1)
[
\alpha_{i}
\int
\varphi_{i}^{\frac{n+2}{n-2}}
\phi_{k,i}
+
b_{k}\sum_{i\neq j =1}^{p}\alpha_{j}d_{k,i}\eps_{i,j}
] \\
& 
+
\int L_{g_{0}}u_{\alpha, \beta}\phi_{k,i}
+
o_{\varepsilon}
(
\frac{1}{\lambda_{i} ^{\frac{n-2}{2}}}
+
\sum_{i\neq j=1}^{p}\eps_{i,j}
),
\end{split}\end{equation}
where $(d_{1,i},d_{2,i},d_{3,i})=(1,-\lambda_{i}\partial_{\lambda_{i}}, \frac{1}{\lambda_{i}}\nabla_{a_{i}})$.
On the other hand we may expand
\begin{equation}\begin{split}
\int & K  ( u_{\alpha, \beta}  +\alpha^{j}\varphi_{j})^{\frac{n+2}{n-2}}\phi_{k,i}  
= 
\underset{[u_{\alpha, \beta}\geq \alpha^{j}\varphi_{j}]}{\int} K(  u_{\alpha, \beta}  +\alpha^{j}\varphi_{j})^{\frac{n+2}{n-2}}\phi_{k,i} \\
& \quad\quad\quad\quad\quad\quad\quad\quad\quad\quad\quad\quad  +
\underset{[u_{\alpha, \beta}<\alpha^{j}\varphi_{j}]}{\int} K(  u_{\alpha, \beta}  +\alpha^{j}\varphi_{j})^{\frac{n+2}{n-2}}\phi_{k,i} \\
= &
\underset{[u_{\alpha, \beta}\geq \alpha^{j}\varphi_{j}]}{\int} K (u_{\alpha, \beta})^{\frac{n+2}{n-2}} \phi_{k,i}
+
\underset{[u_{\alpha, \beta}<\alpha^{j}\varphi_{j}]}{\int} K(\alpha^{j}\varphi_{j})^{\frac{n+2}{n-2}}\phi_{k,i} \\
& +
\frac{n+2}{n-2}\int_{[u_{\alpha, \beta}<\alpha^{j}\varphi_{j}]} K( \alpha^{j}\varphi_{j})^{\frac{4}{n-2}}u_{\alpha, \beta}\phi_{k,i}\\
& 
+
O(\underset{[u_{\alpha, \beta}\geq \alpha^{j}\varphi_{j}]}{\int}  u_{\alpha, \beta}^{\frac{4}{n-2}} \alpha^{j}\varphi_{j}\varphi_{i})
+
\underset{[u_{\alpha, \beta}<\alpha^{j}\varphi_{j}]}{\int} (\alpha^{j}\varphi_{j})^{\frac{6-n}{n-2}}u_{\alpha, \beta}^{2}\varphi_{i}). 
\end{split}
\end{equation}
This gives
\begin{equation}\begin{split}\label{expansion_K(omega+deltai)^(...)}
\int  K  & (  u_{\alpha, \beta}  +\alpha^{j}\varphi_{j})^{\frac{n+2}{n-2}}\phi_{k,i} \\
= &
\int K (u_{\alpha, \beta})^{\frac{n+2}{n-2}} \phi_{k,i}
+
\int K(\alpha^{j}\varphi_{j})^{\frac{n+2}{n-2}}\phi_{k,i} \\
& +
\frac{n+2}{n-2}\int K( \alpha^{j}\varphi_{j})^{\frac{4}{n-2}}u_{\alpha, \beta}\phi_{k,i}\\
& 
+
O(\underset{[u_{\alpha, \beta}\geq \alpha^{j}\varphi_{j}]}{\int}  u_{\alpha, \beta}^{\frac{4}{n-2}} \alpha^{j}\varphi_{j}\varphi_{i})
+
\underset{[u_{\alpha, \beta}<\alpha^{j}\varphi_{j}]}{\int} (\alpha^{j}\varphi_{j})^{\frac{6-n}{n-2}}u_{\alpha, \beta}^{2}\varphi_{i})
.
\end{split}\end{equation}
Note, that 
$
\underset{[ u_{\alpha, \beta}\geq c_{0}\varphi_{i}]}{\int} \varphi_{i} ^{2}
=
o(\frac{1}{\lambda_{i}^{\frac{n-2}{2}}})
$
and for suitable $\epsilon>0$ we have 
\begin{equation}
\begin{split}
\underset{[u_{\alpha, \beta}<\alpha^{j}\varphi_{j}]}{\int} &(\alpha^{j}\varphi_{j})^{\frac{6-n}{n-2}}(u_{\alpha, \beta})^{2}\varphi_{i} \\
= &
\underset{[u_{\alpha, \beta}<\alpha^{j}\varphi_{j}]\cap [\varphi_{i}\geq \sum_{i\neq j=1}^{p}\varphi_{j}]}{\int} (\alpha^{j}\varphi_{j}^{\frac{6-n}{n-2}}(u_{\alpha, \beta})^{2}\varphi_{i})\\
& +
\underset{[u_{\alpha, \beta}<\alpha^{j}\varphi_{j}]\cap [\varphi_{i}<\sum_{i\neq j=1}^{p}\varphi_{j}]}{\int} (\alpha^{j}\varphi_{j})^{\frac{6-n}{n-2}}(u_{\alpha, \beta})^{2}\varphi_{i}, 
\end{split} 
\end{equation}
whence 
\begin{equation}\label{deltai_f_interaction_estimate_2}
\begin{split}
\underset{[u_{\alpha, \beta}<\alpha^{j}\varphi_{j}]}{\int} &(\alpha^{j}\varphi_{j})^{\frac{6-n}{n-2}}(u_{\alpha, \beta})^{2}\varphi_{i} \\
\leq &
C\int_{B_{\frac{\epsilon}{\sqrt{\lambda_{i}}}}{a_{i}}} \varphi_{i}^{\frac{4}{n-2}}
+
C
\int_{\cup_{i\neq j=1}^{p}B_{\frac{\epsilon}{\sqrt{\lambda_{j}}}}(a_{j})}
(\sum_{i\neq j=1}^{p}\varphi_{j})^{\frac{n+2}{n-2}-\epsilon}\varphi_{i} \\
\leq & 
o(\frac{1}{\lambda_{i}^{\frac{n-2}{2}}})
+
\vert[\cup_{i\neq j=1}^{p}B_{{\epsilon}{\sqrt{\lambda_{j}}}}(0)]\vert^{\frac{\epsilon(n-2)}{2n}}
\sum_{i\neq j=1}^{p} \eps_{i,j}.
\end{split} 
\end{equation}
Plugging thus \eqref{deltai_f_interaction_estimate_2} 
into \eqref{expansion_K(omega+deltai)^(...)} we get
\begin{equation}\begin{split}\label{expansion_K(omega+deltai)^(...)_refined}
\int  K  (  u_{\alpha, \beta} & +\alpha^{j}\varphi_{j})^{\frac{n+2}{n-2}}\phi_{k,i} \\
= &
\int K (u_{\alpha, \beta})^{\frac{n+2}{n-2}} \phi_{k,i}
+
\int K(\alpha^{j}\varphi_{j})^{\frac{n+2}{n-2}}\phi_{k,i} \\
& +
\frac{n+2}{n-2}\int K( \alpha_{i}\varphi_{i})^{\frac{4}{n-2}}u_{\alpha, \beta}\phi_{k,i}
+
o(\frac{1}{\lambda_{i}^{\frac{n-2}{2}}} +\sum_{i\neq j=1}^{p}\eps_{i,j}).
\end{split}\end{equation}
Then \eqref{IntK(ajdj)^{...}Phiki} shows
\begin{equation}\begin{split}
\int  K  & (  u_{\alpha, \beta}  +\alpha^{j}\varphi_{j}  )^{\frac{n+2}{n-2}}\phi_{k,i} \\
= &
\alpha_{i}^{\frac{n+2}{n-2}}K_{i}\int \varphi_{i}^{\frac{n+2}{n-2}}\phi_{k,i}
+
\int Ku_{\alpha, \beta}^{\frac{n+2}{n-2}}\phi_{k,i}
+
\sum_{i\neq j =1}^{p}\alpha_{j}^{\frac{n+2}{n-2}}K_{j}\int \varphi_{j}^{\frac{n+2}{n-2}}\phi_{k,i} \\
& +
\frac{n+2}{n-2}
\alpha_{i}^{\frac{4}{n-2}}K_{i}
\sum_{i \neq j=1}^{p}\alpha_{j}
\int \varphi_{i}^{\frac{4}{n-2}}\phi_{k,i}\varphi_{j} \\
& +
\alpha_{i}^{\frac{n+2}{n-2}}(e_{1}  \frac{\lap K_{i}}{\lambda_{i} ^{2}},e_{2} \frac{\lap K_{i}}{\lambda_{i}^{2}},
e_{3}\frac{\nabla K_{i}}{\lambda_{i}}+e_{4}\frac{\nabla \lap K_{i}}{\lambda_{i}^{3}}  )
\\ & +
\frac{n+2}{n-2}\alpha_{i}^{\frac{4}{n-2}}\int K \varphi_{i}^{\frac{4}{n-2}}u_{\alpha, \beta}\phi_{k,i} \\
& + 
o_{\varepsilon}(\frac{1}{\lambda_{i}^{\frac{n-2}{2}}} +\sum_{i\neq j=1}^{p}\eps_{i,j})
+
O(\sum_{r\neq s}\eps_{r,s}^{2})
\end{split}\end{equation}
and we obtain letting 
$(d_{1,i},d_{2,i},d_{3,i})=(1,-\lambda_{i}\partial_{\lambda_{i}}, \frac{1}{\lambda_{i}}\nabla_{a_{i}})$
\begin{equation}\begin{split}
\int  K & (  u_{\alpha, \beta}   +\alpha^{j}\varphi_{j}  )^{\frac{n+2}{n-2}}\phi_{k,i} \\
= &
\alpha_{i}^{\frac{n+2}{n-2}}K_{i}\int \varphi_{i}^{\frac{n+2}{n-2}}\phi_{k,i}
+
\int K u_{\alpha, \beta}^{\frac{n+2}{n-2}}\phi_{k,i}
+
\sum_{i\neq j =1}^{p}\alpha_{j}^{\frac{n+2}{n-2}}K_{j}b_{k}d_{k,i}\eps_{i,j} \\
& +
\sum_{i \neq j=1}^{p}\alpha_{i}^{\frac{4}{n-2}}\alpha_{j}K_{i}b_{k}d_{k,i}\eps_{i,j}
+
\alpha_{i}^{\frac{n+2}{n-2}}
(0,0,
e_{3}\frac{\nabla K_{i}}{\lambda_{i}}
) 
\\
& +
\alpha_{i}^{\frac{4}{n-2}}\int K d_{k,i}\varphi_{i}^{\frac{n+2}{n-2}}u_{\alpha, \beta} +
o_{\varepsilon}(\frac{1}{\lambda_{i}^{\frac{n-2}{2}}} +\sum_{i\neq j=1}^{p}\eps_{i,j})
+
O(\sum_{r\neq s}\eps_{r,s}^{2}).
\end{split}\end{equation}
Since $u_{\alpha, \beta}(a_{i})=\alpha \omega(a_{i})+o_{\varepsilon}(1)$, we get in cases $k=1,2$ with $d_{k}>0$
\begin{equation}\begin{split}
\int K d_{k,i}\varphi_{i}^{\frac{n+2}{n-2}}u_{\alpha, \beta}
=
d_{k} \frac{\alpha K_{i}\omega _{i}}{\lambda_{i} ^{\frac{n-2}{2}}}
+
o_{\varepsilon}(  \frac{1}{\lambda_{i} ^{\frac{n-2}{2}}}),
\end{split}\end{equation}
and in case $k=3$ by radial symmetry 
\begin{equation}
\int K\omega  \frac{1}{\lambda_{i}}\nabla_{a_{i}}\varphi_{i}^{\frac{n+2}{n-2}}
=
o(\frac{1}{\lambda_{i}^{\frac{n-2}{2}}}).
\end{equation} 
We get
\begin{equation}\begin{split}\label{intK(...)_omega_evaluated}
\int & K  ( u_{\alpha, \beta}  +\alpha^{j}\varphi_{j}  )^{\frac{n+2}{n-2}}\phi_{k,i} \\
= &
\alpha_{i}^{\frac{n+2}{n-2}}K_{i}\int \varphi_{i}^{\frac{n+2}{n-2}}\phi_{k,i}
+
\int K u_{\alpha, \beta}^{\frac{n+2}{n-2}}\phi_{k,i}
+
\sum_{i\neq j =1}^{p}\alpha_{j}^{\frac{n+2}{n-2}}K_{j}b_{k}d_{k,i}\eps_{i,j} \\
& +
\alpha_{i}^{\frac{4}{n-2}}
(d_{1} \frac{\alpha K_{i}\omega _{i}}{\lambda_{i} ^{\frac{n-2}{2}}}, d_{2}\frac{\alpha K_{i}\omega _{i}}{\lambda_{i} ^{\frac{n-2}{2}}},d_{3}\frac{\alpha_{i}\nabla K_{i}}{\lambda_{i}} ) \\
& +
\sum_{i \neq j=1}^{p}\alpha_{i}^{\frac{4}{n-2}}\alpha_{j}K_{i}b_{k}d_{k,i}\eps_{i,j} 
+
o_{\varepsilon}(\frac{1}{\lambda_{i}^{\frac{n-2}{2}}} +\sum_{i\neq j=1}^{p}\eps_{i,j})
+
O(\sum_{r\neq s}\eps_{r,s}^{2}).
\end{split}\end{equation} 
Plugging \eqref{IntL_{g_{0}}djPhiki_expanded_f} and  \eqref{intK(...)_omega_evaluated} into \eqref{L_{g_{0}}u-rK_f} yields
\begin{equation}\begin{split}
\int ( & L_{g_{0}}u   -  r\K  u^{\frac{n+2}{n-2}}) \phi_{k,i} \\
= &
4n(n-1)
[
\alpha_{i}
\int
\varphi_{i}^{\frac{n+2}{n-2}}
\phi_{k,i}
+
b_{k}\sum_{i\neq j =1}^{p}\alpha_{j}d_{k,i}\eps_{i,j}
] +
\int L_{g_{0}}u_{\alpha, \beta}\phi_{k,i}\\
& 
-
\frac{r\alpha_{i}^{\frac{n+2}{n-2}}K_{i}}{k}\int \varphi_{i}^{\frac{n+2}{n-2}}\phi_{k,i}
-
\frac{r}{k}\int K u_{\alpha, \beta}^{\frac{n+2}{n-2}}\phi_{k,i}
 \\
& -
b_{k}\sum_{i\neq j =1}^{p}\frac{r\alpha_{j}^{\frac{n+2}{n-2}}K_{j}}{k}d_{k,i}\eps_{i,j}
-
b_{k}\sum_{i \neq j=1}^{p}\frac{r\alpha_{i}^{\frac{4}{n-2}}\alpha_{j}K_{i}}{k}d_{k,i}\eps_{i,j}
\\ & -
\frac{r\alpha_{i}^{\frac{4}{n-2}}}{k}
(d_{1}\frac{\alpha K_{i}\omega _{i}}{\lambda_{i} ^{\frac{n-2}{2}}},d_{2}\frac{\alpha K_{i}\omega _{i}}{\lambda_{i} ^{\frac{n-2}{2}}},d_{3}\frac{\alpha_{i}\nabla K_{i}}{\lambda_{i}} )\\
& +
o_{\varepsilon}(\frac{1}{\lambda_{i}^{\frac{n-2}{2}}} +\sum_{i\neq j=1}^{p}\eps_{i,j})
+
O
(
\sum_{r\neq s}\eps_{r,s}^{2}+\Vert v \Vert^{2}+\vert \delta J(u)\vert^{2}
).
\end{split}\end{equation}
From this the assertion follows.
\end{proof}
The equation on $\sigma_{1,i}=O(\vert \delta J(u)\vert)$ and the fact, that $u_{\alpha, \beta}$ is almost a solution,
simplify the equations on $\sigma_{2,i}$ and $\sigma_{3,i}$ significantly.

\begin{corollary}[Simplifying $\sigma_{k,i}$]\label{cor_simplifying_ski_f}$_{}$\\
On $V(\omega, p, \eps)$ for $\eps>0$ small  we have 
\begin{enumerate}[label=(\roman*)]
 \Item 
\begin{equation*}\begin{split}
\sigma_{2,i}
= &
d_{2}\frac{r\alpha_{i}^{\frac{4}{n-2}}}{k}\frac{\alpha \omega_{i}}{\lambda_{i} ^{\frac{n-2}{2}}}
-b_{2} \frac{r\alpha_{i}^{\frac{4}{n-2}}K_{i}}{k}
\sum_{i\neq j =1}^{p}\alpha_{j}
\lambda_{i}\partial_{\lambda_{i}}\eps_{i,j}
+
R_{2,i},
\end{split}\end{equation*}
 \Item
\begin{equation*}\begin{split}
\sigma_{3,i}
= &
d_{3}\frac{r\alpha_{i}^{\frac{n+2}{n-2}}}{k}\frac{\nabla K_{i}}{\lambda_{i}}
+ b_{3}\frac{r\alpha_{i}^{\frac{4}{n-2}}K_{i}}{k}
\sum_{i\neq j =1}^{p}\alpha_{j}
\frac{1}{\lambda_{i}}\nabla_{a_{i}}\eps_{i,j}
+
R_{2,i},
\end{split}\end{equation*}
\end{enumerate}
where 
$
R_{k,i}
= 
o_{\varepsilon}
(
\frac{1}{\lambda_{i}^{\frac{n-2}{2}}}+\sum_{i\neq j=1}^{p}\eps_{i,j}
)
+
O
(
\sum_{r\neq s}\eps_{r,s}^{2}
+
\Vert v \Vert^{2}
+ 
\vert \delta J(u)\vert^{2}
).
$
\end{corollary}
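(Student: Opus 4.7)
The strategy mirrors the proof of Corollary \ref{cor_simplifying_ski} for the case $\omega=0$, with two additional wrinkles: the background solution $u_{\alpha,\beta}$ introduces interaction terms scaling as $\lambda_i^{-(n-2)/2}$ rather than $\lambda_i^{-(n-2)}$, and the ``pseudo critical'' piece $J_{k,i}:=\int(L_{g_0}u_{\alpha,\beta}-r\bar K u_{\alpha,\beta}^{(n+2)/(n-2)})\phi_{k,i}$ must be controlled. Three groups of terms in Proposition \ref{prop_analysing_ski_f}(ii)--(iii) need to be absorbed into $R_{k,i}$ for $k=2,3$: the self-interaction factor carrying $[r\alpha_i^{4/(n-2)}K_i/(4n(n-1)k)-1]$; the analogous bubble-interaction factor; and $J_{k,i}$ itself.

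First I would exploit $|\sigma_{1,i}|\leq C|\delta J(u)|$: dividing the formula for $\sigma_{1,i}$ from Proposition \ref{prop_analysing_ski_f}(i) by $4n(n-1)\alpha_i\int\varphi_i^{2n/(n-2)}\geq c>0$ yields, in analogy with \eqref{rai^{...}/k=...},
\begin{equation*}
\frac{r\alpha_i^{4/(n-2)}K_i}{4n(n-1)k}-1=O\Big(\tfrac{1}{\lambda_i^{(n-2)/2}}+\textstyle\sum_{i\neq j}\varepsilon_{i,j}+|J_{1,i}|+\|v\|^2+|\delta J(u)|\Big).
\end{equation*}
Multiplying this by $\int\varphi_i^{(n+2)/(n-2)}\phi_{k,i}=O(\lambda_i^{-(n-2)})$ (Lemma \ref{lem_interactions}(iv)) absorbs the self-interaction factor for $k=2,3$, and using $d_{k,i}\varepsilon_{i,j}=O(\varepsilon_{i,j})$ (Lemma \ref{lem_interactions}(vii)) handles the bubble-interaction factor; both contributions fall into $o_\varepsilon(\lambda_i^{-(n-2)/2}+\sum_{i\neq j}\varepsilon_{i,j})+O(\sum_{r\neq s}\varepsilon_{r,s}^2+\|v\|^2+|\delta J(u)|^2)$.

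The main obstacle is controlling $J_{k,i}$ for $k=2,3$. I would invoke that $u_{\alpha,\beta}$ is a pseudo critical point related to $\omega$: by the identity following Lemma \ref{lem_degeneracy_and_pseudo_critical_points}, the defect decomposes as $L_{g_0}u_{\alpha,\beta}-(r\bar K)_{u_{\alpha,\beta}}u_{\alpha,\beta}^{(n+2)/(n-2)}=c_\omega L_{g_0}\omega/\|\omega\|^2+\sum_j c_j L_{g_0}\mathrm{e}_j$. Expanding $u_{\alpha,\beta}=\alpha(\omega+\beta^i\mathrm{e}_i+h(\beta))$ around $\alpha\omega$ (which is itself a critical point since $r_\omega=k_\omega$ for a solution), the terms linear in $\beta$ cancel identically by the eigenvalue equation $L_{g_0}\mathrm{e}_i=\frac{n+2}{n-2}K\omega^{4/(n-2)}\mathrm{e}_i$, giving $c_\omega,c_j=O(\|\beta\|^2)=o_\varepsilon(1)$. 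The remaining difference $(r\bar K)_{u_{\alpha,\beta}}-r\bar K$ is bounded by \eqref{r/kuab-r/k_rough_estimate}, and $\int L_{g_0}\omega\,\phi_{k,i}=\int\omega\,L_{g_0}\phi_{k,i}$ is of order $\lambda_i^{-(n-2)/2}$ by Lemma \ref{lem_emergence_of_the_regular_part} together with the parity-based expansions \eqref{bringoutK_k=2}--\eqref{bringoutK_k=3} applied to the smooth factors $K\omega^{(n+2)/(n-2)}$ and $K\omega^{4/(n-2)}\mathrm{e}_j$ in place of $K$. Altogether $J_{k,i}=o_\varepsilon(\lambda_i^{-(n-2)/2})$ plus admissible terms.

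Collecting: in $\sigma_{2,i}$ the surviving main terms are $d_2\frac{r\alpha_i^{4/(n-2)}}{k}\frac{\alpha\omega_i}{\lambda_i^{(n-2)/2}}$ (after absorbing $K_i$ into $d_2$) and the bubble interaction $-b_2\frac{r\alpha_i^{4/(n-2)}K_i}{k}\sum_{i\neq j}\alpha_j\lambda_i\partial_{\lambda_i}\varepsilon_{i,j}$; in $\sigma_{3,i}$ the surviving main terms are $d_3\frac{r\alpha_i^{(n+2)/(n-2)}}{k}\frac{\nabla K_i}{\lambda_i}$ and $b_3\frac{r\alpha_i^{4/(n-2)}K_i}{k}\sum_{i\neq j}\alpha_j\frac{1}{\lambda_i}\nabla_{a_i}\varepsilon_{i,j}$. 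Note that the subleading $e_4\nabla\Delta K_i/\lambda_i^3$ in $\sigma_{3,i}$ is also absorbed into $R_{3,i}$ since $\lambda_i^{-3}\leq\lambda_i^{-(n-2)/2}$ for $n=3,4,5$. This produces the stated simplified expressions.
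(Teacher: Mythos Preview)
Your argument is correct and reaches the same conclusion, but the treatment of the pseudo-critical piece $J_{k,i}=\int(L_{g_0}u_{\alpha,\beta}-r\bar K\,u_{\alpha,\beta}^{(n+2)/(n-2)})\phi_{k,i}$ differs from the paper's. You bound the coefficients $c_\omega,c_j$ in the decomposition coming from $\Pi\nabla J(u_{\alpha,\beta})=0$ by a \emph{static} Taylor expansion of $u_{\alpha,\beta}=\alpha(\omega+\beta^i\mathrm{e}_i+h(\beta))$ about $\alpha\omega$: the zeroth order vanishes since $\alpha\omega$ is an exact solution, and the first order in $\beta$ vanishes by the eigenvalue relation $L_{g_0}\mathrm{e}_i=\tfrac{n+2}{n-2}K\omega^{4/(n-2)}\mathrm{e}_i$ together with $\partial_{\beta}(r/k)_{u_{1,\beta}}\big|_{\beta=0}=0$, so that $c_\omega,c_j=O(\|\beta\|^2)=o_\varepsilon(1)$. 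The paper instead argues \emph{dynamically}: it substitutes $u_{\alpha,\beta}=u-\alpha^i\varphi_i-v$ and compares $\int K(u-\alpha^i\varphi_i)^{(n+2)/(n-2)}\omega$ with $\int Ku^{(n+2)/(n-2)}\omega$ to recover $\int(L_{g_0}u-(r\bar K)_u u^{(n+2)/(n-2)})\omega=O(|\delta J(u)|)$ plus admissible remainders (equation \eqref{partial_J_u_alpha,beta_tested}), and only afterwards derives the analogue of \eqref{rai^{...}/k=...}. Your route is shorter and avoids the somewhat delicate pointwise comparison of $(u-\alpha^i\varphi_i)^{(n+2)/(n-2)}$ with $u^{(n+2)/(n-2)}$; the paper's route produces error terms expressed directly through $|\delta J(u)|$, which dovetails with the later a-priori estimate on $v$ (Corollary \ref{cor_a-priori_estimate_on_v_f}). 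One minor slip: there is no $e_4\nabla\Delta K_i/\lambda_i^3$ term in Proposition \ref{prop_analysing_ski_f}(iii) to absorb---that contribution appears only in the $\omega=0$ expansion.
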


\begin{proof}[\textbf{Proof of corollary \ref{cor_simplifying_ski_f}}]\label{p_simplifying_ski_f}$_{}$\\
Note, that
\begin{equation}\label{u_alpha,beta_tested_with_phi_k,i}
\begin{split}
\int (L_{g_{0}}u_{\alpha, \beta}-r\K u_{\alpha, \beta}^{\frac{n+2}{n-2}})\phi_{k,i}
= &
\int (L_{g_{0}}u_{\alpha, \beta}-\frac{r_{u_{\alpha, \beta}}}{k_{u_{\alpha, \beta}}}K u_{\alpha, \beta}^{\frac{n+2}{n-2}})\phi_{k,i} \\
& +
((\frac{r}{k})_{u_{\alpha, \beta}}-(\frac{r}{k})_{u})
\int K u_{\alpha, \beta}^{\frac{n+2}{n-2}}\phi_{k,i}
\end{split}
\end{equation} 
Due to $\Pi \nabla J(u_{\alpha, \beta})=0$, cf. lemma \ref{lem_degeneracy_and_pseudo_critical_points} and the remarks following, we have
\begin{equation}\label{equation_solved_again}
\begin{split}
L_{g_{0}}u_{\alpha, \beta}-(r\K)_{u_{\alpha, \beta}}u_{\alpha, \beta}^{\frac{n+2}{n-2}}
= &
[
\int 
(
L_{g_{0}}u_{\alpha, \beta}-(r\K)_{u_{\alpha, \beta}}u_{\alpha, \beta}^{\frac{n+2}{n-2}}
)
\frac{\omega}{\Vert \omega \Vert} 
]
L_{g_{0}}\frac{\omega}{\Vert \omega \Vert}  \\
& +
\sum_{i=1}^{m}
[ \int 
(L_{g_{0}}u_{\alpha, \beta}-(r\K)_{u_{\alpha, \beta}}u_{\alpha, \beta}^{\frac{n+2}{n-2}}) \mathrm{e}_{i}]
L_{g_{0}}\mathrm{e}_{i}
\end{split}
\end{equation} 
and there holds
\begin{equation}
\begin{split}
\int (L_{g_{0}}u_{\alpha, \beta} & -(r\K)_{u_{\alpha, \beta}}u_{\alpha, \beta}^{\frac{n+2}{n-2}})\omega \\
= &
\int (L_{g_{0}}u_{\alpha, \beta}-(r\K)_{u}u_{\alpha, \beta}^{\frac{n+2}{n-2}})\omega
+
O(\vert (\frac{r}{k})_{u_{\alpha, \beta}}-(\frac{r}{k})_{u}\vert) \\
= &
\int (L_{g_{0}}(u-\alpha^{i}\var_{i})-(r\K)_{u}(u-\alpha^{i}\var_{i})^{\frac{n+2}{n-2}})\omega \\
& +
O(\vert (\frac{r}{k})_{u_{\alpha, \beta}}-(\frac{r}{k})_{u}\vert+\Vert v \Vert).
\end{split}
\end{equation} 
Clearly
\begin{equation}
\int L_{g_{0}}\varphi_{i}\omega=O(\frac{1}{\lambda_{i}^{\frac{n-2}{2}}}) 
\end{equation} 
and we have
\begin{equation}
\begin{split}
\int K  (u & -\alpha^{i} \delta_{i})^{\frac{n+2}{n-2}}\omega \\
= &
\underset{[u>\alpha^{i}\delta_{i}]}{\int}K(u-\alpha^{i}\delta_{i})^{\frac{n+2}{n-2}}\omega
+
\underset{[u<\alpha^{i}\delta_{i}]}{\int}K(u-\alpha^{i}\delta_{i})^{\frac{n+2}{n-2}}\omega \\
= &
\underset{[u>\alpha^{i}\delta_{i}]}{\int}Ku^{\frac{n+2}{n-2}}\omega
+
O(\sum_{i}\frac{\ln^{\frac{n-2}{n}}\lambda_{i}}{\lambda_{i}^{\frac{n-2}{2}}})\\
= &
\int Ku^{\frac{n+2}{n-2}}\omega
+
O(\sum_{i}\frac{\ln^{\frac{n-2}{n}}\lambda_{i}}{\lambda_{i}^{\frac{n-2}{2}}}).
\end{split}
\end{equation} 
We obtain 
\begin{equation}\label{partial_J_u_alpha,beta_tested}
\begin{split}
\int (L_{g_{0}}u_{\alpha, \beta} & -(r\K)_{u_{\alpha, \beta}}u_{\alpha, \beta}^{\frac{n+2}{n-2}})\omega \\
= &
\int (L_{g_{0}}u-(r\K)_{u}u^{\frac{n+2}{n-2}})\omega \\
& +
O(\vert (\frac{r}{k})_{u_{\alpha, \beta}}-(\frac{r}{k})_{u}\vert+\sum_{i}\frac{\ln^{\frac{n-2}{n}}\lambda_{i}}{\lambda_{i}^{\frac{n-2}{2}}}+\Vert v \Vert)\\
= &
O(\vert (\frac{r}{k})_{u_{\alpha, \beta}}-(\frac{r}{k})_{u}\vert+\sum_{i}\frac{\ln^{\frac{n-2}{n}}\lambda_{i}}{\lambda_{i}^{\frac{n-2}{2}}}+\Vert v \Vert+\vert \delta J(u)\vert).
\end{split}
\end{equation}
and the same estimate holds for $\omega$ replaced by $\mathrm{e_{i}}$. Plugging this into 
\eqref{equation_solved_again} we obtain for \eqref{u_alpha,beta_tested_with_phi_k,i} the estimate
\begin{equation}
\begin{split}
\int (L_{g_{0}} & u_{\alpha, \beta}  -(r\K)u_{\alpha, \beta}^{\frac{n+2}{n-2}})\phi_{k,i} \\
= &
O
(
(\vert (\frac{r}{k})_{u_{\alpha, \beta}}-(\frac{r}{k})_{u}\vert+\sum_{r}\frac{\ln^{\frac{n-2}{n}}\lambda_{r}}{\lambda_{r}^{\frac{n-2}{2}}}+\Vert v \Vert+\vert \delta J(u)\vert
)
\frac{1}{\lambda_{i}^{\frac{n-2}{2}}}
),
\end{split}
\end{equation}
whence using \eqref{r/kuab-r/k_rough_estimate} we conclude
\begin{equation}
\begin{split}
\int (L_{g_{0}} u_{\alpha, \beta} & - (r\K)u_{\alpha, \beta}^{\frac{n+2}{n-2}})\phi_{k,i} \\
= &
o(\frac{1}{\lambda_{i}^{\frac{n-2}{2}}})
+
O
(
\sum_{r}\frac{1}{\lambda_{r}^{n-2}}+\Vert v \Vert^{2}+\vert \delta J(u)\vert^{2}
).
\end{split}
\end{equation}
Consequently equation (i) of proposition \ref{prop_analysing_ski_f} shows 
\begin{equation}\label{rai4/n-2_f}
\begin{split}
\frac{r\alpha_{i}^{\frac{4}{n-2}}K_{i}}{4n(n-1)k}
= 
1
& +
O(\frac{1}{\lambda_{i}^{\frac{n-2}{2}}}+\sum_{i\neq j=1}^{p}\eps_{i,j}
+
\sum_{r\neq s}\eps_{r,s}^{2}+\Vert v \Vert^{2}+\vert \delta J(u)\vert).
\end{split}
\end{equation} 
Thus the claim follows from proposition \ref{prop_analysing_ski_f}.
\end{proof}
We turn to estimate the error term  term $v$. To do so we first characterize the first two derivatives of 
$J$ at $u_{\alpha, \beta}+\alpha^{i}\varphi_{i}=u-v$.

\begin{proposition}[Derivatives on $H(\omega, p, \eps)$]\label{prop_derivatives_on_H_f}$_{}$\\
For $\eps>0$ small let $u=u_{\alpha, \beta}+\alpha^{i}\var_{i}+v\in V(p, \eps)$  and 
$h_{1},h_{2}\in H= H_{u}(\omega, p, \eps)$.

We then have
\begin{enumerate}[label=(\roman*)]
 \Item 
 \begin{equation*}\begin{split}
\Vert \partial J( & u_{\alpha, \beta}  + \alpha^{i}\varphi_{i} )\lfloor_{H}\Vert \\
= &
o_{\varepsilon}(\Vert v \Vert) 
+
O(\sum_{r} \frac{\vert \nabla K_{r}\vert}{\lambda_{r}} 
+
\frac{1}{\lambda_{r}^{\frac{n-2}{2}}} +\sum_{r\neq s}\eps_{r,s}
+
\vert \delta J(u)\vert)
\end{split}\end{equation*}
 \Item 
 \begin{equation*}\begin{split}
\frac{1}{2}\partial^{2} & J(  u_{\alpha, \beta}  + \alpha^{i}\varphi_{i} )h_{1}h_{2}  \\
= &
k_{u_{\alpha, \beta}+\alpha^{i}\varphi}^{\frac{2-n}{n}}
[
\int L_{g_{0}}h_{1}h_{2}
-
c_{n}n(n+2)
\int 
(\frac{K\omega  ^{\frac{4}{n-2}}}{4n(n-1)}
+
\sum_{i}\varphi_{i}^{\frac{4}{n-2}}
)
h_{1}h_{2}
] \\
& +
o_{\varepsilon}(\Vert h_{1} \Vert \Vert h_{2} \Vert).
\end{split}\end{equation*}
\end{enumerate}
\end{proposition}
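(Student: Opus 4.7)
The plan is to apply the explicit formulas of proposition \ref{prop_derivatives_of_J} at the ``pseudo-critical-plus-bubble'' profile $\tilde u := u_{\alpha,\beta}+\alpha^i\varphi_i$, and then exploit three ingredients: the orthogonality built into $H$ which kills the mixed terms in both variations, the pseudo critical point equation $\Pi\nabla J(u_{\alpha,\beta})=0$ together with the near-bubble calibrations $(r/k)\alpha^{4/(n-2)}\approx 1$ and $r\alpha_i^{4/(n-2)}K_i/k\approx 4n(n-1)$ built into the definition of $V(\omega,p,\eps)$, and the interaction bounds of lemmata \ref{lem_interactions} and \ref{lem_v_type_interactions}. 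The argument will parallel the proof of proposition \ref{prop_derivatives_on_H}, with one new feature: $u_{\alpha,\beta}$ is close to $\alpha\omega$ and hence almost a critical point, so a second leading contribution $K\omega^{4/(n-2)}/(4n(n-1))$ will appear in the second variation alongside the usual bubble-type term $\sum_i\varphi_i^{4/(n-2)}$.

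For (ii), I would begin with the formula of proposition \ref{prop_derivatives_of_J} at $\tilde u$ and eliminate the mixed terms $\int L_{g_0}\tilde u\cdot h_j\cdot\int K\tilde u^{(n+2)/(n-2)}h_k$ up to $o_\eps(\Vert h_j\Vert\,\Vert h_k\Vert)$ by the same reasoning as at the start of the proof of proposition \ref{prop_derivatives_on_H} (namely $h_j\in H$, lemma \ref{lem_v_type_interactions}, and the rough bound \eqref{r/kuab-r/k_rough_estimate}). This reduces the problem to
\begin{equation*}
\tfrac{1}{2}\partial^{2}J(\tilde u)h_{1}h_{2}
=
k_{\tilde u}^{(2-n)/n}\Bigl[\int L_{g_{0}}h_{1}h_{2}-\tfrac{n+2}{n-2}(r/k)_{\tilde u}\int K\tilde u^{4/(n-2)}h_{1}h_{2}\Bigr]+o_{\eps}(\Vert h_{1}\Vert\,\Vert h_{2}\Vert).
\end{equation*}
I would then split the support of $\int K\tilde u^{4/(n-2)}h_1 h_2$ into $\{u_{\alpha,\beta}\geq\alpha^i\varphi_i\}$ and its complement. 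On the first region $\tilde u^{4/(n-2)}\approx u_{\alpha,\beta}^{4/(n-2)}\approx\alpha^{4/(n-2)}\omega^{4/(n-2)}$ and $(r/k)_{\tilde u}\alpha^{4/(n-2)}\approx 1$, yielding the $\tfrac{n+2}{n-2}K\omega^{4/(n-2)}=c_n n(n+2)\tfrac{K\omega^{4/(n-2)}}{4n(n-1)}$ contribution. On the complement I would partition further according to which $\varphi_i$ dominates and invoke $r\alpha_i^{4/(n-2)}K_i/k=4n(n-1)+o_\eps(1)$ to replace $\tfrac{n+2}{n-2}(r/k)K\alpha_i^{4/(n-2)}\varphi_i^{4/(n-2)}$ by $c_n n(n+2)\varphi_i^{4/(n-2)}$. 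Cross-region remainders will be absorbed into $o_\eps$ via lemma \ref{lem_interactions}.

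For (i), I would substitute $\tilde u$ into the first variation formula and expand: use lemma \ref{lem_emergence_of_the_regular_part} for $L_{g_0}\varphi_i$, the expansion \eqref{expansion_K(omega+deltai)^(...)_refined} for $K\tilde u^{(n+2)/(n-2)}$ already deployed in proposition \ref{prop_analysing_ski_f}, and lemma \ref{lem_v_type_interactions} to control pairings with $v$. Testing against $h\in H$ packages the bubble self-interaction into $\alpha_i\bigl(1-\tfrac{r\alpha_i^{4/(n-2)}K_i}{4n(n-1)k}\bigr)\int L_{g_0}\varphi_i\,h$, which is of the stated order by \eqref{rai4/n-2_f}; the pseudo-critical residue $\int(L_{g_0}u_{\alpha,\beta}-(r\K)u_{\alpha,\beta}^{(n+2)/(n-2)})h$ will be handled by the identity \eqref{equation_solved_again} together with \eqref{partial_J_u_alpha,beta_tested} and \eqref{r/kuab-r/k_rough_estimate}; direct derivative, Green's-function-regular-part and bubble-bubble interaction contributions will furnish precisely the orders $\tfrac{|\nabla K_r|}{\lambda_r}$, $\lambda_r^{-(n-2)/2}$ and $\eps_{r,s}$ claimed.

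The main obstacle will be the careful bookkeeping in the transition regions where neither $u_{\alpha,\beta}$ nor any single $\varphi_i$ dominates pointwise; one must combine H\"older splits as in lemma \ref{lem_interactions} with the fact that for $n=3,4,5$ the exponent $\tfrac{6-n}{n-2}$ is non-negative, so all pointwise expansions of $\tilde u^{(n+2)/(n-2)}$ and $\tilde u^{4/(n-2)}$ are legitimate and the ensuing integrals fit within the stated error orders. No new analytical difficulty is expected beyond what is already handled in propositions \ref{prop_analysing_ski_f} and \ref{cor_simplifying_ski_f}.
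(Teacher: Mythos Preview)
Your proposal is correct and follows essentially the same route as the paper's proof: proposition \ref{prop_derivatives_of_J} at $\tilde u=u_{\alpha,\beta}+\alpha^i\varphi_i$, orthogonality of $H$ to kill the mixed terms in $\partial^2 J$, the calibrations $(r/k)\alpha^{4/(n-2)}\approx 1$ and $r\alpha_i^{4/(n-2)}K_i/k\approx 4n(n-1)$ to extract the two leading potentials in (ii), and for (i) the reduction to the bubble self-interaction controlled by \eqref{rai4/n-2_f} plus the pseudo-critical residue handled via \eqref{equation_solved_again}.

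One point deserves care. In (i) the target is $o_\eps(\Vert v\Vert)$, not $O(\Vert v\Vert)$; this is what later allows absorption in corollary \ref{cor_a-priori_estimate_on_v_f}. When you pass from $\partial J(u_{\alpha,\beta})h$ to the residue $\int(L_{g_0}u_{\alpha,\beta}-r\K u_{\alpha,\beta}^{(n+2)/(n-2)})h$ you pick up the direct term $\bigl[(r/k)_{u_{\alpha,\beta}}-(r/k)_u\bigr]\int K u_{\alpha,\beta}^{(n+2)/(n-2)}h$. The rough estimate \eqref{r/kuab-r/k_rough_estimate} only gives the bracket as $O(\Vert v\Vert+\ldots)$. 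The paper does not stop there: it derives inside the proof the sharper bound \eqref{r/kuab-r/k_strong}, namely $(r/k)_{u_{\alpha,\beta}}-(r/k)_u=O(\sum_r\lambda_r^{-(n-2)/2}+\sum_{r\neq s}\eps_{r,s}+\Vert v\Vert^2+\vert\delta J(u)\vert)$, via lemma \ref{lem_v_type_interactions}(iv) applied to $\int(L_{g_0}u-(r\K)_u u^{(n+2)/(n-2)})u_{\alpha,\beta}$. With $\Vert v\Vert^2=o_\eps(\Vert v\Vert)$ this closes. Alternatively you can avoid the refinement by observing that the paired factor $\int K u_{\alpha,\beta}^{(n+2)/(n-2)}h$ is itself $o_\eps(1)$ for $h\in H$ (same near-orthogonality that gives $\int L_{g_0}\omega\,h,\ \int L_{g_0}\mathrm e_j\,h=o_\eps(1)$), but you should make one of these two moves explicit; citing \eqref{r/kuab-r/k_rough_estimate} alone is not enough.
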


\begin{proof}[\textbf{Proof of proposition \ref{prop_derivatives_on_H_f}}]\label{p_derivatives_on_H_f}$_{}$\\ 
Let in addition $h\in H_{u}(\omega,p, \eps)$ with $\Vert h \Vert=1$. From proposition \ref{prop_derivatives_of_J} we infer
\begin{equation}\begin{split}
\frac{1}{2}\partial J  (u_{\alpha, \beta} & +\alpha^{i}\varphi_{i} )h \\
= & 
k_{u_{\alpha, \beta}+\alpha^{i}\varphi_{i}}^{\frac{2-n}{n}}
[
\int L_{g_{0}}(u_{\alpha, \beta}+\alpha^{i}\varphi_{i} )h  \\ 
& \quad\quad\quad\quad\;\;
-
\int (r\K)_{u_{\alpha, \beta}+\alpha^{i}\varphi_{i}}(u_{\alpha, \beta}+\alpha^{i}\varphi_{i})^{\frac{n+2}{n-2}}h
]
\end{split}\end{equation}
and 
\begin{equation}\begin{split}
\frac{1}{2}\partial^{2} & J (u_{\alpha, \beta} +\alpha^{i}\varphi_{i})h_{1}h_{2} \\
= &
k_{u_{\alpha, \beta}+\alpha^{i}\varphi_{i}}^{\frac{2-n}{n}}
[
\int L_{g_{0}}h_{1}h_{2} \\
& \quad\quad\quad\quad\quad -
\frac{n+2}{n-2} 
\int (r\K)_{u_{\alpha, \beta}+\alpha^{i}\varphi_{i}}(u_{\alpha, \beta}+\alpha^{i}\varphi_{i})^{\frac{4}{n-2}}h_{1}h_{2}
] \\
& +
o_{\varepsilon}(\Vert h_{1}\Vert \Vert h_{2} \Vert),
\end{split}\end{equation}
since, when considering the formula for the second variation, we have
\begin{equation}
\begin{split}
\int L_{g_{0}}uh_{i} 
= &
\frac{r}{k}\int Ku^{\frac{n+2}{n-2}}h_{i}
+
O(\vert \delta J(u)\vert \Vert h_{i}\Vert)\\
= &
\frac{r}{k}\int Ku^{\frac{4}{n-2}}vh_{i}+O(\vert \delta J(u)\vert \Vert h_{i}\Vert) \\
= &
O(\Vert v \Vert+\vert \delta J(u)\vert)\Vert h_{i}\Vert.
\end{split}
\end{equation} 
By \eqref{r/kuab-r/k_rough_estimate}  there holds
\begin{equation}
\begin{split}
(\frac{r}{k})_{u}=(\frac{r}{k})_{u_{\alpha, \beta}}+o_{\varepsilon}(1) 
\end{split}
\end{equation} 
and 
$
\frac{r\alpha_{i}^{\frac{4}{n-2}}K_{i}}{k}=4n(n-1)+o_{\varepsilon}(1)
$
by \eqref{rai4/n-2_f}.
Consequently
\begin{equation}\begin{split}
\frac{1}{2}\partial^{2} & J (u_{\alpha, \beta} +\alpha^{i}\varphi_{i})h_{1}h_{2} \\
= &
k_{u_{\alpha, \beta}+\alpha^{i}\varphi_{i}}^{\frac{2-n}{n}}
[
\int L_{g_{0}}h_{1}h_{2} \\
& \quad\quad\quad\quad\;-
c_{n}n(n+2) 
(
\int \frac{K\omega  ^{\frac{4}{n-2}}}{4n(n-1)}h_{1}h_{2}
-
\sum_{i}
\int \varphi_{i}^{\frac{4}{n-2}})h_{1}h_{2}
)
] 
\\ &
+
o_{\varepsilon}(\Vert h_{1}\Vert \Vert h_{2} \Vert).
\end{split}\end{equation}
This shows the statement on the second derivative. Moreover by lemma \ref{lem_v_type_interactions}
\begin{equation}\begin{split}
(\frac{r}{k})_{u_{\alpha, \beta}+\alpha^{i}\varphi_{i}}
=
\frac{r}{k}
+
o(\sum_{r} \frac{1}{\lambda_{r}^{\frac{n-2}{2}}} + \sum_{r\neq s}\eps_{r,s})
+
O
(\Vert v \Vert^{2}+\vert \delta J(u)\vert^{2}).
\end{split}\end{equation}
We obtain
\begin{equation}\begin{split}
\frac{1}{2}\partial J  (u_{\alpha, \beta} &  +\alpha^{i}\varphi_{i} )h \\
= & 
k_{u_{\alpha, \beta}+\alpha^{i}\varphi_{i}}^{\frac{2-n}{n}}
[
\int L_{g_{0}}(u_{\alpha, \beta}+\alpha^{i}\varphi_{i} )h   -
\int r\K(u_{\alpha, \beta}+\alpha^{i}\varphi_{i})^{\frac{n+2}{n-2}}h
] \\
& +
o_{\varepsilon}(\sum_{r} \frac{1}{\lambda_{r}^{\frac{n-2}{2}}} + \sum_{r\neq s}\eps_{r,s})
+
O
(\Vert v \Vert^{2}+\vert \delta J(u)\vert^{2})
,
\end{split}\end{equation}
whence by estimates familiar by now
\begin{equation}\begin{split}
\frac{1}{2}\partial J  (u_{\alpha, \beta} &  +\alpha^{i}\varphi_{i} )h \\
= & 
k_{u_{\alpha, \beta}+\alpha^{i}\varphi_{i}}^{\frac{2-n}{n}}
[
\int (L_{g_{0}}u_{\alpha, \beta}-r\K u_{\alpha, \beta}^{\frac{n+2}{n-2}})h\\
& \quad\quad\quad\quad\;
+\sum_{i}\alpha_{i}
\int (L_{g_{0}}\varphi_{i} - r\K\alpha_{i}^{\frac{4}{n-2}}\varphi_{i}^{\frac{n+2}{n-2}})h
] \\
& +
O( 
\sum_{r} \frac{1}{\lambda_{r}^{\frac{n-2}{2}}}
+ 
\sum_{r\neq s}\eps_{r,s}
+
\Vert v \Vert^{2}+\vert \delta J(u)\vert^{2})
.
\end{split}\end{equation}
Using \eqref{rai4/n-2_f}  we get
\begin{equation}\begin{split}
\frac{1}{2}\partial J  (u_{\alpha, \beta} &  +\alpha^{i}\varphi_{i} )h \\
= & 
k_{u_{\alpha, \beta}+\alpha^{i}\varphi_{i}}^{\frac{2-n}{n}}
[
\int (L_{g_{0}}u_{\alpha, \beta}-r\K u_{\alpha, \beta}^{\frac{n+2}{n-2}})h\\
& \quad\quad\quad\quad\;
+\sum_{i}\alpha_{i}
\int (L_{g_{0}}\varphi_{i} - 4n(n-1)\varphi_{i}^{\frac{n+2}{n-2}})h
] \\
& +
O( 
\sum_{r} \frac{\vert \nabla K_{r}\vert}{\lambda_{r}}
+
\frac{1}{\lambda_{r}^{\frac{n-2}{2}}}
+ 
\sum_{r\neq s}\eps_{r,s}
+
\Vert v \Vert^{2}+\vert \delta J(u)\vert)
\end{split}\end{equation}
and we deduce using lemma \ref{lem_emergence_of_the_regular_part}
\begin{equation}\begin{split}\label{first_derivative_of_J_uab_0}
\frac{1}{2}\partial J  (u_{\alpha, \beta} &  +\alpha^{i}\varphi_{i} )h \\
= & 
k_{u_{\alpha, \beta}+\alpha^{i}\varphi_{i}}^{\frac{2-n}{n}}
\int (L_{g_{0}}u_{\alpha, \beta}-r\K u_{\alpha, \beta}^{\frac{n+2}{n-2}})h\\
& +
O( 
\sum_{r} \frac{\vert \nabla K_{r}\vert}{\lambda_{r}}
+
\frac{1}{\lambda_{r}^{\frac{n-2}{2}}}
+ 
\sum_{r\neq s}\eps_{r,s}
+
\Vert v \Vert^{2}+\vert \delta J(u)\vert)
.
\end{split}\end{equation}
We proceed estimating
\begin{equation}\label{first_derivative_of_J_uab_1}
\begin{split}
\int (L_{g_{0}}u_{\alpha, \beta}-r\K u_{\alpha, \beta}^{\frac{n+2}{n-2}})h
 = &
\frac{k_{u_{\alpha, \beta}}^{\frac{n}{n-2}}}{2}
\langle \partial J(u_{\alpha, \beta}),h\rangle 
+
O(\vert (\frac{r}{k})_{u_{\alpha, \beta}}-\frac{r}{k}\vert ),
\end{split}
\end{equation} 
to whose end we will improve \eqref{r/kuab-r/k_rough_estimate}. Due to lemma \ref{lem_v_type_interactions} we have
\begin{equation}
\begin{split}
\int (L_{g_{0}}u & -(r\K)_{u} u^{\frac{n+2}{n-2}})u_{\alpha, \beta} \\
= &
\int (L_{g_{0}}(u_{\alpha, \beta}+\alpha^{i}\varphi_{i})
-
(r\K)_{u}(u_{\alpha, \beta}+\alpha^{i}\varphi_{i})^{\frac{n+2}{n-2}}
)
u_{\alpha, \beta}\\
& +
o
(
\sum_{r}\frac{1 }{\lambda_{r}^{\frac{n-2}{2}}}
+
\sum_{r\neq s}\eps_{r,s}
)
+
O
(
\Vert v \Vert^{2}
+
\vert \delta J(u)\vert^{2}
),
\end{split}
\end{equation} 
whence in particular
\begin{equation}
\begin{split}
\int (L_{g_{0}}u  & -(r\K)_{u} u^{\frac{n+2}{n-2}})u_{\alpha, \beta} \\
= &
\int (L_{g_{0}}u_{\alpha, \beta}
-
(r\K)_{u}u_{\alpha, \beta}^{\frac{n+2}{n-2}}
)
u_{\alpha, \beta}\\
& +
O
(
\sum_{r}\frac{1 }{\lambda_{r}^{\frac{n-2}{2}}}
+
\sum_{r\neq s}\eps_{r,s}
+
\Vert v \Vert^{2}
+
\vert \delta J(u)\vert^{2}
)
\end{split}
\end{equation} 
and therefore
\begin{equation}\label{r/kuab-r/k_strong}
\begin{split}
(\frac{r}{k})_{u_{\alpha, \beta}}-(\frac{r}{k})_{u}
= &
O
(
\sum_{r}\frac{1 }{\lambda_{r}^{\frac{n-2}{2}}}
+
\sum_{r\neq s}\eps_{r,s}
+
\Vert v \Vert^{2}
+
\vert \delta J(u)\vert
).
\end{split}
\end{equation} 
Plugging \eqref{r/kuab-r/k_strong} with $\frac{r}{k}=(\frac{r}{k})_{u}$ into \eqref{first_derivative_of_J_uab_1} gives recalling lemma \ref{lem_degeneracy_and_pseudo_critical_points}
\begin{equation}
\begin{split}
\int & (L_{g_{0}} u_{\alpha, \beta}   - r\K u_{\alpha, \beta}^{\frac{n+2}{n-2}})h \\
= &
\frac{k_{u_{\alpha, \beta}}}{2}
\langle \partial J(u_{\alpha, \beta}),h\rangle 
+
O
(
\sum_{r}\frac{1 }{\lambda_{r}^{\frac{n-2}{2}}}
+
\sum_{r\neq s}\eps_{r,s}
+
\Vert v \Vert^{2}
+
\vert \delta J(u)\vert
)
\\
= &
\int (L_{g_{0}}u_{\alpha, \beta} -(r\K)_{u_{\alpha, \beta}} u_{\alpha, \beta}^{\frac{n+2}{n-2}})\omega\int L_{g_{0}}\omega h \\
& +
\sum_{i=1}^{m}\int (L_{g_{0}}u_{\alpha, \beta} -(r\K)_{u_{\alpha, \beta}} u_{\alpha, \beta}^{\frac{n+2}{n-2}})\mathrm{e}_{i}\int L_{g_{0}}\mathrm{e}_{i}h \\
& +
O
(
\sum_{r}\frac{1 }{\lambda_{r}^{\frac{n-2}{2}}}
+
\sum_{r\neq s}\eps_{r,s}
+
\Vert v \Vert^{2}
+
\vert \delta J(u)\vert
).
\end{split}
\end{equation}
Applying \eqref{r/kuab-r/k_strong} we then get
\begin{equation}
\begin{split}
\int (L_{g_{0}}u_{\alpha, \beta} & -r\K u_{\alpha, \beta}^{\frac{n+2}{n-2}})h\\
 = &
\int (L_{g_{0}}u_{\alpha, \beta} -(r\K)_{u} u_{\alpha, \beta}^{\frac{n+2}{n-2}})\omega\int L_{g_{0}}\omega h \\
& +
\sum_{i=1}^{m}\int (L_{g_{0}}u_{\alpha, \beta} -(r\K)_{u} u_{\alpha, \beta}^{\frac{n+2}{n-2}})\mathrm{e}_{i}\int L_{g_{0}}\mathrm{e}_{i}h \\
& +
O
(
\sum_{r}\frac{1 }{\lambda_{r}^{\frac{n-2}{2}}}
+
\sum_{r\neq s}\eps_{r,s}
+
\Vert v \Vert^{2}
+
\vert \delta J(u)\vert
),
\end{split}
\end{equation}
whence
\begin{equation}
\begin{split}
\int ( & L_{g_{0}}  u_{\alpha, \beta}  -r\K u_{\alpha, \beta}^{\frac{n+2}{n-2}})h\\
 = &
\int (L_{g_{0}}(u_{\alpha, \beta}+\alpha^{i}\varphi_{i}) -(r\K)_{u} (u_{\alpha, \beta}+\alpha^{i}\varphi_{i})^{\frac{n+2}{n-2}})\omega\int L_{g_{0}}\omega h \\
& +
\sum_{i=1}^{m}\int (L_{g_{0}}(u_{\alpha, \beta}+\alpha^{i}\varphi_{i}) 
-(r\K)_{u} (u_{\alpha, \beta}+\alpha^{i}\varphi_{i})^{\frac{n+2}{n-2}})\mathrm{e}_{i}\int L_{g_{0}}\mathrm{e}_{i}h \\
& +
O
(
\sum_{r}\frac{1 }{\lambda_{r}^{\frac{n-2}{2}}}
+
\sum_{r\neq s}\eps_{r,s}
+
\Vert v \Vert^{2}
+
\vert \delta J(u)\vert
).
\end{split}
\end{equation}
Since $\int L_{g_{0}}\omega h, \int L_{g_{0}}\mathrm{e}_{i}h=o_{\varepsilon}(1)$ as $h\in H_{u}(\omega, p, \eps)$ and $\vert h \vert=1$, we conclude
\begin{equation}
\begin{split}
\int (L_{g_{0}} & u_{\alpha, \beta}  -r\K u_{\alpha, \beta}^{\frac{n+2}{n-2}})h\\
= &
o_{\varepsilon}(\Vert v \Vert )
+
O
(
\sum_{r}\frac{1 }{\lambda_{r}^{\frac{n-2}{2}}}
+
\sum_{r\neq s}\eps_{r,s}
+
\vert \delta J(u)\vert
).
\end{split}
\end{equation}
Plugging this into \eqref{first_derivative_of_J_uab_0} proves the statement on the first derivative.
\end{proof}

In contrast to the case $\omega =0$ the second variation at $u_{\alpha, \beta}+\alpha^{i}\var_{i}$ is not necessarily positive  definite. 
It is however sufficient to have non degeneracy.
\begin{proposition}[Decomposition of the second variation on $H_{u}(\omega, p, \eps)$]\label{prop_decomposing the second variation_f}$_{}$\\
There exist $\gamma, \eps_{0}>0$ such,  that for any 
\begin{equation}
u=u_{\alpha,\beta}+\alpha^{i}\varphi_{i}+v\in V(\omega,p,\varepsilon) 
\end{equation} 
with $0<\eps<\eps_{0}$ we may decompose 
\begin{equation*}\begin{split}
H_{u}(\omega, p, \eps)=H=H_{+}\oplus_{L_{g_{0}}} H_{-}\;\text{ with }\;\dim H_{-}<\infty
\end{split}\end{equation*}
and for any $h_{+}\in H_{+},h_{-}\in H_{-}$ there holds
\begin{enumerate}[label=(\roman*)]
 \item 
$\partial^{2}J(u_{\alpha, \beta}+\alpha^{i}\varphi_{i})\lfloor_{H_{+}}>\gamma$
 \item 
$\partial^{2}J(u_{\alpha, \beta}+\alpha^{i}\varphi_{i})\lfloor_{H_{-}}<-\gamma$
 \item
$\partial^{2}J(u_{\alpha, \beta}+ \alpha^{i}\varphi_{i})h_{+}h_{-}=o_{\varepsilon}(\Vert h_{+} \Vert \Vert h_{-}\Vert)$.
 \end{enumerate}
\end{proposition}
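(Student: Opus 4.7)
[\textbf{Proof proposal for proposition \ref{prop_decomposing the second variation_f}}]$_{}$\\
The plan is to identify $H_{-}$ as the transfer into $H$ of the finite dimensional negative eigenspace of $\partial^{2}J(\omega)$ on $H_{0}(\omega)^{\perp_{L_{g_{0}}}}$, and then to prove positivity on the complement $H_{+}$ by a blow up contradiction argument combining proposition \ref{prop_positivity_of_D2J} with lemma \ref{lem_spectral_theorem_and_degeneracy}.

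First I would recall from lemma \ref{lem_spectral_theorem_and_degeneracy} that
\begin{equation*}
\partial^{2}J(\omega)\lfloor_{H_{0}(\omega)^{\perp_{L_{g_{0}}}}}:H_{0}(\omega)^{\perp_{L_{g_{0}}}}\overset{\simeq}{\longrightarrow}(H_{0}(\omega)^{\perp_{L_{g_{0}}}})^{*}
\end{equation*}
is an isomorphism of the form identity minus a compact self adjoint operator (with respect to $\langle \cdot,\cdot\rangle_{L_{g_{0}}}$), so its spectrum consists of a sequence accumulating only at $1$. In particular there is a well defined, finite dimensional negative eigenspace $\tilde H_{-}\subset H_{0}(\omega)^{\perp_{L_{g_{0}}}}$ together with some $\gamma_{0}>0$ such that $\partial^{2}J(\omega)<-2\gamma_{0}$ on $\tilde H_{-}$ and $\partial^{2}J(\omega)>2\gamma_{0}$ on its $L_{g_{0}}$-orthogonal complement inside $H_{0}(\omega)^{\perp_{L_{g_{0}}}}$. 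By proposition \ref{prop_smoothness_of_u_a_b} the eigenfunctions are smooth. I then project $\tilde H_{-}$ into $H=H_{u}(\omega,p,\epsilon)$ by subtracting off its $\langle\cdot,\cdot\rangle_{Ku^{\frac{4}{n-2}}}$-components along $u_{\alpha,\beta},\partial_{\beta_{i}}u_{\alpha,\beta},\varphi_{j},-\lambda_{j}\partial_{\lambda_{j}}\varphi_{j},\frac{1}{\lambda_{j}}\nabla_{a_{j}}\varphi_{j}$; since $\tilde H_{-}$ is $L_{g_{0}}$-orthogonal to $H_{0}(\omega)\supset\langle\omega,\mathrm{e}_{i}\rangle$ and $\omega,\mathrm{e}_{i}$ differ from $u_{\alpha,\beta},\partial_{\beta_{i}}u_{\alpha,\beta}$ by $o_{\epsilon}(1)$ in $C^{k}$ (proposition \ref{prop_smoothness_of_u_a_b}), and since $\tilde H_{-}$ consists of fixed smooth functions whereas $\varphi_{j}$ and its derivatives have scale invariant $L^{\frac{2n}{n-2}}$-norms concentrating near $a_{j}$ on scale $\lambda_{j}^{-1}\to 0$, this projection is an $o_{\epsilon}(1)$-small perturbation in $W^{1,2}$. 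This produces $H_{-}\subset H$ of the same (finite) dimension, on which $\partial^{2}J(u_{\alpha,\beta}+\alpha^{i}\varphi_{i})<-\gamma_{0}$ by continuity of the second variation, cf.\ proposition \ref{prop_derivatives_on_H_f}.

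Next I would set $H_{+}=H_{-}^{\perp_{L_{g_{0}}}}\cap H$ and prove positivity of $\partial^{2}J(u_{\alpha,\beta}+\alpha^{i}\varphi_{i})$ on $H_{+}$ by contradiction. Otherwise there would exist $\epsilon_{k}\searrow 0$ and $h_{k}\in H_{+,k}$ with $\Vert h_{k}\Vert=1$ and
\begin{equation*}
1\leq c_{n}n(n+2)\lim_{k\to\infty}\int\Big(\frac{K\omega^{\frac{4}{n-2}}}{4n(n-1)}+\sum_{i}\varphi_{i,k}^{\frac{4}{n-2}}\Big)h_{k}^{2},
\end{equation*}
as in proposition \ref{prop_derivatives_on_H_f}. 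Cutting $h_{k}$ via a partition of unity into an $\omega$-part $h_{k,\omega}$, supported on $M\setminus\bigcup_{i}B_{\gamma_{k}/\lambda_{i,k}}(a_{i,k})$ with $\gamma_{k}\to\infty$ slowly, and bubble parts $h_{k,i}$ concentrated on shells $\Omega_{i,k}$ as in the proof of proposition \ref{prop_positivity_of_D2J}, the mass accumulates either on some $\varphi_{j}^{\frac{4}{n-2}}h_{k}^{2}$ integral or on the $K\omega^{\frac{4}{n-2}}h_{k}^{2}$ integral. In the first case the blow up argument of proposition \ref{prop_positivity_of_D2J} together with the orthogonality in $H_{u}$ to $\varphi_{j},-\lambda_{j}\partial_{\lambda_{j}}\varphi_{j},\frac{1}{\lambda_{j}}\nabla_{a_{j}}\varphi_{j}$ yields the same contradiction. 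In the second case $h_{k,\omega}\to h_{\infty}$ weakly in $W^{1,2}_{g_{0}}(M)$ with $h_{\infty}\perp_{L_{g_{0}}} H_{0}(\omega)$ (from the $u_{\alpha,\beta},\partial_{\beta_{i}}u_{\alpha,\beta}$ orthogonalities) and $h_{\infty}\perp_{L_{g_{0}}}\tilde H_{-}$ (from $h_{k}\in H_{-}^{\perp_{L_{g_{0}}}}$ and the fact that the projection of $\tilde H_{-}$ onto $H_{-}$ is $o_{\epsilon_{k}}(1)$-close), so $h_{\infty}$ lies in the positive eigenspace of $\partial^{2}J(\omega)$, where the quadratic form is bounded below by $2\gamma_{0}\Vert h_{\infty}\Vert^{2}$, contradicting the failure of positivity.

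Finally, for the cross terms I would write $h_{+}\in H_{+}$ and $h_{-}\in H_{-}$ and compute
\begin{equation*}
\partial^{2}J(u_{\alpha,\beta}+\alpha^{i}\varphi_{i})h_{+}h_{-}=\partial^{2}J(\omega)h_{+}h_{-}+o_{\epsilon}(\Vert h_{+}\Vert\Vert h_{-}\Vert),
\end{equation*}
using proposition \ref{prop_derivatives_on_H_f}, the closeness $u_{\alpha,\beta}\to\omega$, and the fact that $h_{-}$ is essentially supported away from the bubbles (so $\int\varphi_{i}^{\frac{4}{n-2}}h_{-}h_{+}=o_{\epsilon}(\Vert h_{-}\Vert\Vert h_{+}\Vert)$ by localization). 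Writing $h_{+}=h_{+,0}+h_{+,\perp}$ according to the decomposition $H_{0}(\omega)\oplus_{L_{g_{0}}}H_{0}(\omega)^{\perp_{L_{g_{0}}}}$, the $H_{0}(\omega)$-component is $o_{\epsilon}(\Vert h_{+}\Vert)$ by the almost orthogonality to $u_{\alpha,\beta},\partial_{\beta_{i}}u_{\alpha,\beta}$, and on $H_{0}(\omega)^{\perp_{L_{g_{0}}}}$ the spectral decomposition of $\partial^{2}J(\omega)$ makes $h_{+,\perp}$ (living in the positive eigenspace up to $o_{\epsilon}$) and $h_{-}$ (living in $\tilde H_{-}$ up to $o_{\epsilon}$) orthogonal for $\partial^{2}J(\omega)$ to leading order. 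This gives the claimed $o_{\epsilon}$-estimate.

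The main obstacle in my view is the projection/gluing step: one has to transfer the abstract spectral decomposition of $\partial^{2}J(\omega)$ on $H_{0}(\omega)^{\perp_{L_{g_{0}}}}$ into the moving orthogonal bundle $H_{u}(\omega,p,\epsilon)$ uniformly in $u$, while keeping the constants in (i)--(iii) independent of $u$. The technical book-keeping of cut-offs, the interplay between $L_{g_{0}}$-orthogonality and $Ku^{\frac{4}{n-2}}$-orthogonality, and the control of the cross interactions through lemmata \ref{lem_interactions} and \ref{lem_v_type_interactions} is where the bulk of the work sits; the conceptual content, however, is simply that the quadratic form decouples to leading order into a $\omega$-part (non degenerate by lemma \ref{lem_spectral_theorem_and_degeneracy}) and bubble parts (positive by the argument of proposition \ref{prop_positivity_of_D2J}).
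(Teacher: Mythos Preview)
Your proposal is correct in spirit and would yield the result, but it takes a genuinely different route from the paper's proof.

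The paper works entirely spectrally and intrinsically on $H$: it applies the spectral theorem to the compact problem $K\omega^{\frac{4}{n-2}}h=\mu L_{g_{0}}h$ restricted to $H=H_{u}(\omega,p,\eps)$, obtaining an orthonormal basis $(h_{l})$ of $H$ with eigenvalues $\mu_{h_{l}}$, and simultaneously the global basis $(w_{q})$ of $W^{1,2}$ from lemma \ref{lem_spectral_theorem_and_degeneracy}. The key technical step is the perturbation estimate $(\mu_{w_{q}}-\mu_{h_{l}})\langle w_{q},h_{l}\rangle_{L_{g_{0}}}=o_{\eps}(1)$, proved by writing $w_{q}$ in the frame $\{h_{l}\}\cup\{\upsilon,\upsilon_{j},\upsilon_{k,i}\}$ (the preimages of the constraints defining $H$) and showing the coefficients along the constraint directions are $\ell^{2}$-bounded via Parseval. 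One then slices $H$ into four bands $H_{1},\dots,H_{4}$ according to thresholds $\frac{c}{2},1\pm\epsilon$ for $\frac{n+2}{n-2}\mu_{h_{l}}$; the eigenvalue comparison forces each $h_{l}\in H_{2}$ (resp.\ $H_{4}$) to be $o_{\eps}$-close to the corresponding finite dimensional global eigenspace $W_{2}$ (resp.\ $W_{4}$), and crucially $H_{3}=\{0\}$ because the orthogonality of $H$ to $\upsilon_{j}\approx\mathrm{e}_{j}$ rules out any component near the degenerate eigenspace $E_{\frac{n+2}{n-2}}(\omega)$. Positivity on $H_{1}$ comes from proposition \ref{prop_positivity_of_D2J}, and on $H_{2}$, $H_{4}$ from the global spectral gap; the cross estimate (iii) is immediate because $H_{+}=H_{1}\oplus H_{2}$ and $H_{-}=H_{4}$ are $L_{g_{0}}$-orthogonal and orthogonal for $K\omega^{\frac{4}{n-2}}$ by construction, leaving only the bubble term $\int\varphi_{i}^{\frac{4}{n-2}}h_{+}h_{-}$, which is $o_{\eps}$ since $h_{-}$ lives in a fixed finite dimensional smooth subspace up to $o_{\eps}$.

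Your approach instead constructs $H_{-}$ externally by projecting the global negative eigenspace $\tilde H_{-}$ of $\partial^{2}J(\omega)$ into $H$, and proves positivity on $H_{+}=H_{-}^{\perp_{L_{g_{0}}}}\cap H$ by a blow-up contradiction with a cut-off splitting into an $\omega$-part and bubble parts. This buys a more geometric picture and avoids the somewhat delicate eigenvalue perturbation lemma; the price is the cut-off bookkeeping you flag, and the need to handle the case where mass is shared between the $\omega$-region and several bubble regions simultaneously (your ``either/or'' phrasing should really be an additive decomposition showing each localized quadratic form is nonnegative up to $o_{\eps}$). Both routes ultimately rest on the same two ingredients --- Rey's nondegeneracy for the bubble quadratic form and the spectral gap of $\partial^{2}J(\omega)$ on $H_{0}(\omega)^{\perp_{L_{g_{0}}}}$ --- but the paper's spectral bookkeeping makes the cross-term estimate (iii) essentially free, whereas in your approach it requires a separate argument.
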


\begin{proof}[\textbf{Proof of proposition \ref{prop_decomposing the second variation_f}}]\label{p_decomposing the second variation_f}$_{}$\\
Let $H=H_{u}(\omega, p, \eps)$ and 
note, that  $H$ is a closed subspace of $W$, since
\begin{equation}
H=\langle \upsilon, \upsilon_{j}, \upsilon_{k,i}\rangle^{\perp_{L_{g_{0}}}}
\end{equation} 
according to definition \ref{def_H(omega,p,e)} for $\upsilon, \upsilon_{k,i}, \upsilon_{j} \in W^{1,2}_{g_{0}}(M) $ solving 
\begin{equation}\begin{split}\label{def_phiki&phi_1}
L_{g_{0}}\upsilon=Ku^{\frac{4}{n-2}}u_{\alpha, \beta}, \;
L_{g_{0}}\upsilon_{j}=\frac{n+2}{n-2}Ku^{\frac{4}{n-2}}\partial_{\beta_{j}}u_{\alpha, \beta}
\end{split}\end{equation}
and
\begin{equation}\label{def_phiki&phi_2}
\begin{split}
L_{g_{0}}\upsilon_{k,i}=Ku^{\frac{4}{n-2}}\phi_{k,i} 
\end{split}
\end{equation} 
cf. definitions \ref{def_relevant_quantities} and \ref{def_H(omega,p,e)}.
In view of proposition \ref{prop_derivatives_on_H_f} we consider
\begin{equation}\begin{split}
T
:H\times H\-\R:(a,b)\- T(a,b)
\end{split}\end{equation}
with
\begin{equation}\begin{split}
T(h_{1},h_{2})
= &
\int L_{g_{0}}h_{1}h_{2} \\
& \quad -
c_{n}n(n+2)
\int 
\left[
\frac{K\omega  ^{\frac{4}{n-2}}}{4n(n-1)}
+
\sum_{i}\varphi_{i}^{\frac{4}{n-2}}ab
\right]
h_{1}h_{2}.
\end{split}\end{equation}
Due to the spectral theorem for compact operators there exist
\begin{equation}
(h_{i})_{i\in \N}\subset H \; \text{ and }\; (\mu_{h_{i}})\subset \R\; \text{ with }\; \mu_{h_{i}}\-0
\; \text{ as }\; i\- \infty
\end{equation} 
such, that $(h_{i})_{i\in \N}$ forms an orthonormal basis of $H$
\begin{equation}\begin{split}
H=\langle h_{i}\mid i\in \N\rangle\; \text{ and }\; \langle h_{i},h_{j}\rangle_{L_{g_{0}}}=\int L_{g_{0}}h_{i}h_{j}=\delta_{ij},
\end{split}\end{equation}
and we have 
$
K\omega  ^{\frac{4}{n-2}}h_{i}=\mu_{h_{i}}L_{g_{0}}h_{i}$
weakly, so
\begin{equation}
\int K\omega^{\frac{4}{n-2}}h_{i}h=\mu_{i}\int L_{g_{0}}h_{i}h
\; \text{ for all }\; h \in H.
\end{equation} 
Likewise there exists an orthonormal basis of $W=W^{1,2}(M)$
\begin{equation}\begin{split}
W=\langle w_{q}\mid q \in \N\rangle\; \text{ and }\;
\langle w_{p},w_{q}\rangle_{L_{g_{0}}}
=
\int L_{g_{0}}w_{p}w_{q}=\delta_{pq}
\end{split}\end{equation}
satisfying for a sequence $(\mu_{w_{q}})\subset \R$ with $\mu_{w_{q}}\- 0$ as $q\- \infty$
\begin{equation}
K\omega  ^{\frac{4}{n-2}}w_{q}=\mu_{w_{q}}L_{g_{0}}w_{q}.
\end{equation}  
Below we will prove, that for any $q,l\in \N$ there holds
\begin{equation}\begin{split}\label{wq_hl_interaction_small}
(\mu_{w_{q}}-\mu_{h_{l}})\langle w_{q},h_{l}\rangle_{L_{g_{0}}}
\- 0\; \text{ as }\; \varepsilon\- 0.
\end{split}\end{equation}
Moreover recall, that according to proposition \ref{prop_positivity_of_D2J} we have
\begin{equation}\begin{split}
\int L_{g_{0}}hh-c_{n}n(n+2)\sum_{i}\int\varphi_{i}^{\frac{4}{n-2}}h^{2}
\geq 
c\int L_{g_{0}}hh
\end{split}\end{equation}
for some positive  constant $c>0$. Thus for any 
\begin{equation}\begin{split}
\bar h\in H_{1}=\langle h_{i}\mid \frac{n+2}{n-2}\mu_{h_{i}}\leq \frac{c}{2}\rangle
\end{split}\end{equation}
we have $T(\bar  h, \bar h)\geq \frac{c}{2}\Vert \bar h\Vert^{2}$.
Let $\epsilon>0$ such, that
\begin{equation}\begin{split}\label{2epsilon_condition}
\lbrace 
w_{q}\mid 1-2\epsilon\leq \frac{n+2}{n-2}\mu_{w_{q}}\leq 1+2\epsilon
\rbrace
=
\lbrace \mathrm{e}_{j}\mid j=1, \ldots,m
\rbrace,
\end{split}\end{equation}
where $E_{\frac{n+2}{n-2}}(\omega)=\langle \mathrm{e}_{j}\mid j=1, \ldots,m\rangle$, cf. lemma \ref{lem_spectral_theorem_and_degeneracy}, and define
\begin{equation}\begin{split}
H_{2}=\langle h_{i}\mid \frac{c}{2}<\frac{n+2}{n-2}\mu_{h_{i}}<1-\epsilon\rangle
\end{split}\end{equation}
and 
\begin{equation}\begin{split}
W_{2}=\langle w_{q}\mid \frac{c}{2}<\frac{n+2}{n-2}\mu_{w_{q}}< 1-\epsilon\rangle.
\end{split}\end{equation}
Then for $0\neq \tilde h\in H_{2}$ we have due \eqref{wq_hl_interaction_small} 
\begin{equation}\begin{split}
\Vert \tilde h \Vert^{2}=\Vert \Pi_{W_{2}}\tilde h\Vert^{2}+\Vert \Pi_{W_{2}^{\perp}}\tilde h\Vert^{2},
\;\Vert\Pi_{W_{2}^{\perp}}\tilde h\Vert=o_{\varepsilon}(\Vert \tilde h \Vert),
\end{split}\end{equation}
whence for $\bar h+\tilde h\in H_{1}\oplus H_{2}$ we obtain 
\begin{equation}\begin{split}
T(\bar h+\tilde h, \bar h+\tilde h)
= &
T(\bar h, \bar h)+2T(\bar h, \tilde h)+T(\tilde h, \tilde h) \\
\geq &
\frac{c}{2}\Vert \bar h\Vert^{2}
-2\frac{n+2}{n-2}\int\sum_{i}\tilde\varphi_{i}^{\frac{4}{n-2}}\bar h (\Pi_{W_{2}}\tilde h)\\
& +
T((\Pi_{W_{2}}\tilde h),(\Pi_{W_{2}}\tilde h))
+
o_{\varepsilon}(\Vert \bar h\Vert^{2}+\Vert \tilde h\Vert^{2}).
\end{split}\end{equation}
Since $W_{2}$ is fix and finite dimensional, we get
\begin{equation}\begin{split}\label{finite_dimensional_interaction_small}
\int \tilde\varphi_{i}^{\frac{4}{n-2}}\bar h(\Pi_{W_{2}}\tilde h)=o_{\varepsilon}(\Vert \bar h\Vert^{2}+\Vert \tilde h\Vert^{2})
\end{split}\end{equation}
and 
\begin{equation}\begin{split}
T(( \Pi_{W_{2}} & \tilde h), (\Pi_{W_{2}}\tilde h)) \\
= &
\int L_{g_{0}}(\Pi_{W_{2}}\tilde h)(\Pi_{W_{2}}\tilde h)
-
\frac{n+2}{n-2}\int K\omega  ^{\frac{4}{n-2}}(\Pi_{W_{2}}\tilde h)^{2}
+
o_{\varepsilon}(\Vert \overline{h}_{2}\Vert^{2}) \\
\geq &
\epsilon \Vert (\Pi_{W_{2}}\tilde h)\Vert^{2}
=
\epsilon (\Vert \tilde h \Vert^{2}-\Vert \Pi_{W_{2}^{\perp}}\tilde h\Vert^{2})
\end{split}\end{equation}
Thus $T$ is positive   on $H_{1}\oplus H_{2}$.
Let
\begin{equation}\begin{split}
H_{3}=
\langle h_{i}\mid 1-\eps\leq \frac{n+2}{n-2}\mu_{h_{i}}\leq 1+\eps \rangle
\end{split}\end{equation}
and
\begin{equation}\begin{split}
W_{3}=
\langle w_{q}\mid 1-\eps\leq \frac{n+2}{n-2}\mu_{w_{q}}\leq 1+\eps \rangle
=
\langle \mathrm{e}_{j}\mid j=1, \ldots,m\rangle.
\end{split}\end{equation} 
Then for $0\neq \hat h\in H_{3}$ we have due to \eqref{wq_hl_interaction_small} and \eqref{2epsilon_condition}
\begin{equation}\begin{split}
\Vert \hat h \Vert^{2}=\Vert \Pi_{W_{3}}\hat h\Vert^{2}+\Vert \Pi_{W_{3}^{\perp}}\hat h\Vert^{2},
\;\Vert \Pi_{W_{3}^{\perp}}\hat h\Vert=o_{\varepsilon}(\Vert \hat h \Vert).
\end{split}\end{equation}
Since $\Pi_{W_{3}}\hat h=\sum^{m}_{j=1}\langle \mathrm{e}_{j}, \hat h\rangle_{L_{g_{0}}} \mathrm{e}_{j}$ and 
\begin{equation}\begin{split}
\langle \upsilon_{j}, \hat h\rangle_{L_{g_{0}}}
=
0
\end{split}\end{equation}
we obtain
\begin{equation}
\Vert \Pi_{W_{3}}\hat h\Vert=o_{\varepsilon}(\Vert \hat h \Vert), 
\end{equation} 
once we know $\Vert \upsilon_{j}-\mathrm{e}_{j}\Vert=o_{\varepsilon}(1)$ and we will show this below, cf \eqref{upsilon_close_by}.

Thus $H_{3}=\lbrace 0 \rbrace$
is trivial for $\varepsilon>0$ sufficiently small. 
\\
Finally let
\begin{equation}\begin{split}
H_{4}=\langle h_{i}\mid \frac{n+2}{n-2}\mu_{h_{i}} \geq 1+\epsilon\rangle=(H_{1}\oplus H_{2})^{\perp_{L_{g_{0}}}}
\end{split}\end{equation}
and 
\begin{equation}
\begin{split}
W_{4}=\langle w_{q}\mid \frac{n+2}{n-2}\mu_{w_{q}} \geq 1+\epsilon\rangle.
\end{split}
\end{equation} 
$W_{4}$ is fixed and finite dimensional. Arguing as for $H_{2}$ one obtains, that $T$ is strictly negative on $H_{4}$.
We conclude for $H=\tilde H_{1}\oplus \tilde H_{2}$, where 
\begin{equation}\begin{split} 
\tilde H_{1}=H_{1}\oplus H_{2}\;\text{ and }\;\tilde H_{2}=H_{4}, \; \dim \tilde H_{2}<\infty,
\end{split}\end{equation}
that
$
T\lfloor _{\tilde H_{1}}>\gamma
\;\text{ and }\;
T\lfloor_{\tilde H_{2}}<-\gamma
$
for some $\gamma>0$ small, whence
\begin{equation}\begin{split} 
\partial^{2} J(u_{\alpha, \beta}+\alpha^{i}\varphi_{i})\lfloor _{\tilde H_{1}}>\tilde \gamma
\;\text{ and }\;
\partial^{2} J(u_{\alpha, \beta} +\alpha^{i}\varphi_{i})\lfloor _{\tilde H_{2}}<-\tilde \gamma
\end{split}\end{equation}  
for some $\tilde \gamma>0$  by proposition \ref{prop_derivatives_on_H_f}. Moreover for $\tilde h_{1} \in \tilde H_{1}, \; \tilde h_{2} \in \tilde H_{2}$
\begin{equation}\begin{split}
\int L_{g_{0}}\tilde h_{1}\tilde h_{2}
=
\int K\omega^{\frac{4}{n-2}}\tilde h_{1}\tilde h_{2}=0, \;
\end{split}\end{equation}
whence 
\begin{equation}
\begin{split}
T(\tilde h_{1}, \tilde h_{2})
= &
-c_{n}n(n+2)\sum_{i}\int \varphi^{\frac{4}{n-2}}_{i}\tilde h_{1}\tilde h_{2}.
\end{split}
\end{equation} 
Thus arguing as for \eqref{finite_dimensional_interaction_small} we get
\begin{equation}\begin{split}
\partial^{2} J(u_{\alpha, \beta} +\alpha^{i}\varphi_{i})\tilde h_{1} \tilde h_{2}=o_{\varepsilon}(\Vert \tilde  h_{1}\Vert\Vert \tilde h_{2}\Vert).
\end{split}\end{equation}
\\
We are left with proving \eqref{wq_hl_interaction_small} and \eqref{upsilon_close_by}. First observe, that by definition
\begin{equation}
\begin{split}
L_{g_{0}}\omega=K\omega^{\frac{n+2}{n-2}}, \,L_{g_{0}}\mathrm{e}_{j}=\frac{n+2}{n-2}K\omega^{\frac{n+2}{n-2}}\mathrm{e}_{j}
\end{split}
\end{equation} 
and
\begin{equation}
u_{\alpha, \beta}=\alpha(\omega+\beta^{j}\mathrm{e}_{j})+O(\Vert \beta \Vert^{2}).
\end{equation} 
Consequently \eqref{def_phiki&phi_1} implies 
\begin{equation}
\begin{split}
\Vert L_{g_{0}}(\upsilon-\alpha^{\frac{n+2}{n-2}}\omega)\Vert_{L^{\frac{2n}{n+2}}}\,,
\Vert L_{g_{0}}(\upsilon_{j}-\alpha^{\frac{n+2}{n-2}}\mathrm{e}_{j})\Vert_{L^{\frac{2n}{n+2}}}
=
o_{\varepsilon}(1).
\end{split}
\end{equation} 
Likewise one obtains recalling
definition \ref{def_relevant_quantities} and lemma \ref{lem_interactions}
\begin{equation}
\begin{split}
\Vert L_{g_{0}}(\upsilon_{k,i}-c_{k}\alpha_{i}^{\frac{4}{n-2}}K_{i}\phi_{k,i})\Vert_{L^{\frac{2n}{n+2}}}
=
o_{\varepsilon}(1).
\end{split}
\end{equation} 
Therefore we obtain with $o_{\varepsilon}(1)\- 0$ in $W^{1,2}$ as $\varepsilon\- 0$ 
\begin{equation}\label{upsilon_close_by}
\begin{split}
\upsilon
=
\alpha \omega +o_{\varepsilon}(1),
\upsilon_{j}
=
\alpha \mathrm{e}_{j}+o_{\varepsilon}(1)
\; \text{ and }\;
\upsilon_{k,i}=c_{k}\alpha_{i}^{\frac{4}{n-2}}K_{i} \phi_{k,i}+o_{\varepsilon}(1).
\end{split}
\end{equation} 
Let us write now
\begin{equation}\begin{split}
w_{q}=\langle w_{q},h^{i}\rangle_{L_{g_{0}}}h_{i}+\alpha_{q}\upsilon+\alpha^{k,i}_{q}\upsilon_{k,i}+\alpha^{j}_{q}\upsilon_{j}.
\end{split}\end{equation}
Then on the one hand 
\begin{equation}\begin{split}\label{lhs}
\int K\omega  ^{\frac{4}{n-2}}w_{q}h_{l}
= &
\mu_{w_{q}}\langle w_{q},h_{l}\rangle_{L_{g_{0}}}, 
\end{split}\end{equation}
while on the other one
\begin{equation}\begin{split}\label{rhs}
\int K\omega  ^{\frac{4}{n-2}}w_{q}h_{l}
= &
\langle w_{q},h^{i}\rangle_{L_{g_{0}}}\int K\omega  ^{\frac{4}{n-2}}h_{i}h_{l} 
+
\alpha_{q}\int K\omega  ^{\frac{4}{n-2}}\upsilon h_{l} \\
& +
\alpha_{q}^{j}\int K\omega  ^{\frac{4}{n-2}}\upsilon_{j}h_{l}
+
\alpha_{q}^{k,i}\int K\omega  ^{\frac{4}{n-2}}\upsilon_{k,i}h_{l} 
\\
= &
\mu_{h_{l}}\langle w_{q},h_{l}\rangle_{L_{g_{0}}}
+
o_{\varepsilon}(\vert \alpha_{q}\vert+\sum_{j}\vert \alpha^{j}_{q}\vert+\sum_{k,i}\vert \alpha^{k,i}_{q}\vert )_{l}.
\end{split}\end{equation}
The last equality above follows easily from \eqref{upsilon_close_by} and the orthogonal properties of $H_{u}(\omega,p, \varepsilon)$. Combining \eqref{lhs} and \eqref{rhs} we get
\begin{equation}\begin{split}\label{lambdaej-lambdahl}
(\mu_{w_{q}}-\mu_{h_{l}})\langle w_{q},h_{l}\rangle_{L_{g_{0}}}
= &
o_{\varepsilon}(\vert \alpha_{q}\vert+\sum_{j}\vert \alpha^{j}_{q}\vert+\sum_{k,i}\vert \alpha^{k,i}_{q}\vert)_{l}.
\end{split}\end{equation}
Moreover
\begin{equation}\begin{split}
\langle w_{q}, \upsilon\rangle_{L_{g_{0}}}
= &
\alpha_{q}\langle \upsilon, \upsilon\rangle_{L_{g_{0}}}
+
\alpha_{q}^{j}\langle\upsilon_{j}, \upsilon\rangle
+
\alpha^{l,p}_{q}\langle \upsilon_{l,p}, \upsilon_{l,p}\rangle_{L_{g_{0}}}
\\
\simeq & 
\alpha_{q}
+
o_{\varepsilon}(\sum_{j}\vert \alpha^{j}_{q}\vert+\sum_{l,p}\vert \alpha_{q}^{l,p}\vert),
\end{split}\end{equation}
likewise
\begin{equation}\begin{split}
\langle w_{q}, \upsilon_{j}\rangle_{L_{g_{0}}}
= &
\alpha_{q}\langle \upsilon, \upsilon_{j}\rangle_{L_{g_{0}}}
+
\alpha_{q}^{p}\langle \upsilon_{p}, \upsilon_{j}\rangle
+
\alpha^{l,p}_{q}\langle \upsilon_{l,p}, \upsilon_{j}\rangle_{L_{g_{0}}}
\\
\simeq & 
\alpha_{q}^{p}\delta_{p,j}
+
o_{\varepsilon}(\vert \alpha_{q}\vert +\sum_{j}\vert \alpha_{q}^{j}\vert+\sum_{l,p}\vert \alpha_{q}^{l,p}\vert)
\end{split}\end{equation}
and
\begin{equation}\begin{split}
\langle w_{q}, \upsilon_{k,i}\rangle_{L_{g_{0}}}
= &
\alpha_{q}\langle \upsilon, \upsilon_{k,i}\rangle_{L_{g_{0}}}
+
\alpha_{q}^{j}\langle \upsilon_{j}, \upsilon_{k,i}\rangle
+
\alpha^{l,p}_{q}\langle \upsilon_{l,p}, \upsilon_{k,i}\rangle_{L_{g_{0}}}
\\
\simeq & 
\alpha^{l,p}_{j}\delta_{l,k}\delta_{p,i}
+
o_{\varepsilon}(\vert \alpha_{q}\vert +\sum_{j}\vert \alpha_{q}^{j}\vert+\sum_{l,p}\vert \alpha_{q}^{l,p}\vert)_{k,i}.
\end{split}\end{equation}
Summing up we obtain by Parseval's identity 
\begin{equation}\begin{split}\label{lambdaei-lambdahl_summability}
\Vert \upsilon\Vert^{2}+\sum_{k,i} \Vert \upsilon_{k,i}\Vert^{2} & +\sum_{j}\Vert \upsilon_{j} \Vert^{2} \\
= &(1+o_{\varepsilon}(1))
[
\sum_{q}\vert \alpha_{q}\vert^{2}
+
\sum_{q,k,i}\vert \alpha^{k,i}_{q}\vert^{2}
+
\sum_{q,j}\vert \alpha_{q}^{j}\vert^{2}
]
\end{split}\end{equation}
and the left hand side is uniformly bounded. Thus \eqref{lambdaej-lambdahl} gives
\begin{equation}\begin{split}
(\mu_{w_{q}}-\mu_{h_{l}})\langle w_{q},h_{l}\rangle_{L_{g_{0}}}
= &
o_{\varepsilon}(1).
\end{split}\end{equation}
The proof is thereby complete.
\end{proof}
As before smallness of the first and definiteness of the second variation provide 
an appropriate estimate on the error term  $v$.
\begin{corollary}[A-priori estimate on $v$]\label{cor_a-priori_estimate_on_v_f}$_{}$\\
On $V(\omega, p, \eps)$ for $\eps>0$ small we have 
\begin{equation*}\begin{split}
\Vert v \Vert
= &
O
(
\sum_{r} \frac{\vert \nabla K_{r}\vert}{\lambda_{r}}
+
\frac{1}{\lambda_{r}^{\frac{n-2}{2}}} 
+
\sum_{r\neq s}\eps_{r,s}
+
\vert \delta J(u)\vert).
\end{split}\end{equation*}
\end{corollary}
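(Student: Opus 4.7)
The plan is to imitate the proof of corollary \ref{cor_a-priori_estimate_on_v} from the case $\omega=0$, but to replace the positive definiteness of $\partial^{2}J(\alpha^{i}\varphi_{i})\lfloor_{H}$ by the indefinite splitting $H=H_{+}\oplus_{L_{g_{0}}}H_{-}$ provided by proposition \ref{prop_decomposing the second variation_f}. Since $\dim H_{-}<\infty$ and the second variation is bounded below by $\gamma$ on $H_{+}$ and above by $-\gamma$ on $H_{-}$, the natural test direction is not $v$ itself but $w:=v_{+}-v_{-}\in H$, where $v=v_{+}+v_{-}$ with $v_{\pm}\in H_{\pm}$; by $L_{g_{0}}$-orthogonality of this splitting we have $\Vert v\Vert^{2}=\Vert v_{+}\Vert^{2}+\Vert v_{-}\Vert^{2}$ and hence $\Vert w\Vert=\Vert v\Vert$.

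The key computation is as follows. By uniform H\"older continuity of $\partial^{2}J$ on $V(\omega,p,\varepsilon)$ (proposition \ref{prop_derivatives_of_J} and the remarks thereafter), a Taylor expansion of $\partial J$ around $u_{\alpha,\beta}+\alpha^{i}\varphi_{i}$ in the direction $v$ gives
\begin{equation*}
\partial J(u)w
=
\partial J(u_{\alpha,\beta}+\alpha^{i}\varphi_{i})w
+
\partial^{2}J(u_{\alpha,\beta}+\alpha^{i}\varphi_{i})(v,w)
+
o(\Vert v\Vert^{2}),
\end{equation*}
and by symmetry of $\partial^{2}J$ the cross terms in $\partial^{2}J(v_{+}+v_{-},v_{+}-v_{-})$ cancel exactly, leaving $\partial^{2}J(v_{+},v_{+})-\partial^{2}J(v_{-},v_{-})$. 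By proposition \ref{prop_decomposing the second variation_f}(i),(ii) this last expression is at least $\gamma(\Vert v_{+}\Vert^{2}+\Vert v_{-}\Vert^{2})=\gamma\Vert v\Vert^{2}$. Note that assertion (iii) of that proposition is not even needed here, precisely because of this symmetry cancellation.

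It then remains to control the two terms $\partial J(u)w$ and $\partial J(u_{\alpha,\beta}+\alpha^{i}\varphi_{i})w$. The former satisfies $\vert\partial J(u)w\vert\leq C\vert\delta J(u)\vert\Vert v\Vert$ via $\Vert\partial J(u)\Vert\leq C\vert\delta J(u)\vert$, while for the latter, since $w\in H$, proposition \ref{prop_derivatives_on_H_f}(i) provides
\begin{equation*}
\vert\partial J(u_{\alpha,\beta}+\alpha^{i}\varphi_{i})w\vert
\leq
\Big(o_{\varepsilon}(\Vert v\Vert)
+
O\big(\sum_{r}\tfrac{\vert\nabla K_{r}\vert}{\lambda_{r}}
+
\tfrac{1}{\lambda_{r}^{\frac{n-2}{2}}}
+
\sum_{r\neq s}\eps_{r,s}
+
\vert\delta J(u)\vert\big)\Big)\Vert v\Vert.
\end{equation*}
Absorbing the $o_{\varepsilon}(\Vert v\Vert^{2})$ contribution into $\gamma\Vert v\Vert^{2}$ and dividing by $\Vert v\Vert$ yields the claimed bound. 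The only subtle point is the symmetry cancellation of the $\partial^{2}J(v_{+},v_{-})$ terms together with verification that the Taylor remainder is of order $o(\Vert v\Vert^{2})$, which follows from $J\in C^{2,\alpha}_{loc}$ on $V(\omega,p,\varepsilon)$; beyond this the argument is completely parallel to the case $\omega=0$.
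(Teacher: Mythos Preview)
Your proof is correct and essentially the same as the paper's, which also decomposes $v=v_{+}+v_{-}$ via proposition~\ref{prop_decomposing the second variation_f}, Taylor-expands $\partial J$ around $u_{\alpha,\beta}+\alpha^{i}\varphi_{i}$, and invokes proposition~\ref{prop_derivatives_on_H_f}(i). The only difference is cosmetic: the paper tests separately with $v_{+}$ and $v_{-}$ and subtracts (using part (iii) to control the cross term $\partial^{2}J(v_{+},v_{-})=o_{\varepsilon}(\Vert v_{+}\Vert\Vert v_{-}\Vert)$), whereas your single test with $w=v_{+}-v_{-}$ makes those cross terms cancel exactly by the symmetry of $\partial^{2}J$, so (iii) is not even needed---a minor but tidy streamlining.
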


\begin{proof}[\textbf{Proof of corollary \ref{cor_a-priori_estimate_on_v_f}}]\label{p_a_priori_estimates_on_v_f}$_{}$\\
Note, that $\partial^{2}J$ is uniformly H\"older continuous on $V(\omega,p, \varepsilon)$ according to proposition \ref{prop_derivatives_of_J} and the remarks following.
Decomposing
$
v=v_{+}+v_{-}\in H_{+}\oplus H_{-}
$
according to proposition \ref{prop_decomposing the second variation_f} we readily have
\begin{enumerate}[label=(\roman*)]
 \Item 
\begin{equation}\begin{split}
\partial J(u)v_{+}
\geq  &
\partial J (u_{\alpha, \beta}+\alpha^{i}\varphi_{i})v_{+} 
+
\gamma \Vert v_{+}\Vert ^{2}
+
o_{\varepsilon}(\Vert v_{+}\Vert\Vert v_{-}\Vert)
\end{split}\end{equation}
 \Item
\begin{equation}\begin{split}
\partial J(u)v_{-}
\leq  &
\partial J(u_{\alpha, \beta}+\alpha^{i}\varphi_{i})v_{-} 
-
\gamma \Vert v_{-}\Vert ^{2}
+
o_{\varepsilon}(\Vert v_{+}\Vert\Vert v_{-}\Vert).
\end{split}\end{equation}
\end{enumerate}
This gives
$
\Vert v \Vert^{2}
=
O(
\vert \delta J(u)\vert^{2}
+
\vert \delta J(u_{\alpha, \beta} +\alpha^{i}\varphi_{i})\lfloor_{H}\vert^{2}
)
$
and the claim follows from proposition \ref{prop_derivatives_on_H_f}
\end{proof}

Next we combine lemma \ref{lem_the_shadow_flow_w} and corollaries \ref{cor_simplifying_ski_f}, \ref{cor_a-priori_estimate_on_v}. 

\begin{corollary}[The simplified shadow flow] \label{cor_simplifying_the_shadow_flow_w}$_{}$\\
For $u\in V(\omega, p, \eps)$ with $\eps>0$ we have 
\begin{enumerate}[label=(\roman*)]
\Item
\begin{equation*}\begin{split}
-\frac{\dot \lambda_{i}}{\lambda_{i}}
= &
\frac{r}{k}
[
\frac{d_{2}}{c_{2}}
\frac{\alpha \omega_{i}}{\alpha_{i}K_{i}\lambda_{i} ^{\frac{n-2}{2}}}
-
\frac{b_{2}}{c_{2}}\sum_{i\neq j =1}^{p}\frac{\alpha_{j}}{\alpha_{i}}
\lambda_{i}\partial_{\lambda_{i}}\eps_{i,j}
]
(1+o_{\frac{1}{\lambda_{i}}}(1))
+
R_{2,i}
\end{split}\end{equation*}
\Item
\begin{equation*}\begin{split}
\lambda_{i}\dot a_{i}
= &
\frac{r}{k}
[
\frac{d_{3}}{c_{3}}\frac{\nabla K_{i}}{K_{i}\lambda_{i}}
+
\frac{b_{3}}{c_{3}}
\sum_{i\neq j =1}^{p}\frac{\alpha_{j}}{\alpha_{i}}
\frac{1}{\lambda_{i}}\nabla_{a_{i}}\eps_{i,j}
]
(1+o_{\frac{1}{\lambda_{i}}}(1))
+
R_{3,i},
\end{split}\end{equation*}
\end{enumerate}
where
\begin{equation*}\begin{split}
R_{k,i}
= 
o_{\varepsilon}(\frac{1}{\lambda_{i}^{\frac{n-2}{2}}}+\sum_{i\neq j=1}^{p}\eps_{i,j})
+
O
(
\sum_{r}\frac{\vert \nabla K_{r}\vert^{2}}{\lambda^{2}}
+
\frac{1}{\lambda_{r}^{n-2}}
+
\sum_{r\neq s}\eps_{r,s}^{2}
+
\vert \delta J(u)\vert^{2}
).
\end{split}\end{equation*}
\end{corollary}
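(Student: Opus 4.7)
The plan is to combine three ingredients already established: the implicit shadow flow from Lemma \ref{lem_the_shadow_flow_w}, the simplified expressions for $\sigma_{2,i}, \sigma_{3,i}$ in Corollary \ref{cor_simplifying_ski_f}, and the a priori estimate on the error term from Corollary \ref{cor_a-priori_estimate_on_v_f}. This is the direct analogue of the argument used to derive Corollary \ref{cor_simplifying_the_shadow_flow} in the case $\omega=0$, adapted to the presence of $u_{\alpha,\beta}$.

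First I would substitute the expressions for $\sigma_{k,i}$ from Corollary \ref{cor_simplifying_ski_f} into parts (ii) and (iii) of Lemma \ref{lem_the_shadow_flow_w}. The prefactor computation is $\alpha_i^{(n+2)/(2-n)} \cdot \alpha_i^{4/(n-2)} = \alpha_i^{-1}$, so that the leading term $d_2 \tfrac{r\alpha_i^{4/(n-2)}}{k}\tfrac{\alpha\omega_i}{\lambda_i^{(n-2)/2}}$ in $\sigma_{2,i}$ produces $\tfrac{d_2}{c_2}\tfrac{r}{k}\tfrac{\alpha \omega_i}{\alpha_i K_i \lambda_i^{(n-2)/2}}$, and the interaction term carrying the factor $\tfrac{r\alpha_i^{4/(n-2)}K_i}{k}$ collapses to $\tfrac{b_2}{c_2}\tfrac{r}{k}\sum \tfrac{\alpha_j}{\alpha_i}\lambda_i\partial_{\lambda_i}\eps_{i,j}$. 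The analogous computation for $\sigma_{3,i}$ produces the two terms displayed in part (ii) of the claimed corollary.

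Next I would collect the error terms. The additive residual $R_{k,i}$ in Lemma \ref{lem_the_shadow_flow_w} already contains $\sum_r \lambda_r^{2-n}$, $\sum_{r\neq s}\eps_{r,s}^2$, $\vert \delta J(u)\vert^2$ and $\Vert v \Vert^2$, while the residual inherited from Corollary \ref{cor_simplifying_ski_f}, after multiplication by the bounded factor $\alpha_i^{(n+2)/(2-n)}/(c_k K_i)$, contributes an extra $o_\varepsilon(\lambda_i^{-(n-2)/2} + \sum_{i\neq j}\eps_{i,j})$. To match the advertised form of $R_{k,i}$ the single quantity that has to be processed further is $\Vert v\Vert^2$; inserting the estimate of Corollary \ref{cor_a-priori_estimate_on_v_f} and squaring yields
\begin{equation*}
\Vert v \Vert^2 = O\bigl(\sum_r \tfrac{\vert \nabla K_r\vert^2}{\lambda_r^2} + \tfrac{1}{\lambda_r^{n-2}} + \sum_{r\neq s}\eps_{r,s}^2 + \vert \delta J(u)\vert^2\bigr),
\end{equation*}
where cross terms such as $\vert \nabla K_r\vert\lambda_r^{-1}\eps_{r,s}$ are dominated by the squares via AM--GM.

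There is no real obstacle here, only bookkeeping. The one point to watch is that the $o_{1/\lambda_i}(1)$ multiplier on the right-hand side of Lemma \ref{lem_the_shadow_flow_w}, when expanded against the leading terms in $\sigma_{k,i}$, generates contributions that are absorbed into the $o_\varepsilon$ part of $R_{k,i}$, and the coefficient adjustment from $4n(n-1)$ using \eqref{rai4/n-2_f} only introduces perturbations already controlled by the same residual. Putting everything together gives exactly the two evolution equations and the error term stated.
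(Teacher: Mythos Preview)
Your proposal is correct and follows exactly the paper's approach: the paper's proof is the single line ``This follows from lemma \ref{lem_the_shadow_flow_w} and corollaries \ref{cor_simplifying_ski_f}, \ref{cor_a-priori_estimate_on_v_f},'' and you have simply spelled out the bookkeeping behind that sentence. Your prefactor check $\alpha_i^{(n+2)/(2-n)}\cdot\alpha_i^{4/(n-2)}=\alpha_i^{-1}$ and the absorption of $\Vert v\Vert^{2}$ via Corollary \ref{cor_a-priori_estimate_on_v_f} are exactly what is needed.
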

\begin{proof}[\textbf{Proof of proposition \ref{cor_simplifying_the_shadow_flow_w}}]$_{}$\\
This follows from  lemma \ref{lem_the_shadow_flow_w} and corollaries \ref{cor_simplifying_ski_f}, 
\ref{cor_a-priori_estimate_on_v_f}.
\end{proof}

\section{The flow on V(\textomega, p, \textepsilon)}
\label{sec:FlowOnVwpe}
\subsection{Principal behaviour}
\label{subsec:PrincipalBehaviour}
For $u\in V(\omega, p, \eps)$ corollaries \ref{cor_simplifying_ski} and \ref{cor_simplifying_ski_f} give a hint on the principal terms
of $\partial J(u)$. The following definition assumes these quantities
to give a lower bound on the first variation of $J$.
\begin{definition}[Principal lower bound of the first variation]\label{def_principally_lower_bounded_of_the_first_variation}$_{}$\\ 
We call $\partial J$ principally lower bounded, 

if for every $p\geq 1$ there exist $c, \eps>0$ such, that
\begin{equation*}\begin{split}
\vert \delta J(u)\vert 
\geq
c
(
\sum_{r}
\frac{\vert \nabla K_{r}\vert}{K_{r}\lambda_{r}}
+
\frac{\vert \lap K_{r}\vert }{K_{r}\lambda_{r}^{2}}
+
\frac{1}{\lambda_{r}^{n-2}}
+
\sum_{r\neq s}\eps_{r,s}
)
\; \text{ for all }\; u\in V(p, \eps).
\end{split}\end{equation*}

and
\begin{equation*}\begin{split}
\vert \delta J(u)\vert \geq
c
(
\sum_{r}
\frac{\vert \nabla K_{r}\vert}{K_{r}\lambda_{r}}
+
\frac{1}{\lambda_{r}^{\frac{n-2}{2}}}
+
\sum_{r\neq s}\eps_{r,s}
)
\; \text{ for all }\; u\in V(\omega, p, \eps).
\end{split}\end{equation*}
\end{definition}

Under this mild assumption  we have uniformity in $V(\omega, p, \eps)$ as follows.

 \begin{proposition}[Uniformity in $V(\omega, p, \eps)$]\label{prop_uniformity_in_V(omega,p,e)}$_{}$
 
Assume $\partial J$ to be principally lower bounded.\\
For $u=u_{\alpha, \beta}+\alpha^{i}\var_{i}+v\in V(\omega, p, \eps)$ with $k_{u}=\int Ku^{\frac{2n}{n-2}}\equiv 1$ we then have
\begin{enumerate}[label=(\roman*)]
 \Item 
\begin{equation*}\begin{split}
\lambda_{i}^{-1}, \eps_{i,j}, \vert 1-\frac{r_{\infty}\alpha_{i}^{\frac{4}{n-2}}K_{i}}{4n(n-1)}\vert
,
\Vert v \Vert
\- 0
\end{split}\end{equation*}
 \Item
\begin{equation*}\begin{split}
\vert(\frac{r}{k})_{u_{1, \beta}}-r_{\infty}\alpha^{\frac{4}{n-2}}\vert, \vert \delta J(u_{1, \beta})\vert
\-
0
\end{split}\end{equation*} 
\end{enumerate}
uniformly as $\vert \delta J(u)\vert \- 0$ and $J(u)=r\- J_{\infty}=r_{\infty}$.
\end{proposition}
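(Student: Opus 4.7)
The strategy is to activate the principal lower bound hypothesis and thread its consequences through the estimates already established in sections \ref{sec:w=0} and \ref{sec:omega>0}. The reasoning is essentially bookkeeping once the pieces are in place.

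First, on $V(\omega,p,\eps)$ the hypothesis of principal lower boundedness immediately gives
\begin{equation*}
\sum_r \frac{\vert \nabla K_r \vert}{K_r \lambda_r} + \frac{1}{\lambda_r^{(n-2)/2}} + \sum_{r \neq s} \eps_{r,s} \;\leq\; C\,\vert \delta J(u)\vert \;\longrightarrow\; 0,
\end{equation*}
so in particular $\lambda_i^{-1}$ and $\eps_{i,j}$ tend to $0$. Corollary \ref{cor_a-priori_estimate_on_v_f} then yields $\Vert v \Vert \longrightarrow 0$. For the coefficients $\alpha_i$, the identity \eqref{rai4/n-2_f} reads
\begin{equation*}
\tfrac{r \alpha_i^{4/(n-2)} K_i}{4 n(n-1) k} = 1 + O(\sum_r \lambda_r^{-(n-2)/2} + \sum_{r\neq s}\eps_{r,s} + \Vert v \Vert^2 + \vert \delta J(u)\vert),
\end{equation*}
and the remainder consists precisely of the quantities just shown to vanish. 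Since $k \equiv 1$ along the flow and $r = J(u) \longrightarrow r_\infty$ by hypothesis, this proves $\bigl|\,1 - \tfrac{r_\infty \alpha_i^{4/(n-2)} K_i}{4 n(n-1)}\,\bigr| \longrightarrow 0$, completing part (i).

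For part (ii), the scaling identity $u_{\alpha,\beta} = \alpha\, u_{1,\beta}$ gives $(r/k)_{u_{1,\beta}} = \alpha^{4/(n-2)} (r/k)_{u_{\alpha,\beta}}$, while \eqref{r/kuab-r/k_strong} compares $(r/k)_{u_{\alpha,\beta}}$ to $(r/k)_u = r$ up to an error of order $\sum_r \lambda_r^{-(n-2)/2} + \sum_{r \neq s} \eps_{r,s} + \Vert v \Vert^2 + \vert \delta J(u) \vert$; all of these vanish by part (i) combined with the hypothesis $r \longrightarrow r_\infty$. This proves the first claim in (ii).  To see $\vert \delta J(u_{1,\beta}) \vert \longrightarrow 0$, observe first that, since $\alpha$ is bounded away from $0$ and $\infty$ on $V(\omega, p, \eps)$ by the defining condition $|1 - r \alpha^{4/(n-2)}/k| < \eps$, it suffices to prove $\vert \delta J(u_{\alpha,\beta})\vert \longrightarrow 0$. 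For this we invoke the pseudo-critical point characterization of $u_{\alpha,\beta}$ from lemma \ref{lem_degeneracy_and_pseudo_critical_points}: the residual
\begin{equation*}
L_{g_0} u_{\alpha,\beta} - (r \K)_{u_{\alpha,\beta}} u_{\alpha,\beta}^{(n+2)/(n-2)}
\end{equation*}
lies in the finite-dimensional span $\langle L_{g_0} \omega, L_{g_0} \mathrm{e}_j\rangle$, so its $W^{-1,2}$-norm is controlled by the scalar projections $\int (\cdots) \omega$ and $\int (\cdots) \mathrm{e}_j$. These are exactly the quantities estimated in \eqref{partial_J_u_alpha,beta_tested}, which bounds them by $O\bigl(\vert (r/k)_{u_{\alpha,\beta}} - (r/k)_u\vert + \sum_r \lambda_r^{-(n-2)/2} \ln^{(n-2)/n} \lambda_r + \Vert v \Vert + \vert \delta J(u) \vert\bigr)$, all of which vanish by part (i) and the first claim of (ii).

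The main technical obstacle is the final claim $\vert \delta J(u_{1,\beta}) \vert \longrightarrow 0$: unlike the remaining quantities, it is not a direct function of the parameters $(\alpha, \beta_k, \alpha_i, \lambda_i, a_i)$, and one must use the pseudo-critical point equation and the Lyapunov-Schmidt reduction to reduce it to scalars that have already been estimated. Uniformity in the stated sense follows because every estimate invoked depends only on $\vert \delta J(u)\vert$, $\vert J(u) - r_\infty\vert$, and the radius $\eps$ of the neighbourhood, not on the individual flow line.
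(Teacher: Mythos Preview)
Your proof is correct and follows essentially the same approach as the paper: invoke the principal lower bound to kill $\lambda_i^{-1}$ and $\eps_{i,j}$, feed this into the a-priori estimate on $v$ and into \eqref{rai4/n-2_f} for part (i), then use \eqref{r/kuab-r/k_strong} and the pseudo-critical equation \eqref{equation_solved_again}--\eqref{partial_J_u_alpha,beta_tested} together with scaling for part (ii). The only cosmetic difference is that the paper also cites the $\omega=0$ analogues (corollary \ref{cor_a-priori_estimate_on_v} and \eqref{rai^{...}/k=...}) alongside the $\omega>0$ references, and treats $\vert\delta J(u_{\alpha,\beta})\vert$ before the $(r/k)$ claim rather than after.
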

In view of (i) above and definition \ref{def_V(omega,p,e)} we would expect to have as well
\begin{equation}
\vert 1-r_{\infty}\alpha^{\frac{4}{n-2}}\vert, \Vert \beta \Vert\- 0
\end{equation}

as $\vert \delta J(u)\vert \- 0$ and $J(u)=r\- J_{\infty}=r_{\infty}$. 
\\
But, since critical points of $J$ are not necessarily isolated, some $u_{\alpha, \beta}$ with 
$0\neq \Vert \beta \Vert <\varepsilon$ could be a critical point of $J$ itself.

\begin{proof}[\textbf{Proof of proposition \ref{prop_uniformity_in_V(omega,p,e)} }]$_{}$\\
Of course $\frac{1}{\lambda_{i}}, \eps_{i,j}\- 0$ as $\vert \delta J(u)\vert \- 0$ by assumption
and the same holds true for $\Vert v \Vert $ due to corollaries \ref{cor_a-priori_estimate_on_v}, \ref{cor_a-priori_estimate_on_v_f}.
Then due to \eqref{rai^{...}/k=...} and \eqref{rai4/n-2_f}
\begin{equation}
\begin{split}
1-\frac{r\alpha_{i}^{\frac{4}{n-2}}K_{i}}{4n(n-1)}\- 0 \; \text{ as }\; \vert \delta J(u)\vert \- 0
\end{split}
\end{equation} 
as well and $(\frac{r}{k})_{u_{\alpha, \beta}}-(\frac{r}{k})_{u}\- 0$ as $\vert \delta J(u)\vert \- 0$ due to \eqref{r/kuab-r/k_strong}. From \eqref{equation_solved_again}  and \eqref{partial_J_u_alpha,beta_tested} we infer $\vert \delta J(u_{\alpha, \beta})\- 0$
as $\vert \delta J(u)\vert \- 0$ and we have $\partial J(u_{\alpha, \beta})=\alpha J(u_{1, \beta})$, since
$u_{\alpha, \beta}=\alpha u_{1, \beta}$ and scaling invariance of $J$. Thereby
\begin{equation}
\begin{split}
(\frac{r}{k})_{u_{\alpha, \beta}}=(\frac{r}{k})_{u_{1, \beta}}\alpha^{-\frac{4}{n-2}},
\end{split}
\end{equation} 
whence due to $(\frac{r}{k})_{u}=r_{u}\- r_{\infty}$ we have $(\frac{r}{k})_{u_{1, \beta}}-r_{\infty}\alpha^{\frac{4}{n-2}}\- 0$.
\end{proof}
As indicated above
$\Vert \beta \Vert \- 0$ is not necessary. On the other hand we may assume due to proposition \ref{prop_concentration_compactness}, that along a flow line
$$u=u_{\alpha, \beta}+\alpha^{i}\var_{i}+v\in V(\omega,p, \varepsilon)$$
we have $\Vert \beta_{t_{k}}\Vert\-0$ for a time sequence $t_{k}\- \infty$.

We then have to show $\vert 1-r_{\infty}\alpha^{\frac{4}{n-2}}\vert, \, \Vert \beta \Vert\-0$ along the full flow line.
\\
For $p=0$ this is true due to  the unicity of a limiting critical point, cf. proposition \ref{prop_unicity_of_a_limiting_critical_point}.
The following proposition yields the same result for $p\geq 1$.

\begin{proposition}[Unicity of a limiting critical point at infinity]\label{prop_unicity_of_a_limiting_critical_point_at_infinity}$_{}$

Assume $\partial J$ to be principally lower bounded.\\
If a sequence $u(t_{k})$ converges to a critical point at infinity of $J$

in the sense, that
\begin{equation*}\begin{split}
\e p>1, \eps_{k}\searrow 0\;:\; u(t_{k})\in V(\omega, p, \eps_{k}),
\end{split}\end{equation*}
then $u$ converges as well

in the sense, that
\begin{equation*}\begin{split}
\e  p>1 \fa \eps>0 \e T>0 \fa t>T\;:\; u(t)\in V(\omega, p, \eps).
\end{split}\end{equation*}
\end{proposition}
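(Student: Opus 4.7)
The plan is to argue by contradiction, following the strategy of proposition \ref{prop_unicity_of_a_limiting_critical_point}. Assume that the conclusion fails. Then there exist $\varepsilon_0>0$ and a sequence $s_k\nearrow\infty$ with $u(s_k)\notin V(\omega,p,\varepsilon_0)$. Using the hypothesis $u(t_k)\in V(\omega,p,\varepsilon_k)$ with $\varepsilon_k\searrow 0$, we may assume $t_k<s_k$ and define the first exit time
\begin{equation*}
\tau_k=\inf\{t>t_k\mid u(t)\notin V(\omega,p,\varepsilon_0/2)\}.
\end{equation*}
By smooth dependence of the representation in proposition \ref{prop_optimal_choice} on $u$, on $[t_k,\tau_k)$ the flow line admits a unique decomposition $u=u_{\alpha,\beta}+\alpha^i\var_i+v$ and, at $t=\tau_k$, at least one of the six defining quantities of $V(\omega,p,\varepsilon_0/2)$ saturates to $\varepsilon_0/2$.

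First I would dispose of the four \emph{bubble-type} parameters $1/\lambda_i$, $\eps_{i,j}$, $\vert 1-r\alpha_i^{4/(n-2)}K(a_i)/(4n(n-1)k)\vert$ and $\Vert v\Vert$. If one of these is saturated at $\tau_k$, I invoke proposition \ref{prop_uniformity_in_V(omega,p,e)}: on $V(\omega,p,\varepsilon_0)$ all four tend to zero uniformly as $\vert\delta J(u)\vert\to 0$ and $r\to r_\infty$. Since $\tau_k\nearrow\infty$, proposition \ref{prop_strong_convergence_of_the_first_variation} gives $\vert\delta J(u(\tau_k))\vert\to 0$ together with the monotone convergence $r(\tau_k)\to r_\infty$, contradicting saturation to the fixed constant $\varepsilon_0/2$.

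The genuine difficulty lies with the two remaining \emph{pseudo-critical} parameters $\Vert\beta\Vert$ and $\vert 1-r\alpha^{4/(n-2)}/k\vert$, which parameterize the kernel and the scaling direction of $\partial^{2}J$ at $\omega$ and are therefore not forced to zero merely by $\vert\delta J(u)\vert\to 0$. Here I would use the shadow flow derivation of lemma \ref{lem_the_shadow_flow_w}: the testing quantities $\sigma,\sigma_n,\sigma_{k,i}$ appearing there are by definition $\partial J(u)$ evaluated against $u_{\alpha,\beta}$, $\partial_{\beta_n}u_{\alpha,\beta}$ and $\phi_{k,i}$, hence each is $O(\vert\delta J(u)\vert)$. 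Inverting the block matrix $A$ constructed in that proof via Neumann series yields
\begin{equation*}
\vert\dot\alpha/\alpha\vert,\,\Vert\dot\beta\Vert\leq C\vert\delta J(u)\vert+O(\vert\delta J(u)\vert^{2})
\end{equation*}
throughout $[t_k,\tau_k]$. Integrating and combining $\Vert\beta(t_k)\Vert<\varepsilon_k$ with $\Vert\beta(\tau_k)\Vert=\varepsilon_0/2$ (and similarly for the $\alpha$-quantity) forces
\begin{equation*}
\int_{t_k}^{\tau_k}\vert\delta J(u)\vert\,dt\;\geq\;c\varepsilon_0.
\end{equation*}

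The main obstacle is to convert this lower bound into a contradiction with the a priori bound $\int_{0}^{\infty}\vert\delta J(u)\vert^{2}\,dt<\infty$, since Cauchy-Schwarz alone only delivers $\tau_k-t_k\to\infty$. To close the argument I would mimic the dyadic decomposition used in the proof of proposition \ref{prop_unicity_of_a_limiting_critical_point}, replacing the analytic Lojasiewicz inequality employed there by a Lojasiewicz-type inequality adapted to the critical at infinity. Combining the energy expansion obtained from proposition \ref{prop_analysing_ski_f}, corollary \ref{cor_a-priori_estimate_on_v_f} and the principally lower bound assumption, one expects an estimate of the form $\vert J(u)-r_\infty\vert\leq C\vert\delta J(u)\vert^{1+\gamma}$ on $V(\omega,p,\varepsilon_0)$ for some $\gamma\in(0,1]$, the two sides being controlled by the same principal quantities. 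Plugging such an inequality into a dyadic subdivision of $[t_k,\tau_k]$, exactly as in equations after \eqref{Lojasiewicz} in proposition \ref{prop_unicity_of_a_limiting_critical_point}, yields $\int_{t_k}^{\tau_k}\vert\delta J(u)\vert\,dt\to 0$, contradicting the lower bound above and completing the proof. The hard part is establishing the Lojasiewicz-type estimate uniformly in the bubble and degeneracy parameters; the principally lower bound hypothesis on $\partial J$ is designed precisely to make such an estimate available.
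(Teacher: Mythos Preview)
Your setup is fine and matches the paper: argue by contradiction, produce exit times $\tau_k$, and use proposition \ref{prop_uniformity_in_V(omega,p,e)} to rule out saturation of the four bubble-type quantities. The real issue is how you handle $\alpha$ and $\beta$, and here your approach diverges from the paper's and contains a gap you yourself flag.

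Your route is to bound $\vert\dot\alpha/\alpha\vert,\Vert\dot\beta\Vert\leq C\vert\delta J(u)\vert$ from the matrix inversion in lemma \ref{lem_the_shadow_flow_w}, deduce $\int_{t_k}^{\tau_k}\vert\delta J(u)\vert\,dt\geq c\varepsilon_0$, and then close with a Lojasiewicz inequality at infinity $\vert J(u)-r_\infty\vert\leq C\vert\delta J(u)\vert^{1+\gamma}$. You explicitly call this last step ``the hard part'' and do not prove it; as stated it is a genuine gap. Such an inequality is not obvious: the functional is not analytic in the blow-up parameters, the degenerate directions $\beta$ are precisely where a classical Lojasiewicz argument loses strength, and nothing in sections \ref{sec:w=0}--\ref{sec:omega>0} supplies it.

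The paper bypasses Lojasiewicz entirely by an energy--constraint argument. One first expands $J(u)=\alpha^{2}(c_{\omega}+\Vert\beta\Vert^{2}+o(\Vert\beta\Vert^{2}))+c_{0}r_{\infty}^{(2-n)/2}\sum_{i}K_{i}^{(2-n)/2}+o(1)$ along the flow, and then uses part (ii) of proposition \ref{prop_uniformity_in_V(omega,p,e)} to compute $(r/k)_{u_{1,\beta}}$ explicitly, which pins $\alpha^{2}$ as a function of $\Vert\beta\Vert^{2}$ up to $o(1)$. Substituting back and using $J(u)\to r_{\infty}$ yields an algebraic identity in which any macroscopic growth of $\Vert\beta\Vert^{2}$ must be compensated by a macroscopic growth of $\sum_{i}K_{i}^{(2-n)/2}$. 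But the shadow-flow corollaries \ref{cor_simplifying_the_shadow_flow}, \ref{cor_simplifying_the_shadow_flow_w} together with the principal lower bound give $\partial_{t}K_{i}^{(2-n)/2}\leq O(\vert\delta J(u)\vert^{2})$, and $\vert\delta J(u)\vert^{2}$ \emph{is} integrable in time. Hence $\sum_{i}K_{i}^{(2-n)/2}$ can only increase by a vanishingly small amount, contradicting the required macroscopic growth. The point is that the paper trades the $L^{1}$ control you would need on $\vert\delta J(u)\vert$ for the already-available $L^{1}$ control on $\vert\delta J(u)\vert^{2}$, by passing through the intermediate quantity $K_{i}$ rather than bounding $\dot\beta$ directly.
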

\begin{proof}[\textbf{Proof of proposition \ref{prop_unicity_of_a_limiting_critical_point_at_infinity}}]$_{}$\\
Since 
\begin{equation}
k\equiv 1,J(u)=r\searrow r_{\infty}\;\text{ and }\;\partial J(u)\- 0
\end{equation} 
along a flow line we have on $V(\omega,p, \varepsilon)$ according to proposition \ref{prop_uniformity_in_V(omega,p,e)} 
\begin{equation}\label{Ju_expansion_along_a_flow_line}
\begin{split}
J(u)
= &
\int L_{g_{0}}uu 
= 
\int L_{g_{0}}u_{\alpha, \beta}u_{\alpha, \beta}
+
\sum_{i}\alpha_{i}^{2}\int L_{g_{0}}\varphi_{i}\varphi_{i}
+
o(1) \\
= &
\alpha^{2}(c_{\omega}+\Vert \beta \Vert^{2}+o(\Vert \beta \Vert^{2}))
+
c_{0}r_{\infty}^{\frac{2-n}{2}}\sum_{i}K_{i}^{\frac{2-n}{2}}
+
o(1),
\end{split}
\end{equation} 
where $c_{\omega }=\int L_{g_{0}}\omega \omega $. On the other hand 
\begin{equation}
\begin{split}
(\frac{r}{k})_{u_{1, \beta}}
= &
\frac{\int L_{g_{0}}u_{1, \beta}u_{1, \beta}}{\int Ku_{1, \beta}^{\frac{2n}{n-2}}} \\
= &
\frac
{\int L_{g_{0}}\omega \omega +L_{g_{0}}\beta^{i}\mathrm{e}_{i}\beta^{j}\mathrm{e}_{j}+o(\Vert \beta \Vert^{2})}
{\int K\omega  ^{\frac{2n}{n-2}}+\frac{2n}{n-2}\frac{n+2}{n-2}K\omega  ^{\frac{4}{n-2}}\beta^{i}\mathrm{e}_{i}\beta^{j}\mathrm{e}_{j}+o(\Vert \beta \Vert^{2})} \\
= &
\frac
{c_{\omega}+\Vert \beta \Vert^{2}}
{c_{\omega}+\frac{2n}{n-2}\Vert \beta \Vert^{2}}
+
o(\Vert \beta \Vert^{2})
=
1-\frac{n+2}{n-2}\frac{\Vert \beta \Vert^{2}}{c_{\omega}}+o(\Vert \beta\Vert^{2})
\end{split}
\end{equation} 
whence still according to proposition \ref{prop_uniformity_in_V(omega,p,e)}
\begin{equation}
\begin{split}
\alpha^{-\frac{4}{n-2}}(1-\frac{n+2}{n-2}\frac{\Vert \beta\Vert^{2}}{c_{\omega}}+o(\Vert \beta \Vert^{2}))
=
r_{\infty}+o(1).
\end{split}
\end{equation} 
In particular $\alpha$ is fixed in terms of $\Vert \beta \Vert^{2}$ by
\begin{equation}\label{alpha_in_terms_of_beta}
\begin{split}
\alpha^{2}
=
(\frac{c_{\omega}-\frac{n+2}{n-2}\Vert \beta \Vert ^{2}+o(\Vert \beta \Vert^{2})}{c_{\omega}r_{\infty}})^{\frac{n-2}{2}}.
\end{split}
\end{equation} 
Plugging this into \eqref{Ju_expansion_along_a_flow_line} we obtain, since $J(u)=r_{\infty}+o(1)$
\begin{equation}
\begin{split}
c_{\omega}^{\frac{n-2}{2}}r_{\infty}^{\frac{n}{2}}
= &
(c_{\omega}-\frac{n+2}{n-2}\Vert \beta\Vert^{2} )^{\frac{n-2}{2}}(c_{\omega}+\Vert \beta \Vert^{2})
\\ & +
c_{0}c_{\omega}^{\frac{n-2}{2}}\sum_{i}K_{i}^{\frac{2-n}{2}}
+
o(1)+o(\Vert \beta \Vert^{2}) \\
= &
c_{\omega}^{\frac{n}{2}}-\frac{n}{2}c_{\omega}^{\frac{n-2}{2}}(1+o(1))\Vert \beta\Vert^{2} 
+
c_{0}c_{\omega}^{\frac{n-2}{2}}\sum_{i}K_{i}^{\frac{2-n}{2}}
+
o(1).
\end{split}
\end{equation}
Thus, if
$\Vert \beta \Vert^{2}$ increases significantly, then $\sum_{i}K_{i}^{\frac{2-n}{2}}$ has to increase significantly as well. But 
\begin{equation}
\begin{split}
\partial_{t}K_{i}^{\frac{2-n}{2}}
= &
\frac{2-n}{2}K_{i}^{-\frac{n}{2}}\frac{\nabla K_{i}}{\lambda_{i}}\lambda_{i}\dot a_{i} 
\\
\leq &
-c \frac{\vert \nabla K_{i}\vert^{2}}{\lambda_{i}^{2}}
+
O(\sum_{i}\frac{1}{\lambda_{i}^{2(2-n)}}+\sum_{r\neq s}\eps_{r,s}^{2}+\vert \delta J(u)\vert^{2})
\end{split}
\end{equation} 
due to corollaries \ref{cor_simplifying_the_shadow_flow}, \ref{cor_simplifying_the_shadow_flow_w}, whence 
\begin{equation}
\partial_{t}K_{i}^{\frac{2-n}{2}}
\leq 
O(\vert \delta J(u)\vert^{2})
\end{equation} 
due to definition \ref{def_principally_lower_bounded_of_the_first_variation}.
If the proposition were false, there would exist
\begin{equation*}
s_{0}<s_{0}'<s_{1}<s_{1}'<\ldots <s_{n}<s_{n}'<\ldots
\end{equation*} 
such, that $u\lfloor _{[s_{k},s_{k}']}\in V(\omega, p, \eps_{0})$ and 
\begin{equation}
\begin{split}
u(s_{k})\in V(\omega, p, \eps_{k}), \eps_{k}\- 0, \, u(s_{k}') \in \partial V(\omega, p, \eps_{0}).
\end{split}
\end{equation} 
However due to proposition \ref{prop_uniformity_in_V(omega,p,e)} we may assume
\begin{equation}
\begin{split}
\frac{1}{\lambda_{i}}, \eps_{i,j},1-\frac{r_{\infty}\alpha_{i}^{\frac{4}{n-2}}K_{i}}{4n(n-1)}, \Vert v \Vert
\leq \eps_{k}
\; \text{ during }\; (s_{k},s_{k}').
\end{split}
\end{equation} 
Thus by the very definition \ref{def_V(omega,p,e)} of $V(\omega, p, \eps)$ the only possibility for $u$ to escape from $V(\omega,p, \varepsilon_{0})$ during $(s_{k},s_{k}')$ is, that $\vert 1-r_{\infty}\alpha^{\frac{4}{n-2}}\vert$ or $\Vert \beta \Vert$ has 
to increase during $(s_{k},s_{k}')$ for at least a quantity $\eps_{0}-\eps_{k}$.
This possibility has already been ruled out for $\Vert \beta\Vert $ and is thus as well for 
$\vert 1-r_{\infty}\alpha^{\frac{4}{n-2}}\vert $
by \eqref{alpha_in_terms_of_beta}.
\end{proof}
The only lack in the discussion so far is a missing compactness result on the blow up points.
A straight forward use of the evolution equations given by corollaries 
\ref{cor_simplifying_the_shadow_flow} and \ref{cor_simplifying_the_shadow_flow_w} 
provides at least a weak form of convergence.

\begin{lemma}[Critical points of $K$ as attractors]\label{lem_critical_points_of_K_as_attractors}$_{}$

Suppose $\partial J$ to be principally lower bounded. \\
We then have
\begin{equation*}\begin{split}
K(a_{i})\-K_{i_{\infty}}\; \text{ and }\; \vert \nabla K(a_{i})\vert\-0\;\text{ for all }\; i=1, \ldots,p
\end{split}\end{equation*}
for every flow line $u\in V(\omega, p, \eps)$ converging to a critical point at infinity.
\end{lemma}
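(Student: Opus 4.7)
The plan is to differentiate $K(a_i(t))$ along the flow using the shadow-flow equations from Corollaries~\ref{cor_simplifying_the_shadow_flow}(ii) and \ref{cor_simplifying_the_shadow_flow_w}(ii). Both give
\begin{equation*}
\partial_t K(a_i) = \langle\nabla K_i, \dot a_i\rangle = \frac{r\,d_3}{k\,c_3}\,\frac{|\nabla K_i|^2}{K_i\lambda_i^2}(1+o(1)) + \Xi_i,
\end{equation*}
where $\Xi_i$ packages the interaction term $\lambda_i^{-2}\sum_{j\neq i}(\alpha_j/\alpha_i)\langle\nabla K_i, \nabla_{a_i}\eps_{i,j}\rangle$ and the remainder $\lambda_i^{-1}\langle\nabla K_i, R_{3,i}\rangle$. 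By Lemma~\ref{lem_interactions}(vii), $\nabla_{a_i}\eps_{i,j} = O(\lambda_i\eps_{i,j})$, and Young's inequality then bounds $|\Xi_i| \leq o(1)|\nabla K_i|^2/\lambda_i^2 + O\bigl(\sum_{r\neq s}\eps_{r,s}^2 + \sum_r\lambda_r^{-2(n-2)} + |\delta J(u)|^2\bigr)$. Since $(\sum_r x_r)^2 \geq \sum_r x_r^2$ for nonnegative $x_r$, the principal lower boundedness of $\partial J$ turns each of these summands into a pointwise multiple of $|\delta J(u)|^2$, and the global integrability $\int_0^\infty|\delta J(u)|^2\,dt < \infty$ from Section~\ref{sec:LongtimeExistenceAndStrongConvergence} then puts $\partial_t K(a_i)$ in $L^1([0,\infty))$. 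As $0 < K \in C^\infty(M)$ is bounded, this immediately gives $K(a_i(t)) \to K_{i_\infty}$.

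For $|\nabla K(a_i(t))|\to 0$, I first observe that the same Young / principal-lower-bound treatment applied to $|\dot a_i|$ itself, combined with Cauchy--Schwarz between $|\nabla K_i|/\lambda_i^2$ and its integrable square $|\nabla K_i|^2/\lambda_i^2$, yields $\int_0^\infty|\dot a_i|\,dt < \infty$. Hence $a_i(t) \to a_{i_\infty}\in M$ and by continuity $\nabla K(a_i) \to \nabla K(a_{i_\infty})$, with $K(a_{i_\infty}) = K_{i_\infty}$. I then exclude $|\nabla K(a_{i_\infty})| > 0$ by contradiction: were $|\nabla K(a_{i_\infty})| = \eta > 0$, the drift would be $\dot a_i = \frac{rd_3}{kc_3K_i\lambda_i^2}\nabla K(a_i) + o(\lambda_i^{-2})$, steering $a_i$ asymptotically in the fixed direction $\nabla K(a_{i_\infty})$. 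Since $K$ strictly increases along the ray emanating from $a_{i_\infty}$ in this direction, points $a'$ arbitrarily close to $a_{i_\infty}$ with $K(a') > K_{i_\infty}$ would be approached by $a_i(t)$, forcing $K(a_i(t)) > K_{i_\infty}$ for $t$ large. This contradicts the monotone-up-to-integrable-noise convergence $K(a_i) \to K_{i_\infty}$ already established.

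I expect the main obstacle to be the rigorous implementation of the final contradiction, in particular verifying that the asymptotic direction of the drift $\dot a_i$ truly dominates the signed noise $\Xi_i$ uniformly for large $t$, and that the bubble-bubble interactions $\eps_{i,j}^2$ remain quadratically small relative to the principal positive term $c\eta^2/\lambda_i^2$ on the relevant time intervals. This is essentially the mechanism by which bubbles in the scalar curvature flow track critical points of $K$, and the argument parallels analogous reasoning in the Yamabe-flow setting of \cite{BrendleArbitraryEnergies}, here adapted to the non-constant prescribed function via the principal lower bound hypothesis.
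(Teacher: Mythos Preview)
Your first paragraph (convergence of $K(a_i)$) is correct and essentially coincides with the paper's opening move. The second paragraph, however, contains two genuine gaps.

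\textbf{Gap 1: the claim $\int_0^\infty|\dot a_i|\,dt<\infty$ is not justified.} The leading contribution to $|\dot a_i|$ is $c\,|\nabla K_i|/\lambda_i^{2}$, and the Cauchy--Schwarz bound you invoke reads $\int |\nabla K_i|/\lambda_i^2 \le (\int |\nabla K_i|^2/\lambda_i^2)^{1/2}(\int 1/\lambda_i^2)^{1/2}$. The first factor is finite by principal lower boundedness, but the second requires $\int_0^\infty \lambda_i^{-2}\,dt<\infty$, which is \emph{not} available for $n=4,5$: principal lower boundedness only gives $\lambda_i^{-(n-2)}\le C|\delta J|$, hence $\int\lambda_i^{-2(n-2)}<\infty$, and there is no a priori rate on $\lambda_i\to\infty$ that would upgrade this to $\int\lambda_i^{-2}<\infty$.

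\textbf{Gap 2: the contradiction is spurious.} Even granting $a_i\to a_{i_\infty}$ with $\nabla K(a_{i_\infty})\neq 0$, the drift $\dot a_i\approx c\lambda_i^{-2}\nabla K(a_{i_\infty})$ is perfectly compatible with $a_i(t)$ approaching $a_{i_\infty}$ along the half–line $a_{i_\infty}-s\,\nabla K(a_{i_\infty})$, $s\searrow 0$. In that scenario $K(a_i(t))<K_{i_\infty}$ for all large $t$, $K(a_i)\nearrow K_{i_\infty}$, and nothing contradicts the monotone-up-to-noise structure; indeed that structure forces $K(a_i)$ to approach its limit \emph{from below}, not above.

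The paper avoids both issues by never attempting to prove convergence of $a_i$. It first shows directly that $\partial_t|\nabla K_i|^2 = 2\lambda_i^{-1}\nabla^2K_i(\nabla K_i,\lambda_i\dot a_i)=O(|\delta J|^2)$, so each $|\nabla K_i|\to c_{i_\infty}$. Assuming $Q=\{i:c_{i_\infty}>0\}\neq\emptyset$, it then builds a Lyapunov function $\psi=\sum_{i\in Q}\frac{C^i}{K_i}\ln\frac{1}{\lambda_i}$ (with an ordering on $\frac{1}{K_i}\ln\frac{1}{\lambda_i}$ and $C\gg1$). Differentiating $\psi$ produces, for each $i\in Q$, a term $\gamma\,\frac{|\nabla K_i|^2}{K_i^2}\frac{\ln\lambda_i}{\lambda_i^2}$ from the $K_i^{-1}$ factor; since $|\nabla K_i|\ge c_{i_\infty}/2>0$ and $\ln\lambda_i\to\infty$, this dominates every other self-interaction contribution, while the mutual-interaction terms $-\sum C^i(\alpha_j/\alpha_i)\lambda_i\partial_{\lambda_i}\eps_{i,j}$ are made nonnegative by the ordering trick. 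Hence $\psi'\ge O(|\delta J|^2)$ is integrable, so $\psi$ is bounded below --- contradicting $\psi\to-\infty$ (since $\lambda_i\to\infty$). This argument uses only the $L^2$-integrability of $|\delta J|$ and the qualitative divergence $\lambda_i\to\infty$, sidestepping any rate estimate.
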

\begin{proof}[\textbf{Proof of lemma \ref{lem_critical_points_of_K_as_attractors}}]$_{}$\\
In case $\partial J$ is principally lower bounded
lemmata \ref{cor_simplifying_the_shadow_flow} and \ref{cor_simplifying_the_shadow_flow_w} show
\begin{equation}\begin{split}
\partial_{t}\sum_{i}K_{i}
=
\sum_{i}\frac{\nabla K_{i}}{\lambda_{i}}\lambda \dot a_{i}=O(\vert \delta J(u)\vert^{2})
\end{split}\end{equation}
 As a consequence
\begin{equation}
\begin{split}
K_{i}=K(a_{i})\-K_{i_{\infty}}\; \text{ for all }\; i=1, \ldots,p.
\end{split}
\end{equation} 
Then still according to lemmata \ref{cor_simplifying_the_shadow_flow} and \ref{cor_simplifying_the_shadow_flow_w} we observe
\begin{equation*}\begin{split}
\partial_{t}\vert \nabla K_{i}\vert^{2}
=
2\frac{\nabla^{2}K_{i}(\nabla K_{i}, \lambda_{i}\dot a_{i})}{\lambda_{i}}
=
O( \vert \delta J(u)\vert^{2}),
\end{split}\end{equation*}
whence $\vert \nabla K_{i}\vert \- c_{i_{\infty}}$. 
Letting
\begin{equation}
P=\{1, \ldots,p\}, Q=\{i\in P\mid c_{i_{\infty}}\neq 0\}\; \text{ and }\; q=\sharp\{Q\}
\end{equation} 
we may assume without loss of generality, that 
\begin{equation}
Q=\{1, \ldots,q\}\;\text{ and }\;\min_{i\in Q,j\in P\setminus q}{d(a_{i},a_{j})}>\epsilon_{0}>0
\end{equation} 
We then reorder, if necessary, the elements of $q$ by
\begin{equation}
\begin{split}
\frac{1}{K_{1}}\ln \frac{1}{\lambda_{1}}\geq \ldots \geq  \frac{1}{K_{q}}\ln\frac{1}{\lambda_{q}} 
.
\end{split}
\end{equation} 
In case $u\in V(p, \eps)$ we consider 
$\psi 
=
\sum_{i=1}^{q}\frac{C^{i}}{K_{i}}\ln\frac{1}{\lambda_{i}}.
$
Then corollary \ref{cor_simplifying_the_shadow_flow} gives
\begin{equation}
\begin{split}
\psi'
= &
\sum_{i=1}^{q}\frac{C^{i}}{K_{i}}
[
\frac{\ln\lambda_{i}}{ \lambda_{i}}\frac{\nabla K_{i}}{K_{i}}\lambda_{i}\dot a_{i}
-
\frac{\dot \lambda_{i}}{\lambda_{i}}
] \\
\geq &
c\sum_{i=1}^{q}\frac{C^{i}}{K_{i}}
[
\gamma_{1}\frac{\vert \nabla K_{i}\vert^{2}}{K_{i}^{2}}\frac{\ln \lambda_{i}}{\lambda_{i}^{2}}
+
\gamma_{2}\frac{H_{i}}{\lambda_{i}^{n-2}}
+
\gamma_{3}\frac{\lap K_{i}}{\lambda_{i}^{2}} \\
& \quad\quad\quad\quad\quad\quad\quad\quad\quad\quad -
\gamma_{4}
\sum_{i\neq j=1}^{p}\frac{\alpha_{j}}{\alpha_{i}}\lambda_{i}\partial_{\lambda_{i}}\eps_{i,j}
] (1+o_{\frac{1}{\lambda_{i}}}(1))\\
& +
o_{\varepsilon}(\sum_{i=1}^{q}\frac{1}{\lambda_{i}^{n-2}}+\sum_{i=1}^{q}\sum_{i\neq j=1}^{p}\eps_{i,j})
+
O(\vert \delta J(u)\vert)^{2},
\end{split}
\end{equation} 
where we made use of the principal lower boundedness of $\partial J$. 
We obtain
\begin{equation}
\begin{split}
\psi'
\geq &
-c(1+o_{\frac{1}{\lambda_{i}}}(1))\sum_{\underset{i\in Q}{i\neq j}}\frac{C^{i}}{K_{i}}
\frac{\alpha_{j}}{\alpha_{i}}\lambda_{i}\partial_{\lambda_{i}}\eps_{i,j}
+
o
(
\sum_{\underset{i\in Q}{i\neq j}}\eps_{i,j})
+
O(\vert \delta J(u)\vert^{2})
\end{split}
\end{equation} 
by definition of $q$.
Note, that for $i\in Q$ and $j\in P\setminus Q$ we may assume
\begin{equation}
\begin{split}
-\lambda_{i}\partial_{\lambda_{i}}\eps_{i,j}
=
\frac{n-2}{2}
\frac
{
\frac{\lambda_{i}}{\lambda_{j}}
-
\frac{\lambda_{j}}{\lambda_{i}}
+
\lambda_{i}\lambda_{j}\gamma_{n}G_{g_{0}}^{\frac{2}{2-n}}(a_{i},a_{j})
}
{
(
\frac{\lambda_{i}}{\lambda_{j}}
+
\frac{\lambda_{j}}{\lambda_{i}}
+
\lambda_{i}\lambda_{j}\gamma_{n}G_{g_{0}}^{\frac{2}{2-n}}(a_{i},a_{j})
)^{\frac{n}{2}}
}
\geq
\frac{n-2}{4}\eps_{i,j},
\end{split}
\end{equation} 
since in that case $d(a_{i},a_{j})\geq \varepsilon_{0} >0$, and we obtain
\begin{equation}\label{psi'_attractors}
\begin{split}
\psi'
\geq &
-c(1+o_{\frac{1}{\lambda_{i}}}(1))\sum_{\underset{i\in Q}{i\neq j}}\frac{C^{i}}{K_{i}}
\frac{\alpha_{j}}{\alpha_{i}}\lambda_{i}\partial_{\lambda_{i}}\eps_{i,j}
+
o(\sum_{\underset{i\in Q}{i\neq j}}\eps_{i,j})
+
O(\vert \delta J(u)\vert^{2}).
\end{split}\end{equation}
Moreover  for sufficiently small $\varepsilon>0$ and $C>1$ large we have 
\begin{equation}\label{eij_positive _argument_attractors}
\begin{split}
-
\sum_{\underset{i,j\in Q}{i\neq j}}\frac{C^{i}}{K_{i}}\frac{\alpha_{j}}{\alpha_{i}}
\lambda_{i}\partial_{\lambda_{i}}\eps_{i,j}
\geq c\sum_{\underset{i,j\in Q}{i> j}}\eps_{i,j}.
\end{split}
\end{equation} 

To prove \eqref{eij_positive _argument_attractors} note, that by definition  we have
\begin{equation}
\begin{split}
(C^{i}-C^{j})\frac{\ln\frac{1}{\lambda_{i}}}{K_{i}}
\leq 
(C^{i}-C^{j})\frac{\ln\frac{1}{\lambda_{j}}}{K_{j}}
\end{split}
\end{equation}

for any  $i>j$ with $i,j\in Q$ or equivalently
\begin{equation}
\begin{split}
& \frac{C^{i}-C^{j}}{K_{i}}\ln  \frac{1}{\lambda_{i}} 
+
\frac{C^{j}-C^{i}}{K_{j}}\ln  \frac{1}{\lambda_{j}} 
\leq  
0.
\end{split}
\end{equation}

We then have
\begin{equation}\label{li/lj=o(...)_remark_attractors}
\begin{split}
\frac{\lambda_{j}}{\lambda_{i}}
=
o
(
\frac{\lambda_{i}}{\lambda_{j}}
+
\lambda_{i}\lambda_{j}\gamma_{n}G^{\frac{2}{2-n}}(a_{i},a_{j})
).
\end{split}
\end{equation} 

Otherwise we may assume for some $c>0$
\begin{equation}
\begin{split}
\frac{\lambda_{j}}{\lambda_{i}}
\geq c
(
\frac{\lambda_{i}}{\lambda_{j}}
+
\lambda_{i}\lambda_{j}\gamma_{n}G^{\frac{2}{2-n}}(a_{i},a_{j})
).
\end{split}
\end{equation} 

This implies $\frac{\lambda_{j}}{\lambda_{i}}\gg 1 \gg \frac{\lambda_{i}}{\lambda_{j}}$ and
 $d(a_{i},a_{j})=O(\frac{1}{\lambda_{i}})$. Consequently
\begin{equation}
\begin{split}
& \frac{C^{i}-C^{j}}{K_{j}}\ln  \frac{\lambda_{j}}{\lambda_{i}} 
\leq  
O(\frac{\ln \lambda_{i}}{\lambda_{i}}),
\end{split}
\end{equation}

yielding a contradiction. Thus \eqref{li/lj=o(...)_remark_attractors}
for $i>j$ is established. Write 
\begin{equation}\begin{split}
- & \sum_{\underset{i,j\in Q}{i\neq j}} C^{i}  \frac{\alpha_{j}}{\alpha_{i}}\lambda_{i}\partial_{\lambda_{i}}\eps_{i,j} 
= 
-
\sum_{\underset{i,j\in Q}{i> j}}C^{i}\frac{\alpha_{j}}{\alpha_{i}}\lambda_{i}\partial_{\lambda_{i}}\eps_{i,j}
+
\sum_{\underset{i,j\in Q}{i< j}}C^{i}\frac{\alpha_{j}}{\alpha_{i}}\lambda_{j}\partial_{\lambda_{j}}\eps_{i,j}\\
& \quad\quad\quad\quad\quad\quad\quad\quad\;\;\; -
\sum_{\underset{i,j\in Q}{i< j}}C^{i}\frac{\alpha_{j}}{\alpha_{i}}\lambda_{i}\partial_{\lambda_{i}}\eps_{i,j}
-
\sum_{\underset{i,j\in Q}{i< j}}C^{i}\frac{\alpha_{j}}{\alpha_{i}}\lambda_{j}\partial_{\lambda_{j}}\eps_{i,j}
\\
& = 
-
\sum_{\underset{i,j\in Q}{i> j}}
[
C^{i}\frac{\alpha_{j}}{\alpha_{i}}
-
C^{j}\frac{\alpha_{i}}{\alpha_{j}}
]
\lambda_{i}\partial_{\lambda_{i}}\eps_{i,j} 
-
\sum_{\underset{i,j\in Q}{i< j}}C^{i}\frac{\alpha_{j}}{\alpha_{i}}
[
\lambda_{i}\partial_{\lambda_{i}}\eps_{i,j}
+
\lambda_{j}\partial_{\lambda_{j}}\eps_{i,j}
].
\end{split}\end{equation}

We have
\begin{equation}\begin{split}
-
\lambda_{i}\partial_{\lambda_{i}}\eps_{i,j}
-
\lambda_{j}\partial_{\lambda_{j}}\eps_{i,j}=(n-2)\eps_{i,j}^{\frac{n}{n-2}}\lambda_{i}\lambda_{j}\gamma_{n}G^{\frac{2}{2-n}}( a _{i}, a _{j})
>0
\end{split}\end{equation}

and for $i>j$ due to \eqref{li/lj=o(...)_remark_attractors}
\begin{equation}\begin{split}
-\lambda_{i}\partial_{\lambda_{i}}\eps_{i,j}
= &
\frac{n-2}{2}\eps_{i,j}^{\frac{n}{n-2}}
(
\frac{\lambda_{i} }{ \lambda_{j} }
-
\frac{ \lambda_{j} }{ \lambda_{i} }
+
\lambda_{i}\lambda_{j}\gamma_{n}G^{\frac{2}{2-n}}( a _{i}, a _{j}))\geq 
\frac{n-2}{4}\eps_{i,j}.
\end{split}\end{equation}

This shows \eqref{eij_positive _argument_attractors}.
\\
Thus plugging \eqref{eij_positive _argument_attractors} into
\eqref{psi'_attractors} shows
$\psi'\geq O(\vert \delta J(u)\vert^{2})$ for $C>1$ sufficiently large, whereas $\psi\- -\infty$ 
by definition 
as a continuous, piecewise differentiable function in time;
a contradiction.

The case $u\in V(\omega,p, \varepsilon)$ is proven analogously.
\end{proof}

The following lemma assures $\partial J$ to be principally lower bounded in the case the dimensional conditions $Cond_{n}$, on which theorem \ref{thm_1} relies, hold true.
\begin{proposition}[Principal lower bound of the first variation under $Cond_{n}$]\label{prop_princ_lower_bounded_under_Cond_n}$_{}$
$\partial J$ is principally lower bounded, 
if $Cond_{n}$ as in definition \ref{def_dimensional_conditions} is satisfied.
\end{proposition}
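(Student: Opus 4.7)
The plan is to exploit the explicit expansions of $\sigma_{2,i}$ and $\sigma_{3,i}$ from corollaries \ref{cor_simplifying_ski} and \ref{cor_simplifying_ski_f}, combined with the basic estimate $\vert \sigma_{k,i}\vert \leq C\vert \delta J(u)\vert$, to extract each individual summand on the right hand side of definition \ref{def_principally_lower_bounded_of_the_first_variation}. The gradient term $\vert \nabla K_{i}\vert/(K_{i}\lambda_{i})$ will follow directly from $\sigma_{3,i}$, whose leading contribution is $e_{3}(r/k)\alpha_{i}^{(n+2)/(n-2)}\nabla K_{i}/\lambda_{i}$. The pure interaction sum $\sum_{r\neq s}\eps_{r,s}$ and the self-interaction terms $H_{i}/\lambda_{i}^{n-2}$ (case $\omega = 0$) respectively $\alpha\omega_{i}/\lambda_{i}^{(n-2)/2}$ (case $\omega > 0$) will be extracted by the weighted-combination argument from lemma \ref{lem_critical_points_of_K_as_attractors} applied to $\sigma_{2,i}$: after ordering $\lambda_{1}\leq \ldots \leq \lambda_{p}$, a suitable positive combination $\sum_{i} C^{i}K_{i}^{-1}\sigma_{2,i}$ converts the signed pairs $\lambda_{i}\partial_{\lambda_{i}}\eps_{i,j}$ into $+c\sum_{i\neq j}\eps_{i,j}$ via \eqref{eij_positive _argument_attractors}, while the self-interaction contributions enter with a definite positive sign under $Cond_{n}$ (the positive mass theorem gives $H_{i}>0$ uniformly for $M\not\simeq \mathbb{S}^{n}$, and $\omega_{i}>0$ is automatic if $\omega > 0$).

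The case $\omega > 0$ is thereby complete. In the case $\omega = 0$, the remaining and delicate task is the sign-indefinite Laplacian term $e_{2}(r/k)\alpha_{i}^{(n+2)/(n-2)}\lap K_{i}/\lambda_{i}^{2}$ in $\sigma_{2,i}$, to be matched with $\vert \lap K_{i}\vert/(K_{i}\lambda_{i}^{2})$ on the right hand side. I will handle this dimension by dimension. For $n=3$ one has $\vert \lap K\vert/\lambda^{2} = o(1/\lambda^{n-2})$, so the Laplacian term is absorbed by the positive mass contribution and nothing beyond $Cond_{3}$ is required. For $n=4$ both terms scale as $1/\lambda^{2}$; on a fixed neighbourhood of $[\nabla K = 0]$, $Cond_{4}$ provides $\lap K/K > -c(M)$, and for sufficiently small $c(M)$ relative to the uniform positive lower bound on $H_{i}$ the combination $d_{2}\alpha_{i}H_{i}/\lambda_{i}^{2} + 4n(n-1)e_{2}\alpha_{i}\lap K_{i}/(K_{i}\lambda_{i}^{2})$ still dominates $(H_{i}+\vert \lap K_{i}\vert/K_{i})/\lambda_{i}^{2}$, while away from $[\nabla K = 0]$ the gradient lower bound already controls $\vert \lap K_{i}\vert/\lambda_{i}^{2}$ thanks to $\lambda_{i}$ being large.

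The genuinely hard case is $n=5$, where $\vert \lap K\vert/\lambda^{2}$ dominates the positive mass $H_{i}/\lambda^{3}$. On $[\lap K_{i}\geq 0]$ both terms in $\sigma_{2,i}$ are positive and the bound follows, so it remains to deal with $[\lap K_{i}<0]\cap U$ where $U$ is the neighbourhood appearing in $Cond_{5}$. The idea is to pair $\sigma_{3,i}$ with $\nabla K_{i}$: its leading contribution becomes proportional to $\vert \nabla K_{i}\vert^{2}/\lambda_{i} + \langle \nabla \lap K_{i}, \nabla K_{i}\rangle/\lambda_{i}^{3}$, and $Cond_{5}$ converts $\langle \nabla \lap K_{i}, \nabla K_{i}\rangle$ into at least $\tfrac{1}{3}\vert \lap K_{i}\vert^{2}$. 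This yields $\vert \delta J(u)\vert\, \vert \nabla K_{i}\vert \geq c(\vert \nabla K_{i}\vert^{2}/\lambda_{i} + \vert \lap K_{i}\vert^{2}/\lambda_{i}^{3})$; the first factor of the right hand side gives $\vert \nabla K_{i}\vert/\lambda_{i} \leq C\vert \delta J(u)\vert$, and feeding this back into the second yields $\vert \lap K_{i}\vert^{2}/\lambda_{i}^{3} \leq C\vert \delta J(u)\vert^{2}\lambda_{i}$, i.e.\ $\vert \lap K_{i}\vert/\lambda_{i}^{2} \leq C\vert \delta J(u)\vert$.

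The main obstacle I expect is precisely this $n=5$ step: without $Cond_{5}$ there would be no mechanism for propagating control from the $\nabla K$ direction (encoded in $\sigma_{3,i}$) to the Laplacian (which is otherwise uncontrolled, since $\sigma_{2,i}$ alone cannot distinguish the sign of $\lap K$). Throughout, quadratic remainder terms $R_{k,i}$ together with the a priori estimates on $\Vert v\Vert$ from corollaries \ref{cor_a-priori_estimate_on_v} and \ref{cor_a-priori_estimate_on_v_f} are of strictly higher order in the RHS ingredients and can be absorbed by choosing $\varepsilon > 0$ sufficiently small.
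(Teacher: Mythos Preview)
Your ingredients are the right ones and the case $\omega>0$ is fine, but the sequential extraction you propose for $\omega=0$ has a genuine circularity that you do not address. You plan to (i) extract $\sum_{r\neq s}\eps_{r,s}$ and $H_i/\lambda_i^{n-2}$ from a weighted combination of the $\sigma_{2,i}$ alone, and then (ii) extract $\vert\lap K_i\vert/\lambda_i^2$ by pairing $\sigma_{3,i}$ with $\nabla K_i$. But $\sigma_{2,i}$ carries the sign-indefinite term $e_2(r/k)\alpha_i^{(n+2)/(n-2)}\lap K_i/\lambda_i^2$, which for $n=5$ is of strictly larger order than $H_i/\lambda_i^3$; so step (i) only yields $\sum\eps_{r,s}\le C\vert\delta J(u)\vert+C\sum_i\vert\lap K_i\vert/\lambda_i^2$. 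Conversely, $\sigma_{3,i}$ carries the \emph{principal} interaction term $b_3(r/k)\alpha_i^{4/(n-2)}K_i\sum_{j}\alpha_j\lambda_i^{-1}\nabla_{a_i}\eps_{i,j}=O(\sum_j\eps_{i,j})$, which is linear in $\eps_{i,j}$, not a quadratic remainder; hence your claimed inequality $\vert\delta J(u)\vert\,\vert\nabla K_i\vert\ge c(\vert\nabla K_i\vert^2/\lambda_i+\vert\lap K_i\vert^2/\lambda_i^3)$ is not justified without already controlling $\sum_j\eps_{i,j}$. Feeding the two partial bounds into each other produces a system that does not close, because the coupling constants are fixed universal numbers rather than $o_\varepsilon(1)$.

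The paper avoids this by testing with the \emph{single} combination $\sum_iC^i\bigl(\sigma_{2,i}+\kappa_i\langle\sigma_{3,i},\nabla K_i/\vert\nabla K_i\vert\rangle\bigr)$. After ordering $\lambda_1\le\ldots\le\lambda_p$ and taking $C$ large, the $\sigma_{2,i}$-interaction $-\sum_{i\neq j}C^i\alpha_j\lambda_i\partial_{\lambda_i}\eps_{i,j}\ge c\sum_{i>j}C^i\eps_{i,j}$ dominates the $\sigma_{3,i}$-interaction $\sum_{i\neq j}C^i\lambda_i^{-1}\vert\nabla_{a_i}\eps_{i,j}\vert=O(\sum_{i>j}C^j\eps_{i,j})$, so interaction and self-interaction decouple without any prior bound on $\eps_{i,j}$. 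For the self-interaction one is then reduced to the pointwise inequality
\[
\gamma_1\frac{H_i}{\lambda_i^{n-2}}+\gamma_2\frac{\lap K_i}{K_i\lambda_i^2}+\gamma_3\kappa_i\frac{\vert\nabla K_i\vert}{K_i\lambda_i}+\gamma_4\kappa_i\frac{\langle\nabla\lap K_i,\nabla K_i\rangle}{K_i\vert\nabla K_i\vert\lambda_i^3}\;\ge\;c\Bigl(\frac{H_i}{\lambda_i^{n-2}}+\frac{\vert\lap K_i\vert}{K_i\lambda_i^2}+\frac{\vert\nabla K_i\vert}{K_i\lambda_i}\Bigr),
\]
which for $n=5$, $\lap K_i<0$ and $\vert\nabla K_i\vert$ small follows from $Cond_5$ via Young's inequality in the form $\vert\lap K_i\vert/\lambda_i^2\le\tfrac{3}{2}\vert\nabla K_i\vert/\lambda_i+\tfrac{3}{2}\langle\nabla\lap K_i,\nabla K_i\rangle/(\vert\nabla K_i\vert\lambda_i^3)$; the factor $3$ matches exactly the computed ratio $\gamma_3/\gamma_2=3$. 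Note in particular that $\sigma_{3,i}$ enters with weight $1/\vert\nabla K_i\vert$, not $\vert\nabla K_i\vert$ as in your pairing --- this is what makes the $\nabla\lap K_i$ contribution comparable to $\lap K_i/\lambda_i^2$ rather than to $\vert\lap K_i\vert^2/\lambda_i^3$.
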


\begin{proof}[\textbf{Proof of proposition \ref{prop_princ_lower_bounded_under_Cond_n}}]$_{}$\\
In case $\omega =0$ corollaries \ref{cor_simplifying_ski}, \ref{cor_a-priori_estimate_on_v} and  \eqref{rai^{...}/k=...} show, that
\begin{enumerate}[label=(\roman*)]
 \Item $_{}$
\begin{equation}\begin{split}
\sigma_{2,i}
= &
\tilde \gamma_{1}\alpha_{i}\frac{ H_{i}}{\lambda_{i} ^{n-2}}
+
\gamma_{2}\alpha_{i}\frac{\lap K_{i}}{K_{i}\lambda_{i}^{2}} 
- 
\tilde \gamma_{5}b_{2}
\sum_{i \neq j=1}^{p}\alpha_{j}
\lambda_{i}\partial_{\lambda_{i}}\eps_{i,j} 
+
R_{2,i}
\end{split}\end{equation}
 \Item $_{}$
\begin{equation}\begin{split}
\sigma_{3,i}
= &
\tilde \gamma_{3}\alpha_{i}\frac{\nabla K_{i}}{K_{i}\lambda_{i}}
+
\gamma_{4}\alpha_{i}\frac{\nabla \lap K_{i}}{K_{i}\lambda_{i}^{3}} 
+
\gamma_{6}
\sum_{i \neq j=1}^{p}
\frac{\alpha_{j}}{\lambda_{i}}\nabla_{a_{i}}\eps_{i,j} +
R_{3,i},
\end{split}\end{equation}
\end{enumerate}
where  
\begin{equation}
\begin{split}
R_{k,i}
= &
o_{\varepsilon}
(
\frac{1}{\lambda_{i}^{n-2}} 
+
\sum_{i\neq j=1}^{q}\eps_{i,j}
)
\\
& +
O
(
\sum_{r} \frac{\vert \nabla K_{r}\vert^{2}}{\lambda_{r}^{2}} 
+
\frac{\vert \lap K_{r}\vert^{2}}{\lambda_{r}^{4}}
+
\frac{1}{\lambda_{r}^{2(n-2)}} +\sum_{r\neq s}\eps_{r,s}^{2}
+
\vert \delta J(u)\vert^{2}
)
.                               
\end{split}
\end{equation} 
Letting $0<\underline \kappa\leq \kappa_{i} \leq \overline\kappa <\infty$ for $\vert \nabla K_{i}\vert \neq 0$
and $\kappa_{i}=0$ for $\vert \nabla K_{i}\vert=0$ we get
\begin{equation}\label{testfunction_unicity>...}
\begin{split}
\sum_{i} & C^{i}(\sigma_{2,i}
+\kappa_{i}\langle \sigma_{3,i}, \frac{\nabla K_{i}}{\vert \nabla K_{i}\vert}\rangle ) \\
\geq &
\sum_{i}\alpha_{i}C^{i}
[
\gamma_{1}\frac{H_{i}}{\lambda_{i}^{n-2}}
+
\gamma_{2}\frac{\lap K_{i}}{K_{i}\lambda_{i}^{2}}
+
\gamma_{3}\kappa_{i}\frac{\vert \nabla K_{i}\vert}{K_{i}\lambda_{i}}
+
\gamma_{4}\kappa_{i}\frac{\langle \nabla \lap K_{i}, \nabla K_{i}\rangle}{K_{i}\vert \nabla K_{i}\vert\lambda_{i}^{3}}
] \\
& -
\tilde \gamma_{5}\sum_{i\neq j}C^{i}\alpha_{j}
\lambda_{i}\partial_{\lambda_{i}} \eps_{i,j}
+
o_{\varepsilon}(\sum_{r\neq s}\eps_{r,s})
+
O
(
\sum_{i\neq j}\frac{C^{i}}{\lambda_{i}}\vert \nabla_{a_{i}}\eps_{i,j}\vert
)\\
& +
O
(
\frac{\vert \lap K_{r}\vert^{2}}{\lambda_{r}^{4}}
+
\vert \delta J(u)\vert^{2}
)
.
\end{split}
\end{equation} 
Note, that we do not try to construct a continuous pseudo gradient, so there is no need to choose 
$\kappa_{i}$ continuously.
As before we order 
\begin{equation}
\frac{1}{\lambda_{1}}\geq \ldots\geq \frac{1}{\lambda_{p}}.
\end{equation} 
We then have for sufficiently small
$\varepsilon>0$ and $C>1$  large
\begin{equation}\label{eij_large_unicity}
\begin{split}
\sum_{i\neq j}C^{i}\alpha_{j}\lambda_{i}\partial_{\lambda_{i}}\eps_{i,j}
\geq c \sum_{i>j}C^{i}\eps_{i,j}
\end{split}
\end{equation}
and
\begin{equation}\label{eij_small_unicity}
\begin{split}
\sum_{i\neq j}\frac{C^{i}}{\lambda_{i}}\vert \nabla_{a_{i}}\eps_{i,j}\vert =O(\sum_{i>j}C^{j}\eps_{i,j})
\end{split}
\end{equation}

To prove \eqref{eij_large_unicity} and \eqref{eij_small_unicity} note, that
\begin{equation}\begin{split}
- & \sum_{i\neq j} C^{i}  \frac{\alpha_{j}}{\alpha_{i}}\lambda_{i}\partial_{\lambda_{i}}\eps_{i,j} 
= 
-
\sum_{i>j}C^{i}\frac{\alpha_{j}}{\alpha_{i}}\lambda_{i}\partial_{\lambda_{i}}\eps_{i,j}
+
\sum_{i<j}C^{i}\frac{\alpha_{j}}{\alpha_{i}}\lambda_{j}\partial_{\lambda_{j}}\eps_{i,j}\\
& \quad\quad\quad\quad\quad\quad\quad\quad\;\;\; -
\sum_{i<j}C^{i}\frac{\alpha_{j}}{\alpha_{i}}\lambda_{i}\partial_{\lambda_{i}}\eps_{i,j}
-
\sum_{i<j}C^{i}\frac{\alpha_{j}}{\alpha_{i}}\lambda_{j}\partial_{\lambda_{j}}\eps_{i,j}
\\
& = 
-
\sum_{i>j}
[
C^{i}\frac{\alpha_{j}}{\alpha_{i}}
-
C^{j}\frac{\alpha_{i}}{\alpha_{j}}
]
\lambda_{i}\partial_{\lambda_{i}}\eps_{i,j} 
-
\sum_{i<j}C^{i}\frac{\alpha_{j}}{\alpha_{i}}
[
\lambda_{i}\partial_{\lambda_{i}}\eps_{i,j}
+
\lambda_{j}\partial_{\lambda_{j}}\eps_{i,j}
].
\end{split}\end{equation}

One has
\begin{equation}\begin{split}
-
\lambda_{i}\partial_{\lambda_{i}}\eps_{i,j}
-
\lambda_{j}\partial_{\lambda_{j}}\eps_{i,j}=(n-2)\eps_{i,j}^{\frac{n}{n-2}}\lambda_{i}\lambda_{j}\gamma_{n}G^{\frac{2}{2-n}}( a _{i}, a _{j})
>0
\end{split}\end{equation}

and for $i>j$
\begin{equation}\begin{split}
-\lambda_{i}\partial_{\lambda_{i}}\eps_{i,j}
= &
\frac{n-2}{2}\eps_{i,j}^{\frac{n}{n-2}}
(
\frac{\lambda_{i} }{ \lambda_{j} }
-
\frac{ \lambda_{j} }{ \lambda_{i} }
+
\lambda_{i}\lambda_{j}\gamma_{n}G^{\frac{2}{2-n}}( a _{i}, a _{j}))\geq 
\frac{n-2}{4}\eps_{i,j}.
\end{split}\end{equation}

Thus \eqref{eij_large_unicity} is proven. We are left with estimating 
\begin{equation}
\begin{split}
\sum_{i\neq j}\frac{C^{i}}{\lambda_{i}}\vert \nabla_{a_{i}}\eps_{i,j}\vert 
= &
\frac{n-2}{2}\sum_{i< j}C^{i}\eps_{i,j}
\vert
\frac
{(\frac{\lambda_{j}}{\lambda_{i}})^{\frac{1}{2}}(\lambda_{i}\lambda_{j})^{\frac{1}{2}}\gamma_{n}\nabla_{ a_{i}}G^{\frac{2}{2-n}}(a_{i},a_{j})}
{\frac{\lambda_{i}}{\lambda_{j}}+\frac{\lambda_{j}}{\lambda_{i}}
+
\lambda_{i}\lambda_{j}\gamma_{n}G^{\frac{2}{2-n}}(a_{i},a_{j})} 
\vert \\
& +
o(\sum_{i\neq j}\eps_{i,j}),
\end{split} 
\end{equation}

whence we immediately obtain \eqref{eij_small_unicity}.
\\
Plugging \eqref{eij_small_unicity} and \eqref{eij_small_unicity} into 
\eqref{testfunction_unicity>...} we obtain for $C>1$ sufficiently large
\begin{equation}
\begin{split}
\sum_{i} & C^{i}(\sigma_{2,i}
+\kappa_{i} \langle \sigma_{3,i}, \frac{\nabla K_{i}}{\vert \nabla K_{i}\vert}\rangle ) \\
\geq &
\sum_{i}\alpha_{i}C^{i}
[
\gamma_{1}\frac{H_{i}}{\lambda_{i}^{n-2}}
+
\gamma_{2}\frac{\lap K_{i}}{K_{i}\lambda_{i}^{2}}
+
\gamma_{3}\kappa_{i}\frac{\vert \nabla K_{i}\vert}{K_{i}\lambda_{i}}
+
\gamma_{4}\kappa_{i}\frac{\langle \nabla \lap K_{i}, \nabla K_{i}\rangle}{K_{i}\vert \nabla K_{i}\vert \lambda_{i}^{3}\lambda_{i}^{3}}
] \\
& +
\gamma_{5}\sum_{i> j}C^{i}\eps_{i,j}
+
O
(
\frac{\vert \lap K_{r}\vert^{2}}{\lambda_{r}^{4}}
+
\vert \delta J(u)\vert^{2}
).
\end{split}
\end{equation} 
In case $\lap K_{i}\geq 0$ or $\vert \nabla K_{i}\vert >\epsilon $ for $\epsilon >0$ small 
we immediately obtain
\begin{equation}\label{non_degeneracy_under_cond_n}
\begin{split}
\gamma_{i}\frac{H_{i}}{\lambda_{i}^{n-2}}
+
\gamma_{2}\frac{\lap K_{i}}{K_{i}\lambda_{i}^{2}}
+
\gamma_{3}\kappa_{i}\frac{\vert \nabla K_{i}\vert}{K_{i}\lambda_{i}}
& +
\gamma_{4}\kappa_{i}\frac{\langle \nabla \lap K_{i}, \nabla K_{i}\rangle}{K_{i}\vert \nabla K_{i}\vert \lambda_{i}^{3}} \\
\geq & 
c
[
\frac{H_{i}}{\lambda_{i}^{n-2}}
+
\frac{\vert \lap K_{i}\vert}{K_{i}\lambda_{i}^{2}}
+
\frac{\vert \nabla K_{i}\vert}{K_{i}\lambda_{i}}
]
\end{split}
\end{equation} 
for some $c>0$ and all $\lambda_{i}>0$ sufficiently large choosing $\kappa_{i}$ such, that
\begin{equation}
\begin{split}
\gamma_{i}\frac{H_{i}}{\lambda_{i}^{n-2}} 
+
\gamma_{4}\kappa_{i}\frac{\langle \nabla \lap K_{i}, \nabla K_{i}\rangle}{K_{i}\vert \nabla K_{i}\vert \lambda_{i}^{3}} 
\geq 
c\frac{H_{i}}{\lambda_{i}^{n-2}} 
\end{split}
\end{equation} 
Moreover \eqref{non_degeneracy_under_cond_n} holds true as well for $n=3$ and by $Cond_{4}$ for $n=4.$ For
\begin{equation}
\begin{split}
n=5, \; \lap K_{i}<0\; \text{ and }\; \vert \nabla K_{i} \vert<\eps 
\end{split}
\end{equation}
we may according to $Cond_{5}$ assume, that $\langle\nabla \lap K_{i}, \nabla K_{i}\rangle>\frac{1}{3}\vert \lap K_{i}\vert^{2}$. Thus
\begin{equation}
\begin{split}
\frac{\lap K_{i}}{K_{i}\lambda_{i}^{2}}
> &
-\frac{3}{2}\frac{\vert \nabla K_{i}\vert}{K_{i}\lambda_{i}}
-\frac{3}{2}
\frac
{
\langle \nabla \lap K_{i}, \nabla K_{i}\rangle
}
{
K_{i}\vert \nabla K_{i}\vert \lambda_{i}^{3}
}.
\end{split}
\end{equation} 
Choosing therefore $\kappa_{i}$ such, that 
$
\frac{3}{2}\gamma_{2}<\gamma_{3}\kappa_{i}, \frac{3}{2}\gamma_{2}<\gamma_{4}\kappa_{i},
$
 then \eqref{non_degeneracy_under_cond_n} holds true as well
and thus in any case. We conclude
\begin{equation}
\begin{split}
\sum_{i} & C^{i}(\sigma_{2,i}
+\kappa_{i} \langle \sigma_{3,i}, \frac{\nabla K_{i}}{\vert \nabla K_{i}\vert}\rangle ) \\
\geq &
c\sum_{i}
[
\frac{H_{i}}{\lambda_{i}^{n-2}}
+
\frac{\vert \lap K_{i}\vert}{K_{i}\lambda_{i}^{2}}
+
\frac{\vert \nabla K_{i}\vert}{K_{i}\lambda_{i}}
] 
+
c\sum_{i> j}\eps_{i,j}
+
O(\vert \delta J(u)\vert^{2}).
\end{split}
\end{equation} 
Since $\sigma_{k,i}=O(\vert \delta J(u)\vert)$ by definition, the claim follows.

In case $\omega >0$ we have
due to corollaries \ref{cor_simplifying_ski_f}, \ref{cor_a-priori_estimate_on_v_f} and 
\eqref{rai4/n-2_f}
\begin{enumerate}[label=(\roman*)]
 \Item 
 \begin{equation}
\begin{split}
\sigma_{2,i}
= &
\tilde \gamma_{1}\alpha\frac{\omega_{i}}{K_{i}\lambda_{i}^{\frac{n-2}{2}}}
-
\tilde \gamma_{3}\sum_{i\neq j=1}^{p}\alpha_{j}\lambda_{i}\partial_{\lambda_{i}}\eps_{i,j}
+
R_{2,i}
\end{split}
\end{equation} 
 \Item 
\begin{equation}
\begin{split}
\sigma_{3,i}
=
\tilde \gamma_{2}\alpha_{i}\frac{\nabla K_{i}}{K_{i}\lambda_{i}}
+
\gamma_{4}
\sum_{i\neq j=1}^{p}\alpha_{j}\frac{1}{\lambda_{i}}\nabla_{a_{i}}\eps_{i,j}
+
R_{3,i}
\end{split} 
\end{equation} 
\end{enumerate}

where
\begin{equation}
\begin{split}
R_{k,i}
=
o_{\varepsilon}(\sum_{r}\frac{1}{\lambda_{r}^{\frac{n-2}{2}}}+\sum_{r\neq s}\eps_{r,s})
+
O(\vert \delta J(u)\vert^{2})
\end{split}
\end{equation} 

and the same arguments apply in a simpler way.
\end{proof}

\subsection{Leaving V(\textomega, p, \textepsilon)}
\label{subsec:LeavingVwpe}
In this subsection we consider a flow line 
$$
u=u_{\alpha, \beta}+\alpha^{i}\varphi_{i}+v\in V(\omega,p, \eps)
$$
and we wish to define piecewise differentiable continuous 
function in time
$$
\psi:(a_{i}, \lambda_{i})_{i=1, \ldots,p}\-\psi((a_{i}, \lambda_{i})_{i=1, \ldots,p})
$$
with the fundamental properties
\begin{enumerate}[label=(\roman*)]
 \Item 
$$
\psi\--\infty\; \text{ as }\;\lambda_{i}\- \infty\;\text{ for some }\;  i=1, \ldots,p
$$
 \Item 
$$
\psi'\in L^{1}(\R_{+})\; \text{ is integrable in time}.
$$
\end{enumerate}
The existence of such a function implies, that a flow line cannot at once remain in $V(\omega,p, \varepsilon)$ for all times  
and concentrate in the sense, that $\lambda_{i}\-\infty$.

The subsequent propositions are devoted to prove their existence under the 

dimensional conditions $Cond_{n}$, cf. definition
\ref{def_dimensional_conditions}.

\begin{proposition}[Case $n=3, \, \omega=0$]\label{prop_n=3}$_{}$\\
Let $n=3$ and $Cond_{3}$ hold true. 
Ordering
\begin{equation*}
\begin{split}
\frac{1}{\lambda_{1}}\geq \ldots \geq \frac{1}{\lambda_{p}} 
\end{split}
\end{equation*} 
the piecewise differentiable continuous function
$
\psi=\sum_{i}C^{i}\ln  \frac{1}{\lambda_{i}}
$
satisfies 
\begin{equation*}\begin{split}
\psi'
\geq &
\sum_{i} \frac{H_{i}}{\lambda_{i}} 
+
\sum_{i> j}\eps_{i,j}
+
O(\vert \delta J(u)\vert^{2}),
\end{split}\end{equation*}
provided $C>1$ is sufficiently large
\end{proposition}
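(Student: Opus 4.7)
The plan is to differentiate $\psi$ in time, substitute the explicit evolution equation for $-\dot\lambda_{i}/\lambda_{i}$ supplied by Corollary~\ref{cor_simplifying_the_shadow_flow}(i), and then show that in dimension $n=3$ the mass term $H_{i}/\lambda_{i}$ and the interaction sum $\sum_{i>j}\eps_{i,j}$ dominate all remaining contributions once $C>1$ is chosen sufficiently large. Between times where the ordering $1/\lambda_{1}\ge\dots\ge 1/\lambda_{p}$ changes, $\psi$ is smooth and
\begin{equation*}
\psi'=-\sum_{i}C^{i}\frac{\dot\lambda_{i}}{\lambda_{i}};
\end{equation*}
at crossing times continuity holds by construction, so it suffices to bound $\psi'$ wherever it is defined. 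Substituting Corollary~\ref{cor_simplifying_the_shadow_flow}(i) and specialising to $n=3$ (so $\lambda_{i}^{n-2}=\lambda_{i}$) gives
\begin{equation*}
\psi'=\frac{r}{k}\sum_{i}C^{i}\Bigl[\tfrac{d_{2}}{c_{2}}\tfrac{H_{i}}{\lambda_{i}}+\tfrac{e_{2}}{c_{2}}\tfrac{\lap K_{i}}{K_{i}\lambda_{i}^{2}}-\tfrac{b_{2}}{c_{2}}\sum_{j\ne i}\tfrac{\alpha_{j}}{\alpha_{i}}\lambda_{i}\partial_{\lambda_{i}}\eps_{i,j}\Bigr](1+o_{1/\lambda_{i}}(1))+\sum_{i}C^{i}R_{2,i}.
\end{equation*}

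Next I would exploit $Cond_{3}$: since $M\not\simeq\mathbb{S}^{3}$, the positive mass theorem (cf.\ Definition~\ref{def_bubbles}) yields $H_{i}=H_{a_{i}}(a_{i})\ge c_{0}>0$ uniformly in $a_{i}\in M$. Because $n=3$, the curvature term $\lap K_{i}/(K_{i}\lambda_{i}^{2})$ is of order $1/\lambda_{i}^{2}=o(H_{i}/\lambda_{i})$, hence is absorbed into the positive contribution $\tfrac{d_{2}}{c_{2}}C^{i}H_{i}/\lambda_{i}$ for $\lambda_{i}$ large, i.e.\ $\varepsilon$ small. Observe that no assumption on $K$ is needed here precisely because $n=3$ makes the self-interaction term from the mass dominate over the gradient and laplacian of $K$.

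For the interaction sum I would repeat verbatim the combinatorial rearrangement already carried out in the proof of Proposition~\ref{prop_princ_lower_bounded_under_Cond_n}. With the chosen ordering $\lambda_{1}\le\dots\le\lambda_{p}$ one has for $i>j$
\begin{equation*}
-\lambda_{i}\partial_{\lambda_{i}}\eps_{i,j}\ge\tfrac{n-2}{4}\eps_{i,j},
\end{equation*}
and after splitting $\sum_{i\ne j}$ into $i>j$ and $i<j$, using $-\lambda_{i}\partial_{\lambda_{i}}\eps_{i,j}-\lambda_{j}\partial_{\lambda_{j}}\eps_{i,j}>0$ together with the boundedness of $\alpha_{j}/\alpha_{i}$, one finds for $C>1$ sufficiently large a constant $c>0$ with
\begin{equation*}
-\sum_{i\ne j}C^{i}\tfrac{\alpha_{j}}{\alpha_{i}}\lambda_{i}\partial_{\lambda_{i}}\eps_{i,j}\ge c\sum_{i>j}C^{i}\eps_{i,j}.
\end{equation*}

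Finally, the remainder $R_{2,i}$ consists only of terms of orders $o_{\varepsilon}(1/\lambda_{i}^{n-2})=o_{\varepsilon}(1/\lambda_{i})$, $o_{\varepsilon}(\sum_{i\ne j}\eps_{i,j})$, $|\nabla K_{r}|^{2}/\lambda_{r}^{2}$, $|\lap K_{r}|^{2}/\lambda_{r}^{4}$, $1/\lambda_{r}^{2(n-2)}=1/\lambda_{r}^{2}$, $\eps_{r,s}^{2}$ and $|\delta J(u)|^{2}$. For $n=3$ every quantity listed except the last is either $o_{\varepsilon}(H_{i}/\lambda_{i})$ or $o_{\varepsilon}(\eps_{r,s})$, so they are absorbed by the two dominant positive sums; only $|\delta J(u)|^{2}$ remains. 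After choosing $C$ large enough to win the interaction estimate and $\varepsilon$ small enough that the $(1+o_{1/\lambda_{i}}(1))$ factor together with all $o_{\varepsilon}$ terms are absorbed, we conclude
\begin{equation*}
\psi'\ge\sum_{i}\frac{H_{i}}{\lambda_{i}}+\sum_{i>j}\eps_{i,j}+O(\vert\delta J(u)\vert^{2}).
\end{equation*}
The main obstacle is the bookkeeping of constants: one must verify that the coefficient $\frac{r}{k}\frac{d_{2}}{c_{2}}(1+o(1))C^{i}$ in front of $H_{i}/\lambda_{i}$ and the coefficient produced by the combinatorial step in front of $\eps_{i,j}$ are simultaneously at least $1$ once $C$ is taken large and $\varepsilon$ small, uniformly over the ordering regions; once this is in place, absorbing the subdominant remainders is routine.
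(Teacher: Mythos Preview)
Your proposal is correct and follows essentially the same approach as the paper's own proof: invoke the positive mass theorem via $Cond_{3}$ to secure $H_{i}\geq c_{0}>0$, absorb the $n=3$ subdominant curvature terms $\lap K_{i}/(K_{i}\lambda_{i}^{2})$ and the remainder $R_{2,i}$ into the mass contribution $H_{i}/\lambda_{i}$, and apply the combinatorial rearrangement (the identity $-\sum_{i\ne j}C^{i}\tfrac{\alpha_{j}}{\alpha_{i}}\lambda_{i}\partial_{\lambda_{i}}\eps_{i,j}=-\sum_{i>j}[C^{i}\tfrac{\alpha_{j}}{\alpha_{i}}-C^{j}\tfrac{\alpha_{i}}{\alpha_{j}}]\lambda_{i}\partial_{\lambda_{i}}\eps_{i,j}-\sum_{i<j}C^{i}\tfrac{\alpha_{j}}{\alpha_{i}}[\lambda_{i}\partial_{\lambda_{i}}+\lambda_{j}\partial_{\lambda_{j}}]\eps_{i,j}$ together with $-\lambda_{i}\partial_{\lambda_{i}}\eps_{i,j}\geq\tfrac{n-2}{4}\eps_{i,j}$ for $i>j$) to handle the interaction sum. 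The only cosmetic difference is that the paper carries out this rearrangement in situ rather than citing Proposition~\ref{prop_princ_lower_bounded_under_Cond_n}, and it writes the simplified $-\dot\lambda_{i}/\lambda_{i}$ with the $\lap K_{i}$ term already absorbed into $o_{\varepsilon}(\sum_{r}1/\lambda_{r})$ from the outset.
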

In view of corollary \ref{cor_simplifying_the_shadow_flow} the positive  sign of the mass  
related terms $\frac{H_{i}}{\lambda_{i}}$ is rather obvious and the
ordering $\frac{1}{\lambda_{1}}\geq \ldots \geq \frac{1}{\lambda_{p}}$ and choice of $C\gg 1$ ensure, 
that the interaction related terms are of positive  sign as well. 

\begin{proof}[\textbf{Proof of proposition \ref{prop_n=3}}]\label{p_n=3}$_{}$\\
As $M$ is not conformally equivalent to the standard sphere $\mathbb{S}^{3}$, the positive  mass theorem holds. Thus $H_{i}>0$ in the statement of corollary \ref{cor_simplifying_the_shadow_flow}
\begin{equation}\begin{split}
-\frac{\dot \lambda_{i} }{\lambda_{i} }
= &
\frac{r}{k}
[
\gamma_{0}\frac{H_{i}}{\lambda_{i}}
-
\gamma_{1}\sum_{i \neq j=1}^{p}\frac{\alpha_{j}}{\alpha_{i}}
\lambda_{i}\partial_{\lambda_{i}}\eps_{i,j}]
(1+o_{\frac{1}{\lambda_{i}}}(1))
\\
& +
o_{\varepsilon}(\sum_{r} \frac{1}{\lambda_{r}} +\sum_{r\neq s}\eps_{r,s})
+
O(\vert \delta J(u)\vert^{2})
\end{split}\end{equation}
for suitable $\gamma_{0}, \gamma_{1}>0$. Then for 
$
\psi=\sum_{i}C^{i}\ln  \frac{1}{\lambda_{i}}, \; C>1
$
there holds
\begin{equation}
\begin{split}
\psi'
= &
\frac{r}{k}
[
\tilde \gamma_{0}\sum_{i}C^{i}\frac{H_{i}}{\lambda_{i}}
-
\gamma_{1}\sum_{i\neq j}C^{i}\frac{\alpha_{j}}{\alpha_{i}}
\lambda_{i}\partial_{\lambda_{i}}\eps_{i,j}
] (1+o_{\frac{1}{\lambda_{i}}}(1))\\
& +
o_{\varepsilon}(\sum_{r\neq s }\eps_{r,s})
+
O(\vert \delta J(u)\vert^{2}).
\end{split}
\end{equation} 
We complete the definition of $\psi$ by ordering
\begin{equation}
\begin{split}
\ln \frac{1}{\lambda_{1}}\geq \ldots \geq \ln \frac{1}{\lambda_{p}}
\end{split}
\end{equation} 
and claim, that there exists $c >0$ such, that for any $C>1$ sufficiently large
\begin{equation}\label{eij_positive _argument}
\begin{split}
-
\gamma_{1}\sum_{i\neq j}C^{i}\frac{\alpha_{j}}{\alpha_{i}}
\lambda_{i}\partial_{\lambda_{i}}\eps_{i,j}
\geq c \sum_{i>j}C^{i}\eps_{i,j}.
\end{split}
\end{equation} 
Readily the statement of the proposition follows from this fact.

To prove \eqref{eij_positive _argument} note, that
\begin{equation}\begin{split}
- & \sum_{i\neq j} C^{i}  \frac{\alpha_{j}}{\alpha_{i}}\lambda_{i}\partial_{\lambda_{i}}\eps_{i,j} 
= 
-
\sum_{i>j}C^{i}\frac{\alpha_{j}}{\alpha_{i}}\lambda_{i}\partial_{\lambda_{i}}\eps_{i,j}
+
\sum_{i<j}C^{i}\frac{\alpha_{j}}{\alpha_{i}}\lambda_{j}\partial_{\lambda_{j}}\eps_{i,j}\\
& \quad\quad\quad\quad\quad\quad\quad\quad\;\;\; -
\sum_{i<j}C^{i}\frac{\alpha_{j}}{\alpha_{i}}\lambda_{i}\partial_{\lambda_{i}}\eps_{i,j}
-
\sum_{i<j}C^{i}\frac{\alpha_{j}}{\alpha_{i}}\lambda_{j}\partial_{\lambda_{j}}\eps_{i,j}
\\
& = 
-
\sum_{i>j}
[
C^{i}\frac{\alpha_{j}}{\alpha_{i}}
-
C^{j}\frac{\alpha_{i}}{\alpha_{j}}
]
\lambda_{i}\partial_{\lambda_{i}}\eps_{i,j} 
-
\sum_{i<j}C^{i}\frac{\alpha_{j}}{\alpha_{i}}
[
\lambda_{i}\partial_{\lambda_{i}}\eps_{i,j}
+
\lambda_{j}\partial_{\lambda_{j}}\eps_{i,j}
].
\end{split}\end{equation}

One has
\begin{equation}\begin{split}
-
\lambda_{i}\partial_{\lambda_{i}}\eps_{i,j}
-
\lambda_{j}\partial_{\lambda_{j}}\eps_{i,j}=(n-2)\eps_{i,j}^{\frac{n}{n-2}}\lambda_{i}\lambda_{j}\gamma_{n}G^{\frac{2}{2-n}}( a _{i}, a _{j})
>0
\end{split}\end{equation}

and for $ \frac{\lambda_{j}}{\lambda_{i}} \leq 1,$ so for $i>j$, and $\eps>0$ sufficiently small
\begin{equation}\begin{split}
-\lambda_{i}\partial_{\lambda_{i}}\eps_{i,j}
= &
\frac{n-2}{2}\eps_{i,j}^{\frac{n}{n-2}}
(
\frac{\lambda_{i} }{ \lambda_{j} }
-
\frac{ \lambda_{j} }{ \lambda_{i} }
+
\lambda_{i}\lambda_{j}\gamma_{n}G^{\frac{2}{2-n}}( a _{i}, a _{j}))\geq 
\frac{n-2}{4}\eps_{i,j}.
\end{split}\end{equation}

Thus \eqref{eij_positive _argument} follows.
\end{proof}

\begin{proposition}[Case $n=4, \, \omega=0$]\label{prop_n=4}$_{}$\\
Let $n=4$ and $Cond_{4}$ hold true.
Ordering
\begin{equation*}
\begin{split}
\frac{1}{K_{1}}\ln\frac{1}{\lambda_{1}}\geq \ldots \geq \frac{1}{K_{p}}\ln\frac{1}{\lambda_{p}} 
\end{split}
\end{equation*} 
the piecewise differentiable continuous function
$
\psi=\sum_{i}\frac{C^{i}}{K_{i}}\ln  \frac{1}{\lambda_{i}}
$
satisfies 
\begin{equation*}\begin{split}
\psi'
\geq &
\sum_{i} \frac{H_{i}}{\lambda_{i}^{2}} 
+
\frac{\vert \nabla K_{i}\vert^{2}\ln \lambda_{i}}{K_{i}^{2}\lambda_{i}^{2}}
+
\sum_{i> j}\eps_{i,j}
+
O(\vert \delta J(u)\vert^{2}),
\end{split}\end{equation*}
provided $C>1$ is sufficiently large.
\end{proposition}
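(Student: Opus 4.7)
The plan is to differentiate $\psi$ directly, substitute the evolution equations from corollary \ref{cor_simplifying_the_shadow_flow}, and then, paragraph by paragraph, handle (a) the self-interaction terms in each bubble, (b) the bubble--bubble interactions, and (c) the remainders.

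First I would expand
\begin{equation*}
\psi' = \sum_{i}\frac{C^{i}}{K_{i}}\Bigl(-\frac{\dot\lambda_{i}}{\lambda_{i}}\Bigr) + \sum_{i}\frac{C^{i}\ln\lambda_{i}}{K_{i}^{2}}\,\langle\nabla K_{i},\dot a_{i}\rangle,
\end{equation*}
which is forced by the chain rule on $\psi=\sum \frac{C^{i}}{K_{i}(a_{i})}\ln\frac{1}{\lambda_{i}}$. Inserting the formulas of corollary \ref{cor_simplifying_the_shadow_flow} with $n=4$ and $r/k\to r_{\infty}>0$, the first sum produces the mass contribution $\gamma_{1}\frac{H_{i}}{\lambda_{i}^{2}}$, the Laplacian contribution $\gamma_{2}\frac{\Delta K_{i}}{K_{i}\lambda_{i}^{2}}$, and the interaction piece $-\gamma\sum_{j\neq i}\frac{\alpha_{j}}{\alpha_{i}}\lambda_{i}\partial_{\lambda_{i}}\eps_{i,j}$. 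The second sum, using $\dot a_{i}=\frac{r}{k}\frac{e_{3}}{c_{3}}\frac{\nabla K_{i}}{K_{i}\lambda_{i}^{2}}(1+o(1))+\dotsb$, produces the desired self-interaction term $\gamma_{3}\frac{|\nabla K_{i}|^{2}\ln\lambda_{i}}{K_{i}^{2}\lambda_{i}^{2}}$ together with a cross-interaction $\sum \frac{C^{i}\ln\lambda_{i}}{K_{i}^{2}}\langle\nabla K_{i},\frac{1}{\lambda_{i}}\nabla_{a_{i}}\eps_{i,j}\rangle$, which by lemma \ref{lem_interactions} is $O(\frac{\ln\lambda_{i}}{\lambda_{i}}\eps_{i,j})=o_{\varepsilon}(\eps_{i,j})$ and hence absorbable.

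For the bubble--bubble sum I would repeat verbatim the reshuffling from the preceding case $n=3$: the ordering $\frac{1}{K_{1}}\ln\frac{1}{\lambda_{1}}\geq\ldots\geq\frac{1}{K_{p}}\ln\frac{1}{\lambda_{p}}$ (which is the natural $n=4$ analogue of the $n=3$ ordering) together with $C>1$ sufficiently large converts
\begin{equation*}
-\sum_{i\neq j}\frac{C^{i}}{K_{i}}\frac{\alpha_{j}}{\alpha_{i}}\lambda_{i}\partial_{\lambda_{i}}\eps_{i,j}\;\geq\; c\sum_{i>j}C^{i}\eps_{i,j},
\end{equation*}
by exactly the computation reproduced in the proof of the $n=3$ case, exploiting $-\lambda_{i}\partial_{\lambda_{i}}\eps_{i,j}-\lambda_{j}\partial_{\lambda_{j}}\eps_{i,j}>0$ and $-\lambda_{i}\partial_{\lambda_{i}}\eps_{i,j}\geq\tfrac{1}{2}\eps_{i,j}$ for $i>j$.

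The heart of the proof is the self-interaction lower bound
\begin{equation*}
\gamma_{1}\frac{H_{i}}{\lambda_{i}^{2}}+\gamma_{2}\frac{\Delta K_{i}}{K_{i}\lambda_{i}^{2}}+\gamma_{3}\frac{|\nabla K_{i}|^{2}\ln\lambda_{i}}{K_{i}^{2}\lambda_{i}^{2}}\;\geq\; c\Bigl(\frac{H_{i}}{\lambda_{i}^{2}}+\frac{|\nabla K_{i}|^{2}\ln\lambda_{i}}{K_{i}^{2}\lambda_{i}^{2}}\Bigr).
\end{equation*}
I would split into two regimes. Away from $[\nabla K=0]$, on a set where $|\nabla K|\geq\delta>0$, the $\ln\lambda_{i}$ factor makes the gradient term dominate the bounded Laplacian term as $\lambda_{i}\to\infty$. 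Near $[\nabla K=0]$, hypothesis $Cond_{4}$ yields $\Delta K_{i}/K_{i}>-c(M)$ with $c(M)$ determined by the manifold; since $M$ is not conformally $\mathbb{S}^{4}$, the positive mass theorem plus compactness gives $H_{i}\geq H_{\min}>0$ uniformly in $a_{i}\in M$, so the choice of the constant $c(M)$ in $Cond_{4}$ (relative to the universal ratio $\gamma_{2}/\gamma_{1}$) absorbs the negative contribution into the mass term. The remainders $R_{2,i},R_{3,i}$ of corollary \ref{cor_simplifying_the_shadow_flow} are of the form $o_{\varepsilon}(\lambda_{i}^{-2}+\sum_{j}\eps_{i,j})+O(|\delta J(u)|^{2})$ and are absorbed into the strictly positive lower bounds produced in the previous step.

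\textbf{Main obstacle.} The only delicate point is the compatibility of the universal constants $\gamma_{1},\gamma_{2}$ (arising from bubble geometry in corollary \ref{cor_simplifying_the_shadow_flow}) with the quantitative threshold $c(M)$ of $Cond_{4}$: this is precisely the place where positive mass and the assumption on $\Delta K/K$ must combine uniformly in the concentration points $a_{i}$. The rest of the argument --- differentiation, cross-term absorption, and the ordering trick for the $\eps_{i,j}$ --- is a direct adaptation of the $n=3$ proof and should proceed essentially by book-keeping.
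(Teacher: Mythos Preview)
Your overall strategy matches the paper's, but there is a genuine gap in the bubble--bubble interaction step. You claim that the inequality
\[
-\sum_{i\neq j}\frac{C^{i}}{K_{i}}\frac{\alpha_{j}}{\alpha_{i}}\lambda_{i}\partial_{\lambda_{i}}\eps_{i,j}\;\geq\; c\sum_{i>j}C^{i}\eps_{i,j}
\]
follows ``by exactly the computation reproduced in the proof of the $n=3$ case, exploiting \ldots\ $-\lambda_{i}\partial_{\lambda_{i}}\eps_{i,j}\geq\tfrac{1}{2}\eps_{i,j}$ for $i>j$.'' But in the $n=3$ case this last inequality is immediate because the ordering there is $\frac{1}{\lambda_{1}}\geq\ldots\geq\frac{1}{\lambda_{p}}$, so $i>j$ literally means $\lambda_{i}\geq\lambda_{j}$. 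Here the ordering is by $\frac{1}{K_{i}}\ln\frac{1}{\lambda_{i}}$, and since the $K_{i}$ vary, $i>j$ no longer forces $\lambda_{i}\geq\lambda_{j}$; a priori one could have $\lambda_{j}\gg\lambda_{i}$, in which case $-\lambda_{i}\partial_{\lambda_{i}}\eps_{i,j}$ may be negative.

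The paper closes this gap with a short contradiction argument: if for some $i>j$ one had $\frac{\lambda_{j}}{\lambda_{i}}\geq c\bigl(\frac{\lambda_{i}}{\lambda_{j}}+\lambda_{i}\lambda_{j}\gamma_{n}G_{g_{0}}^{\frac{2}{2-n}}(a_{i},a_{j})\bigr)$, then $\lambda_{j}\gg\lambda_{i}$ and $d(a_{i},a_{j})=O(\lambda_{i}^{-1})$, hence $K_{i}=K_{j}+O(\lambda_{i}^{-1})$. Feeding this back into the ordering inequality $(C^{i}-C^{j})\frac{\ln\frac{1}{\lambda_{i}}}{K_{i}}\leq(C^{i}-C^{j})\frac{\ln\frac{1}{\lambda_{j}}}{K_{j}}$ gives $\frac{C^{i}-C^{j}}{K_{j}}\ln\frac{\lambda_{j}}{\lambda_{i}}\leq O\bigl(\frac{\ln\lambda_{i}}{\lambda_{i}}\bigr)$, contradicting $\lambda_{j}\gg\lambda_{i}$. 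This yields $\frac{\lambda_{j}}{\lambda_{i}}=o\bigl(\frac{\lambda_{i}}{\lambda_{j}}+\lambda_{i}\lambda_{j}\gamma_{n}G_{g_{0}}^{\frac{2}{2-n}}(a_{i},a_{j})\bigr)$ for $i>j$, after which the $n=3$ reshuffling does go through verbatim. You need to insert this step; without it the claimed pointwise bound $-\lambda_{i}\partial_{\lambda_{i}}\eps_{i,j}\geq\tfrac{1}{2}\eps_{i,j}$ is unjustified. Incidentally, the ``main obstacle'' you flag --- compatibility of $\gamma_{1},\gamma_{2}$ with $c(M)$ --- is not really an obstacle: $Cond_{4}$ is stated precisely so that this compatibility holds, and the paper simply invokes it.
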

The interaction terms are of correct sign again. Differentiating $\frac{1}{K_{i}}$ in time leads
to the quantity $\frac{\vert \nabla K_{i}\vert^{2}\ln \lambda_{i}}{K_{i}^{2}\lambda_{i}^{2}}$, which
enforces a blow up point $a_{i}$ to come close to $[\nabla K=0]$. $Cond_{4}$ then ensures
the 
$\frac{\lap K_{i}}{\lambda_{i}^{2}}$ terms to be controlled by the positive  mass related terms $\frac{H_{i}}{\lambda_{i}^{2}}$.

\begin{proof}[\textbf{Proof of proposition \ref{prop_n=4}}]\label{p_n=4}$_{}$\\
As $M$ is not conformally equivalent to the standard sphere $\mathbb{S}^{4}$, the positive  mass theorem holds. Thus $H_{i}>0$ in the statement of corollary \ref{cor_simplifying_the_shadow_flow}
\begin{enumerate}[label=(\roman*)]
 \Item 
\begin{equation}\begin{split}
-\frac{\dot \lambda_{i} }{\lambda_{i} }
= &
\frac{r}{k}
[
\gamma_{0} \frac{H_{i}}{\lambda_{i}^{2}}
+
\gamma_{1}\frac{\lap K_{i}}{K_{i}\lambda_{i}^{2}} 
-
\gamma_{3}\sum_{i \neq j=1}^{p}\frac{\alpha_{j}}{\alpha_{i}}
\lambda_{i}\partial_{\lambda_{i}}\eps_{i,j}
](1+o_{\frac{1}{\lambda_{i}}}(1)) \\
& +
o_{\varepsilon}(\sum_{r} \frac{1}{\lambda_{r}^{2}} +\sum_{r\neq s}\eps_{r,s})
+
O(\sum_{r} \frac{\vert \nabla K_{r}\vert^{2}}{\lambda_{r} ^{2}}+\vert \delta J(u)\vert^{2})
\end{split}\end{equation}
 \Item
\begin{equation}\begin{split}
\dot K_{i}
= &
\frac{r}{k}
\gamma_{2}\frac{\vert \nabla K_{i}\vert^{2}}{K_{i}\lambda_{i}^{2}} (1+o_{\frac{1}{\lambda_{i}}}(1)) \\
& +
 \frac{\nabla K_{i}}{\lambda_{i}}O (\sum_{r} \frac{1}{\lambda_{r}^{2}} +\sum_{r\neq s}\eps_{r,s}+\vert \delta J(u)\vert^{2})
\end{split}\end{equation}
\end{enumerate}
for suitable $\gamma_{0}, \gamma_{1}, \gamma_{2}, \gamma_{3}>0$. 
Then for
$
\psi=\sum_{i}\frac{C^{i}}{K_{i}}\ln  \frac{1}{\lambda_{i}}, \; C>1
$
there holds
\begin{equation}\begin{split}
\psi '
\geq &
\frac{r}{k}
\sum_{i}
\frac{C^{i}}{K_{i}\lambda_{i}^{2}}
(
\tilde \gamma_{0}H_{i}
+
\gamma_{1}\frac{\lap K_{i}}{K_{i}} 
+
\tilde \gamma_{2}
\frac{\vert \nabla K_{i}\vert^{2}}{K_{i}^{2}}\ln \lambda_{i} 
) (1+o_{\frac{1}{\lambda_{i}}}(1))\\
& -
\gamma_{3}\frac{r}{k}\sum_{i\neq j}\frac{C^{i}}{K_{i}}\frac{\alpha_{j}}{\alpha_{i}}\lambda_{i}\partial_{\lambda_{i}}\eps_{i,j} (1+o_{\frac{1}{\lambda_{i}}}(1))\\
& +
o_{\varepsilon}(\sum_{r\neq s}\eps_{r,s})
+
O(\vert \delta J(u)\vert^{2}).
\end{split}\end{equation}
We complete the definition of $\psi$ by ordering
\begin{equation*}\begin{split}
\frac{1}{K_{1}}\ln\frac{1}{\lambda_{1}}\geq \ldots \geq \frac{1}{K_{p}}\ln\frac{1}{\lambda_{p}}
\end{split}\end{equation*}
and claim, that there exists $c >0$ such, that for any $C>1$ sufficiently large
\begin{equation}\begin{split}\label{eij_n=4_estimate}
-\sum_{i\neq j}\frac{C^{i}}{K_{i}}\frac{\alpha_{j}}{\alpha_{i}}\lambda_{i}\partial_{\lambda_{i}}\eps_{i,j}
\geq 
c\sum_{i> j} C^{i}\eps_{i,j}.
\end{split}\end{equation}

To prove \eqref{eij_n=4_estimate} note, that by definition for any pair $i>j$ we have 
\begin{equation}
\begin{split}
(C^{i}-C^{j})\frac{\ln \frac{1}{\lambda_{i}}}{K_{i}}
\leq
(C^{i}-C^{j})\frac{\ln \frac{1}{\lambda_{j}}}{K_{j}}
\end{split}
\end{equation}

or equivalently
\begin{equation}
\begin{split}
\frac{C^{i}-C^{j}}{K_{i}}\ln  \frac{1}{\lambda_{i}} 
+
\frac{C^{j}-C^{i}}{K_{j}}\ln  \frac{1}{\lambda_{j}} 
\leq  &
0
\end{split}
\end{equation}

We then have 
\begin{equation}
\begin{split}
\frac{\lambda_{j}}{\lambda_{i}}
=
o
(
\frac{\lambda_{i}}{\lambda_{j}}
+
\lambda_{i}\lambda_{j}\gamma_{n}G^{\frac{2}{2-n}}(a_{i},a_{j})
),
\end{split}
\end{equation} 

from which the claim follows as when proving \eqref{eij_positive _argument}.
Otherwise we have
\begin{equation}
\begin{split}
\frac{\lambda_{j}}{\lambda_{i}}
\geq c
(
\frac{\lambda_{i}}{\lambda_{j}}
+
\lambda_{i}\lambda_{j}\gamma_{n}G^{\frac{2}{2-n}}(a_{i},a_{j})
)
\end{split}
\end{equation} 

for some $c>0$. This implies $\frac{\lambda_{j}}{\lambda_{i}}\gg 1 \gg \frac{\lambda_{i}}{\lambda_{j}}$ and
 $d((a_{i},a_{j}))=O(\frac{1}{\lambda_{i}})$. Thus
\begin{equation}
\begin{split}
\frac{C^{i}-C^{j}}{K_{j}}\ln  \frac{\lambda_{j}}{\lambda_{i}} 
\leq  &
O(\frac{\ln \lambda_{i}}{\lambda_{i}}),
\end{split}
\end{equation}

yielding a contradiction.
\\
We conclude
\begin{equation}\begin{split}
\psi '
\geq &
\frac{r}{k}
\sum_{i}
\frac{C^{i}}{K_{i}\lambda_{i}^{2}}
(
\tilde \gamma_{0} H_{i}
+
\gamma_{1}\frac{\lap K_{i}}{K_{i}} 
+
\tilde \gamma_{2}\frac{\vert \nabla K_{i}\vert^{2}}{K_{i}^{2}}\ln  \lambda_{i} )
(1+o_{\frac{1}{\lambda_{i}}}(1)) \\
& +
\tilde \gamma_{3}\sum_{i> j}C^{i}\eps_{i,j} 
+
O(\vert \delta J(u)\vert^{2}).
\end{split}\end{equation}
Thereby the assertion follows immediately due to $Cond_{4}$.
\end{proof}

\begin{proposition}[Case $n=5, \, \omega=0$]\label{prop_n=5}$_{}$\\
Let $n=5$ and $Cond_{5}$ hold true. 
For $\underline{\epsilon}>0$ small let $\eta_{\underline\epsilon}\in C^{\infty}_{0}(\R,[0,1])$ with 
\begin{equation*}\begin{split}
\eta_{\underline\epsilon}(r)\equiv 0\;\text{ for }\;r\leq \epsilon, \; \eta_{\underline\epsilon}(r)\equiv 1\;\text{ for } \;r\geq 2
\; \text{ and }\;
0\leq \eta_{\underline\epsilon}'\leq \frac{2}{\underline\epsilon}
\end{split}\end{equation*}
and 
\begin{equation*}\begin{split}
\theta_{i}=\eta_{\underline\epsilon}(-\lambda_{i}\lap K_{i})\ln \frac{-\lambda_{i}\lap K_{i}}{\underline\epsilon  }\geq 0.
\end{split}\end{equation*}
Ordering for some $\kappa>0$
\begin{equation*}
\begin{split}
\frac{\ln  \frac{1}{\lambda_{1}}}{K_{1}} 
-
\kappa \theta_{1}
\geq 
\ldots
\geq
\frac{\ln  \frac{1}{\lambda_{p}}}{K_{p}} 
-
\kappa \theta_{p}
\end{split}
\end{equation*} 
the piecewise differentiable continuous function
\begin{equation*}\begin{split}
\psi
=
\sum_{i}
\left(
\frac{C^{i}}{K_{i}}\ln  \frac{1}{\lambda_{i}} 
-
\kappa C^{i}
\theta_{i}
\right)
\end{split}\end{equation*}
satisfies for $C\gg 1$ and a suitable choice of $\kappa$
\begin{equation*}
\begin{split}
\psi'\geq &
\sum_{i} \frac{H_{i}}{\lambda_{i}^{3}} +\frac{\vert \nabla K_{i}\vert^{2}}{K_{i}^{2}\lambda_{i}^{2}}\ln \lambda_{i}
+
\sum_{i\neq j}\eps_{i,j}
+
O(\vert \delta J(u)\vert^{2}),
\end{split}
\end{equation*} 
provided $d(a_{i},[\nabla K=0])\ll 1$ is sufficiently small for all $i=1, \ldots,p$.
\end{proposition}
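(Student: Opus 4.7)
The plan is to differentiate $\psi$ in time along the flow and to match each resulting term against the positive contributions $H_i/\lambda_i^3$, $|\nabla K_i|^2\ln\lambda_i/(K_i^2\lambda_i^2)$ and $\epsilon_{i,j}$, modulo an $O(|\delta J(u)|^2)$ error, exactly as in propositions \ref{prop_n=3} and \ref{prop_n=4}. Writing
\[
\psi' = \sum_i\Bigl(-\frac{C^i\dot K_i}{K_i^2}\ln\tfrac{1}{\lambda_i} - \frac{C^i}{K_i}\frac{\dot\lambda_i}{\lambda_i}\Bigr) - \kappa\sum_i C^i\dot\theta_i,
\]
the first bracket, via $\dot K_i = \lambda_i^{-1}\langle\nabla K_i,\lambda_i\dot a_i\rangle$ together with corollary \ref{cor_simplifying_the_shadow_flow}(ii), yields the required $|\nabla K_i|^2\ln\lambda_i/(K_i^2\lambda_i^2)$ contribution; inserting corollary \ref{cor_simplifying_the_shadow_flow}(i) into the second bracket produces the positive mass term $H_i/\lambda_i^3$, the sign-indeterminate $\lap K_i/(K_i^2\lambda_i^2)$ term, and the bubble interactions. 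As in proposition \ref{prop_n=4}, the ordering of $\frac{1}{K_i}\ln\tfrac{1}{\lambda_i}-\kappa\theta_i$ combined with $C\gg 1$ turns these interactions into $c\sum_{i>j}\epsilon_{i,j}$: the same reductio via $\lambda_j/\lambda_i=o(\lambda_i/\lambda_j+\lambda_i\lambda_jG^{2/(2-n)})$ still applies, because if it failed we would have $d(a_i,a_j)=O(1/\lambda_i)$, hence $\theta_i - \theta_j\approx -\ln(\lambda_j/\lambda_i)$, which together with the ordering forces $(1/K_j + \kappa)\ln(\lambda_j/\lambda_i)\leq 0$, a contradiction.

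The role of the $\kappa\theta_i$ correction is to neutralize the negative contribution from $\lap K_i<0$ by exploiting $Cond_5$. On the open set $\{-\lambda_i\lap K_i\geq 2\underline\epsilon\}$ where $\eta_{\underline\epsilon}=1$ we have
\[
\dot\theta_i = \frac{\dot\lambda_i}{\lambda_i} + \frac{\langle\nabla\lap K_i,\dot a_i\rangle}{\lap K_i},
\]
so $-\kappa C^i\dot\theta_i$ contributes $\kappa(-\dot\lambda_i/\lambda_i)$ plus a second piece which, after substitution of corollary \ref{cor_simplifying_the_shadow_flow}(ii), reduces to leading order to
\[
-\kappa C^i\frac{r}{k}\frac{e_3}{c_3}\frac{\langle\nabla\lap K_i,\nabla K_i\rangle}{K_i\lambda_i^2\lap K_i}.
\]
Since the assumption $d(a_i,[\nabla K=0])\ll 1$ places $a_i$ inside the open set $U$ of $Cond_5$ and $\lap K_i<0$ in the present regime, this piece is strictly positive of magnitude $\gtrsim\kappa|\lap K_i|/(K_i\lambda_i^2)$. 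Fixing $\kappa$ large (depending only on the ratios $e_j/c_j$) then dominates both the bad $\lap K_i/(K_i^2\lambda_i^2)$ term from the middle bracket and the further $\kappa\lap K_i/(K_i\lambda_i^2)$ piece arising from the $\kappa(-\dot\lambda_i/\lambda_i)$ substitution, producing a strictly positive net coefficient of $|\lap K_i|/(K_i\lambda_i^2)$.

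The remaining regimes are absorbed easily. On $\{-\lambda_i\lap K_i\leq\underline\epsilon\}$ one has $\theta_i\equiv 0$ and $|\lap K_i|\leq\underline\epsilon/\lambda_i$, so $|\lap K_i|/(K_i\lambda_i^2)=O(1/\lambda_i^3)$ is absorbed into $H_i/\lambda_i^3$; on the transitional annulus $\underline\epsilon<-\lambda_i\lap K_i<2\underline\epsilon$ the factor $\eta'_{\underline\epsilon}\leq 2/\underline\epsilon$ is bounded, $|\ln(-\lambda_i\lap K_i/\underline\epsilon)|\leq\ln 2$ is bounded, and $|\lap K_i|=O(1/\lambda_i)$, so the total contribution of $\dot\theta_i$ is $O(1/\lambda_i^3)$ and again absorbed in the positive mass term. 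The error terms $R_{k,i}$ of corollary \ref{cor_simplifying_the_shadow_flow} are of order $1/\lambda_i^4$, $|\nabla K_i|^2/\lambda_i^2$, $\epsilon_{r,s}^2$, or $|\delta J(u)|^2$ and each fits into the asserted lower bound.

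\textbf{Main obstacle.} The delicate point will be the simultaneous compatibility of the three free parameters $\underline\epsilon$, $\kappa$ and $C$: $\kappa$ must be large enough for the $Cond_5$-based cancellation yet compatible with the $\theta_i$-modified ordering used in the interaction estimate; $C$ must be chosen large once $\kappa$ is fixed; and $\underline\epsilon$ must be small but positive so that the transitional annulus is absorbable. What makes this solvable is the \emph{strict} inequality in $Cond_5$ (with factor $1/3$, strictly below $1/2$), which provides the quantitative margin between the two competing coefficients and thus lets all three choices be made in succession.
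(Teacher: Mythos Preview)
Your overall architecture matches the paper's proof: differentiate $\psi$, feed in the shadow flow of corollary \ref{cor_simplifying_the_shadow_flow}, control the interactions via the ordering, and let the $\theta_i$ correction absorb the $\lap K_i$ term through $Cond_5$. The decomposition into the three regimes $\{-\lambda_i\lap K_i\leq\underline\epsilon\}$, the transition annulus, and $\{-\lambda_i\lap K_i\geq 2\underline\epsilon\}$ is also exactly what the paper does.

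There is, however, a genuine gap in your cancellation argument. You write that the $Cond_5$ piece is ``strictly positive of magnitude $\gtrsim\kappa|\lap K_i|/(K_i\lambda_i^2)$'' and that ``fixing $\kappa$ large\ldots\ dominates\ldots\ the further $\kappa\lap K_i/(K_i\lambda_i^2)$ piece''. But both of these contributions scale \emph{linearly} in $\kappa$, so their ratio is independent of $\kappa$; taking $\kappa$ large cannot separate them. Concretely, the good piece has size $\gamma_3\kappa\,\dfrac{\langle\nabla\lap K_i,\nabla K_i\rangle}{|\lap K_i|}\,\dfrac{1}{K_i\lambda_i^2}$ while the bad piece from $\kappa(-\dot\lambda_i/\lambda_i)$ has size $\gamma_2\kappa\,\dfrac{|\lap K_i|}{K_i\lambda_i^2}$, with $\gamma_2=e_2/c_2$, $\gamma_3=e_3/c_3$. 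The net is positive iff $\dfrac{\langle\nabla\lap K_i,\nabla K_i\rangle}{|\lap K_i|^2}>\dfrac{\gamma_2}{\gamma_3}$. The paper \emph{computes} $\gamma_3/\gamma_2=3$ from the explicit integrals defining $c_2,c_3,e_2,e_3$; this is precisely why the threshold in $Cond_5$ is $1/3$, and your remark ``$1/3$, strictly below $1/2$'' is off the mark---there is no slack at $1/2$, the relevant number is exactly $1/3$. Only once the net $\kappa$-coefficient is known to be $\geq c_0>0$ (from the strictness of $Cond_5$) does choosing $\kappa=\gamma_2/(c_0\min_M K)$ kill the residual order-one term $\gamma_2|\lap K_i|/(K_i^2\lambda_i^2)$, which is what the paper does.

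Two smaller points. First, in your interaction reductio you jump directly to $\theta_i-\theta_j\approx-\ln(\lambda_j/\lambda_i)$; this requires $\eta_{\underline\epsilon}(-\lambda_i\lap K_i)=1$, i.e.\ $-\lambda_i\lap K_i\gg 1$, which the paper first extracts from the ordering inequality before using it. Second, you omit the extra interaction $\sum_{i\neq j}\frac{C^i}{\lambda_i}|\nabla_{a_i}\eps_{i,j}|$ coming from the $\lambda_i\dot a_i$ inside $(\lap K_i)'$; the paper bounds this by $O(\sum_{i>j}C^j\eps_{i,j})$ and absorbs it for $C\gg 1$.
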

Note, that closeness of the blow up points to the critical set $[\nabla K=0]$ 
is not a serious restriction, cf. lemma \ref{lem_critical_points_of_K_as_attractors} and proposition \ref{prop_princ_lower_bounded_under_Cond_n}

The interaction terms however are of correct sign again and one is left
with comparing $\frac{H_{i}}{\lambda_{i}^{3}}$ to $\frac{\lap K_{i}}{\lambda_{i}^{2}}$.
$Cond_{5}$ then ensures by differentiating in time, that
$\lambda_{i}\lap K_{i}$ can be absorbed.

\begin{proof}[\textbf{Proof of proposition \ref{prop_n=5}}]\label{p_n=5}$_{}$\\
As $M$ is not conformally equivalent to the standard sphere $\mathbb{S}^{5}$, the positive  mass theorem holds. Thus $H_{i}>0$ in the statement of corollary \ref{cor_simplifying_the_shadow_flow}
\begin{enumerate}[label=(\roman*)]
 \Item 
\begin{equation}\begin{split}\label{dotepsi/epsi_n=5}
-\frac{\dot \lambda_{i} }{\lambda_{i} }
= &
\frac{r}{k}
[\gamma_{1}\frac{H_{i}}{\lambda_{i}^{3}}
+
\gamma_{2}\frac{\lap K_{i}}{K_{i}\lambda_{i}^{2}} 
-
\gamma_{4}\sum_{i \neq j=1}^{p}\frac{\alpha_{j}}{\alpha_{i}}
\lambda_{i}\partial_{\lambda_{i}}\eps_{i,j} 
](1+o_{\frac{1}{\lambda_{i}}}(1)) \\
& +
o_{\varepsilon}(\sum_{r} \frac{1}{\lambda_{r}^{3}}+\sum_{r\neq s}\eps_{r,s})
+
O
(
\vert \delta J(u)\vert^{2}
)
\end{split}\end{equation}
 \Item
\begin{equation}\begin{split}\label{dot_Ki_n=5}
\dot K_{i}
= &
\gamma_{3}\frac{r}{k}
\frac{\vert \nabla K_{i}\vert^{2}}{K_{i}\lambda_{i}^{2}}(1+o_{\frac{1}{\lambda_{i}}}(1))
\\
& +
O
(
\sum_{r} \frac{1}{\lambda_{r}^{3}}+\sum_{r\neq s}\eps_{r,s}
+
\vert \delta J(u)\vert^{2}
)
\frac{\vert \nabla K_{i}\vert}{\lambda_{i}}
\end{split}\end{equation}
 \Item
\begin{equation}\begin{split}\label{dotlapKi_n=5}
(\lap K_{i})'
= &
\frac{r}{k}
[
\gamma_{3}
\frac{\langle\nabla \lap K_{i}, \nabla K_{i}\rangle}{K_{i}\lambda_{i}^{2}} 
+
\gamma_{5}\frac{\vert \nabla \lap K_{i}\vert^{2}}{K_{i}\lambda_{i}^{4}} \\
& \quad\quad\quad\quad +
\gamma_{6}\sum_{i\neq j=1}^{p}\frac{\alpha_{j}}{\alpha_{i}}\frac{\nabla \lap K_{i}\nabla_{a_{i}}\eps_{i,j}}{\lambda_{i}^{2}}
] (1+o_{\frac{1}{\lambda_{i}}}(1))\\
& +
o_{\varepsilon}(\frac{1}{\lambda_{i}}(\sum_{r} \frac{1}{\lambda_{r}^{3}}+\sum_{r\neq s}\eps_{r,s}))  
+
O(\frac{1}{\lambda_{i}}\vert \delta J(u)\vert^{2})
\end{split}\end{equation}
\end{enumerate}
with suitable constants $\gamma_{1}, \ldots, \gamma_{6}>0$. Here we have used
\begin{equation*}\begin{split}
\sum_{r}
\frac{\vert \nabla K_{r}\vert}{K_{r}\lambda_{r}}
+
\frac{\vert \lap K_{r}\vert }{K_{r}\lambda_{r}^{2}}
+
\frac{1}{\lambda_{r}^{n-2}}
+
\sum_{r\neq s}\eps_{r,s}
\leq 
C\vert \delta J(u)\vert 
\end{split}\end{equation*}
according to lemma \ref{prop_princ_lower_bounded_under_Cond_n}.
In view of \eqref{dotepsi/epsi_n=5} we wish to compare 
$\lap K_{i}$ to $\frac{H_{i}}{\lambda_{i}}$ in a neighbourhood of a
critical point with non positive   laplacian and this is done as follows.
For $\eta_{\underline \epsilon}$ as in statement of the proposition consider
\begin{equation}\begin{split}\label{thetai_definition}
\theta_{i}=\eta_{\underline\epsilon}(-\lambda_{i}\lap K_{i})\ln \frac{-\lambda_{i}\lap K_{i}}{\underline\epsilon  }\geq 0.
\end{split}\end{equation}
Letting  $s_{i}=-\lambda_{i}\lap K_{i}$ we calculate
\begin{equation}\begin{split}
\theta_{i}'
= &
\eta_{\underline\epsilon}'(s_{i})s_{i}'\ln \frac{s_{i}}{\underline\epsilon  } 
+
\eta_{\underline\epsilon}(s_{i})(\ln \frac{s_{i}}{\underline\epsilon  })'\\
= &
[
\underline\epsilon\eta_{\underline\epsilon}'(s_{i})
\frac{s_{i}}{\underline\epsilon  }\ln \frac{s_{i}}{\underline\epsilon  } 
+
\eta_{\underline\epsilon}(s_{i})
]
(\ln \frac{s_{i}}{\underline\epsilon  })'
= 
\vartheta_{\underline\epsilon,i}\cdot(\ln \frac{s_{i}}{\underline\epsilon  })',
\end{split}\end{equation}
where readily 
\begin{enumerate}[label=(\roman*)]
 \item \quad
$
\vartheta_{\underline\epsilon,i}
= 0 
\; \text{ for }\; \frac{s_{i}}{\underline\epsilon}\leq 1
$
 \item \quad
$
0\leq \vartheta_{\underline\epsilon,i}
\leq 4\ln 2+1
\; \text{ for }\; 1\leq \frac{s_{i}}{\underline\epsilon}\leq 2
$
 \item \quad 
$
\vartheta_{\underline\epsilon,i}
= 1
\; \text{ for }\; \frac{s_{i}}{\underline\epsilon}\geq 2.
$
\end{enumerate}
From \eqref{dotepsi/epsi_n=5} and \eqref{dotlapKi_n=5} we  infer
\begin{equation}\begin{split}
\theta_{i}'
= &
\frac{r}{k}\vartheta_{\underline\epsilon,i}
[
-\gamma_{1}\frac{H_{i}}{\lambda_{i}^{3}}
+
(
-
\gamma_{2}
+
\frac{\gamma_{3}\langle \nabla \lap K_{i}, \nabla K_{i}\rangle }{\vert \lap K_{i} \vert^{2}}
)
\frac{\lap K_{i}}{K_{i}\lambda_{i}^{2}}  \\
& \quad \quad\;+
\gamma_{4}\sum_{i\neq j=1}^{p}\frac{\alpha_{j}}{\alpha_{i}}\lambda_{i}\partial_{\lambda_{i}}\eps_{i,j} \\
& \quad \quad\;+
\gamma_{5}
\frac{\vert \nabla \lap K_{i}\vert^{2}}{K_{i}\lap K_{i}\lambda_{i}^{4}}
+
\gamma_{6}\sum_{i\neq j=1}^{p}\frac{\alpha_{j}}{\alpha_{i}}
\frac{\nabla \lap K_{i}}{\lap K_{i}\lambda_{i}^{2}}
\nabla_{a_{i}}\eps_{i,j}
](1+o_{\frac{1}{\lambda_{i}}}(1))
\\
& +
o_{\varepsilon}(\sum_{r} \frac{1}{\lambda_{r}^{3}}+\sum_{r\neq s}\eps_{r,s})
+
O
(
\vert \delta J(u)\vert^{2}
).
\end{split}\end{equation}
Note, that  we have  
$-\lambda_{i}\lap K_{i}\geq \underline \epsilon$ for $\vartheta_{\underline\epsilon,i}\neq 0$,
whence 
\begin{equation}
\begin{split}
\vartheta_{\underline\epsilon,i}
\frac{\vert \nabla \lap K_{i}\vert^{2}}{K_{i}\lap K_{i}\lambda_{i}^{4}}
\leq 0
\; \text{ and }\;
\vartheta_{\underline\epsilon,i}\sum_{i\neq j=1}^{p}
\frac{\nabla \lap K_{i}}{\lap K_{i}\lambda_{i}^{2}}
\nabla_{a_{i}}\eps_{i,j}
=
O(\frac{1}{\lambda_{i}}\nabla_{a_{i}}\eps_{i,j}).
\end{split}
\end{equation} 
This gives
\begin{equation}\begin{split}\label{thetai'=}
\theta_{i}'
\leq  &
\frac{r}{k}\vartheta_{\underline\epsilon,i}
[
-\gamma_{1}\frac{H_{i}}{\lambda_{i}^{3}}
+
(
-
\gamma_{2}
+
\frac{\gamma_{3}\langle \nabla \lap K_{i}, \nabla K_{i}\rangle}{\vert \lap K_{i} \vert^{2}}
)
\frac{\lap K_{i}}{K_{i}\lambda_{i}^{2}}  \\
& \quad \quad\;+
\gamma_{4}\sum_{i\neq j=1}^{p}\frac{\alpha_{j}}{\alpha_{i}}\lambda_{i}\partial_{\lambda_{i}}\eps_{i,j} 
+
O(\sum_{i\neq j=1}^{p}
\frac{1}{\lambda_{i}}
\nabla_{a_{i}}\eps_{i,j}
)
](1+o_{\frac{1}{\lambda_{i}}}(1))
\\
& +
o_{\varepsilon}(\sum_{r} \frac{1}{\lambda_{r}^{3}}+\sum_{r\neq s}\eps_{r,s})
+
O
(
\vert \delta J(u)\vert^{2}
).
\end{split}\end{equation}
Consider for some $\kappa>0$ to be defined later on
\begin{equation}\begin{split}
\psi
=
\tilde \psi
-
\sum_{i}\kappa C^{i}
\theta_{i}
=
\sum_{i}
\frac{C^{i}}{K_{i}}\ln  \frac{1}{\lambda_{i}} 
-
\sum_{i}
\kappa C^{i}
\theta_{i}.
\end{split}\end{equation}
By \eqref{dotepsi/epsi_n=5} and \eqref{dot_Ki_n=5} we have
\begin{equation}
\begin{split}
(\frac{\ln \frac{1}{\lambda_{i}}}{K_{i}})'
= &
\frac{r}{kK_{i}}
[
\gamma_{1}\frac{H_{i}}{\lambda_{i}^{3}}
+
\gamma_{2}\frac{\lap K_{i}}{K_{i}\lambda_{i}^{2}} 
+
\gamma_{3}
\frac{\vert \nabla K_{i}\vert^{2}\ln \lambda_{i}}{K_{i}^{2}\lambda_{i}^{2}} \\
& \quad\quad\quad\quad\quad\quad\quad\quad\;\, \, \, \,
-
\gamma_{4}\sum_{i \neq j=1}^{p}\frac{\alpha_{j}}{\alpha_{i}}
\lambda_{i}\partial_{\lambda_{i}}\eps_{i,j} 
](1+o_{\frac{1}{\lambda_{i}}}(1)) \\
&
+
o_{\varepsilon}(\sum_{r} \frac{1}{\lambda_{r}^{3}}+\sum_{r\neq s}\eps_{r,s})
+
O(\vert \delta J(u)\vert^{2}),
\end{split}
\end{equation}
whence in conjunction with \eqref{thetai'=} there holds
\begin{equation}
\begin{split}
 \psi'
\geq &
\frac{r}{k}
\sum_{i}
\frac{C^{i}}{K_{i}\lambda_{i}^{2}}
(
\tilde {\gamma}_{1}\frac{H_{i}}{\lambda_{i}}
+
\gamma_{2}\frac{\lap K_{i}}{K_{i}} 
+
\tilde{\gamma}_{3}
\frac{\vert \nabla K_{i}\vert^{2}}{K_{i}^{2}}\ln \lambda_{i} 
)(1+o_{\frac{1}{\lambda_{i}}}(1)) \\
& -
\gamma_{4}\frac{r}{k}\sum_{i\neq j}C^{i}\frac{\alpha_{j}}{\alpha_{i}}
[
\frac{1}{K_{i}}+\kappa \vartheta_{\underline\epsilon,i}
]
\lambda_{i}\partial_{\lambda_{i}}\eps_{i,j} (1+o_{\frac{1}{\lambda_{i}}}(1))
 \\
& -
\kappa \frac{r}{k}\sum_{i} C^{i}
\vartheta_{\underline\epsilon,i}
(
-
\gamma_{2}
+
\frac{\gamma_{3}\langle\nabla \lap K_{i}, \nabla K_{i}\rangle}{\vert \lap K_{i} \vert^{2}}
)
\frac{\lap K_{i}}{K_{i}\lambda_{i}^{2}} (1+o_{\frac{1}{\lambda_{i}}}(1)) \\
& +
O(\sum_{i\neq j}\frac{C^{i}}{\lambda_{i}}\vert \nabla_{a_{i}}\eps_{i,j}\vert)
+
o_{\varepsilon}(\sum_{r\neq s}\eps_{r,s})
+
O(\vert \delta J(u)\vert^{2}).
\end{split}
\end{equation}
We complete the definition of $\psi$ by ordering
\begin{equation}
\begin{split}
\frac{\ln  \frac{1}{\lambda_{1}}}{K_{1}} 
-
\kappa \theta_{1}
\geq 
\ldots
\geq
\frac{\ln  \frac{1}{\lambda_{p}}}{K_{p}} 
-
\kappa \theta_{p}
\end{split}
\end{equation} 
and claim, that there exists $c >0$ such, that for any $C>1$ sufficiently large
\begin{equation}\label{eij_n=5_estimate}
\begin{split}
 -
\sum_{i\neq j}C^{i}\frac{\alpha_{j}}{\alpha_{i}}
[
\frac{1}{K_{i}}+\kappa \vartheta_{\underline\epsilon,i}
]
\lambda_{i}\partial_{\lambda_{i}}\eps_{i,j} 
\geq \epsilon\sum_{i>j }C^{i}\eps_{i,j}
\end{split}
\end{equation} 
and
\begin{equation}\label{eij_n=5_small}
\begin{split}
\sum_{i\neq j}\frac{C^{i}}{\lambda_{i}}\vert\nabla_{a_{i}}\eps_{i,j}\vert
= &
O(\sum_{i>j}C^{j}\eps_{i,j}).
\end{split}
\end{equation} 

To prove \eqref{eij_n=5_estimate}, \eqref{eij_n=5_small} note, that by definition for any  $i>j$ we have
\begin{equation}
\begin{split}
(C^{i}-C^{j})(\frac{\ln\frac{1}{\lambda_{i}}}{K_{i}}-\kappa \theta_{i})
\leq 
(C^{i}-C^{j})(\frac{\ln\frac{1}{\lambda_{j}}}{K_{j}}-\kappa \theta_{j})
\end{split}
\end{equation}

or equivalently
\begin{equation}
\begin{split}
\frac{C^{i}-C^{j}}{K_{i}}\ln  \frac{1}{\lambda_{i}} 
& +
\frac{C^{j}-C^{i}}{K_{j}}\ln  \frac{1}{\lambda_{j}} \\
& +
\kappa
(C^{j}- C^{i})
\theta_{i}  
+
\kappa (C^{i}-C^{j})
\theta_{j}
\leq  
0.
\end{split}
\end{equation}

We then have 
\begin{equation}\label{li/lj=o(...)_remark}
\begin{split}
\frac{\lambda_{j}}{\lambda_{i}}
=
o
(
\frac{\lambda_{i}}{\lambda_{j}}
+
\lambda_{i}\lambda_{j}\gamma_{n}G^{\frac{2}{2-n}}(a_{i},a_{j})
).
\end{split}
\end{equation} 

Otherwise we may assume for some $c>0$
\begin{equation}
\begin{split}
\frac{\lambda_{j}}{\lambda_{i}}
\geq c
(
\frac{\lambda_{i}}{\lambda_{j}}
+
\lambda_{i}\lambda_{j}\gamma_{n}G^{\frac{2}{2-n}}(a_{i},a_{j})
).
\end{split}
\end{equation} 

This implies $\frac{\lambda_{j}}{\lambda_{i}}\gg 1 \gg \frac{\lambda_{i}}{\lambda_{j}}$ and
 $d(a_{i},a_{j})=O(\frac{1}{\lambda_{i}})$. Consequently
\begin{equation}
\begin{split}
& \frac{C^{i}-C^{j}}{K_{j}}\ln  \frac{\lambda_{j}}{\lambda_{i}} 
+
\kappa
(C^{j}- C^{i})
\theta_{i}  
+
\kappa (C^{i}-C^{j})
\theta_{j} 
\leq  
O(\frac{\ln \lambda_{i}}{\lambda_{i}}),
\end{split}
\end{equation}

whence due to the definition of $\theta_{i}$, see \eqref{thetai_definition}, there necessarily holds
\begin{equation}
\begin{split}
\ln \frac{-\lambda_{i}\lap K_{i}}{\underline\epsilon  }\gg 1
, \; \text{ so }\;
-\lambda_{i}\lap K_{i}\gg 1
\end{split}
\end{equation} 

and we get
\begin{equation}
\begin{split}
& \frac{C^{i}-C^{j}}{K_{j}}\ln  \frac{\lambda_{j}}{\lambda_{i}} 
+
\kappa
(C^{j}- C^{i})
\ln \frac{-\lambda_{i}\lap K_{i}}{\underline\epsilon  } \\
&  
+
\kappa (C^{i}-C^{j})
\eta_{\underline\epsilon}(-\lambda_{j}\lap K_{j})\ln \frac{-\lambda_{j}\lap K_{j}}{\underline\epsilon  } 
\leq  
O(\frac{\ln \lambda_{i}}{\lambda_{i}}).
\end{split}
\end{equation}

On the other hand $d(a_{i},a_{j})=O(\frac{1}{\lambda_{i}})$ and therefore 
\begin{equation}
\begin{split}
1
\ll 
-\lambda_{i}\lap K_{i}
=
-\lambda_{i}\lap K_{j}+O(1)
=
-\frac{\lambda_{i}}{\lambda_{j}}\lambda_{j}\lap K_{j}+O(1).
\end{split} 
\end{equation} 

This shows at once
$
1\ll - \lambda_{i}\lap K_{i} \ll -\lambda_{j}\lap K_{j}
$
and we conclude
\begin{equation}
\begin{split}
\frac{C^{i}-C^{j}}{K_{j}}\ln  \frac{\lambda_{j}}{\lambda_{i}}  
+
\kappa (C^{i}-C^{j})
\ln \frac{-\lambda_{j}\lap K_{j}}{- \lambda_{i}\lap K_{i} } 
\leq  
O(\frac{\ln \lambda_{i}}{\lambda_{i}})
\end{split}
\end{equation}

yielding a contradiction. Thus \eqref{li/lj=o(...)_remark} is established, whence  \eqref{eij_n=5_estimate}
follows

as when proving \eqref{eij_positive _argument}. We are left with estimating 
\begin{equation}
\begin{split}
\sum_{i\neq j}\frac{C^{i}}{\lambda_{i}}\vert \nabla_{a_{i}}\eps_{i,j}\vert 
= &
\frac{n-2}{2}\sum_{i< j}C^{i}\eps_{i,j}
\vert
\frac
{(\frac{\lambda_{j}}{\lambda_{i}})^{\frac{1}{2}}(\lambda_{i}\lambda_{j})^{\frac{1}{2}}\gamma_{n}\nabla_{ a_{i}}G^{\frac{2}{2-n}}(a_{i},a_{j})}
{\frac{\lambda_{i}}{\lambda_{j}}+\frac{\lambda_{j}}{\lambda_{i}}
+
\lambda_{i}\lambda_{j}\gamma_{n}G^{\frac{2}{2-n}}(a_{i},a_{j})} 
\vert \\
& +
o(\sum_{i\neq j}\eps_{i,j}),
\end{split} 
\end{equation}

whence we immediately obtain \eqref{eij_n=5_small}.\\
We conclude for $C>1$ sufficiently large
\begin{equation}
\begin{split}
 \psi'
\geq &
\frac{r}{k}
\sum_{i}
\frac{C^{i}}{K_{i}\lambda_{i}^{2}}
(
\tilde {\gamma}_{1}\frac{H_{i}}{\lambda_{i}}
+
\gamma_{2}\frac{\lap K_{i}}{K_{i}} 
+
\tilde{\gamma}_{3}
\frac{\vert \nabla K_{i}\vert^{2}}{K_{i}^{2}}\ln \lambda_{i} 
)(1+o_{\frac{1}{\lambda_{i}}}(1)) \\
& -
\kappa \frac{r}{k}\sum_{i} C^{i}
\vartheta_{\underline\epsilon,i}
(
-
\gamma_{2}
+
\frac{\gamma_{3}\langle \nabla \lap K_{i}, \nabla K_{i}\rangle }{\vert \lap K_{i} \vert^{2}}
)
\frac{\lap K_{i}}{K_{i}\lambda_{i}^{2}}(1+o_{\frac{1}{\lambda_{i}}}(1))  \\
& +
\tilde \gamma_{4}\sum_{i>j}C^{i}\eps_{i,j}
+
O(\vert \delta J(u)\vert^{2}).
\end{split}
\end{equation}
This gives
\begin{equation}
\begin{split}
 \psi'
\geq &
\frac{r}{k}\sum_{i} C^{i}
[
\frac{\gamma_{2}}{K_{i}}
-
\kappa\vartheta_{\underline\epsilon,i}
(
-
\gamma_{2}
+
\frac{\gamma_{3}\langle \nabla \lap K_{i}, \nabla K_{i}\rangle }{\vert \lap K_{i} \vert^{2}}
)
]
\frac{\lap K_{i}}{K_{i}\lambda_{i}^{2}}(1+o_{\frac{1}{\lambda_{i}}}(1))  \\
& +
c
(
\sum_{i}
\frac{H_{i}}{\lambda_{i}^{3}}
+
\frac{\vert \nabla K_{i}\vert^{2}}{K_{i}^{2}\lambda_{i}^{2}}\ln \lambda_{i} 
+
\sum_{i> j}\eps_{i,j} 
)
+
O(\vert \delta J(u)\vert^{2}).
\end{split}
\end{equation}
We now decompose $P=\{1, \ldots,p\}=P_{1}+P_{2}+P_{3}$ with
\begin{enumerate}[label=(\roman*)]
 \item \quad
$P_{1}=\{i\in \{1, \ldots,p\} \mid -\lap K_{i}<  \frac{\underline\epsilon}{\lambda_{i}} \}$
 \item \quad
$P_{2}=\{i\in \{1, \ldots,p\} \mid  \frac{\underline\epsilon }{\lambda_{i}} \leq -\lap K_{i} \leq 2 \frac{\underline\epsilon}{\lambda_{i}} \}$
 \item \quad
$P_{3}=\{i\in \{1, \ldots,p\} \mid -\lap K_{i}>  2\frac{\underline \epsilon}{\lambda_{i}} \}$.
\end{enumerate}
Note, that for $i\in P_{2}\cup P_{3}$ we have 
$
\lap K_{i}<0,
$
whence according to $Cond_{5}$
\begin{equation}
\begin{split}
\langle \nabla \lap K_{i}, \nabla K_{i}\rangle >\frac{1}{3}\vert \lap K_{i}\vert^{2},
\end{split}
\end{equation} 
in particular $\nabla K_{i}\neq 0$ for $i\in P_{2}\cup P_{3}$.

For $i\in P_{1}$ there holds $\vartheta_{\underline\epsilon,i}=0$, thus
\begin{equation}\begin{split}
(
\frac{\gamma_{2}}{K_{i}}
-
\kappa\vartheta_{\underline\epsilon,i}
(
-
\gamma_{2}
+
\frac{\gamma_{3}\langle\nabla \lap K_{i}, \nabla K_{i}\rangle }{\vert \lap K_{i} \vert^{2}}
)
)
\frac{\lap K_{i}}{K_{i}\lambda_{i}^{2}} 
\geq 
-\frac{\gamma_{2}}{K_{i}^{2}}  \frac{\underline\epsilon}{\lambda_{i}^{3}}.
\end{split}\end{equation}

For $i\in P_{2}$ 
\begin{equation}\begin{split}
(
\frac{\gamma_{2}}{K_{i}}
-
\kappa\vartheta_{\underline\epsilon,i}
(
-
\gamma_{2}
+
\frac{\gamma_{3}\langle \nabla \lap K_{i}, \nabla K_{i}\rangle }{\vert \lap K_{i} \vert^{2}}
)
)
\frac{\lap K_{i}}{K_{i}\lambda_{i}^{2}} 
\geq 
-2\frac{\gamma_{2}}{K_{i}^{2}}  \frac{\underline\epsilon}{\lambda_{i}^{3}},
\end{split}\end{equation}

since indeed $Cond_{5}$ imposed on $K$ can be rewritten as
\begin{equation}\begin{split}
-\gamma_{2}
+
\frac{\gamma_{3}\langle \nabla \lap K_{i}, \nabla K_{i}\rangle }{\vert \lap K_{i} \vert^{2}}
\geq c_{0}>0
\; 
\text{ for }\;a_{i}\in U(\mathcal{N})\cap[\lap K_{i}<0],
\end{split}\end{equation}

as 
$
\frac{\gamma_{3}}{\gamma_{2}}=3 
$ 
by precise calculation, see below.\\
Choosing therefore
\begin{equation}
\underline \epsilon\leq  c\min_{a\in M}H(a)\; \text{ with }\; c=c(K)   
\end{equation} 
we get
\begin{equation}
\begin{split}
 \psi'
\geq &
\frac{r}{k}\sum_{i\in P_{3}} C^{i}
[
\frac{\gamma_{2}}{K_{i}}
-
\kappa c_{0}
]
\frac{\lap K_{i}}{K_{i}\lambda_{i}^{2}} (1+o_{\frac{1}{\lambda_{i}}}(1)) \\
&
+
c
(
\sum_{i}
\frac{H_{i}}{\lambda_{i}^{3}}
+
\frac{\vert \nabla K_{i}\vert^{2}}{K_{i}^{2}\lambda_{i}^{2}}\ln \lambda_{i} 
+
\sum_{i> j}\eps_{i,j} 
)
+
O(\vert \delta J(u)\vert^{2}).
\end{split}
\end{equation}
since $\vartheta_{\underline\epsilon,i}=1$ on $i\in P_{3}$.  
Letting 
$
\kappa=\frac{\gamma_{2}}{c_{0}\cdot\min_{M}K}
$
we get as $\lap K_{i}<0$ for $i\in P_{3}$
\begin{equation}\begin{split}\label{phi'_n=5}
\psi'
\geq &
c
(
\sum_{i}
\frac{H_{i}}{\lambda_{i}^{3}}
+
\frac{\vert \nabla K_{i}\vert^{2}}{K_{i}^{2}\lambda_{i}^{2}}\ln \lambda_{i} 
+
\sum_{i> j}\eps_{i,j} 
)
+
O(\vert \delta J(u)\vert^{2}).
\end{split}\end{equation}
We are left with checking $\frac{\gamma_{3}}{\gamma_{2}}=3$. $\gamma_{2}$ and $\gamma_{3}$ arise from differentiating
\begin{equation}
\begin{split}
-\frac{\dot \lambda_{i}}{\lambda_{i}}
=
\gamma_{2}\frac{r}{k}\frac{\lap K_{i}}{K_{i}\lambda_{i}^{2}}+\ldots 
\; \text{ and }\;
\lambda_{i}\dot a_{i} 
=
\gamma_{3}\frac{r}{k}\frac{\nabla K_{i}}{K_{i}\lambda_{i}}+\ldots,
\end{split}
\end{equation} 
where
$
\gamma_{2}=\frac{e_{2}}{c_{2}}, \; \gamma_{3}=\frac{e_{3}}{c_{3}},
$
cf. corollary \ref{cor_simplifying_the_shadow_flow}.
According to \eqref{a2} and \eqref{a3} 
\begin{equation}
\begin{split}
c_{2}
= 
\frac{(n-2)^{2}}{4}\int_{\R^{n}} \frac{\vert r^{2}-1\vert^{2}}{(1+r^{2})^{n+2}}
, \;
c_{3}
=
\frac{(n-2)^{2}}{n}\int_{\R^{n}} \frac{r^{2}}{(1+r^{2})^{n+2}},
\end{split}
\end{equation} 
whereas according to \eqref{bringoutK_k=2} and \eqref{bringoutK_k=3}
\begin{equation}
\begin{split}
e_{2}=\frac{(n-2)}{4n}\int_{\R^{n}} \frac{r^{2}(r^{2}-1)}{(1+r^{2})^{n+1}}
\; \text{ and }\; 
e_{3}=\frac{n-2}{n}\int_{\R^{n}} \frac{r^{2}}{(1+r^{2})^{n+1}} 
\end{split}
\end{equation} 
One obtains
\begin{equation}\label{g3/g2_n=5}
\frac{\gamma_{3}}{\gamma_{2}}
= 
 \frac{c_{2}e_{3}}{c_{3}e_{2}}
=3.
\end{equation} The proof is thereby complete.
\end{proof}
The strategy in case $\omega>0$ is independent of the dimension the same as when proving proposition \ref{prop_n=3}. Note, that in comparison to propositions \ref{prop_n=3}, \ref{prop_n=4} and \ref{prop_n=5}
the contribution of the positive  mass related term $\frac{H_{i}}{\lambda^{n-2}}$ is replaced by the positive terms
$\frac{\omega_{i}}{\lambda_{i}^{\frac{n-2}{2}}}$.
\begin{proposition}[Case $\omega>0$]\label{prop_w}$_{}$\\
Let $n=3,4,5$. 
Ordering
\begin{equation*}
\begin{split}
\frac{1}{\lambda_{1}}\geq \ldots \geq \frac{1}{\lambda_{p}} 
\end{split}
\end{equation*} 
the function
$
\psi=\sum_{i}C^{i}\ln  \frac{1}{\lambda_{i}}
$
satisfies
\begin{equation*}\begin{split}
\psi'
\geq &
\sum_{i} \frac{\omega_{i}}{\lambda_{i}^{\frac{n-2}{2}}} 
+
\sum_{i> j}\eps_{i,j}
+
O(\vert \delta J(u)\vert^{2}),
\end{split}\end{equation*}
provided $C>1$ is sufficiently large.
\end{proposition}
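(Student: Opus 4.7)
The plan is to differentiate $\psi$ directly, appeal to the explicit evolution equation for $\lambda_i$ given by Corollary \ref{cor_simplifying_the_shadow_flow_w}, and then split the analysis into (a) the self-interaction term driven by $\omega>0$, which is automatically of the correct sign, and (b) the bubble-bubble interaction terms, which are handled exactly as in the proof of Proposition \ref{prop_n=3}. No dimensional condition $Cond_n$ is needed here because the replacement of $H_i/\lambda_i^{n-2}$ by $\omega_i/\lambda_i^{(n-2)/2}$ moves the obstruction from the positive mass theorem to the assumption $\omega>0$.

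First I will write
\begin{equation*}
\psi'=-\sum_{i}C^{i}\frac{\dot\lambda_{i}}{\lambda_{i}}
=\frac{r}{k}\sum_{i}C^{i}\Bigl[\frac{d_{2}}{c_{2}}\frac{\alpha\,\omega_{i}}{\alpha_{i}K_{i}\lambda_{i}^{\frac{n-2}{2}}}-\frac{b_{2}}{c_{2}}\sum_{i\neq j}\frac{\alpha_{j}}{\alpha_{i}}\lambda_{i}\partial_{\lambda_{i}}\eps_{i,j}\Bigr](1+o_{\frac{1}{\lambda_{i}}}(1))+\sum_{i}C^{i}R_{2,i},
\end{equation*}
using Corollary \ref{cor_simplifying_the_shadow_flow_w}, and note that since $\omega>0$ on $M$ there is a uniform lower bound $\omega_{i}\geq c_{\omega}>0$. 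Combined with the uniform bounds $0<\underline\alpha\leq \alpha,\alpha_{i}\leq \overline\alpha$ and $0<\underline{K}\leq K_{i}\leq \overline{K}$ on $V(\omega,p,\eps)$, this gives a genuine positive  contribution of size $\sum_{i}\omega_{i}/\lambda_{i}^{(n-2)/2}$ with no sign assumption.

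The main obstacle is the interaction sum; this is precisely the point where the ordering $\frac{1}{\lambda_{1}}\geq\ldots\geq\frac{1}{\lambda_{p}}$ and the choice $C\gg 1$ enter. I will reproduce verbatim the bookkeeping in the proof of Proposition \ref{prop_n=3}: splitting $\sum_{i\neq j}$ into $i>j$ and $i<j$, using the symmetrisation
\begin{equation*}
-\lambda_{i}\partial_{\lambda_{i}}\eps_{i,j}-\lambda_{j}\partial_{\lambda_{j}}\eps_{i,j}=(n-2)\eps_{i,j}^{\frac{n}{n-2}}\lambda_{i}\lambda_{j}\gamma_{n}G^{\frac{2}{2-n}}(a_{i},a_{j})>0,
\end{equation*}
together with the pointwise estimate $-\lambda_{i}\partial_{\lambda_{i}}\eps_{i,j}\geq \frac{n-2}{4}\eps_{i,j}$ valid whenever $\lambda_{j}/\lambda_{i}\leq 1$ (which for $i>j$ holds by our ordering) to conclude
\begin{equation*}
-\sum_{i\neq j}C^{i}\tfrac{\alpha_{j}}{\alpha_{i}}\lambda_{i}\partial_{\lambda_{i}}\eps_{i,j}\geq c\sum_{i>j}C^{i}\eps_{i,j}
\end{equation*}
for every $C>1$ sufficiently large. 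The prefactor $\alpha_{j}/\alpha_{i}$ is uniformly bounded above and below, so the cross term $C^{j}(\alpha_{i}/\alpha_{j})$ can be dominated by $C^{i}(\alpha_{j}/\alpha_{i})$ once $C$ is chosen large.

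Finally I will absorb the remainder $R_{2,i}$. The $o_{\varepsilon}$ pieces in $R_{2,i}$ go into the principal terms already produced (smallness of $\eps$ and $1/\lambda_{i}$), the contribution $O(\sum_{r}|\nabla K_{r}|^{2}/\lambda_{r}^{2}+1/\lambda_{r}^{n-2}+\sum_{r\neq s}\eps_{r,s}^{2})$ is of strictly higher order than $\sum_{i}\omega_{i}/\lambda_{i}^{(n-2)/2}+\sum_{i>j}\eps_{i,j}$ for $n=3,4,5$ and hence absorbable as well, and the genuinely non-absorbable $O(|\delta J(u)|^{2})$ survives as the tail announced in the statement. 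Combining these three steps yields
\begin{equation*}
\psi'\geq c\Bigl(\sum_{i}\frac{\omega_{i}}{\lambda_{i}^{\frac{n-2}{2}}}+\sum_{i>j}\eps_{i,j}\Bigr)+O(|\delta J(u)|^{2}),
\end{equation*}
which, after rescaling constants, is exactly the assertion of the proposition.
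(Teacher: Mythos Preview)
Your proposal is correct and follows exactly the approach the paper intends: the paper's own proof is simply the one-line statement that the result ``follows analogously to the proof of proposition \ref{prop_n=3},'' and you have faithfully written out what that analogy means, using Corollary \ref{cor_simplifying_the_shadow_flow_w} in place of Corollary \ref{cor_simplifying_the_shadow_flow}, the positivity of $\omega$ in place of the positive mass theorem, and reproducing the interaction-term bookkeeping from the $n=3$ case verbatim. Your handling of the remainder $R_{2,i}$ is also correct, since for $n=3,4,5$ one has $(n-2)/2<2$ and $(n-2)/2<n-2$, so all non-$|\delta J(u)|^{2}$ pieces are indeed absorbable into the principal term.
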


\begin{proof}[\textbf{Proof of proposition \ref{prop_w}}]\label{p_case_omega>0_flow_outside_V(omega,p,e)}$_{}$\\
This follows analogously to the proof of proposition \ref{prop_n=3}.
\end{proof}

\subsection{Proving the theorems}
\label{subsec:ProvingTheTheorem}
\subsubsection{Proof of theorem \ref{thm_1}}
Let us consider a flow line, which is a solution of the evolution equation
\begin{equation}
\partial_{t}u=-\frac{1}{K}(R-r\K)u, \; u_{0}=u(\cdot,0)>0\;\text{ with }\; \int Ku_{0}^{\frac{2n}{n-2}}=1.
\end{equation}
The flow line exists for all times   according to corollary \ref{cor_long_time_existence} and we know
\begin{equation}
J(u)=r\searrow J_{\infty}=r_{\infty}
\;
\text{ and }\; 
\vert \delta J(u)\vert\- 0
\; \text{ as }\; t\- \infty.
\end{equation}
due to proposition \ref{prop_strong_convergence_of_the_first_variation}. 

Thus a flow line is of Palais-Smale type and due to the 
concentration-compactness principle, cf. proposition \ref{prop_concentration_compactness}, the flow line is
precompact in some $V(\omega,p, \eps)$, cf. definition \ref{def_V(omega,p,e)} and the remarks following.

Taking the unicity result on a limiting critical point into account, cf. proposition \ref{prop_unicity_of_a_limiting_critical_point}, we obtain convergence of the flow line to a critical point of $J$, once the flow line is precompact in $V(\omega,0, \eps)$. In other words the flow line converges strongly, if and only if it converges along a sequence in time, and in this case we are done.

Thus we wish to lead to a contradiction the scenario, that for some $p\geq 1$ the flow line is precompact in some $V(\omega,p, \eps)$.

By assumption of  theorem \ref{thm_1} the dimensional condition $Cond_{n}$ hold true, so
$ \partial J $ is principally lower bounded, cf. proposition \ref{prop_princ_lower_bounded_under_Cond_n}. 
Taking the unicity result on a limiting critical point at infinity into account, cf. proposition \ref{prop_unicity_of_a_limiting_critical_point_at_infinity}, we may assume, that the flow line
remains for all times   in $V(\omega,p, \eps)$ and goes deeper and deeper in the sense, that
\begin{equation}\begin{split}
\fa 0<\epsilon<\varepsilon \e T>0 \fa t>T\;:\; u(t)\in V(\omega, p, \epsilon).
\end{split}\end{equation}
In particular the unique representation
$
u=u_{\alpha, \beta}+\alpha^{i}\var_{a_{i}, \lambda_{i}}+v
$
given by proposition \ref{prop_optimal_choice} is well defined for all times    and we have
$\lambda_{i}\- \infty\; \text{ as }\; t\- \infty.$
Moreover the blow up points $a_{i}$ converge to  $[\nabla K=0]$, cf. lemma \ref{lem_critical_points_of_K_as_attractors}. 
Recalling the explanatory introduction of the previous subsection the functions given by propositions \ref{prop_n=3},\ref{prop_n=4},\ref{prop_n=5} and \ref{prop_w} then yield the desired contradiction. 
\subsubsection{Proving theorem \ref{thm_2}}
First of all note, that on $V(p,\varepsilon)$ we have according to definition \ref{def_V(omega,p,e)}
\begin{equation}
J(u)
=
\frac
{\sum \alpha_{i}^{2}\int L_{g_{0}}\varphi_{i}\varphi_{i}}
{
(
\sum_{i}\alpha_{i}^{\frac{2n}{n-2}}\int K\varphi_{i}^{\frac{2n}{n-2}}
)^{\frac{n-2}{n}}
}
+
o_{\varepsilon}(1)
=
c_{0}\frac{\sum_{i}\alpha_{i}^{2}}{(\sum_{i}\alpha_{i}^{\frac{2n}{n-2}}K_{i})^{\frac{n-2}{n}}}
+
o_{\varepsilon}(1)
\end{equation}
with $\alpha_{i}^{\frac{4}{n-2}}=\frac{4n(n-1)k}{rK_{i}}+o_{\varepsilon}(1)$. Therefore
\begin{equation}\label{energy_J_rough}
J(u)
=
c_{0}(\sum_{i}\frac{1}{K_{i}^{\frac{n-2}{2}}})^{\frac{2}{n}}+o_{\varepsilon}(1).
\end{equation} 
From this it is clear, that the least critical energy level at infinity is 
\begin{equation}
J_{\infty,\min}=\frac{c_{0}}{(\max K)^{\frac{n-2}{n}}} 
\end{equation} 
Thus, if we start a flow line $u$ with $u(0,\cdot)=u_{0}$, where
\begin{align*}
u_{0}=\alpha_{0}\varphi_{a_{0},\lambda_{0}}\in V(1,\varepsilon),\,d(a_{0},[K=\max K])<\varepsilon
\end{align*}
and $\varepsilon>0$ is sufficiently small, we may assume, that $u$ remains in $V(1,\varepsilon)$ for all times and $d(a,[K=\max K])=o_{\varepsilon}(1)$.

Indeed according to definition 
\ref{def_H(omega,p,e)} and the remarks following $u$ is precompact 

with respect to $V(\omega,p,\varepsilon)$. Since we want to prove the existence of a non 

trivial solution $\omega>0$, we may argue by contradiction and assume, that no 

non trivial solution exists, that is $\omega=0$. 
So $u$ is precompact with respect to 

$V(p,\varepsilon)$. Moreover, 
if for some time sequence $t_{k}\- \infty$ we had $u_{t_{k}}\in V(p,\varepsilon_{k})$ 

with $\varepsilon_{k}\searrow 0$ and 
$p\geq 2$, then  \eqref{energy_J_rough} would imply 
\begin{equation}
J(u_{t_{k}})=c_{0}(\sum_{i}\frac{1}{K_{i}^{\frac{n-2}{2}}})^{\frac{2}{n}}+o_{\varepsilon_{k}}(1)
\geq 
c_{0}\frac{p^{\frac{2}{n}}}{(\max K)^{\frac{n-2}{n}}}+o_{\varepsilon_{k}}(1),
\end{equation} 

whence without loss of generality $J(u_{t_{k}})>J(u_{0})$; contradicting $\partial_{t}J(u)\leq 0$.

Therefore $u$ is precompact with respect to $V(1,\varepsilon)$. Likewise we obtain 

$d(a,[K=\max K])=o_{\varepsilon}(1)$, since otherwise $J(u_{t_{k}})>J(u_{0})$. \\
Repeating now the arguments for proposition \ref{prop_princ_lower_bounded_under_Cond_n} it is obvious, that
$\partial J$ is principally lower bounded along the flow line $u$, since due to $Cond_{n}'$ the dimensional conditions $Cond_{n}$, cf. definition \ref{def_dimensional_conditions} are satisfied at the critical level $[K=\max K]$, to which $a$ is close. Therefore the results on the principal behaviour proven in subsection 
\ref{subsec:PrincipalBehaviour} hold true for the flow line $u$, in particular $d(a,[\nabla K=0])\- 0$. 
On the other hand we have
\begin{equation}
\max K-K\lfloor_{[\nabla K=0]\setminus [K=\max K]} >\delta
\end{equation} 
for some $\delta>0$ and $d(a,[K=\max K])=o_{\varepsilon}(1)$. Thus we may assume
\begin{equation}
a\-[K=\max K].
\end{equation} 
Finally note, that the statement of propositions \ref{prop_n=3},\ref{prop_n=4} and \ref{prop_n=5} remain 
valid for the functions constructed there, since as before $Cond_{n}'$ implies, that $Cond_{n}$ is satisfied at the critical level $[K=\max K]$, to which $a$ is close. 
Thus we arrive at the same contradiction as before, whence  
$u$ has to be precompact in some $V(\omega,p,\varepsilon)$ with $w>0$ being a non trivial solution. The proof is thereby complete.

\subsection{A diverging scenario}
\label{subsec:DivergingSzenario}
We give a non trivial example of a non compact flow line.
\begin{lemma}[Non-compact flow line with flatness]$_{}$

Let $n=5$ and $u_{0}=\alpha_{0}\varphi_{a_{0}, \lambda_{0}}$
with $a_{0}$ close to $0\in M$, where
\begin{equation*}
K(x)=1-\sum_{i=1, \ldots,5}\vert x_{i}\vert^{4}
\end{equation*}

in local normal conformal coordinates. 
\\
Then for $\varepsilon>0$ small there exists $0<\varepsilon_{0}<\varepsilon$ such, that the flow line 
$u$ with initial data $u_{0}$ remains in $V(1, \varepsilon)$ for all times, provided   
\begin{enumerate}[label=(\roman*)]
 \item 
$\alpha_{0}\varphi_{a_{0}, \lambda_{0}}\in V(1, \varepsilon_{0})$ and $k_{u_{0}}=\int Ku_{0}^{\frac{2n}{n-2}}=1$
 \item 
$\Vert a_{0}\Vert <\varepsilon_{0}$ and $\lambda_{0}\Vert a_{0}\Vert^{2} >\varepsilon^{-1}_{0}$
 \item 
$(a_{i})_{0}=(a_{j})_{0}>0$ for all $i,j=1, \ldots,5$.
\end{enumerate}
Moreover $u$ converges to a critical point at infinity in the sense, that
\begin{align*}
\lambda\- \infty\;\text{ and }\;\Vert a \Vert\- 0\;\text{ as }\;t\- \infty.
\end{align*}
\end{lemma}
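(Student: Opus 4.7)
The plan is a bootstrap argument on the maximal time interval $[0,T)$ on which $u(t)\in V(1,\varepsilon)$. On this interval proposition \ref{prop_optimal_choice} gives the unique representation $u=\alpha\varphi_{a,\lambda}+v$, corollary \ref{cor_a-priori_estimate_on_v} controls $\Vert v \Vert$ by the principal quantities $\vert \nabla K(a)\vert/\lambda$, $\vert \lap K(a)\vert/\lambda^{2}$, $1/\lambda^{n-2}$, $\vert \delta J(u)\vert$, and the shadow flow of corollary \ref{cor_simplifying_the_shadow_flow} governs $(\alpha,\lambda,a)$. The first step is to show that the diagonal configuration $a_{1}=\cdots=a_{5}>0$ is approximately preserved: since $K$ is symmetric under permutations of coordinates in the chosen chart, the principal force $\nabla K(a)=-4(a_{1}^{3},\ldots,a_{5}^{3})$ lies on the diagonal whenever $a$ does, and the non-symmetric corrections coming from $H(a)$, $\nabla \lap K$ and the remainders $R_{k,i}$ are of strictly smaller order under the standing assumption $\lambda \vert a \vert^{2}\gg 1$; a continuity argument then keeps $a(t)$ in a small neighbourhood of the diagonal.

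On the diagonal $a=s(1,\ldots,1)$ with $\vert a \vert^{2}=5s^{2}$ one computes $\lap K(a)=-60s^{2}<0$ and $\nabla K(a)\cdot a=-20s^{4}$, so corollary \ref{cor_simplifying_the_shadow_flow} gives, to principal order,
\begin{equation*}
\dot \lambda = \frac{r}{k}\frac{e_{2}}{c_{2}}\frac{60 s^{2}}{K\lambda}-\frac{r}{k}\frac{d_{2}}{c_{2}}\frac{H(a)}{\lambda^{2}}+\ldots,\qquad
\dot s =-\frac{r}{k}\frac{e_{3}}{c_{3}}\frac{4s^{3}}{K\lambda^{2}}+\ldots
\end{equation*}
Condition (ii), namely $\lambda_{0}\vert a_{0}\vert^{2}>\varepsilon_{0}^{-1}$, forces $s^{2}/\lambda^{2}\gg H/\lambda^{3}$, so the $\lap K$-term dominates the mass contribution and $\dot \lambda >0$. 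Tracking the quantity $u:=\lambda \vert a \vert^{2}=5\lambda s^{2}$ and using the identity $(e_{3}/c_{3})/(e_{2}/c_{2})=3$ from \eqref{g3/g2_n=5},
\begin{equation*}
\dot u = 5 s^{2}\dot \lambda + 10\lambda s \dot s = \frac{r}{k}\frac{s^{4}}{K\lambda}\frac{e_{2}}{c_{2}}(300-40\cdot 3)+\ldots = \frac{36}{5}\frac{r}{k}\frac{e_{2}}{c_{2}}\frac{\vert a \vert^{4}}{K\lambda}+\ldots >0,
\end{equation*}
so $u(t)$ is monotonically increasing and the bootstrap condition $\lambda \vert a \vert^{2}\gg 1$ is self-perpetuating. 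In particular $\dot \lambda >0$ throughout $[0,T)$ while $\dot s <0$, forcing $s\searrow 0$ and $\lambda \nearrow \infty$.

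It then remains to verify the remaining constraints in the definition of $V(1,\varepsilon)$: $\lambda^{-1}\- 0$ is immediate, the size of $\vert 1 - r\alpha^{4/(n-2)}K(a)/(4n(n-1)k)\vert$ is controlled by the $\sigma_{1,i}$-equation of lemma \ref{lem_the_shadow_flow} together with $\int_{0}^{\infty}\vert \delta J(u)\vert^{2}dt<\infty$, and $\Vert v \Vert$ by corollary \ref{cor_a-priori_estimate_on_v}, since all principal terms tend to zero. The main obstacle is the preservation of the symmetry in hypothesis (iii): without true isometries of $(M,g_{0})$ one cannot expect $a(t)$ to remain exactly on the diagonal, and the crucial sign $\dot u>0$ depends on the open inequality $\vert a \vert^{4}>2\sum_{i}a_{i}^{4}$, which is precisely the region in which $Cond_{5}$ fails (on the diagonal $25s^{4}>10s^{4}$). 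The technically delicate step is therefore to quantify the decay of the off-diagonal component of $a$, coupled with the evolution of its mean, so as to guarantee that this open condition, and hence the increase of $u$, persists throughout the full non-symmetric dynamics.
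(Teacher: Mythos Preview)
Your plan is the same as the paper's: a bootstrap on the maximal interval where $u\in V(1,\varepsilon)$, the shadow flow of corollary \ref{cor_simplifying_the_shadow_flow}, and the scalar $\lambda\vert a\vert^{2}$ (equivalently $\lambda\lap K(a)=-12\lambda\vert a\vert^{2}$) as the quantity that must stay large. Your computation $\dot u=\tfrac{960\gamma_{2}}{\lambda}\bigl(\vert a\vert^{4}-2\sum_{i}a_{i}^{4}\bigr)$ at leading order, using $\gamma_{3}/\gamma_{2}=3$ from \eqref{g3/g2_n=5}, is exactly what the paper obtains when it differentiates $\lambda\lap K(a)$, and your identification of the open cone $\vert a\vert^{4}>2\sum_{i}a_{i}^{4}$ with the failure of $Cond_{5}$ is correct.

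There are two places where your outline does not close and the paper supplies the missing argument. First, the ``continuity argument'' for the diagonal does not work as stated: that the principal force is tangent to the diagonal says nothing about transverse stability. The paper instead tracks the ratios directly,
\begin{equation*}
\bigl(\tfrac{a_{i}}{a_{j}}\bigr)'=-c\,\tfrac{a_{i}}{a_{j}}\,\tfrac{a_{i}^{2}-a_{j}^{2}}{\lambda^{2}}(1+o_{\varepsilon}(1))+O\bigl(\tfrac{\vert\delta J(u)\vert^{2}}{\lambda a_{j}}\bigr),
\end{equation*}
so the leading term drives $a_{\max}/a_{\min}$ toward $1$; since $\lambda a_{\min}\geq c\lambda\Vert a\Vert\geq c\varepsilon^{-1}$ the error integrates, and one keeps $a_{i}/a_{j}<\sqrt[4]{5/2}$, which in turn forces $\vert a\vert^{4}>2\sum a_{i}^{4}$ and hence $\partial_{t}\ln(\lambda\vert a\vert^{2})\geq O(\vert\delta J(u)\vert^{2})$. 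This is the quantification you flag as ``technically delicate'' but do not provide.

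Second, your proposed control of $\Vert v\Vert$ and of $\bigl\vert 1-\tfrac{r\alpha^{4/(n-2)}K(a)}{4n(n-1)k}\bigr\vert$ via corollary \ref{cor_a-priori_estimate_on_v} and the $\sigma_{1}$-equation both carry an $O(\vert\delta J(u)\vert)$ term. Proposition \ref{prop_strong_convergence_of_the_first_variation} only gives $\vert\delta J(u)\vert\to 0$ as $t\to\infty$ and $\int_{0}^{\infty}\vert\delta J(u)\vert^{2}<\infty$; neither yields a uniform-in-$t$ smallness sufficient to close the bootstrap at finite $T$. The paper circumvents this by an energy comparison: expanding $J(u)$ around $\alpha\varphi_{a,\lambda}$ and using the positivity of $\partial^{2}J$ on $H_{u}(1,\varepsilon)$ from proposition \ref{prop_positivity_of_D2J} gives $J(u)\geq 4n(n-1)c_{0}^{2/n}+c\Vert v\Vert^{2}+o_{1/\lambda}(1)$, while $J(u)\leq J(u_{0})=4n(n-1)c_{0}^{2/n}+O(1/\lambda_{0})$, whence $\Vert v\Vert^{2}=o_{1/\lambda+1/\lambda_{0}}(1)$ uniformly on $[0,T)$. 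The same expansion pins $\alpha$ to $c_{0}^{-(n-2)/(2n)}$ up to $o_{1/\lambda}(1)+O(\Vert v\Vert)$, without reference to $\vert\delta J(u)\vert$.
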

Note, that $K$ does not satisfy condition $Cond_{5}$, cf. definition \ref{def_dimensional_conditions}, since
\begin{equation*}
 \langle\nabla \Delta K, \nabla K\rangle=\frac{7}{9}\vert \lap K\vert^{2}\; \text{ on }\; B_{\varepsilon}(0),
\end{equation*} 

but $K$ satisfies the flatness condition of Theorem 0.1 in \cite{YanYanLi_Part1}, cf. \cite{YanYanLi_Part2}, \cite{Escobar_Schoen}.

\begin{proof}
In order to prove, that $u$ remains in $V(1, \varepsilon)$ for all times let us define
\begin{equation}
\begin{split}
T
=
\sup\{\tau>0\mid \fa 0\leq t <\tau
\;:\;&
u\in V(1, \varepsilon), \;
\Vert a\Vert< \varepsilon, \;
\lambda\Vert a\Vert^{2}> \varepsilon^{-1} \\
& \frac{a_{i}}{a_{j}}<\sqrt[4]{\frac{5}{2}}
\; \text{ for all }\; i,j=1, \ldots,n
\}.
\end{split}
\end{equation} 
We then have to show $T=\infty$.

Note, that we may assume $J(u_{0})\leq C$ independent of $0<\varepsilon_{0}\ll1 $, whence
\begin{equation}\label{divergence_integrability}
\int^{\infty}_{0}\vert \delta J(u)\vert^{2}\leq c(K)
\end{equation} 

independent of the smallness of $0<\varepsilon\ll 1$.\\
According to corollary \ref{cor_simplifying_the_shadow_flow} the relevant evolution equations are 
\begin{equation}\begin{split}
-\frac{\dot{\lambda}}{\lambda}
=
\frac{r}{k}(\gamma_{1}\frac{H(a)}{\lambda^{3}}& +\gamma_{2}\frac{\Delta K(a)}{K(a)\lambda^{2}})
(1+o_{\frac{1}{\lambda}}(1))\\
& +
o(\frac{1}{\lambda^{3}})+O(\frac{\vert\nabla K(a)\vert^{2}}{\lambda^{2}}+\vert \delta J(u)\vert^{2})
\end{split}\end{equation}
and 
\begin{equation}\begin{split}
\lambda\dot{a}=\frac{r}{k}
(\gamma_{3}\frac{\nabla K(a)}{K(a)\lambda} & +\gamma_{4}\frac{\nabla \Delta K(a)}{K(a)\lambda^{3}}) (1+o_{\frac{1}{\lambda}}(1))\\
& +
o(\frac{1}{\lambda^{3}})+O(\frac{\vert\nabla K(a)\vert^{2}}{\lambda^{2}}+\vert \delta J(u)\vert^{2}),
\end{split}\end{equation}
where $\frac{r}{k}=4n(n-1)+o(1)=80(1+o_{\varepsilon}(1))$ according to \eqref{rai^{...}/k=...}.
Moreover
\begin{equation}\begin{split}\label{derivatives_of_K}
\nabla K(a)=-4\Vert a\Vert^{2}a,
\,
\Delta K(a)=-12\Vert a \Vert^{2}\; \text{ and }\;\nabla\Delta K(a)=-24a.
\end{split}\end{equation}
We obtain during $(0,T)$ the simplified evolution equations
\begin{equation}\begin{split}\label{lambda_dot_simplified}
-
\frac{\dot{\lambda}}{\lambda}=80\gamma_{2}\frac{\Delta K(a)}{\lambda^{2}}(1+o_{\varepsilon}(1))
+
O(\vert \delta J(u)\vert^{2})
\end{split}\end{equation}
and
\begin{equation}\begin{split}\label{a_dot_simplified}
\lambda\dot{a}=80\gamma_{3}\frac{\nabla K(a)}{\lambda}(1+o_{\varepsilon}(1))+O(\vert \delta J(u)\vert^{2}).
\end{split}\end{equation}
First note, that during $(0,T)$
\begin{equation}\label{Norma'_divergence}
\begin{split}
\partial_{t}\Vert a\Vert^{2} 
= & 
\frac{2}{\lambda}\langle a, \lambda\dot{a}\rangle \\
= &
c\frac{\langle\nabla K(a),a\rangle}{\lambda^{2}}(1+o_{\varepsilon}(1))
+
O(\frac{\Vert a \Vert\vert \delta J(u)\vert^{2}}{\lambda})\\ 
\leq & 
O(\frac{\Vert a \Vert\vert \delta J(u)\vert^{2}}{\lambda}),
\end{split}
\end{equation} 
whence
\begin{align*}
\partial_{t}\ln \Vert a \Vert^{2}\leq O(\frac{\vert \delta J(u)\vert^{2}}{\lambda\Vert a \Vert}).
\end{align*}
But 
$\lambda \Vert a \Vert =\lambda^{\frac{1}{2}}(\lambda \Vert a \Vert^{2})^{\frac{1}{2}}>c\varepsilon^{-1}$
during $(0,T)$ by definition.
Therefore $\Vert a \Vert $ remains uniformly small, e.g.$\Vert a\Vert\leq C\varepsilon_{0}$. 
Let us calculate 
\begin{equation}
\begin{split}
(\lambda\Delta K(a))' 
= & 
\frac{\dot{\lambda}}{\lambda}\lambda\Delta K(a)+\langle\nabla\Delta K(a), \lambda\dot{a}\rangle\\
= & 
-80
\gamma_{2}\frac{\vert\Delta K(a)\vert^{2}}{\lambda}(1+o_{\varepsilon}(1))
\\ & +
80\gamma_{3}\frac{\langle\nabla\Delta K(a), \nabla K(a)\rangle}{\lambda}(1+o_{\varepsilon}(1))\\
  & +O((\vert\lambda\Delta K(a)\vert+\vert\nabla\Delta K(a)\vert)\vert \delta J(u)\vert^{2}).
\end{split}
\end{equation} 
Since $\vert \lambda \Delta K(a)\vert=12 \lambda \Vert a \Vert^{2}\geq c\varepsilon^{-1}$  during $(0,T)$, we obtain  
\begin{equation}
\begin{split}
\frac{(\lambda\Delta K(a))' }{80} 
= & 
(
-
12^{2}\gamma_{2}(\sum_{i=1}^{5}\vert a_{i}\vert^{2})^{2}
+
4\cdot 24\gamma_{3}\sum_{i=1}^{5}\vert a_{i}\vert^{4}
)
\frac{1+o_{\varepsilon}(1)}{\lambda}\\
& +
O(\vert\lambda\Delta K(a)\vert\vert \delta J(u)\vert^{2})\\
\leq & 
(-12^{2}\cdot 5^{2}\, \gamma_{2}\, a^{4}_{\min}
+
4\cdot 5\cdot 24 \, \gamma_{3}\,a^{4}_{\max})\frac{1+o_{\varepsilon}(1)}{\lambda}\\
& +
O(\vert\lambda\Delta K(a)\vert\vert \delta J(u)\vert^{2}),
\end{split}
\end{equation}
where we used $a_{i}>0$ during $(0,T)$ and let
\begin{equation}
a_{\min}=\min\{a_{i}\mid i=1, \ldots,n\}
\;
\text{ and }\;
a_{\max}=\max\{a_{i}\mid i=1, \ldots,n\}.
\end{equation} 
Due to  $\frac{\gamma_{3}}{\gamma_{2}}=3$, cf. \eqref{g3/g2_n=5}, 
and $ a^{4}_{\max}<\frac{5}{2}a_{\min}^{4}$ during $(0,T)$ we get
\begin{equation}
\begin{split}
(\lambda\Delta K(a))' 
\leq & 
O(\vert\lambda\Delta K(a)\vert\vert \delta J(u)\vert^{2}).
\end{split}
\end{equation}
Therefore
\begin{equation}\begin{split}
\partial_{t}\ln(-\lambda\Delta K(a))\geq O(\vert \delta J(u)\vert^{2})
\end{split}\end{equation}
and we conclude using \eqref{divergence_integrability}, that 
\begin{equation}\begin{split}
12 \lambda \Vert a \Vert^{2}
=
-\lambda\Delta K(a)
\geq
-\lambda_{0}\Delta K(a_{0})e^{-C\int_{0}^{\infty}\vert \delta J(u)\vert^{2}}
=
12 \lambda_{0}\Vert a_{0}\Vert^{2}
\end{split}\end{equation}
remains during $(0,T)$ uniformly large, say $\lambda \Vert a \Vert^{2}\geq c\varepsilon_{0}^{-1}$.
Moreover
\begin{equation}
\begin{split}
-\frac{\dot{\lambda}}{\lambda} 
= & 
80\gamma_{2}\frac{\Delta K(a)}{\lambda^{2}}(1+o_{\varepsilon}(1))+O(\vert \delta J(u)\vert^{2})\\
= &
-c\frac{\Vert a \Vert^{2}}{\lambda^{2}}+O(\vert \delta J(u)\vert^{2}) \\
\leq &
-c\frac{\varepsilon^{-1}}{\lambda^{3}}+O(\vert \delta J(u)\vert^{2}),
\end{split}
\end{equation} 
whence
\begin{equation}\begin{split}
\partial_{t}\lambda^{3}+\lambda^{3}O(\vert \delta J(u)\vert^{2})
\geq
C\varepsilon^{-1}.
\end{split}\end{equation}
Letting $\vartheta=\lambda^{3}$ this becomes 
\begin{equation}\begin{split}
\dot{\vartheta}+\vartheta O(\vert \delta J(u)\vert^{2})
\geq
C\varepsilon^{-1}
.
\end{split}\end{equation}
Thus for $\tau(t)=\vartheta(t)e^{\int_{0}^{t}O(\vert \delta J(u)\vert^{2})}$
there holds
\begin{equation}
\begin{split}
\dot{\tau}(t) 
= & (\dot{\vartheta}+\vartheta O(\vert \delta J(u)\vert^{2}))(t)
e^{\int_{0}^{t}O(\vert \delta J(u)\vert^{2})}
\geq 
C\varepsilon^{-1}
e^{\int_{0}^{t}O(\vert \delta J(u)\vert^{2})}
\end{split}
\end{equation} 
and therefore
\begin{equation}\label{tau_evolution_divergence}
\begin{split}
\dot{\tau}(t) 
\geq c\varepsilon^{-1},
\end{split}
\end{equation}
whence
\begin{equation}\begin{split}\label{theta_estimate_divergence}
\vartheta(0)=\tau(0)\leq\tau(t)=\vartheta(t)e^{\int_{0}^{t}O(\vert \delta J(u)\vert^{2})}\leq
C\vartheta(t),
\end{split}\end{equation}
so $\vartheta$ and thereby $\lambda$ remain uniformly large, say $\lambda \geq c\varepsilon_{0}^{-1}$.
Finally note, that 
\begin{equation}
\begin{split}
(\frac{a_{i}}{a_{j}})^{'} 
= & 
\frac{\lambda\dot{a}_{i}}{\lambda a_{j}}-\frac{a_{i}}{a_{j}}\frac{\lambda\dot{a}_{j}}{\lambda a_{j}}\\
= & 
-c(\frac{\vert a_{i}\vert^{2}a_{i}}{\lambda^{2}a_{j}}
-
\frac{a_{i}}{a_{j}}\frac{\vert a_{j}\vert^{2}a_{j}}{\lambda^{2}a_{j}})(1+o_{\varepsilon}(1))
+
O(\frac{\vert \delta J(u)\vert^{2}}{\lambda a_{j}})\\
= & 
-c\frac{a_{i}}{a_{j}}\frac{1}{\lambda^{2}}(\vert a_{i}\vert^{2}-\vert a_{j}\vert^{2})(1+o_{\varepsilon}(1))
+
O(\frac{\vert \delta J(u)\vert^{2}}{\lambda a_{j}}),
\end{split}
\end{equation} 
whence without loss of generality we may assume
\begin{align*}
(\frac{a_{\max}}{a_{\min}})^{'}
\leq 
C\frac{\vert \delta J(u)\vert^{2}}{\lambda a_{\min}}
\; \text{ in case }\;
\frac{a_{\max}}{a_{\min}}\geq \sqrt[4]{\frac{5}{4}}.
\end{align*}
But during $(0,T)$ we have
\begin{equation}
\lambda a_{\min}\geq \sqrt[4]{\frac{2}{5}}\lambda  a_{\max} \geq c\lambda \Vert a \Vert
\end{equation} 
and 
$\lambda \Vert a \Vert
=
\lambda^{\frac{1}{2}}(\lambda \Vert a \Vert^{2})^{\frac{1}{2}}
>\varepsilon^{-1}
$, whence 
\begin{equation}
\begin{split}
\partial_{t}\ln(\frac{a_{\max}}{a_{\min}})
\leq 
C\varepsilon\vert \delta J(u)\vert^{2}
\; \text{ in case }\;
\frac{a_{\max}}{a_{\min}}\geq \sqrt[4]{\frac{5}{4}}.
\end{split}
\end{equation} 
Consequently we may assume 
\begin{equation}
\begin{split}
\frac{a_{i}}{a_{j}}<\sqrt[4]{\frac{5}{3}} 
\; \text{ during }\; (0,T).
\end{split}
\end{equation} 
So far we have seen, that during $(0,T)$ we may assume
\begin{equation}
\begin{split}
\Vert a \Vert <C\varepsilon_{0}, \lambda \Vert a \Vert^{2}>c\varepsilon_{0}^{-1}, \, \lambda >c\varepsilon_{0}^{-1}
\; \text{ and }\; 
\frac{a_{i}}{a_{j}}<\sqrt[4]{\frac{5}{3}}.
\end{split}
\end{equation} 
In order to show $T=\infty$ it remains to prove
\begin{equation}
u\in V(1, \frac{\varepsilon}{2})\; \text{ during }\; (0,T).
\end{equation} 
By definition \ref{def_V(omega,p,e)} and the remarks thereafter this is equivalent to showing
\begin{equation}
\vert 1-\frac{r\alpha^{\frac{4}{n-2}} K(a)}{4n(n-1)k}\vert, \Vert u-\alpha \var_{a, \lambda}\Vert = \Vert v \Vert<\frac{\varepsilon}{2}.
\end{equation} 
To that end let us expand using $k\equiv 1$
\begin{equation}
\begin{split}
J(u)
= & 
r
=
\int L_{g_{0}}uu
=
\int L_{g_{0}}(\alpha \var_{a, \lambda}+v)(\alpha \var_{a, \lambda}+v) \\
= &
\alpha^{2}\int L_{g_{0}}\var_{a, \lambda}\var_{a, \lambda} 
+
2\alpha \int L_{g_{0}}\var_{a, \lambda}v
+
\int L_{g_{0}}vv.
\end{split}
\end{equation} 
Due to lemmata  \ref{lem_emergence_of_the_regular_part} and  \ref{lem_interactions} we have with $n=5$
\begin{equation}
\int L_{g_{0}}\varphi_{a, \lambda}\varphi_{a, \lambda}
=
4n(n-1)c_{0}+o_{\frac{1}{\lambda}}(1).
\end{equation} 
Moreover from lemma \ref{lem_emergence_of_the_regular_part} we get 
\begin{equation}\label{divergence_v_interaction}
\begin{split}
\frac{\int L_{g_{0}}\varphi_{a, \lambda}v }{4n(n-1)}
= &
\int \varphi_{a, \lambda}^{\frac{n+2}{n-2}}v
+
o_{\frac{1}{\lambda}}(1) 
= 
\int K \varphi_{a, \lambda}^{\frac{n+2}{n-2}}v
+
o_{\frac{1}{\lambda}}(1) \\
= &
\alpha^{-\frac{4}{n-2}}\int K (u-v)^{\frac{4}{n-2}}\varphi_{a, \lambda}v
+
o_{\frac{1}{\lambda}}(1) \\
= &
-\frac{4}{n-2}\alpha^{-\frac{4}{n-2}}\int Ku^{\frac{6-n}{n-2}}\varphi_{a, \lambda} v^{2}
+
o(\Vert v \Vert^{2})+o_{\frac{1}{\lambda}}(1)\\
= &
-\frac{4}{n-2}\alpha^{-1}\int K\varphi_{a, \lambda}^{\frac{4}{n-2}} v^{2}
+
o(\Vert v \Vert^{2})+o_{\frac{1}{\lambda}}(1).
\end{split}
\end{equation} 
We conclude
\begin{equation}\label{J(u)_estimate_divergence}
\begin{split}
J(u)
= & 
4n(n-1)c_{0}\alpha^{2} 
+
\int L_{g_{0}}vv
-
\frac{32n(n-1)}{n-2}\int \varphi_{a, \lambda}^{\frac{4}{n-2}}v^{2}\\
& +
o_{\frac{1}{\lambda}}(1)+o(\Vert v \Vert^{2})
\end{split}
\end{equation} 
On the other hand we have
\begin{equation}
\begin{split}
1
\equiv &
k
=
\int Ku^{\frac{2n}{n-2}}
=
\int K(\alpha \varphi_{a, \lambda}+v)^{\frac{2n}{n-2}}\\
= &
\alpha^{\frac{2n}{n-2}}\int K\varphi_{a, \lambda}^{\frac{2n}{n-2}}
+
\frac{2n}{n-2}\alpha^{\frac{n+2}{n-2}}\int K\varphi_{a, \lambda}^{\frac{n+2}{n-2}}v \\
& +
\frac{n}{n-2}\frac{n+2}{n-2}\alpha^{\frac{4}{n-2}}\int K\varphi_{a, \lambda}^{\frac{4}{n-2}}v^{2}
+
o(\Vert v \Vert^{2}).
\end{split}
\end{equation}
Considering the second summand above we obtain
using  \eqref{divergence_v_interaction} 
\begin{equation}
\begin{split}
1
= &
\alpha^{\frac{2n}{n-2}}c_{0}
+
\frac{n(n-6)}{(n-2)^{2}}
\alpha^{\frac{4}{n-2}}\int \varphi_{a, \lambda}^{\frac{4}{n-2}}v^{2}
+
o_{\frac{1}{\lambda}}(1)+o(\Vert v \Vert^{2}),
\end{split}
\end{equation}
whence
\begin{equation}\label{alpha_estimate_divergence}
\alpha
=
c_{0}^{-\frac{n-2}{2n}}
+
\frac{6-n}{2(n-2)}c_{0}^{-\frac{n+2}{2n}}\int \varphi_{a, \lambda}^{\frac{4}{n-2}}v^{2}
+
o_{\frac{1}{\lambda}}(1)+o(\Vert v \Vert^{2})
\end{equation} 
and therefore
\begin{equation}
c_{0}\alpha^{2}
=
c_{0}^{-\frac{2}{n}}
+
\frac{6-n}{n-2}\int \varphi_{a, \lambda}^{\frac{4}{n-2}}v^{2}
+
o_{\frac{1}{\lambda}}(1)+o(\Vert v \Vert^{2}).
\end{equation} 
We conclude
\begin{equation}
\begin{split}
J(u)
= &
4n(n-1)c_{0}^{\frac{2}{n}} \\
& +
\int L_{g_{0}}vv
-
4n(n-1)\frac{n+2}{n-2}
\int \varphi_{a, \lambda}^{\frac{4}{n-2}}v^{2} 
+
o_{\frac{1}{\lambda}}(1)+o(\Vert v \Vert^{2}) \\
\geq &
4n(n-1)c_{0}^{\frac{2}{n}} \\
& +
c_{n}\int 
\left(\vert \nabla v \vert^{2}_{g_{0}}
-
n(n+2)\int \varphi_{a, \lambda}^{\frac{4}{n-2}}v^{2} 
\right)
+
o_{\frac{1}{\lambda}}(1)+o(\Vert v \Vert^{2})
\end{split}
\end{equation} 
and thus by means of proposition \ref{prop_positivity_of_D2J},
\begin{equation}
\begin{split}
J(u)
\geq 
4n(n-1)c_{0}^{\frac{2}{n}} 
+
o_{\frac{1}{\lambda}}(1)+c\Vert v \Vert^{2}
.
\end{split}
\end{equation} 
But $J(u)\leq J(u_{0})=4n(n-1)c_{0}^{\frac{2}{n}}+O(\frac{1}{\lambda_{0}})$ and therefore
\begin{equation}
\Vert v \Vert^{2}=o_{\frac{1}{\lambda}+\frac{1}{\lambda_{0}}}(1)
\end{equation} 
remains uniformly small during $(0,T)$.
Finally we infer from \eqref{alpha_estimate_divergence}, that $\alpha$ remains 
uniformly close to $c_{0}^{-\frac{n-2}{2n}}$, in particular
\begin{equation}
\begin{split}
\frac{r\alpha^{\frac{4}{n-2}}K(a)}{4n(n-1)k}
= &
\frac{\alpha^{\frac{4}{n-2}} J(u)}{4n(n-1)}
+
O(\Vert a \Vert) \\
= &
\alpha^{\frac{4}{n-2}}c^{\frac{2}{n}}_{0}+O(\Vert a \Vert +\Vert v \Vert)+o_{\frac{1}{\lambda}}(1) \\
= &
1+O(\Vert a \Vert +\Vert v \Vert)+o_{\frac{1}{\lambda}}(1),
\end{split}
\end{equation} 
whence 
\begin{equation}
\vert 1-\frac{r\alpha^{\frac{4}{n-2}}K(a)}{4n(n-1)k}\vert
\end{equation} remains uniformly small. This completes the proof of $T=\infty$, which is to say, that $u$ remains in $V(1, \varepsilon)$. 
Turning back to \eqref{tau_evolution_divergence} we then get $\tau\geq t$ as $t\- \infty$, whence 
according to \eqref{theta_estimate_divergence} 
\begin{equation}
\vartheta=\lambda^{3}\geq ct.
\end{equation} 
Finally \eqref{derivatives_of_K} and \eqref{Norma'_divergence} show
\begin{equation}
\begin{split}
\partial_{t} \Vert a \Vert^{2} \leq -c\frac{ \Vert a \Vert^{4}}{\lambda^{2}} 
+
O(\frac{ \Vert a \Vert\vert \delta J(u)\vert^{2}}{\lambda})
=
\Vert a \vert^{2}
(
-
\frac{\Vert a \Vert^{2}}{\lambda^{2}}
+
O(\frac{ \vert \delta J(u)\vert^{2}}{\Vert a \Vert\lambda})
)
.
\end{split}
\end{equation}
Since $\lambda \Vert a \Vert^{2}$ and therefore $\lambda \Vert a \Vert$ as well remain large we obtain
\begin{equation}
\partial_{t}\ln \Vert a \Vert^{2}
\leq 
-c\frac{\Vert a \Vert^{2}}{\lambda^{2}}+O( \vert \delta J(u)\vert^{2}),
\end{equation} 
whence due to \eqref{derivatives_of_K} and \eqref{lambda_dot_simplified}
\begin{equation}
\partial_{t}\ln \Vert a \Vert^{2}
\leq 
-c\frac{\dot \lambda}{\lambda}+O( \vert \delta J(u)\vert^{2})
=
-
c\partial_{t}\ln \lambda+O(\vert \delta J(u)\vert^{2})
\end{equation} 
Therefore $\lambda\- \infty$ implies $\Vert a \Vert\- 0$.
\end{proof}

\section{Appendix}
\label{sec:Appendix}
\begin{lemma} \label{App1}$_{}$\\
Let $(M^{n},g_{0})$ be a Riemannian manifold,
$g(t)=u^{\frac{4}{n-2}}(t)g_{0}, \;u>0$.
There holds
\begin{enumerate}[label=(\roman*)]
\item 
$$
d\mu_{g} 
= 
u^{\frac{2n}{n-2}}d\mu_{g_{0}}
$$
\item \quad
$$
_{g}\Gamma^{k}_{i,j} 
= 
_{\hspace{3pt}g_{0}}\hspace{-3pt}\Gamma^{k}_{i,j}
+
\frac{2}{n-2}u^{-1}
(
\partial_{i}u \delta^{k}_{j}
+
\partial_{j}u \delta^{k}_{i}
-
\partial_{l}u g^{k,l}g_{i,j}
)
$$
\item 
\begin{equation*}\begin{split}
\tilde{R}^{l}_{i,j,k}
= &
R^{l}_{i,j,k} 
+
\frac{2}{n-2}u^{-1}
[
\nabla^{2}_{i,k}u
\delta^{l}_{j}
-
\nabla^{2}_{i,p}u
g^{l,p}g_{j,k} \\
& \quad\quad\quad\quad \quad\quad \quad\;\,
-
\nabla^{2}_{j,k}u
\delta^{l}_{i}
+
\nabla^{2}_{j,p}u
g^{l.p}g_{i,k}
] \\
& -
\frac{2n}{(n-2)^{2}}u^{-2}
[
\nabla_{i}u\nabla_{k} u \delta^{l}_{j}
-
\nabla_{i}u\nabla_{p}ug^{l,p}g_{j,k}
 \\
& \quad\quad\quad\quad \quad\quad\;\;-
\nabla_{j}u \nabla_{k}u \delta^{l}_{i}
+
\nabla_{j}u \nabla_{p}u g ^{l,p}g_{i,k} \\
& \quad\quad\quad\quad \quad\quad\;\; 
+
\frac{2}{n}\vert \nabla u \vert^{2} g_{j,k}\delta^{l}_{i}
-
\frac{2}{n}\vert \nabla u \vert^{2} g_{i,k}\delta^{l}_{j}
]  
\end{split}\end{equation*}
\item 
\begin{equation*}\begin{split}
\tilde{R}_{i,k} 
= 
R_{i,k}
& +
\frac{2}{n-2}u^{-1}
[
(n-2)\nabla^{2}_{i,k}u
-
\lap u g_{i,k}
] \\
& -
\frac{2}{n-2}u^{-2}
[
n\nabla_{i}u\nabla_{k}u
-
\vert \nabla u \vert^{2}g_{i,k}
] 
\end{split}\end{equation*}
\item 
$
R=R_{g}
=
u^{-\frac{n+2}{n-2}}[-c_{n}\lap u + R_{g_{0}}u]
=
u^{-\frac{n+2}{n-2}}L_{g_{0}} u
, \;\text{i.e.}\quad \hspace{-0.46pt}
$
\begin{equation*}
\begin{split}
 u^{-\frac{n+2}{n-2}}L_{g_{0}}(uv)=L_{g}(v)
\end{split}\end{equation*}

\item 
and for $\partial_{t}u=-\frac{1}{K}(R-r\K)u$ we have  
\begin{equation*}\begin{split}
\partial_{t} R
= 
c_{n}\lap_{g}\frac{R}{K}
+
\frac{4}{n-2}(R-r\K)\frac{R}{K}.
\end{split}\end{equation*}
\end{enumerate}

\end{lemma}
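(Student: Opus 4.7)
The plan is to establish (i)--(vi) by direct computation from the conformal change $g=u^{4/(n-2)}g_{0}$, building each identity on the preceding one. All six statements are classical formulas, so the work is organized bookkeeping rather than a novel argument; the only real choice is how to arrange the calculation so that the curvature transformations remain covariant.

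I would begin with (i), which follows at once from $\det(u^{4/(n-2)}g_{0}) = u^{4n/(n-2)}\det g_{0}$, hence $\sqrt{\det g}\, dx = u^{2n/(n-2)}\sqrt{\det g_{0}}\, dx$. For (ii), I plug $g_{ij} = u^{4/(n-2)}(g_{0})_{ij}$ into the Koszul formula $\Gamma^{k}_{ij} = \tfrac{1}{2}g^{kl}(\partial_{i}g_{jl}+\partial_{j}g_{il}-\partial_{l}g_{ij})$ and isolate the difference tensor $A^{k}_{ij} := {}_{g}\Gamma^{k}_{ij} - {}_{g_{0}}\Gamma^{k}_{ij}$, which reads off immediately. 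For (iii)--(iv), I would organize the Riemann computation covariantly as
\[
\tilde R^{l}{}_{ijk} - R^{l}{}_{ijk}
= \nabla^{(0)}_{j}A^{l}_{ik} - \nabla^{(0)}_{k}A^{l}_{ij}
+ A^{l}_{jp}A^{p}_{ik} - A^{l}_{kp}A^{p}_{ij},
\]
which automatically separates the linear-in-$\nabla^{2}u$ contributions (coming from the derivatives of $A$) from the quadratic-in-$du$ ones (coming from the products of $A$). Item (iv) is then the contraction $l=j$.

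For (v), I trace (iv) using $g^{ik}=u^{-4/(n-2)}g_{0}^{ik}$; the $\nabla_{i}u\nabla_{k}u$ and $|\nabla u|^{2}$ terms combine with the Hessian and Laplacian corrections so that, after multiplying by the overall $u^{-4/(n-2)}$ and collecting, exactly $u^{-(n+2)/(n-2)}(-c_{n}\Delta_{g_{0}}u + R_{g_{0}}u)$ emerges, with $c_{n}=4(n-1)/(n-2)$ arising from the arithmetic on the exponents. The displayed operator identity $u^{-(n+2)/(n-2)}L_{g_{0}}(uv)=L_{g}v$ is then obtained by applying the scalar curvature transformation to the conformal factor $uv$ in place of $u$: indeed, $R_{(uv)^{4/(n-2)}g_{0}} = R_{v^{4/(n-2)}g} = L_{g}v / v^{(n+2)/(n-2)}$ on the one hand, and $L_{g_{0}}(uv)/(uv)^{(n+2)/(n-2)}$ on the other.

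For (vi), I differentiate $R = u^{-(n+2)/(n-2)}L_{g_{0}}u$ in time, using that both $L_{g_{0}}$ and $K$ are time-independent. Writing $\partial_{t}u = uf$ with $f=-(R-r\bar K)/K$, the second term is
\[
u^{-(n+2)/(n-2)}L_{g_{0}}(\partial_{t}u) = u^{-(n+2)/(n-2)}L_{g_{0}}(uf) = L_{g}f = -c_{n}\Delta_{g}f + Rf
\]
by the covariance identity from (v). Since $r\bar K/K \equiv r/k$ is spatially constant, $\Delta_{g}f = -\Delta_{g}(R/K)$, and the $Rf$ contribution combines with the first term $-\tfrac{n+2}{n-2}u^{-1}(\partial_{t}u)R = \tfrac{n+2}{n-2}(R-r\bar K)R/K$ via the cancellation $\tfrac{n+2}{n-2}-1=\tfrac{4}{n-2}$ to produce the stated evolution equation. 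The main obstacle is the algebraic bookkeeping in (iii)--(v); the detour through the covariant difference tensor $A$ and, later, through the covariance identity for $L_{g_{0}}$ is what keeps the computation clean and the power of $u$ balanced at every step.
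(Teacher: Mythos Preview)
The paper states this lemma without proof, treating the six identities as standard background facts about conformal change; your outline is a correct and well-organized derivation of exactly these formulas, and the use of the covariance identity $u^{-(n+2)/(n-2)}L_{g_{0}}(uf)=L_{g}f$ to handle (vi) is the clean way to do it.
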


\begin{lemma}\label{App3}[Local bound and higher integrability, cf. \cite{StruweLargeEnergies}, Theorem A.1.]$_{}$\\
Let $P\in C^{\infty}(M), \, p>\frac{2n}{n-2}$ and $r>0$ small.

There exists $C=C(p,r)$ such, that for $u>0$ solving
$L_{g_{0}}u=Pu$ with
\begin{equation*}\begin{split}
\Vert P \Vert_{L_{g_{0}}^{\frac{n}{2}}(B_{2r}(x_{0}))}<\frac{2n}{n-2}\frac{Y(M,g_{0})}{p}
\end{split}\end{equation*}

we have
\begin{equation*}
\begin{split}
\Vert u \Vert_{L_{g_{0}}^{p}(B_{r}(x_{0}))}\leq C\Vert u\Vert_{L_{g_{0}}^{\frac{2n}{n-2}}(B_{2r}(x_{0}))}.
\end{split}
\end{equation*}
\end{lemma}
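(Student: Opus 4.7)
The plan is to run a standard Moser-type iteration on the subcritical equation $L_{g_0}u = Pu$, exploiting the positivity of the Yamabe invariant $Y(M,g_0)$ to get a Sobolev inequality with the correct constant, and using the smallness of $\|P\|_{L^{n/2}}$ to absorb the potential term.

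First I would set up the Caccioppoli inequality. Choose a smooth cutoff $\eta\in C^{\infty}_{0}(B_{2r}(x_{0}))$ with $\eta\equiv 1$ on $B_{r}(x_{0})$ and $|\nabla\eta|\leq C/r$. For an exponent $q\geq \tfrac{n}{n-2}$ to be iterated, test the equation with $w=u^{2q-1}\eta^{2}$ and write $\mathrm{w}=u^{q}\eta$. Integration by parts gives
\begin{equation*}
\int c_{n}(2q-1)u^{2q-2}\eta^{2}|\nabla u|^{2}+R_{g_{0}}u^{2q}\eta^{2}
=
\int Pu^{2q}\eta^{2}-\int 2c_{n}u^{2q-1}\eta\langle\nabla u,\nabla\eta\rangle.
\end{equation*}
Young's inequality on the cross term (absorbing a small portion of the $|\nabla u|^{2}\eta^{2}$ integral) then yields the pointwise-in-exponent estimate
\begin{equation*}
\int c_{n}|\nabla \mathrm{w}|^{2}+R_{g_{0}}\mathrm{w}^{2}
\leq
\frac{q^{2}}{2q-1}\left(\int P\mathrm{w}^{2}+C(r,q)\int_{B_{2r}}u^{2q}\right).
\end{equation*}

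Next I would invoke the definition of the Yamabe invariant applied to $\mathrm{w}\geq 0$, namely
\begin{equation*}
Y(M,g_{0})\|\mathrm{w}\|_{L^{\frac{2n}{n-2}}}^{2}
\leq
\int c_{n}|\nabla \mathrm{w}|^{2}+R_{g_{0}}\mathrm{w}^{2},
\end{equation*}
and H\"older's inequality on the potential term $\int P\mathrm{w}^{2}\leq \|P\|_{L^{n/2}(B_{2r})}\|\mathrm{w}\|^{2}_{L^{2n/(n-2)}}$. The hypothesis $\|P\|_{L^{n/2}(B_{2r})}<\tfrac{2n}{n-2}\tfrac{Y}{p}$ ensures that the coefficient $\tfrac{q^{2}}{2q-1}\|P\|_{L^{n/2}}$ stays strictly below $Y$ for every $q\leq p/2$, so it can be absorbed. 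This produces the reverse-H\"older step
\begin{equation*}
\|u\|_{L^{\frac{2qn}{n-2}}(B_{r})}^{2q}\leq C(q,r)\|u\|_{L^{2q}(B_{2r})}^{2q}
\end{equation*}
with constant uniformly bounded for $q\in[\tfrac{n}{n-2},\tfrac{p}{2}]$.

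Finally I would iterate this gain of integrability along a geometrically shrinking sequence of radii $r_{k}\searrow r$ contained in $(r,2r)$ and exponents $q_{k}=(\tfrac{n}{n-2})^{k}\tfrac{n}{n-2}$ starting from $q_{0}=\tfrac{n}{n-2}$, stopping at the first $k$ with $2q_{k}\geq p$. Telescoping the constants, which is standard because $\sum (n/(n-2))^{-k}<\infty$, converts the assumed $L^{\frac{2n}{n-2}}$ bound on $B_{2r}$ into the desired $L^{p}$ bound on $B_{r}$. The main delicate point, and the only place where the precise threshold $\tfrac{2n}{n-2}\tfrac{Y}{p}$ matters, is to verify that the absorption coefficient $\tfrac{q^{2}}{2q-1}\|P\|_{L^{n/2}}/Y$ remains bounded away from $1$ uniformly along the iteration up to exponent $p$; this is where the proof would need a careful bookkeeping of constants rather than any genuine new idea.
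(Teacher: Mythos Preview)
The paper does not supply its own proof of this lemma; it is stated with a reference to Schwetlick--Struwe, Theorem~A.1. Your Moser iteration with cutoff, Yamabe--Sobolev inequality, and absorption of the potential term is exactly the standard argument used there, so your approach matches the intended one.

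One bookkeeping point: your claim that the absorption coefficient $\tfrac{q^{2}}{2q-1}\Vert P\Vert_{L^{n/2}}$ stays below $Y$ ``for every $q\leq p/2$'' is slightly too strong and in fact fails in low dimensions. What you actually need (and what works) is absorption only up to $q=\tfrac{p(n-2)}{2n}$, since the final Sobolev step sends $L^{2q}$ to $L^{2qn/(n-2)}=L^{p}$ at exactly that value. At $q=\tfrac{p(n-2)}{2n}$ the inequality $\tfrac{q^{2}}{2q-1}\cdot\tfrac{2n}{n-2}\tfrac{Y}{p}<Y$ reduces precisely to $p>\tfrac{2n}{n-2}$, which is the hypothesis. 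So organize the iteration so that the last exponent used is $2q=\tfrac{p(n-2)}{n}$ (interpolating if the geometric sequence overshoots) and the threshold $\tfrac{2n}{n-2}\tfrac{Y}{p}$ is then exactly the right one. With that correction your outline is complete.
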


\begin{proof}[\textbf{Proof of lemma \ref{lem_emergence_of_the_regular_part}}]\label{p_emergence_of_the_regular_part}$_{}$\\
A straight forward calculation shows
\begin{equation}\begin{split}
\lap_{g_{ a }}(\frac{\lambda}{1+\lambda^{2}\gamma_{n}G_{ a }^{\frac{2}{2-n}}})^{\frac{n-2}{2}}
= &
\frac{n}{2-n}\gamma_{n}(\frac{\varphi_{a, \lambda}}{u_{a}})^{\frac{n+2}{n-2}}
\vert \nabla G_{ a } \vert^{2}_{g_{a}} 
G_{a}^{2\frac{n-1}{2-n}}\\
& +
\gamma_{n}\lambda(\frac{\varphi_{a, \lambda}}{u_{a}})^{\frac{n}{n-2}}G_{ a }^{\frac{n}{2-n}}\lap_{g_{a}} G_{ a },
\end{split}\end{equation}
which is due to
\begin{equation}
\begin{split}
\vert \nabla G_{ a } \vert^{2}_{g_{a}} 
G_{a}^{2\frac{n-1}{2-n}}
=
(n-2)^{2}\vert \nabla G_{ a }^{\frac{1}{2-n}} \vert^{2}_{g_{a}}
\;
\text{ and }\;
\lap_{g_{a}}G_{a}=\frac{-\delta_{a}+R_{g_{a}}G_{a}}{c_{n}},
\end{split} 
\end{equation}
where $\delta_{a}$ denotes the Dirac measure at $a$, equivalent to
\begin{equation}\begin{split}
\lap_{g_{ a }}(\frac{\lambda}{1+\lambda^{2}\gamma_{n}G_{ a }^{\frac{2}{2-n}}})^{\frac{n-2}{2}}
= &
n(2-n)\gamma_{n}(\frac{\varphi_{a, \lambda}}{u_{a}})^{\frac{n+2}{n-2}}
\vert \nabla G_{ a }^{\frac{1}{2-n}} \vert^{2}_{g_{a}}\\
& +
\frac{R_{g_{ a }}\gamma_{n}}{c_{n}}\lambda(\frac{\varphi_{a, \lambda}}{u_{a}})^{\frac{n}{n-2}}G_{ a }^{\frac{2}{2-n}}
.
\end{split}\end{equation}
Since $L_{g_{a}}=-c_{n}\lap_{g_{a}}+R_{g_{a}}$ with $c_{n}=4\frac{n-1}{n-2}$ we obtain
\begin{equation}\begin{split}
L_{g_{ a }}\frac{\varphi_{a, \lambda}}{u_{a}}
= &
4n(n-1)(\frac{\varphi_{a, \lambda}}{u_{a}})^{\frac{n+2}{n-2}}\gamma_{n}
\vert \nabla G_{ a }^{\frac{1}{2-n}} \vert^{2}_{g_{a}} \\
& +
R_{g_{a}}\frac{\varphi_{a, \lambda}}{u_{a}}
(
1-\lambda \gamma_{n}G_{a}^{\frac{2}{2-n}}(\frac{\varphi_{a, \lambda}}{u_{a}})^{\frac{2}{2-n}}
)
\\
= &
4n(n-1)(\frac{\varphi_{a, \lambda}}{u_{a}})^{\frac{n+2}{n-2}}\gamma_{n}
\vert \nabla G_{ a }^{\frac{1}{2-n}} \vert^{2}_{g_{a}} 
+
\frac{R_{g_{ a }}}{\lambda}(\frac{\varphi_{a, \lambda}}{u_{a}})^{\frac{n}{n-2}}.
\end{split}\end{equation}
By conformal invariance, cf. lemma \ref{App1}, we conclude
\begin{equation}\begin{split}
L_{g_{0}} \varphi_{a, \lambda}
= &
u_{a}^{\frac{n+2}{n-2}}L_{g_{ a }}\frac{\varphi_{a, \lambda}}{u_{a}} \\
= &
4n(n-1)\varphi_{a, \lambda}^{\frac{n+2}{n-2}}\gamma_{n}\vert \nabla G_{ a }^{\frac{1}{2-n}} \vert^{2}_{g_{a}}
+
\frac{u_{a}^{\frac{2}{n-2}}R_{g_{a}}}{\lambda}\varphi_{a, \lambda}^{\frac{n}{n-2}},
\end{split}\end{equation}
in particular $L_{g_{0}}\varphi_{a, \lambda}=O(\varphi_{a, \lambda}^{\frac{n+2}{n-2}})$.
Expanding 
\begin{equation}
\begin{split}
G_{ a }=\frac{1}{4n(n-1)\omega _{n}}(r_{a}^{2-n}+H_{ a }), \, r_{ a }=d_{g_{a}}(a, \cdot)
\end{split}
\end{equation} we derive
\begin{equation}\begin{split}
\gamma_{n} \vert \nabla G_{ a }^{\frac{1}{2-n}} \vert^{2}_{g_{a}}
= &
\vert \nabla(r_{a}(1+r_{a}^{n-2} H_{ a })^{\frac{1}{2-n}})\vert^{2}_{g_{a}} \\
= &
\vert \nabla r_{a}(1+\frac{1}{2-n}r_{a}^{n-2} H_{ a }+O(\vert r_{a}^{n-2} H_{ a }\vert ^{2})) \\
 & \, +
r_{a}(-r_{a}^{n-3}\nabla r_{a}   H_{ a }+\frac{1}{2-n}r_{a}^{n-2}\nabla  H_{ a } \\
& \quad\quad\quad\quad\quad\quad\quad\,+
O(\vert r_{a}^{n-2} H_{ a }\vert \vert \nabla (r_{a}^{n-2} H_{ a })\vert))\vert^{2}_{g_{a}}, 
\end{split}\end{equation}
whence 
\begin{equation}\begin{split}
\gamma_{n} \vert \nabla G_{ a }^{\frac{1}{2-n}}\vert^{2}_{g_{a}}
= &
1
-
\frac{2}{n-2}
( (n-1)H_{ a }+r_{a}\partial_{r_{a}}H_{ a }) r_{a}^{n-2}
+
o(r_{a}^{n-2}).
\end{split}\end{equation}
Thus we conclude
\begin{equation}\begin{split}
L_{g_{0}} \varphi_{a, \lambda}
= &
4n(n-1)\varphi_{a, \lambda}^{\frac{n+2}{n-2}}
-
2nc_{n}
((n-1)H_{ a }+r_{a}\partial_{r_{a}}H_{ a }) r_{a}^{n-2}\varphi_{a, \lambda}^{\frac{n+2}{n-2}} \\
& +
o(r_{a}^{n-2}\varphi_{a, \lambda}^{\frac{n+2}{n-2}})
+
\frac{u_{a}^{\frac{2}{n-2}}R_{g_{a}}}{\lambda}\varphi_{a, \lambda}^{\frac{n}{n-2}}.
\end{split}\end{equation}
For $R_{g_{a}}=O(r_{a}^{2})$ and $\lap R_{g_{a}}=-\frac{1}{6}\vert W(a)\vert^{2}$ cf. \cite{LeeAndParker}.
\end{proof}
\begin{proof}[\textbf{Proof of lemma \ref{lem_interactions}}]\label{p_interactions} $_{}$\\
These kind of expansions are well known, cf.  \cite{BahriCriticalPointsAtInfinity}. 
Using but just slightly modified bubbles we nonetheless repeat their proves. 
\begin{enumerate}[label=(\roman*)]
 \item 
We have 
\begin{equation}\begin{split}
(\phi_{k,i})_{k=1,2,3}
= &
(\varphi_{i},-\lambda_{i}\partial_{\lambda_{i}}\varphi_{i}, \frac{1}{\lambda_{i}}\nabla_{a_{i}}\varphi_{i}),
\end{split}\end{equation}
so
\begin{equation}\begin{split}\label{Phi1i}
\phi_{1,i}
= & 
u_{a_{i}}
(\frac{\lambda_{i}}{1+\lambda_{i}^{2}\gamma_{n}G^{\frac{2}{2-n}}_{a_{i}}}
)^{\frac{n-2}{2}}
\end{split}\end{equation}
and
\begin{equation}\begin{split}\label{Phi2i}
\phi_{2,i}
= &
\frac{n-2}{2}
\frac
{\lambda_{i}^{2}\gamma_{n}G^{\frac{2}{2-n}}_{a_{i}}-1}
{\lambda_{i}^{2}\gamma_{n}G^{\frac{2}{2-n}}_{a_{i}}+1}
\varphi_{i}
\end{split}\end{equation}
and
\begin{equation}\begin{split}\label{Phi3i}
\phi_{3,i}
= &
-\frac{n-2}{2}
u_{a_{i}}
\frac
{\lambda_{i}\gamma_{n}\nabla_{a_{i}}G^{\frac{2}{2-n}}_{a_{i}}}
{1+\lambda_{i}^{2}\gamma_{n}G^{\frac{2}{2-n}}_{a_{i}}} 
\varphi_{i}
+
\frac{\nabla_{a_{i}}u_{a_{i}}}{u_{a_{i}}\lambda_{i}}
\varphi_{i}.
\end{split}\end{equation}
Note, that in $x\simeq \exp_{g_{a_{i}}}x$ coordinates
\begin{equation}\begin{split}
\gamma_{n}G_{a_{i}}^{\frac{2}{2-n}}(x)
= 
r^{2}+O(r^{n}),
\end{split}\end{equation}
cf. definition \ref{def_bubbles} and 
\begin{equation}\begin{split}
\gamma_{n}(\nabla_{a_{i}}G^{\frac{2}{2-n}}_{a_{i}})(x)
=
-2x+O(r^{n-1}).
\end{split}\end{equation}
Moreover $u_{a_{i}}=1+O(r^{2})$. The assertion readily follows.
\item
\begin{enumerate}
 \item[$(\alpha)$] \quad\quad  Case $k=1$ \\
We have $\phi_{k,i}=\varphi_{i}$ for $k=1$ and thus for $c>0$ small
\begin{equation}\begin{split}
\int \varphi_{i}^{\frac{2n}{n-2}}
= &
\int_{B_{c}(a_{i})}(\frac{\lambda_{i}}{1+\lambda_{i}^{2}\gamma_{n}G^{\frac{2}{2-n}}_{a_{i}}})^{n}d\mu_{g_{a_{i}}}
+
O(\frac{1}{\lambda_{i}^{n}}).
\end{split}\end{equation}
By definition \ref{def_bubbles} one has
passing to $x\simeq \exp_{g_{a_{i}}}x$ coordinates
\begin{equation}\begin{split}
\int \varphi_{i}^{\frac{2n}{n-2}}
= &
\underset{B_{c}(0)}{\int}
(\frac{\lambda_{i}}{1+\lambda_{i}^{2}r^{2}})^{n}
(
1
+
O(\frac{\lambda_{i}^{2}O(r^{n}H_{a})}{1+\lambda_{i}^{2}r^{2}})
)
+
O(\frac{1}{\lambda_{i}^{n}})
\\
= &
\underset{B_{c\lambda_{i}}(0)}{\int}
\frac{1}{(1+r^{2})^{n}}
+
O(\frac{1}{\lambda_{i}^{n-2}})
= 
c_{1}+O(\frac{1}{\lambda_{i}^{n-2}}).
\end{split}\end{equation}
 \item[$(\beta)$] \quad\quad  Case $k=2$ \\
The proof runs analogously to the one of case $k=1$ above yielding
\begin{equation}\begin{split}\label{a2}
c_{2}
= &
\frac{(n-2)^{2}}{4}\int \frac{\vert r^{2}-1\vert^{2}}{(1+r^{2})^{n+2}}
\end{split}\end{equation}

 \item[$(\gamma)$] \quad\quad  Case $k=3$ \\
We have
\begin{equation}\begin{split}
\phi_{k,i}
= &
\frac{2-n}{2}
\frac
{\lambda_{i}\gamma_{n}\nabla_{a_{i}}G^{\frac{2}{2-n}}_{a_{i}}}
{1+\lambda_{i}^{2}\gamma_{n}G^{\frac{2}{2-n}}_{a_{i}}}\varphi_{i}
+
\frac{\nabla_{a_{i}}u_{a_{i}}}{\lambda_{{i}}u_{a_{i}}}\varphi_{i},
\end{split}\end{equation}
whence using $\gamma_{n}(\nabla_{a_{i}}G^{\frac{2}{2-n}}_{a_{i}})(x)
=
-2x+O(r^{n-1})$ and $u_{a_{i}}=O(r^{2})$
\begin{equation}\begin{split}\label{a3}
\int \vert \phi_{k,i}\vert^{2}\varphi_{i}^{\frac{4}{n-2}}
= &
\frac{(n-2)^{2}}{n}\int \frac{r^{2}}{(1+r^{2})^{n+2}}
+
O(\frac{1}{\lambda_{i}^{n-2}}+\frac{1}{\lambda_{i}^{2}})\\
= &
c_{3}
+
O(\frac{1}{\lambda_{i}^{n-2}}+\frac{1}{\lambda_{i}^{2}})
\end{split}\end{equation}
\end{enumerate}
 \item  
\begin{enumerate}
 \item[$(\alpha)$] \quad\quad  Case $k=1$ \\$_{}$\\
Due to lemma \ref{lem_emergence_of_the_regular_part} and case $(v)$ we have for $c>0$ small
\begin{equation}\begin{split}
\int \varphi_{i}^{\frac{n+2}{n-2}}\varphi_{j}
= &
\underset{B_{c}( a _{i})}{\int}
\frac{L_{g_{0}}\varphi_{i}\varphi_{j}
}{4n(n-1)}
+
o(\eps_{i,j}) \\
= &
\int
\frac
{
L_{g_{0}}\varphi_{i}
\varphi_{j}
}
{4n(n-1)}+
o(\eps_{i,j}),
\end{split}\end{equation}
whence by $\int L_{g_{0}} \varphi_{i}\varphi_{j}=\int \varphi_{i}L_{g_{0}}\varphi_{j}$ and backward calculation
\begin{equation}
\begin{split}
 \int \varphi_{i}^{\frac{n+2}{n-2}}\varphi_{j}
= &
\int \varphi_{i}\varphi_{j}^{\frac{n+2}{n-2}}+o(\eps_{i,j}).
\end{split}
\end{equation} 
Thus we may assume $ \frac{1}{\lambda_{i}} \leq  \frac{1}{\lambda_{j}}$. We get
\begin{equation}\begin{split}\label{di^...dj=didj^...+o(eij)}
\int \varphi_{i}^{\frac{n+2}{n-2}} \varphi_{j}
= &
\underset{B_{c}( a _{i})}{\int}
(\frac{ \lambda_{i} }{ 1+\lambda_{i}^{2}\gamma_{n}G_{ a _{i}}^{\frac{2}{2-n}}})^{\frac{n+2}{2}} \\
& \quad\quad\quad\quad\quad\quad
\frac{u_{ a _{j}}}{u_{ a _{i}}}(\frac{ \lambda_{j} }{ 1+\lambda_{j}^{2}\gamma_{n}G_{ a _{j}}^{\frac{2}{2-n}}})^{\frac{n-2}{2}}d\mu_{g_{ a _{i}}}
\\ & +
O(  \frac{1}{\lambda_{i}^{\frac{n+2}{2}}}  \frac{1}{\lambda_{j}^{\frac{n-2}{2}}}).
\end{split}\end{equation}
Clearly 
$ \lambda_{i}^{-\frac{n+2}{2}}\lambda_{j}^{-\frac{n-2}{2}}=o(\eps_{i,j})$
and in
$
x\simeq \exp_{g_{ a _{i}}}(x)
$
coordinates
\begin{equation}
\begin{split}
u_{ a _{i}}(x)=1+O(r^{2})
\;\text{ and }\;
\gamma_{n}G^{\frac{2}{2-n}}_{ a _{i}}(x)=r^{2}+O(r^{n}),
\end{split}
\end{equation}  
whence using case (v)
\begin{equation}\begin{split}\label{expanding_epsij}
\int \varphi_{i}^{\frac{n+2}{n-2}} \varphi_{j}
= &
\underset{B_{ c\lambda_{i} }(0)}{\int}
\frac{u_{ a _{j}}( a _{i})}{(1+r^{2})^{\frac{n+2}{2}}} \\
& \quad\;\;\,
(\frac{1}{\frac{\lambda_{i} }{ \lambda_{j} }+\lambda_{i}\lambda_{j}\gamma_{n}G_{ a _{j}}^{\frac{2}{2-n}}(\exp_{g_{ a _{i}}} \frac{x}{\lambda_{i}})})^{\frac{n-2}{2}}
+
o(\eps_{i,j}).
\end{split}\end{equation}
Due to $\frac{1}{\lambda_{i}}\leq \frac{1}{\lambda_{j}}$ we have
\begin{equation}\begin{split}
\eps_{i,j}^{\frac{2}{2-n}}
\sim 
\lambda_{i}\lambda_{j}\gamma_{n}G_{g_{0}}^{\frac{2}{2-n}}( a _{i}, a _{j})
\; \text{ or }\;
\eps_{i,j}^{\frac{2}{2-n}}
\sim
\frac{\lambda_{i}}{\lambda_{j}}
\end{split}\end{equation}
and may expand on
\begin{equation}\begin{split}
\mathcal{A}
=
\left[
\vert  \frac{x}{\lambda_{i}}\vert \leq \epsilon\sqrt{\gamma_{n}G^{\frac{2}{2-n}}_{ a _{j}}( a _{i})}
\right]
\cup
\left[
\vert  \frac{x}{\lambda_{i}}\vert \leq \epsilon  \frac{1}{\lambda_{j}} 
\right]
\subset
B_{c \lambda_{i}}(0)
\end{split}\end{equation}
for $\epsilon>0$ sufficiently small
\begin{equation}\begin{split}
(
\frac{\lambda_{i} }{ \lambda_{j} }
& +
\lambda_{i}\lambda_{j}\gamma_{n}G_{ a _{j}}^{\frac{2}{2-n}}(\exp_{g_{ a _{i}}} \frac{x}{\lambda_{i}}))^{\frac{2-n}{2}} \\
= &
(
\frac{\lambda_{i} }{ \lambda_{j} }
+
\lambda_{i}\lambda_{j}
\gamma_{n}G_{ a _{j}}^{\frac{2}{2-n}}( a _{i})
)^{\frac{2-n}{2}} \\
& +
\frac{2-n}{2}
\frac
{
\gamma_{n}\nabla G^{\frac{2}{2-n}}_{ a _{j}}( a _{i})\lambda_{j}x
+
O(\frac{ \lambda_{j} }{ \lambda_{i} }\vert x\vert^{2})
}
{
(
\frac{\lambda_{i} }{ \lambda_{j} }
+
\lambda_{i}\lambda_{j}\gamma_{n}G_{ a _{j}}^{\frac{2}{2-n}}( a _{i})
)^{\frac{n}{2}}
}.
\end{split}\end{equation}
 Thus by \eqref{expanding_epsij}
 \begin{equation}\begin{split}
 \int \varphi_{i}^{\frac{n+2}{n-2}}\varphi_{j}
 = &
 \sum_{k=1}^{4}I_{k}+o(\eps_{i,j})
 \end{split}\end{equation}
 with 
 \begin{equation}\begin{split}
 I_{1}
 = &
 \frac
 {u_{ a _{j}}( a _{i})}
 {(\frac{\lambda_{i} }{ \lambda_{j} }
 +
 \lambda_{i}\lambda_{j}
 \gamma_{n}G_{ a _{j}}^{\frac{2}{2-n}}( a _{i})
 )^{\frac{n-2}{2}}
 }
\int_{\mathcal{A}}
 \frac{1}{(1+r^{2})^{\frac{n+2}{2}}}
 \end{split}\end{equation}
 and
 \begin{equation}\begin{split}
 I_{2}
 = &
 -
 \frac
 {\frac{n-2}{2}u_{ a _{j}}( a _{i})\gamma_{n}}
 {(\frac{\lambda_{i} }{ \lambda_{j} }
 +
 \lambda_{i}\lambda_{j}
 \gamma_{n}G_{ a _{j}}^{\frac{2}{2-n}}( a _{i})
 )^{\frac{n}{2}}
 }
\int_{\mathcal{A}} \frac{\nabla G^{\frac{2}{2-n}}_{ a _{j}}( a _{i})\lambda_{j}x}{(1+r^{2})^{\frac{n+2}{2}}}
 \end{split}\end{equation}
 and 
 \begin{equation}\begin{split}
 I_{3}
 = &
 \frac
 {u_{ a _{j}}( a _{i})}
 {(\frac{\lambda_{i} }{ \lambda_{j} }
 +
 \lambda_{i}\lambda_{j}
 \gamma_{n}G_{ a _{j}}^{\frac{2}{2-n}}( a _{i})
 )^{\frac{n}{2}}
 }
\int_{\mathcal{A}}
 \frac{O(\frac{ \lambda_{j} }{ \lambda_{i} }\vert x \vert^{2})}{(1+r^{2})^{\frac{n+2}{2}}}
 \end{split}\end{equation}
 and
 \begin{equation}\begin{split}
 I_{4}
 = &
\underset{\mathcal{A}^{c}}{\int} \frac{ u_{ a _{j}}( a _{i})}{(1+r^{2})^{\frac{n+2}{2}}}
 (\frac{1}{\frac{\lambda_{i} }{ \lambda_{j} }+\lambda_{i}\lambda_{j}\gamma_{n}G_{ a _{j}}^{\frac{2}{2-n}}(\exp_{g_{ a _{i}}} \frac{x}{\lambda_{i}})})^{\frac{n-2}{2}}.
 \end{split}\end{equation}
Note, that since $\lambda_{i}\geq \lambda_{j}$, $\mathcal{A}$ tends to cover $\R^{n}$ as $\eps_{i,j}\-0$. 
Thus 
\begin{equation}\begin{split}
I_{1}
= &
b_{1}\frac
{u_{ a _{j}}( a _{i})}
{(\frac{\lambda_{i} }{ \lambda_{j} }
+
\lambda_{i}\lambda_{j}
\gamma_{n}G_{ a _{j}}^{\frac{2}{2-n}}( a _{i})
)^{\frac{n-2}{2}}
}
+
o(\eps_{i,j}),
\end{split}\end{equation}
whereas 
$I_{2}=0$ by radial symmetry and 
$
I_{3}
= 
o(\eps_{i,j}).
$
Moreover  
\begin{equation}
\begin{split}
I_{4}=o(\eps_{i,j})
\end{split}
\end{equation} 
in case $\eps_{i,j}^{\frac{2}{2-n}}\sim \frac{\lambda_{i}}{\lambda_{j}}$. 
Otherwise we decompose 
\begin{equation}
\begin{split}
\mathcal{A}^{c}
\subseteq &
\mathcal{B}_{1}\cup \mathcal{B}_{2},
\end{split}
\end{equation}
where for a sufficiently large constant $E>0$
\begin{equation}
\begin{split}
\mathcal{B}_{1}
= &
[\eps \sqrt{\gamma_{n}G^{\frac{2}{2-n}}_{a_{j}}(a_{i})}
\leq 
\vert \frac{x}{\lambda_{i}}\vert 
\leq 
E\sqrt{\gamma_{n}G^{\frac{2}{2-n}}_{a_{j}}(a_{i})}] \\
\end{split}
\end{equation} 
and
\begin{equation}
\begin{split}
\mathcal{B}_{2}
= &
[
E\sqrt{\gamma_{n}G^{\frac{2}{2-n}}_{a_{j}}(a_{i})}
\leq 
\vert \frac{x}{\lambda_{i}}\vert 
\leq c
].
\end{split}
\end{equation} 
We then may estimate
\begin{equation}
\begin{split}
I_{4}^{1}
= &
\underset{\mathcal{B}_{1}}{\int} \frac{ u_{ a _{j}}( a _{i})}{(1+r^{2})^{\frac{n+2}{2}}}
 (\frac{1}{\frac{\lambda_{i} }{ \lambda_{j} }+\lambda_{i}\lambda_{j}\gamma_{n}G_{ a _{j}}^{\frac{2}{2-n}}(\exp_{g_{ a _{i}}} \frac{x}{\lambda_{i}})})^{\frac{n-2}{2}} \\
\leq &
\frac{C(\frac{\lambda_{j}}{\lambda_{i}})^{\frac{n+2}{2}}}{(1+\lambda_{i}^{2}\gamma_{n}G^{\frac{2}{2-n}}_{a_{j}}(a_{i}))^{\frac{n+2}{2}}} \\
& \quad
\underset{[\vert \frac{x}{\lambda_{j}}\vert \leq E\sqrt{\gamma_{n}G^{\frac{2}{2-n}}_{a_{j}}(a_{i})}]}{\int} 
 (\frac{1}{1+\lambda_{j}^{2}\gamma_{n}G_{ a _{j}}^{\frac{2}{2-n}}(\exp_{g_{ a _{i}}} \frac{x}{\lambda_{j}})})^{\frac{n-2}{2}}.
\end{split}
\end{equation} 
Changing coordinates via $d_{i,j}=\exp_{g_{a_{i}}}^{-1}\exp_{g_{a_{j}}}$ we get
\begin{equation}
\begin{split}
I_{4}^{1}
\leq &
\frac{C}{(\frac{\lambda_{i}}{\lambda_{j}}+\lambda_{i}\lambda_{j}G^{\frac{2}{2-n}}_{a_{j}}(a_{i}))^{\frac{n+2}{2}}}
\underset{[\vert \frac{x}{\lambda_{j}}\vert \leq \tilde  E d(a_{i},a_{j})]}{\int} 
 (\frac{1}{1+r^{2}})^{\frac{n-2}{2}}
\end{split}
\end{equation} 
and thus $I_{4}^{1}=o(\eps_{i,j})$, since we may assume $\lambda_{j}\ll \lambda_{i}$. Moreover
\begin{equation}
\begin{split}
I_{4,2}
= &
\underset{\mathcal{B}_{2}}{\int} \frac{ u_{ a _{j}}( a _{i})}{(1+r^{2})^{\frac{n+2}{2}}}
 (\frac{1}{\frac{\lambda_{i} }{ \lambda_{j} }+\lambda_{i}\lambda_{j}\gamma_{n}G_{ a _{j}}^{\frac{2}{2-n}}(\exp_{g_{ a _{i}}} \frac{x}{\lambda_{i}})})^{\frac{n-2}{2}} \\
 \leq &
\frac{C}{(\frac{\lambda_{i} }{ \lambda_{j} }+\lambda_{i}\lambda_{j}\gamma_{n}G_{ a _{j}}^{\frac{2}{2-n}}(a_{i}))^{\frac{n-2}{2}}} \\
& \quad\quad\quad\quad 
\underset{[\vert x \vert \geq \sqrt{\lambda_{i}^{2}\gamma_{n}G^{\frac{2}{2-n}}_{a_{j}}(a_{i})}]}{\int}
\frac{1}{(1+r^{2})^{\frac{n+2}{2}}} \\
= &
o(\eps_{i,j}),
\end{split}
\end{equation} 
since $\lambda_{i}^{2}\gamma_{n}G^{\frac{2}{2-n}}_{a_{j}}(a_{i})\gg 1$ in this case. Therefore
\begin{equation}
\begin{split}
I_{4}\leq I_{4}^{1}+I_{4}^{2}=o(\eps_{i,j}).
\end{split}
\end{equation} 
Collecting terms we get 
\begin{equation}
\begin{split}
\int \varphi_{i}^{\frac{n+2}{n-2}}\varphi_{j}
= &
I_{1}+o(\eps_{i,j}).               
\end{split}
\end{equation}
Due to conformal invariance there holds
\begin{equation}\begin{split}
G_{a_{j}}(a_{j},a_{i})=u_{a_{j}}^{-1}(a_{i})u_{a_{j}}^{-1}(a_{j})G_{g_{0}}(a_{i},a_{j})
\end{split}\end{equation}
and we conclude
\begin{equation}\begin{split}
\int \varphi_{i}^{\frac{n+2}{n-2}}\varphi_{j}
= &
\frac
{b_{1}}
{(\frac{\lambda_{i} }{ \lambda_{j} }
+
\lambda_{i}\lambda_{j}
\gamma_{n}G_{g_{0}}^{\frac{2}{2-n}}( a _{i}, a _{j})
)^{\frac{n-2}{2}}
}
+
o(\eps_{i,j}).
\end{split}\end{equation}
The claim follows.
 \item[$(\beta)$] \quad\quad  Case $k=2$ \\$_{}$\\
First we deal with the case $\frac{1}{\lambda_{i}}\leq \frac{1}{\lambda_{j}}$. For $c>0$ small we get
\begin{equation}\begin{split}
-\lambda_{j}\int & \varphi_{i}^{\frac{n+2}{n-2}}\partial_{\lambda_{j}}\varphi_{j}\\
= &
\frac{n-2}{2}
\underset{B_{c}( a _{i})}{\int}
(\frac{ \lambda_{i} }{ 1+\lambda_{i}^{2}\gamma_{n}G_{ a _{i}}^{\frac{2}{2-n}}})^{\frac{n+2}{2}}\frac{u_{ a _{j}}}{u_{ a _{i}}} \\
& \quad\quad\quad\quad\;\;
(\frac{ \lambda_{j} }{ 1+\lambda_{j}^{2}\gamma_{n}G_{ a _{j}}^{\frac{2}{2-n}}})^{\frac{n-2}{2}}
\frac{\lambda_{j}^{2}\gamma_{n}G_{a_{j}}^{\frac{2}{2-n}}-1}{\lambda_{j}^{2}\gamma_{n}G_{a_{j}}^{\frac{2}{2-n}}+1}
d\mu_{g_{ a _{i}}}
\\ & +
O(  \frac{1}{\lambda_{i}^{\frac{n+2}{2}}}  \frac{1}{\lambda_{j}^{\frac{n-2}{2}}}).
\end{split}\end{equation}
Clearly $ \lambda_{i}^{-\frac{n+2}{2}}\lambda_{j}^{-\frac{n-2}{2}}=o(\eps_{i,j})$,
whence as before
\begin{equation}\begin{split}\label{expanding_lambdaipartiallambdaiepsij}
-\lambda_{j}\int & \varphi_{i}^{\frac{n+2}{n-2}}\partial_{\lambda_{j}}\varphi_{j} \\
= &
\frac{n-2}{2}
\underset{B_{ c\lambda_{i} }(0)}{\int}
\frac{u_{ a _{j}}( a _{i})}{(1+r^{2})^{\frac{n+2}{2}}} 
\frac{\lambda_{j}^{2}\gamma_{n}G_{a_{j}}^{\frac{2}{2-n}}(\exp_{g_{ a _{i}}}\frac{x}{\lambda_{i}})-1}
{\lambda_{j}^{2}\gamma_{n}G_{a_{j}}^{\frac{2}{2-n}}(\exp_{g_{ a _{i}}} \frac{x}{\lambda_{i}})+1}
\\
& \quad\quad\quad\quad\quad
(\frac{1}{\frac{\lambda_{i} }{ \lambda_{j} }+\lambda_{i}\lambda_{j}\gamma_{n}G_{ a _{j}}^{\frac{2}{2-n}}
(\exp_{g_{ a _{i}}} \frac{x}{\lambda_{i}})})^{\frac{n-2}{2}} 
\\
& +
o(\eps_{i,j}).
\end{split}\end{equation}
Due to $\frac{1}{\lambda_{i}}\leq \frac{1}{\lambda_{j}}$ we have
\begin{equation}\begin{split}
\eps_{i,j}^{\frac{2}{2-n}}
\sim 
\lambda_{i}\lambda_{j}\gamma_{n}G^{\frac{2}{2-n}}( a _{i}, a _{j})
\; \text{ or }\;
\eps_{i,j}^{\frac{2}{2-n}}
\sim
\frac{\lambda_{i}}{\lambda_{j}}
\end{split}\end{equation}
and may expand on
\begin{equation}\begin{split}
\mathcal{A}
= &
[\vert  \frac{x}{\lambda_{i}}\vert \leq \epsilon\sqrt{\gamma_{n}G^{\frac{2}{2-n}}_{ a _{j}}( a _{i})}]
\cup
[\vert  \frac{x}{\lambda_{i}}\vert \leq \epsilon  \frac{1}{\lambda_{j}} ]
\end{split}\end{equation}
for $\epsilon>0$ sufficiently small

\begin{equation}\begin{split}
& 
\frac{1}
{
(\frac{\lambda_{i} }{ \lambda_{j} }
+
\lambda_{i}\lambda_{j}\gamma_{n}G_{ a _{j}}^{\frac{2}{2-n}}(\exp_{g_{ a _{i}}} \frac{x}{\lambda_{i}}))^{\frac{n-2}{2}} }\\
& \quad\quad\quad\quad\quad\quad\quad\quad\quad\quad\quad\quad 
\frac{\lambda_{j}^{2}\gamma_{n}G_{a_{j}}^{\frac{2}{2-n}}(\exp_{g_{ a _{i}}} \frac{x}{\lambda_{i}})-1}
{\lambda_{j}^{2}\gamma_{n}G_{a_{j}}^{\frac{2}{2-n}}(\exp_{g_{ a _{i}}} \frac{x}{\lambda_{i}})+1}
\\
& \quad = 
(
\frac{\lambda_{i} }{ \lambda_{j} }
+
\lambda_{i}\lambda_{j}
\gamma_{n}G_{ a _{j}}^{\frac{2}{2-n}}( a _{i})
)^{\frac{2-n}{2}} 
\frac
{\lambda_{j}^{2}\gamma_{n}G_{a_{j}}^{\frac{2}{2-n}}(a_{i})-1}
{\lambda_{j}^{2}\gamma_{n}G_{a_{j}}^{\frac{2}{2-n}}(a_{i})+1}\\
& \quad \;\;\;\; +
\frac{2-n}{2}
\frac
{
\gamma_{n}\nabla G^{\frac{2}{2-n}}_{ a _{j}}( a _{i})\lambda_{j}x
}
{
(
\frac{\lambda_{i} }{ \lambda_{j} }
+
\lambda_{i}\lambda_{j}\gamma_{n}G_{ a _{j}}^{\frac{2}{2-n}}( a _{i})
)^{\frac{n}{2}}
}
\frac
{\lambda_{j}^{2}\gamma_{n}G_{a_{j}}^{\frac{2}{2-n}}(a_{i})-1}
{\lambda_{j}^{2}\gamma_{n}G_{a_{j}}^{\frac{2}{2-n}}(a_{i})+1}
\\
& \quad \;\;\;\;+
\frac
{
2
}
{
(
\frac{\lambda_{i} }{ \lambda_{j} }
+
\lambda_{i}\lambda_{j}\gamma_{n}G_{ a _{j}}^{\frac{2}{2-n}}( a _{i})
)^{\frac{n}{2}}
}
\frac
{\gamma_{n}\nabla G_{a_{j}}^{\frac{2}{2-n}}(a_{i})\lambda_{j}x}
{1+\lambda_{j}^{2}\gamma_{n}G_{a_{j}}^{\frac{2}{2-n}}(a_{i})}
\\
& \quad \;\;\;\;+
\frac
{
O(\frac{ \lambda_{j} }{ \lambda_{i} }\vert x\vert^{2})
}
{
(
\frac{\lambda_{i} }{ \lambda_{j} }
+
\lambda_{i}\lambda_{j}\gamma_{n}G_{ a _{j}}^{\frac{2}{2-n}}( a _{i})
)^{\frac{n}{2}}
}.
\end{split}\end{equation}
By radial symmetry we then get with $b_{2}=\frac{n-2}{2}\int \frac{1}{(1+r^{2})^{\frac{n+2}{2}}}$
\begin{equation}\begin{split}
-&\lambda_{j}\int  \varphi_{i}^{\frac{n+2}{n-2}}\varphi_{j} \\
& \hspace{-8pt}
=
\frac
{b_{2}u_{ a _{j}}( a _{i})}
{(\frac{\lambda_{i} }{ \lambda_{j} }
+
\lambda_{i}\lambda_{j}
\gamma_{n}G_{ a _{j}}^{\frac{2}{2-n}}( a _{i})
)^{\frac{n-2}{2}}
}
\frac
{\lambda_{j}^{2}\gamma_{n}G_{a_{j}}^{\frac{2}{2-n}}(a_{i})-1}
{\lambda_{j}^{2}\gamma_{n}G_{a_{j}}^{\frac{2}{2-n}}(a_{i})+1} 
+
o(\eps_{i,j})
\end{split}\end{equation}
and thus by conformal invariance
\begin{equation}\begin{split}\label{didj_first_two_cases}
-\lambda_{j}\int \varphi_{i}^{\frac{n+2}{n-2}}\partial_{\lambda_{j}}\varphi_{j}
= &
\frac
{
b_{2}(\lambda_{i}\lambda_{j}\gamma_{n}G_{g_{0}}^{\frac{2}{2-n}}(a_{i},a_{j})-\frac{\lambda_{i}}{\lambda_{j}})
}
{(\frac{\lambda_{i} }{ \lambda_{j} }
+
\lambda_{i}\lambda_{j}
\gamma_{n}G^{\frac{2}{2-n}}_{g_{0}}( a _{i}, a _{j})
)^{\frac{n}{2}}
} \\
& +
o(\eps_{i,j}).
\end{split}\end{equation}
We turn to the case
$
\frac{1}{\lambda_{i}}\geq \frac{1}{\lambda_{j}}.
$
By the same reasoning as for \eqref{di^...dj=didj^...+o(eij)} 
\begin{equation}\begin{split}
-\lambda_{j}\int \varphi_{i}^{\frac{n+2}{n-2}}\partial_{ \lambda_{j}}\varphi_{j}
= &
-\lambda_{j}\int \varphi_{i}\partial_{\lambda_{j}} \varphi_{j}^{\frac{n+2}{n-2}}
+
o(\eps_{i,j}).
\end{split}\end{equation}
For $c>0$ small we get
\begin{equation}\begin{split}
-\lambda_{j}\int &\varphi_{i} \partial_{\lambda_{j}}\varphi_{j}^{\frac{n+2}{n-2}}\\
= &
\frac{n+2}{2}\underset{B_{c}( a _{i})}{\int}
(\frac{ \lambda_{i} }{ 1+\lambda_{i}^{2}\gamma_{n}G_{ a _{i}}^{\frac{2}{2-n}}})^{\frac{n-2}{2}} \\
& \quad\quad\quad
\frac{u_{ a _{i}}}{u_{ a _{j}}}(\frac{ \lambda_{j} }{ 1+\lambda_{j}^{2}\gamma_{n}G_{ a _{j}}^{\frac{2}{2-n}}})^{\frac{n+2}{2}}
\frac{\lambda_{j}^{2}\gamma_{n}G_{a_{j}}^{\frac{2}{2-n}}-1}{\lambda_{j}^{2}\gamma_{n}G_{a_{j}}^{\frac{2}{2-n}}+1}
d\mu_{g_{ a _{j}}}
\\ & +
O(  \frac{1}{\lambda_{j}^{\frac{n+2}{2}}}  \frac{1}{\lambda_{i}^{\frac{n-2}{2}}}),
\end{split}\end{equation}
whence
\begin{equation}\begin{split}
-\lambda_{j}&\int  \varphi_{i}\partial_{\lambda_{j}}\varphi_{j}^{\frac{n+2}{n-2}} \\
= &
\frac{n+2}{2}
\underset{B_{ c\lambda_{i} }(0)}{\int}
\frac
{r^{2}-1}
{r^{2}+1}
(\frac{1}{1+r^{2}})^{\frac{n+2}{2}}
\\
& \quad\quad\quad\quad
\frac
{
u_{ a _{i}}( a _{j})
}
{
(\frac{\lambda_{j}}{\lambda_{i}}+\lambda_{i}\lambda_{j}\gamma_{n}G^{\frac{2}{2-n}}_{a_{i}}(\exp_{g_{a_{j}}}\frac{x}{\lambda_{j}}))^{\frac{n+2}{2}}
} 
+
o(\eps_{i,j}).
\end{split}\end{equation}
We may expand on
\begin{equation*}\begin{split}
\mathcal{A}
= &
[
\vert  \frac{x}{\lambda_{j}}\leq \varepsilon \sqrt{\gamma_{n}G^{\frac{2}{2-n}}_{a_{i}}(a_{j})}
]
\cup
[\vert  \frac{x}{\lambda_{j}}\vert \leq \epsilon  \frac{1}{\lambda_{i}} ]
\end{split}\end{equation*}
for $\epsilon>0$ sufficiently small
\begin{equation}\begin{split}
(
\frac{\lambda_{j} }{ \lambda_{i} }
& +
\lambda_{i}\lambda_{j}\gamma_{n}G_{ a_{i}}^{\frac{2}{2-n}}(\exp_{g_{ a_{j}}} \frac{x}{\lambda_{i}}))^{\frac{2-n}{2}} 
\\
= & 
(
\frac{\lambda_{j} }{ \lambda_{i} }
+
\lambda_{i}\lambda_{j}
\gamma_{n}G_{ a_{i}}^{\frac{2}{2-n}}( a_{j})
)^{\frac{2-n}{2}} 
\\ & +
\frac{2-n}{2}
\frac
{
\gamma_{n}\nabla G^{\frac{2}{2-n}}_{ a_{i}}( a_{j})\lambda_{i}x
+
O(\frac{ \lambda_{i} }{ \lambda_{j} }\vert x\vert^{2})
}
{
(
\frac{\lambda_{j} }{ \lambda_{i} }
+
\lambda_{i}\lambda_{j}\gamma_{n}G_{ a_{i}}^{\frac{2}{2-n}}( a_{j})
)^{\frac{n}{2}}
}.
\end{split}\end{equation}
This gives with indeed
$
b_{2}=\frac{n+2}{2}\int \frac{r^{2}-1}{r^{2}+1}(\frac{1}{1+r^{2}})^{\frac{n+2}{2}}
=
\frac{n-2}{2}\int (\frac{1}{1+r^{2}})^{\frac{n+2}{2}}
$
\begin{equation}\begin{split}
-\lambda_{j}\int \varphi_{i}^{\frac{n+2}{n-2}}\partial_{\lambda_{j}}\varphi_{j}
= &
b_{2}\frac
{u_{ a_{i}}( a_{j})}
{(\frac{\lambda_{i} }{ \lambda_{j} }
+
\lambda_{i}\lambda_{j}
\gamma_{n}G_{ a_{i}}^{\frac{2}{2-n}}( a_{j})
)^{\frac{n-2}{2}}
} \\
& +
o(\eps_{i,j})
\end{split}\end{equation}
and we conclude by conformal invariance
\begin{equation}\begin{split}\label{didj_third_case}
-\lambda_{j}\int \varphi_{i}^{\frac{n+2}{n-2}}\partial_{\lambda_{j}}\varphi_{j}
= &
\frac
{
b_{2}
}
{(\frac{\lambda_{i} }{ \lambda_{j} }
+
\lambda_{i}\lambda_{j}
\gamma_{n}G_{g_{0}}^{\frac{2}{2-n}}( a _{i}, a _{j})
)^{\frac{n-2}{2}}
} \\
& +
o(\eps_{i,j}).
\end{split}\end{equation}
\eqref{didj_first_two_cases} and \eqref{didj_third_case} then prove the claim.
 \item[$(\gamma)$] \quad\quad  Case $k=3$ \\$_{}$\\
First we consider the case $\frac{1}{\lambda_{i}}\leq \frac{1}{\lambda_{j}}$. For $c>0$ small we get
\begin{equation}\begin{split}
\frac{1}{\lambda_{j}}\int & \varphi_{i}^{\frac{n+2}{n-2}}\nabla_{a_{j}}\varphi_{j}\\
= &
\frac{2-n}{2}
\underset{B_{c}( a _{i})}{\int}
(\frac{ \lambda_{i} }{ 1+\lambda_{i}^{2}\gamma_{n}G_{ a _{i}}^{\frac{2}{2-n}}})^{\frac{n+2}{2}}\frac{u_{ a _{j}}}{u_{ a _{i}}} \\
& \quad\quad\quad\quad\;\;
(\frac{ \lambda_{j} }{ 1+\lambda_{j}^{2}\gamma_{n}G_{ a _{j}}^{\frac{2}{2-n}}})^{\frac{n-2}{2}}
\frac
{\lambda_{j}\gamma_{n}\nabla_{a_{j}}G_{a_{j}}^{\frac{2}{2-n}}}
{1+\lambda_{j}^{2}\gamma_{n}G^{\frac{2}{2-n}}_{a_{j}}}
d\mu_{g_{ a _{i}}}
\\ & +
O
(
\underset{B_{c}( a _{i})}{\int}
\varphi_{i}^{\frac{n+2}{n-2}}
\vert\frac{\nabla_{a_{j}}u_{ a _{j}}}{u_{ a _{i}}\lambda_{j}} \vert \varphi_{j}
+
\frac{1}{\lambda_{i}^{\frac{n+2}{2}}}  \frac{1}{\lambda_{j}^{\frac{n-2}{2}}}
).
\end{split}\end{equation}
Due to case  $(v)$ we obtain passing to 
$
x\simeq \exp_{g_{ a _{i}}}(x)
$
coordinates
\begin{equation}\begin{split}\label{expanding_1/lambdainablaaiepsij}
\frac{1}{\lambda_{j}}& \int \varphi_{i}^{\frac{n+2}{n-2}}\nabla_{a_{j}}\varphi_{j} \\
= &
\frac{2-n}{2}
\underset{B_{ c\lambda_{i} }(0)}{\int}
\frac{u_{ a _{j}}( a _{i})}{(1+r^{2})^{\frac{n+2}{2}}} 
\frac
{\lambda_{j}\gamma_{n}\nabla_{a_{j}}G_{a_{j}}^{\frac{2}{2-n}}(\exp_{g_{ a _{i}}}\frac{x}{\lambda_{i}})}
{1+\lambda_{j}^{2}\gamma_{n}G_{a_{j}}^{\frac{2}{2-n}}(\exp_{g_{ a _{i}}} \frac{x}{\lambda_{i}})}
\\
& \quad\quad\quad\quad\quad
(\frac{1}{\frac{\lambda_{i} }{ \lambda_{j} }+\lambda_{i}\lambda_{j}\gamma_{n}G_{ a _{j}}^{\frac{2}{2-n}}
(\exp_{g_{ a _{i}}} \frac{x}{\lambda_{i}})})^{\frac{n-2}{2}} 
+
o(\eps_{i,j}).
\end{split}\end{equation}
Since
\begin{equation}\begin{split}
\eps_{i,j}^{\frac{2}{2-n}}
\sim 
\lambda_{i}\lambda_{j}\gamma_{n}G^{\frac{2}{2-n}}( a _{i}, a _{j})
\; \text{ or }\;
\eps_{i,j}^{\frac{2}{2-n}}
\sim
\frac{\lambda_{i}}{\lambda_{j}}
\end{split}\end{equation}
we may expand on 
\begin{equation}\begin{split}
\mathcal{A}
= &
[\vert  \frac{x}{\lambda_{i}}\vert \leq \epsilon\sqrt{\gamma_{n}G^{\frac{2}{2-n}}_{ a _{j}}( a _{i})}]
\cup
[\vert  \frac{x}{\lambda_{i}}\vert \leq \epsilon  \frac{1}{\lambda_{j}} ]
\end{split}\end{equation}
for $\epsilon>0$ sufficiently small as before to obtain
\begin{equation}\begin{split}
\frac{1}{\lambda_{j}}\int & \varphi_{i}^{\frac{n+2}{n-2}}\nabla_{a_{j}}\varphi_{j}\\
= &
-b_{3}
\frac
{u_{a_{j}}(a_{i})}
{(\frac{\lambda_{i}}{\lambda_{j}}+\lambda_{i}\lambda_{j}\gamma_{n}G^{\frac{2}{2-n}}_{a_{j}}(a_{i}))^{\frac{n-2}{2}}}
\frac
{\lambda_{j}\gamma_{n}\nabla_{a_{j}}G^{\frac{2}{2-n}}_{a_{j}}(a_{i})}
{1+\lambda_{j}^{2}\gamma_{n}G^{\frac{2}{2-n}}_{a_{j}}(a_{i})}
\\ & +
o(\eps_{i,j})
\end{split}\end{equation}
with $b_{3}=\frac{n-2}{2}\int \frac{1}{(1+r^{2})^{\frac{n+2}{2}}}$.
This gives
\begin{equation}\begin{split}
\frac{1}{\lambda_{j}}\int & \varphi_{i}^{\frac{n+2}{n-2}}\nabla_{a_{j}}\varphi_{j}
= 
-b_{3}
\frac
{u_{a_{j}}(a_{i})\lambda_{i}\gamma_{n}\nabla_{a_{j}}G^{\frac{2}{2-n}}_{a_{j}}(a_{i})}
{(\frac{\lambda_{i}}{\lambda_{j}}+\lambda_{i}\lambda_{j}\gamma_{n}G^{\frac{2}{2-n}}_{a_{j}}(a_{i}))^{\frac{n}{2}}} \\
& +
o(\eps_{i,j}),
\end{split}\end{equation}
whence by conformal invariance 
\begin{equation}\begin{split}\label{deltai^...*1/lambdaj*nablaajdeltaj}
\frac{1}{\lambda_{j}}\int & \varphi_{i}^{\frac{n+2}{n-2}}\nabla_{a_{j}}\varphi_{j}
= 
\frac
{-b_{3}\lambda_{i}\gamma_{n}\nabla_{a_{j}}G_{g_{0}}^{\frac{2}{2-n}}(a_{i},a_{j})}
{
(
\frac{\lambda_{i}}{\lambda_{j}}
+
\lambda_{i}\lambda_{j}\gamma_{n}G_{g_{0}}^{\frac{2}{2-n}}(a_{i},a_{j})
)
^{\frac{n}{2}}
}
+
o(\eps_{i,j}).
\end{split}\end{equation}
We turn to the case $\frac{1}{\lambda_{i}}\geq \frac{1}{\lambda_{j}}$.
As before
\begin{equation}\begin{split}
\frac{1}{\lambda_{j}}\int  \varphi_{i}^{\frac{n+2}{n-2}}\nabla_{a_{j}}\varphi_{j}
= &
\frac{1}{\lambda_{j}}\int \varphi_{i}\nabla_{a_{j}}\varphi_{j}^{\frac{n+2}{n-2}}
+
o(\eps_{i,j})
\end{split}\end{equation}
and for $c>0$ small we obtain by arguments familiar by now
\begin{equation}\begin{split}
\frac{1}{\lambda_{j}}\int & \varphi_{i}\nabla_{a_{j}}\varphi_{j}^{\frac{n+2}{n-2}}\\
= &
-\frac{n+2}{2}
\underset{B_{c\lambda_{j}}( a _{j})}{\int}
(
\frac
{ 1 }
{ 
\frac{\lambda_{j}}{\lambda_{i}}
+
\lambda_{i}\lambda_{j}\gamma_{n}G_{ a _{i}}^{\frac{2}{2-n}}(\exp_{g_{a_{j}}}\frac{x}{\lambda_{j}})})^{\frac{n-2}{2}} \\
& \quad\quad\quad\quad\quad\quad\;\;\;
\frac
{u_{ a _{i}}(a_{j})\lambda_{j}\gamma_{n}\nabla_{a_{j}}G_{a_{j}}^{\frac{2}{2-n}}(\exp_{g_{a_{j}}}\frac{x}{\lambda_{j}})}
{( 1+r^{2})^{\frac{n+4}{2}}}
\\ & +
o(\eps_{i,j}),
\end{split}\end{equation}
whence
\begin{equation}\begin{split}
\frac{1}{\lambda_{j}}\int & \varphi_{i}\nabla_{a_{j}}\varphi_{j}^{\frac{n+2}{n-2}}\\
= &
(n+2)
\underset{B_{c\lambda_{j}}( a _{j})}{\int}
\frac
{u_{ a _{i}}(a_{j})x}
{( 1+r^{2})^{\frac{n+4}{2}}}
\\
& \quad\quad\quad\quad\quad\quad
(
\frac
{ 1 }
{ 
\frac{\lambda_{j}}{\lambda_{i}}
+
\lambda_{i}\lambda_{j}\gamma_{n}G_{ a _{i}}^{\frac{2}{2-n}}(\exp_{g_{a_{j}}}\frac{x}{\lambda_{j}})})^{\frac{n-2}{2}} 
\\ & +
o(\eps_{i,j}).
\end{split}\end{equation}
Expanding on
\begin{equation}
\begin{split}
\mathcal{A}
= &
[
\vert \frac{x}{\lambda_{j}}\vert
\leq \epsilon
\sqrt{\gamma_{n}G^{\frac{2}{2-n}}_{a_{i}}(a_{j})}
]
\cup
[\vert  \frac{x}{\lambda_{j}}\vert \leq \epsilon  \frac{1}{\lambda_{i}} ]
\end{split}
\end{equation} for $\epsilon>0$ sufficiently small we  derive
\begin{equation}\begin{split}
\frac{1}{\lambda_{j}}\int \varphi_{i}\nabla_{a_{j}}\varphi_{j}^{\frac{n+2}{n-2}}
= &
b_{3}
\frac
{
u_{ a _{i}}( a _{j})
\lambda_{i}\gamma_{n}\nabla_{a_{j}}G^{\frac{2}{2-n}}_{a_{i}}(a_{j})
}
{(\frac{\lambda_{j} }{ \lambda_{i} }
+
\lambda_{i}\lambda_{j}
\gamma_{n}G_{ a _{j}}^{\frac{2}{2-n}}( a _{i})
)^{\frac{n}{2}}
}
 +
o(\eps_{i,j})
\end{split}\end{equation}
with indeed
$
b_{3}=\frac{(n+2)(n-2)}{2n}\int \frac{r^{2}}{(1+r^{2})^{\frac{n+4}{2}}}
=
\frac{n-2}{2}\int (\frac{1}{1+r^{2}})^{\frac{n+2}{2}}.
$
Thus
\begin{equation}\begin{split}\label{deltai*1/lambdaj*nablaajdeltaj^...}
\frac{1}{\lambda_{j}}\int  \varphi_{i}\nabla_{a_{j}}\varphi_{j}^{\frac{n+2}{n-2}}
= &
b_{3}
\frac
{
\lambda_{i}\gamma_{n}\nabla_{a_{j}}G_{g_{0}}^{\frac{2}{2-n}}(a_{i},a_{j})
}
{(\frac{\lambda_{j} }{ \lambda_{i} }
+
\lambda_{i}\lambda_{j}
\gamma_{n}G_{g_{0}}^{\frac{2}{2-n}}( a _{i},a_{j})
)^{\frac{n}{2}}
} \\
& +
o(\eps_{i,j})
\end{split}\end{equation}
by conformal invariance. 
From
\eqref{deltai^...*1/lambdaj*nablaajdeltaj}, \eqref{deltai*1/lambdaj*nablaajdeltaj^...}
the claim follows.
\end{enumerate}
\item
Due to \eqref{Phi1i}, \eqref{Phi2i}, \eqref{Phi3i} and
\begin{equation}\begin{split}
\gamma_{n}G_{a_{i}}^{\frac{2}{2-n}}
= 
r^{2}+O(r^{n}),
\;
\gamma_{n}\nabla_{a_{i}}G^{\frac{2}{2-n}}_{a_{i}}
=
-2x+O(r^{n-1}),
\end{split}\end{equation}
cf. definition \ref{def_bubbles} we have on $B_{c}(a_{i})$ for $c>0$ small
\begin{enumerate}
\item [$(\alpha)$]
 \begin{equation}\begin{split}
\phi_{1,i}
= &
u_{a_{i}}(\frac{\lambda_{i}}{1+\lambda_{i}^{2}r^{2}})^{\frac{n-2}{2}}
+
O(r^{n-2}(\frac{\lambda_{i}}{1+\lambda_{i}^{2}r^{2}})^{\frac{n-2}{2}})
\end{split}\end{equation}
\item [$(\beta)$]
 \begin{equation}\begin{split}
\phi_{2,i}
= &
\frac{n-2}{2}u_{a_{i}}\frac{\lambda_{i}^{2}r^{2}-1}{\lambda_{i}^{2}r^{2}+1}
(\frac{\lambda_{i}}{1+\lambda_{i}^{2}r^{2}})^{\frac{n-2}{2}} \\
& +
O(r^{n-2}(\frac{\lambda_{i}}{1+\lambda_{i}^{2}r^{2}})^{\frac{n-2}{2}})
\end{split}\end{equation}
\item [$(\gamma)$]
\begin{equation}\begin{split}
\phi_{3,i}
= &
-\frac{n-2}{2}
u_{a_{i}}
\frac
{\lambda_{i}x}
{1+\lambda_{i}^{2}r^{2}} 
(\frac{\lambda_{i}}{1+\lambda_{i}^{2}r^{2}})^{\frac{n-2}{2}} \\
& +
O(r^{n-2}(\frac{\lambda_{i}}{1-\lambda_{i}^{2}r^{2}})^{\frac{n-2}{2}})
+
O(\frac{r}{\lambda_{i}}(\frac{\lambda_{i}}{1+\lambda_{i}^{2}r^{2}})^{\frac{n-2}{2}})
\end{split}\end{equation}  
\item [$(\varphi)$]
\begin{equation}\begin{split}
\varphi_{i}^{\frac{4}{n-2}}
= &
u^{\frac{4}{n-2}}_{a_{i}}(\frac{\lambda_{i}}{1+\lambda_{i}^{2}r^{2}})^{2}
+
O(r^{n-2}(\frac{\lambda_{i}}{1+\lambda_{i}^{2}r^{2}})^{2}).
\end{split}\end{equation}
\end{enumerate}
Consequently
\begin{equation}\begin{split}
\int \phi_{1,i}\varphi_{i}^{\frac{4}{n-2}}\phi_{2,i}
= &
\int_{B_{c}(a_{i})} \phi_{1,i}\varphi_{i}^{\frac{4}{n-2}}\phi_{2,i}
+
O(\frac{1}{\lambda_{i}^{n}})\\
= &
\int_{B_{\frac{c}{\lambda_{i}}}(0)} \frac{r^{2}-1}{r^{2}+1}(\frac{1}{1+r^{2}})^{n}
+
O(\frac{1}{\lambda_{i}^{n-2}})
\\
= &
\int \frac{r^{2}-1}{r^{2}+1}(\frac{1}{1+r^{2}})^{n}
+
O(\frac{1}{\lambda_{i}^{n-2}})=O(\frac{1}{\lambda_{i}^{n-2}}),
\end{split}\end{equation}
since the integral above vanishes. Alike using radial symmetry
\begin{equation}\begin{split}
\int \phi_{1,i}\varphi_{i}^{\frac{4}{n-2}}\phi_{3,i},
\int \phi_{2,i}\varphi_{i}^{\frac{4}{n-2}}\phi_{3,i}
= &
O(\frac{1}{\lambda_{i}^{n-2}}+\frac{1}{\lambda_{i}^{2}}).
\end{split}\end{equation}
Moreover we have readily have
\begin{equation}
\int \var_{i}^{\frac{n+2}{n-2}}\phi_{k,i} 
=
(1,-\lambda_{i} \partial_{i}, \frac{1}{\lambda_{i}}\nabla_{a_{i}})\int \varphi_{i}^{\frac{2n}{n-2}}
=
\delta_{1k}+O(\frac{1}{\lambda_{i}}^{n-2})
.\end{equation} 
 \item 
Let $\alpha'=\frac{n-2}{2}\alpha, \, \beta'=\frac{n-2}{2}\beta$, so $\alpha'+\beta'=n$.
We distinguish 
\begin{enumerate}
 \item [($\alpha$)] \quad 
$\eps_{i,j}^{\frac{2}{2-n}}\sim \frac{\lambda_{i} }{ \lambda_{j} }
\; \vee \;
\eps_{i,j}^{\frac{2}{2-n}} \sim \lambda_{i}\lambda_{j}\gamma_{n}G^{\frac{2}{2-n}}( a _{i}, a _{j})$
\smallskip \\
We estimate for $c>0$ small
\begin{equation}\begin{split}
\int &\varphi_{i}^{\alpha} \varphi_{j}^{\beta}\\
\leq &
C \underset{B_{c}(0)}{\int}(\frac{\lambda_{i} }{1+\lambda_{i}^2r^{2}})^{\alpha'}(\frac{ \lambda_{j} }{1 +\lambda_{j}^{2}\gamma_{n}G_{ a _{j}}^{\frac{2}{2-n}}(\exp_{ a _{i}}x)})^{\beta'}\\
& +
C  \frac{1}{\lambda_{i}^{\alpha'}}\underset{B_{c}(0)}{\int}(\frac{ \lambda_{j} }{1+\lambda_{j}^2r^{2}})^{\beta'}
+
O(  \frac{1}{\lambda_{i}^{\alpha'}} \frac{1}{\lambda_{j}^{\beta'}} ) \\
= &
C \underset{B_{ c\lambda_{i} }(0)}{\int}(\frac{1}{1+r^{2}})^{\alpha'}
(\frac{1}{\frac{\lambda_{i} }{ \lambda_{j} }
+
\lambda_{i}\lambda_{j}\gamma_{n}G_{ a _{j}}^{\frac{2}{2-n}}(\exp_{ a _{i}}\frac{x}{\lambda_{i}})})^{\beta'}\\
& +
C  \frac{1}{\lambda_{i}^{\alpha'}} \frac{1}{\lambda_{j}^{n-\beta'}} 
\underset{B_{ c\lambda_{j} }(0)}{\int}(\frac{1}{1+r^{2}})^{\beta'}
+
o(\eps_{i,j}^{\beta}).
\end{split}\end{equation}
Thus by 
$
\int_{B_{ c\lambda_{j} }(0)}(\frac{1}{1+r^{2}})^{\beta'}
\leq 
C \frac{1}{\lambda_{j} ^{2\beta'-n}}
$ 
we get
\begin{equation}\begin{split}
\int  & \varphi_{i}^{\alpha}  \varphi_{j}^{\beta}  \\
\leq &
C\underset{B_{ c\lambda_{i} }(0)}{\int}(\frac{1}{1+r^{2}})^{\alpha'}(\frac{1}{\frac{\lambda_{i} }{ \lambda_{j} }+\lambda_{i}\lambda_{j}\gamma_{n}G_{ a _{j}}^{\frac{2}{2-n}}(\exp_{ a _{i}} \frac{x}{\lambda_{i}})})^{\beta'} \\
& +
o(\eps_{i,j}^{\beta}).
\end{split}\end{equation}
This shows the claim in cases
\begin{equation}\begin{split}
\frac{ \lambda_{j} }{ \lambda_{i} }+\frac{\lambda_{i} }{ \lambda_{j} }
+
\lambda_{i}\lambda_{j}\gamma_{n}G_{g_{0}}^{\frac{2}{2-n}}( a _{i}, a _{j})
\sim \frac{\lambda_{i}}{\lambda_{j}}
\; \text{ or }\;
d( a _{i}, a _{j})>3c.
\end{split}\end{equation}
Else we may assume $d( a _{i}, a _{j})<3c$ and 
\begin{equation}\begin{split}
\frac{ \lambda_{j} }{ \lambda_{i} }+\frac{\lambda_{i} }{ \lambda_{j} }+\lambda_{i}\lambda_{j}\gamma_{n}G_{g_{0}}^{\frac{2}{2-n}}( a _{i}, a _{j}) 
& \sim 
\lambda_{i}\lambda_{j}d^{2}( a _{i}, a _{j}).
\end{split}\end{equation}
We then get with $\mathcal{B}=[\frac{1}{2}d( a _{i}, a _{j})\leq \vert  \frac{x}{\lambda_{i}}\vert \leq 2 d( a _{i}, a _{j})]$
\begin{equation}\begin{split}
\int &\varphi_{i}^{\alpha}\varphi_{j}^{\beta} \\
\leq &
C \underset{\mathcal{B}}{\int}
(\frac{1}{1+r^{2}})^{\alpha'}(\frac{1}{\frac{\lambda_{i} }{ \lambda_{j} }+\lambda_{i}\lambda_{j}d^{2}( a _{j}, \exp_{ a _{i}}( \frac{x}{\lambda_{i}}))})^{\beta'} +
O(\eps_{i,j}^{\beta})\\
\leq &
C
(\frac{1}{1+\vert\lambda_{i}d( a _{i}, a _{j})\vert^{2}})^{\alpha'}
\underset{ [\vert  \frac{x}{\lambda_{i}}\vert \leq 4 d( a _{i}, a _{j})]}{\int}
(\frac{1}{\frac{\lambda_{i} }{ \lambda_{j} }+\frac{ \lambda_{j} }{ \lambda_{i} }r^{2}})^{\beta'} \\
& +
O(\eps_{i,j}^{\beta})\\
\leq &
C
\frac{(\frac{ \lambda_{j} }{ \lambda_{i} })^{\beta'-n}}{(1+\vert\lambda_{i}d( a _{i}, a _{j})\vert^{2})^{\alpha'}}
 \underset{ [r \leq 4 \lambda_{j}d( a _{i}, a _{j})]}{\int}
(\frac{1}{1+r ^{2}})^{\beta'}
+
O(\eps_{i,j}^{\beta}).
\end{split}\end{equation}
Note, that in case 
$\lambda_{j}d( a _{i}, a _{j})$ remains bounded, we are done. Else
\begin{equation}\begin{split}
\int \varphi_{i}^{\alpha}\varphi_{j}^{\beta}
\leq &
C
\frac{(\frac{ \lambda_{j} }{ \lambda_{i} })^{\beta'-n}(\lambda_{j}d( a _{i}, a _{j}))^{n-2\beta'}}{(1+\vert\lambda_{i}d( a _{i}, a _{j})\vert^{2})^{\alpha'}}
+
O(\eps_{i,j}^{\beta}) \\
\leq &
C
(\frac{1}{1+\vert\lambda_{i}d( a _{i}, a _{j})\vert^{2}})^{\alpha'-\frac{n}{2}+\beta'}
(\frac{\lambda_{i} }{ \lambda_{j} })^{\beta'}+
O(\eps_{i,j}^{\beta}),
\end{split}\end{equation}
whence due to $\alpha'>\frac{n}{2}$ the claim follows.

 \item [$(\beta)$]\quad $\eps_{i,j}^{\frac{2}{2-n}}\sim \frac{ \lambda_{j} }{ \lambda_{i} }$. 
\smallskip \\
We estimate for $c>0$ small
\begin{equation}\begin{split}
\int & \varphi_{i}^{\alpha}\varphi_{j}^{\beta} \\
\leq &
C
\int_{B_{ c\lambda_{j} }(0)}
(\frac{1}{\frac{ \lambda_{j} }{ \lambda_{i} }+\lambda_{i}\lambda_{j}\gamma_{n}G_{ a _{i}}^{\frac{2}{2-n}}( \exp_{ a _{j}}( \frac{x}{\lambda_{j}} ))})^{\alpha'} 
(\frac{1}{1+r^{2}})^{\beta'} \\
& +
C \frac{1}{\lambda_{j}^{\beta'}} \int_{B_{c}(0)}(\frac{\lambda_{i} }{1+\lambda_{i}^2r^{2}})^{\alpha'}
+
O(  \frac{1}{\lambda_{i}^{\alpha'}} \frac{1}{\lambda_{j}^{\beta'}} ),
\end{split}\end{equation}
which by
$
\int_{B_{c}(0)}(\frac{\lambda_{i} }{1+\lambda_{i}^2r^{2}})^{\alpha'}
\leq
C\frac{1}{\lambda_{i}^{n-\alpha'}} =  C\frac{1}{\lambda_{i} ^{\beta'}}
$
gives

\begin{equation}\begin{split}
 \int & \varphi_{i}^{\alpha}\varphi_{j}^{\beta} \\
\leq &
C
\int_{B_{ \lambda_{j}c }(0)}
(\frac{1}{\frac{ \lambda_{j} }{ \lambda_{i} }+\lambda_{i}\lambda_{j}\gamma_{n}G_{a_{i}}^{\frac{2}{2-n}}(\exp_{ a _{j}}( \frac{x}{\lambda_{j}}))})^{\alpha'} 
(\frac{1}{1+r^{2}})^{\beta'} \\
& +
o(\eps_{i,j}^{\beta}).
\end{split}\end{equation}
By assumption $d( a _{i}, a _{j})\leq  \frac{2}{\lambda_{i}} $, whence we may replace as before
\begin{equation}\begin{split}
\gamma_{n}G_{ a _{i}}^{\frac{2}{2-n}}(\exp_{ a _{j}}( \frac{x}{\lambda_{j}}))\sim d^{2}( a _{i}, \exp_{ a _{j}}( \frac{x}{\lambda_{j}}))
\;\text{ on }\;
B_{ \lambda_{j}c }(0).
\end{split}\end{equation}
Thus for $\gamma>3$
\begin{equation}\begin{split}
\int & \varphi_{i}^{\alpha}\varphi_{j}^{\beta} \\
\leq &
C
\underset{[\gamma\frac{ \lambda_{j} }{ \lambda_{i} }\leq \vert x \vert\leq  \lambda_{j}c ]}{\int}
(\frac{1}{\frac{ \lambda_{j} }{ \lambda_{i} }+\lambda_{i}\lambda_{j}d^{2}( a _{i}, \exp_{ a _{j}}( \frac{x}{\lambda_{j}}))})^{\alpha'} 
(\frac{1}{1+r^{2}})^{\beta'} \\
& +C
(\frac{\lambda_{i} }{ \lambda_{j} })^{\alpha'}
\int_{[\vert x \vert<\gamma\frac{ \lambda_{j} }{ \lambda_{i} }]}
(\frac{1}{1+r^{2}})^{\beta'} 
+
o(\eps_{i,j}^{\beta}) \\
\leq &
C
\int_{ [\gamma\frac{ \lambda_{j} }{ \lambda_{i} }\leq \vert x \vert\leq  \lambda_{j}c ]}
(\frac{1}{\frac{ \lambda_{j} }{ \lambda_{i} }+\frac{\lambda_{i} }{ \lambda_{j} }r^{2}})^{\alpha'} 
(\frac{1}{1+r^{2}})^{\beta'} \\
& +C
(\frac{\lambda_{i} }{ \lambda_{j} })^{\alpha'}
(\frac{ \lambda_{j} }{ \lambda_{i} })^{n-2\beta} 
+
o(\eps_{i,j}^{\beta}),
\end{split}\end{equation}
since for $\vert x \vert \geq \gamma\frac{ \lambda_{j} }{ \lambda_{i} }$ we may assume using $d( a _{i}, a _{j})\leq \frac{1}{\lambda_{i}} $
\begin{equation}\begin{split}
d( a _{i}, \exp_{ a _{j}}( \frac{x}{\lambda_{j}}))\geq  \frac{r}{\lambda_{j}}.
\end{split}\end{equation}
Therefore
\begin{equation}\begin{split}
\int \varphi_{i}^{\alpha}\varphi_{j}^{\beta}
\leq &
C
(\frac{ \lambda_{j} }{ \lambda_{i} })^{\alpha'}\int_{ [\vert x \vert \geq \gamma\frac{ \lambda_{j} }{ \lambda_{i} }]}
r^{-2n}
+
O(\eps_{i,j}^{\beta}) 
= 
O(\eps_{i,j}^{\beta}).
\end{split}\end{equation}

\end{enumerate}
\item 
By symmetry we may assume $\frac{1}{\lambda_{i}}\leq \frac{1}{\lambda_{j}}$ and thus
\begin{equation}\begin{split}
\eps_{i,j}^{\frac{2}{2-n}}\sim \frac{\lambda_{i} }{ \lambda_{j} }
\; \vee \;
\eps_{i,j}^{\frac{2}{2-n}} \sim \lambda_{i}\lambda_{j}\gamma_{n}G_{g_{0}}^{\frac{2}{2-n}}( a _{i}, a _{j})
\end{split}\end{equation}
We estimate for $c>0$ small
\begin{equation}\begin{split}
\int \varphi_{i}^{\frac{n}{n-2}}&\varphi_{j}^{\frac{n}{n-2}} \\
\leq &
C \int_{B_{c}(0)}(\frac{\lambda_{i} }{1+\lambda_{i}^2r^{2}})^{\frac{n}{2}}
(\frac{ \lambda_{j} }{1 +\lambda_{j}^{2}\gamma_{n}G_{ a _{j}}^{\frac{2}{2-n}}(\exp_{ a _{i}}x)})^{\frac{n}{2}}\\
& +
C  \frac{1}{\lambda_{i}^{\frac{n}{2}}}
\int_{B_{c}(0)}(\frac{ \lambda_{j} }{1+\lambda_{j}^2r^{2}})^{\frac{n}{2}}
+
O(  \frac{1}{\lambda_{i}^{\frac{n}{2}}} \frac{1}{\lambda_{j}^{\frac{n}{2}}} ) \\
= &
C \int_{B_{ c\lambda_{i} }(0)}(\frac{1}{1+r^{2}})^{\frac{n}{2}}
(\frac{1}{\frac{\lambda_{i} }{ \lambda_{j} }
+
\lambda_{i}\lambda_{j}\gamma_{n}G_{ a _{j}}^{\frac{2}{2-n}}(\exp_{ a _{i}}( \frac{x}{\lambda_{i}}))})^{\frac{n}{2}}\\
& +
O(\ln \lambda_{j}\eps_{i,j}^{\frac{n}{n-2}}).
\end{split}\end{equation}
Thus in cases
\begin{equation}\begin{split}
\frac{ \lambda_{j} }{ \lambda_{i} }+\frac{\lambda_{i} }{ \lambda_{j} }
+
\lambda_{i}\lambda_{j}\gamma_{n}G_{g_{0}}^{\frac{2}{2-n}}( a _{i}, a _{j})
\sim \frac{\lambda_{i}}{\lambda_{j}}
\; \text{ or }\;
d( a _{i}, a _{j})>3c
\end{split}\end{equation}
we obtain
\begin{equation}\begin{split}
\int \varphi_{i}^{\frac{n}{n-2}}\varphi_{j}^{\frac{n}{n-2}}
\leq &
C \ln \lambda_{i}\eps_{i,j}^{\frac{n}{n-2}}
+
C\ln \lambda_{j}\eps_{i,j}^{\frac{n}{n-2}}
\leq 
C\ln (\lambda_{i}\lambda_{j})\eps_{i,j}^{\frac{n}{n-2}},
\end{split}\end{equation}
thus $\int \varphi_{i}\varphi_{j}=O(\eps_{i,j}^{\frac{n}{n-2}}\ln \eps_{i,j})$.
Else we may assume  $d( a _{i}, a _{j})<3c$ and 
\begin{equation}\begin{split}
\frac{ \lambda_{j} }{ \lambda_{i} }+\frac{\lambda_{i} }{ \lambda_{j} }+\lambda_{i}\lambda_{j}\gamma_{n}G^{\frac{2}{2-n}}( a _{i}, a _{j}) 
& \sim 
\lambda_{i}\lambda_{j}d^{2}( a _{i}, a _{j}).
\end{split}\end{equation}
We then get with $\mathcal{B}=[\frac{1}{2}d( a _{i}, a _{j})\leq \vert  \frac{x}{\lambda_{i}}\vert \leq 2 d( a _{i}, a _{j})]$
\begin{equation}\begin{split}
\int \varphi_{i}^{\frac{n}{n-2}}&\varphi_{j}^{\frac{n}{n-2}}\\
\leq &
C \underset{\mathcal{B}}{\int}
(\frac{1}{1+r^{2}})^{\frac{n}{2}}(\frac{1}{\frac{\lambda_{i} }{ \lambda_{j} }+\lambda_{i}\lambda_{j}d^{2}( a _{j}, \exp_{ a _{i}}( \frac{x}{\lambda_{i}}))})^{\frac{n}{2}}
\\ & +
O(\eps_{i,j}^{\frac{n}{n-2}}\ln \eps_{i,j})\\
\leq &
C
(\frac{1}{1+\vert\lambda_{i}d( a _{i}, a _{j})\vert^{2}})^{\frac{n}{2}}
\underset{ [\vert  \frac{x}{\lambda_{i}}\vert \leq 4 d( a _{i}, a _{j})]}{\int}
(\frac{1}{\frac{\lambda_{i} }{ \lambda_{j} }+\frac{ \lambda_{j} }{ \lambda_{i} }r^{2}})^{\frac{n}{2}} \\
& +
O(\eps_{i,j}^{\frac{n}{n-2}}\ln \eps_{i,j})\\
\leq &
C
(\frac{1}{1+\vert\lambda_{i}d( a _{i}, a _{j})\vert^{2}})^{\frac{n}{2}}(\frac{ \lambda_{i} }{ \lambda_{j} })^{\frac{n}{2}}
 \underset{ [r \leq 4 \lambda_{j}d( a _{i}, a _{j})]}{\int}
(\frac{1}{1+r ^{2}})^{\frac{n}{2}}\\
& +
O(\eps_{i,j}^{\frac{n}{n-2}}\ln \eps_{i,j})\\
\leq &
C
\eps_{i,j}^{\frac{n}{n-2}}
\ln(\lambda_{j}d(a_{i},a_{j}))
+
O(\eps_{i,j}^{\frac{n}{n-2}}\ln \eps_{i,j}).
\end{split}\end{equation}
The claim follows, as $\lambda_{j}\leq \lambda_{i}$ by assumption.

 \item
$\eps_{i,j}=O(\eps_{i,j})$ is trivial and $\lambda_{i}\partial_{\lambda_{i}}\eps_{i,j}=O(\eps_{i,j})$ follows readily due to
\begin{equation}\begin{split}
\lambda_{i}\partial_{\lambda_{i}}\eps_{i,j}
= &
\frac{2-n}{2}\eps_{i,j}
\frac
{
\frac{\lambda_{i}}{\lambda_{j}}-\frac{\lambda_{j}}{\lambda_{i}}+\lambda_{i}\lambda_{j}\gamma_{n}G_{g_{0}}^{\frac{2}{2-n}}(a_{i},a_{j})
}
{
\frac{\lambda_{i}}{\lambda_{j}}+\frac{\lambda_{j}}{\lambda_{i}}+\lambda_{i}\lambda_{j}\gamma_{n}G_{g_{0}}^{\frac{2}{2-n}}(a_{i},a_{j})
}
.
\end{split}\end{equation}
Last 
$
\frac{1}{\lambda_{i}}\nabla_{a_{i}}\eps_{i,j}
=
O(\eps_{i,j})
$
follows from 
\begin{equation}\begin{split}\label{nablaai/lambdai_epsij}
\frac{1}{\lambda_{i}}\nabla_{a_{i}}\eps_{i,j}
= &
\frac{2-n}{2}\eps_{i,j}
\frac
{
\lambda_{j}\gamma_{n}\nabla_{a_{i}}G_{g_{0}}^{\frac{2}{2-n}}(a_{i},a_{j})
}
{
\frac{\lambda_{i}}{\lambda_{j}}+\frac{\lambda_{j}}{\lambda_{i}}+\lambda_{i}\lambda_{j}\gamma_{n}G_{g_{0}}^{\frac{2}{2-n}}(a_{i},a_{j})
}.
\end{split}\end{equation}
immediately in case $d(a_{i},a_{j})>c>0$. In the contrary case we estimate
\begin{equation}\begin{split}
\frac
{
\lambda_{j}\gamma_{n}\vert \nabla_{a_{i}}G_{g_{0}}^{\frac{2}{2-n}}(a_{i},a_{j})\vert
}
{
\frac{\lambda_{i}}{\lambda_{j}}+\frac{\lambda_{j}}{\lambda_{i}}+\lambda_{i}\lambda_{j}\gamma_{n}G_{g_{0}}^{\frac{2}{2-n}}(a_{i},a_{j})
}
\leq &
\frac
{
\frac{\lambda_{j}}{\lambda_{i}}+\lambda_{i}\lambda_{j}\gamma_{n} \vert \nabla_{a_{i}}G_{g_{0}}^{\frac{2}{2-n}}(a_{i},a_{j})\vert^{2}
}
{
\frac{\lambda_{i}}{\lambda_{j}}+\frac{\lambda_{j}}{\lambda_{i}}+\lambda_{i}\lambda_{j}\gamma_{n}G_{g_{0}}^{\frac{2}{2-n}}(a_{i},a_{j})
}
\end{split}\end{equation}
with the right hand side being bounded for $d(a_{i},a_{j})$ small.
\end{enumerate}
\end{proof}

\begin{proof}[\textbf{Proof of proposition \ref{prop_optimal_choice}}(Cf. \cite{BahriCoronCriticalExponent}, Appendix A)]\label{p_optimal_choice}$_{}$\\
Let us denote by $w(\eps)$ any quantity, for which $\vert w(\eps)\vert\overset{\eps\to 0}{\-}0$ and consider for
\begin{equation}
\begin{split}
u\in V(\omega,p, \eps)\; \text{ with }\;\eps\- 0 
\end{split}
\end{equation} 
a representation
\begin{equation}
\begin{split}
u=u_{\hat\alpha, \hat\beta}+\hat\alpha^{i}\var_{\hat a_{i}, \hat \lambda_{i}}+\hat v, \,
(\hat\alpha, \hat\beta_{k}, \hat\alpha_{i}, \hat a_{i}, \hat\lambda_{i})\in A_{u}(\omega,p, \eps), \Vert \hat v \Vert\leq \eps.
\end{split}
\end{equation} 
Since $A_{u}(\omega,p, \eps)\subset A_{u}(\omega,p,2\eps_{0})$ we have
\begin{equation}
\begin{split}
\inf
_
{
(\tilde \alpha, \tilde\beta_{k}, \tilde\alpha_{i}, \tilde a_{i}, \tilde\lambda_{i})\in A_{u}(\omega,p,2\eps_{0}) 
}
\int 
Ku^{\frac{4}{n-2}}
\vert 
u
-
u_{\tilde \alpha, \tilde \beta}
-
\tilde\alpha^{i}\varphi_{\tilde a_{i}, \tilde \lambda_{i}}
\vert^{2}
=
w(\eps),
\end{split}
\end{equation} 
whence we may consider $(\tilde \alpha, \tilde \beta_{k}, \tilde \alpha_{i}, \tilde a_{i}, \tilde \lambda_{i})\in A_{u}(\omega,p,2\eps_{0})$ such, that
\begin{equation}
\begin{split}
\int Ku^{\frac{4}{n-2}}\vert u-u_{\tilde \alpha, \tilde \beta}-\tilde \alpha^{i}\var_{\tilde a_{i}, \tilde \lambda_{i}}\vert^{2}
=
w(\eps).
\end{split}
\end{equation} 
Expanding this gives in a first step
\begin{equation}
\begin{split}
\int K(\hat\alpha\omega+\hat\alpha^{i}\var_{\hat a_{i}, \hat\lambda_{i}})^{\frac{4}{n-2}}
\vert
u_{\hat\alpha, \hat\beta}-u_{\tilde \alpha, \tilde \beta}
+
\hat\alpha^{i}\var_{\hat a_{i}, \hat \lambda_{i}}-\tilde \alpha^{i}\var_{\tilde a_{i}, \lambda_{i}}
\vert^{2}
=
w(\eps)
\end{split}
\end{equation} 
and using lemma \ref{lem_interactions} and proposition \ref{prop_smoothness_of_u_a_b} we derive
\begin{equation}
\begin{split}
w(\eps)
= &
\int K\omega^{\frac{4}{n-2}}
\vert u_{\hat \alpha, \hat \beta}-u_{\tilde \alpha, \tilde \beta}-\tilde \alpha^{i}\var_{\tilde a_{i}, \tilde \lambda_{i}}\vert^{2} \\
& +
\sum_{i} \int K\var_{\hat a_{i}, \hat \lambda_{i}}^{\frac{4}{n-2}}
\vert \hat \alpha_{i}\var_{\hat a_{i}, \hat \lambda_{i}}-\tilde \alpha^{j}\var_{\tilde a_{j}, \tilde \lambda_{j}}\vert^{2}
\end{split}
\end{equation} 
Consequently for at least one $j=j_{i}$ the quantity
\begin{equation}\begin{split}
\frac{\tilde \lambda_{j_{i}}}{\hat \lambda_{i}}
+
\frac{\hat \lambda_{i}}{\tilde \lambda_{j_{i}}}
+
\tilde \lambda_{j_{i}}\hat \lambda_{i}\gamma_{n}G_{g_{0}}^{\frac{2}{2-n}}(\tilde a_{j_{i}}, \hat  a_{i})
\end{split}\end{equation}
has to stay bounded, whereas on the other hand 
\begin{equation}\begin{split}
\hat \eps_{i,j}
=
(\frac{\hat \lambda_{i}}{\hat \lambda_{j}}+\frac{\hat \lambda_{j}}{\hat \lambda_{i}}
+
\hat \lambda_{i}\hat \lambda_{j}\gamma_{n}G_{g_{0}}^{\frac{2}{2-n}}(\hat a_{i}, \hat a_{j}))^{\frac{2-n}{2}}
<\eps.
\end{split}\end{equation}
Thus for any $j= i, \ldots,p$ there exists exactly one $j_{i}\in \{1, \ldots, p\}$ such,  that
\begin{equation}\begin{split}
\frac{\tilde \lambda_{j_{i}}}{\hat \lambda_{i}}
+
\frac{\hat \lambda_{i}}{\tilde \lambda_{j_{i}}}
+
\tilde \lambda_{j_{i}}\hat \lambda_{i}\gamma_{n}G_{g_{0}}^{\frac{2}{2-n}}(\tilde a_{j_{i}}, \hat a_{i})
\end{split}\end{equation}
remains bounded and we may assume $j_{i}=i$. From this we deduce
\begin{equation}
\begin{split}
w(\eps)
= &
\int K\omega^{\frac{4}{n-2}}
\vert u_{\hat \alpha, \hat \beta}-u_{\tilde \alpha, \tilde \beta}\vert^{2} 
+
\sum_{i} \int K\var_{\hat a_{i}, \hat \lambda_{i}}^{\frac{4}{n-2}}
\vert \hat \alpha_{i}\var_{\hat a_{i}, \hat \lambda_{i}}
-
\tilde \alpha_{i}\var_{\tilde a_{i}, \tilde \lambda_{i}}\vert^{2}.
\end{split}
\end{equation} 
Note, that
\begin{equation}
\begin{split}
 \int K\omega^{\frac{4}{n-2}}
\vert u_{\hat \alpha, \hat \beta}-u_{\tilde \alpha, \tilde \beta}\vert^{2} 
= &
\vert \hat \alpha-\tilde \alpha \vert^{2} \int K\omega^{\frac{2n}{n-2}}
+
\sum_{i}\vert \hat \alpha\hat \beta{i}-\tilde \alpha\tilde \beta_{i}\vert^{2}\int K\omega^{\frac{4}{n-2}}\mathrm{e}_{i}^{2} \\
& +
\int K \omega^{\frac{4}{n-2}}\vert \hat \alpha h(\hat \beta)-\tilde \alpha h(\tilde \beta)\vert^{2},
\end{split}
\end{equation} 
whence due to $\Vert h\hat \beta\Vert=O(\Vert\hat  \beta \Vert^{2})$ we obtain
\begin{equation}
\begin{split}
 \int K\omega^{\frac{4}{n-2}}
\vert u_{\hat \alpha, \hat \beta}-u_{\tilde \alpha, \tilde \beta}\vert^{2} 
\geq &
C(\vert \hat \alpha-\tilde \alpha \vert^{2} +\Vert \hat \beta -\tilde \beta \Vert^{2}).
\end{split}
\end{equation}
Moreover in $g_{\hat a_{i}}$ normal coordinates with 
\begin{equation}
\begin{split}
\gamma(\tau)
=
\tau(\tilde\alpha_{i}, \tilde a_{i}, \tilde \lambda_{i})
+
(1-\tau)(\hat \alpha_{i}, \hat a_{i}, \hat \lambda_{i})
\end{split}
\end{equation} 
we have for some $\tau\in (0,1)$
\begin{equation}
\begin{split}
\tilde \alpha_{i}\var_{\tilde a_{i}, \tilde \lambda_{i}}
& -
\hat \alpha_{i}\var_{\hat a_{i}, \hat \lambda_{i}} \\
= &
\begin{pmatrix}
\partial_{\alpha}\\
\nabla_{a}\\
\partial_{\lambda}
\end{pmatrix}
(\alpha\var_{a, \lambda})
\lfloor_{(\alpha,a, \lambda=\gamma(0))} 
\begin{pmatrix}
\tilde \alpha-\hat \alpha \\
\tilde a-\hat a \\
\tilde \lambda-\hat \lambda
\end{pmatrix}\\
& +
\begin{pmatrix}
\partial^{2}_{\alpha} & \partial_{\alpha}\nabla_{a}&\partial_{\alpha}\partial_{\lambda} \\
\nabla_{a}\partial_{\lambda}& \nabla^{2}_{a} &\nabla_{a}\partial_{\lambda} \\
\partial_{\lambda}&\partial_{\lambda}\nabla_{a}&\partial^{2}_{\lambda}
\end{pmatrix}
(\alpha\var_{a, \lambda})
\lfloor_{(\alpha,a, \lambda)=\gamma(\tau)}
\begin{pmatrix}
\tilde \alpha-\hat \alpha \\
\tilde a-\hat a \\
\tilde \lambda-\hat \lambda
\end{pmatrix}^{2},
\end{split}
\end{equation} 
whence due to lemma \ref{lem_interactions} (i) and $c<\frac{\tilde \lambda_{i}}{\lambda_{i}}<C$ we obtain
\begin{equation}\label{phi-phitilde}
\begin{split}
\Vert \tilde \alpha_{i}\var_{\tilde a_{i}, \tilde \lambda_{i}}
-
\hat \alpha_{i}\var_{\hat a_{i}, \hat \lambda_{i}} 
& -
\begin{pmatrix}
\var_{\hat a_{i}, \hat \lambda_{i}}\\
\frac{1}{\hat \lambda_{i}}\nabla_{\hat a_{i}}\var_{\hat a_{i}, \hat \lambda_{i}}\\
\hat \lambda_{i}\partial_{\hat \lambda_{i}}\var_{\hat a_{i}, \hat \lambda_{i}}
\end{pmatrix}
\begin{pmatrix}
\tilde \alpha_{i}-\hat \alpha_{i} \\
\hat \lambda_{i}(\tilde a_{i}-\hat a_{i}) \\
\frac{\tilde \lambda_{i}-\hat \lambda_{i}}{\hat \lambda_{i}}
\end{pmatrix}
\Vert
\\
= &
O
(
\vert \tilde \alpha_{i}-\hat \alpha_{i}\vert^{2} 
+
\hat \lambda_{i}^{2}\vert \tilde a_{i}-\hat a_{i}\vert^{2}
+
\vert\frac{\tilde \lambda_{i}-\hat \lambda_{i}}{\hat \lambda_{i}}\vert^{2}).
\end{split}
\end{equation} 
So lemma \ref{lem_interactions} (ii) and (iv) yield
\begin{equation}
\begin{split}
\int K\var_{\hat a_{i}, \hat \lambda_{i}}^{\frac{4}{n-2}} 
\vert \hat \alpha_{i}\var_{\hat a_{i}, \hat \lambda_{i}}
& -
\tilde \alpha_{i}\var_{\tilde a_{i}, \tilde \lambda_{i}}\vert^{2} \\
\geq & 
C
(
\vert \hat \alpha_{i}-\tilde \alpha_{i}\vert^{2}
+
\hat \lambda_{i}^{2}\vert \hat a_{i}-\tilde a_{i}\vert^{2}
+
\vert \frac{\tilde \lambda_{i}}{\hat \lambda_{i}}-1\vert^{2}
)
+
w(\eps).
\end{split}
\end{equation}
Collecting terms we arrive at
\begin{equation}\label{convexity}
\begin{split}
\vert \hat \alpha-\tilde \alpha \vert^{2} 
& +
\Vert \hat \beta -\tilde \beta \Vert^{2} \\
& +
\sum_{i}
(
\vert \hat \alpha_{i}-\tilde \alpha_{i}\vert^{2}
+
\hat \lambda_{i}^{2}\vert \hat a_{i}-\tilde a_{i}\vert^{2}
+
\vert \frac{\tilde \lambda_{i}}{\hat \lambda_{i}}-1\vert^{2}
)
=
w(\eps).
\end{split}
\end{equation}
Consequently, if we consider a minimizing sequence 
\begin{equation}
\begin{split}
(\tilde \alpha_{l}, \tilde \beta_{k,l}, \tilde \alpha_{i,l}, \tilde a_{i,l}, \tilde \lambda_{i,l})_{l}
\subseteq 
A_{u}(\omega,p,2\eps_{0})
\end{split}
\end{equation} 
for the functional
\begin{equation}
\begin{split}
\int Ku^{\frac{4}{n-2}}\vert u-u_{\tilde \alpha, \tilde \beta}-\tilde \alpha^{i}\var_{\tilde a_{i}, \tilde \lambda_{i}}\vert^{2} 
\end{split}
\end{equation} 
with $u\in V(\omega,p, \eps)$ fixed, e.g.
\begin{equation}
\begin{split}
u=u_{\hat \alpha, \hat \beta}+\hat \alpha^{i}\var_{\hat a_{i}, \hat \lambda_{i}}+\hat v, \;
(\hat \alpha, \hat \beta_{k}, \hat \alpha_{i}, \hat a_{i}, \hat \lambda_{i})\in A_{u}(\omega,p, \eps)
,
\Vert\hat  v \Vert\leq \eps,
\end{split}
\end{equation} 
then there necessarily holds 
\begin{equation}
\begin{split}
(\tilde \alpha_{l}, \tilde \beta_{k,l}, \tilde \alpha_{i,l}, \tilde a_{i,l}, \tilde \lambda_{i,l})_{l}
\subseteq 
A_{u}(\omega,p, \eps+w(\eps)).
\end{split}
\end{equation}
for all $l$ sufficiently large. Moreover, since $\tilde \lambda_{i,l}\overset{l\to \infty}{\-} \infty$ is not possible due to
\begin{equation}
\begin{split}
\vert \frac{\tilde \lambda_{i,l}}{\hat \lambda_{i}}-1\vert^{2}=w(\eps)
\end{split}
\end{equation} 
the infimum of the functional is attained for some 
\begin{equation}
\begin{split}
(\alpha,  \beta_{k}, \alpha_{i}, a_{i}, \lambda_{i})\in \overline A_{u}(\omega,p, \eps+w(\eps))\subset A_{u}(\omega,p, \eps_{0}),
\end{split}
\end{equation}
provided $\eps\ll \eps_{0}$ is sufficiently small.
\smallskip
\\
To show uniqueness we argue by contradiction and assume, that for some
\begin{equation}
\begin{split}
u=u_{\hat \alpha, \hat \beta}+\hat \alpha^{i}\var_{\hat a_{i}, \hat \lambda_{i}}+\hat v, \,
(\hat \alpha, \hat \beta_{k}, \hat \alpha_{i}, \hat a_{i}, \hat \lambda_{i} )
\in A_{u}(\omega,p, \eps), \Vert v \Vert<\eps,
\end{split}
\end{equation} 
in other words for some $u\in V(\omega,p, \eps)$ with suitable representation there exist
\begin{equation}\begin{split}
(\alpha, \beta_{k}, \alpha_{i},a_{i}, \lambda_{i})
,
(\tilde\alpha, \tilde\beta_{k}, \tilde\alpha_{i}, \tilde a_{i}, \tilde\lambda_{i})
\in A_{u}(\omega,p, \eps_{0})
\end{split}\end{equation}
such, that
\begin{equation}\begin{split}\label{*=tilde*=inf}  
\inf_{\bar\alpha, \bar \beta, \bar\alpha_{i}, \bar a_{i}, \bar\lambda_{i}} &
\int Ku^{\frac{4}{n-2}} 
\vert u-u_{\bar \alpha, \bar\beta}-\bar\alpha^{i}\varphi_{\bar a_{i}, \bar\lambda_{i}}\vert^{2}  \\
= &
\int Ku^{\frac{4}{n-2}}
\vert u-u_{\alpha, \beta}-\alpha^{i}\varphi_{a_{i}, \lambda_{i}}\vert^{2} Ku^{\frac{4}{n-2}} \\
= &
\int Ku^{\frac{4}{n-2}}
\vert u-u_{\tilde \alpha, \tilde \beta}-\tilde\alpha^{i}\varphi_{\tilde a_{i}, \tilde\lambda_{i}}\vert^{2} Ku^{\frac{4}{n-2}}
.
\end{split}\end{equation}
By what was shown before the quantities
\begin{equation}\begin{split}
&
A = \vert \tilde \alpha-\alpha \vert, \; B_{k}=\vert \tilde \beta_{k}-\beta_{k}\vert
, \\ 
&
A_{i}
=\vert\tilde \alpha_{i}-\alpha_{i}\vert
, \;
L_{i}=\vert\frac{\tilde \lambda_{i}}{\lambda_{i}}-1\vert
, \;
D_{i}^{2}=\tilde \lambda_{i}\lambda_{i}d^{2}(\tilde a_{i},a_{i}) 
\end{split}\end{equation}
are well defined and we will prove the proposition by showing 
\begin{equation}
\begin{split}
A,B_{k},A_{i},D_{i},L_{i}=w(\eps) 
\end{split}
\end{equation} 
and 
\begin{equation}\begin{split}\label{uniqueness_equation}
A +\sum_{k=1}^{m}B_{k} +\sum_{i=1}^{p}A_{i}+D_{i}+L_{i}  = 
o(A+\sum_{k=1}^{m}B_{k}+\sum_{i=1}^{p}A_{i}+D_{i}+L_{i}).
\end{split}\end{equation}
The first statement if rather obvious. Indeed \eqref{convexity} shows
\begin{equation}
\begin{split}
\vert \alpha-\hat \alpha\vert, \vert \tilde \alpha-\hat \alpha\vert=w(\eps),
\end{split}
\end{equation} 
so $A=w(\eps)$ and the same argument applies to $B_{k},A_{i},D_{i},L_{i}$ as well.
\smallskip \\
We are left with proving \eqref{uniqueness_equation}. Note, that
\begin{equation}\begin{split}\label{deltaajlj-dtildeajtildelj}
\varphi_{ a_{j}, \lambda_{j}}-\varphi_{\tilde a_{j}, \tilde \lambda_{j}}
= &
u_{ a_{j}}(\frac{\lambda_{j}}{1+\lambda_{j}^{2}\gamma_{n}G_{ a_{j}}^{\frac{2}{2-n}}})^{\frac{n-2}{2}}
-
u_{\tilde a_{j}}(\frac{\tilde \lambda_{j}}{1+\tilde \lambda_{j}^{2}\gamma_{n}G_{\tilde a_{j}}^{\frac{2}{2-n}}})^{\frac{n-2}{2}}\\
= &
\varphi_{ a_{j}, \lambda_{j}}
(
1
-
\frac{u_{\tilde a_{j}}}{u_{ a_{j}}}
(\frac{\tilde \lambda_{j}}{ \lambda_{j}}
\frac
{
1+  \lambda_{j}^{2}\gamma_{n}G_{a_{j}}^{\frac{2}{2-n}}
}
{
1+\tilde \lambda_{j}^{2}\gamma_{n}G_{\tilde a_{j}}^{\frac{2}{2-n}}
}
)^{\frac{n-2}{2}}
)
\end{split}\end{equation}
and therefore
\begin{equation}\begin{split}
\label{rough_estimate_delta_tilde_delta}
\vert \varphi_{ a_{j}, \lambda_{j}}-\varphi_{\tilde a_{j}, \tilde \lambda_{j}}\vert 
\leq c( D_{j} +  L_{j})\varphi_{a_{j}, \lambda_{j}}.
\end{split}\end{equation}

First we make use of
\begin{equation}\begin{split}
\partial_{\alpha}
\int 
Ku^{\frac{4}{n-2}}
\vert 
u
-
u_{\alpha, \beta}
-
\alpha^{i}\varphi_{a_{i}, \lambda_{i}}
\vert^{2}
=0.
\end{split}\end{equation}
Differentiating we obtain
\begin{equation}\begin{split}
0
= &
\int Ku^{\frac{4}{n-2}}
(u-u_{\alpha, \beta}-\alpha^{i}\varphi_{a_{i}, \lambda_{i}})
\partial_{\alpha}u_{\alpha, \beta}\\
= &
\int Ku^{\frac{4}{n-2}}(u_{\tilde \alpha, \tilde \beta}-u_{\alpha, \beta})\partial_{\alpha}u_{\alpha, \beta}
+
(\tilde \alpha^{i}-\alpha^{i})
\int Ku^{\frac{4}{n-2}}\varphi_{\tilde a_{i}, \tilde \lambda_{i}}\partial_{\alpha}u_{\alpha, \beta} \\
& +
\alpha^{i}\int Ku^{\frac{4}{n-2}}(\varphi_{\tilde a_{i}, \tilde \lambda_{i}}-\varphi_{a_{i}, \lambda_{i}})\partial_{\alpha}u_{\alpha, \beta} \\
& +
\int Ku^{\frac{4}{n-2}}\tilde v(\partial_{\alpha}u_{\alpha, \beta}-\partial_{\tilde \alpha}u_{\tilde \alpha, \tilde \beta}),
\end{split}\end{equation}
whence  
$
A=o(A+\sum^{m}_{k=1}B_{k}+\sum_{i=1}^{p}A_{i}+D_{i}+L_{i}).
$

Similarly we make use of 
\begin{equation}\begin{split}
\partial_{\beta_{k}}
\int 
Ku^{\frac{4}{n-2}}
\vert 
u
-
u_{\alpha, \beta}
-
\alpha^{i}\varphi_{a_{i}, \lambda_{i}}
\vert^{2}=0
\end{split}\end{equation}
yielding 
$
B_{k}=o(A+\sum^{m}_{k=1}B_{k}+\sum_{i=1}^{p}A_{i}+D_{i}+L_{i}).
$

We proceed using
\begin{equation}\begin{split}
\partial_{\alpha_{j}}\int Ku^{\frac{4}{n-2}} \vert u-u_{\alpha, \beta}-\alpha^{i}\varphi_{a_{i}, \lambda_{i}}\vert^{2}=0.
\end{split}\end{equation}
This gives
\begin{equation}\begin{split}
0 
= &
\int Ku^{\frac{4}{n-2}} ( u-u_{\alpha, \beta}-\alpha^{i}\varphi_{a_{i}, \lambda_{i}})\varphi_{a_{j}, \lambda_{j}}\\
= &
\int Ku^{\frac{4}{n-2}}\varphi_{a_{j}, \lambda_{j}}(u_{\tilde \alpha, \tilde \beta}-u_{\alpha, \beta})
+
(\tilde \alpha^{i}-\alpha^{i})\int Ku^{\frac{4}{n-2}}\varphi_{a_{j}, \lambda_{j}} \varphi_{a_{i}, \lambda_{i}} \\
& +
\tilde \alpha^{i}\int Ku^{\frac{4}{n-2}}\varphi_{a_{j}, \lambda_{j}}(\varphi_{\tilde a_{i}, \tilde \lambda_{i}}-\varphi_{a_{i}, \lambda_{i}}) \\
& +
\int Ku^{\frac{4}{n-2}}v(\varphi_{a_{j}, \lambda_{j}}-\varphi_{\tilde a_{j}, \tilde \lambda_{j}}),
\end{split}\end{equation}
whence due to 
\eqref{rough_estimate_delta_tilde_delta} and lemma \ref{lem_interactions} 
\begin{equation}\begin{split}
0 
= &
(\tilde \alpha_{j}-\alpha_{j})\int Ku^{\frac{4}{n-2}} \varphi_{a_{j}, \lambda_{j}}^{2}
+
\tilde \alpha_{j}K_{j}\alpha_{j}^{\frac{4}{n-2}}
\int \varphi^{\frac{n+2}{n-2}}_{a_{j}, \lambda_{j}}
(\varphi_{\tilde a_{j}, \tilde \lambda_{j}}-\varphi_{a_{j}, \lambda_{j}}) \\
& +
o(A+\sum^{m}_{k=1}B_{k}+\sum^{p}_{i=1}A_{i}+D_{i}+L_{i}).
\end{split}\end{equation}
Arguing as for \eqref{phi-phitilde} we obtain passing to $g_{a_{i}}$ normal coordinates
\begin{equation}
\begin{split}
\int \varphi^{\frac{n+2}{n-2}}_{a_{j}, \lambda_{j}}
(\varphi_{\tilde a_{j}, \tilde \lambda_{j}}-\varphi_{a_{j}\lambda_{j}})
= &
\int \varphi^{\frac{n+2}{n-2}}_{a_{j}, \lambda_{j}}
\frac{1}{\lambda_{j}}\nabla_{a_{j}}\varphi_{a_{j}, \lambda_{j}}
\, \lambda_{j}(\tilde a_{j}-a_{j})\\
& +
\int \varphi^{\frac{n+2}{n-2}}_{a_{j}, \lambda_{j}}
\lambda_{j}\partial_{\lambda_{j}}\var_{a_{j}, \lambda_{j}}
\,
(\frac{\tilde \lambda_{j}}{\lambda_{j}}-1) \\
& +
o(D_{j}+L_{j}),
\end{split} 
\end{equation} 
whence according to lemma \ref{lem_interactions} (iv) we obtain
\begin{equation}
\begin{split}
\int \varphi^{\frac{n+2}{n-2}}_{a_{j}, \lambda_{j}}
(\varphi_{\tilde a_{j}, \tilde \lambda_{j}}-\varphi_{a_{j}\lambda_{j}})
= &
o(D_{j}+L_{j}).
\end{split} 
\end{equation}
We conclude
\begin{equation}\begin{split}
A_{j}
= &
o(A+\sum^{m}_{k=1}B_{k}+\sum^{p}_{i=1}A_{i}+D_{i}+L_{i}).
\end{split}\end{equation}
Analogously one obtains 
\begin{equation}\begin{split}
L_{j},D_{j}=o(A+\sum^{m}_{k=1}B_{k}+\sum^{p}_{i=1}A_{i}+D_{i}+L_{i})
\end{split}\end{equation}
by exploiting 
\begin{equation}\begin{split}
\partial_{\lambda_{j}}\int Ku^{\frac{4}{n-2}} \vert u-u_{\alpha, \beta} -\alpha^{i}\varphi_{i}\vert^{2}=0
\end{split}\end{equation}
and
\begin{equation}\begin{split}
\nabla_{a_{j}}\int Ku^{\frac{4}{n-2}} \vert u-u_{\alpha, \beta} -\alpha^{i}\varphi_{i}\vert^{2}=0
\end{split}\end{equation}
using
\begin{equation}\begin{split}
\int \varphi_{a_{j}, \lambda_{j}}^{\frac{4}{n-2}} & \lambda_{j}\partial_{ \lambda_{j}}\varphi_{a_{j}, \lambda_{j}}
(
\varphi_{a_{j}, \lambda_{j}}-\varphi_{\tilde a_{j}, \tilde\lambda_{j}}
) \\
= &
-\int \varphi_{a_{j}, \lambda_{j}}^{\frac{4}{n-2}} 
\vert\lambda_{j}\partial_{ \lambda_{j}}\varphi_{a_{j}, \lambda_{j}}\vert^{2}
(\frac{\tilde \lambda_{j}}{\lambda_{j}}-1)
+
o(D_{j}+L_{j})
\end{split}\end{equation}
and 
\begin{equation}\begin{split}
\int \varphi_{a_{j}, \lambda_{j}}^{\frac{4}{n-2}} & \frac{1}{\lambda_{j}}\nabla_{ a_{j}}\varphi_{a_{j}, \lambda_{j}}
(
\varphi_{a_{j}, \lambda_{j}}-\varphi_{\tilde a_{j}, \tilde\lambda_{j}}
) \\
= &
-\int \varphi_{a_{j}, \lambda_{j}}^{\frac{4}{n-2}} 
(\frac{1}{\lambda_{j}}\nabla_{ a_{j}}\varphi_{a_{j}, \lambda_{j}})^{2}
\, \lambda_{j}(\tilde a_{j}-a_{j})
+
o(D_{j}+L_{j})
\end{split}\end{equation}
Finally we show smooth dependence. To that end consider
\begin{equation}
\begin{split}
F(u,(\bar \alpha, \bar \beta_{k}, \bar \alpha_{i}, \bar a_{i}, \bar \lambda_{i}))
=
\int Ku^{\frac{4}{n-2}}\vert u-u_{\bar \alpha, \bar \beta}-\bar \alpha^{i}\var_{\bar a_{i}, \bar \lambda_{i}}\vert^{2}.
\end{split}
\end{equation} 
If $(\alpha, \beta_{k}, \alpha_{i},a_{i}, \lambda_{i})$ denotes the minimizer constructed
for $u\in V(\omega,p, \eps)$, then
\begin{equation}
\begin{split}
D_{(\alpha, \beta_{k}, \alpha_{i},a_{i}, \lambda_{i})}F(u,(\alpha, \beta_{k}, \alpha_{i},a_{i}, \lambda_{i}))
=
0.
\end{split}
\end{equation} 
Moreover in view of lemma \ref{lem_interactions} we easily find, that
\begin{equation}
\begin{split}
D^{2}_{( \alpha, \beta_{k}, \alpha_{i}, a_{i}, \lambda_{i})}F(u,( \alpha, \beta_{k}, \alpha_{i}, a_{i}, \lambda_{i}))
>0 
\end{split}
\end{equation} 
is positive, provided $\eps>0$ is sufficiently small. Thus the implicit function theorem provides
a smooth parametrization of
\begin{equation}
\begin{split}
[D_{(\alpha, \beta_{k}, \alpha_{i},a_{i}, \lambda_{i})}F(u,(\alpha, \beta_{k}, \alpha_{i},a_{i}, \lambda_{i}))
=
0].
\end{split}
\end{equation} 
This proves the statement.  
\end{proof}


\end{document}